\documentclass[10pt]{amsart}
\usepackage{amssymb}
\usepackage[pdftex]{graphicx}
\usepackage{bm}
\usepackage[colorlinks=true]{hyperref}
\makeindex
\newcommand{\Nto}{N^{\LXN}_{-\vartheta}}
\newcommand{\LXN}{\Lambda\ac\left(T^{*}X \oplus N^{*}\right)}
\newcommand{\bt}{b,\vartheta}
\newcommand{\ct}{\cos\left(\vartheta\right)}

\newcommand{\Aut}{\mathrm{Aut}}

\newcommand{\TsX}{T^{*}X}
\newcommand{\Ad}{\mathrm{Ad}}

\newcommand{\kpg}{\mathfrak k^{\perp}\left(\gamma\right)}
\newcommand{\mpg}{\mathfrak p^{\perp}\left(\gamma\right)}
\newcommand{\Yok}{Y_{0}^{\mathfrak k}}
\newcommand{\ad}{\mathrm{ad}}
\newcommand{\pa}{\partial}

\newcommand{\ac}{^{\cdot}}
\newcommand{\we}{\wedge}
\newcommand{\ho}{\widehat{\otimes}}
\newcommand{\n}{\nabla}
\newcommand{\N}{\mathbf{N}}
\newcommand{ \Z}{\mathbf{Z}}
\newcommand{ \R}{\mathbf{R}}
\newcommand{ \C}{\mathbf{C}}

\def \Trs{\mathrm {Tr_{s}}}
\def \Tr{\mathrm{Tr}}

\def \ch{\mathrm {ch}}
\def \End{\mathrm {End}}

\def \even{\mathrm{even}}
\def \odd{\mathrm{odd}}

\def \Re{\mathrm{Re}}
\def\deg{\mathrm{deg}}
\def\Im{\mathrm{Im}}

\newcommand{\ei}{e^{i}}
\newcommand{\ej}{e^{j}}
\theoremstyle{plain}
\newtheorem{lem}{Lemma}[section]
\newtheorem{thm}[lem]{Theorem}
\newtheorem{prop}[lem]{Proposition}

\theoremstyle{definition}
\newtheorem{defin}[lem]{Definition}
\newtheorem{exa}[lem]{Example}

\theoremstyle{remark}
\newtheorem{remk}[lem]{Remark}
\newtheorem{remark}[lem]{Remark}

\numberwithin{equation}{section}
\numberwithin{figure}{section}
\begin{document} 
 
 \bibliographystyle{alpha}
 
\title{Eta invariants and the hypoelliptic Laplacian}


\author{Jean-Michel Bismut}
\address{D\'epartement de Math\'ematique \\ Universit\'e Paris-Sud
\\ B\^atiment 425 \\ 91405 Orsay \\ France}
\thanks{The research leading to the results contained in this paper has received  funding from
the European Research Council 
(E.R.C.) under European Union's Seventh Framework Program 
(FP7/2007-2013)/ ERC grant  agreement No. 291060. The author is  
indebted to a referee for reading the paper  carefully and making very
helpful suggestions.}
\curraddr{}
\email{Jean-Michel.Bismut@math.u-psud.fr}

\subjclass[2010]{11F72, 35H10, 58J20, 58J28, 58J65}
\keywords{Spectral theory, Selberg trace formula; Hypoelliptic 
equations; Index theory and related fixed 
point theorems; Eta-invariants, Chern-Simons invariants; Diffusion processes and stochastic analysis on manifolds}
\date{}


\dedicatory{This article is dedicated to H. Moscovici and R.J. Stanton}

\begin{abstract}
  The purpose of this paper is to give a new proof of results of 
  Moscovici and Stanton on the orbital 
  integrals associated with eta invariants on compact locally symmetric 
  spaces. Moscovici and Stanton used 
  methods of
  harmonic analysis on reductive groups. Here, we combine our approach to orbital integrals 
   that uses the hypoelliptic Laplacian, with the 
  introduction
  of a rotation on certain 
  Clifford algebras. Probabilistic methods play an important role in 
  establishing key estimates. In particular, we construct the 
  proper It\^{o} calculus associated with certain hypoelliptic 
  diffusions.
\end{abstract}
\maketitle
\tableofcontents
\section{Introduction}
The purpose of this paper is to give a new approach to the evaluation 
by Moscovici-Stanton \cite{MoscoviciStanton89} of the orbital 
integrals that appear in  the eta invariant of a 
Dirac operator  on a  
compact odd dimensional locally symmetric space. Moscovici and Stanton obtained their 
results by an appropriate use of Selberg trace formula.   They were extending earlier results by Millson \cite{Millson78} 
on compact manifolds of constant negative curvature. 
In a later paper 
  \cite{MoscoviciStanton91}, Moscovici and Stanton used similar 
  methods to evaluate the integrand of the Ray-Singer analytic 
  torsion for compact locally symmetric spaces.  In 
  both papers \cite{MoscoviciStanton89,MoscoviciStanton91}, Moscovici and Stanton built up on the evaluation of 
   orbital integrals associated with heat kernels to give an explicit 
   formula for the eta invariant and analytic torsion. Here, we will 
   recover the results of Moscovici-Stanton  on the orbital integrals  in the case of eta invariants.
   
   Our method uses the theory of the hypoelliptic Laplacian on 
   symmetric spaces \cite{Bismut08b}, which we briefly review. The 
   hypoelliptic Laplacian is a general construction that is valid on arbitrary 
   Riemannian manifolds \cite{Bismut05a,Bismut06d}, in which a family 
   of hypoelliptic operators acting 
   on the total space of the tangent bundle, or of a larger vector 
   bundle, is produced, that 
   interpolates in the proper sense between a generalized Laplacian, 
   and the generator of the geodesic flow. In 
   \cite{Bismut08b},  a version 
   of the hypoelliptic Laplacian was shown to exist on symmetric 
   spaces, such that the semisimple orbital 
   integrals associated with the heat kernel for the Casimir operator 
   are preserved by the hypoelliptic deformation.  By deforming all the way to 
   the geodesic flow,  a general 
   explicit formula was given in \cite{Bismut08b} for these 
   orbital integrals. 
   
   In \cite[chapter 7]{Bismut08b}, applications 
   were given to the evaluation of the orbital integrals 
   appearing in the evaluation of the Ray-Singer analytic torsion. 
   We recovered this way some of the results of Moscovici-Stanton 
   \cite{MoscoviciStanton91}. This is because, in the context of 
   analytic torsion, the square of the relevant Dirac operator is 
   just the Casimir operator.
   The same is true for the other index theoretic calculations of 
   \cite[chapter 7]{Bismut08b} for even dimensional  locally 
   symmetric spaces, since the square of the Dirac operator differs 
   from the Casimir operator by a covariantly constant matrix 
   operator. Let us also point out the work by  
   Shen \cite{Shen16,Shen16b} who was able to complete the proof 
   by Moscovici-Stanton \cite{MoscoviciStanton91} of the Fried 
   conjecture for the analytic torsion  \cite{Fried86a} in the case 
   of compact locally symmetric spaces. Shen used the explicit 
   formulas established in \cite{Bismut08b} for the orbital integrals 
   for the heat kernel associated with the Casimir operator, and 
   filled a gap in Moscovici-Stanton's proof 
   \cite{MoscoviciStanton91}, also using arguments from 
   representation theory.
   
   The eta invariant is a spectral function of the Dirac 
   operator, and not just of its square, and so the 
   analysis in \cite{Bismut08b}  breaks down. Before we explain 
   how to deal with this difficulty, let us describe  the 
   ideas of \cite{Bismut08b} in more detail.
   
   Let $G$ be a connected
    reductive Lie group, let $\theta$ be its Cartan involution,  let $\mathfrak g$ be its Lie 
   algebra, and let $K$ 
   be the associated maximal compact subgroup of $G$ with Lie algebra $\mathfrak k 
   \subset \mathfrak g$. Let $\mathfrak g=\mathfrak p \oplus 
   \mathfrak k$ be the Cartan splitting of $\mathfrak g$.  Let 
   $X=G/K$ be the corresponding symmetric space.  Let $B$ be    a   bilinear 
 symmetric nondegenerate  form  on $\mathfrak g$, which is    
 $G$-invariant and $\theta$-invariant,  positive on $\mathfrak 
   p$, and negative on $\mathfrak k$.
   
   Let $C^{\mathfrak g}$ be the Casimir operator of $G$ associated 
   with $B$, and let $F$ 
   be the complex vector bundle on $X$ associated with an 
   irreducible 
   representation of $K$. Then 
   $C^{\mathfrak g}$ 
   descends to a second order elliptic operator $C^{\mathfrak g,X}$ acting on smooth 
   sections of $F$ on $X$. Set 
   $\mathcal{L}^{X}=\frac{1}{2}C^{\mathfrak g,X}+c$, where $c$ is an 
   explicit constant.
   
   Note that $TX$ is the vector bundle 
   corresponding to the adjoint action of $K$ on $\mathfrak p$. Let $N$ be 
   the vector bundle on $X$ associated with the adjoint action of $K$ 
   on $\mathfrak k$. Let $\widehat{\pi}:\widehat{\mathcal{X}}\to X$ be the total space 
   of $TX \oplus N$.  The hypoelliptic 
   deformation $\mathcal{L}_{b}^{X}\vert_{b>0}$ of 
   $\mathcal{L}^{X}$  is obtained 
   via a corresponding family of generalized Dirac operators 
   $\mathfrak D_{b}^{X}\vert_{b>0}$. The 
   operators $\mathfrak D^{X}_{b},\mathcal{L}^{X}_{b}$  act on 
   the smooth sections over $\widehat{\mathcal{X}}$ of 
   $\widehat{\pi}^{*}\left(\Lambda\ac\left(T^{*}X \oplus N^{*}\right)
   \otimes_{\R}F\right)$.  Up to lower order terms, the operator 
   $\mathcal{L}^{X}_{b}$ is a scaled sum of the harmonic oscillator 
   along $TX \oplus N$, of the generator of the geodesic flow,
   and of a nonnegative scalar term of degree $4$ in the variables in the fibre 
   $TX \oplus N$. 
   
   In \cite{Bismut08b}, the 
   fact that the semisimple orbital integrals for the heat kernel 
   of $\mathcal{L}^{X}$ are
   invariant 
    by  the hypoelliptic deformation is shown to  be a
   version of the McKean-Singer formula \cite{McKeanSinger} for the 
   Lefschetz supertrace associated with a classical Dirac operator. By making $b\to + \infty $, which forces the 
   hypoelliptic Laplacian $\mathcal{L}^{X}_{b}$ to converge in the 
   proper sense to the generator of the geodesic flow, we obtain an 
   explicit geometric formula for the orbital integrals of the heat 
   kernel of $\mathcal{L}^{X}$.
   
     Let 
   $c\left(\mathfrak g\right),\widehat{c}\left(\mathfrak g\right)$ be 
   the Clifford algebras associated with $\left(\mathfrak g,B\right),
  \left(\mathfrak g,-B\right)$, and let $U\left(\mathfrak g\right)$ 
  be the enveloping algebra of $\mathfrak g$. In the construction of 
  $\mathfrak D^{X}_{b}$, one key idea in
 \cite{Bismut08b} is to express   the Casimir operator $C^{\mathfrak 
 g}$ up to a constant as minus
 the  square of the Kostant Dirac 
   operator $\widehat{D}^{ \mathfrak g}\in \widehat{c}\left(\mathfrak 
   g\right)\otimes U\left(\mathfrak g\right)$ 
   \cite{Kostant76,Kostant97}.  Ultimately, both Clifford algebras 
   $c\left(\mathfrak g\right), \widehat{c}\left(\mathfrak g\right)$ 
   are used in the constructions of \cite{Bismut08b}.

  We assume $G$ to be simply connected. Let $\overline{TX}$ be another copy of $TX$, and let $S^{\overline{TX}}$ denote 
  the vector bundle of $\overline{TX}$ spinors. The classical Dirac 
  operator $D^{X}$ on $X$ acts on $C^{ \infty }\left(X,S^{\overline{TX}} 
  \otimes F\right)$. Let 
   $\overline{\mathfrak p}$ be another copy of $\mathfrak p$. The 
   Clifford algebra $c\left(\overline{\mathfrak p}\right)$ descends to 
   the Clifford algebra $c\left(\overline{TX}\right)$,  that is used   in the construction of  the Dirac operator $D^{X}$.  The operator
   $D^{X,2}$ differs from $C^{\mathfrak g,X}$ by a constant tensor 
   in which  the Clifford algebra $c\left(\overline{\mathfrak p}\right)$ does 
   not appear.   This is why in \cite[chapter 7]{Bismut08b}, in which 
   the index theory of Dirac operators is considered in relation with 
   the trace formula, the odd part of the Clifford algebra $c\left(\overline{\mathfrak 
   p}\right)$  plays  no role. These considerations suggest 
   that to apply the methods of \cite{Bismut08b} to the Dirac operator 
   $D^{X}$ itself, we have to consider the three Clifford algebras 
   $c\left(\mathfrak g\right),\widehat{c}\left(\mathfrak 
   g\right),c\left(\overline{\mathfrak p}\right)$ together.
    In a formally similar context, three Clifford algebras were 
    already considered in
   \cite{Bismut06d} to construct a hypoelliptic version of the Dirac 
   operator associated with an arbitrary Riemannian manifold.
   
   The complex structure of 
   $\mathfrak p \oplus \overline{\mathfrak p}$ will play an important role 
   in our method to recover the results of  Moscovici-Stanton  \cite{MoscoviciStanton89}.
   What replaces  the conservation of Lefschetz supertraces is a 
   conservation principle for another class of orbital integrals. More precisely,  we apply the odd 
   superconnection  
   formalism of Quillen \cite{Quillen85a} to a two parameter family 
   of generalized Dirac operators $\mathfrak 
   D^{X}_{b,\vartheta}\vert_{b>0,\vartheta\in \left[0,\frac{\pi}{2}\right[}$ 
   over $\widehat{\mathcal{X}}$.  The corresponding family of hypoelliptic 
   Laplacians is denoted 
   $\mathcal{L}^{X}_{b,\vartheta}\vert_{b>0,\vartheta\in 
   \left[0,\frac{\pi}{2}\right[}$. If $\vartheta=0$, we recover the 
   families $\mathfrak 
   D^{X}_{b}\vert_{b>0},\mathcal{L}^{X}_{b}\vert_{b>0}$. If $\gamma\in G$ 
   is semisimple and nonelliptic, we 
    show that the integral of a $1$-form $\mathsf b$ constructed via 
   orbital integrals associated with $\gamma$ on 
    $\left[0,\frac{\pi}{2}\right[$ does not depend on $b>0$.  By making $b\to 0$, 
    this quantity is shown to be an explicit multiple of the orbital integral
   $\Tr^{\left[\gamma\right]}\left[\frac{D^{X}}{\sqrt{2}}\exp\left(-sD^{X,2}/2\right)\right]*
   \frac{1}{\sqrt{s}}\left(t\right)$, where $*$ is  the 
   convolution of functions on $\R_{+}$. 
   By making $b\to + \infty $, we express this quantity in geometric 
   terms, by a localization procedure which is essentially taken from 
   our previous work
   \cite{Bismut08b}.  If $\gamma=e^{a}k^{-1}, a\in \mathfrak p,k\in 
   K, \Ad\left(k\right)a=a$,  like in \cite{Bismut08b}, the geometric 
   expression involves an integral on $\mathfrak 
   k\left(\gamma\right)$, the $\mathfrak k$ part of the Lie algebra 
   $\mathfrak z\left(\gamma\right)$ of the centralizer of $\gamma$. 
   Ultimately, we recover 
   the vanishing results of 
   Moscovici and Stanton  \cite{MoscoviciStanton89}, 
   and also the explicit formulas they obtained in the case where 
   the orbital integrals do not vanish.
   
   In the paper, we tried to clearly separate the algebraic 
   and geometric arguments from the analytic arguments. In particular, in 
   sections \ref{sec:linal}--\ref{sec:applic}, we construct the 
   operators $\mathfrak 
   D^{X}_{b,\vartheta},\mathcal{L}^{X}_{b,\vartheta}$,  key estimates 
   on heat kernels for $\mathcal{L}^{X}_{b,\vartheta}$ on 
   $\widehat{\mathcal{X}}$ are  
   stated without proof, and are used to establish our main results in Theorems \ref{Tvan} and \ref{Tgen}. The 
   proof of these estimates is deferred to sections \ref{sec:extra}--\ref{sec:unilar}.  Most of the estimates are obtained by properly adapting 
   the estimates in \cite{Bismut08b}, except for a uniform estimate 
   on solutions of a linear differential equation, that is established in 
   Theorems \ref{Tfuest} and \ref{Tunif}. This estimate
    eluded us for some time, and largely explains the length of the 
    paper, which should otherwise be a rather straightforward 
    extension of \cite{Bismut08b}. As in \cite{Bismut08b}, and for 
    fundamental reasons, probabilistic techniques play an important
    role in the proofs.
   
   This paper is organized as follows. In section \ref{sec:linal}, we 
   briefly recall simple facts of linear algebra, that include  
    Quillen's superconnection formalism \cite{Quillen85a}.
   
   In section \ref{sec:eta}, we recall the construction of the 
   hypoelliptic Laplacian $\mathcal{L}_{b}^{X}$ of \cite{Bismut08b}, which up to an 
   explicit constant, deforms  
    $C^{\mathfrak g,X}/2$.  The operator $\mathcal{L}_{b}^{X}$ is obtained via the 
   construction of a Dirac like operator $\mathfrak D_{b}^{X}$, which 
   itself deforms the  operator $0$.
   
   In section \ref{sec:defdx}, we construct the families of operators 
   $\mathfrak 
   D^{X}_{b,\vartheta}\vert_{\left(b,\vartheta\right)\in\R^{*}_{+} 
   \times \left[0,\frac{\pi}{2}\right[}$ and 
   $\mathcal{L}^{X}_{b,\vartheta}
   \vert_{\left(b,\vartheta\right)\in\R^{*}_{+}\times 
   \left[0,\frac{\pi}{2}\right[}$.  Given 
   $\vartheta\in\left[0,\frac{\pi}{2}\right[$, the family $\mathfrak 
   D_{b,\vartheta}^{X}\vert_{b>0}$ is a deformation of 
   $\sin\left(\vartheta\right)iD^{X}$. For $\vartheta>0$, the 
   construction of $\mathfrak D^{X}_{b,\vartheta}$ now involves 
   the Clifford algebra 
   $c\left(\overline{TX}\right)$ explicitly.
   
   In section \ref{sec:defhyp}, given a semisimple element $\gamma\in G$,  we construct a closed superconnection $1$-form 
   $\mathsf{b}$ on $\R_{+}^{*}\times \left[0,\frac{\pi}{2}\right[$, 
   involving orbital integrals associated with the heat kernel for 
   $\mathcal{L}^{X}_{b,\vartheta}$, in which the operator $\mathfrak 
   D^{X}_{\bt}$ also appears. 
   
   In section \ref{sec:pres}, we show that the integral of 
   $\mathsf{b}$ on $\left[0,\frac{\pi}{2}\right[$ does not depend on 
   $b>0$, and coincides with the integral on 
   $\left[0,\frac{\pi}{2}\right[$ of a $1$-form $\mathsf{a}$ 
   involving the orbital integrals 
  $\Tr^{\left[\gamma\right]}\left[D^{X}\exp\left(-sD^{X,2}/2\right)\right]$.
   
  In section \ref{sec:oddorb}, by making $b\to + \infty $, we give an 
  explicit geometric formula for the  integral of  $\mathsf{a}$. The 
  geometric computations are closely related to our earlier work 
  \cite{Bismut08b}.
   
  In section \ref{sec:applic}, by working in more detail on the 
  geometric
  formulas of section \ref{sec:oddorb}, we obtain our main results, 
  i.e., we rederive the formulas of Moscovici-Stanton 
  \cite{MoscoviciStanton89} for the orbital integrals 
  $\Tr^{\left[\gamma\right]}\left[D^{X}\exp\left(-tD^{X,2}/2\right)\right]$. 
  
  In the sections that follow, we establish the estimates that are 
  needed in the proofs of the previous results. The analysis of the 
  operators $\mathcal{L}^{X}_{b,\vartheta}$ involves an essentially 
  different step from the analysis in \cite{Bismut08b} when $b>0$ is 
  bounded, because we need to obtain uniform estimates in 
  $\vartheta\in\left[0,\frac{\pi}{2}\right[$ as $b\to 0$. 
  When $b\to + \infty $, the analysis is essentially the same as in 
  \cite{Bismut08b}. 
  
  In section \ref{sec:extra}, we improve on the uniform estimates we had 
  obtained in \cite[chapter 12--14]{Bismut08b} for the smooth kernels 
  $\exp\left(-t\mathcal{L}^{X}_{b}\right)$ , when $b>0,t>0$ remain 
  uniformly bounded. In \cite{Bismut08b}, only the case where $t$ 
  remains bounded  away from $0$ was considered. The reason for doing 
  this is that some of 
  our estimates for the  kernels for 
  $\exp\left(-\mathcal{L}^{X}_{b,\vartheta}\right)$ are equivalent to 
  the just described estimates when $t>0$ is small. Also we establish uniform 
  estimates on the rate of escape from an open ball for the 
  hypoelliptic diffusion associated with 
  $\mathcal{L}^{X}_{b}\vert_{0<b\le 1}$. These estimates were not stated 
  explicitly in \cite{Bismut08b},  and turn out to be useful in the 
  sections that follow. 
  
  In section \ref{sec:unisca}, we obtain uniform estimates for the 
  heat kernels for a scalar version of 
  $\mathcal{L}^{X}_{b,\vartheta}$ for $b>0$ bounded and $\vartheta\in 
  \left[0,\frac{\pi}{2}\right[$. These estimates are easy 
  consequences of the results of section \ref{sec:extra}.
  
  In section \ref{sec:fin}, we obtain the required estimates for the 
  heat kernels $\exp\left(-\mathcal{L}^{X}_{b,\vartheta}\right)$ when 
  $b>0$ remains bounded and $\vartheta\in 
  \left[0,\frac{\pi}{2}\right[$. This is the technically most 
  difficult section of the paper. Indeed, passing from the estimates 
  for the scalar version of $\mathcal{L}^{X}_{b,\vartheta}$ to the 
  full operator $\mathcal{L}^{X}_{b,\vartheta}$ introduces new 
  difficulties that did not appear in \cite{Bismut08b}, essentially 
  because the exterior algebra $\Lambda\ac\left(T^{*}X \oplus 
  N^{*}\right)$ and the Clifford algebra 
  $c\left(\overline{TX}\right)$ are coupled in a nontrivial way for 
  $\theta>0$.  Probabilistic techniques are especially useful there. 
  While we described before the main new difficulty with respect to 
  \cite{Bismut08b} to be the proof of 
  uniform bounds on the solution of a family of differential 
  equations, the way this control is obtained is via geometric 
  considerations on the projection of this solution on the proper 
  symmetric space.
  
  Finally, in section \ref{sec:unilar}, we obtain the required uniform
  estimates for the smooth kernels 
  $\exp\left(-\mathcal{L}^{X}_{b,\vartheta}\right)$ for $b\ge 1$ and 
  $\vartheta\in\left[0,\frac{\pi}{2}\right[$.
 
  Let us also point out that in the very short subsection 
  \ref{subsec:geito}, we establish a generalized It\^o formula for 
  the hypoelliptic diffusion associated with the operator 
  $\mathcal{L}^{X}_{b}$.  This subsection can be read independently 
  of the remainder of the paper. It makes clear that the proper It\^o 
  calculus for our hypoelliptic diffusion is deduced from the 
  classical It\^o calculus by a simple convolution. This new form of 
  the It\^o calculus plays a key role in the proof of our estimates, 
  and should be of independent interest. For an introduction to 
  Brownian motion and the stochastic calculus, we refer to 
  Ikeda-Watanabe \cite{IkedaWatanabe89} and Le 
  Gall \cite{LeGall16}.

   In the whole paper, if $\mathcal{A}$ is a $\Z_{2}$-graded algebra, 
   if $a,a'\in\mathcal{A}$, $\left[a,a'\right]$ denotes the 
   supercommutator of $a,a'$, so that
   \begin{equation}\label{eq:suprcm}
\left[a,a'\right]=aa'-\left(-1\right)^{\deg a\deg a'}a'a.
\end{equation}
    If $\mathcal{A},\mathcal{A'}$ are 
   $\Z_{2}$-graded algebras,  
    $\mathcal{A}\ho \mathcal{A}'$ denotes the tensor product 
    $\mathcal{A} \otimes \mathcal{A}'$ 
   equipped with the induced $\Z_{2}$-graded structure.
   
   Also, in our estimates, the  constants $c>0,C>0$ may vary from 
   line to line.
\section{Linear algebra}%
\label{sec:linal}
The purpose of this section is to develop an algebraic formalism in a finite dimensional 
context  that 
will be used in the next sections in infinite dimensions, to properly 
handle eta invariants on locally symmetric spaces. 

This section is organized as follows. In subsection 
\ref{subsec:elop}, if $E$ is a vector space, and $\mathcal{A}=\End\left(E\right)$, we introduce associated superconnections and 
$\sigma$-traces   in the sense of
 Quillen \cite{Quillen85a}. Of special 
importance is the construction of a function on $\mathcal{A}$ that 
transgresses a Quillen form of degree $1$.

In subsection \ref{subsec:cen},  we specialize the 
constructions of subsection \ref{subsec:elop} to the case where 
$\mathcal{A}'$ is a subalgebra of $\mathcal{A}$ and where $C$ is a 
central element in $\mathcal{A}'$. 
\subsection{Superconnections and $\sigma$-traces}%
\label{subsec:elop}
First, we describe Quillen's odd superconnection formalism \cite[\S
5]{Quillen85a}. Let $\sigma$ be the odd generator of the Clifford algebra 
$c\left(\R\right)$, so that $\sigma^{2}=1$.

Let $E$ be a finite dimensional complex vector space.
Put 
\begin{equation}\label{eq:clx1}
\mathcal{A}=\End\left(E\right).
\end{equation}
Set
\begin{equation}\label{eq:cla1}
\mathcal{A}_{\sigma}=\mathcal{A} \otimes_{\R} c\left(\R\right).
\end{equation}
Then $\mathcal{A}_{\sigma}$ is a $\Z_{2}$-graded algebra. The 
splitting of $\mathcal{A}_{\sigma}$ into its even and odd 
parts is given by
\begin{equation}\label{eq:cla2}
\mathcal{A}_{\sigma}=\mathcal{A} \oplus \sigma \mathcal{A}.
\end{equation}
If $\alpha\in \mathcal{A}_{\sigma}$, then $\alpha=a+\sigma b, a,b\in 
\mathcal{A}$. Put
\index{Trs@$\Tr_{\sigma}$}%
\begin{equation}\label{eq:cla2x}
\Tr_{\sigma}\left[\alpha\right]=\Tr\left[b\right].
\end{equation}
Then $\Tr_{\sigma}$ vanishes on the even part of 
$\mathcal{A}_{\sigma}$, and also on supercommutators in 
$\mathcal{A}_{\sigma}$.

Let $\mathcal{A}^{*}$ denote the  dual vector space  to $\mathcal{A}$. Put
\begin{equation}\label{eq:cla3}
\mathcal{B}=\Lambda\ac\left(\mathcal{A}^{*}\right) 
\ho\mathcal{A}_{\sigma}.
\end{equation}
Then $\mathcal{B}$ is also a $\Z_{2}$-graded algebra.
We extend $\Tr_{\sigma}$ to a map from $\mathcal{B}$ into 
$\Lambda\ac\left(\mathcal{A}^{*}\right)$, with the convention that if $\omega\in 
\Lambda\ac\left(\mathcal{A}^{*}\right), 
\alpha\in\mathcal{A}_{\sigma}$, 
\begin{equation}\label{eq:cla4}
\Tr_{\sigma}\left[\omega 
\alpha\right]=\omega\Tr_{\sigma}\left[\alpha\right].
\end{equation}
Then $\Tr_{\sigma}$ still vanishes on supercommutators in 
$\mathcal{B}$.

We now view $\mathcal{A}$ as a trivial vector bundle on the vector 
space $\mathcal{A}$. Let $D$ denote the tautological section of 
$\mathcal{A}$ over the vector space $\mathcal{A}$. 
Let 
$d$ be the de Rham operator on $\mathcal{A}$. Let $A$ be the 
 superconnection over $\mathcal{A}$, 
\begin{equation}\label{eq:bob1}
A=d+D\sigma.
\end{equation}
Its curvature $A^{2}\in \mathcal{B}^{\even}$ is given by
\begin{equation}\label{eq:bob2}
A^{2}=D^{2}+\left(dD\right) \sigma.
\end{equation}
It verifies the Bianchi identity
\begin{equation}\label{eq:bob2x1}
\left[A,A^{2}\right]=0.
\end{equation}

    Let $\varphi$ be a holomorphic  function from $\C$ into itself. 
Then $\varphi$ extends to an analytic function from 
$\mathcal{B}^{\even}$ 
into itself. In particular, $\varphi\left(A^{2}\right)$ lies in 
$\mathcal{B}^{\even}$.

Set
\begin{equation}\label{eq:bob3}
\Phi=\Tr_{\sigma}\left[\varphi\left(A^{2}\right)\right].
\end{equation}
Then $\Phi$ is an odd form on $\mathcal{A}$. A result of 
Quillen \cite{Quillen85a} says that $\Phi$ is a closed form. Indeed, 
using the vanishing of $\Tr_{\sigma}$ on supercommutators and 
Bianchi's identity (\ref{eq:bob2x1}), we get
\begin{equation}\label{eq:bob3x0}
d\Phi=\Tr_{\sigma}\left[\left[d,\varphi\left(A^{2}\right)\right]\right]=
\Tr_{\sigma}\left[\left[A,\varphi\left(A^{2}\right)\right]\right]=0.
\end{equation}

We will be especially 
interested in the component $\Phi^{(1)}$ of degree $1$ of $\Phi$, 
that is given by
\begin{equation}\label{eq:bob5x-1}
\Phi^{(1)}=\Tr\left[\varphi'\left(D^{2}\right)dD\right].
\end{equation}
Let $F:\mathcal{A}\to\C$ be the smooth function that is given by
\begin{equation}\label{eq:bob4}
F\left(D\right)=\int_{0}^{1}\Tr\left[
D\varphi'\left(u^{2}D^{2}\right)\right]du,
\end{equation}
Since $\Phi^{(1)}$ is closed,  $F$ is the unique smooth function on $\mathcal{A}$ vanishing at 
$0$ such that
\begin{equation}\label{eq:bib4x1}
dF=\Phi^{(1)}.
\end{equation}

For $t\ge 0$, set
\begin{equation}\label{eq:bob4x-1}
\varphi_{t}\left(z\right)=\varphi\left(tz\right).
\end{equation}
The associated function $F_{t}$ is given by
\begin{equation}\label{eq:bob4x2}
F_{t}\left(D\right)=\sqrt{t}F\left(\sqrt{t}D\right).
\end{equation}
Equivalently,
\begin{equation}\label{eq:bob4x2a}
F_{t}\left(D\right)=\sqrt{t}\int_{0}^{t}\frac{1}{2\sqrt{u}}\Tr\left[D\varphi'\left(uD^{2}\right)\right]
du.
\end{equation}
The function $t\in \R_{+}\to F_{t}\left(D\right)\in\C$ is smooth, and moreover,
\begin{equation}\label{eq:bob4x3}
\frac{d}{dt}F_{t}\left(D\right)\vert_{t=0}= \varphi'\left(0\right)\Tr\left[D\right].
\end{equation}
\begin{exa}\label{exa1}
Let $\log$ be the  logarithm defined on 
$\left\{z\in \C^{*}, \mathrm{\Re}\ z\ge 0\right\}$, with
polar angle lying in $[-\frac{\pi}{2},\frac{\pi}{2}]$. For $z\in \R, 
\Re z\ge 0$, set
\begin{equation}\label{eq:bob3ba}
\varphi\left(z\right)=\log\left(1+z\right).
\end{equation}

Here, we assume $E$ to be a Hermitian vector space. 
Let $\mathcal{A}_{\mathrm{sa}}, \mathcal{A}_{\ge 0}$ be the vectors 
subspaces of $\mathcal{A}$ of 
self-adjoint elements, and self-adjoint nonnegative elements.
 If $B\in \mathcal{A}_{\ge 0}$, then
\begin{equation}\label{eq:bob3y-1}
\Tr\left[\varphi\left(B\right)\right]=\log\det\left(1+B\right).
\end{equation}

Here, we restrict our construction to elements $D\in 
\mathcal{A}_{\mathrm{sa}}$. We define the form $\Phi$ on 
$\mathcal{A}_{\mathrm{sa}}$ as in (\ref{eq:bob3}). Then
\begin{equation}\label{eq:bob4u}
    \Phi^{(1)}=\Tr\left[\left(1+D^{2}\right)^{-1}dD\right].
\end{equation}
As in Moscovici-Stanton \cite[\S\ 2]{MoscoviciStanton89}, we introduce the Cayley 
transform
\begin{equation}\label{eq:bob4x1}
C\left(D\right)=\frac{1+iD}{1-iD}.
\end{equation}

Set
\begin{equation}\label{eq:bob5}
F\left(D\right)=\frac{1}{2i}\log\det\left[C\left(D\right)\right]=\Im\log\det\left(1+iD\right).
\end{equation}
Then $F$ is the real function on $\mathcal{A}_{\mathrm{sa}}$  such 
that  $F\left(0\right)=0, dF=\Phi^{(1)}$.

We denote by $\Tr'\left[\frac{D}{\left\vert  D\right\vert}\right]$ the 
trace of $\frac{D}{\left\vert  D\right\vert}$ in which the possible 
zero eigenvalues of $D$ have been eliminated.  
When $t\to + \infty $, then
\begin{equation}\label{eq:bob5x1}
\frac{F_{t}\left(D\right)}{\sqrt{t}}\to \frac{\pi}{2}\Tr'\left[\frac{D}{\left\vert  
D\right\vert}\right].
\end{equation}

Another definition of $F\left(D\right)$ is by a formula like in (\ref{eq:bob4}), 
\begin{equation}\label{eq:bob6}
F\left(D\right)=\int_{0}^{1}\Tr\left[\left(1+u^{2}D^{2}\right)^{-1}D\right]du.
\end{equation}
One can derive (\ref{eq:bob5x1}) from (\ref{eq:bob6}).
\end{exa}
\begin{exa}\label{exa2}
Set
\begin{equation}\label{eq:bob7}
\varphi\left(z\right)=\exp\left(-z\right).
\end{equation}
Then
\begin{equation}\label{eq:bob8}
\Phi^{(1)}=-\Tr\left[dD\exp\left(-D^{2}\right)\right].
\end{equation}
Also
\begin{equation}\label{eq:bob9}
F_{t}\left(D\right)=-\sqrt{t}\int_{0}^{t}\frac{1}{2\sqrt{u}}
\Tr\left[D\exp\left(-uD^{2}\right)\right]du.
\end{equation}
If $D\in \mathcal{A}_{\mathrm{sa}}$, when $t\to + \infty $, 
\begin{equation}\label{eq:bob9x1}
\frac{F_{t}\left(D\right)}{\sqrt{t}}\to -\frac{\sqrt{\pi}}{2}\Tr'\left[\frac{D}{\left\vert  
D\right\vert}\right].
\end{equation}

For $s\in \C, \mathrm{Re}\,s>0$, if $B\in\mathcal{A}_{\ge 0}$,
then
\begin{equation}\label{eq:bob9x2}
\frac{1}{\Gamma\left(s\right)}\int_{0}^{+ \infty 
}u^{s-1}e^{-u}\varphi\left(uB\right)du=\left(1+B\right)^{-s}.
\end{equation}
By (\ref{eq:bob9x2}), we get
\begin{equation}\label{eq:bob9x3}
\frac{\pa}{\pa s}\left[\frac{1}{\Gamma\left(s\right)}\int_{0}^{+ \infty 
}u^{s-1}e^{-u}\varphi\left(uB\right)du\right]\left(0\right)=-\log\left(1+B\right).
\end{equation}
Note that (\ref{eq:bob5x1}) also follows from (\ref{eq:bob9x1}), 
(\ref{eq:bob9x3}).
\end{exa}
\subsection{A central element}%
\label{subsec:cen}
Let $\mathcal{A}'$ be a subalgebra of $\mathcal{A}$, let $C$ be a 
central element in $\mathcal{A}'$. We will  restrict ourselves to 
forms on $\mathcal{A}'$. In particular $D$ now denotes the generic 
element of $\mathcal{A}'$, and $d$ is the de Rham operator on 
$\mathcal{A}'$.  We still define the superconnection $A$ on 
$\mathcal{A}'$ by equation (\ref{eq:bob1}). Then 
\begin{equation}\label{eq:cla5}
\left[A,C\right]=0.
\end{equation}
By (\ref{eq:bob2x1}), (\ref{eq:cla5}), we obtain the Bianchi identity 
\begin{equation}\label{eq:cla6}
\left[A,A^{2}-C\right]=0.
\end{equation}

Let $\varphi$ be a holomorphic function as in 
subsection \ref{subsec:elop}. Set
\begin{equation}\label{eq:bob10}
\Phi_{C}=\Tr_{\sigma}\left[\varphi\left(A^{2}-C\right)\right].
\end{equation}
By proceeding as in (\ref{eq:bob3x0}) and using (\ref{eq:cla6}), we 
get
\begin{equation}\label{eq:bob10x1}
d\Phi_{C}=\Tr_{\sigma}\left[\left[d,\varphi\left(A^{2}-C\right)\right]\right]=
\Tr_{\sigma}\left[A,\varphi\left(A^{2}-C\right)\right]=0,
\end{equation}
i.e.,  $\Phi_{C}$ is  a closed odd
form on $\mathcal{A}'$. Also
\begin{equation}\label{eq:bob11}
\Phi_{C}^{(1)}=\Tr\left[\varphi'\left(D^{2}-C\right)dD\right].
\end{equation}
As in (\ref{eq:bob4}), for $D\in \mathcal{A}'$, set
\begin{equation}\label{eq:cla7}
F_{C}\left(D\right)=\int_{0}^{1}\Tr\left[D\varphi'\left(u^{2}D^{2}-C\right)\right]du.
\end{equation}
Then $F_{C}$ is the unique function on $\mathcal{A}'$ that vanishes 
at $0$  and is such that
\begin{equation}\label{eq:cla8}
dF_{C}=\Phi^{(1)}.
\end{equation}
More generally, the considerations of subsection \ref{subsec:elop} 
also apply to $F_{C}$. For $t\ge 0$, we denote by 
\index{FCt@$F_{C,t}$}%
$F_{C,t}$ the 
function associated with $\varphi_{t}$.
\begin{exa}\label{exa3}
    Set
 \begin{equation}\label{eq:bob7x0}
\varphi\left(z\right)=\exp\left(z\right).
\end{equation}
Let $\Phi_{C,t}$ be associated with $\varphi_{t}$.
Then
\begin{equation}\label{eq:bob12x1}
\Phi_{C,t}^{(1)}=t\Tr\left[dD\exp\left(t\left(D^{2}-C\right)\right) \right].
\end{equation}
Also
\begin{equation}\label{eq:bob13}
F_{C,t}\left(D\right)=\sqrt{t}\int_{0}^{t}\frac{1}{2\sqrt{u}}\Tr\left[D\exp\left(uD^{2}-tC  
\right)\right]du.
\end{equation}
The function $F_{C,t}\left(D\right)$ is  a smooth function of $t\in \R_{+}$. 
Moreover,
\begin{equation}\label{eq:bob13x1}
\frac{d}{dt}F_{C,t}\left(D\right)\vert_{t=0}=\Tr\left[D\right].
\end{equation}

We will now restrict the above forms to the $D\in \mathcal{A}'$ such 
that $D^{2}=C$. Since $C$ is central, this set is invariant by 
conjugation by invertible elements in $\mathcal{A}'$. Then (\ref{eq:bob13}) takes the form
\begin{equation}\label{eq:bob14}
F_{C,t}\left(D\right)=\sqrt{t}\int_{0}^{t}\frac{1}{2\sqrt{u}}\Tr\left[D\exp\left(-\left(t-u\right)D^{2}\right)\right]du.
\end{equation}
We can rewrite (\ref{eq:bob14}) in the form
\begin{equation}\label{eq:bob15}
F_{C,t}\left(D\right)=t\int_{0}^{1}\frac{1}{2\sqrt{u}}\Tr\left[D\exp\left(-t
\left(1-u\right)D^{2}\right)\right]du.
\end{equation}
Let $*$ denote the convolution of distributions with support in 
$\R_{+}$. Then (\ref{eq:bob14}) can be  written in the form
\begin{equation}\label{eq:bob15x1}
F_{C,t}\left(D\right)=\frac{\sqrt{t}}{2}
\left( \Tr\left[D\exp\left(-sD^{2}\right)\right]*\frac{1}{\sqrt{s}} \right) \left(t\right).
\end{equation}

We have the identity of distributions on $\R_{+}$,
\begin{equation}\label{eq:conk2}
\frac{1}{\sqrt{s}}*\frac{1}{\sqrt{s}}=\pi.
\end{equation}
Since $F_{C,t}\left(D\right)$ is a smooth function of $t\in \R_{+}$, 
$\frac{F_{C,s}\left(D\right)}{\sqrt{s}}*\frac{1}{\sqrt{s}}\left(t\right)$ is a 
smooth function of $t\in\R_{+}^{*}$.
\begin{prop}\label{Psim}
For $t> 0$, the following identity holds:
\begin{equation}\label{eq:bob15x2}
\Tr\left[D\exp\left(-tD^{2}\right)\right]=\frac{2}{\pi}\frac{d}{dt}\left[\frac{F_{C,s}\left(D\right)}{\sqrt{s}}*\frac{1}{\sqrt{s}}\right]
\left(t\right).
\end{equation}
\end{prop}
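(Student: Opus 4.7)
The plan is to chain together the convolution formula (\ref{eq:bob15x1}) with the distributional identity (\ref{eq:conk2}) and then differentiate. First, I would rewrite (\ref{eq:bob15x1}) by dividing by $\sqrt{t}$, so that
\begin{equation*}
\frac{F_{C,s}\left(D\right)}{\sqrt{s}}=\frac{1}{2}\left(\Tr\left[D\exp\left(-rD^{2}\right)\right]*\frac{1}{\sqrt{r}}\right)\left(s\right).
\end{equation*}
This is the key reformulation: the quantity $F_{C,s}(D)/\sqrt{s}$ is, up to a factor of $1/2$, nothing but $\Tr[D\exp(-rD^{2})]$ convolved with $1/\sqrt{r}$.

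Next I would convolve both sides with $1/\sqrt{s}$ and invoke associativity of convolution on $\R_{+}$, together with (\ref{eq:conk2}), to obtain
\begin{equation*}
\left[\frac{F_{C,s}\left(D\right)}{\sqrt{s}}*\frac{1}{\sqrt{s}}\right]\left(t\right)=\frac{1}{2}\left(\Tr\left[D\exp\left(-rD^{2}\right)\right]*\frac{1}{\sqrt{r}}*\frac{1}{\sqrt{s}}\right)\left(t\right)=\frac{\pi}{2}\int_{0}^{t}\Tr\left[D\exp\left(-sD^{2}\right)\right]ds,
\end{equation*}
the last equality because convolving with the constant function $\pi$ on $\R_{+}$ amounts to taking $\pi$ times the primitive vanishing at $0$.

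Finally I would differentiate in $t$ by the fundamental theorem of calculus, which produces
\begin{equation*}
\frac{d}{dt}\left[\frac{F_{C,s}\left(D\right)}{\sqrt{s}}*\frac{1}{\sqrt{s}}\right]\left(t\right)=\frac{\pi}{2}\Tr\left[D\exp\left(-tD^{2}\right)\right],
\end{equation*}
and multiplying by $2/\pi$ yields (\ref{eq:bob15x2}). There is no serious obstacle here; the only point to watch is the justification that the convolution identity (\ref{eq:conk2}) of distributions on $\R_{+}$ translates, after convolution with the locally integrable function $\Tr[D\exp(-rD^{2})]$, into ordinary integration against the constant $\pi$, and that the resulting function is smooth on $\R_{+}^{*}$ so that differentiation is licit, both of which are already noted in the paragraph preceding the proposition.
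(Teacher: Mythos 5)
Your proof is correct and follows the same route as the paper, which simply cites equations (\ref{eq:bob15x1}) and (\ref{eq:conk2}); you have merely filled in the elementary chain of convolution identities and the differentiation step that the paper leaves implicit.
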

\begin{proof}
This follows from (\ref{eq:bob15x1}), (\ref{eq:conk2}).
\end{proof}

Assume now that  $D$ is 
self-adjoint, and $C=D^{2}$ is central.  We define $\Tr'\left[\frac{D}{\left\vert  D\right\vert^{2\alpha}}\right]$
by taking the 
trace of $\frac{D}{\left\vert  D\right\vert^{2\alpha}}$ in which the possible 
zero eigenvalues of $D$ have been eliminated. 
By (\ref{eq:bob15}), for $0<\alpha<1$, then
\begin{equation}\label{eq:bob15x3}
\frac{1}{\Gamma\left(\alpha\right)}\int_{0}^{+ \infty }t^{\alpha-2}
F_{C,t}\left(D\right)dt=\frac{\Gamma\left(1/2\right)\Gamma\left(1-\alpha\right)}{
\Gamma\left(\frac{3}{2}-\alpha\right)}\frac{1}{2}\Tr'\left[\frac{D}{\left\vert  D\right\vert^{2\alpha}}\right].
\end{equation}
For $\alpha=1/2$, we get
\begin{equation}\label{eq:bob16}
\frac{1}{\sqrt{\pi}}\int_{0}^{+ \infty }t^{-3/2}F_{C,t}\left(D\right)dt=\frac{\pi}{2}
\Tr'\left[\frac{D}{\left\vert  D\right\vert}\right].
\end{equation}
\end{exa}
\begin{defin}\label{Dsym}
The self-adjoint operator $D$ is said to be fully symmetric if its 
spectrum, counted with  multiplicity, is invariant under the 
 map $\lambda\to -\lambda$.
\end{defin}
\begin{prop}\label{psym}
The self-adjoint operator $D$ is fully symmetric if and only if for any 
$t>0$, $F_{C,t}\left(D\right)=0$.
\end{prop}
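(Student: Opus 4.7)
The strategy is to reduce the statement to a linear-algebra fact about the spectrum of $D$ by first converting the vanishing of $F_{C,t}(D)$ into the vanishing of the trace $\Tr[D\exp(-tD^2)]$, and then analyzing this trace via spectral decomposition.

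\textbf{Step 1 (equivalence of the two vanishings).} The plan is to use the convolution formula (\ref{eq:bob15x1}), which gives
\[
F_{C,t}(D) = \frac{\sqrt{t}}{2}\Bigl(\Tr[D\exp(-sD^{2})]*\tfrac{1}{\sqrt{s}}\Bigr)(t).
\]
If $\Tr[D\exp(-tD^{2})]=0$ for all $t>0$, then clearly $F_{C,t}(D)=0$ for all $t>0$. Conversely, if $F_{C,t}(D)\equiv 0$, applying Proposition \ref{Psim} yields $\Tr[D\exp(-tD^{2})]=0$ for all $t>0$. (Alternatively, convolution by $1/\sqrt{s}$ is the classical Abel transform, which is injective on continuous functions on $\R_+$, so one can invert (\ref{eq:bob15x1}) directly.)

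\textbf{Step 2 (spectral analysis of the trace).} Since $D$ is self-adjoint on the finite-dimensional $E$, decompose $D=\sum_{\lambda}\lambda\, P_{\lambda}$ along its distinct real eigenvalues. Grouping eigenvalues by $c=\lambda^{2}$, and writing $m_{\pm}(c)=\dim\ker(D\mp\sqrt{c})$ for $c>0$, one obtains
\[
\Tr[D\exp(-tD^{2})] = \sum_{c>0} \sqrt{c}\bigl(m_{+}(c)-m_{-}(c)\bigr)e^{-tc}.
\]
The functions $t\mapsto e^{-tc}$ for distinct $c>0$ are linearly independent on $\R_+^{*}$, so this sum vanishes identically in $t$ if and only if $m_{+}(c)=m_{-}(c)$ for every $c>0$. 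This is exactly the condition that the non-zero part of the spectrum (with multiplicities) is invariant under $\lambda\mapsto-\lambda$, and the zero eigenvalue is trivially invariant; hence it is equivalent to $D$ being fully symmetric in the sense of Definition \ref{Dsym}.

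\textbf{Main obstacle.} The only subtlety is the linear-independence step in Step 2 (handled by the classical fact that finitely many distinct real exponentials are linearly independent as functions on $\R_+^{*}$) and a careful check that the $\lambda=0$ eigenspace does not interfere---it contributes $0$ to $\Tr[D\exp(-tD^2)]$ and is automatically symmetric. Everything else is a direct application of (\ref{eq:bob15x1}) and Proposition \ref{Psim}.
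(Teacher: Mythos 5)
Your proof is correct and follows essentially the same route as the paper's: both use \eqref{eq:bob15x1} together with Proposition \ref{Psim} (i.e.\ \eqref{eq:bob15x2}) to establish the equivalence of the vanishing of $F_{C,t}(D)$ and of $\Tr[D\exp(-tD^{2})]$, and then appeal to the spectral characterization of full symmetry. The only difference is that you spell out the linear-independence-of-exponentials argument, which the paper leaves implicit.
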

\begin{proof}
The operator $D$ is fully symmetric if and only if for any $s\ge 0$, 
\begin{equation}\label{eq:conk1}
\Tr\left[D\exp\left(-sD^{2}\right)\right]=0.
\end{equation}
By (\ref{eq:bob15x1}), (\ref{eq:bob15x2}), we get our proposition.
\end{proof}
\section{The hypoelliptic Laplacian on a symmetric space}%
\label{sec:eta}
Let $G$ be a connected and simply connected reductive group, and let $X=G/K$ be the symmetric space 
associated with $G$. 
In this section, we  recall the construction in
\cite{Bismut08b} of the hypoelliptic Laplacian, a  deformation 
$\mathcal{L}^{X}_{b}\vert_{b>0}$ of 
the  operator $C^{\mathfrak g,X}/2$ acting on $C^{\infty}\left(X,S^{\overline{TX}} 
\otimes F\right)$, where $S^{\overline{TX}}$ is the bundle of 
$\overline{TX}$  spinors, and $F$ is a homogeneous vector bundle on $X$. In 
section \ref{sec:defdx}, we will add an extra deformation parameter $\vartheta\in 
\left[0,\frac{\pi}{2}\right[$, the constructions  in the 
present section corresponding to the case $\vartheta=0$.

This section is organized as follows. In subsection 
\ref{subsec:redgp}, we introduce the connected reductive group $G$, its Lie algebra 
$\mathfrak g$, and the symmetric space $X$.

In subsection \ref{subsec:actcart}, we lift the Cartan involution to 
homogeneous vector bundles on $X$.

In subsection \ref{subsec:env}, we construct the Casimir operator 
$C^{ \mathfrak g}$.

In subsection \ref{subsec:clifgtx}, we introduce the  exterior 
algebra 
$\Lambda\ac\left(\mathfrak g^{*}\right)$, and the Clifford algebras 
$c\left(\mathfrak g\right),\widehat{c}\left(\mathfrak g\right)$ 
associated with an invariant bilinear form $B$ on $\mathfrak g$.

In subsection \ref{subsec:symalg}, we describe a few properties of  the symmetric algebra 
$S\ac\left(\mathfrak g^{*}\right)$, and the Bargmann isomorphism, 
that identifies a completion of $S\ac\left(\mathfrak g^{*}\right)$ 
with $L_{2}\left(\mathfrak g\right)$.

In subsection \ref{subsec:spin}, when $G$ is simply connected, if 
$\overline{TX}$ is another copy of $TX$, we 
construct the bundle of spinors $S^{\overline{TX}}$ associated with 
$\overline{TX}$.

In subsection \ref{subsec:twi}, we define the Dirac operator  
$\widehat{D}^{X}$ acting on $C^{ \infty }\left(X,S^{\overline{TX}} 
\otimes F\right)$, and the elliptic operator $\mathcal{L}^{X}_{0}$ acting 
on the same vector space, that differs by a constant from the action 
of $\frac{1}{2}C^{ \mathfrak g}$ on $C^{ \infty }\left(X,S^{\overline{TX}} 
\otimes F\right)$,  and by another constant 
from $-\frac{1}{2}\widehat{D}^{X,2}$.

In subsection \ref{subsec:opdb}, following \cite{Bismut08b},
we use the Dirac operator of Kostant 
$\widehat{D}^{\mathfrak g}$ to define
an operator $\mathfrak D_{b}$ acting on $C^{ \infty }\left(G\times 
\mathfrak g,\Lambda\ac\left(\mathfrak g^{*}\right)\right)$.

In subsection \ref{subsec:comp}, we establish a simple compression 
property of the operator $\mathfrak D_{b}$. 

In subsection \ref{subsec:formDb2}, we state a formula of 
\cite{Bismut08b} for $\mathfrak D_{b}^{2}$.

Let $\pi:\widehat{\mathcal{X}}\to X$ be the total space of the vector bundle 
 $TX \oplus N=G\times _{K} 
\mathfrak g$ on $X$.
In subsection \ref{subsec:actqu}, by quotienting the above 
constructions by $K$, we descend 
the operator $\mathfrak D_{b}$ to an operator $\mathfrak D_{b}^{X}$ 
acting on $C^{\infty }\left(\widehat{\mathcal{X}},\pi^{*}
\left(\Lambda\ac\left(T^{*}X \oplus N^{*}\right)\otimes  S^{\overline{TX}} 
\otimes F\right)\right)$. Also we construct the hypoelliptic Laplacian 
$\mathcal{L}^{X}_{b}$ that   acts on the same vector space.

Finally, in subsection \ref{subsec:formre}, we give an important  formula 
established in \cite{Bismut08b} that explains why the family 
$\mathcal{L}^{X}_{b}\vert_{b>0}$
 deforms $\mathcal{L}^{X}_{0}$ as $b\to 0$.

In the constructions of the present section, the Dirac operator $\widehat{D}^{X}$ only 
appears through its square $\widehat{D}^{X,2}$ (that coincides, up 
to a constant, with the action of $-\frac{1}{2}C^{ \mathfrak g}$), the family of operators  
$\mathfrak D^{X}_{b}\vert_{b=0}$ being a deformation of the $0$ 
operator. In section \ref{sec:defdx}, we will resurrect the operator $\widehat{D}^{X}$ 
through another family of operators, by introducing an extra 
parameter $\vartheta\in\left[0,\frac{\pi}{2}\right[$.
\subsection{A connected reductive group}%
\label{subsec:redgp}
Let 
\index{G@$G$}%
$G$ be a real connected reductive group, and let 
\index{th@$\theta$}%
$\theta$ be its 
Cartan involution. Let 
\index{K@$K$}%
$K \subset G$ be the subgroup of $G$ fixed by 
$\theta$, so that $K$ is a maximal compact subgroup.  Then $K$ is 
also connected.

Let 
\index{g@$\mathfrak g$}%
\index{k@$\mathfrak k$}%
$\mathfrak g,\mathfrak k$ be the Lie algebras of $G,K$. Then 
$\mathfrak  k$ is the $+1$ eigenspace of $\theta$ in $\mathfrak g$. Let
\index{p@$\mathfrak p$}%
$\mathfrak p$ be the $-1$ eigenspace of $\theta$ so that
\begin{equation}\label{eq:Lie1}
\mathfrak g= \mathfrak p \oplus \mathfrak k.
\end{equation}
Set
\index{m@$m$}%
\index{n@$n$}%
\begin{align}\label{eq:Lie1x1}
&m=\dim \mathfrak p, &n=\dim \mathfrak k.
\end{align}

Let 
\index{tg@$\theta^{\mathfrak g}$}%
$\theta^{\mathfrak g}$ be the $\mathfrak g$-valued canonical 
left-invariant $1$-form on $\mathfrak g$, and let $\theta^{\mathfrak 
g}=\theta^{\mathfrak p} + \theta^{ \mathfrak k}$ denote its splitting with respect to 
(\ref{eq:Lie1}).

Let 
\index{B@$B$}%
$B$ 
be a bilinear symmetric nondegenerate  form on $\mathfrak g$ which is 
$G$-invariant, and also invariant under $\theta$, let 
\index{f@$\varphi$}%
$\varphi:\mathfrak g\to \mathfrak g^{*}$ denote the canonical 
isomorphism induced by $B$. Then 
(\ref{eq:Lie1}) is an orthogonal splitting with respect to $B$. We 
assume $B$ to be positive on $\mathfrak p$, and negative on 
$\mathfrak k$. Let $\left\langle  
\,\right\rangle=-B\left(\cdot,\theta\cdot\right)$ be the induced 
scalar product on $\mathfrak g$. Then (\ref{eq:Lie1}) is still an 
orthogonal splitting with respect to $\left\langle  \,\right\rangle$. 
If $a\in \mathfrak g$, $\ad\left(a\right)\in \End\left(\mathfrak g\right)$ is antisymmetric with 
respect to $B$. If $a\in \mathfrak k$, $\ad\left(a\right)$ preserves $\mathfrak p$ and 
$\mathfrak k$,  and it is antisymmetric with respect to $\left\langle  
\,\right\rangle$. If $a\in \mathfrak p$, $\ad\left(a\right)$ 
exchanges $\mathfrak p$ and $\mathfrak k$, and it is symmetric with 
respect to $\left\langle  \,\right\rangle$.

Let 
\index{X@$X$}%
$X=G/K$ be the  symmetric space associated with $\left(G,K\right)$, and let 
\index{p@$p$}%
$p:G\to X$ 
be the corresponding  projection.  Then $\theta^{\mathfrak k}$ defines a 
connection form on the $K$-bundle $p:G\to X$.  The tangent bundle $TX$ to $X$ is given by
\begin{equation}\label{eq:Lie2}
TX=G\times _{K}\mathfrak p.
\end{equation}
Then $TX$ comes 
equipped with a metric $g^{TX}$ induced by $B$, and with a Euclidean connection 
\index{nTX@$\n^{TX}$}%
$\n^{TX}$,  which coincides with the Levi-Civita connection of $TX$. 
Let
\index{d@$d$}%
$d$ denote the Riemannian distance on $X$.

Set
\index{N@$N$}%
\begin{equation}\label{eq:Lie3}
N=G\times _{K} \mathfrak k.
\end{equation}
The vector bundle  $N$ is also equipped with a metric $g^{N}$ and with a Euclidean 
connection 
\index{nN@$\n^{N}$}%
$\n^{N}$.  Let $\n^{TX \oplus N}$ be the connection on $TX 
\oplus N$ that is induced by $\n^{TX}, \n^{N}$. 

Clearly,
\begin{equation}\label{eq:Lie4}
TX \oplus N=G\times_{K} \mathfrak g.
\end{equation}
The bilinear form $B$  descends to $TX \oplus N$. As explained in 
\cite[section 2.2]{Bismut08b}, since $G$ also acts on $\mathfrak g$, 
the map $\left(g,a\right)\in G\times \mathfrak 
g\to\Ad\left(g\right)a\in \mathfrak g$ identifies $TX \oplus N$ with 
the trivial vector bundle $\mathfrak g$ on $X$. Let 
\index{nTXN@$\n^{TX \oplus N,f}$}%
$\n^{TX \oplus 
N,f}$ denote the corresponding flat connection on the bundle of Lie 
algebras $TX \oplus N$.  By \cite[eq. (2.2.2)]{Bismut08b}, we get
\begin{equation}\label{eq:Lie5}
\n^{TX \oplus N,f}=\n^{TX \oplus N}+\ad\left(\cdot\right).
\end{equation}
Let 
\index{nTXN@$\n^{TX \oplus N f*}$}%
$\n^{TX \oplus N f*}$ be the flat connection on $TX \oplus N$, 
\begin{equation}\label{eq:Lie4x1}
\n^{TX \oplus N,f*}=\n^{TX \oplus N}-\ad\left(\cdot\right).
\end{equation}
 The above flat connections  preserve $B$.
 
 Let  $E$ be a finite dimensional real or complex vector space, let 
$\rho^{E}:K\to \Aut\left(E\right)$ be a representation of $K$ on $E$, 
that preserves a scalar or Hermitian product.
Let $F$ the vector bundle on $X$,
\begin{equation}\label{eq:cart1}
F=G\times_{K}E.
\end{equation}
Then $F$ is a Euclidean or Hermitian vector bundle, that is 
canonically equipped with a metric preserving connection 
\index{nF@$\n^{F}$}%
$\n^{F}$.
 If $a\in N$, let $\rho^{F}\left(a\right)\in 
\End\left(F\right)$ correspond to the action of $a$ via the 
representation $\rho^{E}$.

Let 
\index{RF@$R^{F}$}%
$R^{F}$ be the curvature of $\n^{F}$. If 
$e,f\in TX$, then
\begin{equation}\label{eq:Lie13y1}
R^{F}\left(e,f\right)=-\rho^{F}\left(\left[e,f\right]\right).
\end{equation}
\subsection{The action of the Cartan involution}%
\label{subsec:actcart}
The Cartan 
involution $\theta$ acts isometrically on $X$. It  preserves the orientation of $X$ if $m$ is 
even, reverses the orientation if $m$ is odd.

The action of $\theta$ on $X$ lifts to 
two possible isometric involutions $\theta_{\pm}$ of $F$ that preserve $\n^{F}$.  Namely, if
$\left(g,f\right)\in G\times_{K}E$, then
\begin{equation}\label{eq:cart2}
\theta_{\pm}\left(g,f\right)=\left(\theta g,\pm f\right).
\end{equation}
Then $\theta_{-}=-\theta_{+}$. If $E = \mathfrak p$, then $F=TX$, and $\theta_{-}$ 
corresponds to the derivative of the action of $\theta$ on $X$. 

More 
generally, if $\left(g,h\right)\in G\times_{K} \mathfrak g=TX \oplus N 
$, one can also define a lift $\Theta$  of $\theta$ to $TX \oplus N$ by the 
formula
\begin{equation}\label{eq:cart3}
\Theta\left(g,h\right)=\left(\theta g,\theta h\right).
\end{equation}

\subsection{The Casimir operator}%
\label{subsec:env}
 In the sequel, we identify $\mathfrak g$ to the vector space of 
    left-invariant vector fields on $G$. The
    enveloping algebra 
    \index{Ug@$U\left(\mathfrak g\right)$}%
    $U\left(\mathfrak g\right)$ will be identified with the 
     algebra of left-invariant differential operators on $G$.
 
     Let 
     \index{Cg@$C^{\mathfrak g}$}%
     $C^{\mathfrak g}\in U\left( \mathfrak g\right)$ be the
  Casimir element of $G$. If  $e_{1},\ldots,e_{m+n}$ is a basis  of 
  $\mathfrak g$ and if $e_{1}^{*},\ldots,e^{*}_{m+n}$ is the dual 
  basis of $\mathfrak g$ with respect to  $B$, then
  \begin{equation}
     C^{\mathfrak g}=-\sum_{i=1}^{m+n}e^{*}_{i}e_{i}.
      \label{eq:kos16-1}
  \end{equation}
  Then $C^{\mathfrak g} $ lies in the centre of $ U\left( \mathfrak g\right)$.
  
  If we assume that $e_{1},\ldots,e_{m}$ is an orthonormal basis of 
  $\mathfrak p$ and $e_{m+1},\ldots,e_{m+n}$ is an orthonormal basis 
  of $\mathfrak k$, then
  \begin{align}\label{eq:toplitz0}
e^{*}_{i}=&\,e_{i}\ \mathrm{for}\ 1\le i\le m,\\
&\,-e_{i}\,\mathrm{for}\ m+1\le i\le m+n. \nonumber 
\end{align}
In particular,
  \begin{equation}
    C^{\mathfrak g}=-\sum_{i=1}^{m}e_{i}^{2}+\sum_{i=m+1}^{m+n}e_{i}^{2}.
      \label{eq:kos16-2}
  \end{equation}

  The Casimir operator 
  \index{Ck@$C^{\mathfrak k}$}%
  $C^{\mathfrak k}$ of $K$ will be
  calculated with respect to  the bilinear form induced by $B$ on  
  $\mathfrak k$, i.e.,
  \begin{equation}
      C^{\mathfrak k}=\sum_{i=m+1}^{m+n}e_{i}^{2}.
      \label{eq:kos16-3}
  \end{equation}
  
  In the sequel, we use the notation
  \index{CgH@$C^{\mathfrak g,H}$}%
  \begin{equation}\label{eq:kos16-3a1}
C^{\mathfrak g,H}=-\sum_{i=1}^{m}e_{i}^{2}.
\end{equation}
By (\ref{eq:kos16-2})--(\ref{eq:kos16-3a1}), we get
\begin{equation}\label{eq:kos16-3a2}
C^{\mathfrak g}=C^{\mathfrak g,H}+C^{\mathfrak k}.
\end{equation}
Moreover, 
\begin{equation}\label{eq:comm1}
\left[C^{\mathfrak g,H},C^{\mathfrak k}\right]=0.
\end{equation}

  Let $E,F$ be taken as in (\ref{eq:cart1}).  Let
  \index{CkE@$C^{ \mathfrak k,E}$}%
  $C^{ \mathfrak 
  k,E}\in \End\left(E\right)$ be the associated Casimir operator 
  acting on $E$, 
  \index{CkE@$ C^{ \mathfrak k,E}$}%
  \begin{equation}
      C^{ \mathfrak k,E}=\sum_{i=m+1}^{m+n}\rho^{E,2}\left(e_{i}\right).
      \label{eq:kos16-6a}
  \end{equation}
  Then $C^{\mathfrak k,E}$ commutes with the $\rho^{E}\left(k\right), 
  k\in K$. If $\rho^{E}$ is irreducible, $C^{\mathfrak k,E}$ is a 
  constant. 
  We denote by  
  \index{CkF@$C^{\mathfrak k,F}$}%
  $C^{\mathfrak k,F}$ the self-adjoint parallel 
  endomorphism of $F$ that corresponds to $C^{\mathfrak k,E}$.
  
  In particular $C^{ \mathfrak k, \mathfrak k}\in \End\left(\mathfrak k\right),C^{ \mathfrak 
  k,\mathfrak p}\in \End\left( \mathfrak p\right)$ are the Casimir 
  operators  associated with the  actions of $K$ on $\mathfrak 
  k,\mathfrak p$.
  
  Let 
  \index{SX@$S^{X}$}%
  $S^{X}$ be the scalar curvature of $X$. By \cite[eq. 
  (2.6.8)]{Bismut08b}, we get
  \begin{equation}\label{eq:qsic-7}
S^{X}=\Tr^{\mathfrak p}\left[C^{\mathfrak k,\mathfrak p}\right].
\end{equation}
\subsection{The algebras $\Lambda\ac\left(\mathfrak g^{*}\right)$ and 
$c\left(\mathfrak g\right),\widehat{c}\left(\mathfrak g\right)$}%
     \label{subsec:clifgtx}
     Let $\Lambda\ac\left(\mathfrak g^{*}\right)$ be the exterior 
     algebra of $\mathfrak g^{*}$.  Let
     \index{NLg@$N^{\Lambda\ac\left( \mathfrak g^{*}\right)}$}%
     $N^{\Lambda\ac\left( \mathfrak g^{*}\right)}$ denote the number operator of 
     $\Lambda\ac\left(\mathfrak g^{*}\right)$. Let 
	 \index{Bg@$B^{*}$}%
	 $B^{*}$ be the 
	 bilinear symmetric form 
     on $\Lambda\ac\left(\mathfrak g^{*}\right)$ that is induced by 
     $B$.  
     
     Let
     \index{kg@$\kappa^{\mathfrak g}$}%
     $\kappa^{\mathfrak g}\in \Lambda^{3}\left(\mathfrak 
     g^{*}\right)$ be such that if $a,b,c\in \mathfrak g$,
     \begin{equation}\label{eq:form1}
\kappa^{\mathfrak 
g}\left(a,b,c\right)=B\left(\left[a,b\right],c\right).
\end{equation}
We denote by $\kappa^{\mathfrak k}\in \Lambda^{3}\left(\mathfrak k^{*}\right)$ 
the corresponding form associated with the restriction of $B$ to 
$\mathfrak k$. By \cite[eq. (2.6.7) and Proposition 2.6.1]{Bismut08b},
\begin{align}\label{eq:qsic-10}
&B^{*}\left(\kappa^{\mathfrak g},\kappa^{\mathfrak 
g}\right)=\frac{1}{2}\Tr^{\mathfrak p}\left[C^{\mathfrak k, 
\mathfrak p}\right]+\frac{1}{6}\Tr^{\mathfrak k}\left[C^{\mathfrak k, 
\mathfrak k}\right],
&B^{*}\left(\kappa^{\mathfrak k},\kappa^{\mathfrak 
k}\right)=\frac{1}{6}\Tr^{\mathfrak k}\left[C^{\mathfrak k, \mathfrak 
k}\right].
\end{align}

  We follow \cite[sections 1.1 and  2.3]{Bismut08b}.  Let 
  \index{cg@$c\left(\mathfrak g\right)$}%
  \index{cg@$\widehat{c}\left(\mathfrak g\right)$}%
  $c\left(\mathfrak g\right), 
     \widehat{c}\left(\mathfrak g\right)$ denote the Clifford 
     algebras associated with $\left( \mathfrak g,B\right), 
     \left(\mathfrak g,-B\right)$. Then $c\left(\mathfrak g\right), 
     \widehat{c}\left(\mathfrak g\right)$ are the algebras generated 
     by $1\in \R, e\in \mathfrak g$, and the commutation relations for 
     $e,f\in \mathfrak g$  given by $ef+fe=-2B\left(e,f\right)$ and 
     $ef+fe=2B\left(e,f\right)$.

    Recall that 
	\index{f@$\varphi$}%
$\varphi: \mathfrak g\to \mathfrak g^{*}$ is the canonical 
identification induced by $B$. If $a\in \mathfrak g$, let 
 \index{ca@$c\left(a\right)$}%
                     \index{ca@$\widehat{c}\left(a\right)$}%
 $c\left(a\right),\widehat{c}\left(a\right)\in 
 \End\left(\Lambda\ac\left(\mathfrak g^{*}\right)\right)$ be given by
\begin{align}
    &c\left(a\right)=\varphi a\we -i_{a}, 
    &\widehat{c}\left(a\right)=\varphi a\we+i_{a}.
    \label{eq:kos6}
\end{align}
Then  $c\left(a\right)$ and 
$\widehat{c}\left(a\right)$ are odd operators, which are  respectively antisymmetric and 
symmetric  with respect to $B^{*}$. If $a,b\in \mathfrak g$, then
\begin{align}
    &\left[c\left(a\right),c\left(b\right)\right]=-2B\left(a,b\right),
    &\left[\widehat{c}\left(a\right),\widehat{c}\left(b\right)\right]=
    2B\left(a,b\right),\qquad
    \left[c\left(a\right),\widehat{c}\left(b\right)\right]=0.
    \label{eq:kos7}
\end{align}
By (\ref{eq:kos7}),  $\Lambda\ac\left(\mathfrak g^{*}\right)$ is a 
$c\left(\mathfrak g\right)$ and a $\widehat{c}\left(\mathfrak 
g\right)$ Clifford module. As explained in \cite[section 
1.1]{Bismut08b}, from the above we get canonical isomorphisms of 
$\Z_{2}$-graded vector spaces,
\begin{align}\label{eq:Lie7}
&c\left(\mathfrak g\right) \simeq \Lambda\ac\left(\mathfrak 
g^{*}\right),
&\widehat{c}\left(\mathfrak g\right) \simeq \Lambda\ac\left(\mathfrak 
g^{*}\right).
\end{align}
The action of $c\left(\mathfrak g\right)$ on 
$\Lambda\ac\left(\mathfrak g^{*}\right)$ corresponds to  left 
multiplication on $c\left(\mathfrak g\right)$, and the action of 
$\widehat{c}\left(\mathfrak g^{*}\right)$ to right multiplication 
on $c\left(\mathfrak g\right)$ multiplied  by
$\left(-1\right)^{N^{\Lambda\ac\left(\mathfrak g^{*}\right)}}$. 
By \cite[eq. (1.1.15)]{Bismut08b}, we get
\begin{equation}\label{eq:ors0}
N^{\Lambda\ac\left(\mathfrak 
g^{*}\right)}=\frac{1}{2}c\left(e^{*}_{i}\right)\widehat{c}\left(e_{i}\right)+\frac{1}{2}\left(m+n\right).
\end{equation}

 If $a\in \mathfrak g$, we will  denote by 
     $c\left(a\right),\widehat{c}\left(a\right)$  the 
     corresponding elements in  $c\left(\mathfrak 
     g\right),\widehat{c}\left(\mathfrak g\right)$. There will be no 
     risk of confusion with the above definitions of 
     $c\left(a\right), \widehat{c}\left(a\right)$.
     
     Let
     \index{Ag@$\mathcal{A}\left(\mathfrak g\right)$}%
     $\mathcal{A}\left(\mathfrak g\right)$ be the Lie algebra of  
endomorphisms of $\mathfrak g$ that are antisymmetric with respect to 
$B$. Then $\mathcal{A}\left(\mathfrak g\right)$ embeds as a Lie subalgebra of 
$c\left(\mathfrak g\right)$ and $\widehat{c}\left(\mathfrak 
g\right)$. Namely, let $e_{1},\ldots,e_{m+n}$ be a basis of $\mathfrak g$, let 
    $e^{*}_{1},\ldots,e^{*}_{m+n}$ be the corresponding dual basis of 
    $\mathfrak g$ with respect to $B$. If $A\in 
    \mathcal{A}\left(\mathfrak g\right)$, as in  \cite[eqs. (1.1.9) 
    and (1.1.11)]{Bismut08b}, we define 
    \index{cA@$c\left(A\right)$}%
    \index{cA@$\widehat{c}\left(A\right)$}%
    $c\left(A\right)\in 
    c\left(\mathfrak g\right), \widehat{c}\left(A\right)\in 
    \widehat{c}\left(\mathfrak g\right)$ by the formulas
    \begin{align}\label{eq:ors1}
&c\left(A\right)=\frac{1}{4}B\left(Ae^{*}_{i},e^{*}_{j}\right)c\left(e_{i}\right)c\left(e_{j}\right),
&\widehat{c}\left(A\right)=-\frac{1}{4}B\left(Ae^{*}_{i},e^{*}_{j}\right)\widehat{c}\left(e_{i}\right)\widehat{c}\left(e_{j}\right).
\end{align}
Then if $e\in \mathfrak g$, 
\begin{align}\label{eq:ors2}
&\left[c\left(A\right),c\left(e\right)\right]=c\left(Ae\right),&\left[\widehat{c}\left(A\right),\widehat{c}\left(e\right)\right]=
\widehat{c}\left(Ae\right).
\end{align}

   Let 
   \index{ckg@$c\left(\kappa^{\mathfrak g}\right)$}%
   \index{ckg@$\widehat{c}\left(-\kappa^{\mathfrak g}\right)$}%
   $c\left(\kappa^{\mathfrak 
    g}\right)\in c\left(\mathfrak g \right),\widehat{c}\left(-\kappa^{\mathfrak g}\right)
    \in \widehat{c}\left(\mathfrak g\right)$ be given by
    \begin{align}\label{eq:kos18}
       &  c\left(\kappa^{\mathfrak g}\right)=\frac{1}{6}\kappa^{\mathfrak g}\left(e^{*}_{i},e^{*}_{j},e^{*}_{k}\right)c\left(e_{i}\right)
	 c\left(e_{j}\right)c\left(e_{k}\right),\\
	 &\widehat{c}\left(-\kappa^{\mathfrak g}\right)=-\frac{1}{6}\kappa^{\mathfrak g}\left(e^{*}_{i},e^{*}_{j},e^{*}_{k}\right)\widehat{c}\left(e_{i}\right)
	 \widehat{c}\left(e_{j}\right)\widehat{c}\left(e_{k}\right). 
	 \nonumber 
	 \end{align}
	Then $c\left(\kappa^{\mathfrak 
    g}\right),\widehat{c}\left(-\kappa^{\mathfrak g}\right)$ 
    correspond to $\kappa^{\mathfrak g},-\kappa^{\mathfrak g}$ by the 
    above canonical isomorphisms.

     By (\ref{eq:Lie1}), we get
     \begin{equation}\label{eq:Lie6}
\Lambda\ac\left(\mathfrak g^{*}\right)=\Lambda\ac\left(\mathfrak 
p^{*}\right)  \ho\Lambda\ac\left( \mathfrak k^{*}\right).
\end{equation}
     By restricting $B$ to $ \mathfrak p, \mathfrak k$, we obtain the 
     Clifford algebras $c\left( \mathfrak p\right),\widehat{c}\left( 
     \mathfrak p\right),c\left( \mathfrak k\right),\widehat{c}\left( 
     \mathfrak k\right)$. Since the splitting $\mathfrak g= \mathfrak 
     p \oplus \mathfrak k$ is orthogonal with respect to $B$, we get
     \begin{align}\label{eq:sumex6}
&c\left( \mathfrak g\right)=c\left( \mathfrak p\right)\ho c\left( 
\mathfrak k\right),
&\widehat{c}\left( \mathfrak g\right)=\widehat{c}\left( \mathfrak 
p\right)\ho \widehat{c}\left( \mathfrak k\right).
\end{align}
The  algebras 
     $\Lambda\ac\left(\mathfrak 
     p^{*}\right),\Lambda\ac\left(\mathfrak k^{*}\right)$ descend to 
    the bundles of algebras
     $\Lambda\ac\left(\TsX\right),\Lambda\ac\left(N^{*}\right)$, and 
     $\Lambda\ac\left(\mathfrak g^{*}\right)$ descends to 
     $\Lambda\ac\left(\TsX\oplus N^{*}\right)$. The vector bundle $TX \oplus N$ is also equipped with 
     the bilinear form $B$. Let $c\left(TX \oplus N\right), 
     \widehat{c}\left(TX \oplus N\right)$ be the associated bundles of 
     Clifford algebras.   Let 
     $c\left(TX\right),c\left(N\right),\widehat{c}\left(TX\right),\widehat{c}\left(N\right)$ denote
     the  bundles of algebras  associated with the 
     restriction of $B$ to $TX,N$.
     As in (\ref{eq:sumex6}), we get
     \begin{align}\label{eq:sumex7}
    &c\left( TX \oplus N\right)=c\left( TX\right)\ho c\left( 
    N\right),
    &\widehat{c}\left( TX \oplus N\right)=\widehat{c}\left( TX
    \right)\ho \widehat{c}\left( N\right).
    \end{align}
\subsection{The  symmetric algebra  $S\ac\left(\mathfrak g^{*}\right)$}%
    \label{subsec:symalg}
    Let 
    \index{Sg@$S\ac\left(\mathfrak g^{*}\right)$}%
    $S\ac\left(\mathfrak g^{*}\right)$ denote the symmetric 
    algebra of $\mathfrak g^{*}$. Equivalently $S\ac\left(\mathfrak 
    g^{*}\right)$ is the polynomial algebra on $\mathfrak g$. Let 
    $N^{S\ac\left(\mathfrak g^{*}\right)}$ be the number operator of 
    $S\ac\left(\mathfrak g^{*}\right)$.
    By 
    (\ref{eq:Lie1}), as in (\ref{eq:Lie6}), we get
    \begin{equation}\label{eq:Lie8}
S\ac\left(\mathfrak g^{*}\right) =S\ac\left(\mathfrak p^{*}\right) 
\otimes S\ac\left(\mathfrak k^{*}\right).
\end{equation}

For the moment, we view $\mathfrak g= \mathfrak p \oplus \mathfrak k$ 
as a Euclidean vector space. Then $\left\langle  \,\right\rangle$ 
induces a scalar product on $S\ac\left(\mathfrak g^{*}\right)$. Let 
$\overline{S}\ac\left(\mathfrak g^{*}\right)$ denote the Hilbert 
completion of $S\ac\left(\mathfrak g^{*}\right)$.

Let 
\index{Dg@$\Delta^{\mathfrak g}$ }%
$\Delta^{\mathfrak g}$ be the Laplacian on the Euclidean vector 
space $\mathfrak g$. Let 
\index{Hg@$H^{\mathfrak g}$}%
$H^{\mathfrak g}$ be the harmonic oscillator on $\mathfrak 
g$, so that if $Y$ is the generic element of $\mathfrak g$,
\begin{equation}\label{eq:Lie8x1}
H^{\mathfrak g}=\frac{1}{2}\left(-\Delta^{\mathfrak g}+\left\vert  
Y\right\vert^{2}-m-n\right).
\end{equation}

Let 
\index{L2g@$L_{2}\left(\mathfrak g\right)$}%
$L_{2}\left(\mathfrak g\right)$ denote the Hilbert space of square 
integrable functions on $\mathfrak g$. As explained in \cite[section 
1.4]{Bismut08b}, we have the classical Bargmann isomorphism 
$\overline{S}\ac\left(\mathfrak g^{*}\right) \simeq 
L_{2}\left(\mathfrak g\right)$. Under this isomorphism, 
$N^{S\ac\left(\mathfrak g^{*}\right)}$ corresponds to the harmonic 
oscillator $H^{\mathfrak g}$.

Then $S\ac\left(\mathfrak p^{*}\right),S\ac\left(\mathfrak 
k^{*}\right)$ descends to 
$S\ac\left(\TsX\right),S\ac\left(N^{*}\right)$. Also $H^{\mathfrak 
g}$ descend to the harmonic oscillator $H^{TX \oplus N}$ along the 
fibres of $TX \oplus N$. The Bargmann isomorphism identifies  
$\overline{S}\ac\left(T^{*}X \oplus N^{*}\right)$ with $L_{2}\left(TX 
\oplus N\right)$, and it maps $N^{S\ac\left(\TsX \oplus 
N^{*}\right)}$ to $H^{TX \oplus N}$.
\subsection{The spinors of $\overline{TX}$}%
\label{subsec:spin}
We fix once and for all an orientation of $\mathfrak p$, which in 
turn defines an orientation of $X$.

Let $\overline{\mathfrak p}$ denote another copy of $\mathfrak p$, 
which we equip 
 with the orientation corresponding to the 
orientation of $\mathfrak p$. We 
equip $\overline{\mathfrak p}$ with the scalar product corresponding 
to the scalar product of $\mathfrak p$. If 
$e\in \mathfrak p$, let $\overline{e}$ denote the corresponding 
element of $\overline{\mathfrak p}$. 

Let $c\left(\overline{\mathfrak p}\right)$ be the Clifford algebra of 
$\overline{\mathfrak p}$. If $e\in 
\mathfrak p$, let $c\left(\overline{e}\right)$ be the corresponding 
element in $c\left(\overline{\mathfrak p}\right)$. 

Let 
$S^{ \overline{\mathfrak p}}$ denote the Hermitian vector space of spinors 
that is associated with the Euclidean vector space $\overline{\mathfrak p}$. Then $S^{\overline{\mathfrak p}}$ 
is a $c\left(\overline{\mathfrak p}\right)$ Clifford module. If $e\in \mathfrak p$, 
$c\left(\overline{e}\right)$ acts as a skew-adjoint 
operator on $S^{\overline{\mathfrak p}}$. 

Everything we did for $c\left(\overline{\mathfrak p}\right)$ remains 
valid for $\widehat{c}\left(\overline{\mathfrak p}\right)$. We make 
the convention that if $e\in \mathfrak p$, 
$\widehat{c}\left(\overline{e}\right)$ acts on $S^{\overline{\mathfrak p}}$ 
like $ic\left(\overline{e}\right)$. In particular 
$\widehat{c}\left(\overline{e}\right)$ acts on $S^{ \overline{\mathfrak p}}$ 
as a self-adjoint operator. Using the same conventions as in 
(\ref{eq:ors1}), if $A\in \End\left(\overline{\mathfrak p}\right)$ is 
antisymmetric, 
\begin{equation}\label{eq:bub1x-1}
c\left(A\right)=\widehat{c}\left(A\right)
\end{equation}
acts on $S^{\overline{\mathfrak p}}$.
In this specific case, using the second notation instead of the first 
one will be a matter of convenience.

If 
$m$ is even,   $S^{\overline{\mathfrak p}}$ splits as $S^{\overline{\mathfrak p}}=S^{\overline{\mathfrak p}}_{+} \oplus S^{\overline{\mathfrak p}}_{-}$, $S^{\overline{\mathfrak p}}_{\pm}$ being of 
dimension $2^{m/2-1}$. Moreover,  we have an identification of 
$\Z_{2}$-graded algebras,
\begin{equation}\label{eq:Lie9}
c\left(\overline{\mathfrak p}\right)\otimes _{\R}\C \simeq 
\End\left(S^{\overline{\mathfrak p}}\right).
\end{equation}
 
If $m$ is odd, then $S^{\overline{\mathfrak p}}$ has dimension 
$2^{\left(m-1\right)/2}$. Moreover, we have the identification of 
algebras
\begin{equation}\label{eq:Lie10}
c\left(\overline{\mathfrak p}\right)\otimes _{\R}\C \simeq 
\End\left(S^{\overline{\mathfrak p}}\right) \oplus \End\left(S^{\overline{\mathfrak p}}\right). 
\end{equation}
If we give its canonical orientation to $\overline{\mathfrak p} \oplus 
\R$, then
\begin{equation}\label{eq:Lie11}
S^{\overline{\mathfrak p}}=S^{ \overline{\mathfrak p} \oplus \R}_{+}.
\end{equation}

The Lie group $\mathrm{Spin}\left(\overline{\mathfrak p}\right)$  embeds in 
$c^{\even}\left(\overline{\mathfrak p}\right)$,  and acts unitarily on 
$S^{ \overline{\mathfrak p}}$. 
 In the sequel, we will assume  that $G$ is simply connected.
Equivalently, we suppose that $K$ is simply connected. Then the representation 
$k\in K\to \Ad\left(k\right)\in \mathrm{SO}\left(\overline{\mathfrak p}\right)$ 
lifts to a group homomorphism $k\in K\to \sigma\left(k\right)\in 
\mathrm{Spin}\left(\overline{\mathfrak p}\right)$. If $f\in \mathfrak 
k$, then
\begin{equation}\label{eq:batt1}
\sigma\left(f\right)=\widehat{c}\left(\ad\left(f\right)\vert_{\overline{\mathfrak p} }\right).
\end{equation}
By (\ref{eq:batt1}), we get
\begin{equation}\label{eq:batt2}
C^{\mathfrak k,S^{\overline{\mathfrak 
p}}}=\sum_{i=m+1}^{m+n}\widehat{c}\left(\ad\left(e_{i}\right)\vert_{\overline{\mathfrak p}}\right)^{2}.
\end{equation}
By \cite[eq. (7.8.6)]{Bismut08b}, we have the identity
\begin{equation}\label{eq:Lie11x1}
C^{\mathfrak k, S^{\overline{\mathfrak p}}}=\frac{1}{8}\Tr^{\mathfrak 
p}\left[C^{\mathfrak k, \mathfrak p}\right].
\end{equation}

Ultimately, $k\in K\to 
\sigma\left(k\right)\in \mathrm{Spin}\left(\overline{\mathfrak p}\right)$ 
defines a unitary representation of $K$ into $U\left(S^{\overline{ \mathfrak 
p}}\right)$. If $m$ is even, this representation preserves 
$S^{\overline{\mathfrak p}}_{\pm}$.

Set
\begin{equation}\label{eq:Lie12}
S^{\overline{TX}}=G\times_{K}S^{\overline{\mathfrak p}}.
\end{equation}
Then $S^{\overline{TX}}$ is a Hermitian vector bundle with connection on $X$. 
This bundle is just the  bundle of spinors of $TX$ equipped with 
the connection $\n^{S^{\overline{TX}}}$ induced by $\n^{TX}$.

Let $\n^{S^{\overline{TX}} \otimes F}$ be the connection on $S^{\overline{TX}} \otimes F$ 
that is induced by $\n^{S^{\overline{TX}}},\n^{F}$.

As we saw in subsection \ref{subsec:actcart}, $\theta_{\pm}$ acts on 
$S^{\overline{TX}} \otimes F$. The induced action of $\theta_{\pm}$ on 
$C^{ \infty }\left(X,S^{\overline{TX}}\otimes F\right)$ is given by
\begin{equation}\label{eq:Lie12z1}
\theta_{\pm}s\left(x\right)=\theta_{\pm}s\left(\theta^{-1}x\right).
\end{equation}

\subsection{The elliptic Dirac operator $\widehat{D}^{X}$}%
\label{subsec:twi}
In the sequel,  $e_{1},\ldots,e_{m}$ is an orthonormal basis of 
$\mathfrak p$, and $e_{m+1},\ldots,e_{m+n}$ is an orthonormal basis 
of $\mathfrak k$. We will use the same notation for corresponding 
orthonormal bases of $TX$ and $N$.
\begin{defin}\label{DellD}
Let $D\in c\left(\overline{\mathfrak p}\right)\otimes 
U\left(\mathfrak g\right),\widehat{D}\in 
\widehat{c}\left(\overline{\mathfrak p}\right)\otimes 
U\left(\mathfrak g\right)$ be given by
\begin{align}\label{eq:Lie14}
&D=\sum_{i=1}^{m}c\left(\overline{e}_{i}\right)e_{i},
&\widehat{D}=\sum_{i=1}^{m}\widehat{c}\left(\overline{e}_{i}\right)e_{i}.
\end{align}
Then $D,\widehat{D}$ are $K$-invariant, so that $D,\widehat{D}$ 
descend to  Dirac operators  
\index{DX@$D^{X}$}%
\index{DX@$\widehat{D}^{X}$}%
$D^{X},\widehat{D}^{X}$ acting on $C^{ \infty 
}\left(X,S^{\overline{TX}}\otimes F\right)$. These operators are 
given by
\begin{align}\label{eq:Lie15}
&D^{X}=\sum_{i=1}^{m}c\left(\overline{e}_{i}\right)\n^{S^{\overline{TX}} \otimes 
F}_{e_{i}},
&\widehat{D}^{X}=\sum_{i=1}^{m}\widehat{c}\left(\overline{e}_{i}\right)\n^{S^{\overline{TX}} \otimes 
F}_{e_{i}}.
\end{align}
Then $D^{X}$ is formally self-adjoint, and $\widehat{D}^{X}$ is 
formally skew-adjoint. 
Because of the conventions we made before, we have the identities, 
\begin{align}\label{eq:Lie5y2}
&\widehat{D}=iD,&\widehat{D}^{X}=iD^{X}.
\end{align}
Also $D^{X}$ is a classical Dirac operator.
\end{defin}
\begin{prop}\label{Pinv}
The following identities hold:
\begin{align}\label{eq:Lie16}
&\theta_{\pm} D^{X}\theta_{\pm}^{-1}=-D^{X},
&\theta_{\pm} \widehat{D}^{X}\theta_{\pm}^{-1}=-\widehat{D}^{X}.
\end{align}
\end{prop}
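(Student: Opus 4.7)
The plan is to work upstairs on $G$, where the Dirac operators $D^X,\widehat D^X$ arise by descent from the fixed first-order operators $D,\widehat D$ of \eqref{eq:Lie14}. Identifying sections of $S^{\overline{TX}}\otimes F$ with $K$-equivariant functions $f:G\to S^{\overline{\mathfrak p}}\otimes E$, the operator $D$ acts as $f\mapsto\sum_{i=1}^{m}c(\overline e_i)\,e_i f$, where each $e_i\in\mathfrak p$ is viewed as a left-invariant vector field on $G$; likewise for $\widehat D$ with $\widehat c(\overline e_i)$. In this picture the Clifford coefficients act purely fiberwise, while the differentiations are encoded entirely by the Lie algebra elements $e_i\in\mathfrak p$.

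The first task is to rewrite the bundle involution $\theta_{\pm}$ as an operator on equivariant functions. From $\theta_{\pm}(g,v)=(\theta g,\pm v)$ together with $(\theta_{\pm}s)(x)=\theta_{\pm}s(\theta^{-1}x)$ (formula \eqref{eq:Lie12z1}), one reads off $(\theta_{\pm}f)(g)=\pm f(\theta^{-1}g)$, that is, $\theta_{\pm}=\pm\,\theta^{*,-1}$ on functions, where $\theta^{*}F:=F\circ\theta$. Since the scalar $\pm 1$ is fiberwise, it commutes with both $c(\overline e_i)$ and $\widehat c(\overline e_i)$ as well as with the differential operators $e_i$; the two factors of $\pm 1$ in the conjugation $\theta_{\pm}D\theta_{\pm}^{-1}$ therefore cancel, and it suffices to prove $\theta^{*,-1}D\theta^{*}=-D$ and $\theta^{*,-1}\widehat D\theta^{*}=-\widehat D$. (Well-definedness of $\theta_{\pm}$ on the $K$-associated bundle is automatic since $\theta k=k$ for $k\in K$, so the lift commutes with the equivariance condition.)

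The second task is the interaction of $\theta^{*}$ with the left-invariant vector fields in $\mathfrak p$. Since $\theta$ is a Lie group automorphism of $G$ with $\theta e_i=-e_i$ for $e_i\in\mathfrak p$, a direct computation yields
\[
e_i(\theta^{*}F)(g)=\tfrac{d}{dt}\Big|_{t=0}F\bigl(\theta(ge^{te_i})\bigr)=\tfrac{d}{dt}\Big|_{t=0}F\bigl(\theta g\cdot e^{-te_i}\bigr)=-(e_iF)(\theta g),
\]
so that $\theta^{*,-1}\,e_i\,\theta^{*}=-e_i$. Because $c(\overline e_i)$ is a constant fiber endomorphism and thus commutes with $\theta^{*}$, we conclude $\theta^{*,-1}D\theta^{*}=\sum_i c(\overline e_i)(-e_i)=-D$, and this descends to $\theta_{\pm}D^{X}\theta_{\pm}^{-1}=-D^{X}$. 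The same computation with $c(\overline e_i)$ replaced by $\widehat c(\overline e_i)$ gives the second identity; alternatively, the relation $\widehat D^{X}=iD^{X}$ from \eqref{eq:Lie5y2} together with the $\C$-linearity of $\theta_{\pm}$ shows the two claims are equivalent. I do not anticipate any genuine obstacle here: the argument is purely algebraic bookkeeping once the cancellation of the $\pm 1$ factors is isolated and the Cartan involution is unwound on $\mathfrak p$; in particular the subtle spin lift $\sigma$ plays no role, because $c,\widehat c$ act only along the fiber where $\theta_{\pm}$ is scalar.
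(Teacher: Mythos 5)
Your proof is correct and uses essentially the same mechanism as the paper: $\theta_{\pm}$ commutes with the Clifford coefficients $c(\overline{e}_i)$ (resp.\ $\widehat c(\overline{e}_i)$), which act purely along the fiber, and conjugates the $\mathfrak p$-directional derivatives to their negatives because $\theta$ acts as $-1$ on $\mathfrak p$. You carry this out upstairs on $G$ in the $K$-equivariant function picture, whereas the paper states the same two facts directly on $X$ via the covariant derivative $\n^{S^{\overline{TX}}\otimes F}$; the two formulations are equivalent.
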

\begin{proof}
If   $e\in TX$, using the considerations after 
(\ref{eq:cart2}), we have the identities,
\begin{align}\label{eq:16z1}
&\theta_{\pm}c\left(\overline{e}\right)\theta_{\pm}^{-1}=c\left(\overline{e}\right),
    &\theta_{\pm}\n^{S^{\overline{TX} \otimes 
F}}_{e}\theta_{\pm}^{-1}=\n^{S^{\overline{TX} \otimes F}}_{-e}.
\end{align}
By (\ref{eq:Lie15}), (\ref{eq:16z1}), we get (\ref{eq:Lie16}).
\end{proof}

Let 
\index{CgX@$C^{ \mathfrak g,X}$}%
\index{CgHX@$C^{\mathfrak g,H,X}$}%
$C^{ \mathfrak g,X},C^{\mathfrak g,H,X}$ denote the action of 
$C^{ \mathfrak g},C^{\mathfrak g,H}$ on 
$C^{ \infty }\left(X,S^{\overline{TX}} \otimes F\right)$. By 
\cite[eqs. (2.12.17) and (7.2.6)]{Bismut08b} or by (\ref{eq:kos16-3a2}), $C^{\mathfrak g,X}$ 
splits as
\begin{equation}\label{eq:japo1}
C^{\mathfrak g,X}=C^{\mathfrak g,H,X}+C^{\mathfrak 
k,S^{\overline{TX}} \otimes F}.
\end{equation}
If 
\index{DHX@$\Delta^{X,H}$}%
$\Delta^{X,H}$ denotes the Bochner Laplacian, 
then
\begin{equation}\label{eq:japo2}
C^{\mathfrak g,H,X}=-\Delta^{X,H}.
\end{equation}

Let 
\index{LX@$\mathcal{L}^{X}_{0}$ }%
$\mathcal{L}^{X}_{0}$ \footnote{In \cite{Bismut08b}, the operator 
$\mathcal{L}^{X}_{0}$ was instead denoted $\mathcal{L}^{X}$.}
denote the operator that was defined in \cite[eq. 
(7.2.8)]{Bismut08b}, i.e.,
\begin{equation}\label{eq:Lie17}
\mathcal{L}^{X}_{0}=\frac{1}{2}C^{ \mathfrak g,X}+\frac{1}{8}B^{*}\left(\kappa^{ 
\mathfrak g},\kappa^{\mathfrak g}\right).
\end{equation}
Recall that by (\ref{eq:bub1x-1}), if $f\in \mathfrak k$, 
$\widehat{c}\left(\ad\left(f\right)\vert _{\overline{\mathfrak p}}\right)$ acts 
on $S^{\overline{\mathfrak p}}$, and if $f\in N$, 
$c\left(\ad\left(f\right)\vert_{\overline{TX}}\right)$ acts on 
$S^{\overline{TX}}$.
\begin{prop}\label{PLX}
The following identity holds:
\begin{multline}\label{eq:qsic-9a1}
\mathcal{L}^{X}_{0}=\frac{1}{2} 
\left( -\Delta^{X,H}
+\frac{1}{4}\Tr^{\mathfrak 
p}\left[C^{\mathfrak k,\mathfrak p}\right] +
2\sum_{i=m+1}^{m+n}
\widehat{c}\left(\ad\left(e_{i}\right)\vert_{\overline{TX}}\right)
\rho^{F}\left(e_{i}\right) \right) \\
+\frac{1}{48}\Tr^{\mathfrak 
k}\left[C^{\mathfrak k,\mathfrak 
k}\right]+\frac{1}{2}
C^{\mathfrak k,F}.
\end{multline}
\end{prop}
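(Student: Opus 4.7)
The plan is to unfold the definition \eqref{eq:Lie17} of $\mathcal{L}^{X}_{0}$ and match each piece against the stated formula, using the Casimir decomposition \eqref{eq:japo1}, the Bochner identity \eqref{eq:japo2}, the spinor Casimir identity \eqref{eq:Lie11x1}, and the explicit value of $B^{*}\left(\kappa^{\mathfrak g},\kappa^{\mathfrak g}\right)$ from \eqref{eq:qsic-10}. The statement is purely algebraic once these inputs are in place; no new analysis is required.

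First, I would combine \eqref{eq:japo1} and \eqref{eq:japo2} to write
\begin{equation*}
C^{\mathfrak g,X}=-\Delta^{X,H}+C^{\mathfrak k,S^{\overline{TX}} \otimes F}.
\end{equation*}
The key step is to compute $C^{\mathfrak k,S^{\overline{TX}} \otimes F}$. Since $S^{\overline{TX}} \otimes F$ is a tensor product of $K$-representations, the action of the Casimir splits into three pieces: the Casimirs acting on each factor separately, plus a cross term. Concretely, using \eqref{eq:batt1} to identify the action of $e_{i}\in \mathfrak k$ on $S^{\overline{\mathfrak p}}$ with $\widehat{c}\left(\ad\left(e_{i}\right)\vert_{\overline{\mathfrak p}}\right)$ and descending to $X$, I would write
\begin{equation*}
C^{\mathfrak k,S^{\overline{TX}} \otimes F}=C^{\mathfrak k,S^{\overline{TX}}}+C^{\mathfrak k,F}+2\sum_{i=m+1}^{m+n}\widehat{c}\left(\ad\left(e_{i}\right)\vert_{\overline{TX}}\right)\rho^{F}\left(e_{i}\right).
\end{equation*}
Substituting \eqref{eq:Lie11x1}, i.e. $C^{\mathfrak k,S^{\overline{TX}}}=\tfrac{1}{8}\Tr^{\mathfrak p}\left[C^{\mathfrak k,\mathfrak p}\right]$, then gives an explicit formula for $C^{\mathfrak g,X}$.

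Next, I would invoke \eqref{eq:qsic-10} to replace $B^{*}\left(\kappa^{\mathfrak g},\kappa^{\mathfrak g}\right)$ in \eqref{eq:Lie17} by $\tfrac{1}{2}\Tr^{\mathfrak p}\left[C^{\mathfrak k,\mathfrak p}\right]+\tfrac{1}{6}\Tr^{\mathfrak k}\left[C^{\mathfrak k,\mathfrak k}\right]$, so that $\tfrac{1}{8}B^{*}\left(\kappa^{\mathfrak g},\kappa^{\mathfrak g}\right)=\tfrac{1}{16}\Tr^{\mathfrak p}\left[C^{\mathfrak k,\mathfrak p}\right]+\tfrac{1}{48}\Tr^{\mathfrak k}\left[C^{\mathfrak k,\mathfrak k}\right]$. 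Adding this to $\tfrac{1}{2}C^{\mathfrak g,X}$, the $\tfrac{1}{16}\Tr^{\mathfrak p}\left[C^{\mathfrak k,\mathfrak p}\right]$ terms combine into $\tfrac{1}{8}\Tr^{\mathfrak p}\left[C^{\mathfrak k,\mathfrak p}\right]$, which is exactly $\tfrac{1}{2}\cdot\tfrac{1}{4}\Tr^{\mathfrak p}\left[C^{\mathfrak k,\mathfrak p}\right]$ as required, while $\tfrac{1}{2}C^{\mathfrak k,F}$ and $\tfrac{1}{48}\Tr^{\mathfrak k}\left[C^{\mathfrak k,\mathfrak k}\right]$ land outside the big parenthesis, and the cross term $\sum_{i}\widehat{c}\left(\ad\left(e_{i}\right)\vert_{\overline{TX}}\right)\rho^{F}\left(e_{i}\right)$ appears with coefficient $1$, i.e. $\tfrac{1}{2}\cdot 2$, inside the parenthesis.

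There is no real obstacle here; the only point that requires some care is bookkeeping of the factors $\tfrac12$, $\tfrac18$, $\tfrac1{16}$, $\tfrac1{48}$ and verifying that the cross term in the Casimir of a tensor representation carries the coefficient $2$ (from $\left(e_{i}\otimes 1+1\otimes e_{i}\right)^{2}$) and that it survives intact because the operators $\widehat{c}\left(\ad\left(e_{i}\right)\vert_{\overline{TX}}\right)$ and $\rho^{F}\left(e_{i}\right)$ act on distinct tensor factors. After these substitutions and collecting terms, the displayed identity follows directly.
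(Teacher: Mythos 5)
Your proposal is correct and follows essentially the same route as the paper: unfold \eqref{eq:Lie17}, substitute the Casimir split \eqref{eq:japo1}--\eqref{eq:japo2} and the value of $B^{*}\left(\kappa^{\mathfrak g},\kappa^{\mathfrak g}\right)$ from \eqref{eq:qsic-10} to get the intermediate formula \eqref{eq:gzinc2c1}, and then expand $C^{\mathfrak k,S^{\overline{TX}}\otimes F}$ via the tensor-product Casimir identity together with \eqref{eq:Lie11x1}, which is exactly the paper's \eqref{eq:japo3}. The arithmetic $\tfrac12\cdot\tfrac18+\tfrac18\cdot\tfrac12=\tfrac18$ for the $\Tr^{\mathfrak p}\left[C^{\mathfrak k,\mathfrak p}\right]$ coefficient checks out, so the proof is complete.
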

\begin{proof}
    By (\ref{eq:qsic-10}),  (\ref{eq:japo1})--(\ref{eq:Lie17}),   we get
\begin{equation}\label{eq:gzinc2c1}
\mathcal{L}^{X}_{0}=\frac{1}{2}
\left( -\Delta^{X,H}+C^{\mathfrak k,S^{\overline{TX}} \otimes F}
+\frac{1}{8}\Tr^{\mathfrak 
p}\left[C^{\mathfrak k,\mathfrak p}\right] \right) 
+\frac{1}{48}\Tr^{\mathfrak 
k}\left[C^{\mathfrak k,\mathfrak 
k}\right].
\end{equation}
Moreover, using (\ref{eq:Lie11x1}), we obtain
\begin{equation}\label{eq:japo3}
C^{\mathfrak k,S^{\overline{TX}} \otimes 
F}=\frac{1}{8}\Tr^{\mathfrak p}\left[C^{\mathfrak k, \mathfrak 
p}\right]+
C^{\mathfrak 
k,F}+2\sum_{i=m+1}^{m+n}\widehat{c}\left(\ad\left(e_{i}\right)
\vert_{\overline{TX}}\right)
\rho^{F}\left(e_{i}\right).
\end{equation}
By (\ref{eq:gzinc2c1}), (\ref{eq:japo3}), we get (\ref{eq:qsic-9a1}).
\end{proof}

By \cite[Theorem 7.2.1]{Bismut08b}, we get
\begin{equation}\label{eq:Lie17x1}
   \frac{1}{2}\widehat{D} ^{X,2}=-\mathcal{L}^{X}_{0}+\frac{1}{8}B^{*}
    \left(\kappa^{ \mathfrak k},\kappa^{ \mathfrak 
    k}\right)+\frac{1}{2}C^{ \mathfrak k,F}.
\end{equation}
Also Lichnerowicz's formula asserts that
\begin{equation}\label{eq:qsic-8}
\widehat{D}^{X,2}=\Delta^{X,H}
-\frac{S^{X}}{4}+\frac{1}{2}\sum_{1\le i,j\le 
m}^{}\widehat{c}\left(\overline{e}_{i}\right)\widehat{c}\left(\overline{e}_{j}\right)
R^{F}\left(e_{i},e_{j}\right).
\end{equation}
As was explained in \cite[section 7.2]{Bismut08b}, equations 
(\ref{eq:Lie17x1}) and (\ref{eq:qsic-8}) are equivalent. By (\ref{eq:Lie13y1}), (\ref{eq:qsic-7}),
(\ref{eq:ors1}), 
 and (\ref{eq:qsic-8}), we get
\begin{equation}\label{eq:qsic-9}
\frac{1}{2}\widehat{D}^{X,2}=\frac{1}{2}\Delta^{X,H}
-\frac{1}{8}\Tr^{\mathfrak p}\left[C^{\mathfrak k, \mathfrak 
p}\right]-\sum_{i=m+1}^{m+n}\widehat{c}\left(\ad\left(e_{i}\right)
\vert_{\overline{TX}}\right)
\rho^{F}\left(e_{i}\right).
\end{equation}

\subsection{The operator $ \mathfrak D_{b}$}%
\label{subsec:opdb}
   Set
   \index{Ag@$\mathcal{A}^{\mathfrak g}$}
   \index{Ag@$\widehat{\mathcal{A}}^{\mathfrak g}$}%
   \begin{align}
       &\mathcal{A}^{\mathfrak g}=c\left(\mathfrak g\right) \otimes U\left( \mathfrak 
       g\right),
       &\widehat{\mathcal{A}}^{\mathfrak g}=\widehat{c}\left(\mathfrak g\right) \otimes U\left(\mathfrak 
       g\right).
       \label{eq:kos19}
   \end{align}
   Then $\mathcal{A}^{ \mathfrak g}, 
   \widehat{\mathcal{A}}^{ \mathfrak g}$ are $\Z_{2}$-graded 
   algebras. Moreover, $G$ acts on $\mathcal{A}^{ \mathfrak 
   g},\widehat{\mathcal{A}}^{ \mathfrak g}$. Let 
   $e_{1},\ldots,e_{m+n}$ be a basis of $\mathfrak g$, and let 
   $e^{*}_{1},\ldots,e^{*}_{m+n}$ denote the dual basis of $\mathfrak g$ 
    with respect to $B$.
    
    Now, we introduce the Dirac operators of Kostant 
    \cite{Kostant76,Kostant97}.
  \begin{defin}\label{DKos1}
Let 
\index{Dg@$D^{ \mathfrak g}$}
\index{Dg@$\widehat{D}^{\mathfrak g}$}%
$D^{ \mathfrak g}\in \mathcal{A}^{ \mathfrak g},
\widehat{D}^{\mathfrak g}\in \widehat{\mathcal{A}}^{\mathfrak g}$ be 
the Dirac operators
\begin{align}
   &D^{\mathfrak g}=\sum_{i=1}^{m+n}c\left(e_{i}^{*}\right)e_{i}+
   \frac{1}{2}c\left(\kappa^{\mathfrak g}\right),
    &\widehat{D}^{\mathfrak 
    g}=\sum_{i=1}^{m+n}\widehat{c}\left(e^{*}_{i}\right)
    e_{i}+\frac{1}{2}\widehat{c}\left(-\kappa^{\mathfrak g}\right).
    \label{eq:kos20}
\end{align}
\end{defin}

Now we recall a   result of Kostant 
\cite{Kostant76,Kostant97}, \cite[Theorem 2.7.2]{Bismut08b}.
\begin{thm}\label{TKos1}
The following identities hold:
\begin{align}\label{eq:kos22}
&D^{\mathfrak g,2}=C^{\mathfrak g}+\frac{1}{4}B^{*}\left(\kappa^{ \mathfrak 
g},\kappa^{\mathfrak g}\right),
&\widehat{D}^{\mathfrak g,2}=-C^{\mathfrak g}-\frac{1}{4}B^{*}\left(\kappa^{ \mathfrak 
g},\kappa^{\mathfrak g}\right). 
\end{align}
\end{thm}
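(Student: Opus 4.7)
The plan is to expand $D^{\mathfrak g,2}$ directly and to organize the resulting terms by bidegree in $c(\mathfrak g)\otimes U(\mathfrak g)$. Writing $T=\sum_{i}c(e_{i}^{*})e_{i}$, so that $D^{\mathfrak g}=T+\frac{1}{2}c(\kappa^{\mathfrak g})$, and observing that both summands are odd, one has
\begin{equation*}
D^{\mathfrak g,2}=T^{2}+\tfrac{1}{2}\{T,c(\kappa^{\mathfrak g})\}+\tfrac{1}{4}c(\kappa^{\mathfrak g})^{2}.
\end{equation*}
The proof then consists of three calculations and one cancellation.

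For $T^{2}=\sum_{i,j}c(e_{i}^{*})c(e_{j}^{*})\,e_{i}e_{j}$, I split each factor into its symmetric and antisymmetric parts in $(i,j)$. Using $\{c(e_{i}^{*}),c(e_{j}^{*})\}=-2B(e_{i}^{*},e_{j}^{*})$, the symmetric-symmetric contribution is $-\sum_{i,j}B(e_{i}^{*},e_{j}^{*})\,\tfrac{1}{2}(e_{i}e_{j}+e_{j}e_{i})$, which equals $C^{\mathfrak g}$ by the definition (\ref{eq:kos16-1}) of the Casimir. The antisymmetric-antisymmetric part is the remainder
\begin{equation*}
R=\tfrac{1}{2}\sum_{i,j}c(e_{i}^{*})c(e_{j}^{*})\,[e_{i},e_{j}],
\end{equation*}
where the $B$-proportional contribution drops out by the antisymmetry of $[\cdot,\cdot]$.

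Next, since $e_{i}$ and $c(\kappa^{\mathfrak g})$ live in distinct tensor factors, one has $\tfrac{1}{2}\{T,c(\kappa^{\mathfrak g})\}=\tfrac{1}{2}\sum_{i}\{c(e_{i}^{*}),c(\kappa^{\mathfrak g})\}\,e_{i}$. Applying the graded Leibniz rule to the cubic expression (\ref{eq:kos18}) defining $c(\kappa^{\mathfrak g})$, together with $\{c(e_{i}^{*}),c(e_{a})\}=-2\delta_{ia}$, the three Leibniz terms coincide thanks to the total antisymmetry of $\kappa^{\mathfrak g}$, yielding $\{c(e_{i}^{*}),c(\kappa^{\mathfrak g})\}=-\kappa^{\mathfrak g}(e_{i}^{*},e_{b}^{*},e_{c}^{*})\,c(e_{b})c(e_{c})$. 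Substituting $\kappa^{\mathfrak g}(e_{i}^{*},e_{b}^{*},e_{c}^{*})=B(e_{i}^{*},[e_{b}^{*},e_{c}^{*}])$ from (\ref{eq:form1}), summing via the identity $\sum_{i}B(e_{i}^{*},v)\,e_{i}=v$, and rewriting $\sum_{b,c}c(e_{b})c(e_{c})[e_{b}^{*},e_{c}^{*}]=\sum_{i,j}c(e_{i}^{*})c(e_{j}^{*})[e_{i},e_{j}]$ by a change of basis, I obtain $\tfrac{1}{2}\{T,c(\kappa^{\mathfrak g})\}=-R$. Thus the remainder $R$ cancels exactly against the cross term.

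Finally, I evaluate $\tfrac{1}{4}c(\kappa^{\mathfrak g})^{2}$ by expanding the square and grouping by the number of pairwise Clifford contractions. One obtains contributions in Clifford degrees $6$, $4$, $2$, and $0$. The degree-zero scalar is precisely $\tfrac{1}{4}B^{*}(\kappa^{\mathfrak g},\kappa^{\mathfrak g})$ by definition of the induced bilinear form on $\Lambda^{3}(\mathfrak g^{*})$. The higher-degree components must vanish, and this is where the Jacobi identity of $\mathfrak g$ enters: it is equivalent to the closedness of $\kappa^{\mathfrak g}$ as a bi-invariant $3$-form on $G$, and forces the degree $6$, $4$, and $2$ pieces of $c(\kappa^{\mathfrak g})^{2}$ to cancel. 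This vanishing is the main obstacle; it is the only step in the proof that is not purely formal Clifford bookkeeping. Combining the three steps yields the first identity of (\ref{eq:kos22}). The identity for $\widehat{D}^{\mathfrak g}$ follows by the parallel argument, with the only changes being $\{\widehat{c}(a),\widehat{c}(b)\}=+2B(a,b)$ and the minus sign in front of $\kappa^{\mathfrak g}$ in the definition (\ref{eq:kos20}) of $\widehat{D}^{\mathfrak g}$; each symmetric contraction then acquires the opposite sign, producing the global minus on the right-hand side.
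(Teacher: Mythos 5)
The paper itself does not prove Theorem \ref{TKos1} --- it states the result and refers to Kostant and to Theorem 2.7.2 of \cite{Bismut08b} --- so your proposal has to be judged on its own. The architecture is the standard one, and the parts you wrote out check: $T^{2}=C^{\mathfrak g}+R$ with $R=\tfrac{1}{4}\sum_{i,j}[c(e_{i}^{*}),c(e_{j}^{*})][e_{i},e_{j}]$; the Leibniz-rule computation $\{c(e_{i}^{*}),c(\kappa^{\mathfrak g})\}=-\kappa^{\mathfrak g}(e_{i}^{*},e_{b}^{*},e_{c}^{*})c(e_{b})c(e_{c})$; the cancellation $\tfrac{1}{2}\{T,c(\kappa^{\mathfrak g})\}=-R$, once one notes that $\sum_{i,j}c(e_{i}^{*})c(e_{j}^{*})\otimes[e_{i},e_{j}]$ is basis-independent so the dual-basis rewriting is legitimate; and the degree-zero normalization, which with $\{c(a),c(b)\}=-2B(a,b)$ gives $+B^{*}(\kappa^{\mathfrak g},\kappa^{\mathfrak g})$ for the scalar part of $c(\kappa^{\mathfrak g})^{2}$ and, under $\{\widehat{c}(a),\widehat{c}(b)\}=2B(a,b)$, gives $-B^{*}(\kappa^{\mathfrak g},\kappa^{\mathfrak g})$ for $\widehat{c}(\kappa^{\mathfrak g})^{2}$, producing the overall sign flip in the second identity of (\ref{eq:kos22}).

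There is, however, a genuine gap exactly at the point you flag: the statement that $c(\kappa^{\mathfrak g})^{2}$ has no component of positive Clifford degree is asserted and attributed to Jacobi via closedness, but the identification that makes this work is not carried out, and it is the substantive content of the theorem. Two clarifications on the shape of the claim. The degree-$6$ part vanishes for \emph{any} $3$-form, since $\kappa\wedge\kappa=0$. The degree-$2$ part also vanishes for any $3$-form: the main anti-automorphism $\tau$ of $c(\mathfrak g)$ (reversal of Clifford words) acts on the degree-$q$ piece by $(-1)^{q(q-1)/2}$, so $\tau(c(\kappa^{\mathfrak g}))=-c(\kappa^{\mathfrak g})$, hence $c(\kappa^{\mathfrak g})^{2}$ is $\tau$-fixed, while $\tau=-1$ in degree $2$. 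Only the degree-$4$ piece can survive for a generic $3$-form (one sees it is nonzero already for $e^{1}\wedge e^{2}\wedge e^{3}+e^{1}\wedge e^{4}\wedge e^{5}$), and it is there, and only there, that the Jacobi identity is used. Your link to closedness is the right idea: the degree-$4$ piece of $c(\kappa^{\mathfrak g})^{2}$ is, up to a nonzero constant, $c$ applied to the $4$-form obtained by antisymmetrizing $(x,y,z,w)\mapsto B([x,y],[z,w])$, and the invariance of $B$ identifies that $4$-form with $d\kappa^{\mathfrak g}$, whose vanishing is the Jacobi identity. But that chain of identifications --- computing the one-contraction term in the Clifford product, expressing it intrinsically as the contraction of $\kappa^{\mathfrak g}\otimes\kappa^{\mathfrak g}$, and matching it to $d\kappa^{\mathfrak g}$ --- is precisely what is absent from your write-up, so step 5 does not close.
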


By (\ref{eq:Lie17}), (\ref{eq:kos22}), $\mathcal{L}^{X}_{0}$ is just the 
action of $\frac{1}{2}D^{ \mathfrak g,2}$ on $C^{ \infty }\left(X,S^{\overline{TX}} \otimes 
F\right)$.

Let $d^{\mathfrak p},d^{\mathfrak k}$ be the de Rham operators on 
 $\mathfrak p,\mathfrak k$, and let $d^{\mathfrak 
p*},d^{\mathfrak k*}$ denote their formal $L_{2}$ adjoints with respect to 
the scalar products on $\mathfrak p,\mathfrak k$. Let 
\index{Yp@$Y^{\mathfrak p}$}%
\index{Yk@$Y^{\mathfrak k}$}%
$Y^{\mathfrak 
p}, Y ^{\mathfrak k}$ be the tautological sections of $\mathfrak p, 
\mathfrak k$ on $\mathfrak p, \mathfrak k$. We identify $Y^{\mathfrak 
p}, Y^{\mathfrak k}$ to the corresponding $1$-forms via the scalar 
products of $\mathfrak p, \mathfrak k$.

If $f\in \mathfrak g$, let $\n_{e}$ denote the corresponding 
differentiation on $\mathfrak g$.  Let $e_{1},\ldots,e_{m}$ be an orthonormal basis of $\mathfrak p$, 
let $e_{m+1},\ldots,e_{m+n}$ be an orthonormal basis of $\mathfrak 
k$. We use the notation in (\ref{eq:kos6}).
Set
\index{Dp@$\mathcal{D}^{ \mathfrak p}$}%
\index{Ep@$\mathcal{E}^{ \mathfrak p}$}%
\index{Dk@$\mathcal{D}^{ \mathfrak k}$}%
\index{Ek@$\mathcal{E}^{\mathfrak k}$}%
\begin{align}\label{eq:brav12}
&\mathcal{D}^{ \mathfrak p}=\sum_{i=1}^{m}c\left(e_{i}\right)\n_{e_{i}},
&\mathcal{E}^{ \mathfrak p}=\widehat{c}\left(Y^{ 
\mathfrak p}\right),\\
&\mathcal{D} ^{\mathfrak 
k  }=\sum_{i=m+1}^{m+n}c\left(e_{i}^{*}\right)\n_{e_{i}},
&\mathcal{E}^{ \mathfrak k }=\widehat{c}\left(Y^{ \mathfrak 
k}\right). \nonumber 
\end{align}
A trivial computation \cite[eqs. (2.8.6), (2.8.11)]{Bismut08b} shows 
that
\begin{align}\label{eq:Lie18}
&\mathcal{D}^{ \mathfrak p}+\mathcal{E}^{ \mathfrak p}=d^{\mathfrak 
p}+Y^{\mathfrak p}\we+d^{\mathfrak p*}+i_{Y^{\mathfrak p}},
&\mathcal{D}^{\mathfrak k}-\mathcal{E}^{ \mathfrak k}=d^{\mathfrak 
k}+Y^{ \mathfrak k}\we-d^{\mathfrak k*}-i_{Y^{ \mathfrak k}}.
\end{align}
Let 
\index{Dp@$\Delta^{\mathfrak p}$}%
\index{Dk@$\Delta^{\mathfrak k}$}%
$\Delta^{\mathfrak p}, \Delta^{\mathfrak k}$ denote the 
Laplacians on $\mathfrak p, \mathfrak k$. By \cite[eqs. (2.8.8), (2.8.13)]{Bismut08b}, 
\begin{align}\label{eq:Lie19}
&\frac{1}{2}\left(\mathcal{D}^{ \mathfrak p}+\mathcal{E}^{ 
    \mathfrak p}\right)^{2}=\frac{1}{2}\left(-\Delta^{ \mathfrak 
    p}+\left\vert  Y^{ \mathfrak 
    p}\right\vert^{2}-m\right)+N^{\Lambda\ac\left( \mathfrak 
    p^{*}\right)},\\
    & \frac{1}{2}\left(-i\mathcal{D}^{ \mathfrak k}+i\mathcal{E}^{ 
    \mathfrak k}\right)^{2}=\frac{1}{2}\left(-\Delta^{ \mathfrak 
    k}+\left\vert  Y^{ \mathfrak 
    k}\right\vert^{2}-n\right)+N^{\Lambda\ac\left( \mathfrak 
    k^{*}\right)}. \nonumber 
\end{align}

If $k\in K$, the action of $k$   on $C^{ \infty }\left(G,\Lambda\ac\left( \mathfrak 
g^{*}\right) \otimes S\ac\left( \mathfrak g^{*}\right)\right)$ is 
given by
\begin{equation}
    ks\left(g\right)=\rho^{\Lambda\ac\left( \mathfrak g^*\right) 
    \otimes S\ac\left( \mathfrak g^{*}\right)} \left(k\right)s\left(gk\right).
    \label{eq:trubl1}
\end{equation}
Also we have the identification
\begin{equation}
    C^{ \infty }\left(G,\Lambda\ac\left( \mathfrak g^{*}\right) 
    \otimes C^{ \infty }\left( \mathfrak g\right)\right)=C^{ \infty 
    }\left(G\times \mathfrak g,\Lambda\ac\left( \mathfrak g^{*}\right)\right).
    \label{eq:semex17}
\end{equation}
The  action of $K$ on (\ref{eq:semex17}) that corresponds to 
(\ref{eq:trubl1}) is given by 
\begin{equation}
    ks\left(g,Y\right)=\rho^{\Lambda\ac\left( \mathfrak g^{*}\right)}
    \left(k\right)s\left(gk, \Ad\left(k^{-1}\right)Y\right).
    \label{eq:sumex18}
\end{equation}

Now we define the operator
\index{Db@$\mathfrak D_{b}$}%
$\mathfrak D_{b}$ as in \cite[Definition 
2.9.1]{Bismut08b}.
\begin{defin}\label{Dfanta}
For $b>0$, let $\mathfrak D_{b}\in \End \left( 
C^{ \infty 
    }\left(G\times \mathfrak g,\Lambda\ac\left( \mathfrak g^{*}\right)\right) \right) $ be given by
\begin{equation}
    \mathfrak D_{b}=\widehat{D}^{ \mathfrak g}+ic\left(\left[
    Y^{ \mathfrak k},Y^{ \mathfrak p}\right]\right)+\frac{1}{b}
    \left(\mathcal{D}^{ \mathfrak p}+\mathcal{E}^{ \mathfrak p}-
    i\mathcal{D}^{ \mathfrak k }+
    i\mathcal{E} ^{ \mathfrak k }\right).
    \label{eq:rio1}
\end{equation}
Then $\mathfrak D_{b}$ commutes with $K$. 
\end{defin}
\subsection{The compression of the operator $\mathfrak D_{b}$}%
\label{subsec:comp}
Let $Y$ be the tautological section of $\mathfrak g$ on $\mathfrak 
g$, so that $Y=Y^{ \mathfrak p}+Y^{ \mathfrak k}\in \mathfrak 
g$. We have the  identity 
\begin{equation}
    \left\vert  Y\right\vert^{2}=\left\vert  Y^{ \mathfrak p}\right\vert^{2}
    +\left\vert  Y^{ \mathfrak k}\right\vert^{2}.
    \label{eq:rio2}
\end{equation}

By \cite[section 1.6]{Bismut08b}, the kernel 
\index{H@$H$}%
$H\subset 
\Lambda\ac\left( \mathfrak g^{*}\right)\otimes L_{2}\left( \mathfrak g\right)$ of the operator 
$D^{ \mathfrak p}+\mathcal{E}^{ \mathfrak p}-i\mathcal{D}^{ \mathfrak 
k}+i\mathcal{E}^{ \mathfrak k}$ is $1$-dimensional and spanned by 
 $\exp\left(-\left\vert  Y\right\vert^{2}/2\right)$. Let 
 \index{P@$P$}%
 $P$ denote the 
orthogonal projection operator on $H$. Of course $P$  acts on 
$C^{ \infty }\left(G,\Lambda\ac\left( \mathfrak 
g^{*}\right)\otimes L_{2}\left( \mathfrak  g\right) \right) $. We 
identify $C^{\infty }\left(G,\R\right)$ with a vector subspace of 
$C^{ \infty }\left(G\times \mathfrak g,\Lambda\ac\left(\mathfrak 
g^{*}\right)\right)$ via the embedding $s\to 
s\exp\left(-\left\vert  Y\right\vert^{2}/2\right)/\pi^{\left(m+n\right)/4}$.

Then 
we have the result in \cite[Proposition 2.10.1]{Bismut08b}.
\begin{prop}\label{Pcomp1}
The following identity holds:
\begin{equation}
    P \left( \widehat{D}^{ \mathfrak g}+ic\left(\left[Y^{ \mathfrak 
    k},Y^{ \mathfrak p}\right]\right) \right) 
  P=0.
    \label{eq:comp1}
\end{equation}
\end{prop}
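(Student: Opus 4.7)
The plan is a pure $\Z_{2}$-parity argument with respect to the grading of $\Lambda\ac\left(\mathfrak g^{*}\right)$ defined by $\left(-1\right)^{N^{\Lambda\ac\left(\mathfrak g^{*}\right)}}$. The first observation is that $H$ is spanned by $\exp\left(-\left\vert Y\right\vert^{2}/2\right)$, which is a purely scalar function in $\Lambda^{0}\left(\mathfrak g^{*}\right)\otimes L_{2}\left(\mathfrak g\right)$, hence lies entirely in the even part. Since the $\Z$-graded pieces of $\Lambda\ac\left(\mathfrak g^{*}\right)$ are mutually orthogonal for the induced Hilbert structure, the orthogonal projector $P$ has image in the even part, and $P$ commutes with $\left(-1\right)^{N^{\Lambda\ac\left(\mathfrak g^{*}\right)}}$; equivalently, $P$ is an even operator which annihilates the odd part.

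Next, I would verify that the operator $\widehat{D}^{\mathfrak g}+ic\left(\left[Y^{\mathfrak k},Y^{\mathfrak p}\right]\right)$ is odd for this grading. Each summand $\widehat{c}\left(e^{*}_{i}\right)e_{i}$ is odd because $\widehat{c}\left(e^{*}_{i}\right)$ acts on $\Lambda\ac\left(\mathfrak g^{*}\right)$ as an odd operator, while the left-invariant differential operators $e_{i}\in U\left(\mathfrak g\right)$ preserve the Clifford grading. The term $\widehat{c}\left(-\kappa^{\mathfrak g}\right)$, as a product of three odd Clifford operators in its definition (\ref{eq:kos18}), is also odd. Finally, since $\left[\mathfrak k,\mathfrak p\right]\subset\mathfrak p$, the element $\left[Y^{\mathfrak k},Y^{\mathfrak p}\right]$ is a section of $\mathfrak g$ over $\mathfrak g$, and $c\left(\left[Y^{\mathfrak k},Y^{\mathfrak p}\right]\right)$ is odd as well, the coefficient function on $\mathfrak g$ being a Clifford scalar.

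The conclusion is then immediate from the standard parity principle: if $X$ is any odd operator with respect to $\left(-1\right)^{N^{\Lambda\ac\left(\mathfrak g^{*}\right)}}$ and $P$ projects onto a subspace contained in the even part, then $XP$ maps into the odd part, which is orthogonal to the image of $P$, so that $PXP=0$. Applied to $X=\widehat{D}^{\mathfrak g}+ic\left(\left[Y^{\mathfrak k},Y^{\mathfrak p}\right]\right)$, this yields (\ref{eq:comp1}).

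There is no genuine obstacle in this argument; everything reduces to a bookkeeping check of the Clifford degrees. The only point requiring mild care is that the extra term $ic\left(\left[Y^{\mathfrak k},Y^{\mathfrak p}\right]\right)$, although it involves a quadratic function of the fibre variable $Y$, is still built from a single Clifford generator and therefore contributes with the correct parity, matching that of $\widehat{D}^{\mathfrak g}$.
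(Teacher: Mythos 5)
Your proof is correct and matches the paper's approach: the argument the paper uses in the proof of the extension Proposition~\ref{Pcomp} is precisely this parity observation, namely that $H$ is concentrated in degree $0$ of $\Lambda\ac\left(\mathfrak g^{*}\right)$ while $\widehat{D}^{\mathfrak g}$ (a sum of degree-one and degree-three Clifford terms with grading-preserving coefficients) and $c\left(\left[Y^{\mathfrak k},Y^{\mathfrak p}\right]\right)$ are odd with respect to $\left(-1\right)^{N^{\Lambda\ac\left(\mathfrak g^{*}\right)}}$, so the compression by $P$ vanishes.
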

\subsection{A formula for $\mathfrak D^{2}_{b}$}%
\label{subsec:formDb2}
Now, we denote by 
\index{Dpk@$\Delta^{ \mathfrak p \oplus \mathfrak k}$}%
$\Delta^{ \mathfrak p \oplus \mathfrak k}$ the standard Euclidean 
Laplacian on the Euclidean vector space $ \mathfrak g = 
\mathfrak p \oplus \mathfrak k$.

We make the same assumptions on the basis $e_{1},\ldots,e_{m+n}$ of 
$\mathfrak g$ as in (\ref{eq:toplitz0}). If $V\in \mathfrak k$, $\ad\left(V\right)\vert_{ 
\mathfrak p}$ acts as an antisymmetric endomorphism of $ \mathfrak 
p$, so that by (\ref{eq:ors1}),
\begin{equation}
    c\left(\ad\left(V\right)\vert_{ \mathfrak 
    p}\right)=\frac{1}{4}\sum_{1\le i,j\le m}^{}\left\langle  
    \left[V,e_{i}\right],e_{j}\right\rangle 
    c\left(e_{i}\right)c\left(e_{j}\right).
    \label{eq:rus1a}
\end{equation}
Also, if $W\in \mathfrak p$, $\ad\left(W\right)$ exchanges 
$ \mathfrak k$ and $\mathfrak p$ and is antisymmetric with respect to 
$B$, i.e., it is symmetric with respect to  the scalar product on 
$\mathfrak g$. Moreover, by (\ref{eq:ors1}), 
\begin{equation}
    c\left(\ad\left(W\right)\right)=\frac{1}{2}\sum_{\substack{m+1\le i\le m+n
    \\ 1\le j\le m}}^{}\left\langle  \left[W,e_{i}^{*}\right],e_{j}\right\rangle
    c\left(e_{i}\right)c\left(e_{j}\right).
    \label{eq:rus1b}
\end{equation}

If $a\in \mathfrak g$, let
\index{nVa@$\n_{a}^{V}$}%
$\n_{a}^{V}$ be the corresponding 
differentiation operator along $\mathfrak g$.
In particular $\n^{V}_{\left[Y^{ \mathfrak 
     k},Y^{ \mathfrak p}\right]}$ denotes the differentiation 
     operator  in the direction $\left[Y^{ 
     \mathfrak k},Y^{ \mathfrak p}\right]\in \mathfrak p$.
If $Y\in \mathfrak g$, we denote by $\underline{Y}^{ \mathfrak p}+i\underline{Y} ^{ \mathfrak 
     k}$ the section  of $U\left( \mathfrak g\right) \otimes _{\R}\C$ 
     associated to $Y^{ \mathfrak p}+iY^{ \mathfrak k}\in  \mathfrak 
     g \otimes _{\R}\C$. Recall that 
     $N^{\Lambda\ac\left( \mathfrak g^{*}\right)}$ is the number 
     operator of  $\Lambda\ac\left( \mathfrak g^{*}\right)$.
     
     The following identity was established in \cite[Theorem 
     2.11.1]{Bismut08b}.
\begin{thm}\label{Tcarop}
The following identity holds:
\begin{multline}
    \frac{\mathfrak D^{2}_{b}}{2}=\frac{\widehat{D}^{ \mathfrak 
	 g,2}}{2}+\frac{1}{2}\left\vert  \left[Y^{ \mathfrak k},Y^{ \mathfrak 
     p}\right]\right\vert^{2}
     +\frac{1}{2b^{2}} \left( -\Delta^{ \mathfrak 
	 p \oplus \mathfrak k}+\left\vert  Y\right\vert^{2}-m-n\right)+\frac{N^{\Lambda\ac\left( 
	 \mathfrak g^{*}\right)}}{b^{2}} \\
     +\frac{1}{b}\Biggl(\underline{Y}^{ \mathfrak p}+i\underline{Y} ^{ \mathfrak 
     k}-i\n^{V}_{\left[Y^{ \mathfrak k},
     Y^{ \mathfrak p}\right]}+
     \widehat{c}\left(\ad\left(Y^{ \mathfrak p}+iY^{ 
		    \mathfrak k}\right)
     \right)\\
     +2ic\left(\ad\left(Y^{ \mathfrak k}\right)\vert _{ 
     \mathfrak p} \right)
     -c\left(\ad\left(Y^{ \mathfrak 
     p}\right)\right)\Biggr).
      \label{eq:rio2a}
\end{multline}
\end{thm}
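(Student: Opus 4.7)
The plan is to expand $\mathfrak D_b^2$ by writing $\mathfrak D_b = A + b^{-1}B$, where $A = \widehat D^{\mathfrak g} + ic([Y^{\mathfrak k}, Y^{\mathfrak p}])$ and $B = \mathcal D^{\mathfrak p} + \mathcal E^{\mathfrak p} - i\mathcal D^{\mathfrak k} + i\mathcal E^{\mathfrak k}$. Since $A$ and $B$ are both odd, this gives $\mathfrak D_b^2 = A^2 + b^{-1}\{A,B\} + b^{-2}B^2$, and the goal is to identify each of the three pieces, after division by $2$, with the corresponding terms in \eqref{eq:rio2a}.

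For $A^2$, the anticommutator $\{\widehat D^{\mathfrak g}, c([Y^{\mathfrak k}, Y^{\mathfrak p}])\}$ vanishes: by \eqref{eq:kos7} the $\widehat c$ and $c$ generators anticommute, the cubic piece $\widehat c(-\kappa^{\mathfrak g})$ anticommutes with $c$-generators as well, and the left-invariant vector fields $e_I$ commute with functions of $Y$. Using $c(v)^2 = -|v|^2$ for $v \in \mathfrak p$, one gets $A^2 = \widehat D^{\mathfrak g,2} + |[Y^{\mathfrak k}, Y^{\mathfrak p}]|^2$. For $B^2$, the two diagonal squares are given by the harmonic-oscillator identities \eqref{eq:Lie19}, and their sum yields the full harmonic oscillator on $\mathfrak g$ together with $2N^{\Lambda\ac(\mathfrak g^{*})}$; the cross anticommutator vanishes by $\mathfrak p$-$\mathfrak k$ orthogonality of $B$ and commutativity of the directional derivatives $\n^V_{e_i}$ for indices in orthogonal subspaces.

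The crux is $\{A,B\}$, which I split into four blocks. First, $\{\widehat D^{\mathfrak g}, \mathcal D^{\mathfrak p} - i\mathcal D^{\mathfrak k}\}=0$ (Clifford anticommutation and commuting differentiations in $g$ vs.\ $Y$). Second, $\{\widehat D^{\mathfrak g}, \mathcal E^{\mathfrak p} + i\mathcal E^{\mathfrak k}\}$: the first-order part of $\widehat D^{\mathfrak g}$ produces $2(\underline Y^{\mathfrak p} + i\underline Y^{\mathfrak k})$ via $\{\widehat c(e_I^*), \widehat c(Y^{\mathfrak p} + iY^{\mathfrak k})\} = 2B(e_I^*, Y^{\mathfrak p} + iY^{\mathfrak k})$ combined with $\sum_I B(e_I^*, v) e_I = v$, while the cubic piece yields $2\widehat c(\ad(Y^{\mathfrak p} + iY^{\mathfrak k}))$ via the key identity $\{\widehat c(-\kappa^{\mathfrak g}), \widehat c(v)\} = 4\widehat c(\ad(v))$, which I establish by threading $\widehat c(v)$ through $\widehat c(e_I)\widehat c(e_J)\widehat c(e_K)$ and collapsing the three resulting contractions using antisymmetry of $\kappa^{\mathfrak g}$ and the relation $\sum_I B(e_I,v)e_I^* = v$. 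Third, $\{ic([Y^{\mathfrak k}, Y^{\mathfrak p}]), \mathcal D^{\mathfrak p}\}$: the Clifford anticommutator $\{c(v),c(e_j)\}=-2B(v,e_j)$ together with $[Y^{\mathfrak k}, Y^{\mathfrak p}]\in\mathfrak p$ delivers $-2i\n^V_{[Y^{\mathfrak k}, Y^{\mathfrak p}]}$, while differentiating $Y^{\mathfrak p}$ via $\n^V_{e_j}Y^{\mathfrak p}=e_j$ produces $\sum_j c(e_j) c([Y^{\mathfrak k}, e_j])$, identified with $4c(\ad(Y^{\mathfrak k})\vert_{\mathfrak p})$ using \eqref{eq:rus1a}, and the prefactor $i$ then gives $4ic(\ad(Y^{\mathfrak k})\vert_{\mathfrak p})$. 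Fourth, $\{ic([Y^{\mathfrak k}, Y^{\mathfrak p}]), -i\mathcal D^{\mathfrak k}\}$: the Clifford anticommutator vanishes by $\mathfrak p$-$\mathfrak k$ orthogonality, and differentiation of $Y^{\mathfrak k}$ yields $\sum_{i\in\mathfrak k} c(e_i^{*})c([e_i, Y^{\mathfrak p}])$, which by \eqref{eq:rus1b} equals $-2c(\ad(Y^{\mathfrak p}))$. Finally $\{ic([Y^{\mathfrak k}, Y^{\mathfrak p}]), \mathcal E^{\mathfrak p} + i\mathcal E^{\mathfrak k}\}=0$ by \eqref{eq:kos7}. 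Summing and dividing by $2$ reproduces the bracketed expression in \eqref{eq:rio2a}.

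The main obstacle is the Clifford bookkeeping in the mixed term. The cubic anticommutator $\{\widehat c(-\kappa^{\mathfrak g}), \widehat c(v)\}$ a priori generates three distinct degree-$2$ expressions, and only after using the total antisymmetry of $\kappa^{\mathfrak g}$ do they merge into the single expression $4\widehat c(\ad(v))$. Similarly, the sums $\sum_j c(e_j) c(\n^V_{e_j}[Y^{\mathfrak k}, Y^{\mathfrak p}])$ must be carefully matched against the definitions \eqref{eq:ors1}, \eqref{eq:rus1a}, \eqref{eq:rus1b}, the point being that the numerical factor is $4$ for the $\mathfrak p$-preserving piece $\ad(Y^{\mathfrak k})\vert_{\mathfrak p}$ but $2$ for the $\mathfrak p$-$\mathfrak k$-exchanging piece $\ad(Y^{\mathfrak p})$, which accounts precisely for the asymmetric coefficients $2i$ versus $-1$ in front of the two Clifford terms in the theorem.
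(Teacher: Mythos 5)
Your proposal is correct. The paper itself does not prove \eqref{eq:rio2a}; it simply cites \cite[Theorem 2.11.1]{Bismut08b}. Your direct expansion $\mathfrak D_b^2 = A^2 + b^{-1}\{A,B\} + b^{-2}B^2$, with the three identifications you carry out, is the standard computation behind that reference and reproduces all terms correctly — in particular the anticommutator identity $\{\widehat c(-\kappa^{\mathfrak g}),\widehat c(v)\}=4\widehat c(\ad(v))$, the vanishing of the $\mathfrak p$--$\mathfrak k$ cross term in $B^2$, and the asymmetric coefficients arising from the factor $4$ in \eqref{eq:rus1a} versus $2$ in \eqref{eq:rus1b}.
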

\subsection{The operator $\mathcal{L}^{X}_{b}$}%
\label{subsec:actqu}
To make our notation simpler,  from now on, and in the whole paper, if $V$ is a real vector space and if $W$ is a complex 
vector space, we will use the notation $V \otimes W$ to denote the 
tensor product $V \otimes _{\R}W$. 

Observe that $K$  acts on $$C^{ \infty }\left(G,\Lambda\ac \left( \mathfrak 
g^{*}\right) \otimes S\ac\left(\mathfrak g^{*}\right) \otimes 
S^{\overline{ \mathfrak p}} \otimes E\right) $$ by a formula 
similar to (\ref{eq:trubl1}).

Let
\index{X@$\widehat{\mathcal{X}}$}%
$\widehat{\mathcal{X}}$ be the total space of $TX \oplus N$ over 
$X$, and let 
\index{pi@$\widehat{\pi}$}%
$\widehat{\pi}:\widehat{\mathcal{X}}\to X$ be the corresponding  
projection.  By \cite[eq. (2.12.7)]{Bismut08b}, since $TX \oplus N$ 
can be identified with the trivial vector bundle $\mathfrak g$, then
\begin{equation}
    \widehat{\mathcal{X}}=X\times \mathfrak g,
    \label{eq:prod1}
\end{equation}
and $\widehat{\pi}:\widehat{\mathcal{X}}\to X$ is the  projection $X\times 
\mathfrak g\to X$. Let 
\index{YTX@$Y^{TX}$}
\index{YN@$Y^{N}$}%
$Y=Y^{TX}+Y^{N}, Y^{TX}\in TX,Y^{N}\in N$ be 
the canonical section  of $\widehat{\pi}^{*}\left(TX \oplus N\right)$ over 
$\widehat{\mathcal{X}}$. Then
\begin{equation}
    \left\vert  Y\right\vert^{2}=\left\vert  Y^{TX}\right\vert^{2}
    +\left\vert  Y^{N}\right\vert^{2}.
    \label{eq:sumex18ax1}
\end{equation}
\begin{defin}\label{Dspace}
    Let 
    \index{nC@$\n^{C^{ \infty }\left(TX \oplus 
     N,\widehat{\pi}^{*}\left(\Lambda\ac\left(T^{*}X \oplus 
     N^{*}\right)\otimes  S^{\overline{TX}} \otimes 
     F\right) \right) }$}%
    $\n^{C^{ \infty }\left(TX \oplus 
     N,\widehat{\pi}^{*}\left(\Lambda\ac\left(T^{*}X \oplus 
     N^{*}\right)\otimes S^{\overline{TX}} \otimes 
     F\right) \right) }$ be the connection on the vector bundle  $C^{ \infty }\left(TX \oplus 
     N,\widehat{\pi}^{*}\left(\Lambda\ac\left(T^{*}X \oplus 
     N^{*}\right)\otimes  S^{\overline{TX}} \otimes 
     F\right) \right) $ on $X$ that is induced by the connection 
      $\theta^{\mathfrak k}$ on the $K$-bundle $p:G\to X=G/K$.

      Let 
\index{H@$\mathcal{H}$}%
   $\mathcal{H}$ be the vector space 
of smooth sections 
over $X$ of the vector bundle $C^{ \infty }\left(TX \oplus 
 N, \widehat{\pi}^{*}\left( \Lambda\ac\left(T^{*}X \oplus 
 N^{*}\right)\otimes S^{\overline{TX}} \otimes  F \right) \right) $.  Then 
 \begin{equation}\label{eq:Lie20}
\mathcal{H}=C^{ \infty 
}\left(\widehat{\mathcal{X}},\widehat{\pi}^{*}\left(\Lambda\ac\left(\TsX \oplus N^{*}\right)
\otimes S^{\overline{TX}} \otimes F\right)\right).
\end{equation}
\end{defin}

Since 
$\widehat{D}^{\mathfrak g}, \mathfrak 
D_{b}$ commute with the action of  $K$, they descend to  operators
\index{DgX@$\widehat{D}^{\mathfrak g,X}$}%
\index{Dgb@$\mathfrak D^{X}_{b}$}%
$\widehat{D}^{\mathfrak g,X},\mathfrak D^{X}_{b}$ acting on 
$\mathcal{H}$. The operators $\mathcal{D}^{\mathfrak 
p},\mathcal{E}^{\mathfrak p},\mathcal{D}^{\mathfrak 
k},\mathcal{E}^{\mathfrak k}$ descend to operators 
\index{DTX@$\mathcal{D}^{TX}$}%
\index{ETX@$\mathcal{E}^{TX}$}%
\index{DN@$\mathcal{D}^{N}$}%
\index{EN@$\mathcal{E}^{N}$}%
$\mathcal{D}^{TX},\mathcal{E}^{TX},\mathcal{D}^{N},\mathcal{E}^{N}$ 
acting along the fibres of $TX \oplus N$. Also the operator 
$C^{\mathfrak g}$ descends to an operator $C^{\mathfrak g,X}$. 
By 
(\ref{eq:kos22}), we get
\begin{equation}\label{eq:dep0}
\widehat{D}^{\mathfrak g,X,2}=-C^{\mathfrak g,X}-\frac{1}{4}B^{*}\left(
\kappa^{\mathfrak g},\kappa^{\mathfrak g}\right).
\end{equation}

Put
\index{LbX@$\mathcal L_{b}^{X}$}%
\begin{equation}
 \mathcal L_{b}^{X}=-\frac{1}{2}\widehat{D}^{ \mathfrak 
    g,X,2}+\frac{1}{2}\mathfrak D ^{X,2}_{b}.
    \label{eq:tra1}
\end{equation}

Let 
\index{DTXN@$\Delta^{TX \oplus N}$}%
$\Delta^{TX \oplus N}$ be the Laplacian acting along the fibres 
of the Euclidean vector bundle $TX \oplus N$. 

We will give an important formula that was established in \cite[Theorem 
2.12.5 and eq. (2.13.5)]{Bismut08b}. Note that
$\widehat{c}\left(\ad\left(Y^{N}\right)\vert_{\overline{TX}}\right)$ 
is a section of the Clifford algebra $c\left(\overline{TX}\right)$ 
and acts on $S^{\overline{TX}}$, while
$c\left(\ad\left(Y^{TX}\right)\right),c\left(\theta\ad\left(Y^{N}\right)\right),
\widehat{c}\left(\ad\left(Y^{TX}\right)\right)$ 
 lie in $c\left(TX \oplus N\right), \widehat{c}\left(TX \oplus 
N\right)$, and  act on $\Lambda\ac\left(T^{*}X \oplus 
N^{*}\right)$.
\begin{thm}\label{Timpfo}
The following identities hold:
\begin{align}\label{eq:tra2}
    &\mathfrak D_{b}^{X}=\widehat{D}^{ \mathfrak g,X}+ic\left(\left[Y^{ 
N},Y^{TX}\right]\right)+\frac{1}{b}\left(\mathcal{D}^{ TX}+\mathcal{E}^{ TX}-i\mathcal{D}^{ 
N}+i\mathcal{E}^{N}\right), \nonumber \\
&\mathcal L_{b}^{X}=\frac{1}{2}\left\vert  \left[Y^{N},Y^{ TX}\right]\right\vert^{2}
     +\frac{1}{2b^{2}} \left( -\Delta^{ TX \oplus N
	 }+\left\vert  Y\right\vert^{2}-m-n\right)
	 +\frac{N^{\Lambda\ac\left( 
	 T^{*}X \oplus N^{*}\right)}}{b^{2}}\\
     &+\frac{1}{b}\Biggl(\n_{Y^{ TX}}^{C^{ \infty }\left(TX \oplus 
     N,\widehat{\pi}^{*}\left(\Lambda\ac\left(T^{*}X \oplus N^{*}\right)\otimes 
      S^{\overline{TX}} \otimes  F\right) \right) } +
     \widehat{c}\left(\ad\left(Y^{TX}\right)
     \right) \nonumber 
     \\
    &- c\left(\ad\left(Y^{TX} \right) +i\theta\ad\left(Y^{N}\right)\right)  
   -i\widehat{c}\left(\ad\left(Y^{N}\right)\vert_{\overline{TX}}\right) -i\rho^{F}\left(Y^{ N}\right)\Biggr). \nonumber 
\end{align}
\end{thm}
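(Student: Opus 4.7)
The plan is to descend the formulas established on $G \times \mathfrak{g}$ in Definition \ref{Dfanta} and Theorem \ref{Tcarop} to the total space $\widehat{\mathcal{X}} = X \times \mathfrak g$ via the quotient by the principal $K$-bundle $p : G \to X$. Under the identification $TX \oplus N \simeq G \times_{K} \mathfrak g$, each $K$-invariant object built from $\mathfrak g$-data has a canonical counterpart built from $(TX \oplus N)$-data, and the connection $\theta^{\mathfrak k}$ dictates how left-invariant differentiations on $G$ split into horizontal derivatives on $X$ and vertical derivatives on the fibres of $\widehat{\mathcal{X}}$.

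The claimed formula for $\mathfrak D_{b}^{X}$ follows directly by descending (\ref{eq:rio1}). Each piece passes to the quotient term by term: $\widehat{D}^{\mathfrak g}$ becomes $\widehat{D}^{\mathfrak g,X}$; the Clifford element $ic(\left[Y^{\mathfrak k}, Y^{\mathfrak p}\right])$ descends to $ic(\left[Y^{N}, Y^{TX}\right])$ via $G \times_{K} \mathfrak g \simeq TX \oplus N$; and the fibrewise operators $\mathcal{D}^{\mathfrak p}, \mathcal{E}^{\mathfrak p}, \mathcal{D}^{\mathfrak k}, \mathcal{E}^{\mathfrak k}$ become $\mathcal{D}^{TX}, \mathcal{E}^{TX}, \mathcal{D}^{N}, \mathcal{E}^{N}$, since they act only along the fibres of $TX \oplus N$ and are $K$-equivariant by construction.

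For the formula for $\mathcal{L}_{b}^{X}$, I start from (\ref{eq:tra1}) and descend Theorem \ref{Tcarop} to $\widehat{\mathcal{X}}$. The term $\widehat{D}^{\mathfrak g,2}/2$ in (\ref{eq:rio2a}) descends to $\widehat{D}^{\mathfrak g,X,2}/2$, which is exactly cancelled by the first summand of (\ref{eq:tra1}). The scalar potential $\vert [Y^{\mathfrak k}, Y^{\mathfrak p}] \vert^{2}/2$ descends to $\vert [Y^{N}, Y^{TX}] \vert^{2}/2$, and the $b^{-2}$ block yields the harmonic-oscillator term along the fibres of $TX \oplus N$ together with the number operator $N^{\LXN}/b^{2}$, exactly as claimed.

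The main technical step, and the principal obstacle, is to descend correctly the $b^{-1}$ terms of (\ref{eq:rio2a}). The left-invariant differentiation $\underline{Y}^{\mathfrak p}$ becomes, via the connection $\theta^{\mathfrak k}$, the horizontal lift $\n^{C^{\infty}(TX \oplus N, \widehat{\pi}^{*}(\LXN \otimes S^{\overline{TX}} \otimes F))}_{Y^{TX}}$. The differentiation $i\underline{Y}^{\mathfrak k}$, applied to a $K$-equivariant section, is converted via (\ref{eq:sumex18}) into the infinitesimal action of $iY^{N} \in \mathfrak k$ on the fibre representation $\Lambda\ac(\mathfrak g^{*}) \otimes S^{\overline{\mathfrak p}} \otimes E$: on the spinor factor this action is $-i\widehat{c}(\ad(Y^{N})\vert_{\overline{TX}})$ by (\ref{eq:batt1}), on $F$ it is $-i\rho^{F}(Y^{N})$, and on $\Lambda\ac(\mathfrak g^{*})$ it combines with the existing $\widehat{c}$ and $c$ contributions of (\ref{eq:rio2a}) together with the vertical derivative $-i\n^{V}_{[Y^{\mathfrak k}, Y^{\mathfrak p}]}$. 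Using that $\theta$ acts as $-1$ on $\mathfrak p$ and $+1$ on $\mathfrak k$, so that $\theta\ad(Y^{N})$ intertwines the $\mathfrak p$- and $\mathfrak k$-components of $\ad(Y^{N})$ up to a sign, one checks that the $c$-terms reorganise into $-c(\ad(Y^{TX}) + i\theta\ad(Y^{N}))$ as in the statement, while $\widehat{c}(\ad(Y^{TX}))$ remains unchanged. Collecting all contributions reproduces the claimed formula for $\mathcal{L}_{b}^{X}$.
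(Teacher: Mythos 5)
The paper itself does not prove this theorem: it simply states that the identities "were established in [Theorem 2.12.5 and eq. (2.13.5)]{Bismut08b}" and cites that reference, so there is no internal proof to compare against. Your proof sketch reproduces the argument from the cited reference correctly and identifies the essential steps: descent of (\ref{eq:rio1}) term by term for $\mathfrak D_b^X$; cancellation of $\widehat{D}^{\mathfrak g,X,2}/2$ against the first summand of (\ref{eq:tra1}); direct descent of the quadratic potential, harmonic oscillator, and number-operator pieces; conversion of $\underline{Y}^{\mathfrak p}$ into the horizontal covariant derivative via $\theta^{\mathfrak k}$; and, crucially, the conversion of $i\underline{Y}^{\mathfrak k}$ on $K$-invariant sections into $-i$ times the fibre Lie-algebra action plus a vertical derivative $i\n^{V}_{[Y^{\mathfrak k},Y^{\mathfrak p}]}$ that cancels the explicit $-i\n^{V}_{[Y^{\mathfrak k},Y^{\mathfrak p}]}$ in (\ref{eq:rio2a}). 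Your final reorganisation of the Clifford terms is right: writing the derivation of $\Lambda\ac(\mathfrak g^{*})$ induced by $\ad(Y^{\mathfrak k})$ as $c(\ad(Y^{\mathfrak k}))+\widehat{c}(\ad(Y^{\mathfrak k}))$ (a point you leave implicit but should state explicitly), the $\widehat{c}$-terms collapse to $\widehat{c}(\ad(Y^{\mathfrak p}))$ and the $c$-terms collapse, using $\theta\vert_{\mathfrak p}=-1$, $\theta\vert_{\mathfrak k}=+1$, and the identity $-ic(\ad(Y^{\mathfrak k}))+2ic(\ad(Y^{\mathfrak k})\vert_{\mathfrak p})=-ic(\theta\ad(Y^{\mathfrak k}))$, to $-c(\ad(Y^{\mathfrak p})+i\theta\ad(Y^{\mathfrak k}))$, which descends to $-c(\ad(Y^{TX})+i\theta\ad(Y^{N}))$ as claimed. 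So your argument is correct and takes the (only reasonable) same approach as the external reference.
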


In \cite[Theorem 2.13.2]{Bismut08b}, it was shown that 
$\frac{\pa}{\pa t}+\mathcal{L}^{X}_{b}$ is hypoelliptic, as a 
consequence of a result of H\"{o}rmander \cite{Hormander67}. According to 
the terminology of \cite{Bismut08b}, $\mathcal{L}^{X}_{b}$ is called 
a hypoelliptic Laplacian.

\subsection{A formula relating $\mathcal{L}_{b}^{X}$ to 
$\mathcal{L}^{X}_{0}$}%
\label{subsec:formre}
By (\ref{eq:tra2}), $\mathcal{L}^{X}_{b}$ can be written in the form
\begin{equation}
    \mathcal{L}^{X}_{b}=\frac{\alpha}{b^{2}}+\frac{\beta}{b}+\gamma.
    \label{eq:bugr1}
\end{equation}
We  still denote by 
\index{H@$H$}%
$H$ the kernel of $\alpha$. Then $H$ is the module over $C^{\infty 
}\left(X,\C\right)$ given by 
\begin{equation}
    H=\left\{\exp\left(-\left\vert  Y\right\vert^{2}/2\right)\right\} 
    \otimes S^{\overline{TX}} \otimes F.
    \label{eq:bugr2}
\end{equation}
Let 
\index{Hp@$H^{\perp}$}%
$H^{\perp}$ be the orthogonal  space to $H$ in $L_{2}\left(\widehat{\mathcal{X}},
\widehat{\pi}^{*}\left(\Lambda\ac\left(T^{*}X \oplus N^{*}\right)
\otimes  S^{\overline{TX}} \otimes  F\right)\right)$. We still denote by 
\index{P@$P$}%
$P$  
the orthogonal projection on $H$. Let
 \index{P@$P^{\perp}$}%
$P^{\perp}$ be  the orthogonal projection from $
L_{2}\left(\widehat{\mathcal{X}},\widehat{\pi}^{*}\left(\Lambda\ac\left(T^{*}X 
\oplus N^{*}\right)\otimes  S^{\overline{TX}}
\otimes F\right)\right)$ on $H^{\perp}$. We embed 
$L_{2}\left(X,S^{\overline{TX}} \otimes F\right)$ 
into $L_{2}\left(\widehat{\mathcal{X}},\widehat{\pi}^{*}\left(\Lambda\ac\left(T^{*}X \oplus N^{*}\right)
\otimes  S^{\overline{TX}} \otimes  F\right) \right) $ via the isometric embedding $s \to 
\widehat{\pi}^{*}s\exp\left(-\left\vert  
Y\right\vert^{2}/2\right)/\pi^{\left(m+n\right)/4}$. 

Note that $\beta$ maps $H$ into 
$H^{\perp}$. Let 
\index{a-1@$\alpha^{-1}$}%
$\alpha^{-1}$ be the inverse of $\alpha$ 
restricted to $H^{\perp}$.

The following result was established in \cite[Theorem 
2.16.1]{Bismut08b}.
\begin{thm}\label{Tfundid}
The following identity holds:
\begin{equation}
    P\left(\gamma-\beta\alpha^{-1}\beta\right)P=\mathcal{L}^{X}_{0}.
    \label{eq:bugr3}
\end{equation}
\end{thm}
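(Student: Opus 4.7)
The plan is to carry out the Schur-complement (adiabatic reduction) computation implicit in (\ref{eq:bugr3}), exploiting the fact that $\alpha$ is a fiberwise harmonic oscillator tensored with the number operator of $\Lambda\ac\left(T^{*}X\oplus N^{*}\right)$, so its spectrum on $H^{\perp}$ is contained in $\{1,2,3,\ldots\}$ and $\alpha^{-1}$ is a bounded self-adjoint operator on $H^{\perp}$. Comparing (\ref{eq:bugr1}) with (\ref{eq:tra2}), one reads off
\begin{equation*}
\alpha=\tfrac{1}{2}\bigl(-\Delta^{TX\oplus N}+|Y|^{2}-m-n\bigr)+N^{\Lambda\ac\left(T^{*}X\oplus N^{*}\right)}, \qquad \gamma=\tfrac{1}{2}\bigl|[Y^{N},Y^{TX}]\bigr|^{2},
\end{equation*}
with $\beta$ equal to the sum of the five summands appearing at order $1/b$ in the formula for $\mathcal{L}_{b}^{X}$ in (\ref{eq:tra2}).

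The core computation is to evaluate $P\beta\alpha^{-1}\beta P$. Each summand of $\beta$ is linear in $Y$, hence raises the harmonic-oscillator level by exactly $+1$. The summands $\n_{Y^{TX}}^{V}$, $-i\widehat{c}\left(\ad(Y^{N})|_{\overline{TX}}\right)$ and $-i\rho^{F}(Y^{N})$ act as scalars on $\Lambda\ac\left(T^{*}X\oplus N^{*}\right)$, so they map $H$ into the $\alpha$-eigenspace of eigenvalue $1$. Since $\ad(Y^{TX})$ and $\theta\ad(Y^{N})$ are $B$-antisymmetric, the remaining summands $\widehat{c}\left(\ad(Y^{TX})\right)$ and $-c\left(\ad(Y^{TX})+i\theta\ad(Y^{N})\right)$ produce only a two-form piece when applied to $1\in\Lambda^{0}$ (the zero-form component vanishes by antisymmetry), mapping $H$ into the $\alpha$-eigenspace of eigenvalue $3$. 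Thus $\alpha^{-1}\beta P$ amounts to dividing by $1$ or $3$ on the respective pieces; a second application of $\beta$, followed by $P$, reduces to a finite list of bilinear pairings, each evaluated by a Gaussian fiber moment (with $\langle Y^{a}Y^{b}\rangle=\tfrac12\delta^{ab}$ for the normalized measure $e^{-|Y|^{2}}/\pi^{(m+n)/2}\,dY$) together with Clifford relations that retract the two-forms back onto $\Lambda^{0}$.

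Matching with Proposition \ref{PLX} then proceeds summand by summand. The self-pairing of $\n^{V}_{Y^{TX}}$ yields $-\tfrac12\Delta^{X,H}$ after contracting $\langle Y^{TX,i}Y^{TX,j}\rangle$; the cross-pairing of $-i\widehat{c}\left(\ad(Y^{N})|_{\overline{TX}}\right)$ with $-i\rho^{F}(Y^{N})$ (taken twice in $\beta\alpha^{-1}\beta$ with $\alpha^{-1}=1$) produces $\sum_{i=m+1}^{m+n}\widehat{c}\left(\ad(e_{i})|_{\overline{TX}}\right)\rho^{F}(e_{i})$ after the $Y^{N}$-moment; the self-pairing of $-i\rho^{F}(Y^{N})$ reconstitutes $\tfrac12 C^{\mathfrak{k},F}$. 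The Clifford self-pairings of $\widehat{c}\left(\ad(Y^{TX})\right)$ and of $c\left(\ad(Y^{TX})+i\theta\ad(Y^{N})\right)$, weighted by $1/3$, combine with $P\gamma P=\tfrac{1}{2}\langle |[Y^{N},Y^{TX}]|^{2}\rangle$ to give the scalar constant $\tfrac{1}{8}\Tr^{\mathfrak{p}}[C^{\mathfrak{k},\mathfrak{p}}]+\tfrac{1}{48}\Tr^{\mathfrak{k}}[C^{\mathfrak{k},\mathfrak{k}}]$ via (\ref{eq:ors1}), (\ref{eq:ors2}) and (\ref{eq:qsic-10}). Assembling, one recovers exactly the expression (\ref{eq:qsic-9a1}) for $\mathcal{L}_{0}^{X}$.

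The principal obstacle will be the Clifford and Gaussian bookkeeping: the five summands of $\beta$ a priori generate fifteen bilinear pairings, many of which vanish for degree or parity reasons but several of which require careful use of the commutation relations (\ref{eq:kos7}) and of the formulas (\ref{eq:ors1})--(\ref{eq:ors2}) for $c(A),\widehat c(A)$ on antisymmetric endomorphisms. Keeping track of the factor $1/3$ coming from the $\alpha=3$ eigenspace, of the signs produced by the $\Z_{2}$-graded interaction between $c$ and $\widehat c$, and of the various Gaussian fourth moments (which bring in the bracket quartic through $\gamma$) is the main source of potential error; everything in the end reorganizes into the Casimir splitting (\ref{eq:japo1})--(\ref{eq:japo3}) applied to (\ref{eq:Lie17}), which is the algebraic identity that (\ref{eq:bugr3}) is encoding.
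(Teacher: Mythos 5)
Your sketch is correct in outline and will indeed reproduce formula (\ref{eq:qsic-9a1}), but it follows the ``second proof [...] based on explicit computations'' referred to in Remark \ref{Rodd} and developed (for the $\vartheta$-deformed extension) in subsection \ref{subsec:anopr}, rather than the paper's primary route. That route, carried out in subsection \ref{subsec:formrebis} for the extension Theorem \ref{Thfub}, is structural: one writes $\tfrac{1}{\sqrt 2}\mathfrak D_{b}^{X}=E+F/b$, so that comparing with (\ref{eq:tra1}) gives $\alpha=F^{2}$, $\beta=[E,F]$, $\gamma=E^{2}-\tfrac{1}{2}\widehat D^{\mathfrak g,X,2}$. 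The purely algebraic identity $F\alpha^{-1}F=P^{\perp}$ then yields
\begin{equation*}
P\bigl(\gamma-\beta\alpha^{-1}\beta\bigr)P=(PEP)^{2}-\tfrac{1}{2}P\widehat D^{\mathfrak g,X,2}P,
\end{equation*}
the compression identity of Proposition \ref{Pcomp1} forces $PEP=0$, and $-\tfrac{1}{2}\widehat D^{\mathfrak g,X,2}=\mathcal L_{0}^{X}$ follows from (\ref{eq:dep0}) and (\ref{eq:Lie17}). This bypasses all Gaussian-moment and resolvent bookkeeping. Your Schur-complement computation is an honest cross-check (it is what (\ref{eq:qsic6})--(\ref{eq:qsic22}) carry out), and the paper in fact needs it for later estimates, but it is substantially longer and the place where errors can hide. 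Two small inaccuracies in your outline: the first summand of $\beta$ in (\ref{eq:tra2}) is the horizontal covariant derivative $\n_{Y^{TX}}$ acting through the bundle connection, not the vertical radial derivative $\n^{V}_{Y^{TX}}$ (the latter would be compressed away by $P$); and your list of surviving self-pairings omits that of $-i\widehat c\bigl(\ad(Y^{N})|_{\overline{TX}}\bigr)$, which contributes $+\tfrac{1}{2}C^{\mathfrak k,S^{\overline{\mathfrak p}}}=\tfrac{1}{16}\Tr^{\mathfrak p}[C^{\mathfrak k,\mathfrak p}]$ by (\ref{eq:batt2}) and (\ref{eq:Lie11x1}), and must be folded into the scalar tally together with $P\gamma P=-\tfrac{1}{8}\Tr^{\mathfrak p}[C^{\mathfrak k,\mathfrak p}]$ and the $1/3$-weighted quartic Clifford pairings.
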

\begin{remk}\label{Rodd}
    In \cite{Bismut08b}, two proofs were given of (\ref{eq:bugr3}). A 
    first proof relies on Proposition \ref{Pcomp1}. A second proof is 
    based on explicit computations. In subsections 
    \ref{subsec:formrebis} and \ref{subsec:anopr}, both 
    arguments will be extended to the deformation 
    $\mathcal{L}^{X}_{\bt}$ of $\mathcal{L}^{X}_{b}$.
    
In the above constructions, only the even part of 
$c\left(\overline{\mathfrak p}\right)$ or of $c\left(\overline{TX}\right)$ is involved. If $m$ is even, our 
operators preserve the splitting $S^{\overline{TX}}=S^{\overline{TX}}_{+} \oplus 
S^{\overline{TX}}_{-}$. There is no way that by the above method, odd elements 
in $c\left(\overline{\mathfrak p}\right)$ or 
$c\left(\overline{TX}\right)$ would appear. Incidentally,  equations 
(\ref{eq:rio1}),
(\ref{eq:comp1}), and (\ref{eq:tra2}) suggest that $\mathfrak D^{X}_{b}\vert_{b>0}$ is a deformation of the  operator $0$.
\end{remk}

\section{The hypoelliptic operators  $\mathcal{L}^{X}_{b,\vartheta}$}%
\label{sec:defdx}
In this section, we introduce a family of hypoelliptic operators 
$\mathcal{L}^{X}_{b,\vartheta}\vert_{\left( b,\vartheta\right)\in 
\R_{+}^{*}\times \left[0,\frac{\pi}{2}\right[}$  acting on 
$C^{\infty }\left(\widehat{\mathcal{X}}, 
\pi^{*}\left(\Lambda\ac\left(T^{*}X \oplus N^{*}\right)\right)\otimes 
S^{\overline{TX}} \otimes F\right)$, that coincides with the family
$\mathcal{L}^{X}_{b}\vert_{b>0} $ for $\vartheta=0$. The construction of this new family is done 
through a new family of operators $\mathfrak D^{X}_{b,\vartheta}\vert_{\left(b,\vartheta\right)\in 
\R_{+}^{*}\times \left[0,\frac{\pi}{2}\right[}$, that coincides with 
the family $\mathfrak D^{X}_{b}\vert_{b>0}$ for $\vartheta=0$.   
While in Remark \ref{Rodd}, one could argue that the family $\mathfrak 
D_{b}^{X}\vert_{b>0}$ 
deforms the  operator $0$ as $b\to 0$, here, given $\vartheta\in 
\left[0,\frac{\pi}{2}\right[$, $\mathfrak 
D_{b,\vartheta}^{X}\vert_{b>0}$ 
deforms the classical Dirac operator 
$\sin\left(\vartheta\right)\widehat{D}^{X}$ as $b\to 0$. It is in this way  
that the Dirac operator $\widehat{D}^{X}$ enters the picture, and not 
only through its square as in section \ref{sec:eta}.  

In this section, superconnections associated with the 
family  $\mathfrak D^{X}_{b,\vartheta}\vert_{\left( b,\vartheta\right)\in 
\R_{+}^{*}\times \left[0,\frac{\pi}{2}\right[}$ do appear. Also we 
introduce the conjugate families $\overline{\mathfrak D}^{X}_{\bt}, 
\mathfrak D^{X \prime }_{\bt}$, the first family  being more suitable when 
studying the limit $b\to 0$, and the second will be shown later to be 
more convenient when considering the limit $b\to + \infty $.

The organization 
of the section is  closely related to the organization of 
section \ref{sec:eta}. In subsection \ref{subsec:dedx}, we introduce 
the family of operators 
$\widehat{D}^{X}_{\vartheta}=\sin\left(\vartheta\right)\widehat{D}^{X}, 
\vartheta\in \left[0,\frac{\pi}{2}\right[$,  we construct a
corresponding superconnection $A^{X}$, and a version $T^{X}$ of its 
curvature.

In subsection \ref{subsec:grpso}, if $\overline{\mathfrak p}$ is 
another copy of $\mathfrak p$, we make $\mathrm{SO}\left(2\right)$ 
act on $\mathfrak p \oplus \overline{\mathfrak p}$.

In subsection \ref{subsec:defdb}, using the above action of 
$\mathrm{SO}\left(2\right)$, we construct a deformation $\mathfrak 
D_{b,\vartheta}$ of $\mathfrak D_{b}$, and also the conjugate 
operators 
$\overline{\mathfrak D}_{\bt}, \mathfrak D'_{b,\vartheta}$.

In subsection \ref{subsec:compdb}, we give a formula for a 
compression of $\overline{\mathfrak D}_{b,\vartheta}$.

In subsection \ref{subsec:dbct}, we give  formulas for $\mathfrak 
D_{b,\vartheta}^{2}, \mathfrak D^{\prime 2}_{b,\vartheta}$.

In subsection \ref{subsec:scbs}, we introduce superconnections 
$B,\overline{B},B'$ 
on $\R_{+}^{*}\times \left[0,\frac{\pi}{2}\right[$ that are 
associated with the above families,  and we prove a compression identity.

In subsection \ref{subsec:des}, we descend the above  
operators to 
$X$. We obtain this way new operators $\mathfrak D^{X}_{b,\vartheta}, 
\overline{\mathfrak D}^{X}_{\bt},
\mathfrak D^{X \prime }_{b,\vartheta}$ and new  hypoelliptic 
Laplacians 
$\mathcal{L}^{X}_{b,\vartheta},\overline{\mathcal{L}}^{X}_{\bt},\mathcal{L}^{X \prime}_{b,\vartheta}$ 
that act on the same space as $\mathfrak D^{X}_{b},\mathcal{L}^{X}_{b}$ and coincide with 
$\mathfrak D^{X}_{b},\mathcal{L}^{X}_{b}$ for $\vartheta=0$. 

In subsection \ref{subsec:formrebis}, we give a formula relating 
$\overline{\mathcal{L}}^{X}_{b,\vartheta}$ to $\mathcal{L}^{X}_{0,\vartheta}$, 
that ultimately explains why $\mathcal{L}^{X}_{b,\vartheta}$ deforms 
$\mathcal{L}^{X}_{0,\vartheta}$. As in \cite[Theorem 2.16.1]{Bismut08b}, the proof uses 
the compression identity of subsection \ref{subsec:compdb}. 

In subsection \ref{subsec:newsup}, we descend the superconnections 
$B,\overline{B}, B'$ to superconnections $B^{X},\overline{B}^{X},B^{X \prime }$ over $\R^{*}_{+}\times 
\left[0,\frac{\pi}{2}\right[$, and we introduce their proper curvatures 
$L^{X}, \overline{L}^{X},L^{X \prime }$. We establish a corresponding 
compression identity, and we give a formula relating $\overline{L}^{X}$ to 
$T^{X}$, that extend in higher degree what we did in subsection \ref{subsec:formrebis}.

In subsection \ref{subsec:ide}, we establish a quadratic compression 
identity on linear maps.

In subsection \ref{subsec:anopr}, as in \cite[section 
2.16]{Bismut08b}, using the compression identity of subsection 
\ref{subsec:ide}, we give a direct 
computational proof of the formulas of subsection 
\ref{subsec:formrebis} and \ref{subsec:newsup}. One reason for giving this direct proof is 
that as in \cite{Bismut08b}, in section \ref{sec:fin}, and more 
specifically in subsection \ref{subsec:limu},  we will use  some of the intermediate 
identities established in this direct proof to 
study the behaviour of our hypoelliptic orbital integrals as $b\to 
0$. 

Finally, in subsection \ref{subsec:scal}, when the bilinear form $B$ 
on $\mathfrak g$ is replaced by $B/t$, we give a formula expressing 
the above operators  associated with $B/t$ in terms of the original 
operators.

In this section, we make the same assumptions as in section 
\ref{sec:eta}, and we use the corresponding notation. In particular 
the connected reductive Lie group 
$G$ is assumed to be simply connected.
\subsection{A deformation of $\widehat{D}^{X}$}%
\label{subsec:dedx}
Here, as in \cite[section 2]{BismutFreed86b}, we will adopt the formalism of subsection \ref{subsec:elop} 
while omitting the Clifford variable $\sigma$ for simplicity. We will 
instead take into account the grading in the Clifford algebras 
$\widehat{c}\left(\overline{\mathfrak p}\right)$. In the sequel, we 
form the $\Z_{2}$-graded tensor 
product 
$\Lambda\ac\left(\R^{*}\right)\ho\widehat{c}\left(\overline{\mathfrak 
p}\right) \otimes U\left(\mathfrak g\right)$. In particular 
$c^{\odd}\left(\overline{\mathfrak p}\right)$ anticommutes with 
$d\vartheta\in \Lambda^{1}\left(\R^{*}\right)$. 

Recall that the operator $\widehat{D}$ was defined in 
(\ref{eq:Lie14}).   For $\vartheta\in \left[0,\frac{\pi}{2}\right[$, set
\index{Dt@$\widehat{D}_{\vartheta}$}%
\begin{equation}\label{eq:defa1}
\widehat{D}_{\vartheta}=\sin\left(\vartheta\right)\widehat{D}.
\end{equation}
Then  
$\widehat{D}_{\vartheta}\in \widehat{c}^{\odd}\left(\overline{\mathfrak p}\right) 
\otimes U\left(\mathfrak g\right)$.

Let 
\index{A@$A$}%
$A$ be the superconnection  over 
$\left[0,\frac{\pi}{2}\right[$,
\begin{equation}\label{eq:defa2x}
A=d\vartheta\frac{\pa}{\pa 
\vartheta}+\frac{\widehat{D}_{\vartheta}}{\sqrt{2}}.
\end{equation}
The curvature $A^{2}$ of $A$ is given by
\begin{equation}\label{eq:defa3x}
A^{2}=\frac{1}{2}\sin^{2}\left(\vartheta\right)\widehat{D}^{2}+\frac{d\vartheta}{\sqrt{2}}\cos\left(\vartheta\right)\widehat{D}.
\end{equation}

Also $\widehat{D}_{\vartheta}$ descends to an 
operator 
\index{DXt@$\widehat{D}^{X}_{\vartheta}$}%
$\widehat{D}^{X}_{\vartheta}$ acting on $C^{ \infty 
}\left(X,S^{\overline{TX}} \otimes F\right)$ given by
\begin{equation}\label{eq:defa1x0}
\widehat{D}^{X}_{\vartheta}=\sin\left(\vartheta\right)\widehat{D}^{X}.
\end{equation}
The superconnection  $A$ descends to a superconnection  
\index{AX@$A^{X}$}%
$A^{X}$ on the trivial vector bundle $C^{\infty }\left(X,S^{\overline{TX}} \otimes F\right)$
on $\left[0,\frac{\pi}{2}\right[$ given by
\begin{equation}\label{eq:defa2}
A^{X}=d\vartheta\frac{\pa}{\pa \vartheta}+\frac{\widehat{D}^{X}_{\vartheta}}{\sqrt{2}}.
\end{equation}
The curvature $A^{X,2}$ of $A^{X}$ is given by
\begin{equation}\label{eq:defa3}
A^{X,2}=\frac{1}{2}\sin^{2}\left(\vartheta\right)\widehat{D}^{X,2}+\frac{d\vartheta}{
\sqrt{2}}\cos\left(\vartheta\right)\widehat{D}^{X}.
\end{equation}

Recall that the operator $\mathcal{L}_{0}^{X}$ was defined in 
(\ref{eq:Lie17}). 
\begin{defin}\label{DLXt}
Set
\index{LXt@$\mathcal{L}^{X}_{0,\vartheta}$}%
\begin{equation}\label{eq:gzinc1}
\mathcal{L}^{X}_{0,\vartheta}=\frac{1}{2}\sin^{2}\left(\vartheta\right)\widehat{D}^{X,2}+\mathcal{L}_{0}^{X}.
\end{equation}
\end{defin}

Let $e_{m+1},\ldots,e_{m+n}$ be an orthonormal basis of $N$.
\begin{prop}\label{PLXt}
The following identities hold:
\begin{align}\label{eq:japo4}
    &\mathcal{L}^{X}_{0,\vartheta}=-\frac{1}{2}\cos^{2}\left(\vartheta\right)
    \widehat{D}^{X,2}+\frac{1}{48}\Tr^{\mathfrak k}\left[C^{\mathfrak 
    k, \mathfrak k}\right]+\frac{1}{2}C^{\mathfrak k,F}, \nonumber \\
    &\mathcal{L}^{X}_{0,\vartheta}=\cos^{2}\left(\vartheta\right)\mathcal{L}_{0}^{X}+
\sin^{2}\left(\vartheta\right)\left(\frac{1}{8}
B^{*}\left(\kappa^{\mathfrak k},\kappa^{\mathfrak 
k}\right)+\frac{1}{2}C^{\mathfrak k,F}\right),\\
&\mathcal{L}^{X}_{0,\vartheta}=\frac{1}{2}\cos^{2}\left(\vartheta\right)
\left( -\Delta^{X,H}
+\frac{1}{4}\Tr^{\mathfrak 
p}\left[C^{\mathfrak k,\mathfrak p}\right] +
2\sum_{i=m+1}^{m+n}
\widehat{c}\left(\ad\left(e_{i}\right)\vert_{\overline{TX}}\right)
\rho^{F}\left(e_{i}\right) \right)  \nonumber \\
&\qquad \qquad +\frac{1}{48}\Tr^{\mathfrak 
k}\left[C^{\mathfrak k,\mathfrak 
k}\right]+\frac{1}{2}
C^{\mathfrak k,F}. \nonumber 
\end{align}
\end{prop}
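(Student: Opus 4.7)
The plan is to prove the three identities in sequence, each one essentially an algebraic rearrangement using formulas already available in the paper, with no hard analytic work needed.

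For the first formula, I would start from the definition (\ref{eq:gzinc1}), namely $\mathcal{L}^{X}_{0,\vartheta}=\frac{1}{2}\sin^{2}(\vartheta)\widehat{D}^{X,2}+\mathcal{L}_{0}^{X}$, and substitute the expression for $\mathcal{L}^{X}_{0}$ coming from (\ref{eq:Lie17x1}),
\begin{equation*}
\mathcal{L}^{X}_{0}=-\tfrac{1}{2}\widehat{D}^{X,2}+\tfrac{1}{8}B^{*}(\kappa^{\mathfrak{k}},\kappa^{\mathfrak{k}})+\tfrac{1}{2}C^{\mathfrak{k},F}.
\end{equation*}
Combining the two $\widehat{D}^{X,2}$ terms via $\sin^{2}(\vartheta)-1=-\cos^{2}(\vartheta)$ gives a coefficient of $-\tfrac{1}{2}\cos^{2}(\vartheta)$ in front of $\widehat{D}^{X,2}$. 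The remaining scalars are $\tfrac{1}{8}B^{*}(\kappa^{\mathfrak{k}},\kappa^{\mathfrak{k}})+\tfrac{1}{2}C^{\mathfrak{k},F}$, and by the second identity in (\ref{eq:qsic-10}) we have $\tfrac{1}{8}B^{*}(\kappa^{\mathfrak{k}},\kappa^{\mathfrak{k}})=\tfrac{1}{48}\Tr^{\mathfrak{k}}[C^{\mathfrak{k},\mathfrak{k}}]$, which produces the first identity of (\ref{eq:japo4}).

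For the second formula, I would return to the definition and this time use (\ref{eq:Lie17x1}) to replace $\tfrac{1}{2}\widehat{D}^{X,2}$ by $-\mathcal{L}^{X}_{0}+\tfrac{1}{8}B^{*}(\kappa^{\mathfrak{k}},\kappa^{\mathfrak{k}})+\tfrac{1}{2}C^{\mathfrak{k},F}$. Multiplying by $\sin^{2}(\vartheta)$ and adding $\mathcal{L}^{X}_{0}$ produces a coefficient $(1-\sin^{2}(\vartheta))=\cos^{2}(\vartheta)$ in front of $\mathcal{L}^{X}_{0}$ and leaves $\sin^{2}(\vartheta)\bigl(\tfrac{1}{8}B^{*}(\kappa^{\mathfrak{k}},\kappa^{\mathfrak{k}})+\tfrac{1}{2}C^{\mathfrak{k},F}\bigr)$, which is exactly the second identity.

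Finally, for the third formula, I would plug the explicit expression (\ref{eq:qsic-9a1}) for $\mathcal{L}^{X}_{0}$ from Proposition \ref{PLX} into the just-established second identity. Inside the $\cos^{2}(\vartheta)$ factor this reproduces the $\frac{1}{2}\cos^{2}(\vartheta)$ times the bracketed expression in $(-\Delta^{X,H}+\ldots)$. The two constant-type terms combine as $\cos^{2}(\vartheta)\bigl(\tfrac{1}{48}\Tr^{\mathfrak{k}}[C^{\mathfrak{k},\mathfrak{k}}]+\tfrac{1}{2}C^{\mathfrak{k},F}\bigr)+\sin^{2}(\vartheta)\bigl(\tfrac{1}{48}\Tr^{\mathfrak{k}}[C^{\mathfrak{k},\mathfrak{k}}]+\tfrac{1}{2}C^{\mathfrak{k},F}\bigr)$, again using (\ref{eq:qsic-10}) to rewrite $\tfrac{1}{8}B^{*}(\kappa^{\mathfrak{k}},\kappa^{\mathfrak{k}})$. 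Since $\cos^{2}(\vartheta)+\sin^{2}(\vartheta)=1$, these collapse to $\tfrac{1}{48}\Tr^{\mathfrak{k}}[C^{\mathfrak{k},\mathfrak{k}}]+\tfrac{1}{2}C^{\mathfrak{k},F}$, yielding the third formula.

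There is no real obstacle here. The only thing to be careful about is the consistent use of the conversion $\tfrac{1}{8}B^{*}(\kappa^{\mathfrak{k}},\kappa^{\mathfrak{k}})=\tfrac{1}{48}\Tr^{\mathfrak{k}}[C^{\mathfrak{k},\mathfrak{k}}]$ from (\ref{eq:qsic-10}), and tracking the trigonometric identity $\sin^{2}(\vartheta)+\cos^{2}(\vartheta)=1$ so that the constant terms combine correctly. The three formulas are really three presentations of the same operator: one emphasizing $\widehat{D}^{X,2}$, one emphasizing $\mathcal{L}^{X}_{0}$, and one making explicit the scalar and curvature pieces.
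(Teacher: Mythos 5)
Your proof is correct and follows essentially the same route as the paper: the paper likewise uses (\ref{eq:gzinc1}) and (\ref{eq:Lie17x1}) to obtain the first two identities (its display (\ref{eq:gzinc2})), then invokes (\ref{eq:qsic-10}) and (\ref{eq:qsic-9a1}) to convert $\tfrac{1}{8}B^{*}(\kappa^{\mathfrak k},\kappa^{\mathfrak k})$ to $\tfrac{1}{48}\Tr^{\mathfrak k}[C^{\mathfrak k,\mathfrak k}]$ and to substitute the explicit form of $\mathcal{L}^{X}_{0}$. Your version merely spells out the trigonometric bookkeeping that the paper compresses into one line.
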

\begin{proof}
By (\ref{eq:Lie17x1}), (\ref{eq:gzinc1}), we get
\begin{multline}\label{eq:gzinc2}
\mathcal{L}^{X}_{0,\vartheta}=-\frac{1}{2}\cos^{2}\left(\vartheta\right)\widehat{D}^{X,2}+\frac{1}{8}
B^{*}\left(\kappa^{\mathfrak k},\kappa^{\mathfrak 
k}\right)+\frac{1}{2}C^{\mathfrak k,F}\\
=\cos^{2}\left(\vartheta\right)\mathcal{L}_{0}^{X}+
\sin^{2}\left(\vartheta\right)\left(\frac{1}{8}
B^{*}\left(\kappa^{\mathfrak k},\kappa^{\mathfrak 
k}\right)+\frac{1}{2}C^{\mathfrak k,F}\right).
\end{multline}
By  (\ref{eq:qsic-10}), (\ref{eq:qsic-9a1}), and (\ref{eq:gzinc2}), we get (\ref{eq:japo4}). 
\end{proof}
\begin{defin}\label{DTX}
Set
\index{TX@$T^{X}$}%
\begin{equation}\label{eq:defa4}
T^{X}=A^{X,2}+\mathcal{L}_{0}^{X}.
\end{equation}
By  (\ref{eq:defa3}), (\ref{eq:gzinc1}), and (\ref{eq:defa4}), we get
\begin{equation}\label{eq:defa5x1b}
T^{X}=\mathcal{L}^{X}_{0,\vartheta}+\frac{d\vartheta}{
\sqrt{2}}\cos\left(\vartheta\right)\widehat{D}^{X}.
\end{equation}
\end{defin}

Now we establish a version of Bianchi's identity, similar to 
(\ref{eq:cla6}).
\begin{prop}\label{PBi}
The following identity holds:
\begin{equation}\label{eq:bia1}
\left[A^{X},T^{X}\right]=0.
\end{equation}
\end{prop}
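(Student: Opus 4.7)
The plan is to decompose $T^X = A^{X,2} + \mathcal{L}^X_0$ and handle the two summands separately. For the first summand, the standard Bianchi identity for a superconnection (as already recalled in (\ref{eq:bob2x1}) for the model $A$) gives $[A^X, A^{X,2}] = 0$; this is the usual formal fact that since $A^{X,2}$ is even and $A^X$ is odd, $[A^X, A^{X,2}] = A^X A^{X,2} - A^{X,2} A^X = A^{X,3} - A^{X,3} = 0$.

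The real content is therefore to show $[A^X, \mathcal{L}^X_0] = 0$. Since $\mathcal{L}^X_0$ is even, does not involve the variable $\vartheta$, and does not involve $d\vartheta$, we have $[d\vartheta\,\partial_\vartheta, \mathcal{L}^X_0] = 0$, so that
\begin{equation*}
[A^X, \mathcal{L}^X_0] = \tfrac{1}{\sqrt{2}}[\widehat{D}^X_\vartheta, \mathcal{L}^X_0] = \tfrac{\sin(\vartheta)}{\sqrt{2}}[\widehat{D}^X, \mathcal{L}^X_0].
\end{equation*}
It therefore suffices to check that $\widehat{D}^X$ commutes with $\mathcal{L}^X_0$. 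This is where I would invoke (\ref{eq:Lie17x1}), which expresses
\begin{equation*}
\mathcal{L}^X_0 = -\tfrac{1}{2}\widehat{D}^{X,2} + \tfrac{1}{8}B^*(\kappa^{\mathfrak k},\kappa^{\mathfrak k}) + \tfrac{1}{2}C^{\mathfrak k, F}.
\end{equation*}
Clearly $\widehat{D}^X$ commutes with $\widehat{D}^{X,2}$ and with the scalar constant $\tfrac{1}{8}B^*(\kappa^{\mathfrak k},\kappa^{\mathfrak k})$. For the last term, $C^{\mathfrak k,F}$ is a parallel self-adjoint endomorphism of $F$ (the homogeneous bundle structure guarantees that the $K$-invariant operator $C^{\mathfrak k,E}$ descends to a $\nabla^F$-parallel section of $\End(F)$), and it commutes with the Clifford variables $\widehat{c}(\overline{e}_i)$ since they act on the disjoint factor $S^{\overline{TX}}$. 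Hence $C^{\mathfrak k,F}$ commutes with every $\nabla^{S^{\overline{TX}}\otimes F}_{e_i}$ and with every $\widehat{c}(\overline{e}_i)$, so with $\widehat{D}^X$.

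Adding the two vanishings yields (\ref{eq:bia1}). There is essentially no obstacle here; the identity is a formal consequence of Kostant's formula (\ref{eq:kos22}) for $\widehat{D}^{\mathfrak g,2}$, which was already invoked to write $\mathcal{L}^X_0$ in the form (\ref{eq:Lie17x1}). The only thing worth checking carefully is that the parallel endomorphism $C^{\mathfrak k,F}$ genuinely commutes with $\widehat{D}^X$, which is a routine consequence of its $K$-invariance; this replaces, in the present odd-superconnection setting, the use of (\ref{eq:cla5}) in the finite-dimensional model of subsection \ref{subsec:cen}.
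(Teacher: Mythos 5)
Your proof is correct and follows the same overall decomposition as the paper: $T^X = A^{X,2} + \mathcal{L}^X_0$, Bianchi for the first summand, and the observation $[A^X,\mathcal{L}^X_0]=0$ for the second. The one place where you diverge from the paper is in justifying $[A^X,\mathcal{L}^X_0]=0$. The paper argues directly from the definition (\ref{eq:Lie17}): since $C^{\mathfrak g}$ lies in the centre of $U(\mathfrak g)$, the induced operator $C^{\mathfrak g,X}$ commutes with $\widehat{D}^X$ (which involves only elements of $\mathfrak g$ together with Clifford variables on the independent factor $\overline{\mathfrak p}$), and hence so does $\mathcal{L}^X_0$; this is the analogue of (\ref{eq:cla5}). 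You instead rewrite $\mathcal{L}^X_0$ via the Lichnerowicz-type identity (\ref{eq:Lie17x1}) and check commutation term by term, which requires the additional observation that $C^{\mathfrak k,F}$ is a $\nabla^F$-parallel endomorphism acting on a factor disjoint from $S^{\overline{TX}}$. Both routes are valid; the paper's is a touch more economical because it reads off centrality at the level of $U(\mathfrak g)$ rather than at the level of the descended operator, whereas yours makes visible exactly which pieces of the Lichnerowicz decomposition carry the commutativity. Nothing is missing from your argument.
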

\begin{proof}
As in (\ref{eq:bob2x1}), we have the classical Bianchi identity,
\begin{equation}\label{eq:bia2}
\left[A^{X},A^{X,2}\right]=0.
\end{equation}
Moreover,  $C^{\mathfrak g}$ lies in the centre of 
$U\left(\mathfrak g\right)$. By (\ref{eq:Lie17}),  we  get an analogue of (\ref{eq:cla5}),
\begin{equation}\label{eq:bia3}
\left[A^{X},\mathcal{L}_{0}^{X}\right]=0.
\end{equation}
By (\ref{eq:defa4}), (\ref{eq:bia2}), and (\ref{eq:bia3}), we get 
(\ref{eq:bia1}).
\end{proof}

Let $e_{1},\ldots,e_{m}$ be an orthonormal basis of $TX$. At each 
$x\in X$, we identify this frame to the orthonormal frame in $TX$ 
defined on a neighbourhood of $x$ obtained by parallel 
transport along geodesics centred at $x$ with respect to the 
Levi-Civita connection. We have the identity
\begin{equation}\label{eq:supp1}
\Delta^{X,H}=\sum_{i=1}^{m}\n^{S^{TX} \otimes F,2}_{e_{i}}.
\end{equation}

We
have a version of an  identity established in 
\cite[Proposition 2.1]{BismutFreed86b}.
\begin{prop}\label{PBide}
The following identity holds:
\begin{multline}\label{eq:defa5x1c}
T^{X}=-\frac{1}{2}\sum_{i=1}^{m}\left(
\cos\left(\vartheta\right)\n^{S^{\overline{TX}} \otimes F}_{e_{i}}-\frac{d\vartheta}{
\sqrt{2}}\widehat{c}\left(\overline{e}_{i}\right)\right)^{2}\\
+\frac{1}{2}\cos^{2}\left(\vartheta\right)
\left( 
\frac{1}{4}\Tr^{\mathfrak 
p}\left[C^{\mathfrak k,\mathfrak p}\right] +
2\sum_{i=m+1}^{m+n}
\widehat{c}\left(\ad\left(e_{i}\right)\vert_{\overline{TX}}\right)
\rho^{F}\left(e_{i}\right) \right)  \\
\qquad \qquad +\frac{1}{48}\Tr^{\mathfrak 
k}\left[C^{\mathfrak k,\mathfrak 
k}\right]+\frac{1}{2}
C^{\mathfrak k,F}. 
\end{multline}
\end{prop}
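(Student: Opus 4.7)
The plan is to expand the square on the right-hand side and match it against the formula for $T^{X}$ obtained from combining \eqref{eq:defa5x1b} with the third identity of Proposition \ref{PLXt}. Because the identity is an identity of differential operators, it suffices to verify it pointwise, and the statement before \eqref{eq:supp1} allows me to assume that the orthonormal frame $e_{1},\dots,e_{m}$ is parallel at the base point $x\in X$, so that $\n^{S^{\overline{TX}}\otimes F}e_{i}=0$ at $x$.

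First I would treat $\cos(\vartheta)\n^{S^{\overline{TX}}\otimes F}_{e_{i}}$ and $\frac{d\vartheta}{\sqrt{2}}\widehat{c}(\overline{e}_{i})$ as even elements of the $\Z_{2}$-graded algebra $\Lambda\ac(\R^{*})\ho\widehat{c}(\overline{\mathfrak p})\otimes U(\mathfrak g)$: the first is even, and the second is the product of two odd elements $d\vartheta$ and $\widehat{c}(\overline{e}_{i})$. Expanding,
\begin{equation*}
\left(\cos(\vartheta)\n^{S^{\overline{TX}}\otimes F}_{e_{i}}-\frac{d\vartheta}{\sqrt{2}}\widehat{c}(\overline{e}_{i})\right)^{2}
=\cos^{2}(\vartheta)\n^{S^{\overline{TX}}\otimes F,2}_{e_{i}}
-\frac{\cos(\vartheta)\,d\vartheta}{\sqrt{2}}\bigl\{\n^{S^{\overline{TX}}\otimes F}_{e_{i}},\widehat{c}(\overline{e}_{i})\bigr\}+\frac{(d\vartheta)^{2}}{2}\widehat{c}(\overline{e}_{i})^{2}.
\end{equation*}
The last term vanishes since $d\vartheta\wedge d\vartheta=0$. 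At the base point, the parallelism of the frame gives $\bigl\{\n^{S^{\overline{TX}}\otimes F}_{e_{i}},\widehat{c}(\overline{e}_{i})\bigr\}=2\widehat{c}(\overline{e}_{i})\n^{S^{\overline{TX}}\otimes F}_{e_{i}}$; also, $d\vartheta$ anticommutes with the odd element $\widehat{c}(\overline{e}_{i})$ in the graded tensor product, but since we are taking a symmetric combination, no extra sign appears.

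Summing over $i$ and using \eqref{eq:supp1} together with \eqref{eq:Lie15}, I obtain
\begin{equation*}
-\frac{1}{2}\sum_{i=1}^{m}\left(\cos(\vartheta)\n^{S^{\overline{TX}}\otimes F}_{e_{i}}-\frac{d\vartheta}{\sqrt{2}}\widehat{c}(\overline{e}_{i})\right)^{2}
=-\frac{\cos^{2}(\vartheta)}{2}\Delta^{X,H}+\frac{d\vartheta}{\sqrt{2}}\cos(\vartheta)\widehat{D}^{X}.
\end{equation*}
Adding to this the remaining zeroth-order terms on the right-hand side of \eqref{eq:defa5x1c} reconstructs precisely the third formula for $\mathcal{L}^{X}_{0,\vartheta}$ in \eqref{eq:japo4}, augmented by $\frac{d\vartheta}{\sqrt{2}}\cos(\vartheta)\widehat{D}^{X}$. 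By \eqref{eq:defa5x1b}, this is exactly $T^{X}$, completing the proof.

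The only delicate point is the careful bookkeeping of the $\Z_{2}$-grading: one must check that the cross term does not acquire an extra sign from commuting $d\vartheta$ past the (even) operator $\n^{S^{\overline{TX}}\otimes F}_{e_{i}}$, and that $(d\vartheta)^{2}=0$ kills the would-be Clifford square. Everything else is a direct comparison of terms, and the calculation is essentially the standard Lichnerowicz-style manipulation used in \cite{BismutFreed86b}.
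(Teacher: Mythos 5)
Your proof is correct and follows the same route as the paper: the paper's own argument simply invokes the third identity of Proposition \ref{PLXt} together with \eqref{eq:defa5x1b}, and you have just carried out explicitly the expansion of the square (with the $\Z_{2}$-graded sign bookkeeping for $d\vartheta$ and $\widehat{c}(\overline{e}_i)$, and the use of the parallel frame) that the paper leaves implicit.
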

\begin{proof}
This follows from the third identity in (\ref{eq:japo4}) and from 
(\ref{eq:defa5x1b}).
\end{proof}
\subsection{The action of  $\mathrm{SO}\left(2\right)$ on 
$\mathfrak p \oplus \overline{\mathfrak p}$}%
\label{subsec:grpso}
Let 
\index{J@$J$}%
$J\in\mathrm{so}\left(2,\R\right)$ be 
given by
\begin{equation}\label{eq:co2x1}
J=
\begin{bmatrix}
    0 & -1 \\
    1 & 0
\end{bmatrix}.
\end{equation}
For $\vartheta\in \R$, set
\index{Rt@$R_{\vartheta}$}%
\begin{equation}\label{eq:co2x2}
R_{\vartheta}=\exp\left(\vartheta J\right).
\end{equation}
Then $R_{\vartheta}$ is the rotation of angle $\vartheta$, i.e.,
\begin{equation}\label{eq:co3}
R_{\vartheta}=
\begin{bmatrix}
     \cos\left(\vartheta\right)&-\sin\left(\vartheta\right) \\
   \sin\left(\vartheta\right) & \cos\left(\vartheta\right)
\end{bmatrix}.
\end{equation}
We equip $\mathfrak p \oplus \overline{\mathfrak p}$ with the direct sum of the 
obvious scalar products. Then $J$ acts as an antisymmetric 
endomorphism of $\mathfrak p \oplus \overline{\mathfrak p}$, and $R_{\vartheta}$ acts as an 
isometry of $\mathfrak p \oplus \overline{\mathfrak p}$.

Note that
\begin{equation}\label{eq:co3x1}
\mathfrak g \oplus\overline{ \mathfrak p}= \mathfrak p \oplus \mathfrak  k \oplus \overline{\mathfrak 
p}.
\end{equation}
Let $\beta$ be the bilinear symmetric form on $\mathfrak 
g \oplus \overline{\mathfrak p}$ which coincides with $B$ on $\mathfrak g$ and 
with the scalar product  on $\overline{\mathfrak p}$, and is such 
that the splitting in (\ref{eq:co3x1}) is  orthogonal. We 
extend the action of $J$ to $\mathfrak g \oplus 
\overline{\mathfrak p}$, by making $J$ act like $0$ on $\mathfrak k$. 
Then the action of $R_{\vartheta}$ extends to $\mathfrak g \oplus 
\overline{\mathfrak p}$. Also $R_{\vartheta}$ preserves $\beta$.

Let 
$\widehat{c}\left(\mathfrak g \oplus \overline{\mathfrak p}\right)$ be the Clifford 
algebra of $\left(\mathfrak g \oplus \overline{\mathfrak p},-\beta\right)$. Then
\begin{equation}\label{eq:co2}
\widehat{c}\left(\mathfrak g \oplus \overline{\mathfrak 
p}\right)=\widehat{c}\left(\mathfrak g\right) 
\ho\widehat{c}\left(\overline{\mathfrak p}\right).
\end{equation}
If $f\in \mathfrak g \oplus \overline{\mathfrak p}$, let  
$\widehat{c}\left(f\right)$ denote the corresponding element in 
$\widehat{c}\left(\mathfrak g \oplus \overline{\mathfrak p}\right)$. 
By (\ref{eq:co2}),  $\widehat{c}\left(\mathfrak g \oplus \overline{\mathfrak p}\right)$ acts 
naturally on $\Lambda\ac\left(\mathfrak g^{*}\right)\otimes S^{\overline{\mathfrak p}}$. 

Let $\widehat{c}\left(J\right)\in \widehat{c}\left(\mathfrak g \oplus 
\overline{\mathfrak p}\right)$ correspond to $J$. 
If $e_{1},\ldots,e_{m}$ is an orthonormal basis of $\mathfrak p$, 
then
\begin{equation}\label{eq:co4}
\widehat{c}\left(J\right)=-\frac{1}{2}\sum_{i=1}^{m}\widehat{c}\left(e_{i}\right)\widehat{c}\left(\overline{e}_{i}\right).
\end{equation}
For $\vartheta\in\R$, set
\index{Rt@$\widehat{R}_{\vartheta}$}%
\begin{equation}\label{eq:co5}
\widehat{R}_{\vartheta}=\exp\left(\vartheta\widehat{c}\left(J\right)\right).
\end{equation}
Then
\begin{equation}\label{eq:co6}
\widehat{R}_{\vartheta}=\prod_{i=1}^{m}\left(\cos\left(\vartheta/2\right)
-\sin\left(\vartheta/2\right)\widehat{c}\left(e_{i}\right)
\widehat{c}\left(\overline{e}_{i}\right)\right).
\end{equation}

For $f\in \mathfrak g \oplus \overline{\mathfrak p}$, set
\begin{equation}\label{eq:co6x1}
\widehat{c}_{\vartheta}\left(f\right)=\widehat{c}\left(R_{\vartheta}f\right).
\end{equation}
Then
\begin{equation}\label{eq:co7}
\widehat{c}_{\vartheta}\left(f\right)=\widehat{R}_{\vartheta}\widehat{c}\left(f\right)\widehat{R}_{\vartheta}^{-1}.
\end{equation}

If $A\in \End\left(\mathfrak g \oplus \overline{\mathfrak 
p}\right)$ is antisymmetric with respect to $\beta$, we denote by 
$\widehat{c}\left(A\right)$ the corresponding element in 
$\widehat{c}\left(\mathfrak g \oplus \overline{\mathfrak p}\right)$. 
Put
\begin{equation}\label{eq:hero1}
\widehat{c}_{\vartheta}\left(A\right)=\widehat{R}_{\vartheta}\widehat{c}\left(A\right)\widehat{R}_{\vartheta}^{-1}.
\end{equation}
Then
\begin{equation}\label{eq:hero1a}
\widehat{c}_{\vartheta}\left(A\right)=\widehat{c}\left(R_{\vartheta}AR_{\vartheta}^{-1}\right).
\end{equation}

Recall that if $e\in \mathfrak g$, 
$\widehat{c}\left(\ad\left(e\right)\right)\in 
\widehat{c}\left(\mathfrak g\right)$ is given by (\ref{eq:ors1}). If $e\in \mathfrak g $, we denote by 
$\ad\left(e\right)\vert_{ \mathfrak g}$ the endomorphism of $\mathfrak g 
\oplus \overline{\mathfrak p}$, which is $\ad\left(e\right)$ on $\mathfrak g$, 
and $0$ on   $\overline{\mathfrak p}$. Then $\ad\left(e\right)\vert_{ 
\mathfrak g}$ is antisymmetric with respect to $\beta$, and 
$\widehat{c}\left(\ad\left(e\right)\vert_{\mathfrak 
g}\right)=\widehat{c}\left(\ad\left(e\right)\right)$.

Set
\index{Nlp@$N^{\Lambda\ac\left(\mathfrak p^{*}\right) \prime}_{\vartheta}$}%
\begin{equation}\label{eq:co16x1}
N^{\Lambda\ac\left(\mathfrak p^{*}\right) \prime}_{\vartheta}
=\widehat{R}_{\vartheta}N^{\Lambda\ac\left(\mathfrak p^{*}\right)}\widehat{R}_{\vartheta}^{-1}.
\end{equation}
By (\ref{eq:ors0}), we get
\begin{equation}\label{eq:co16x2}
N^{\Lambda\ac\left(\mathfrak 
p^{*}\right)\prime}_{\vartheta}-\frac{m}{2}=\frac{1}{2}\sum_{i=1}^{m}c\left(e_{i}\right)\widehat{c}_{\vartheta}
\left(e_{i}\right).
\end{equation}
By (\ref{eq:co6x1}), (\ref{eq:co16x2}), we obtain
\begin{equation}\label{eq:co16x3}
N^{\Lambda\ac\left(\mathfrak 
p^{*}\right) \prime}_{\vartheta}=\cos\left(\vartheta\right)
N^{\Lambda\ac\left(\mathfrak p^{*}\right)}
+\frac{1}{2}\sin\left(\vartheta\right)\sum_{i=1}^{m}c\left(e_{i}\right)\widehat{c}\left(\overline{e}_{i}\right)
+\frac{m}{2}\left(1-\cos\left(\vartheta\right)\right).
\end{equation}
\subsection{The deformation $\mathfrak D_{b,\vartheta}$ of  $\mathfrak D_{b}$}%
\label{subsec:defdb}
\begin{defin}\label{Dkosd}
For $\vartheta\in \left[0,\frac{\pi}{2}\right[$, set
\index{Dgt@$\widehat{D}^{\mathfrak g}_{\vartheta}$}%
\index{Ept@$\mathcal{E}^{\mathfrak p}_{\vartheta}$}%
\begin{align}\label{eq:co8}
&\widehat{D}^{\mathfrak g}_{\vartheta}=\widehat{R}_{\vartheta}\widehat{D}^{\mathfrak g}\widehat{R}^{-1}_{\vartheta},
&\mathcal{E}^{\mathfrak 
p}_{\vartheta}=\widehat{R}_{\vartheta}\mathcal{E}^{\mathfrak 
p}\widehat{R}_{\vartheta}^{-1}.
\end{align}
\end{defin}

By (\ref{eq:brav12}), (\ref{eq:co6x1}), we get
\begin{equation}\label{eq:co9}
\mathcal{E}^{\mathfrak 
p}_{\vartheta}=\widehat{c}_{\vartheta}\left(Y^{ \mathfrak 
p}\right)=\cos\left(\vartheta\right)\widehat{c}\left(Y^{ \mathfrak 
p}\right)+\sin
\left(\vartheta\right)\widehat{c}\left(\overline{Y}^{ \mathfrak p}\right).
\end{equation}
By (\ref{eq:kos22}),  (\ref{eq:brav12}), and (\ref{eq:co8}), we obtain
\begin{align}\label{eq:co10}
	&\widehat{D}^{\mathfrak g,2}_{\vartheta}=-C^{\mathfrak 
	g}-\frac{1}{4}B^{*}\left(\kappa^{\mathfrak g},\kappa^{\mathfrak 
	g}\right),
&\mathcal{E}_{\vartheta}^{\mathfrak p,2}=\left\vert  Y^{\mathfrak 
p}\right\vert^{2}.
\end{align}
It is crucial to observe that $\widehat{D}^{\mathfrak 
g,2}_{\vartheta}$ does not depend on $\vartheta$, and lies in the 
centre of $U\left(\mathfrak g\right)$.
\begin{defin}\label{Defdb}
Set
\index{Dbt@$\mathfrak D_{b,\vartheta}$}%
\begin{align}\label{eq:co11}
&\mathfrak D_{b,\vartheta}=\widehat{D}^{\mathfrak 
g}_{\vartheta}+\cos\left(\vartheta\right)ic\left(\left[
    Y^{ \mathfrak k},Y^{ \mathfrak p}\right]\right)+\frac{1}{b}
    \left(\mathcal{D}^{ \mathfrak p}+\mathcal{E}^{ \mathfrak p}-
    i\mathcal{D}^{ \mathfrak k }+
    \cos\left(\vartheta\right)i\mathcal{E} ^{ \mathfrak k }\right),\\
    &\mathfrak D_{b,\vartheta}'=\widehat{D}^{\mathfrak g}
    +\cos\left(\vartheta\right)ic\left(\left[
    Y^{ \mathfrak k},Y^{ \mathfrak p}\right]\right)+\frac{1}{b}
    \left(\mathcal{D}^{ \mathfrak p}+\mathcal{E}^{ \mathfrak p}_{-\vartheta}-
    i\mathcal{D}^{ \mathfrak k }+
    \cos\left(\vartheta\right)i\mathcal{E} ^{ \mathfrak k }\right). \nonumber 
\end{align}
\end{defin}

The operators $\mathfrak D_{b,\vartheta},\mathfrak D'_{b,\vartheta}$ 
lie in $\End\left(C^{ \infty  }\left(G\times \mathfrak 
g,\Lambda\ac\left(\mathfrak g^{*}\right)\right)\right) \ho 
\widehat{c}\left(\overline{\mathfrak p}\right)$
\footnote{If $m$ is even, we could have written instead $\End \left( C^{\infty 
}\left(G\times \mathfrak g,\Lambda\ac\left(\mathfrak g^{*}\right)\otimes 
S^{\overline{\mathfrak p}}\right)\right) $. If $m$ is odd, one should 
use instead the identification in (\ref{eq:Lie9}). This discrepancy is 
relevant since when $m$ is odd, the $\Z_{2}$-grading of 
$Ìc\left(\overline{\mathfrak p}\right)$ does not come from 
$S^{\overline{\mathfrak p}}$.}.
Clearly,
\begin{equation}\label{eq:co12}
\mathfrak D'_{b,\vartheta}=\widehat{R}^{-1}_{\vartheta}\mathfrak 
D_{b,\vartheta}\widehat{R}_{\vartheta}.
\end{equation}
Comparing with (\ref{eq:rio1}), we get
\begin{equation}\label{eq:co13}
\mathfrak D_{b,0}=\mathfrak D'_{b,0}=\mathfrak D_{b}.
\end{equation}

For $a>0$, set
\index{Kka@$K^{\mathfrak k}_{a}$}%
\begin{equation}\label{eq:co13a1}
K^{\mathfrak k}_{a}s\left(g,Y^{\mathfrak p},Y^{\mathfrak 
k}\right)=a^{n/2}s\left(g,Y^{\mathfrak p},aY^{\mathfrak k}\right).
\end{equation}
The factor $a^{n/2}$ in (\ref{eq:co13a1}) is introduced only to make 
$K_{a}$ a $L_{2}$ isometry.
\begin{defin}\label{DEt}
For $\vartheta\in\left[0,\frac{\pi}{2}\right[$, set
\begin{equation}\label{eq:co13a2}
\overline{\mathfrak D}_{b,\vartheta}=K^{\mathfrak k}_{1/\cos^{1/2}\left(\vartheta\right)} 
\mathfrak D_{b,\vartheta}K^{\mathfrak 
k}_{\cos^{1/2}\left(\vartheta\right)}.
\end{equation}
\end{defin}

By (\ref{eq:co11}), (\ref{eq:co13a2}), we get
\begin{equation}\label{eq:co13a4}
\overline{\mathfrak D}_{b,\vartheta}=\widehat{D}^{\mathfrak 
g}_{\vartheta}+\cos^{1/2}\left(\vartheta\right)ic\left(\left[
    Y^{ \mathfrak k},Y^{ \mathfrak p}\right]\right)+\frac{1}{b}
    \left(\mathcal{D}^{ \mathfrak p}+\mathcal{E}^{ \mathfrak p} 
    \right) +\frac{\cos^{1/2}\left(\vartheta\right)}{b} \left( -
    i\mathcal{D}^{ \mathfrak k }+
    i\mathcal{E} ^{ \mathfrak k }\right).
\end{equation}
\begin{remk}\label{Rcons}
In our definition of $\mathfrak D_{b,\vartheta}$, another possibility 
is to still use equation (\ref{eq:co11}), with 
$\cos\left(\vartheta\right)$ replaced by $1$. This  
simplifies the algebraic computations which follow. However, this 
change would make the analysis  more difficult. 
\end{remk}
\subsection{The compression of $\overline{\mathfrak D}_{b,\vartheta}$}%
\label{subsec:compdb}
For $\vartheta\in \left[0,\frac{\pi}{2}\right[$,  by the results of 
section \ref{subsec:comp}, the kernel 
\index{H@$H$}%
$H$ of the operator 
$D^{\mathfrak p}+\mathcal{E}^{ \mathfrak p}+\frac{\cos^{1/2}\left(\vartheta\right)}{b}\left( 
-i\mathcal{D}^{\mathfrak k}+
i\mathcal{E}^{\mathfrak k} \right) $ is 
$1$-dimensional and is just $\left\{\exp\left(-\left\vert  
Y\right\vert^{2}/2 \right) 
\right\}\otimes 
S^{\overline{\mathfrak p}}$. We still denote by
\index{P@$P$}%
$P$  
the orthogonal projection on $H$. Let 
$H^{\perp}$ denote the orthogonal vector space to $H$.

The operator $\widehat{D}_{\vartheta}$ defined in (\ref{eq:defa1}) acts on 
$C^{ \infty }\left(G,S^{ 
\overline{\mathfrak p}} \right)$. As in section \ref{subsec:comp},  
the vector space 
  $C^{ \infty }\left(G,S^{\overline{\mathfrak p}}\right)$ 
can be identified with $H$, i.e., with a vector
subspace of $C^{ \infty }\left(G\times \mathfrak 
g,\Lambda\ac\left(\mathfrak g^{*}\right)\otimes S^{\overline{\mathfrak 
p}}\right)$ via the embedding $s\to s\exp\left(-\left\vert  
Y\right\vert^{2}/2\right)/\pi^{\left(m+n\right)/4}$.

Now, we  extend \cite[Proposition 2.10.1]{Bismut08b},  that was 
stated before as Proposition \ref{Pcomp1}.
\begin{prop}\label{Pcomp}
For $\vartheta\in \left[0,\frac{\pi}{2}\right[$, the following identity holds:
\begin{equation}\label{eq:co14}
P\left( \widehat{D}^{\mathfrak 
g}_{\vartheta}+\cos^{1/2}\left(\vartheta\right)ic\left(\left[Y^{ 
\mathfrak k},Y^{\mathfrak p}\right]\right) \right) 
P=\widehat{D}_{\vartheta}.
\end{equation}
\end{prop}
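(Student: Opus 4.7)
The plan is to reduce (\ref{eq:co14}) to a direct algebraic calculation using that $P$ projects onto $\Lambda^{0}(\mathfrak g^{*})\otimes\{e^{-\left\vert Y\right\vert^{2}/2}\}\otimes S^{\overline{\mathfrak p}}$, so that any operator of odd $\Lambda\ac(\mathfrak g^{*})$-degree is killed upon $P$-sandwiching. First I would expand $\widehat{D}^{\mathfrak g}_{\vartheta}=\widehat{R}_{\vartheta}\widehat{D}^{\mathfrak g}\widehat{R}_{\vartheta}^{-1}$ using (\ref{eq:kos20}). Since $\widehat{R}_{\vartheta}$ lies in $\widehat{c}(\mathfrak p\oplus\overline{\mathfrak p})$ and commutes with $U(\mathfrak g)$, and since $R_{\vartheta}$ fixes $\mathfrak k$ while sending $e\in\mathfrak p$ to $\cos(\vartheta)e+\sin(\vartheta)\overline{e}$, the splitting (\ref{eq:toplitz0}) yields
\begin{equation}
\widehat{D}^{\mathfrak g}_{\vartheta}=\widehat{D}_{\vartheta}+\cos(\vartheta)\sum_{i=1}^{m}\widehat{c}(e_{i})e_{i}-\sum_{i=m+1}^{m+n}\widehat{c}(e_{i})e_{i}+\tfrac{1}{2}\widehat{R}_{\vartheta}\widehat{c}(-\kappa^{\mathfrak g})\widehat{R}_{\vartheta}^{-1},
\end{equation}
after recognizing the cross-term $\sin(\vartheta)\sum_{i=1}^{m}\widehat{c}(\overline{e}_{i})e_{i}=\widehat{D}_{\vartheta}$.

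For every $e\in\mathfrak g$ the operator $\widehat{c}(e)=\varphi e\we+i_{e}$ is $\Lambda$-odd, so the two summations $\sum\widehat{c}(e_{i})e_{i}$ compress to zero under $P\cdot P$. The element $c([Y^{\mathfrak k},Y^{\mathfrak p}])$ is likewise $\Lambda$-odd, so the extra term $\cos^{1/2}(\vartheta)ic([Y^{\mathfrak k},Y^{\mathfrak p}])$ vanishes under $P\cdot P$ irrespective of its coefficient. The operator $\widehat{D}_{\vartheta}$ acts only on the $C^{\infty}(G)\otimes S^{\overline{\mathfrak p}}$ factors, commutes with $P$, and restricts on $H\simeq C^{\infty}(G,S^{\overline{\mathfrak p}})$ to $\widehat{D}_{\vartheta}$ itself. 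Hence everything reduces to proving $P\widehat{R}_{\vartheta}\widehat{c}(-\kappa^{\mathfrak g})\widehat{R}_{\vartheta}^{-1}P=0$, which is the main obstacle.

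To kill this last term I expand
\begin{equation}
\widehat{R}_{\vartheta}\widehat{c}(-\kappa^{\mathfrak g})\widehat{R}_{\vartheta}^{-1}=-\tfrac{1}{6}\kappa^{\mathfrak g}(e_{i}^{*},e_{j}^{*},e_{k}^{*})\widehat{c}_{\vartheta}(e_{i})\widehat{c}_{\vartheta}(e_{j})\widehat{c}_{\vartheta}(e_{k})
\end{equation}
and do a case analysis on the locations of $e_{i},e_{j},e_{k}$ in $\mathfrak p$ versus $\mathfrak k$. Using $[\mathfrak p,\mathfrak p]\subset\mathfrak k$ and $[\mathfrak p,\mathfrak k]\subset\mathfrak p$ together with $\mathfrak p\perp\mathfrak k$, the coefficient $\kappa^{\mathfrak g}$ vanishes for the types $(\mathfrak p,\mathfrak p,\mathfrak p)$ and $(\mathfrak p,\mathfrak k,\mathfrak k)$ up to permutation. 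In the $(\mathfrak k,\mathfrak k,\mathfrak k)$ case $\widehat{c}_{\vartheta}(e)=\widehat{c}(e)$ and the triple product is $\Lambda$-odd. In the $(\mathfrak p,\mathfrak p,\mathfrak k)$ case I expand each $\widehat{c}_{\vartheta}(e)=\cos(\vartheta)\widehat{c}(e)+\sin(\vartheta)\widehat{c}(\overline{e})$ on the two $\mathfrak p$ slots while the $\mathfrak k$ slot remains $\widehat{c}(e_{c})$. The monomials with one or three $\widehat{c}(\mathfrak g)$ factors are $\Lambda$-odd. The remaining $\Lambda$-even monomials carry two $\widehat{c}(\mathfrak g)$ factors and one $\widehat{c}(\overline{\mathfrak p})$ factor; anticommuting the latter to the right (via the graded tensor product relation $\widehat{c}(e)\widehat{c}(\overline{f})+\widehat{c}(\overline{f})\widehat{c}(e)=0$) leaves a remaining product $\widehat{c}(e_{a})\widehat{c}(e_{c})$ with $e_{a}\in\mathfrak p,\,e_{c}\in\mathfrak k$, whose $\Lambda^{0}\to\Lambda^{0}$ part is $B(e_{a},e_{c})=0$. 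Every case thus contributes zero, establishing (\ref{eq:co14}). As a consistency check, at $\vartheta=0$ the identity recovers Proposition \ref{Pcomp1}, since $\widehat{D}_{0}=0$ and $\cos^{1/2}(0)=1$.
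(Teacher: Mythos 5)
Your proof is correct and follows essentially the same route as the paper's: compress the conjugated Kostant Dirac operator term by term, use $\Lambda$-degree parity to kill the odd pieces, and observe that the surviving cross term is exactly $\sin(\vartheta)\widehat{D}$. The only cosmetic difference is in the cubic term: the paper invokes the pre-computed identity $\widehat{c}(-\kappa^{\mathfrak g})=-2\sum_{i>m}\widehat{c}(e_{i})\widehat{c}(\ad(e_{i})\vert_{\mathfrak p})+\widehat{c}(-\kappa^{\mathfrak k})$ (equation \ref{eq:bostoc2}, imported from \cite{Bismut08b}) and then argues compression, whereas you re-derive the vanishing directly by the $(\mathfrak p,\mathfrak k)$-type case analysis on $\kappa^{\mathfrak g}(e_{i}^{*},e_{j}^{*},e_{k}^{*})$ together with $B(\mathfrak p,\mathfrak k)=0$ for the remaining $\Lambda$-even monomials; the two arguments are equivalent in content.
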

\begin{proof}
In the proof, we will assume that $e_{1},\ldots,e_{m}$ is an 
orthonormal basis of $\mathfrak p$, and $e_{m+1},\ldots,e_{m+n}$
is an orthonormal basis of $\mathfrak k$. We proceed as in the proof of \cite[Proposition 2.10.1]{Bismut08b}. 
Recall that the kernel $H$ is concentrated in degree $0$ in 
$\Lambda\ac\left(\mathfrak g^{*}\right)$, and that for $e\in 
\mathfrak g$, $c\left(e\right),\widehat{c}\left(e\right)$ act as odd 
operators on $\Lambda\ac\left(\mathfrak g^{*}\right)$. It follows that
\begin{equation}\label{eq:co15}
P\left( \sum_{1}^{m+n}\widehat{c}\left(R_{\vartheta}e^{*}_{i}\right)e_{i}
\right) P=
\sin\left(\vartheta\right)\widehat{D}.
\end{equation}
By \cite[eq. (2.7.4)]{Bismut08b}, we get
\begin{equation}
	 \widehat{c}\left(- \kappa^{ \mathfrak 
	 g}\right)=-2\sum_{i=m+1}^{m+n}\widehat{c}\left(e_{i}\right)
	 \widehat{c}\left(\ad\left(e_{i}\right)\vert_{ \mathfrak p}\right)+\widehat{c}\left(
	 -\kappa^{ \mathfrak k}\right).
	 \label{eq:bostoc2}
     \end{equation}
By (\ref{eq:bostoc2}), we deduce easily that
\begin{equation}\label{eq:co16}
P\widehat{R}_{\vartheta}\widehat{c}\left(- \kappa^{ \mathfrak 
	 g}\right)\widehat{R}_{\vartheta}^{-1}P=0.
\end{equation}
Similarly,
\begin{equation}\label{eq:co16y1}
Pc\left(\left[Y^{\mathfrak k},Y^{\mathfrak 
p}\right]\right)P=0.
\end{equation}
By (\ref{eq:kos20}), (\ref{eq:co8}), (\ref{eq:co15}),  (\ref{eq:co16}), and (\ref{eq:co16y1}), we get 
(\ref{eq:co14}). The proof of our proposition  is completed. 
\end{proof}
\subsection{A formula for $\mathfrak D^{2}_{b,\vartheta}, \mathfrak 
D^{\prime 2}_{b,\vartheta}$}%
\label{subsec:dbct}
We use the same notation as in subsection \ref{subsec:formDb2}.
We establish an extension of \cite[Theorem 2.11.1]{Bismut08b}, 
stated here as Theorem \ref{Tcarop}.
\begin{thm}\label{Tcaropbis}
The following identities hold:
\begin{align}\label{eq:rio2abis}
    &\frac{\mathfrak D^{2}_{b,\vartheta}}{2}=\frac{\widehat{D}^{ \mathfrak 
	 g,2}_{\vartheta}}{2}+\frac{\cos^{2}\left(\vartheta\right)}{2}\left\vert  \left[Y^{ \mathfrak k},Y^{ \mathfrak 
     p}\right]\right\vert^{2}
     +\frac{1}{2b^{2}} \Biggl( -\Delta^{ \mathfrak 
	 p \oplus \mathfrak k}+\left\vert  Y^{\mathfrak p}\right\vert^{2}
	 +\cos^{2}\left(\vartheta\right)
	 \left\vert  Y^{\mathfrak k}\right\vert^{2}\nonumber \\
	& -m-\cos\left(\vartheta\right)n\Biggr)+\frac{N^{\Lambda\ac\left( 
	 \mathfrak 
	 p^{*}\right)}+\cos\left(\vartheta\right)N^{\Lambda\ac
	 \left(\mathfrak k^{*}\right)}}{b^{2}}  \nonumber \\
     &+\frac{\cos\left(\vartheta\right)}{b}\Biggl(\underline{Y}^{ \mathfrak p}
     +i\underline{Y} ^{ \mathfrak 
     k}-i\n^{V}_{\left[Y^{ \mathfrak k},
     Y^{ \mathfrak p}\right]}+
     \widehat{c}_{\vartheta}\left(\ad\left(Y^{ \mathfrak p}+iY^{ 
		    \mathfrak k}\right)\vert_{\mathfrak g}
     \right) \nonumber \\
    & +2ic\left(\ad\left(Y^{ \mathfrak k}\right)\vert _{ 
     \mathfrak p} \right)
     -c\left(\ad\left(Y^{ \mathfrak 
     p}\right)\right)\Biggr),\\
  &\frac{\mathfrak D^{\prime 2}_{b,\vartheta}}{2}=\frac{\widehat{D}^{ \mathfrak 
	 g,2}}{2}+\frac{\cos^{2}\left(\vartheta\right)}{2}\left\vert  \left[Y^{ \mathfrak k},Y^{ \mathfrak 
     p}\right]\right\vert^{2}
      +\frac{1}{2b^{2}} \Biggl( -\Delta^{ \mathfrak 
	 p \oplus \mathfrak k}+\left\vert  Y^{\mathfrak p}\right\vert^{2}
	 +\cos^{2}\left(\vartheta\right)
	 \left\vert  Y^{\mathfrak k}\right\vert^{2}\nonumber \\
	& -m-\cos\left(\vartheta\right)n\Biggr)+\frac{N^{\Lambda\ac\left( 
	 \mathfrak 
	 p^{*}\right) \prime }_{-\vartheta}+\cos\left(\vartheta\right)N^{\Lambda\ac
	 \left(\mathfrak k^{*}\right)}}{b^{2}}  \nonumber \\
     &+\frac{\cos\left(\vartheta\right)}{b}\Biggl(\underline{Y}^{ \mathfrak p}+i\underline{Y} ^{ \mathfrak 
     k}-i\n^{V}_{\left[Y^{ \mathfrak k},
     Y^{ \mathfrak p}\right]}+
     \widehat{c}\left(\ad\left(Y^{ \mathfrak p}+iY^{ 
		    \mathfrak k}\right)\vert_{\mathfrak g}
     \right) \nonumber \\
    & +2ic\left(\ad\left(Y^{ \mathfrak k}\right)\vert _{ 
     \mathfrak p} \right)
     -c\left(\ad\left(Y^{ \mathfrak 
     p}\right)\right)\Biggr). \nonumber    
      \end{align}
\end{thm}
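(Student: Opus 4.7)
The plan is to adapt the proof of Theorem \ref{Tcarop} (i.e., \cite[Theorem 2.11.1]{Bismut08b}) so as to track the extra $\vartheta$-dependence. I would first prove the identity for $\mathfrak D^{2}_{b,\vartheta}$ by direct expansion, then derive the one for $\mathfrak D^{\prime 2}_{b,\vartheta}$ from it via the conjugation relation (\ref{eq:co12}), which gives $\mathfrak D^{\prime 2}_{b,\vartheta}=\widehat R_{\vartheta}^{-1}\mathfrak D^{2}_{b,\vartheta}\widehat R_{\vartheta}$. Under this conjugation, $\widehat{D}^{\mathfrak g,2}_{\vartheta}\mapsto \widehat{D}^{\mathfrak g,2}$, $N^{\Lambda\ac(\mathfrak p^{*})}\mapsto N^{\Lambda\ac(\mathfrak p^{*})\prime}_{-\vartheta}$ by (\ref{eq:co16x1}), $\widehat c_{\vartheta}\mapsto\widehat c$, and all the other terms ($c$-valued, purely $\mathfrak k$-valued, derivative, and scalar) are invariant because $\widehat R_{\vartheta}$ supercommutes with $c(\mathfrak g)$ and acts trivially on the $\mathfrak k$-sector.

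For the first identity, decompose $\mathfrak D_{b,\vartheta}=T_{0}+T_{1}+T_{2}+T_{3}$ with $T_{0}=\widehat D^{\mathfrak g}_{\vartheta}$, $T_{1}=\cos(\vartheta)\,ic([Y^{\mathfrak k},Y^{\mathfrak p}])$, $T_{2}=b^{-1}(\mathcal D^{\mathfrak p}+\mathcal E^{\mathfrak p})$, $T_{3}=b^{-1}(-i\mathcal D^{\mathfrak k}+\cos(\vartheta)\,i\mathcal E^{\mathfrak k})$. Since all four $T_{j}$ are odd, $\mathfrak D^{2}_{b,\vartheta}=\sum_{j}T_{j}^{2}+\sum_{j<k}\{T_{j},T_{k}\}$. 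The four squares produce the diagonal contributions: $T_{0}^{2}=\widehat D^{\mathfrak g,2}_{\vartheta}$ from (\ref{eq:co10}); $T_{1}^{2}=\cos^{2}(\vartheta)|[Y^{\mathfrak k},Y^{\mathfrak p}]|^{2}$ via the Clifford relation on $\mathfrak p$; $T_{2}^{2}$ from the $\mathfrak p$-half of (\ref{eq:Lie19}); and $T_{3}^{2}$ by expanding $(-i\mathcal D^{\mathfrak k}+\cos(\vartheta)i\mathcal E^{\mathfrak k})^{2}$ and comparing with the $\mathfrak k$-half of (\ref{eq:Lie19}) at $\vartheta=0$ to extract $\mathcal D^{\mathfrak k,2}$, $\mathcal E^{\mathfrak k,2}$ and $\{\mathcal D^{\mathfrak k},\mathcal E^{\mathfrak k}\}$ separately. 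This produces the first four lines of the claimed formula, including the asymmetric number-operator term $N^{\Lambda\ac(\mathfrak p^{*})}+\cos(\vartheta)N^{\Lambda\ac(\mathfrak k^{*})}$.

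The six anti-commutators $\{T_{j},T_{k}\}$ are computed as in the proof of Theorem \ref{Tcarop}. For cross terms involving $T_{0}$, I would exploit that $\widehat R_{\vartheta}$ supercommutes with $c(\mathfrak g)$ (hence with $\mathcal D^{\mathfrak p}$, $\mathcal D^{\mathfrak k}$, and $c([Y^{\mathfrak k},Y^{\mathfrak p}])$) and acts trivially on $\mathcal E^{\mathfrak k}$, so that these anti-commutators reduce to the $\vartheta=0$ computations followed by the replacement $\widehat c\mapsto \widehat c_{\vartheta}$ on $\mathfrak g$-valued inputs. Cross terms containing $T_{1}$ or $T_{3}$ pick up an explicit $\cos(\vartheta)$ factor. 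Assembling the six contributions produces the five first-order terms under the uniform $\cos(\vartheta)/b$ prefactor: the derivative contributions $\underline Y^{\mathfrak p}+i\underline Y^{\mathfrak k}-i\n^{V}_{[Y^{\mathfrak k},Y^{\mathfrak p}]}$, the term $\widehat c_{\vartheta}(\ad(Y^{\mathfrak p}+iY^{\mathfrak k})|_{\mathfrak g})$, and the Clifford-curvature terms $2ic(\ad(Y^{\mathfrak k})|_{\mathfrak p})-c(\ad(Y^{\mathfrak p}))$.

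The main obstacle is verifying that the cross terms $\{T_{0},T_{2}\}$, which carry no obvious $\cos(\vartheta)$ factor from either side, contribute only to the $\cos(\vartheta)/b$ block. This requires a careful accounting of the contributions of both the linear part $\sum\widehat c_{\vartheta}(e_{i}^{*})e_{i}$ and the cubic Kostant term $\tfrac{1}{2}\widehat c_{\vartheta}(-\kappa^{\mathfrak g})$ of $\widehat D^{\mathfrak g}_{\vartheta}$ against $\mathcal E^{\mathfrak p}=\widehat c(Y^{\mathfrak p})$ inside the $\Z_{2}$-graded algebra $\widehat c(\mathfrak g\oplus\overline{\mathfrak p})$. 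This is the step where the specific $\cos(\vartheta)$-weighting of Definition \ref{Defdb}, rather than the unit weighting mentioned in Remark \ref{Rcons}, is essential to produce the clean closed-form result.
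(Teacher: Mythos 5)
Your proposal is correct and follows the paper's proof exactly: a direct expansion of $\mathfrak D^2_{b,\vartheta}$ by the method of Theorem \ref{Tcarop} (i.e.\ of \cite[Theorem 2.11.1]{Bismut08b}), followed by conjugation via (\ref{eq:co12}) to obtain the formula for $\mathfrak D^{\prime 2}_{b,\vartheta}$. The details you supply---the $T_j$ decomposition, the commutation of $\widehat{R}_{\vartheta}$ with $c(\mathfrak g)$ and with $\mathcal{E}^{\mathfrak k}$, the passage $N^{\Lambda\ac\left(\mathfrak p^{*}\right)}\mapsto N^{\Lambda\ac\left(\mathfrak p^{*}\right)\prime}_{-\vartheta}$, and the careful treatment of $\{T_0,T_2\}$ to extract the $\cos(\vartheta)$ factor---are a faithful fleshing out of what the paper leaves implicit.
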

\begin{proof}
The proof of the first identity in (\ref{eq:rio2a}) is the same the 
proof of \cite[Theorem 2.11.1]{Bismut08b}. Combining this identity 
with  (\ref{eq:co12}), we get the second identity. The proof of our theorem is completed. 
\end{proof}
\begin{remk}\label{Rcons1}
A most important aspect of equation (\ref{eq:rio2abis}) is that the linear 
terms in the variable $Y$ all have the same weight 
$\frac{\cos\left(\vartheta\right)}{b}$. This would not be the case 
with the modification of $\mathfrak D^{X}_{b,\vartheta}$ that was 
suggested in Remark \ref{Rcons}. This fact will play a most important 
role in the analysis.
\end{remk}
\subsection{The superconnections $B,\overline{B},B'$}%
\label{subsec:scbs}
For $a\in \R$, set
\index{Ka@$K_{a}$}%
\begin{equation}\label{eq:ors-1}
K_{a}f\left(Y\right)=a^{\left(m+n\right)/2}f\left(aY\right).
\end{equation}

Let $\left(b,\vartheta\right)$ be the generic element of $\R^{*}_{+}\times 
\left[0,\frac{\pi}{2}\right[$. Let $d^{\R^{*}_{+}\times 
\left[0,\frac{\pi}{2}\right[}$ denote the de Rham operator on $\R^{*}_{+}\times 
\left[0,\frac{\pi}{2}\right[$.
\begin{defin}\label{Dconec}
 Over $\R^{*}_{+}\times 
   \left[0,\frac{\pi}{2}\right[$, let 
   $\n^{C^{ \infty }\left(G\times \mathfrak 
   g,\Lambda\ac\left(\mathfrak g^{*}\right) \otimes S^{\overline{\mathfrak p}}\right)},\n^{C^{ \infty }\left(G\times \mathfrak 
   g,\Lambda\ac\left(\mathfrak g^{*}\right) \otimes 
   S^{\overline{\mathfrak p}}\right)'}$ denote the flat connections on 
   $$C^{ \infty }\left(G\times \mathfrak 
   g,\Lambda\ac\left(\mathfrak g^{*}\right) \otimes 
   S^{\overline{\mathfrak p}}\right)$$
   that are given by
   \begin{align}\label{eq:co30x-3}
&\n^{C^{ \infty }\left(G\times \mathfrak 
   g,\Lambda\ac\left(\mathfrak g^{*}\right) \otimes S^{\overline{\mathfrak p}}\right)}
   =K_{b}\widehat{R}_{\vartheta}d^{\R^{*}_{+}\times 
\left[0,\frac{\pi}{2}\right[}\widehat{R}_{\vartheta}
^{-1}K_{b}^{-1}, \nonumber \\
&\overline{\n}^{C^{ \infty }\left(G\times \mathfrak 
   g,\Lambda\ac\left(\mathfrak g^{*}\right) \otimes 
   S^{\overline{\mathfrak p}}\right)}=K^{\mathfrak k}_{1/\cos^{1/2}\left(\vartheta\right)}
   \n^{C^{ \infty }\left(G\times \mathfrak 
   g,\Lambda\ac\left(\mathfrak g^{*}\right) \otimes 
   S^{\overline{\mathfrak p}}\right)}K^{\mathfrak 
   k}_{\cos^{1/2}\left(\vartheta\right)}, \\
&\n^{C^{ \infty }\left(G\times \mathfrak 
   g,\Lambda\ac\left(\mathfrak g^{*}\right) \otimes S^{\overline{\mathfrak p}}\right) \prime 
}=K_{b\cos\left(\vartheta\right)}d^{\R^{*}_{+}\times 
\left[0,\frac{\pi}{2}\right[}
K_{b\cos\left(\vartheta\right)}^{-1}. \nonumber 
\end{align}
\end{defin}

Then
 \begin{align}\label{eq:co30x-1}
&\n^{C^{ \infty }\left(G\times \mathfrak 
   g,\Lambda\ac\left(\mathfrak g^{*}\right) \otimes S^{\overline{\mathfrak p}}\right)}=db\left(\frac{\pa}{\pa 
   b}-\frac{1}{b}\n^{V}_{Y}-\frac{m+n}{2b}\right)+d\vartheta \left( \frac{\pa}{\pa 
\vartheta}-\widehat{c}\left(J\right)\right), \nonumber \\
&\overline{\n}^{C^{ \infty }\left(G\times \mathfrak 
   g,\Lambda\ac\left(\mathfrak g^{*}\right) \otimes S^{\overline{\mathfrak p}}\right)}=db\left(\frac{\pa}{\pa 
b}-\frac{1}{b}\n^{V}_{Y}-\frac{m+n}{2b}\right) \nonumber \\
&\qquad \qquad+d\vartheta \left( \frac{\pa}{\pa 
\vartheta}-\widehat{c}\left(J\right)-\frac{1}{2}\tan\left(\vartheta\right)\n^{V}
_{Y^{\mathfrak k}}-\frac{1}{4}\tan\left(\vartheta\right)n\right),\\
&\n^{C^{ \infty }\left(G\times \mathfrak 
   g,\Lambda\ac\left(\mathfrak g^{*}\right) \otimes S^{\overline{\mathfrak p}}\right)\prime }=db\left(\frac{\pa}{\pa 
   b}-\frac{1}{b}\n^{V}_{Y}-\frac{m+n}{2b}\right) \nonumber \\
   &\qquad \qquad +d\vartheta \left( \frac{\pa}{\pa \vartheta}
   +\tan\left(\vartheta\right)\n^{V}_{Y}+\tan\left(\vartheta\right)\frac{m+n}{2}\right) . \nonumber 
\nonumber 
\end{align}
\begin{prop}\label{Pderop}
    The following identities hold:
 \begin{align}\label{eq:co29x1y}
    &\n^{C^{ \infty }\left(G\times \mathfrak 
   g,\Lambda\ac\left(\mathfrak g^{*}\right) \otimes S^{\overline{\mathfrak p}}\right)}\mathfrak D _{b,\vartheta}= -\frac{2db}{b^{2}}\left(b
    \cos\left(\vartheta\right)ic\left(\left[Y^{\mathfrak k},Y^{\mathfrak p}\right]\right)
+\mathcal{E}^{\mathfrak p}+\cos\left(\vartheta\right)i\mathcal{E}^{\mathfrak k}\right) \nonumber \\
&-\frac{d\vartheta}{b}\left( b\sin\left(\vartheta\right)
ic\left(\left[Y^{\mathfrak k},Y^{\mathfrak p}\right]\right) 
+\widehat{c}\left(\overline{Y}^{\mathfrak p}\right)+\sin\left(\vartheta\right)i\mathcal{E}^{\mathfrak k}
\right) ,  \nonumber   \\
&\overline{ \n}^{C^{ \infty }\left(G\times \mathfrak 
   g,\Lambda\ac\left(\mathfrak g^{*}\right) \otimes S^{\overline{\mathfrak p}}\right)}\overline{\mathfrak D} _{b,\vartheta}= -\frac{2db}{b^{2}}\left(b
    \cos^{1/2}\left(\vartheta\right)ic\left(\left[Y^{\mathfrak k},Y^{\mathfrak p}\right]\right)
+\mathcal{E}^{\mathfrak p}+\cos^{1/2}\left(\vartheta\right)i\mathcal{E}^{\mathfrak k}\right) \nonumber \\
&-\frac{d\vartheta}{b}\left( 
b\frac{\sin\left(\vartheta\right)}{\cos^{1/2}\left(\vartheta\right)}
ic\left(\left[Y^{\mathfrak k},Y^{\mathfrak p}\right]\right) 
+\widehat{c}\left(\overline{Y}^{\mathfrak 
p}\right)+\frac{\sin\left(\vartheta\right)}{\cos^{1/2}\left(\vartheta\right)}i\mathcal{E}^{\mathfrak k}
\right) ,   \\
&\n^{C^{ \infty }\left(G\times \mathfrak 
   g,\Lambda\ac\left(\mathfrak g^{*}\right) \otimes S^{\overline{\mathfrak 
   p}}\right) \prime }\mathfrak D'
_{b,\vartheta}=-\frac{2db}{b^{2}}\left(b
\cos\left(\vartheta\right)ic\left(\left[Y^{\mathfrak k},Y^{\mathfrak p}\right]\right)
+\mathcal{E}^{\mathfrak p}_{-\vartheta}+\cos\left(\vartheta\right)i\mathcal{E}^{\mathfrak k}\right) \nonumber \\
&+\frac{d\vartheta}{b}\left(b\sin\left(\vartheta\right)
ic\left(\left[Y^{\mathfrak k},Y^{\mathfrak p}\right]\right)-
\tan \left(\vartheta\right)\left(\mathcal{D}^{\mathfrak p}-i\mathcal{D}^{\mathfrak k}\right)
-\frac{1}{\cos\left(\vartheta\right)}\widehat{c}\left(\overline{Y}^{\mathfrak p}\right)\right). \nonumber 
\end{align}
\end{prop}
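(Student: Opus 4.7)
The plan is to verify each of the three identities by direct computation, exploiting the factorized forms of the connections in (\ref{eq:co30x-3}) and their component expressions in (\ref{eq:co30x-1}). Writing each connection as $\nabla = d + \omega$ with $d = db\,\partial_{b} + d\vartheta\,\partial_{\vartheta}$, the induced action on an operator-valued section $\mathcal{O}$ is
\begin{equation*}
\nabla \mathcal{O} = d\mathcal{O} + \left[\omega,\mathcal{O}\right],
\end{equation*}
where the central scalar contributions to $\omega$ (the $-(m+n)/(2b)$, $-(n/4)\tan(\vartheta)$, and $(m+n)\tan(\vartheta)/2$ terms) commute with everything and drop out. Thus the computation reduces to partial differentiation of the defining formulas, together with a finite list of elementary supercommutators.

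The commutators we need are the Euler-degree identities
$[\n^{V}_{Y},\mathcal{E}^{\mathfrak p}] = \mathcal{E}^{\mathfrak p}$, $[\n^{V}_{Y},\mathcal{E}^{\mathfrak k}] = \mathcal{E}^{\mathfrak k}$, $[\n^{V}_{Y},\mathcal{D}^{\mathfrak p}] = -\mathcal{D}^{\mathfrak p}$, $[\n^{V}_{Y},\mathcal{D}^{\mathfrak k}] = -\mathcal{D}^{\mathfrak k}$, $[\n^{V}_{Y},c([Y^{\mathfrak k},Y^{\mathfrak p}])] = 2c([Y^{\mathfrak k},Y^{\mathfrak p}])$, $[\n^{V}_{Y},\widehat{D}^{\mathfrak g}_{\vartheta}] = 0$, and the rotation identities $[\widehat{c}(J),\widehat{c}(Y^{\mathfrak p})] = \widehat{c}(\overline{Y}^{\mathfrak p})$, $[\widehat{c}(J),\widehat{c}(\overline{Y}^{\mathfrak p})] = -\widehat{c}(Y^{\mathfrak p})$, and $[\widehat{c}(J),\widehat{D}^{\mathfrak g}_{\vartheta}] = \pa_{\vartheta}\widehat{D}^{\mathfrak g}_{\vartheta}$ (the latter by the very definition $\widehat{D}^{\mathfrak g}_{\vartheta}=\widehat{R}_{\vartheta}\widehat{D}^{\mathfrak g}\widehat{R}_{\vartheta}^{-1}$). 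All other generators of $\mathfrak{D}_{b,\vartheta}$ commute with $\widehat{c}(J)$ since they either belong to $c(\mathfrak{g})$ (which commutes with $\widehat{c}$) or do not involve $\mathfrak{p} \oplus \overline{\mathfrak{p}}$.

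For the first identity, I apply the above to $\mathfrak D_{b,\vartheta}$: the $db$-component is $\pa_{b}\mathfrak D_{b,\vartheta} - \frac{1}{b}[\n^{V}_{Y},\mathfrak D_{b,\vartheta}]$; the derivative contributes $-b^{-2}(\mathcal{D}^{\mathfrak p}+\mathcal{E}^{\mathfrak p}-i\mathcal{D}^{\mathfrak k}+\ct i\mathcal{E}^{\mathfrak k})$, and the commutator contributes the degree-shifted counterpart, the two combining to double $\mathcal{E}^{\mathfrak p}+\ct i\mathcal{E}^{\mathfrak k}$ while cancelling the $\mathcal{D}$-terms and producing the $c([Y^{\mathfrak k},Y^{\mathfrak p}])$ piece. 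The $d\vartheta$-component is $\pa_{\vartheta}\mathfrak D_{b,\vartheta} - [\widehat{c}(J),\mathfrak D_{b,\vartheta}]$; the $[\widehat{c}(J),\widehat{D}^{\mathfrak g}_{\vartheta}]$ term cancels $\pa_{\vartheta}\widehat{D}^{\mathfrak g}_{\vartheta}$, while the remaining $\sin(\vartheta)$-derivatives of $\cos(\vartheta)$-prefactors, together with the new term $-b^{-1}\widehat{c}(\overline{Y}^{\mathfrak p})$ from $[\widehat{c}(J),\mathcal{E}^{\mathfrak p}]$, assemble into the stated right-hand side.

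The other two identities follow the same scheme. For the second, one uses either the conjugacy $\overline{\mathfrak D}_{b,\vartheta} = K^{\mathfrak k}_{1/\cos^{1/2}(\vartheta)}\mathfrak D_{b,\vartheta}K^{\mathfrak k}_{\cos^{1/2}(\vartheta)}$ together with the corresponding twist of the connection in (\ref{eq:co30x-3}), or one works directly from (\ref{eq:co13a4}) by the same commutator method; the additional $\tan(\vartheta)$ terms in $\overline{\n}$ precisely compensate the $\vartheta$-derivatives of the $\cos^{1/2}(\vartheta)$ factors sitting in $\overline{\mathfrak D}_{b,\vartheta}$. For the third, the analogous computation uses the commutation $\mathfrak D'_{b,\vartheta} = \widehat{R}_{\vartheta}^{-1}\mathfrak D_{b,\vartheta}\widehat{R}_{\vartheta}$ and the derivative $\pa_{\vartheta}\mathcal{E}^{\mathfrak p}_{-\vartheta} = -\sin(\vartheta)\widehat{c}(Y^{\mathfrak p})-\cos(\vartheta)\widehat{c}(\overline{Y}^{\mathfrak p})$, the $\tan(\vartheta)\n^{V}_{Y}$ term in $\n^{\prime}$ producing the $-\tan(\vartheta)(\mathcal{D}^{\mathfrak p}-i\mathcal{D}^{\mathfrak k})$ piece after the Euler commutator is applied. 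The main obstacle is purely one of bookkeeping across six components, and a useful sanity check is that at $\vartheta=0$ all three formulas collapse to the same identity for $\pa_{b}\mathfrak D_{b}$.
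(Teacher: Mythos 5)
Your proof is correct and takes essentially the same approach as the paper, whose own proof is just a one-line citation of equations (\ref{eq:co11}), (\ref{eq:co13a2}), (\ref{eq:co30x-3}), and (\ref{eq:co30x-1}) — in other words, precisely the direct component computation you carry out. Your Euler-degree commutators and the rotation identity $[\widehat{c}(J),\widehat{D}^{\mathfrak g}_{\vartheta}]=\pa_{\vartheta}\widehat{D}^{\mathfrak g}_{\vartheta}$ are exactly the bookkeeping the paper leaves implicit, and I have checked that the three right-hand sides come out as stated.
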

\begin{proof}
Our proposition follows from (\ref{eq:co11}), (\ref{eq:co13a2}), 
(\ref{eq:co30x-3}), and (\ref{eq:co30x-1}).
\end{proof}
\begin{defin}\label{Dsupnew}
Let 
\index{B@$B$}%
\index{B@$\overline{B}$}%
\index{B@$B'$}%
$B,\overline{B},B'$ be the superconnections, 
   \begin{align}\label{eq:co31-x}
&B=\n^{C^{ \infty }\left(G\times \mathfrak 
   g,\Lambda\ac\left(\mathfrak g^{*}\right) \otimes S^{\overline{\mathfrak 
   p}}\right)}+\frac{\mathfrak D_{b,\vartheta}}{\sqrt{2}}, \nonumber 
   \\
   &\overline{B}=\overline{\n}^{C^{ \infty }\left(G\times \mathfrak 
   g,\Lambda\ac\left(\mathfrak g^{*}\right) \otimes S^{\overline{\mathfrak 
   p}}\right)}+\frac{\overline{\mathfrak 
   D}_{b,\vartheta}}{\sqrt{2}},\\
   &B'=\n^{C^{ \infty }\left(G\times \mathfrak 
   g,\Lambda\ac\left(\mathfrak g^{*}\right) \otimes S^{\overline{\mathfrak 
   p}}\right) \prime }+\frac{\mathfrak D'_{b,\vartheta}}{\sqrt{2}} . 
   \nonumber 
\end{align}
\end{defin}

By (\ref{eq:co13a2}), (\ref{eq:co30x-3}), we get
\begin{equation}\label{eq:co31-xa1}
\overline{B}=K^{\mathfrak 
k}_{1/\cos^{1/2}\left(\vartheta\right)}BK^{\mathfrak k}
_{\cos^{1/2}\left(\vartheta\right)}.
\end{equation}

Recall  that the superconnection 
\index{A@$A$}%
$A$ was defined in (\ref{eq:defa2x}).
We will extend Proposition  \ref{Pcomp}. We make temporarily $db=0$.
\begin{prop}\label{Pcompbi}
The following identity of superconnections on $C^{\infty 
}\left(G,S^{\overline{\mathfrak p}}\right)$ over 
$\left[0,\frac{\pi}{2}\right[$ holds:
\begin{equation}\label{eq:co14x}
P\left(\overline{\n}^{C^{ \infty }\left(G\times \mathfrak 
g,\Lambda\ac\left(\mathfrak g^{*}\right)\otimes S^{\overline{\mathfrak 
p}}\right)}\vert_{db=0}+\frac{1}{\sqrt{2}} \left( \widehat{D}^{\mathfrak 
g}_{\vartheta}+\cos^{1/2}\left(\vartheta\right)ic\left(\left[Y^{ 
\mathfrak k},Y^{\mathfrak p}\right]\right) \right) \right) P=A.
\end{equation}
\end{prop}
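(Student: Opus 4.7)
The plan is to combine Proposition \ref{Pcomp}, which handles the scalar (degree-zero in $d\vartheta$) part of the identity, with a direct calculation of the $P$-compression of the $d\vartheta$-coefficient of $\overline{\n}^{C^{\infty}\left(G\times \mathfrak g,\Lambda\ac\left(\mathfrak g^{*}\right)\otimes S^{\overline{\mathfrak p}}\right)}$. Reading the middle line of (\ref{eq:co30x-1}) with $db=0$, the claim splits into two pieces: the degree-zero part is exactly Proposition \ref{Pcomp}, and the $d\vartheta$-coefficient amounts to
\begin{equation*}
P\left(\frac{\pa}{\pa \vartheta}-\widehat{c}\left(J\right)-\frac{1}{2}\tan\left(\vartheta\right)\n^{V}_{Y^{\mathfrak k}}-\frac{1}{4}\tan\left(\vartheta\right)n\right)P=\frac{\pa}{\pa \vartheta}.
\end{equation*}

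I would then check each of the four summands in turn. Since the isometric embedding $s\mapsto s\exp\left(-\left\vert Y\right\vert^{2}/2\right)/\pi^{\left(m+n\right)/4}$ realizing $H$ has no $\vartheta$-dependence in the Gaussian factor, one has $P\frac{\pa}{\pa \vartheta}P=\frac{\pa}{\pa \vartheta}$. By (\ref{eq:co4}), $\widehat{c}\left(J\right)=-\frac{1}{2}\sum_{i=1}^{m}\widehat{c}\left(e_{i}\right)\widehat{c}\left(\overline{e}_{i}\right)$, and by (\ref{eq:kos6}) each $\widehat{c}\left(e_{i}\right)$ raises the degree in $\Lambda\ac\left(\mathfrak g^{*}\right)$ by one; as $H$ is concentrated in form degree zero, $\widehat{c}\left(J\right)$ carries $H$ into $H^{\perp}$, so that $P\widehat{c}\left(J\right)P=0$.

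The heart of the argument is that the last two summands cancel. A direct fibre derivative gives $\n^{V}_{Y^{\mathfrak k}}\exp\left(-\left\vert Y\right\vert^{2}/2\right)=-\left\vert Y^{\mathfrak k}\right\vert^{2}\exp\left(-\left\vert Y\right\vert^{2}/2\right)$, and a Gaussian computation yields $P\bigl(\left\vert Y^{\mathfrak k}\right\vert^{2}\exp\left(-\left\vert Y\right\vert^{2}/2\right)/\pi^{\left(m+n\right)/4}\bigr)=\frac{n}{2}\exp\left(-\left\vert Y\right\vert^{2}/2\right)/\pi^{\left(m+n\right)/4}$. Thus the compression of $-\frac{1}{2}\tan\left(\vartheta\right)\n^{V}_{Y^{\mathfrak k}}$ reduces to scalar multiplication by $\frac{n}{4}\tan\left(\vartheta\right)$, which exactly cancels the fourth summand $-\frac{n}{4}\tan\left(\vartheta\right)$. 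Combining these compressions with Proposition \ref{Pcomp} produces precisely the superconnection $A=d\vartheta\,\frac{\pa}{\pa \vartheta}+\widehat{D}_{\vartheta}/\sqrt{2}$ of (\ref{eq:defa2x}). This cancellation is really what the rescaling $K^{\mathfrak k}_{1/\cos^{1/2}\left(\vartheta\right)}$ inserted in (\ref{eq:co30x-3}) was designed to produce; without it, a spurious scalar $\frac{n}{4}\tan\left(\vartheta\right)$ would persist after compression. I do not anticipate any serious obstacle beyond the bookkeeping of the normalization constant $\pi^{\left(m+n\right)/4}$.
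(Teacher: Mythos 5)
Your proposal is correct and follows essentially the same route as the paper: reduce to the $d\vartheta$-coefficient via Proposition~\ref{Pcomp}, kill $\widehat{c}(J)$ by the degree-raising observation (the paper cites (\ref{eq:co4})), and cancel the last two summands by showing $P\n^{V}_{Y^{\mathfrak k}}P=-n/2$, which the paper derives from the Gaussian second-moment identity~(\ref{eq:ele1}) and you derive by the equivalent direct Gaussian integral. (The $\tanh$ appearing in the paper's equation~(\ref{eq:gign1}) is a typo for $\tan$, as you correctly use.)
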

\begin{proof}
In degree $0$ in the variable $d\vartheta$, equation (\ref{eq:co14x}) was already established in 
Proposition  \ref{Pcomp}. To establish (\ref{eq:co14x}), we only need to 
show that we have the identity of connections on $C^{ \infty 
}\left(G, S^{\overline{\mathfrak p}}\right)$,
\begin{equation}\label{eq:gign1}
Pd\vartheta\left(\frac{\pa}{\pa 
\vartheta}-\widehat{c}\left(J\right)-\frac{1}{2}\tan\left(\vartheta\right)\n_{Y^{\mathfrak k}}
-\frac{1}{4}\tanh\left(\vartheta\right)n\right)P=d\vartheta\frac{\pa}{\pa \vartheta}.
\end{equation}
By (\ref{eq:co4}), we get
\begin{equation}\label{eq:gign4}
P\widehat{c}\left(J\right)P=0.
\end{equation}
Also by \cite[eq. (2.16.23)]{Bismut08b}, if $u\in \mathfrak k$, we 
have the elementary identity
\begin{equation}\label{eq:ele1}
P\left\langle  u,Y^{\mathfrak k}\right\rangle^{2}P=\frac{1}{2}\left\vert  
u\right\vert^{2}.
\end{equation}
From (\ref{eq:ele1}), it is easy to deduce that
\begin{equation}\label{eq:ele2}
P\left(\n_{Y^{\mathfrak k}}+\frac{n}{2}\right)P=0.
\end{equation}
By (\ref{eq:gign4}), (\ref{eq:ele2}), we get (\ref{eq:gign1}).  The 
proof of our proposition  is completed. 
\end{proof}
\subsection{The operators $\mathcal{L}^{X}_{b,\vartheta}, 
\overline{\mathcal{L}}^{X}_{\bt},
\mathcal{L}^{X \prime }_{b,\vartheta}$}%
\label{subsec:des}
We use the same notation as in section \ref{sec:eta}.  Then $J$ and 
$R_{\vartheta}$ still act on $TX \oplus \overline{TX}$, so that 
$\widehat{c}\left(J\right)$ is  a section of $\widehat{c}\left(TX 
\oplus N \oplus \overline{TX}\right)$. 

  We define 
$\widehat{R}_{\vartheta}$ as in (\ref{eq:co5}). Recall that the vector space 
\index{H@$\mathcal{H}$}%
$\mathcal{H}$ was defined in Definition 
\ref{Dspace}. As in 
(\ref{eq:co13a1}), if $s\in \mathcal{H}$, set
\begin{equation}\label{eq:ele3}
K^{N}_{a}s\left(x,Y^{TX},Y^{N}\right)=a^{n/2}s\left(x,Y^{TX},aY^{N}\right).
\end{equation}
As in section \ref{subsec:actqu},  
$\widehat{D}^{\mathfrak g}_{\vartheta}, \mathfrak D
_{b,\vartheta}, \mathfrak D'_{\bt},\overline{\mathfrak 
D}_{b,\vartheta}$ 
 descend to  operators  
 \index{DgXt@$\widehat{D}^{\mathfrak g,X}_{\vartheta}$}%
 \index{DXbt@$\mathfrak D^{X}_{b,\vartheta}$}%
 \index{DXbt@$\mathfrak D^{X \prime }_{b,\vartheta}$}%
 \index{DXbt@$\overline{\mathfrak D}^{X}_{\bt}$}%
 $\widehat{D}^{\mathfrak g,X}_{\vartheta}, 
 \mathfrak D^{X}_{b,\vartheta},\mathfrak 
D^{X \prime }_{b,\vartheta},\overline{\mathfrak D}^{X}_{\bt}$ acting on $\mathcal{H}$.  By 
(\ref{eq:co12}), (\ref{eq:co13a2}),  we get
\begin{align}\label{eq:co2bu}
&\mathfrak  D^{X \prime}_{b,\vartheta}=\widehat{R}^{-1}_{\vartheta} 
\mathfrak D^{X}_{b,\vartheta}\widehat{R}_{\vartheta},
&\overline{\mathfrak 
D}^{X}_{\bt}=K^{N}_{1/\cos^{1/2}\left(\vartheta\right)} \mathfrak 
D^{X}_{\bt}K^{N}_{\cos^{1/2}\left(\vartheta\right)}.
\end{align}
\begin{defin}\label{defnew}
Put
\index{LXbt@$\mathcal{L}_{b,\vartheta}^{X}$}%
\index{LXbt@$\mathcal{L}_{b,\vartheta}^{X \prime}$}%
\begin{align}\label{eq:co17}
&\mathcal{L}_{b,\vartheta}^{X}=
-\frac{1}{2}\widehat{D}^{\mathfrak g,X,2}_{\vartheta}+
\frac{1}{2} \mathfrak D^{X,2}_{b,\vartheta}, \nonumber \\
&\overline{\mathcal{L}}_{b,\vartheta}^{X}=
-\frac{1}{2}\widehat{D}^{\mathfrak g,X,2}_{\vartheta}+
\frac{1}{2} \overline{\mathfrak D}^{X,2}_{b,\vartheta}, \\
&\mathcal{L}_{b,\vartheta}^{X \prime}=-\frac{1}{2}\widehat{D}^{\mathfrak g,X,2}+\frac{1}{2}\mathfrak D^{X \prime 
,2}_{b,\vartheta}. \nonumber 
\end{align}
\end{defin}

As observed after (\ref{eq:co10}), we can rewrite the first equation 
in (\ref{eq:co17}) in the form
\begin{equation}\label{eq:co17ra1}
\mathcal{L}_{b,\vartheta}^{X}=
-\frac{1}{2}\widehat{D}^{\mathfrak g,X,2}+
\frac{1}{2} \mathfrak D^{X,2}_{b,\vartheta}.
\end{equation}

By (\ref{eq:co8}), (\ref{eq:co2bu}), and (\ref{eq:co17}), we get
\begin{align}\label{eq:co18}
&\mathcal{L}_{b,\vartheta}^{X \prime 
}=\widehat{R}^{-1}_{\vartheta}\mathcal{L}_{b,\vartheta}^{X 
}\widehat{R}_{\vartheta},
&\overline{\mathcal{L}}^{X}_{\bt}=K^{N}_{1/\cos^{1/2}\left(\vartheta\right)}\mathcal{L}^{X}_{\bt}
K^{N}_{\cos^{1/2}\left(\vartheta\right)}.
\end{align}

 As before, we extend $\ad\left(Y^{TX}\right)$ to an endomorphism 
$\ad\left(Y^{TX}\right)_{TX \oplus N}$ of $TX \oplus N \oplus 
\overline{TX}$ 
that coincides with $\ad\left(Y^{TX}\right)$ on $TX \oplus N$, and 
vanishes on $\overline{TX}$. Similarly, we denote by 
$\ad\left(Y^{N}\vert_{\overline{TX}}\right)$ the morphism of $TX \oplus N 
\oplus \overline{TX}$ that 
extends the action of $\ad\left(Y^{N}\right)$ on  
$\overline{TX}$ by $0$ on $TX \oplus N$. As in (\ref{eq:co16x1}),  (\ref{eq:co8}), put
\index{NLTXt@$N_{\vartheta}^{\Lambda\ac\left(T^{*}X\right) \prime}$}%
\index{ETXt@$\mathcal{E}^{TX}_{\vartheta}$}%
\begin{align}\label{eq:co18z1}
    &N_{\vartheta}^{\Lambda\ac\left(T^{*}X\right) \prime}=\widehat{R}_{\vartheta}
    N^{\Lambda\ac\left(T^{*}X\right)}\widehat{R}_{\vartheta}^{-1},
&\mathcal{E}^{TX}_{\vartheta}=\widehat{R}_{\vartheta}\mathcal{E}^{TX}\widehat{R}
_{\vartheta}^{-1}.
\end{align}
Set
\index{NLTXN@$N_{\vartheta}^{\Lambda\ac\left(T^{*}X \oplus N^{*}\right)}$}%
\index{NLTXN@$N_{\vartheta}^{\Lambda\ac\left(T^{*}X \oplus 
N^{*}\right) \prime}$}%
\begin{align}\label{eq:gip1}
   &N_{\vartheta}^{\Lambda\ac\left(T^{*}X \oplus 
   N^{*}\right)}=N^{\Lambda\ac\left(\TsX\right)}+\cos\left(\vartheta\right)N^{\Lambda\ac\left(N^{*}\right)},\\
&N^{\Lambda\ac\left(T^{*}X \oplus 
N^{*}\right) \prime 
}_{\vartheta}=N_{\vartheta}^{\Lambda\ac\left(\TsX\right) \prime }+
\cos\left(\vartheta\right)N^{\Lambda\ac\left(N^{*}\right)}.\nonumber 
\end{align}
Then
\begin{equation}\label{eq:gip2}
N^{\Lambda\ac\left(T^{*}X \oplus 
N^{*}\right) \prime }_{\vartheta}=\widehat{R}_{\vartheta}
N^{\Lambda\ac\left(T^{*}X \oplus N^{*}\right)}_{\vartheta}\widehat{R}_{-\vartheta}.
\end{equation}

By  (\ref{eq:co16x3}),  (\ref{eq:gip1}), we obtain
\begin{multline}\label{eq:comp1z1}
N_{-\vartheta}^{\Lambda\ac\left(T^{*}X \oplus N^{*}\right) \prime }=\cos\left(\vartheta\right)N^{\Lambda\ac\left(T^{*}X
\oplus 
N^{*}\right)} \\
-\frac{1}{2}\sin\left(\vartheta\right)\sum_{i=1}^{m}c\left(e_{i}\right)
\widehat{c}\left(\overline{e}_{i}\right)
+\frac{m}{2}\left(1-\cos\left(\vartheta\right)\right).
\end{multline}

Let 
\index{DTX@$\Delta^{TX}$}%
\index{DN@$\Delta^{N}$}%
$\Delta^{TX},\Delta^{N}$ denote the Laplacians acting along the 
fibres of the Euclidean vector bundles $TX,N$. Now we extend \cite[Theorem 2.12.5 and eq. (2.13.5)]{Bismut08b}, that was stated 
before as Theorem \ref{Timpfo}.
\begin{thm}\label{Tform}
The following identities hold:
\begin{align}\label{eq:vb4a}
&\mathfrak D_{b,\vartheta}^{X}=\widehat{D}^{ \mathfrak g,X}_{\vartheta}+
\cos\left(\vartheta\right)ic\left(\left[Y^{ 
N},Y^{TX}\right]\right)+\frac{1}{b}\left(\mathcal{D}^{ TX}+\mathcal{E}^{ TX}-i\mathcal{D}^{ 
N}+\cos\left(\vartheta\right)i\mathcal{E}^{N}\right),   \nonumber \\
&\overline{\mathfrak D}^{X}_{\bt}=\widehat{D}^{ \mathfrak g,X}_{\vartheta}+\cos^{1/2}\left(\vartheta\right)ic\left(\left[Y^{N},Y^{TX}\right]\right)
+\frac{1}{b}\left(\mathcal{D}^{TX}+\mathcal{E}^{TX}\right) \\
&\qquad \qquad+\frac{\cos^{1/2}\left(\vartheta\right)}{b}
\left(-i\mathcal{D}^{N}+i\mathcal{E}^{N}\right), \nonumber \\
&\mathfrak D_{b,\vartheta}^{ X \prime}=\widehat{D}^{ \mathfrak 
g,X}+\cos\left(\vartheta\right)ic\left(\left[Y^{ 
N},Y^{TX}\right]\right)+\frac{1}{b}\left(\mathcal{D}^{ 
TX}+\mathcal{E}^{TX}_{-\vartheta}
-i\mathcal{D}^{ 
N}+\cos\left(\vartheta\right)i\mathcal{E}^{N}\right). \nonumber 
\end{align}
Moreover,
\begin{align}\label{eq:co19x-1}
    &\mathcal{L}^{X }_{b,\vartheta}=\frac{\cos^{2}\left(\vartheta\right)}{2}\left\vert  \left[Y^{ N},Y^{TX}\right]\right\vert^{2}
     +\frac{1}{2b^{2}} \Biggl( -\Delta^{TX\oplus N}+\left\vert  
     Y^{TX}\right\vert^{2}+\cos^{2}\left(\vartheta\right)
     \left\vert  Y^{N}\right\vert^{2}-m \nonumber \\
     &-\cos\left(\vartheta\right)n\Biggr) 
    +\frac{N_{-\vartheta}^{\Lambda\ac\left(T^{*}X \oplus N^{*}\right) }}{b^{2}} 
   +\frac{\cos\left(\vartheta\right)}{b}\Biggl(\n_{Y^{ TX}}^{C^{ \infty }\left(TX \oplus 
 N,\widehat{\pi}^{*} \left( \Lambda\ac\left(T^{*}X \oplus N^{*}\right)\otimes S^{\overline{TX}} \otimes F 
 \right) \right)} \nonumber \\
   &  +
	 \widehat{c}_{\vartheta}\left(\ad\left(Y^{TX}\right)
	 \vert_{TX \oplus N} \right) 
 -c\left(\ad\left(Y^{ 
	  TX}\right)
   +i\theta\ad\left(Y^{N}\right)\right) \nonumber \\ 
  & -i\widehat{c}_{\vartheta}\left(\ad\left(Y^{N}\right)\vert_{\overline{TX}}\right)
   -i\rho^{F}\left(Y^{N}\right)\Biggr) , \nonumber \\
   &\overline{\mathcal{L}}^{X }_{b,\vartheta}=\frac{\cos\left(\vartheta\right)}{2}\left\vert  \left[Y^{ N},Y^{TX}\right]\right\vert^{2}
     +\frac{1}{2b^{2}} \left( -\Delta^{TX}+\left\vert  
     Y^{TX}\right\vert^{2}
     -m\right)  \nonumber \\
    & +\frac{\ct}{2b^{2}}
     \left(-\Delta^{N}+\left\vert  Y^{N}\right\vert^{2}-n\right)
    +\frac{N^{\Lambda\ac\left(T^{*}X \oplus 
    N^{*}\right)}_{-\vartheta}}{b^{2}} \nonumber \\ 
     &+\frac{\cos\left(\vartheta\right)}{b}\Biggl(\n_{Y^{ TX}}^{C^{ \infty }\left(TX \oplus 
 N,\widehat{\pi}^{*} \left( \Lambda\ac\left(T^{*}X \oplus N^{*}\right)\otimes 
 S^{\overline{TX}} \otimes F 
 \right) \right)}
     +
	 \widehat{c}_{\vartheta}\left(\ad\left(Y^{TX}\right)
	 \vert_{TX \oplus N} \right)   \\ 
 & -c\left(\ad\left(Y^{ TX}\right) 
   \right) \Biggr) 
   -i\frac{\cos^{1/2}\left(\vartheta\right)}{b}
   \left(c\left(\theta\ad\left(Y^{N}\right)\right)+
   \widehat{c}_{\vartheta}\left(\ad\left(Y^{N}\right)\vert_{\overline{TX}}\right)
   +\rho^{F}\left(Y^{N}\right)\right), \nonumber \\ 
   &\mathcal{L}^{X \prime }_{b,\vartheta}=\frac{\cos^{2}\left(\vartheta\right)}{2}\left\vert  \left[Y^{ N},Y^{TX}\right]\right\vert^{2}
     +\frac{1}{2b^{2}} \Biggl( -\Delta^{TX\oplus N}+\left\vert  
     Y^{TX}\right\vert^{2}+\cos^{2}\left(\vartheta\right)
     \left\vert  Y^{N}\right\vert^{2} -m \nonumber \\
     &-\cos\left(\vartheta\right)n\Biggr) 
    +\frac{N^{\Lambda\ac\left(T^{*}X \oplus N^{*}\right) \prime}_{-\vartheta}}{b^{2}} 
     +\frac{\cos\left(\vartheta\right)}{b}\Biggl(\n_{Y^{ TX}}^{C^{ \infty }\left(TX \oplus 
 N,\widehat{\pi}^{*} \left( \Lambda\ac\left(T^{*}X \oplus N^{*}\right)\otimes S^{\overline{TX}} \otimes F 
 \right) \right)} \nonumber \\
    & +
	  \widehat{c}\left(\ad\left(Y^{TX}\right)
	  \right)  
  -c\left(\ad\left(Y^{ 
	  TX}\right)
   +i\theta\ad\left(Y^{N}\right)\right)-i\widehat{c}\left(\ad\left(Y^{N}\right)\vert_{\overline{TX}}\right)
   -i\rho^{F}\left(Y^{N}\right)\Biggr). \nonumber 
\end{align}
\end{thm}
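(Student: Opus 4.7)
The plan is to deduce Theorem \ref{Tform} from Theorem \ref{Tcaropbis} (and its extension to $\mathfrak D'^{,2}_{b,\vartheta}$) by a $K$-equivariant descent from $G\times\mathfrak g$ to $\widehat{\mathcal X}=X\times\mathfrak g$, exactly paralleling the passage from Theorem \ref{Tcarop} to Theorem \ref{Timpfo} carried out in \cite{Bismut08b}. Since all operators in play commute with $K$, they descend to $\mathcal H$, and the only arithmetic to do is book-keeping. The three formulas in (\ref{eq:vb4a}) are immediate: they are just the definitions (\ref{eq:co11}) and (\ref{eq:co13a4}) rewritten with $\mathfrak p, \mathfrak k$ replaced by $TX, N$ and $\mathcal D^{\mathfrak p}, \mathcal E^{\mathfrak p}, \mathcal D^{\mathfrak k}, \mathcal E^{\mathfrak k}$ replaced by their descended versions $\mathcal D^{TX}, \mathcal E^{TX}, \mathcal D^{N}, \mathcal E^{N}$, with $[Y^{\mathfrak k}, Y^{\mathfrak p}]$ becoming $[Y^N, Y^{TX}]$.

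For the three Laplacian formulas, I would start from the definitions (\ref{eq:co17}) and (\ref{eq:co17ra1}), insert the expression for $\mathfrak D^2_{b,\vartheta}/2$ from Theorem \ref{Tcaropbis}, and observe that by (\ref{eq:co10}) the term $\widehat D^{\mathfrak g,2}_{\vartheta}/2$ is $\vartheta$-independent and descends via (\ref{eq:dep0}) to $-\frac{1}{2}C^{\mathfrak g,X}-\frac{1}{8}B^{*}(\kappa^{\mathfrak g},\kappa^{\mathfrak g})$, precisely cancelling the $-\frac{1}{2}\widehat D^{\mathfrak g,X,2}_{\vartheta}$ piece of $\mathcal L^X_{b,\vartheta}$. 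What survives is the second-order fibre Laplacian $-\Delta^{TX\oplus N}/2b^2$, the quadratic term $|Y|^2$ split according to $\cos^2(\vartheta)$, the number operator piece which is recognised as $N^{\Lambda^{\cdot}(T^*X\oplus N^*)}_{-\vartheta}/b^2$ by definition (\ref{eq:gip1}) (using $\cos(-\vartheta)=\cos(\vartheta)$), and the family of linear-in-$Y$ terms on the last two lines of (\ref{eq:rio2abis}). The formula for $\overline{\mathcal L}^X_{b,\vartheta}$ is then obtained either by a parallel direct computation using $\overline{\mathfrak D}^X_{b,\vartheta}$, or, more efficiently, by applying the conjugation (\ref{eq:co18}) by $K^N_{1/\cos^{1/2}(\vartheta)}$, which rescales the $N$-fibre variable and produces the explicit $\cos^{1/2}(\vartheta)$ factors on the $Y^N$-linear terms. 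The formula for $\mathcal L^{X\prime}_{b,\vartheta}$ follows similarly by conjugating with $\widehat R_{\vartheta}$ and using (\ref{eq:gip2}).

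The main technical step is the descent of the linear-in-$Y$ terms, specifically the passage
\[
\underline Y^{\mathfrak p}+i\underline Y^{\mathfrak k}-i\nabla^V_{[Y^{\mathfrak k},Y^{\mathfrak p}]}\;\longrightarrow\;\nabla^{\mathcal H}_{Y^{TX}}+\text{(ad-corrections)}.
\]
Here $\underline Y^{\mathfrak p}$ acts as a left-invariant vector field on $G$, while $\nabla^{\mathcal H}_{Y^{TX}}$ is the horizontal lift with respect to $\nabla^{TX\oplus N}$. The relation between the two is exactly (\ref{eq:Lie5}), $\nabla^{TX\oplus N,f}=\nabla^{TX\oplus N}+\mathrm{ad}(\cdot)$, which produces additional $\widehat c(\mathrm{ad}(Y^{TX}))$ and $\rho^F(Y^N)$ terms; combining these with the $\widehat c_{\vartheta}(\mathrm{ad}(Y^{\mathfrak p}+iY^{\mathfrak k})|_{\mathfrak g})$, $c(\mathrm{ad}(Y^{\mathfrak k})|_{\mathfrak p})$ and $c(\mathrm{ad}(Y^{\mathfrak p}))$ terms of (\ref{eq:rio2abis}), and using the splitting of $\mathrm{ad}(Y^N)$ into its parts acting on $TX$ and on $N$, yields the final expression on the last two lines of the formula for $\mathcal L^X_{b,\vartheta}$. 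This is the one place where the $\overline{TX}$-Clifford structure genuinely couples to the $\Lambda^{\cdot}(T^*X\oplus N^*)$ structure through $\widehat c_{\vartheta}$, and it is the only computational point where one must track the ad-corrections carefully.

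The hardest book-keeping step is keeping the Clifford notation consistent in the line involving $c(\mathrm{ad}(Y^{TX})+i\theta\mathrm{ad}(Y^N))$: the descent of $c(\mathrm{ad}(Y^{\mathfrak p}))$ uses that $\mathrm{ad}(Y^{\mathfrak p})$ exchanges $\mathfrak p$ and $\mathfrak k$, while the descent of $c(\mathrm{ad}(Y^{\mathfrak k})|_{\mathfrak p})$ is antisymmetric on $\mathfrak p$ alone, and the combined effect is a single antisymmetric endomorphism $\mathrm{ad}(Y^{TX})+i\theta\,\mathrm{ad}(Y^N)$ of $TX\oplus N$, whose Clifford element appears in the stated formula. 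No genuinely new idea is required beyond those of \cite[Thm.~2.12.5]{Bismut08b}; the only novelty of Theorem \ref{Tform} is the presence of the $\cos(\vartheta)$ and $\cos^{1/2}(\vartheta)$ weights and the rotated number operator $N_{-\vartheta}^{\Lambda^{\cdot}(T^*X\oplus N^*)}$ (and its primed analogue), all of which are produced automatically by the procedure above.
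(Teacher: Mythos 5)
Your proposal is correct and follows essentially the same route as the paper: formula (\ref{eq:vb4a}) is read off from (\ref{eq:co11}), (\ref{eq:co13a4}); one of the three Laplacian identities is obtained by a direct $K$-equivariant descent from (\ref{eq:rio2abis}) exactly as in \cite[Theorem 2.12.5]{Bismut08b}; and the other two follow by the conjugations $\widehat R_{\vartheta}$ and $K^{N}_{1/\cos^{1/2}(\vartheta)}$ of (\ref{eq:co2bu}) and (\ref{eq:co18}). The only (immaterial) difference is that the paper does the direct descent for $\mathcal L^{X\prime}_{b,\vartheta}$ from the primed formula of (\ref{eq:rio2abis}) — which uses the unrotated $\widehat c$ and so matches \cite[Theorem 2.12.5]{Bismut08b} most closely — and then conjugates to get the other two, whereas you compute $\mathcal L^{X}_{b,\vartheta}$ directly from the first formula and conjugate from there; both orders are valid since the three operators are intertwined by the stated conjugations.
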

\begin{proof}
The identities in (\ref{eq:vb4a}) follow from
(\ref{eq:co11}), (\ref{eq:co13a4}).
By proceeding as in the proof of \cite[Theorem 2.12.5]{Bismut08b}, 
and using the second equation in (\ref{eq:rio2abis}), we get the  
third 
equation in (\ref{eq:co19x-1}). By conjugating this equation by 
$\widehat{R}_{\vartheta}$, we get the first equation. By 
(\ref{eq:co18}) and by the first identity in (\ref{eq:co19x-1}), we 
get the second identity. The proof of our theorem is completed. 
\end{proof}
\begin{remk}\label{Rcons2}
The considerations of Remark \ref{Rcons1} still apply here. Namely, 
in $\mathfrak L^{X}_{\bt}, \mathfrak L^{X \prime }_{\bt}$, 
the linear terms in $Y^{TX},Y^{N}$ all have the same factor 
$\frac{\cos\left(\vartheta\right)}{b}$.
\end{remk}

Now we follow \cite[Definition 2.4.1]{Bismut08b}.
\begin{defin}\label{Dflatbis}
     Let 
     \index{nLTX@$\n^{\Lambda\ac\left(T^{*}X \oplus N^{*}\right), f*,\hat{f}}$}%
     $\n^{\Lambda\ac\left(T^{*}X \oplus N^{*}\right), f*,\hat{f}}$ be the connection 
     on $\Lambda\ac\left(T^{*}X \oplus N^{*}\right)$,
     \begin{equation}
	 \n^{\Lambda\ac\left(T^{*}X \oplus N^{*}\right), 
	 f*,\hat{f}}_{\cdot}=\n^{\Lambda\ac\left(T^{*}X \oplus 
	 N^{*}\right)}_{\cdot}-c\left(\ad\left(\cdot\right)\right)+\widehat{c}\left(\ad\left(\cdot\right)\right).
	 \label{eq:glub1}
     \end{equation}
     By \cite[Proposition 2.4.2]{Bismut08b}, $\n^{\Lambda\ac\left(T^{*}X \oplus N^{*}\right), f*,\hat{f}}$
     is a flat connection. Let
     \index{nCTX@$\n^{C^{\infty }\left( TX \oplus N,\widehat{\pi}^{*} \left( \Lambda\ac\left(T^{*}X 
     \oplus N^{*}\right)\otimes S^{\overline{TX}} \otimes F \right) 
     \right) , f*,\hat{f}}$}%
     $$\n^{C^{\infty }
     \left( TX \oplus N,\widehat{\pi}^{*} \left( \Lambda\ac\left(T^{*}X 
     \oplus N^{*}\right)\otimes S^{\overline{TX}} \otimes F \right) 
     \right) , f*,\hat{f}}$$
     be the 
     connection on $C^{\infty }\left( TX \oplus N,
     \widehat{\pi}^{*} \left( \Lambda\ac\left(T^{*}X 
     \oplus N^{*}\right)\otimes S^{\overline{TX}} \otimes F \right)  \right) $ that is induced by 
     $\n^{\Lambda\ac\left(T^{*}X \oplus N^{*}\right), 
     f*,\hat{f}},\n^{S^{\overline{TX}}},\n^{F}$.
     \end{defin}
     
     We can rewrite the third equation in (\ref{eq:co19x-1}) in the 
     form
     \begin{multline}\label{eq:glub2}
\mathcal{L}^{X \prime }_{b,\vartheta}=\frac{\cos^{2}\left(\vartheta\right)}{2}\left\vert  \left[Y^{ N},Y^{TX}\right]\right\vert^{2}
     +\frac{1}{2b^{2}} \Biggl( -\Delta^{TX\oplus N}+\left\vert  
     Y^{TX}\right\vert^{2}+\cos^{2}\left(\vartheta\right)
     \left\vert  Y^{N}\right\vert^{2} -m \\
     -\cos\left(\vartheta\right)n\Biggr) 
    +\frac{N_{-\vartheta}^{\Lambda\ac\left(\TsX \oplus 
    N^{*}\right) \prime }}{b^{2}} 
     +\frac{\cos\left(\vartheta\right)}{b}\Biggl(\n_{Y^{ TX}}^{C^{ \infty }\left(TX \oplus 
 N,\widehat{\pi}^{*} \left( \Lambda\ac\left(T^{*}X \oplus 
 N^{*}\right)\otimes S^{\overline{TX}} \otimes F
 \right) \right),f*,\widehat{f} }
	    \\
  -ic\left(
   \theta\ad\left(Y^{N}\right) \right) 
   -i\widehat{c}\left(\ad\left(Y^{N}\right)\vert_{\overline{TX}}\right)
   -i\rho^{F}\left(Y^{N}\right)\Biggr)  . 
\end{multline}
\subsection{A formula relating $\overline{\mathcal{L}}^{X}_{b,\vartheta}$ to 
$\mathcal{L}^{X}_{0,\vartheta}$}%
\label{subsec:formrebis}
We proceed as in subsection \ref{subsec:formre}. By (\ref{eq:co19x-1}), 
we can write $\overline{\mathcal{L}}^{X}_{b,\vartheta}$ in 
the form
\begin{equation}\label{eq:co18x1}
\overline{\mathcal{L}}^{X}_{b,\vartheta}=\frac{\alpha_{\vartheta}}{b^{2}}
+\frac{\beta_{\vartheta}}{b}+\gamma_{\vartheta}.
\end{equation}

For $\vartheta\in \left[0,\frac{\pi}{2}\right[$, the 
operator $\alpha_{\vartheta}$ acting fibrewise has discrete spectrum. Its  
kernel  is the vector space 
\index{H@$H$}%
$H$ in (\ref{eq:bugr2}). We still define
\index{Hp@$H^{\perp}$}%
$H^{\perp}$ as in subsection \ref{subsec:formre}.

Note that 
$\beta_{\vartheta}$ maps $H$ into $H^{\perp}$.
Let $\alpha_{\vartheta}^{-1}$ be the inverse 
of $\alpha_{\vartheta}$ restricted to $H^{\perp}$.

Recall that the operator 
\index{LXt@$\mathcal{L}^{X}_{0,\vartheta}$}%
$\mathcal{L}^{X}_{0,\vartheta}$ was defined in 
Definition \ref{DLXt}. Now we establish an extension of  
\cite[Theorem 2.16.1]{Bismut08b}, which was stated before as Theorem 
\ref{Tfundid}.
\begin{thm}\label{Thfub}
For $\vartheta\in\left[0,\frac{\pi}{2}\right[$, the following identity holds:
\begin{equation}\label{eq:co21}
P\left(\gamma_{\vartheta}-\beta_{\vartheta}\alpha_{\vartheta}^{-1}\beta_{\vartheta}\right)P=
\mathcal{L}^{X}_{0,\vartheta}.
\end{equation}
\end{thm}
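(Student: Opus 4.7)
The plan is to mimic the first of the two proofs of Theorem \ref{Tfundid} given in \cite{Bismut08b}, which used Proposition \ref{Pcomp1}. Here the analogous ingredient is Proposition \ref{Pcomp}, combined with the fact that after the $K^{N}_{\cos^{1/2}(\vartheta)}$-rescaling built into $\overline{\mathfrak D}^{X}_{b,\vartheta}$, the fibrewise part factorises as the square of an odd operator whose kernel is exactly $H$.

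First I would split $\overline{\mathfrak D}^{X}_{b,\vartheta}=A_{0}+\frac{1}{b}A_{-1}$ with
\[
A_{0}=\widehat{D}^{\mathfrak g,X}_{\vartheta}+\cos^{1/2}(\vartheta)ic([Y^{N},Y^{TX}]),\qquad A_{-1}=\mathcal{D}^{TX}+\mathcal{E}^{TX}+\cos^{1/2}(\vartheta)(-i\mathcal{D}^{N}+i\mathcal{E}^{N}).
\]
From (\ref{eq:Lie18})--(\ref{eq:Lie19}), and because the $\mathfrak p$-part and $\mathfrak k$-part of $A_{-1}$ are odd operators acting on disjoint graded factors, they anticommute; squaring yields $\tfrac12 A_{-1}^{2}=\alpha_{\vartheta}$, matching the harmonic oscillator plus number operator term appearing in the second equation of (\ref{eq:co19x-1}). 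In particular $A_{-1}$ annihilates the Gaussian ground state in $Y$, so $A_{-1}P=PA_{-1}=0$ and, being self-adjoint, $A_{-1}$ preserves $H^{\perp}$; on $H^{\perp}$ the operator $A_{-1}$ is thus invertible, and a short computation gives $A_{-1}\alpha_{\vartheta}^{-1}A_{-1}=2P^{\perp}$.

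Next, expanding $\overline{\mathfrak D}^{X,2}_{b,\vartheta}=A_{0}^{2}+\frac{1}{b}\{A_{0},A_{-1}\}+\frac{1}{b^{2}}A_{-1}^{2}$ and using that $\widehat{D}^{\mathfrak g,X,2}_{\vartheta}=\widehat{D}^{\mathfrak g,X,2}$ (since $\widehat{R}_{\vartheta}$ commutes with the central element $C^{\mathfrak g}\in U(\mathfrak g)$), I read off $2\gamma_{\vartheta}=-\widehat{D}^{\mathfrak g,X,2}+A_{0}^{2}$, $2\beta_{\vartheta}=\{A_{0},A_{-1}\}$, $2\alpha_{\vartheta}=A_{-1}^{2}$. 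Then $A_{-1}P=PA_{-1}=0$ forces $\{A_{0},A_{-1}\}P=A_{-1}A_{0}P$ and $P\{A_{0},A_{-1}\}=PA_{0}A_{-1}$, so
\[
P\beta_{\vartheta}\alpha_{\vartheta}^{-1}\beta_{\vartheta}P=\tfrac{1}{4}PA_{0}A_{-1}\alpha_{\vartheta}^{-1}A_{-1}A_{0}P=\tfrac{1}{2}PA_{0}P^{\perp}A_{0}P,
\]
while $PA_{0}^{2}P=(PA_{0}P)^{2}+PA_{0}P^{\perp}A_{0}P$. Combining, the $P A_0 P^\perp A_0 P$ cross term cancels and one is left with
\[
P(\gamma_{\vartheta}-\beta_{\vartheta}\alpha_{\vartheta}^{-1}\beta_{\vartheta})P=-\tfrac{1}{2}\widehat{D}^{\mathfrak g,X,2}+\tfrac{1}{2}(PA_{0}P)^{2}.
\]

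To finish, the descended form of Proposition \ref{Pcomp} identifies $PA_{0}P$ with $\widehat{D}^{X}_{\vartheta}=\sin(\vartheta)\widehat{D}^{X}$, so $(PA_{0}P)^{2}=\sin^{2}(\vartheta)\widehat{D}^{X,2}$; comparing with (\ref{eq:dep0}) and (\ref{eq:Lie17}) gives $-\tfrac12\widehat{D}^{\mathfrak g,X,2}=\mathcal{L}^{X}_{0}$, and Definition \ref{DLXt} then yields exactly $\mathcal{L}^{X}_{0,\vartheta}$ as required. The only genuinely delicate point is the first step, namely recognising that the $\cos^{1/2}(\vartheta)$ rescaling hidden in Definition \ref{DEt} is precisely what makes the kernel of $\alpha_{\vartheta}$ $\vartheta$-independent and equal to $H$ of (\ref{eq:bugr2}); had one worked directly with $\mathfrak D^{X}_{b,\vartheta}$, the fibrewise ground state would shift with $\vartheta$ and the algebraic cancellation above would not close as cleanly. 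Everything else is bookkeeping with super-commutators and the scalar compression identity.
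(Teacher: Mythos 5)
Your proof is correct and follows essentially the same route as the paper's: split $\overline{\mathfrak D}^{X}_{b,\vartheta}$ into a $b$-independent part $A_{0}$ and a $1/b$ part $A_{-1}$ (the paper's $E_{\vartheta}+F_{\vartheta}/b$ with $E_{\vartheta}=A_{0}/\sqrt{2}$, $F_{\vartheta}=A_{-1}/\sqrt{2}$), read off $\alpha_{\vartheta}=F_{\vartheta}^{2}$, $\beta_{\vartheta}=[E_{\vartheta},F_{\vartheta}]$, $\gamma_{\vartheta}=E_{\vartheta}^{2}-\tfrac12\widehat{D}^{\mathfrak g,X,2}_{\vartheta}$, compute $P\beta_{\vartheta}\alpha_{\vartheta}^{-1}\beta_{\vartheta}P=PE_{\vartheta}P^{\perp}E_{\vartheta}P$, use $PE_{\vartheta}^{2}P-PE_{\vartheta}P^{\perp}E_{\vartheta}P=(PE_{\vartheta}P)^{2}$, then apply Proposition \ref{Pcomp} and translate $-\tfrac12\widehat{D}^{\mathfrak g,X,2}=\mathcal{L}^{X}_{0}$ via (\ref{eq:dep0}) and (\ref{eq:Lie17}). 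Your closing remark about the role of the $K^{N}_{\cos^{1/2}(\vartheta)}$ rescaling — keeping the fibrewise ground state $\vartheta$-independent and equal to $H$ — is a correct and useful observation that the paper does not make explicit, but it is a comment on the setup rather than a different method.
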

\begin{proof}
We proceed as in \cite[Theorem 2.16.1]{Bismut08b}. By 
(\ref{eq:vb4a}), we can write $\frac{1}{\sqrt{2}} \overline{\mathfrak D}^{X}_{b,\vartheta}$ in the form
\begin{equation}\label{eq:co22}
\frac{1}{\sqrt{2}}\overline{\mathfrak D}^{X}_{b,\vartheta}=E_{\vartheta}+\frac{F_{\vartheta}}{b}.
\end{equation}
Using (\ref{eq:co17}),  and comparing 
(\ref{eq:co18x1}) and (\ref{eq:co22}), we obtain
\begin{align}\label{eq:co23}
&\alpha_{\vartheta}=F_{\vartheta}^{2},\qquad
\beta_{\vartheta}=\left[E_{\vartheta},F_{\vartheta}\right],
&\gamma_{\vartheta}=E_{\vartheta}^{2}-\frac{1}{2}
\widehat{D}^{\mathfrak g,X,2}_{\vartheta}.
\end{align}
By (\ref{eq:co23}), we obtain
\begin{equation}\label{eq:co24}
P\left(\gamma_{\vartheta}-\beta_{\vartheta}
\alpha_{\vartheta}^{-1}\beta_{\vartheta}\right)P=
P\left(E^{2}_{\vartheta}-E_{\vartheta}P^{\perp}E_{\vartheta}-
\frac{1}{2}\widehat{D}^{\mathfrak g,X,2}_{\vartheta}\right)P.
\end{equation}
By equation (\ref{eq:co14}) in Proposition \ref{Pcomp}, we get
\begin{equation}\label{eq:co25}
P\left(E^{2}_{\vartheta}-E_{\vartheta}P^{\perp}_{\vartheta}
E_{\vartheta}\right)P=
\left(PE_{\vartheta}P\right)^{2}=\frac{1}{2}
\sin^{2}\left(\vartheta\right)\widehat{D}^{X,2}.
\end{equation}
By (\ref{eq:Lie17}), (\ref{eq:kos22}), (\ref{eq:co24}), and 
(\ref{eq:co25}), we obtain
\begin{equation}\label{eq:group0}
P\left(\gamma_{\vartheta}-\beta_{\vartheta}
\alpha_{\vartheta}^{-1}\beta_{\vartheta}\right)P=\frac{1}{2}\sin^{2}\left(\vartheta\right)
\widehat{D}^{X,2}+\mathcal{L}_{0}^{X}.
\end{equation}
By  (\ref{eq:gzinc1}),  (\ref{eq:group0}),  we get (\ref{eq:co21}).  
The proof of our theorem is completed. 
\end{proof}
\subsection{The superconnections $B^{X},\overline{B}^{X},B^{X \prime}$}%
\label{subsec:newsup}
The  flat connections 
$$\n^{C^{ \infty }\left(G\times \mathfrak 
g,\Lambda\ac\left(\mathfrak g^{*} \right) \otimes S^{\overline{\mathfrak p}} \right)},
\overline{\n}^{C^{\infty }\left(G\times \mathfrak  
g, \Lambda\ac\left(\mathfrak g^{*}\right) \otimes  
S^{\overline{\mathfrak p}}\right)},
\n^{
 C^{ \infty }\left(G\times \mathfrak 
g, \Lambda\ac\left(\mathfrak g^{*}\right)  \otimes S^{\overline{\mathfrak p}} \right)\prime }$$
induce flat connections 
\index{nH@$\n^{\mathcal{H}}$}%
\index{nH@$\overline{\n}^{\mathcal{H}}$}%
\index{nH@$\n^{\mathcal{H} \prime}$}%
$\n^{\mathcal{H}},\overline{\n}^{\mathcal{H}},\n^{\mathcal{H} \prime }$ on the vector bundle $\mathcal{H}$ over 
 $\R_{+}^{*}\times\left[0,\frac{\pi}{2}\right[$.
Let $d^{\R^{*}_{+}\times 
\left[0,\frac{\pi}{2}\right[}$ still denote the de Rham operator on $\R^{*}_{+}\times 
\left[0,\frac{\pi}{2}\right[$.

For $a\in\R$, we still define 
\index{Ka@$K_{a}$}%
$K_{a}$ as in (\ref{eq:ors-1}), i.e., 
if $s\in \mathcal{H}$, then
\begin{equation}\label{eq:ors-1bis}
K_{a}s\left(x,Y\right)=a^{\left(m+n\right)/2}s\left(x,aY\right).
\end{equation}
By (\ref{eq:co30x-3}), we get
  \begin{align}\label{eq:co30x-3bis}
&\n^{\mathcal{H}}
   =K_{b}\widehat{R}_{\vartheta}d^{\R^{*}_{+}\times 
\left[0,\frac{\pi}{2}\right[}\widehat{R}_{\vartheta}
^{-1}K_{b}^{-1}, \nonumber \\
&\overline{\n}^{\mathcal{H}}=K^{N}_{1/\cos^{1/2}\left(\vartheta\right)}
   \n^{\mathcal{H}}K^{N}_{\cos^{1/2}\left(\vartheta\right)}, \\
&\n^{\mathcal{H} \prime }=K_{b\cos\left(\vartheta\right)}d^{\R^{*}_{+}\times 
\left[0,\frac{\pi}{2}\right[}
K_{b\cos\left(\vartheta\right)}^{-1}. \nonumber 
\end{align}

By (\ref{eq:co30x-1}), we obtain
   \begin{align}\label{eq:co30x-1a}
&\n^{\mathcal{H}}=db\left(\frac{\pa}{\pa 
b}-\frac{1}{b}\n^{V}_{Y}-\frac{m+n}{2b}\right)+d\vartheta \left( \frac{\pa}{\pa 
\vartheta}-\widehat{c}\left(J\right)\right),\nonumber \\
&\overline{\n}^{\mathcal{H}}=db\left(\frac{\pa}{\pa 
b}-\frac{1}{b}\n^{V}_{Y}-\frac{m+n}{2b}\right) \nonumber \\
&\qquad \qquad+d\vartheta \left( \frac{\pa}{\pa 
\vartheta}-\widehat{c}\left(J\right)-\frac{1}{2}\tan\left(\vartheta\right)\n^{V}
_{Y^{N}}-\frac{1}{4}\tan\left(\vartheta\right)n\right),\\
&\n^{\mathcal{H} \prime }=db\left(\frac{\pa}{\pa 
b}-\frac{1}{b}\n^{V}_{Y}-\frac{m+n}{2b}\right) \nonumber \\
&\qquad\qquad +d\vartheta \left( \frac{\pa}{\pa \vartheta}
+\tan\left(\vartheta\right)\n^{V}_{Y}+\tan\left(\vartheta\right)\frac{m+n}{2}\right) .  \nonumber 
\nonumber 
\end{align}

By (\ref{eq:co29x1y}),  we get
\begin{align}\label{eq:co29x1}
    &\n^{\mathcal{H}}\mathfrak D^{X}_{b,\vartheta}= -\frac{2db}{b^{2}}\left(b
    \cos\left(\vartheta\right)ic\left(\left[Y^{N},Y^{TX}\right]\right)
+\mathcal{E}^{TX}+\cos\left(\vartheta\right)i\mathcal{E}^{N}\right) \nonumber \\
&-\frac{d\vartheta}{b}\left( b\sin\left(\vartheta\right)
ic\left(\left[Y^{N},Y^{TX}\right]\right) +
\widehat{c}\left(\overline{Y}^{TX}\right)+\sin\left(\vartheta\right)i\mathcal{E}^{N}
\right) ,  \nonumber  \\
&\overline{ \n}^{\mathcal{H}}\overline{\mathfrak D}^{X} _{b,\vartheta}= -\frac{2db}{b^{2}}\left(b
    \cos^{1/2}\left(\vartheta\right)ic\left(\left[Y^{N},Y^{TX}\right]\right)
+\mathcal{E}^{TX}+\cos^{1/2}\left(\vartheta\right)i\mathcal{E}^{N}\right) \nonumber \\
&-\frac{d\vartheta}{b}\left( 
b\frac{\sin\left(\vartheta\right)}{\cos^{1/2}\left(\vartheta\right)}
ic\left(\left[Y^{N},Y^{TX}\right]\right) 
+\widehat{c}\left(\overline{Y}^{TX
}\right)+\frac{\sin\left(\vartheta\right)}{\cos^{1/2}\left(\vartheta\right)}i\mathcal{E}^{N}
\right) ,   \\
&\n^{\mathcal{H} \prime }\mathfrak D^{X 
\prime }_{b,\vartheta}=-\frac{2db}{b^{2}}\left(b
\cos\left(\vartheta\right)ic\left(\left[Y^{N},Y^{TX}\right]\right)
+\mathcal{E}^{TX}_{-\vartheta}+\cos\left(\vartheta\right)i\mathcal{E}^{N}\right) \nonumber \\
&+\frac{d\vartheta}{b}\left(b\sin\left(\vartheta\right)
ic\left(\left[Y^{N},Y^{TX}\right]\right)-
\tan \left(\vartheta\right)\left(\mathcal{D}^{TX}-i\mathcal{D}^{N}\right)
-\frac{1}{\cos\left(\vartheta\right)}\widehat{c}\left(\overline{Y}^{TX}\right)\right). \nonumber 
\end{align}

Note that the superconnections $B,\overline{B},B'$  in (\ref{eq:co31-x}) descend to 
superconnections 
\index{BX@$B^{X}$}%
\index{BX@$\overline{B}^{X}$}%
\index{BX@$B^{X \prime}$}%
$B^{X},\overline{B}^{X},B^{X \prime }$ on $\mathcal{H}$ given by
\begin{align}\label{eq:co31}
&B^{X}=\n^{\mathcal{H}}+\frac{\mathfrak D^{X 
}_{b,\vartheta}}{\sqrt{2}}, \nonumber \\
&\overline{B}^{X}=\overline{\n}^{\mathcal{H}}+\frac{\overline{\mathfrak 
D}^{X}_{\bt}}{\sqrt{2}},\\
&B^{X \prime}=\n^{\mathcal{H} \prime }+\frac{\mathfrak 
D^{X \prime  }_{b,\vartheta}}{\sqrt{2}}. \nonumber 
\end{align}

By (\ref{eq:co31-xa1}), we get
\begin{equation}\label{eq:co31z1}
\overline{B}^{X}=K^{N}_{1/\cos^{1/2}\left(\vartheta\right)}B^{X}K^{N}_{\cos^{1/2}\left(\vartheta\right)}.
\end{equation}
Moreover, we have the identities
\begin{align}\label{eq:co33}
&B^{X,2}=\frac{1}{2}\mathfrak D^{X,2}_{b,\vartheta}+\frac{1}{\sqrt{2}}\n^{\mathcal{H}}\mathfrak 
D^{X}_{b,\vartheta}, \nonumber \\
&\overline{B}^{X,2}=\frac{1}{2}\overline{\mathfrak 
D}^{X,2}_{\bt}+\frac{1}{\sqrt{2}}\overline{\n}^{\mathcal{H}}\overline{\mathfrak D}^{X}_{\bt},\\
&B^{ X\prime, 2}=\frac{1}{2}\mathfrak D^{X\prime, 
2}_{b,\vartheta}+\frac{1}{\sqrt{2}}\n^{\mathcal{H} \prime }\mathfrak 
D^{X \prime }_{b,\vartheta}. \nonumber 
\end{align}

\begin{defin}\label{Dmx}
Put
\index{LX@$L^{X}$}%
\index{LX@$\overline{L}^{X}$}%
\index{LX@$L^{X \prime}$}%
\begin{align}\label{eq:co35x-1}
&L^{X}=-\frac{1}{2}\widehat{D}^{\mathfrak 
g,X,2}_{\vartheta}+B^{X,2}, \nonumber \\
&\overline{L}^{X}=-\frac{1}{2}\widehat{D}^{\mathfrak 
g,X,2}_{\vartheta}+\overline{B}^{X,2},\\
&L^{X \prime}=-\frac{1}{2}\widehat{D}^{\mathfrak 
g,X,2}+B^{X \prime ,2}. \nonumber 
\end{align}
\end{defin}

By (\ref{eq:co10}), (\ref{eq:co31z1}), and   (\ref{eq:co35x-1}), we obtain
\begin{equation}\label{eq:co35x-1a}
\overline{L}^{X}=K^{N}_{1/\cos^{1/2}\left(\vartheta\right)}L^{X}K^{N}_{\cos^{1/2}\left(\vartheta\right)}.
\end{equation}

By  (\ref{eq:co10}), (\ref{eq:co35x-1}), we get
\begin{align}\label{eq:co35}
&L^{X}=B^{X,2}+\frac{1}{2}C^{\mathfrak 
g,X}+\frac{1}{8}B^{*}\left(\kappa^{\mathfrak g},\kappa^{\mathfrak 
g}\right), \nonumber \\
&\overline{L}^{X}=\overline{B}^{X,2}+\frac{1}{2}C^{\mathfrak 
g,X}+\frac{1}{8}B^{*}\left(\kappa^{\mathfrak g},\kappa^{\mathfrak 
g}\right),\\
&L^{X \prime }=B^{X \prime, 2}+\frac{1}{2}C^{\mathfrak 
g,X}+\frac{1}{8}B^{*}\left(\kappa^{\mathfrak g},\kappa^{\mathfrak 
g}\right). \nonumber 
\end{align}
By (\ref{eq:co17}), (\ref{eq:co33}), and (\ref{eq:co35x-1}), we obtain
\begin{align}\label{eq:co36}
&L^{X}=\mathcal{L}^{X}_{b,\vartheta}+\frac{1}{\sqrt{2}}\n^{\mathcal{H}}\mathfrak D^{X}_{b,\vartheta}, \nonumber \\
&\overline{L}^{X}=\overline{\mathcal{L}}^{X}_{b,\vartheta}+\frac{1}{\sqrt{2}}\overline{\n}^{\mathcal{H}}\overline{\mathfrak D}^{X}_{b,\vartheta}, \\
&L^{X \prime }=\mathcal{L}^{X \prime}_{b,\vartheta}+\frac{1}{\sqrt{2}}
\n^{\mathcal{H} \prime }\mathfrak 
D^{X \prime }_{b,\vartheta}. \nonumber 
\end{align}

We will establish an analogue of the Bianchi identities in 
(\ref{eq:cla6}) and in Proposition \ref{PBi}.
\begin{prop}\label{PBibis}
The following identities hold:
\begin{align}\label{eq:co37}
&\left[B^{X},L^{X}\right]=0,
&\left[\overline{B}^{X},\overline{L}^{X}\right]=0, \qquad
\left[B^{X \prime },L^{X \prime }\right]=0.
\end{align}
\end{prop}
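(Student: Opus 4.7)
The plan is to reduce all three identities to two pieces: the standard superconnection Bianchi identity for an odd operator, together with the centrality of the Casimir $C^{\mathfrak g}$ in $U\left(\mathfrak g\right)$. By (\ref{eq:co35}), we can rewrite
\begin{equation*}
L^{X}=B^{X,2}+\tfrac{1}{2}C^{\mathfrak g,X}+\tfrac{1}{8}B^{*}\left(\kappa^{\mathfrak g},\kappa^{\mathfrak g}\right),
\end{equation*}
and similarly for $\overline{L}^{X},L^{X\prime}$. Taking the supercommutator with $B^{X}$ (or $\overline{B}^{X}$, $B^{X\prime}$), the constant $\tfrac{1}{8}B^{*}\left(\kappa^{\mathfrak g},\kappa^{\mathfrak g}\right)$ drops out, so the identities (\ref{eq:co37}) reduce to the two statements $\left[B^{X},B^{X,2}\right]=0$ and $\left[B^{X},C^{\mathfrak g,X}\right]=0$ (and analogues). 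The first is the formal Bianchi identity: any odd element $A$ satisfies $\left[A,A^{2}\right]=A A^{2}-A^{2}A=0$ since $A^{2}$ is even.

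For the commutation with $C^{\mathfrak g,X}$, I would split $B^{X}=\n^{\mathcal{H}}+\mathfrak D^{X}_{b,\vartheta}/\sqrt{2}$ and verify each piece separately. The connection $\n^{\mathcal{H}}$ is built from $\partial_{b}$, $\partial_{\vartheta}$, $\n^{V}_{Y}$ and $\widehat{c}\left(J\right)$, all of which commute with $C^{\mathfrak g,X}$ because the latter acts only on the base $X$, is independent of $\left(b,\vartheta,Y\right)$, and acts trivially on the Clifford factor. For $\mathfrak D^{X}_{b,\vartheta}$ it suffices to check that the lift $\mathfrak D_{b,\vartheta}\in\End\left(C^{\infty}\left(G\times\mathfrak g,\Lambda\ac\left(\mathfrak g^{*}\right)\otimes S^{\overline{\mathfrak p}}\right)\right)$ commutes with $C^{\mathfrak g}$, since it descends $K$-equivariantly to $\mathcal{H}$. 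Inspecting (\ref{eq:co11}), each ingredient is either a left-invariant differential operator on $G$ (so commutes with $C^{\mathfrak g}$ by centrality, in particular $\widehat{D}^{\mathfrak g}_{\vartheta}=\widehat{R}_{\vartheta}\widehat{D}^{\mathfrak g}\widehat{R}_{\vartheta}^{-1}$ since $\widehat{R}_{\vartheta}$ is Clifford), a Clifford multiplication acting on a factor disjoint from the $U\left(\mathfrak g\right)$ factor on which $C^{\mathfrak g}$ lives, or a vertical differential operator on $\mathfrak g$. This gives the first identity.

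For the second identity, I would appeal directly to (\ref{eq:co31z1}), which writes $\overline{B}^{X}=K^{N}_{1/\cos^{1/2}\left(\vartheta\right)}B^{X}K^{N}_{\cos^{1/2}\left(\vartheta\right)}$. Since $K^{N}_{a}$ is a vertical rescaling in $Y^{N}$, it commutes with the horizontal operator $C^{\mathfrak g,X}$, so $\overline{L}^{X}=K^{N}_{1/\cos^{1/2}\left(\vartheta\right)}L^{X}K^{N}_{\cos^{1/2}\left(\vartheta\right)}$, and conjugating the first identity finishes the second. The third identity cannot be obtained from the first by conjugation with $\widehat{R}_{\vartheta}$, because $\n^{\mathcal{H}\prime}$ is not the $\widehat{R}_{\vartheta}$-conjugate of $\n^{\mathcal{H}}$ (the $d\vartheta$-parts in (\ref{eq:co30x-1a}) differ); instead I would repeat the argument of the previous paragraph verbatim, noting that the ingredients of $\n^{\mathcal{H}\prime}$ ($\partial_{b}$, $\partial_{\vartheta}$, $\n^{V}_{Y}$ with various coefficients) and of $\mathfrak D^{X\prime}_{b,\vartheta}=\widehat{R}_{\vartheta}^{-1}\mathfrak D^{X}_{b,\vartheta}\widehat{R}_{\vartheta}$ all commute with $C^{\mathfrak g,X}$ by the same token. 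No real obstacle is present; the entire proof is bookkeeping that traces back to the centrality of $C^{\mathfrak g}$, and the main care is needed only in checking that the connection pieces $\widehat{c}\left(J\right)$, $\tan\left(\vartheta\right)\n^{V}_{Y^{N}}$, etc., genuinely commute with $C^{\mathfrak g,X}$, which they do because they act on variables disjoint from its differentiation.
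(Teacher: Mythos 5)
Your proof is correct and follows the same route as the paper's: reduce the three identities via (\ref{eq:co35}) to the formal Bianchi identity $\left[B^{X},B^{X,2}\right]=0$ together with $\left[B^{X},C^{\mathfrak g,X}\right]=0$, the latter a consequence of $C^{\mathfrak g}$ being central in $U\left(\mathfrak g\right)$. The paper is terser (it simply invokes centrality via (\ref{eq:co10}) and says the other two cases are ``similar''), whereas you supply the explicit check that each piece of $\n^{\mathcal{H}}$, $\n^{\mathcal{H}\prime}$, and $\mathfrak D^{X}_{\bt}$, $\mathfrak D^{X\prime}_{\bt}$ commutes with $C^{\mathfrak g,X}$, and you spot that the $\overline{L}^{X}$ case can be obtained more cheaply by conjugating the first identity through (\ref{eq:co31z1}) and (\ref{eq:co35x-1a}) — a small but tidy shortcut. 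Your remark that the third identity does \emph{not} reduce to the first by $\widehat{R}_{\vartheta}$-conjugation (because $\n^{\mathcal{H}\prime}\neq\widehat{R}_{\vartheta}^{-1}\n^{\mathcal{H}}\widehat{R}_{\vartheta}$) is correct and consistent with the interpolation argument the paper later uses in the proof of Theorem \ref{Tclo}.
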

\begin{proof}
   The classical Bianchi identity asserts that
\begin{equation}\label{eq:co34}
\left[B^{X},B^{X,2}\right]=0.
\end{equation}
Since $C^{\mathfrak g}$ lies in the centre of $U\left(\mathfrak 
g\right)$, by (\ref{eq:co10}), we get
\begin{equation}\label{eq:bia4}
\left[B^{X},C^{\mathfrak g,X}\right]=0.
\end{equation}
By (\ref{eq:co35}), (\ref{eq:co34}), and (\ref{eq:bia4}), we get  
the first identity in 
(\ref{eq:co37}). The proof of the other identities is similar.
\end{proof}

 Recall that the superconnection
 \index{AX@$A^{X}$}%
   $A^{X}$ was defined in equation (\ref{eq:defa2}).
   \begin{prop}\label{Pnewco}
The following identity holds:
 \begin{equation}\label{eq:cogign5}
P\left(\overline{\n}^{\mathcal{H}}\vert_{db=0}+\frac{1}{\sqrt{2}} \left( \widehat{D}^{\mathfrak 
g,X}_{\vartheta}+\cos^{1/2}\left(\vartheta\right)ic\left(\left[Y^{N},Y^{TX}\right]\right) \right) \right)P
=A^{X}.
\end{equation}
\end{prop}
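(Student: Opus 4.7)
The plan is to deduce Proposition \ref{Pnewco} from its $G$-level analogue Proposition \ref{Pcompbi} by $K$-equivariant descent to $\widehat{\mathcal{X}} = X \times \mathfrak{g}$. Every ingredient appearing in (\ref{eq:cogign5}) has already been built as the quotient by the $K$-action of a corresponding object in (\ref{eq:co14x}): the Kostant Dirac operator $\widehat{D}^{\mathfrak g}_{\vartheta}$ descends to $\widehat{D}^{\mathfrak g,X}_{\vartheta}$, the potential $ic([Y^{\mathfrak k}, Y^{\mathfrak p}])$ descends to $ic([Y^{N}, Y^{TX}])$ via the identifications (\ref{eq:Lie2})--(\ref{eq:Lie4}), the flat connection $\overline{\n}^{C^{\infty}(G \times \mathfrak{g}, \Lambda\ac(\mathfrak{g}^{*}) \otimes S^{\overline{\mathfrak p}})}$ descends to $\overline{\n}^{\mathcal{H}}$, the superconnection $A$ descends to $A^{X}$, and the fibrewise orthogonal projector $P$ is $K$-invariant so descends to the projector of the same name on $\mathcal{H}$. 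Taking $K$-invariants of the identity (\ref{eq:co14x}) thus yields (\ref{eq:cogign5}) at once.

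For an explicit verification, I would split the claim according to the degree in $d\vartheta$, using the middle line of (\ref{eq:co30x-1a}). In degree zero one obtains
\begin{equation*}
P\,\frac{1}{\sqrt{2}}\left(\widehat{D}^{\mathfrak g,X}_{\vartheta} + \cos^{1/2}(\vartheta)\, ic([Y^{N}, Y^{TX}])\right)P,
\end{equation*}
which by the $X$-level version of Proposition \ref{Pcomp} (i.e.\ the descent of (\ref{eq:co14}) through the $K$-quotient) equals $\widehat{D}^{X}_{\vartheta}/\sqrt{2}$, in accord with the degree zero part of $A^{X}$ in (\ref{eq:defa2}). In degree one, the needed identity is
\begin{equation*}
P\left(\frac{\pa}{\pa \vartheta} - \widehat{c}(J) - \tfrac{1}{2}\tan(\vartheta)\, \n^{V}_{Y^{N}} - \tfrac{1}{4}\tan(\vartheta)\, n\right)P = \frac{\pa}{\pa \vartheta}.
\end{equation*}
Since the fibrewise kernel $H$ does not depend on $\vartheta$, $P$ commutes with $\pa/\pa\vartheta$, and the claim reduces to the pair of vanishings $P\widehat{c}(J)P = 0$ and $P(\n^{V}_{Y^{N}} + n/2)P = 0$.

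The first of these is immediate from (\ref{eq:co4}): $\widehat{c}(J) = -\tfrac{1}{2}\sum_{i=1}^{m}\widehat{c}(e_{i})\widehat{c}(\overline{e}_{i})$ carries $H$, which is concentrated in $\Lambda^{0}(T^{*}X \oplus N^{*})$, into $\Lambda^{1}(T^{*}X) \otimes S^{\overline{TX}} \otimes F \subset H^{\perp}$ via the creation operator $\widehat{c}(e_{i})$. The second is the fibrewise Gaussian identity (\ref{eq:ele2}): applied to the Gaussian $\exp(-|Y|^{2}/2)/\pi^{(m+n)/4}$ spanning the fibre of $H$, $\n^{V}_{Y^{N}}$ produces multiplication by $-|Y^{N}|^{2}$, and $P|Y^{N}|^{2}P = n/2$ by summing (\ref{eq:ele1}) over an orthonormal basis of $N$. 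Both vanishings are exactly the ones used in the proof of Proposition \ref{Pcompbi}, so the argument here amounts to bookkeeping; no new analytic or algebraic obstacle appears at the $X$-level beyond what was already handled at the group level.
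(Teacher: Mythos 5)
Your proof is correct and takes the same route as the paper: the paper's proof of Proposition \ref{Pnewco} is the single sentence ``Our proposition follows from Proposition \ref{Pcompbi},'' i.e.\ exactly the $K$-equivariant descent you describe in your first paragraph. Your explicit verification in degree $0$ and degree $1$ reproduces faithfully the computation the paper carries out one level up in the proof of Proposition \ref{Pcompbi} (equations (\ref{eq:gign1})--(\ref{eq:ele2})), including the two key vanishings $P\widehat{c}(J)P=0$ and $P(\n^{V}_{Y^{N}}+n/2)P=0$; it is slightly more detailed than what the paper records at the $X$-level but adds nothing new.
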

\begin{proof}
Our proposition follows from Proposition  \ref{Pcompbi}.
\end{proof}

By (\ref{eq:co19x-1}), (\ref{eq:co18x1}),  (\ref{eq:co29x1}), and 
(\ref{eq:co36}), we 
can write $\overline{L}^{X}\vert_{db=0}$ in the form
\begin{equation}\label{eq:gign6}
\overline{L}^{X}\vert_{db=0}=\frac{\alpha_{\vartheta}}{b^{2}}+\frac{\underline{\beta}_{\vartheta}}{b}+\underline{\gamma}_{\vartheta},
\end{equation}
and moreover,
\begin{align}\label{eq:gign7}
&\underline{\beta}_{\vartheta}=\beta_{\vartheta}-\frac{d\vartheta}{\sqrt{2}}
\left( 
\widehat{c}\left(\overline{Y}^{TX}\right)+\frac{\sin\left(\vartheta\right)}{\cos^{1/2}\left(\vartheta\right)}i
\mathcal{E}^{N} \right) ,\\
&\underline{\gamma}_{\vartheta}=\gamma_{\vartheta}-\frac{d\vartheta 
}{\sqrt{2}} \frac{\sin\left(\vartheta\right)}{\cos^{1/2}\left(\vartheta\right)}
ic\left(\left[Y^{N},Y^{TX}\right]\right). \nonumber 
\end{align}
Again, $\underline{\beta}_{\vartheta}$ maps $H$ in 
$H^{\perp}$.

Recall that 
\index{TX@$T^{X}$}%
$T^{X}$ was defined in Definition \ref{DTX} and is given 
by (\ref{eq:defa5x1b}). Now, we extend Theorem \ref{Thfub}.
\begin{thm}\label{Thfubter}
For $\vartheta\in \left[0,\frac{\pi}{2}\right[$, the following 
identity holds:
\begin{equation}\label{eq:co51}
P\left(\underline{\gamma}_{\vartheta}-\underline{\beta}_{\vartheta}
\alpha_{\vartheta}^{-1}\underline{\beta}_{\vartheta} \right) P
=T^{X}.
\end{equation}
\end{thm}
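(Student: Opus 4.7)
The plan is to mimic the proof of Theorem \ref{Thfub}, using the superconnection-level compression identity Proposition \ref{Pnewco} in place of Proposition \ref{Pcomp}. Setting $db=0$ throughout, I split
\[
\overline{B}^{X}\vert_{db=0} = E^{\#}_{\vartheta} + \frac{F_{\vartheta}}{b},
\]
where
\[
F_{\vartheta} = \tfrac{1}{\sqrt{2}}\bigl(\mathcal{D}^{TX} + \mathcal{E}^{TX} + \cos^{1/2}(\vartheta)(-i\mathcal{D}^{N} + i\mathcal{E}^{N})\bigr)
\]
is the same $1/b$-part of $\overline{\mathfrak D}^{X}_{\bt}/\sqrt{2}$ that appears in the proof of Theorem \ref{Thfub}, and
\[
E^{\#}_{\vartheta} = \overline{\n}^{\mathcal{H}}\vert_{db=0} + \tfrac{1}{\sqrt{2}}\bigl(\widehat{D}^{\mathfrak g,X}_{\vartheta} + \cos^{1/2}(\vartheta)\, ic([Y^{N}, Y^{TX}])\bigr)
\]
absorbs the $d\vartheta$-valued piece of $\overline{\n}^{\mathcal{H}}$. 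Both $E^{\#}_{\vartheta}$ and $F_{\vartheta}$ are odd. Squaring $\overline{B}^{X}\vert_{db=0}$, comparing with (\ref{eq:gign6}), and using (\ref{eq:co35x-1}) together with the identity $-\tfrac{1}{2}\widehat{D}^{\mathfrak g,X,2}_{\vartheta} = \mathcal{L}^{X}_{0}$ (which follows from the $\vartheta$-independence of $\widehat{D}^{\mathfrak g,2}_{\vartheta}$ in (\ref{eq:co10}) combined with (\ref{eq:Lie17})), I identify
\[
\alpha_{\vartheta} = F_{\vartheta}^{2}, \qquad \underline{\beta}_{\vartheta} = [E^{\#}_{\vartheta}, F_{\vartheta}], \qquad \underline{\gamma}_{\vartheta} = E^{\#,2}_{\vartheta} - \tfrac{1}{2}\widehat{D}^{\mathfrak g,X,2}_{\vartheta},
\]
where the bracket is the supercommutator of two odd operators.

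Next, since $F_{\vartheta}$ is formally self-adjoint fibrewise and $F_{\vartheta}^{2} = \alpha_{\vartheta}$ has kernel $H$, one has $PF_{\vartheta} = F_{\vartheta}P = 0$ and $F_{\vartheta}\alpha_{\vartheta}^{-1}F_{\vartheta} = P^{\perp}$. Expanding $\underline{\beta}_{\vartheta} = E^{\#}_{\vartheta}F_{\vartheta} + F_{\vartheta}E^{\#}_{\vartheta}$, this gives
\[
P\underline{\beta}_{\vartheta}\alpha_{\vartheta}^{-1}\underline{\beta}_{\vartheta}P = PE^{\#}_{\vartheta}F_{\vartheta}\alpha_{\vartheta}^{-1}F_{\vartheta}E^{\#}_{\vartheta}P = PE^{\#}_{\vartheta}P^{\perp}E^{\#}_{\vartheta}P.
\]
Combining with $PE^{\#,2}_{\vartheta}P = (PE^{\#}_{\vartheta}P)^{2} + PE^{\#}_{\vartheta}P^{\perp}E^{\#}_{\vartheta}P$ yields
\[
P\bigl(\underline{\gamma}_{\vartheta} - \underline{\beta}_{\vartheta}\alpha_{\vartheta}^{-1}\underline{\beta}_{\vartheta}\bigr)P = (PE^{\#}_{\vartheta}P)^{2} - \tfrac{1}{2}P\widehat{D}^{\mathfrak g,X,2}_{\vartheta}P.
\]
By Proposition \ref{Pnewco}, $PE^{\#}_{\vartheta}P = A^{X}$, so the first term is $A^{X,2}$. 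The operator $\widehat{D}^{\mathfrak g,X,2}_{\vartheta}$ descends from a left-invariant differential operator on $G$ that does not differentiate in the fibre variable, hence preserves $H$; acting on $H \simeq C^{\infty}(X, S^{\overline{TX}} \otimes F)$ via the natural Gaussian embedding, $-\tfrac{1}{2}P\widehat{D}^{\mathfrak g,X,2}_{\vartheta}P = \mathcal{L}^{X}_{0}$. Invoking (\ref{eq:defa4}) gives $A^{X,2} + \mathcal{L}^{X}_{0} = T^{X}$, the desired identity.

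The main point to verify carefully is the $\Z_{2}$-graded bookkeeping: that $\alpha_{\vartheta}$ is even and $\underline{\beta}_{\vartheta}$ odd within the superconnection framework, and that the Schur-complement computation survives the insertion of the $d\vartheta$-factor into $E^{\#}_{\vartheta}$. Equivalently, writing $\underline{\beta}_{\vartheta} = \beta_{\vartheta} + \delta$ with $\delta$ proportional to $d\vartheta$, the would-be cross-term $\delta\alpha_{\vartheta}^{-1}\delta$ vanishes because $d\vartheta^{2} = 0$, so no genuinely new input is required beyond Proposition \ref{Pnewco}. An alternative direct computational proof in the spirit of subsection \ref{subsec:anopr} would also work, but would be considerably more cumbersome.
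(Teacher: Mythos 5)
Your proof is correct and follows the same route as the paper: you split $\overline{B}^{X}\vert_{db=0}$ into the same two pieces (your $E^{\#}_{\vartheta}$ is precisely the paper's $\underline{E}_{\vartheta}=\overline{\n}^{\mathcal{H}}\vert_{db=0}+E_{\vartheta}$), read off $\alpha_{\vartheta},\underline{\beta}_{\vartheta},\underline{\gamma}_{\vartheta}$ from the expansion of $\overline{L}^{X}\vert_{db=0}$ via (\ref{eq:co35x-1}), apply the Schur-complement computation using $P F_{\vartheta}=F_{\vartheta}P=0$ and $F_{\vartheta}\alpha_{\vartheta}^{-1}F_{\vartheta}=P^{\perp}$, and finish with the compression identity of Proposition \ref{Pnewco} and the relation $T^{X}=A^{X,2}+\mathcal{L}^{X}_{0}$ from (\ref{eq:defa4}). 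The only difference is that you spell out the intermediate Schur-complement steps and the identification $-\tfrac{1}{2}P\widehat{D}^{\mathfrak g,X,2}_{\vartheta}P=\mathcal{L}^{X}_{0}$ (which the paper compresses into citations of (\ref{eq:Lie17}) and (\ref{eq:kos22})); this extra detail is all correct.
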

\begin{proof}
    By (\ref{eq:co22}), (\ref{eq:co30x-1a}), and  (\ref{eq:co31}), we get
\begin{equation}\label{eq:48x2}
\overline{B}^{X}\vert_{db=0}=\underline{E}_{\vartheta}+\frac{F_{\vartheta}}{b},
\end{equation}
with
\begin{equation}\label{eq:48x2b}
\underline{E}_{\vartheta}=\overline{\n}^{\mathcal{H}}\vert_{db=0}+E_{\vartheta}.
\end{equation}
Using (\ref{eq:co35x-1}), and comparing (\ref{eq:gign6}) and 
(\ref{eq:48x2}), we get
\begin{align}\label{eq:48x2c}
&\alpha_{\vartheta}=F^{2}_{\vartheta},
\qquad 
\underline{\beta}_{\vartheta}=\left[\underline{E}_{\vartheta},F_{\vartheta}\right],
&\underline{\gamma}_{\vartheta}=\underline{E}_{\vartheta}^{2}-\frac{1}{2}
\widehat{D}^{\mathfrak g,X,2}_{\vartheta}.
\end{align}
By (\ref{eq:48x2c}), we get
\begin{equation}\label{eq:co24b1}
P\left(\underline{\gamma}_{\vartheta}-\underline{\beta}_{\vartheta}\alpha_{\vartheta}^{-1}
\underline{\beta}_{\vartheta}\right)P=P\left(\underline{E}_{\vartheta}^{2}
-\underline{E}_{\vartheta}P^{\perp}_{\vartheta}\underline{E}_{\vartheta}
-\frac{1}{2}\widehat{D}^{\mathfrak g,X,2}_{\vartheta}\right)
P.
\end{equation}
By equation (\ref{eq:cogign5}) in Proposition \ref{Pnewco}, we obtain
\begin{equation}\label{eq:co24b2}
P_{\vartheta}\left(\underline{E}^{2}_{\vartheta}-\underline{E}_{\vartheta}P^{\perp}_{\vartheta}\underline{E}_{\vartheta}\right)
P_{\vartheta}=\left(P_{\vartheta}\underline{E}_{\vartheta}P_{\vartheta}\right)^{2}=A^{X,2}.
\end{equation}
By (\ref{eq:Lie17}), (\ref{eq:kos22}), (\ref{eq:defa4}), 
(\ref{eq:co24b1}), and (\ref{eq:co24b2}), we get (\ref{eq:co51}).
\end{proof}
\subsection{A compression identity on linear maps}%
\label{subsec:ide}
\begin{defin}\label{RtY}
Set
\index{RtY@$R_{\vartheta}\left(Y\right)$}%
\begin{multline}\label{eq:qsic4}
R_{\vartheta}\left(Y\right)=\cos\left(\vartheta\right)\Bigl(
	 \widehat{c}_{\vartheta}\left(\ad\left(Y^{TX}\right)
	 \vert_{TX \oplus N} \right)
  -c\left(\ad\left(Y^{ 
	  TX}\right)
   \right) \Bigr) \\
   -i\cos^{1/2}\left(\vartheta\right)
   \Bigl(c\left(\theta\ad\left(Y^{N}\right)\right)+
   \widehat{c}_{\vartheta}\left(\ad\left(Y^{N}\right)\vert_{\overline{TX}}\right)
   +\rho^{F}\left(Y^{N}\right)\Bigr). 
 \end{multline}
Then $R_{\vartheta}\left(Y\right)$ splits as
\index{RtY@$R_{\vartheta}\left(Y^{TX}\right)$}%
\index{RtY@$R_{\vartheta}\left(Y^{N}\right)$}%
\begin{equation}\label{eq:qsic5}
R_{\vartheta}\left(Y\right)=R_{\vartheta}\left(Y^{TX}\right)+R_{\vartheta}\left(Y^{N}\right).
\end{equation}
Let 
\index{P@$\mathbf{P}$}%
$\mathbf{P}$ be the orthogonal projection from 
$\Lambda\ac\left(\TsX \oplus N^{*}\right) \otimes 
S^{\overline{TX}}\otimes F$ on $S^{\overline{TX}} \otimes F$.
\end{defin}
\begin{prop}\label{Pproj}
The following identity holds:
\begin{align}\label{eq:nea1}
    &\mathbf{P}R_{\vartheta}\left(Y^{TX}\right)\mathbf{P=0},\\
&\mathbf{P}R_{\vartheta}\left(Y^{N}\right)\mathbf{P}=-
\cos^{5/2}\left(\vartheta\right)\widehat{c}\left(i\ad\left(Y^{N}\right)
\vert_{\overline{TX}}\right)-i\cos^{1/2}\left(\vartheta\right)\rho^{F}\left(Y^{N}\right).
\nonumber 
\end{align}
\end{prop}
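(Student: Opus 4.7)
The plan is to exploit the fact that $\mathbf{P}$ projects onto the scalar (degree-zero) part of $\Lambda\ac(T^{*}X \oplus N^{*})$. The key lemma I will need is the following: for any antisymmetric endomorphism $A$ of $TX \oplus N$ (with respect to $B$), one has $\mathbf{P}c(A)\mathbf{P} = 0$ and $\mathbf{P}\widehat{c}(A)\mathbf{P} = 0$. This follows from formula (\ref{eq:ors1}): applying $\widehat{c}(e_{i})\widehat{c}(e_{j})$ to $1 \in \Lambda^{0}$ yields $\varphi e_{i} \wedge \varphi e_{j} + B(e_{i},e_{j})$, and the symmetric part $B(e_{i},e_{j})$ is killed by the antisymmetric coefficient $B(Ae_{i}^{*},e_{j}^{*})$, leaving a pure degree-$2$ form. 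More generally, for an antisymmetric endomorphism $A$ of $TX \oplus N \oplus \overline{TX}$, the same analysis shows that $\mathbf{P}\widehat{c}(A)\mathbf{P}$ picks up only the $(\overline{TX},\overline{TX})$ block of $A$: its $(TX \oplus N, TX \oplus N)$ block vanishes by the lemma, mixed blocks contribute monomials $\widehat{c}(e)\widehat{c}(\overline{f})$ which send scalars to degree-$1$ forms, and the $(\overline{TX},\overline{TX})$ block acts purely on $S^{\overline{TX}}$, commuting with $\mathbf{P}$.

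To prove the first identity, I apply this principle to the three pieces of $R_{\vartheta}(Y^{TX})$. Using $\widehat{c}_{\vartheta}(A) = \widehat{c}(R_{\vartheta}AR_{\vartheta}^{-1})$ together with the matrix (\ref{eq:co3}) for $R_{\vartheta}$, a direct computation gives that $R_{\vartheta}\ad(Y^{TX})|_{TX \oplus N}R_{\vartheta}^{-1}$ has only blocks $TX \to N$ (factor $\cos\vartheta$), $N \to TX$ (factor $\cos\vartheta$), $N \to \overline{TX}$ (factor $\sin\vartheta$), and $\overline{TX} \to N$ (factor $\sin\vartheta$); its $(\overline{TX},\overline{TX})$ block vanishes. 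Hence $\mathbf{P}\widehat{c}_{\vartheta}(\ad(Y^{TX})|_{TX \oplus N})\mathbf{P} = 0$, while $c(\ad(Y^{TX}))$ is $c$ of an antisymmetric endomorphism of $TX \oplus N$ and also satisfies $\mathbf{P}\cdot\mathbf{P} = 0$. This establishes $\mathbf{P}R_{\vartheta}(Y^{TX})\mathbf{P} = 0$.

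For the second identity, I treat the three summands of $R_{\vartheta}(Y^{N})$ separately. The term $c(\theta\ad(Y^{N}))$ vanishes under $\mathbf{P}\cdot\mathbf{P}$ because $\theta\ad(Y^{N})$ is antisymmetric with respect to $B$ (this uses the antisymmetry of $\ad(Y^{N})$, the $B$-invariance of $\theta$, and the orthogonality of the Cartan splitting). The term $\rho^{F}(Y^{N})$ acts only on $F$ and commutes with $\mathbf{P}$, contributing $\rho^{F}(Y^{N})$. For $\widehat{c}_{\vartheta}(\ad(Y^{N})|_{\overline{TX}})$, I compute $R_{\vartheta}\ad(Y^{N})|_{\overline{TX}}R_{\vartheta}^{-1}$ and find blocks $TX \to TX$ with factor $\sin^{2}\vartheta$, $TX \to \overline{TX}$ and $\overline{TX} \to TX$ with factor $-\sin\vartheta\cos\vartheta$, and the crucial $(\overline{TX},\overline{TX})$ block equal to $\cos^{2}(\vartheta)\ad(Y^{N})|_{\overline{TX}}$. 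Only this last block survives $\mathbf{P}\cdot\mathbf{P}$, so $\mathbf{P}\widehat{c}_{\vartheta}(\ad(Y^{N})|_{\overline{TX}})\mathbf{P} = \cos^{2}(\vartheta)\widehat{c}(\ad(Y^{N})|_{\overline{TX}})$. Assembling the three contributions and multiplying by the prefactor $-i\cos^{1/2}(\vartheta)$ yields the claimed formula, upon absorbing the factor $i$ as $-i\widehat{c}(\ad(Y^{N})|_{\overline{TX}}) = -\widehat{c}(i\ad(Y^{N})|_{\overline{TX}})$.

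The main bookkeeping obstacle is carrying out the conjugations $R_{\vartheta}(\cdot)R_{\vartheta}^{-1}$ correctly, since $R_{\vartheta}$ mixes $TX$ and $\overline{TX}$ but acts trivially on $N$, and then tracking antisymmetry with respect to the extended form $\beta$. Once these block decompositions are in hand, the general lemma reduces everything to an immediate verification.
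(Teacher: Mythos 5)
Your proof is correct and reaches the paper's conclusions, but it is organized around a cleaner, more abstract mechanism than what the paper writes down. The paper expands each operator ($c(\ad(Y^{TX}))$, $\widehat{c}_\vartheta(\ad(Y^{TX})\vert_{TX\oplus N})$, etc.) in explicit coordinates using exterior and interior multiplications (equations (\ref{eq:nea0}), (\ref{eq:nea0x1}), (\ref{eq:he1}), (\ref{eq:nea0x4})), computed via the vector-level conjugation formula $\widehat{c}_\vartheta(f)=\widehat{c}(R_\vartheta f)$, and then reads off by inspection what survives $\mathbf{P}\cdot\mathbf{P}$. You instead isolate a general lemma --- that $\mathbf{P}\widehat{c}(A)\mathbf{P}$ depends only on the $(\overline{TX},\overline{TX})$ block of the $\beta$-antisymmetric endomorphism $A$ of $TX\oplus N\oplus\overline{TX}$ (because the $(TX\oplus N,TX\oplus N)$ block gives a traceless quantization on degree zero, and cross terms raise the $\Lambda$-degree) --- and then work at the endomorphism level, using $\widehat{c}_\vartheta(A)=\widehat{c}(R_\vartheta A R_\vartheta^{-1})$ and computing the relevant blocks of the conjugates directly from (\ref{eq:co3}). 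The two computations are the same in substance, but your block-decomposition lemma makes transparent why only $\cos^2(\vartheta)\,\ad(Y^N)\vert_{\overline{TX}}$ survives in the $\widehat{c}_\vartheta(\ad(Y^N)\vert_{\overline{TX}})$ term and why nothing survives in $\widehat{c}_\vartheta(\ad(Y^{TX})\vert_{TX\oplus N})$, whereas the paper establishes these facts by explicit expansion. Both routes are equally rigorous; yours is arguably more reusable and less error-prone because the case analysis is done once in the lemma rather than term by term.
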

\begin{proof}
If 
$e_{1},\ldots,e_{m}$ is an orthonormal basis of $TX$, and if 
$e_{m+1},\ldots,e_{m+n}$ is an orthonormal basis of $N$, by 
(\ref{eq:ors1}),  we have the 
identities
\begin{align}\label{eq:nea0}
&c\left(\ad\left(Y^{TX}\right)\right)=-\frac{1}{2}\sum_{\substack{1\le 
i\le m\\ m+1\le j\le m+n}}{}\left\langle  
\left[Y^{TX},e_{i}\right],e_{j}\right\rangle\left(\ei-i_{e_{i}}\right)\left(\ej+i_{e_{j}}\right),\\
&\widehat{c}\left(\ad\left(Y^{TX}\right)\right)=\frac{1}{2}\sum_{\substack{1\le 
i\le m\\ m+1\le j\le m+n}}{}\left\langle  
\left[Y^{TX},e_{i}\right],e_{j}\right\rangle\left(\ei+i_{e_{i}}\right)\left(\ej-i_{e_{j}}\right). \nonumber 
\end{align}
By  (\ref{eq:nea0}), we deduce that
\begin{align}\label{eq:nea-1}
&\mathbf{P}c\left(\ad\left(Y^{TX}\right)\right)\mathbf{P}=0,
&\mathbf{P}\widehat{c}\left(\ad\left(Y^{TX}\right)\right)\mathbf{P}=0.
\end{align}

By (\ref{eq:co6x1}), (\ref{eq:nea0}), we get
\begin{multline}\label{eq:nea0x1}
\widehat{c}_{\vartheta}\left(\ad\left(Y^{TX}\right)\vert_{TX \oplus 
N}\right)=\cos\left(\vartheta\right)\widehat{c}\left(\ad\left(Y^{TX}\right)\right)\\
+\sin\left(\vartheta\right)
\frac{1}{2}\sum_{\substack{1\le 
i\le m\\ m+1\le j\le m+n}}{}\left\langle  
\left[Y^{TX},e_{i}\right],e_{j}\right\rangle\widehat{c}\left(\overline{e}_{i}\right)\left(\ej-i_{e_{j}}\right).
\end{multline}
By (\ref{eq:nea-1}), (\ref{eq:nea0x1}), we get
\begin{equation}\label{eq:nea0x2}
\mathbf{P}\widehat{c}_{\vartheta}\left(\ad\left(Y^{TX}\right)\vert_{TX \oplus 
N}\right)\mathbf{P}=0.
\end{equation}

Moreover, we have the identities
\begin{align}\label{eq:he1}
&c\left(\theta\ad\left(Y^{N}\right)\right)=-\frac{1}{4}\sum_{1\le 
i,j\le m}^{}\left\langle  
\left[Y^{N},e_{i}\right],e_{j}\right\rangle\left(e^{i}-i_{e_{i}}\right)\left(e^{j}-i_{e_{j}}\right) \nonumber \\
&-\frac{1}{4}\sum_{m+1\le i,j\le m+n}^{}\left\langle  
\left[Y^{N},e_{i}\right],e_{j}\right\rangle\left(\ei+i_{e_{i}}\right)\left(\ej+i_{e_{j}}\right),\\
&\widehat{c}\left(\ad\left(Y^{N}\right)\right)=-\frac{1}{4}\sum_{1\le 
i,j\le m}^{}\left\langle  
\left[Y^{N},e_{i}\right],e_{j}\right\rangle\left(\ei+i_{e_{i}}\right)\left(\ej+i_{e_{j}}\right) \nonumber \\
&+\frac{1}{4}\sum_{m+1\le i,j\le m+n}^{}\left\langle  
\left[Y^{N},e_{i}\right],e_{j}\right\rangle\left(\ei-i_{e_{i}}\right)\left(\ej-i_{e_{j}}\right). \nonumber 
\end{align}
Since $\ad\left(Y^{N}\right),\theta\ad\left(Y^{N}\right)$ are antisymmetric, we get easily
\begin{align}\label{eq:nea0x2b}
    &\mathbf{P}c\left(\theta\ad\left(Y^{N}\right)\vert_{TX}\right)\mathbf{P}=0,
&\mathbf{P}\widehat{c}\left(\ad\left(Y^{N}\right)\right)\mathbf{P}=0.
\end{align}
Also we have the identity
\begin{equation}\label{eq:nea0x3}
\widehat{c}\left(\ad\left(Y^{N}\right)\vert_{\overline{TX}}\right)=-\frac{1}{4}\sum_{1\le i,j\le m}^{}
\left\langle  
\left[Y^{N},e_{i}\right],e_{j}\right\rangle_{TX}\widehat{c}\left(\overline{e}_{i}\right)\widehat{c}\left(\overline{e}_{j}\right).
\end{equation}
By (\ref{eq:nea0x3}), we deduce that
\begin{multline}\label{eq:nea0x4}
\widehat{c}_{\vartheta}\left(\ad\left(Y^{N}\right)\vert_{\overline{TX}}\right)=\cos^{2}\left(\vartheta\right)
\widehat{c}\left(\ad\left(Y^{N}\right)\vert_{\overline{TX}}\right)+\sin^{2}\left(\vartheta\right)\widehat{c}\left(\ad\left(Y^{N}\right)
\vert_{TX}\right)\\
+\frac{1}{2}\sin\left(\vartheta\right)\cos\left(\vartheta\right)\sum_{1\le i,j\le m}^{}\left\langle  
\left[Y^{N},e_{i}\right],e_{j}\right\rangle_{TX}\left(\ei+i_{e_{i}}\right)\widehat{c}\left(\overline{e}_{j}\right).
\end{multline}
By (\ref{eq:nea0x2b}), (\ref{eq:nea0x4}), we get
\begin{equation}\label{eq:nea0x5}
\mathbf{P}\widehat{c}_{\vartheta}\left(\ad\left(Y^{N}\right)\vert_{\overline{TX}}\right)\mathbf{P}=
\cos^{2}\left(\vartheta\right)\widehat{c}\left(\ad\left(Y^{N}\right)\vert_{\overline{TX}}\right).
\end{equation}

By 
(\ref{eq:qsic4}), (\ref{eq:nea-1}), (\ref{eq:nea0x2}), 
(\ref{eq:nea0x2b}),  and 
(\ref{eq:nea0x5}),
we get (\ref{eq:nea1}).
\end{proof}

Now we extend \cite[Definition 2.16.2]{Bismut08b}.
\begin{defin}\label{Dsten}
For $0\le \vartheta<\frac{\pi}{2}$, set
\index{StY@$S_{\vartheta}\left(Y\right)$}%
\begin{multline}\label{eq:qsic6}
S_{\vartheta}\left(Y\right)=\mathbf{P}\Biggl[R_{\vartheta}\left(Y^{TX}\right)
\left(1+N_{-\vartheta}^{\Lambda\ac\left(\TsX 
 \oplus N^{*}\right)}\right)^{-1}R_{\vartheta}\left(Y^{TX}\right)\\
+R_{\vartheta}\left(Y^{N}\right)\left(\cos\left(\vartheta\right)
+N_{-\vartheta}^{\Lambda\ac\left(\TsX 
\oplus N^{*}\right)}\right)
^{-1}R_{\vartheta}\left(Y^{N}\right)\Biggr]\mathbf{P}.
\end{multline}
Then $S_{\vartheta}\left(Y\right)$ lies in 
$\End\left(S^{\overline{TX}}\otimes F\right)$.
In the sequel, we use the notation
\begin{equation}\label{eq:qsic7x-1}
x=\cos\left(\vartheta\right).
\end{equation}
\end{defin}

We give an extension of \cite[Proposition 2.16.3]{Bismut08b}.
\begin{prop}\label{Pidnew}
The following identity holds:
\begin{multline}\label{eq:qsic7}
S_{\vartheta}\left(Y\right)=
\frac{x^{2}}{4}\frac{x+3}{x+2}\Tr\left[\ad^{2}\left(Y^{TX}\right)\vert_{N}\right]\mathbf{P}\\
+\frac{1}{4}\left(\frac{x}{2}\frac{\left(x^{2}-2\right)^{2}}{x+2}-
x^{3}\left(x-1\right)\right)\Tr
\left[-\ad^{2}\left(Y^{N}\right)\vert_{TX}\right]\mathbf{P}\\
+\frac{1}{24}\Tr
\left[-\ad^{2}\left(Y^{N}\right)\vert_{N}\right]\mathbf{P}
-\left(\rho^{F}\left(Y^{N}\right)+x^{2}
\widehat{c}\left(\ad\left(Y^{N}\right)\vert_{\overline{TX}}\right)\right)^{2}\mathbf{P}.
\end{multline}
\end{prop}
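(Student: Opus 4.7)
The plan is to extend the proof of [Bismut08b, Proposition 2.16.3] in a straightforward, if bookkeeping-heavy, way, by exploiting the bi-graded structure of $\Lambda\ac\left(T^{*}X \oplus N^{*}\right)=\Lambda\ac\left(T^{*}X\right)\ho \Lambda\ac\left(N^{*}\right)$. The key observation is that by (\ref{eq:gip1}), on a component of bi-degree $(p,q)$, the operator $N_{-\vartheta}^{\Lambda\ac\left(T^{*}X \oplus N^{*}\right)}$ acts as multiplication by the scalar $p+xq$, where $x=\cos\vartheta$. Consequently $\left(1+N_{-\vartheta}^{\Lambda\ac\left(T^{*}X \oplus N^{*}\right)}\right)^{-1}$ and $\left(x+N_{-\vartheta}^{\Lambda\ac\left(T^{*}X \oplus N^{*}\right)}\right)^{-1}$ act diagonally with eigenvalues $1/(1+p+xq)$ and $1/(x+p+xq)$. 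Since $\mathbf{P}$ retains only the bi-degree $(0,0)$ component of $\Lambda\ac\left(T^{*}X \oplus N^{*}\right)$, only those pairs of bi-degrees in the two $R_{\vartheta}$-factors whose shifts cancel will contribute to $S_{\vartheta}\left(Y\right)$.

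I will decompose $R_{\vartheta}\left(Y^{TX}\right)$ and $R_{\vartheta}\left(Y^{N}\right)$ into their bi-homogeneous summands using the explicit Clifford expansions (\ref{eq:nea0}), (\ref{eq:nea0x1}), (\ref{eq:he1}) and (\ref{eq:nea0x4}) already assembled in the proof of Proposition \ref{Pproj}. For $R_{\vartheta}\left(Y^{TX}\right)$, since $\ad\left(Y^{TX}\right)$ exchanges $\mathfrak{p}$ and $\mathfrak{k}$, the unrotated $c,\widehat{c}$-pieces are of bi-degrees $(\pm 1,\pm 1)$, while the rotation in (\ref{eq:nea0x1}) contributes a $\widehat{c}\left(\overline{e}_{i}\right)$-factor (inert with respect to the bi-grading on $T^{*}X\oplus N^{*}$) multiplied by $(e^{j}\pm i_{e_{j}})$ of bi-degree $(0,\pm 1)$; combining the pairs that return to $(0,0)$ and applying the standard identity $\sum_{i\in\mathfrak{p},j\in\mathfrak{k}}\left\langle\left[Y^{TX},e_{i}\right],e_{j}\right\rangle^{2}=-\Tr\left[\ad^{2}\left(Y^{TX}\right)\vert_{N}\right]$ yields the first term of (\ref{eq:qsic7}), with the rational factor $\frac{x^{2}(x+3)}{4(x+2)}$ collecting the contributions from bi-degrees $(1,1)$ and $(0,1)$ (weighted respectively by $1/(1+x)$ and $1/x$ after simplification), multiplied by $\cos^{2}\vartheta$ from the overall $\cos\vartheta$-prefactor.

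For $R_{\vartheta}\left(Y^{N}\right)$, since $\ad\left(Y^{N}\right)$ preserves $\mathfrak{p}$ and $\mathfrak{k}$ separately, the unrotated $c$- and $\widehat{c}$-pieces split into bi-degrees $(\pm 2,0)$, $(0,\pm 2)$ and $(0,0)$, while the rotation in (\ref{eq:nea0x4}) injects a bi-degree $(\pm 1,0)$ summand containing an $\widehat{c}\left(\overline{e}_{j}\right)$-factor, and additionally a genuine $(0,0)$-piece $\cos^{2}\vartheta\,\widehat{c}\left(\ad\left(Y^{N}\right)\vert_{\overline{TX}}\right)$. The pairs of bi-degrees $(\pm 2,0)$-to-$(\mp 2,0)$ and the rotated $(\pm 1,0)$-terms combine, with the weights $1/(x+2)$ and $1/(x+1)$, to produce the second term $\Tr\left[-\ad^{2}\left(Y^{N}\right)\vert_{TX}\right]$ with its rational coefficient in $x$; the pairs $(0,\pm 2)$-to-$(0,\mp 2)$ carry the weight $1/(x+2x)=1/(3x)$ and deliver $\frac{1}{24}\Tr\left[-\ad^{2}\left(Y^{N}\right)\vert_{N}\right]$; and the bi-degree $(0,0)$-to-$(0,0)$ contribution, using $\mathbf{P}R_{\vartheta}\left(Y^{N}\right)\mathbf{P}=-ix^{1/2}\left(\rho^{F}\left(Y^{N}\right)+x^{2}\widehat{c}\left(\ad\left(Y^{N}\right)\vert_{\overline{TX}}\right)\right)$ from (\ref{eq:nea1}), divided by the eigenvalue $x$ of $\left(x+N_{-\vartheta}\right)$ at the zero degree, yields precisely the last term $-\left(\rho^{F}\left(Y^{N}\right)+x^{2}\widehat{c}\left(\ad\left(Y^{N}\right)\vert_{\overline{TX}}\right)\right)^{2}\mathbf{P}$.

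The main obstacle is the careful tracking of the $\vartheta$-dependent coefficients introduced by the rotation $\widehat{R}_{\vartheta}$ on each bi-degree, together with the summation of contributions of distinct bi-degrees having distinct denominators of the form $1+p+xq$ or $x+p+xq$. The somewhat unusual rational function $\frac{x(x^{2}-2)^{2}}{2(x+2)}-x^{3}(x-1)$ multiplying $\Tr\left[-\ad^{2}\left(Y^{N}\right)\vert_{TX}\right]$ is precisely the artifact of this combination, reflecting the fact that the rotated piece from (\ref{eq:nea0x4}) contains a nontrivial $\sin^{2}\vartheta$-admixture of $\widehat{c}\left(\ad\left(Y^{N}\right)\vert_{TX}\right)$ mixing several bi-degree channels. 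Apart from this bookkeeping, no new conceptual ingredient beyond what is already in [Bismut08b, Proposition 2.16.3] and Proposition \ref{Pproj} is required.
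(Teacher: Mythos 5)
Your approach --- decompose each $R_{\vartheta}$ into bi-homogeneous Clifford pieces with respect to the bi-grading of $\Lambda\ac\left(T^{*}X\right)\ho\Lambda\ac\left(N^{*}\right)$, apply the resolvents diagonally with eigenvalue $1/(1+p+xq)$ or $1/(x+p+xq)$ on bi-degree $(p,q)$, and retain only the pairs that return to bi-degree $(0,0)$ under $\mathbf{P}$ --- is exactly what the paper's explicit computations (\ref{eq:qsic8})--(\ref{eq:qsic9x2}) carry out. There is one small slip in your bookkeeping: the resolvent $\left(1+N^{\Lambda\ac\left(T^{*}X\oplus N^{*}\right)}_{-\vartheta}\right)^{-1}$ acts on the intermediate bi-degrees $(1,1)$ and $(0,1)$ with eigenvalues $1/(2+x)$ and $1/(1+x)$, not the $1/(1+x)$ and $1/x$ you wrote; this is evidently a transcription error rather than a conceptual one, since your stated final coefficient $\frac{x^{2}(x+3)}{4(x+2)}$ agrees with what (\ref{eq:qsic8})--(\ref{eq:qsic9}) produce.
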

\begin{proof}
By (\ref{eq:gip1}), 
(\ref{eq:qsic4}), (\ref{eq:nea0}), and (\ref{eq:nea0x1}), we get
\begin{multline}\label{eq:qsic8}
\mathbf{P}R_{\vartheta}\left(Y^{TX}\right)\left(1+
N_{-\vartheta}^{\Lambda\ac\left(\TsX 
 \oplus N^{*}\right)}
\right)^{-1}
R_{\vartheta}\left(Y^{TX}\right)\mathbf{P}\\
=\frac{x^{2}}{4}\sum_{\substack{1\le i,i'\le m\\
m+1\le j,j'\le m+n}}^{}\left\langle  
\left[Y^{TX},e_{i}\right],e_{j}\right\rangle
\left\langle  \left[Y^{TX},e_{i'}\right],e_{j'}\right\rangle\\
\mathbf{P}\left(-\frac{\left(x+1\right)^{2}}{x+2}i_{e_{i'}}i_{e_{j'}}
\ei\ej-\frac{-x^{2}+1}{x+1}\widehat{c}\left(\overline{e}_{i'}\right)
i_{e_{j'}}\widehat{c}\left(\overline{e}_{i}\right)\ej\right)\mathbf{P}.
\end{multline}
Equivalently,
\begin{multline}\label{eq:qsic9}
\mathbf{P}R_{\vartheta}\left(Y^{TX}\right)
\left(1+
N_{-\vartheta}^{\Lambda\ac\left(\TsX 
 \oplus N^{*}\right)}
\right)^{-1}
R_{\vartheta}\left(Y^{TX}\right)\mathbf{P}\\
=\frac{x^{2}}{4}\frac{x+3}{x+2}\Tr\left[\ad^{2}\left(Y^{TX}\right)\vert_{N}\right]\mathbf{P}.
\end{multline}
The variables 
$ 
\widehat{c}\left(\overline{e}_{i'}\right),\widehat{c}\left(\overline{e}_{i}\right)$ in the right-hand side of (\ref{eq:qsic8}) 
disappear in (\ref{eq:qsic9}) because in (\ref{eq:qsic8}), a nonzero contribution is only possible 
if $j=j'$. Ultimately only the case where $i=i'$ contributes to the 
right-hand side of (\ref{eq:qsic9}), which explains why no Clifford 
variable $\widehat{c}$ appears.

The same arguments as before combined with (\ref{eq:he1}), 
(\ref{eq:nea0x4})  show that
\begin{multline}\label{eq:qsic9x1}
\mathbf{P}R_{\vartheta}\left(Y^{N}\right)\left(\cos\left(\vartheta\right)
+N_{-\vartheta}^{\Lambda\ac\left(\TsX 
 \oplus N^{*}\right)}\right)^{-1}
R_{\vartheta}\left(Y^{N}\right)\mathbf{P}\\
=\frac{1}{4}\left( 
\frac{x}{2}\frac{\left(x^{2}-2\right)^{2}}{x+2}-
\frac{x^{3}\left(x^{2}-1\right)}{x+1} \right) \sum_{1\le i,j\le m}^{}
\left\langle  
\left[Y^{N},e_{i}\right],e_{j}\right\rangle^{2}\mathbf{P}\\
+\frac{1}{24}\sum_{m+1\le i, j\le m+n}^{}
\left\langle  \left[Y^{N},e_{i}\right],e_{j}\right\rangle^{2}-
\left(\rho^{F}\left(Y^{N}\right)+x^{2}\widehat{c}\left(\ad\left(Y^{N}\right)\vert_{\overline{TX}}\right)\right)^{2}\mathbf{P}.
\end{multline}
Equivalently,
\begin{multline}\label{eq:qsic9x2}
\mathbf{P}R_{\vartheta}\left(Y^{N}\right)\left(\cos\left(\vartheta\right)
+N^{\Lambda\ac\left(T^{*}X \oplus N^{*}\right)}_{-\vartheta}\right)^{-1}
R_{\vartheta}\left(Y^{N}\right)\mathbf{P}\\
=\frac{1}{4}\left( 
\frac{x}{2}\frac{\left(x^{2}-2\right)^{2}}{x+2}-x^{3}\left(x-1\right) \right) \Tr
\left[-\ad^{2}\left(Y^{N}\right)\vert_{TX}\right]\mathbf{P}\\
+\frac{1}{24}\Tr\left[-\ad^{2}\left(Y^{N}\right)\vert_{N}\right]-
\left(\rho^{F}\left(Y^{N}\right)+x^{2}\widehat{c}\left(\ad\left(Y^{N}\right)\vert_{\overline{TX}}\right)
\right)^{2}\mathbf{P}.
\end{multline}
 By (\ref{eq:qsic9}), (\ref{eq:qsic9x2}), we get (\ref{eq:qsic7}). 
 The proof of our proposition  is completed. 
\end{proof}
\begin{remk}\label{Rsing}
By Proposition \ref{Pidnew}, $S_{\vartheta}\left(Y\right)$ extends by 
continuity at $\vartheta=\frac{\pi}{2}$. 
\end{remk}

Recall that the projector 
\index{P@$P$}%
$P$ was defined in subsections 
\ref{subsec:comp} and \ref{subsec:formre}.

\begin{defin}\label{Defdel}
Put
\index{dt@$\delta_{\vartheta}$}%
\begin{equation}\label{eq:qsic10}
\delta_{\vartheta}=PS_{\vartheta}\left(Y\right)P.
\end{equation}
Then $\delta_{\vartheta}$ lies in $\End\left(S^{\overline{TX}} 
\otimes F\right)$.
\end{defin}

Now we extend \cite[Proposition 2.16.5]{Bismut08b}. The 
absence of $S^{\overline{TX}}$ in \cite{Bismut08b} explains 
the fact that the results are different.
\begin{prop}\label{Pdelta}
The following identity holds:
\begin{multline}\label{eq:qsic11}
\delta_{\vartheta}=-\frac{1}{8}x\left(x+1\right)\Tr^{\mathfrak 
p}\left[C^{\mathfrak k, \mathfrak 
p}\right]-\frac{1}{48}\Tr^{\mathfrak k}\left[C^{\mathfrak k,\mathfrak 
k}\right]-\frac{1}{2}C^{\mathfrak 
k,E} \\
-x^{2}\sum_{i=m+1}^{m+n}\widehat{c}\left(\ad\left(e_{i}\right)\vert 
_{\overline{TX}}\right)
\rho^{F}\left(e_{i}\right).
\end{multline}
\end{prop}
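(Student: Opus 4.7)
The plan is to compute $\delta_\vartheta = P S_\vartheta(Y) P$ by applying $P$ termwise to the four-term expression for $S_\vartheta(Y)$ furnished by Proposition~\ref{Pidnew}. Each term is a quadratic function of $Y$, so the Gaussian moment identity (\ref{eq:ele1}) in the form $P\langle u, Y\rangle^2 P = \frac{1}{2}|u|^2 P$ reduces every contribution to a sum over an orthonormal basis of $\mathfrak g$.

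For the three $\ad^2$-trace pieces, cyclicity of the trace yields $\Tr^{\mathfrak k}[\ad^2(e_i)|_\mathfrak k] = \Tr^{\mathfrak p}[\ad^2(e_i)|_\mathfrak p]$ for $e_i \in \mathfrak p$; combined with the (anti)symmetries of $\ad$ on $\mathfrak g$ recorded in subsection \ref{subsec:redgp}, this identifies the three Gaussian averages $P\Tr[\ad^2(Y^{TX})|_N]P$, $P\Tr[-\ad^2(Y^N)|_{TX}]P$, and $P\Tr[-\ad^2(Y^N)|_N]P$ with $-\frac{1}{2}\Tr^{\mathfrak p}[C^{\mathfrak k, \mathfrak p}]$, $-\frac{1}{2}\Tr^{\mathfrak p}[C^{\mathfrak k, \mathfrak p}]$, and $-\frac{1}{2}\Tr^{\mathfrak k}[C^{\mathfrak k, \mathfrak k}]$ respectively. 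For the squared term, $\rho^F(Y^N)$ and $\widehat{c}(\ad(Y^N)|_{\overline{TX}})$ act on disjoint tensor factors of $S^{\overline{TX}} \otimes F$ and therefore commute, so expanding and averaging yields three pieces: $-\frac{1}{2}C^{\mathfrak k, F}$ (identified with $-\frac{1}{2}C^{\mathfrak k, E}$ via the bundle isomorphism $F = G\times_K E$) from $(\rho^F)^2$; the cross term reproduces $-x^2\sum_{i=m+1}^{m+n}\widehat{c}(\ad(e_i)|_{\overline{TX}})\rho^F(e_i)$; and the $\widehat{c}^2$ piece, via the identity (\ref{eq:Lie11x1}) $C^{\mathfrak k, S^{\overline{\mathfrak p}}} = \frac{1}{8}\Tr^{\mathfrak p}[C^{\mathfrak k, \mathfrak p}]$, produces $-\frac{x^4}{16}\Tr^{\mathfrak p}[C^{\mathfrak k, \mathfrak p}]$.

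The coefficient of $\Tr^{\mathfrak k}[C^{\mathfrak k, \mathfrak k}]$ in $\delta_\vartheta$ then comes solely from the third trace piece and equals $\frac{1}{24}\cdot(-\frac{1}{2}) = -\frac{1}{48}$, as claimed, while the $-\frac{1}{2}C^{\mathfrak k, E}$ and $\widehat{c}\rho^F$ contributions already appear in the target form. The main obstacle is to verify that the four contributions to the coefficient of $\Tr^{\mathfrak p}[C^{\mathfrak k, \mathfrak p}]$ assemble to $-\frac{x(x+1)}{8}$: after clearing a common denominator of $16(x+2)$, this reduces to the polynomial identity
\begin{equation*}
2x^2(x+3) + x(x^2-2)^2 - 2x^3(x-1)(x+2) + x^4(x+2) = 2x(x+1)(x+2),
\end{equation*}
whose verification is a routine expansion in which the $x^5$ monomials ($x^5 - 2x^5 + x^5$) and the $x^4$ monomials ($-2x^4 + 2x^4$) cancel, leaving $2x^3 + 6x^2 + 4x = 2x(x+1)(x+2)$. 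Sanity checks at $x = 0$ (both sides vanish) and $x = 1$ (coefficient $-\frac{1}{4}$, matching the Bismut08b case) confirm the identity and complete the proof.
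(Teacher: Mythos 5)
Your proof is correct and takes essentially the same route as the paper: apply the Gaussian projector $P$ termwise to the four pieces of $S_{\vartheta}(Y)$ from Proposition~\ref{Pidnew}, reduce via $P\langle u,Y\rangle^{2}P=\frac{1}{2}|u|^{2}$, convert the $\widehat{c}^{2}$ contribution through $C^{\mathfrak k,S^{\overline{\mathfrak p}}}=\frac{1}{8}\Tr^{\mathfrak p}[C^{\mathfrak k,\mathfrak p}]$, and finish with a polynomial identity in $x=\cos\vartheta$. The only (cosmetic) difference is that the paper factors the arithmetic through the intermediate simplification (\ref{eq:qsic16}) before folding in the $\frac{x^{4}}{16}$ term, whereas you clear denominators once and verify the combined identity $2x^{2}(x+3)+x(x^{2}-2)^{2}-2x^{3}(x-1)(x+2)+x^{4}(x+2)=2x(x+1)(x+2)$ directly.
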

\begin{proof}
By \cite[eq. (2.16.23)]{Bismut08b} or by (\ref{eq:ele1}),  if $u\in \mathfrak g$, we get
\begin{equation}\label{eq:qsic12}
P\left\langle  u,Y\right\rangle^{2}P=\frac{1}{2}\left\vert  
u\right\vert^{2}.
\end{equation}
By (\ref{eq:qsic12}), we deduce that
\begin{align}\label{eq:qsic13}
&P\Tr\left[\ad^{2}\left(Y^{TX}\right)\vert_{N}\right]P=-\frac{1}{2}\Tr^{\mathfrak p}
\left[C^{\mathfrak k, \mathfrak p}\right], \nonumber \\
&P\Tr\left[-\ad^{2}\left(Y^{N}\right)\vert_{TX}\right]P=-
\frac{1}{2}\Tr^{\mathfrak p}\left[C^{\mathfrak k,\mathfrak 
p}\right],\\
&P\Tr\left[-\ad^{2}\left(Y^{N}\right)\vert_{N}\right]P=-\frac{1}{2}\Tr^{\mathfrak k}\left[C^{\mathfrak k, 
\mathfrak k}\right],\nonumber \\
&P\left(-\left(\rho^{F}\left(Y^{N}\right)+x^{2}
\widehat{c}\left(\ad\left(Y^{N}\right)\vert_{\overline{TX}}\right)\right)^{2}\right)P 
\nonumber \\= 
&-\frac{1}{2}C^{\mathfrak k,E}-\frac{x^{4}}{2}C^{\mathfrak 
k,S^{\overline{\mathfrak p}}}-x^{2}\sum_{i=m+1}^{m+n}
\widehat{c}\left(\ad\left(e_{i}\right)\vert_{\overline{TX}}\right)\rho^{F}\left(e_{i}\right). 
\nonumber 
\end{align}
By (\ref{eq:qsic7}), (\ref{eq:qsic10}), and (\ref{eq:qsic13}), we get
\begin{multline}\label{eq:qsic15}
\delta_{\vartheta}=-\frac{1}{8}\left(\frac{2x^{2}\left(x+3\right)+x\left(x^{2}-2\right)
^{2}}{2\left(x+2\right)}-x^{3}\left(x-1\right)\right)\Tr^{\mathfrak p}
\left[C^{\mathfrak k,\mathfrak p}\right]-\frac{1}{48}\Tr^{\mathfrak 
k}\left[C^{\mathfrak k,\mathfrak k}\right]\\
-\left( \frac{1}{2}C^{\mathfrak k,E}+\frac{x^{4}}{2}C^{\mathfrak 
k,S^{\overline{\mathfrak 
p}}}+x^{2}\sum_{i=m+1}^{m+n}\widehat{c}\left(\ad\left(e_{i}\right)\vert_{\overline{TX}}\right)
\rho^{F}\left(e_{i}\right) \right) .
\end{multline}
Also
\begin{equation}\label{eq:qsic16}
\frac{2x^{2}\left(x+3\right)+x\left(x^{2}-2\right)
^{2}}{2\left(x+2\right)}-x^{3}\left(x-1\right)=\frac{x}{2}\left(-x^{3}+2x+2\right).
\end{equation}
By (\ref{eq:Lie11x1}),  (\ref{eq:qsic15}), and (\ref{eq:qsic16}), we get (\ref{eq:qsic11}).
\end{proof}
\begin{remk}\label{Rcons2a}
If we make the modifications suggested in Remarks \ref{Rcons} and 
\ref{Rcons1},  in equation (\ref{eq:qsic11}), $x\left(x+1\right)$ 
should be replaced by $x^{2}+1$.
\end{remk}

\begin{defin}\label{Dst}
Let 
\index{R0Y@$R^{0}_{\vartheta}\left(Y\right)$}%
\index{S0Y@$S^{0}_{\vartheta}\left(Y\right)$}%
$R^{0}_{\vartheta}\left(Y\right), 
S^{0}_{\vartheta}\left(Y\right)$ be 
$R_{\vartheta}\left(Y\right),S_{\vartheta}\left(Y\right)$  when 
$\rho^{E}$ is the trivial representation. 
For $0\le\vartheta<\frac{\pi}{2}$, set
\index{T0Y@$T^{0}_{\vartheta}\left(Y\right)$}%
\begin{multline}\label{eq:psic27}
T^{0}_{\vartheta}\left(Y\right)=\mathbf{P}R^{0}_{\vartheta}\left(Y\right)
\Biggl[\left(1+N_{-\vartheta}^{\Lambda\ac\left(T^{*}X \oplus 
N^{*}\right)}\right)^{-1}R^{0}_{\vartheta}\left(Y^{TX}\right)\\
+\left(\cos\left(\vartheta\right)+N_{-\vartheta}^{\Lambda\ac\left(T^{*}X \oplus N^{*}\right)}\right)
^{-1}R^{0}_{\vartheta}\left(Y^{N}\right)\Biggr]
\mathbf{P}.
\end{multline}
\end{defin}

Now we will establish an auxiliary identity that will be needed in 
section \ref{sec:fin}, in the proof of Theorem \ref{TlimV}.
\begin{prop}\label{Peqel}
The following identity holds:
\begin{equation}\label{eq:psica1}
T^{0}_{\vartheta}\left(Y\right)=S^{0}_{\vartheta}\left(Y\right).
\end{equation}
\end{prop}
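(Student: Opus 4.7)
The plan is to expand $T^{0}_{\vartheta}-S^{0}_{\vartheta}$ using the splitting $R^{0}_{\vartheta}\left(Y\right)=R^{0}_{\vartheta}\left(Y^{TX}\right)+R^{0}_{\vartheta}\left(Y^{N}\right)$ from (\ref{eq:qsic5}) and to compare (\ref{eq:qsic6}) with (\ref{eq:psic27}). Upon subtraction, the two ``diagonal'' contributions cancel and one is left with
\begin{align*}
T^{0}_{\vartheta}\left(Y\right)-S^{0}_{\vartheta}\left(Y\right)=&\,\mathbf{P}R^{0}_{\vartheta}\left(Y^{N}\right)\bigl(1+N_{-\vartheta}^{\Lambda\ac\left(T^{*}X\oplus N^{*}\right)}\bigr)^{-1}R^{0}_{\vartheta}\left(Y^{TX}\right)\mathbf{P}\\
&+\mathbf{P}R^{0}_{\vartheta}\left(Y^{TX}\right)\bigl(\cos\left(\vartheta\right)+N_{-\vartheta}^{\Lambda\ac\left(T^{*}X\oplus N^{*}\right)}\bigr)^{-1}R^{0}_{\vartheta}\left(Y^{N}\right)\mathbf{P}.
\end{align*}
I will show both ``cross terms'' vanish by a bidegree argument on $\Lambda\ac\left(T^{*}X\oplus N^{*}\right)=\Lambda\ac\left(T^{*}X\right)\ho\Lambda\ac\left(N^{*}\right)$.

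The projector $\mathbf{P}$ singles out bidegree $\left(0,0\right)$, and by (\ref{eq:gip1}) the operator $N_{-\vartheta}^{\Lambda\ac\left(T^{*}X\oplus N^{*}\right)}=N^{\Lambda\ac\left(T^{*}X\right)}+\cos\left(\vartheta\right)N^{\Lambda\ac\left(N^{*}\right)}$ is diagonal in the bigrading, so both resolvents preserve it. Inspection of (\ref{eq:nea0}) and (\ref{eq:nea0x1}) then shows that each monomial of $R^{0}_{\vartheta}\left(Y^{TX}\right)$ shifts the bidegree by an element of
\begin{equation*}
\mathcal{T}=\left\{\left(\pm 1,\pm 1\right)\right\}\cup\left\{\left(0,\pm 1\right)\right\},
\end{equation*}
because $\ad\left(Y^{TX}\right)$ exchanges $\mathfrak p$ and $\mathfrak k$, so that $c\bigl(\ad\left(Y^{TX}\right)\bigr)$ and $\widehat{c}\bigl(\ad\left(Y^{TX}\right)\bigr)$ couple one $T^{*}X$-generator with one $N^{*}$-generator, whereas the rotated piece $\widehat{c}\left(\overline{e}_{i}\right)\left(e^{j}-i_{e_{j}}\right)$ of (\ref{eq:nea0x1}) acts on $S^{\overline{TX}}$ via $\widehat{c}\left(\overline{e}_{i}\right)$ and shifts only the $N^{*}$-degree by $\pm 1$. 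An analogous inspection of (\ref{eq:he1}), (\ref{eq:nea0x3}), and (\ref{eq:nea0x4}) shows that each monomial of $R^{0}_{\vartheta}\left(Y^{N}\right)$ shifts the bidegree by an element of
\begin{equation*}
\mathcal{N}=\left\{\left(\pm 2,0\right)\right\}\cup\left\{\left(0,\pm 2\right)\right\}\cup\left\{\left(0,0\right)\right\}\cup\left\{\left(\pm 1,0\right)\right\},
\end{equation*}
since $\ad\left(Y^{N}\right)$ and $\theta\ad\left(Y^{N}\right)$ each preserve $\mathfrak p$ and $\mathfrak k$ separately.

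The conclusion is then immediate: for either cross term to contribute a nonzero result, one needs $s+t=\left(0,0\right)$ for some $s\in\mathcal{T}$ and $t\in\mathcal{N}$. The second coordinate of any $s\in\mathcal{T}$ equals $\pm 1$, whereas the second coordinate of any $t\in\mathcal{N}$ belongs to $\left\{-2,0,2\right\}$; these sets are disjoint, so no such pair exists. Hence both cross terms vanish and (\ref{eq:psica1}) follows. No substantial obstacle is anticipated: once the shift sets $\mathcal{T}$ and $\mathcal{N}$ have been tabulated from the Clifford-algebra formulas already established in the paper, the argument reduces to the one-line parity check above.
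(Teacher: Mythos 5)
Your proof is correct and rests on the same underlying idea as the paper's: after subtracting $S^{0}_{\vartheta}$ from $T^{0}_{\vartheta}$ one is left with the two cross terms, and these vanish because the $N^{*}$-degree shift produced by any monomial of $R^{0}_{\vartheta}\left(Y^{TX}\right)$ is $\pm 1$ while that of any monomial of $R^{0}_{\vartheta}\left(Y^{N}\right)$ lies in $\left\{-2,0,2\right\}$, so the two can never recombine to land in the $\mathbf{P}$-range. The paper reaches the same conclusion by displaying the explicit Clifford expansions (\ref{eq:nid1}), (\ref{eq:nid3}) of $R^{0}_{\vartheta}\left(Y^{TX}\right)\mathbf{P}$, $\mathbf{P}R^{0}_{\vartheta}\left(Y^{N}\right)$, and their transposes and then noting the vanishing ``easily''; your systematic bidegree bookkeeping is a cleaner and more transparent way of carrying out exactly that check.
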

\begin{proof}
By (\ref{eq:qsic6}),  it is enough to show that in 
(\ref{eq:psic27}), the contribution of the bilinear terms in 
$Y^{TX},Y^{N}$ vanishes identically. By (\ref{eq:qsic4}), 
(\ref{eq:nea0}), (\ref{eq:nea0x1}), (\ref{eq:he1}), and 
(\ref{eq:nea0x4}),  we get
\begin{align}\label{eq:nid1}
&R_{\vartheta}^{0}\left(Y^{TX}\right)\mathbf{P}=\frac{\cos\left(\vartheta\right)}{2}\sum_{\substack{1\le i\le m \\
m+1\le j\le m+n}}^{}\left\langle  
\left[Y^{TX},e_{i}\right],e_{j}\right\rangle \nonumber \\
&\left( 
\left(\cos\left(\vartheta\right)\ei\we+\sin\left(\vartheta\right)\widehat{c}\left(\overline{e}_{i}\right)\right)\ej
+\ei\we\ej  \right) \mathbf{P},\\
&\mathbf{P}R_{\vartheta}^{0}\left(Y^{N}\right)=i\mathbf{P}\frac{\cos^{1/2}\left(\vartheta\right)}{4}
\Biggl(\sum_{1\le i,j\le m}^{}\left\langle  
\left[Y^{N},e_{i}\right],e_{j}\right\rangle 
i_{e_{i}}i_{e_{j}}  \nonumber \\
&+\sum_{m+1\le i,j\le 
m+n}^{}\left\langle  
\left[Y^{N},e_{i}\right],e_{j}\right\rangle 
i_{e_{i}}i_{e_{j}} +\sum_{1\le i,j\le m}^{}\left\langle  \left[Y^{N},e_{i}\right],e_{j}\right\rangle \nonumber \\
&
\Bigl(-\sin\left(\vartheta\right)i_{e_{i}}+\cos\left(\vartheta\right)\widehat{c}\left(\overline{e}_{i}\right)\Bigr)
\Bigl(-\sin\left(\vartheta\right)i_{e_{j}}+\cos\left(\vartheta\right)\widehat{c}\left(\overline{e}_{j}\right)\Bigr)
\Biggr). \nonumber 
\end{align}
By (\ref{eq:nid1}), we deduce easily that
\begin{equation}\label{eq:nid2}
\mathbf{P}R^{0}_{\vartheta}\left(Y^{N}\right)\left(1+N^{\Lambda\ac\left(T^{*}X \oplus N^{*}\right)}
_{-\vartheta}\right)^{-1}R^{0}_{\vartheta}\left(Y^{TX}\right)
\mathbf{P}=0.
\end{equation}
Similarly, we have the identities
\begin{align}\label{eq:nid3}
&\mathbf{P}R^{0}_{\vartheta}\left(Y^{TX}\right)=-\mathbf{P}\frac{\cos\left(\vartheta\right)}{2}
\sum_{\substack{1\le i\le m\\
m+1\le j\le m+n}}^{}\Biggl( \left\langle  
\left[Y^{TX},e_{i}\right],e_{j}\right\rangle  \nonumber \\
&\left( \left(\cos\left(\vartheta\right)i_{e_{i}}+\sin\left(\vartheta\right)\widehat{c}\left(\overline{e}_{i}\right)
\right)i_{e_{j}}+i_{e_{i}}i_{e_{j}} \right) , \nonumber \\
&R^{0}_{\vartheta}\left(Y^{N}\right)\mathbf{P}=-i\frac{\cos^{1/2}\left(\vartheta\right)}{4}
\Biggl(-\sum_{1\le i,j\le m}^{}\left\langle  
\left[Y^{N},e_{j}\right],e_{j}\right\rangle\ei\we\ej \\ 
&-\sum_{m+1\le i,j\le 
m+n}^{}\left\langle  \left[Y^{N},e_{i}\right],e_{j}\right\rangle\ei\we 
\ej  -\sum_{1\le i,j\le m}^{}\left\langle  \left[Y^{N},e_{i}\right],e_{j}\right\rangle\nonumber \\
&
\left(-\sin\left(\vartheta\right)\ei+\cos\left(\vartheta\right)\widehat{c}\left(\overline{e}_{i}\right)\right)
\left(-\sin\left(\vartheta\right)\ej+\cos\left(\vartheta\right)\widehat{c}\left(\overline{e}_{j}\right)\right)
\Biggr)\mathbf{P}. \nonumber 
\end{align}
By (\ref{eq:nid3}), we also deduce that
\begin{equation}\label{eq:nid4}
\mathbf{P}R^{0}_{\vartheta}\left(Y^{TX}\right)\left(\cos\left(\vartheta\right)+N^{\Lambda\ac\left(T^{*}X \oplus N^{*}\right)
}_{-\vartheta}\right)^{-1}R^{0}_{\vartheta}
\left(Y^{N}\right)\mathbf{P}=0.
\end{equation}
By (\ref{eq:nid2}), (\ref{eq:nid4}), we get (\ref{eq:psica1}).
\end{proof}
\begin{defin}\label{DSbis}
	Put
\index{StY@$\overline{S}^{0}_{\vartheta}\left(Y\right)$}%
\begin{equation}\label{eq:psic27y1}
\overline{S}^{0}_{\vartheta}\left(Y\right)=S^{0}_{\vartheta}\left(Y\right)+
\cos^{4}\left(\vartheta\right)\widehat{c}\left(\ad\left(Y^{N}\right)\vert_{\overline{TX}}
\right)^{2}.
\end{equation}
\end{defin}
By equation (\ref{eq:qsic7}) in Proposition \ref{Pidnew}, 
$\overline{S}^{0}_{\vartheta}\left(Y\right)$ is a scalar operator, 
that depends quadratically on $Y$.

\begin{defin}\label{Dbon1}
	Let 
\index{dt@$\delta^{0}_{\vartheta}$}%
$\delta^{0}_{\vartheta}$ be the constant 
\index{dt@$\delta_{\vartheta}$}%
$\delta_{\vartheta}$ in (\ref{eq:qsic11}) 
with $E$ the trivial representation. 
Set
\index{d0t@$\pmb \delta^{0}_{\vartheta}$}%
\begin{equation}\label{eq:mir-1}
\pmb \delta^{0}_{\vartheta}=\delta^{0}_{\vartheta}+\frac{\cos^{4}\left(\vartheta\right)}{16}
\Tr^{\mathfrak p}\left[C^{\mathfrak k, \mathfrak p}\right].
\end{equation}
\end{defin}

By (\ref{eq:qsic11}), (\ref{eq:mir-1}), if $x=\ct$, we get
\begin{equation}\label{eq:rob3}
\pmb\delta_{\vartheta}^{0}=\frac{1}{16}\left(x^{4}-2x^{2}-2x\right)\Tr^{\mathfrak p}\left[C^{\mathfrak k, \mathfrak p}\right]
-\frac{1}{48}\Tr^{\mathfrak k}\left[C^{\mathfrak k, \mathfrak 
k}\right].
\end{equation}
\begin{prop}\label{Pidnez}
	The following identity holds:
	\begin{equation}\label{eq:mir9a}
P\overline{S}^{0}_{\vartheta}\left(Y\right)P=\pmb 
\delta^{0}_{\vartheta}.
\end{equation}
\end{prop}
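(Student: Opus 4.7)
The plan is to reduce the identity to a direct Gaussian computation using the harmonic oscillator ground state, exploiting the fact that only the quadratic $Y^N$ piece remains non-trivial beyond $\delta^{0}_{\vartheta}$.

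By the definition (\ref{eq:psic27y1}) of $\overline{S}^{0}_{\vartheta}\left(Y\right)$ and the definition (\ref{eq:mir-1}) of $\pmb\delta^{0}_{\vartheta}$, together with $\delta^{0}_{\vartheta}=PS^{0}_{\vartheta}\left(Y\right)P$, the claim (\ref{eq:mir9a}) reduces to showing
\begin{equation*}
P\,\widehat{c}\!\left(\ad\left(Y^{N}\right)\vert_{\overline{TX}}\right)^{2}P=\frac{1}{16}\Tr^{\mathfrak p}\left[C^{\mathfrak k,\mathfrak p}\right].
\end{equation*}
So only this scalar Gaussian average has to be computed; the rest of the proposition is bookkeeping.

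To evaluate it, fix an orthonormal basis $f_{m+1},\ldots,f_{m+n}$ of $\mathfrak k$ for $\left\langle\,\right\rangle$ and write $Y^{N}=\sum_{\alpha}y^{\alpha}f_{\alpha}$. Since $\ad$ is linear in its argument, $\widehat{c}\!\left(\ad\left(Y^{N}\right)\vert_{\overline{TX}}\right)=\sum_{\alpha}y^{\alpha}\,\widehat{c}\!\left(\ad\left(f_{\alpha}\right)\vert_{\overline{TX}}\right)$, so
\begin{equation*}
\widehat{c}\!\left(\ad\left(Y^{N}\right)\vert_{\overline{TX}}\right)^{2}=\sum_{\alpha,\beta}y^{\alpha}y^{\beta}\,\widehat{c}\!\left(\ad\left(f_{\alpha}\right)\vert_{\overline{TX}}\right)\widehat{c}\!\left(\ad\left(f_{\beta}\right)\vert_{\overline{TX}}\right).
\end{equation*}
The identity (\ref{eq:qsic12}), polarized, gives $Py^{\alpha}y^{\beta}P=\frac{1}{2}\delta_{\alpha\beta}$, and since the Clifford factors are independent of $Y$, they commute with the projector $P$. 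Therefore
\begin{equation*}
P\,\widehat{c}\!\left(\ad\left(Y^{N}\right)\vert_{\overline{TX}}\right)^{2}P=\frac{1}{2}\sum_{\alpha}\widehat{c}\!\left(\ad\left(f_{\alpha}\right)\vert_{\overline{TX}}\right)^{2}.
\end{equation*}

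Finally, by (\ref{eq:batt1}) the right-hand side is $\tfrac{1}{2}\sum_{\alpha}\sigma\left(f_{\alpha}\right)^{2}=\tfrac{1}{2}C^{\mathfrak k,S^{\overline{\mathfrak p}}}$ by (\ref{eq:batt2}), and by (\ref{eq:Lie11x1}) this equals $\tfrac{1}{16}\Tr^{\mathfrak p}\left[C^{\mathfrak k,\mathfrak p}\right]$, completing the proof. No serious obstacle arises here: the only subtlety is to observe that the anti-commutator cross-terms $\widehat{c}(\ad(f_\alpha))\widehat{c}(\ad(f_\beta))$ with $\alpha\neq\beta$ are killed by $P$ because of the Gaussian orthogonality (\ref{eq:qsic12}), so one does not need to expand the Clifford product combinatorially.
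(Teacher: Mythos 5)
Your proof is correct and takes essentially the same route as the paper: you reduce the claim via (\ref{eq:psic27y1}) and (\ref{eq:mir-1}) to the Gaussian identity $P\,\widehat{c}(\ad(Y^{N})\vert_{\overline{TX}})^{2}P=\frac{1}{2}C^{\mathfrak k,S^{\overline{\mathfrak p}}}=\frac{1}{16}\Tr^{\mathfrak p}[C^{\mathfrak k,\mathfrak p}]$, using (\ref{eq:qsic12}), (\ref{eq:batt2}), and (\ref{eq:Lie11x1}). The only difference is that you re-derive this identity directly, whereas the paper cites the special case of the last line of (\ref{eq:qsic13}) with $E$ trivial, which was itself obtained by exactly the computation you carried out.
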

\begin{proof}
	Using (\ref{eq:Lie11x1}),   Proposition \ref{Pdelta}, the last 
equation in (\ref{eq:qsic13}), (\ref{eq:psic27y1}), and 
(\ref{eq:mir-1}), we get (\ref{eq:mir9a}).
\end{proof}

\subsection{A computational proof of Theorems \ref{Thfub} and \ref{Thfubter}}%
\label{subsec:anopr}
First,  we give another proof of  Theorem \ref{Thfub}.

By \cite[eq. (2.16.25)]{Bismut08b}, we get
\begin{equation}\label{eq:qsic19}
P\frac{1}{2}\left\vert  \left[Y^{N},Y^{TX}\right]\right\vert^{2}P=-
\frac{1}{8}\Tr^{\mathfrak p}\left[C^{\mathfrak k,\mathfrak p}\right].
\end{equation}
By the second equation in (\ref{eq:co19x-1}), by (\ref{eq:co18x1}), 
   (\ref{eq:qsic6}), 
(\ref{eq:qsic10}), (\ref{eq:qsic12}),  and (\ref{eq:qsic19}), we obtain
\begin{equation}\label{eq:qsic20}
P\left(\gamma_{\vartheta}-\beta_{\vartheta}\alpha_{\vartheta}^{-1}
\beta_{\vartheta}\right)P=
-\frac{x^{2}}{2}\Delta^{X,H}
-\delta_{\vartheta}-\frac{x}{8}\Tr^{\mathfrak p}\left[C^{\mathfrak k,\mathfrak p}\right].
\end{equation}
By (\ref{eq:qsic11}), we get
\begin{multline}\label{eq:qsic21}
-\delta_{\vartheta}-\frac{x}{8}\Tr^{\mathfrak p}\left[C^{\mathfrak k,\mathfrak p}\right]
=\frac{x^{2}}{8}\Tr^{\mathfrak k}\left[C^{\mathfrak k, \mathfrak 
p}\right]+\frac{1}{48}\Tr^{\mathfrak k}\left[C^{\mathfrak k, 
\mathfrak k}\right]\\
+\frac{1}{2}C^{\mathfrak 
k,E}+x^{2}\sum_{i=m+1}^{m+n}
\widehat{c}\left(\ad\left(e_{i}\right)\vert_{\overline{TX}}\right)\rho^{F}\left(e_{i}\right).
\end{multline}
By (\ref{eq:qsic20}), (\ref{eq:qsic21}), we obtain
\begin{multline}\label{eq:qsic22}
P\left(\gamma_{\vartheta}-\beta_{\vartheta}\alpha_{\vartheta}^{-1}
\beta_{\vartheta}\right)P=
\frac{x^{2}}{2}\Biggl( -\Delta^{X,H}
+\frac{1}{4}\Tr^{\mathfrak p}\left[C^{\mathfrak k, \mathfrak p}\right] \\
+2
\sum_{i=m+1}^{m+n}\widehat{c}\left(\ad\left(e_{i}\right)\vert_{\overline{TX}}
\right)\rho^{F}\left(e_{i}\right)\Biggr)+\frac{1}{48}\Tr^{\mathfrak k}
\left[C^{\mathfrak k, \mathfrak k}\right]+\frac{1}{2}C^{\mathfrak 
k,E}.
\end{multline}
By (\ref{eq:japo4}),  (\ref{eq:qsic22}), we 
get (\ref{eq:co21}). This completes the second proof of Theorem 
\ref{Thfub}.

Now, we give another proof of Theorem \ref{Thfubter}.
By Theorem \ref{Thfub}, we only need to show the coincidence of 
the $d\vartheta$ components in (\ref{eq:co51}).  We use 
(\ref{eq:gign7}). We have the trivial identity 
\begin{equation}\label{eq:qsic29}
Pc\left(\left[Y^{N},Y^{TX}\right]\right)P=0.
\end{equation}
By (\ref{eq:defa5x1b}), (\ref{eq:gign7}),
to establish (\ref{eq:co51}), we only to show that
\begin{multline}\label{eq:qsic30}
P\beta_{\vartheta}\alpha^{-1}_{\vartheta}\frac{d\vartheta}{\sqrt{2}}
\left(\widehat{c}\left(\overline{Y}^{TX}\right)+
\frac{\sin\left(\vartheta\right)}{\cos^{1/2}\left(\vartheta\right)}i\mathcal{E}^{N}\right)P\\
+P\frac{d\vartheta}{\sqrt{2}}\left(\widehat{c}\left(\overline{Y}^{TX}\right)+
\frac{\sin\left(\vartheta\right)}{\cos^{1/2}\left(\vartheta\right)}i\mathcal{E}^{N}\right)
\alpha^{-1}_{\vartheta}\beta_{\vartheta}P=
\frac{d\vartheta}{\sqrt{2}}\cos\left(\vartheta\right)\widehat{D}^{X}.
\end{multline}
In 
(\ref{eq:qsic30}),  using (\ref{eq:co19x-1}), (\ref{eq:nid1}), and 
(\ref{eq:nid3}), 
an easy computation shows that we can replace $\beta_{\vartheta}$ by 
$$\cos\left(\vartheta\right)\n_{Y^{ TX}}^{C^{ \infty }\left(TX \oplus 
 N,\widehat{\pi}^{*} \left( \Lambda\ac\left(T^{*}X \oplus N^{*}\right)\otimes 
 S^{\overline{TX}} \otimes F 
 \right) \right)},$$
 which combined with (\ref{eq:qsic12}) gives  (\ref{eq:qsic30}). This 
 completes the second proof of Theorem \ref{Thfubter}.
\subsection{The scaling of the invariant form $B$}%
\label{subsec:scal}
Given $t>0$, we denote with an extra index $t$ the objects considered 
above that are associated with the form $B/t$ over $\mathfrak g$.
We will establish an analogue of the results in \cite[section 
2.14]{Bismut08b}.  By (\ref{eq:vb4a}), (\ref{eq:co30x-1a}), as in 
\cite[eq. (2.14.3)]{Bismut08b}, we get
\begin{align}\label{eq:he-1}
&K_{\sqrt{t}}t^{N^{\Lambda\ac\left(T^{*}X \oplus 
N^{*}\right)}/2}\mathfrak 
D^{X}_{b,\vartheta,t}t^{-N^{\Lambda\ac\left(T^{*}X \oplus 
N^{*}\right)}/2}K^{-1}_{\sqrt{t}}=\sqrt{t}\mathfrak 
D^{X}_{\sqrt{t}b,\vartheta}, \nonumber \\
&K_{\sqrt{t}}t^{N^{\Lambda\ac\left(T^{*}X \oplus 
N^{*}\right)}/2}\mathfrak 
D^{X \prime }_{b,\vartheta,t}t^{-N^{\Lambda\ac\left(T^{*}X \oplus 
N^{*}\right)}/2}K^{-1}_{\sqrt{t}}=\sqrt{t}\mathfrak 
D^{X \prime}_{\sqrt{t}b,\vartheta},\\
&K_{\sqrt{t}}t^{N^{\Lambda\ac\left(T^{*}X \oplus N^{*}\right)}/2}
\n^{\mathcal{H}}
   t^{-N^{\Lambda\ac\left(T^{*}X \oplus 
   N^{*}\right)}/2}K^{-1}_{\sqrt{t}}=\n^{\mathcal{H}}, \nonumber \\
   &K_{\sqrt{t}}t^{N^{\Lambda\ac\left(T^{*}X \oplus N^{*}\right)}/2}
\n^{\mathcal{H}\prime }
   t^{-N^{\Lambda\ac\left(T^{*}X \oplus 
   N^{*}\right)}/2}K^{-1}_{\sqrt{t}}=\n^{\mathcal{H} \prime}. \nonumber 
\end{align}
By (\ref{eq:he-1}), we deduce that
\begin{align}\label{eq:he-2}
&K_{\sqrt{t}}t^{N^{\Lambda\ac\left(T^{*}X \oplus N^{*}\right)}/2}
\mathcal{L}^{X}_{b,\vartheta,t}
   t^{-N^{\Lambda\ac\left(T^{*}X \oplus 
   N^{*}\right)}/2}K^{-1}_{\sqrt{t}}=t\mathcal{L}^{X}_{\sqrt{t}b,\vartheta},\\
&K_{\sqrt{t}}t^{N^{\Lambda\ac\left(T^{*}X \oplus N^{*}\right)}/2}
\mathcal{L}^{X \prime}_{b,\vartheta,t}
   t^{-N^{\Lambda\ac\left(T^{*}X \oplus 
   N^{*}\right)}/2}K^{-1}_{\sqrt{t}}=t\mathcal{L}^{X}_{\sqrt{t}b,\vartheta}. \nonumber    
\end{align}
The above identities remain valid when replacing $\mathfrak 
D^{X}_{\bt},\n^{\mathcal{H}}$ by $\overline{\mathfrak D}^{X}_{\bt}, 
\overline{\n}^{\mathcal{H}}$.

\section{A closed $1$-form on $\R_{+}^{*}\times 
\left[0,\frac{\pi}{2}\right[$}%
\label{sec:defhyp}
In this section, when $m=\dim \mathfrak p$ is odd, using hypoelliptic orbital integrals, we define a 
closed $1$-form $\mathsf{b}$ on $\R^{*}_{+}\times 
\left[0,\frac{\pi}{2}\right[$. This $1$-form will play an important 
role in establishing our main result.

This section is organized as follows. In subsection 
\ref{subsec:trasup}, we define in our context the proper  traces and 
supertraces. 

In subsection \ref{subsec:oddorb},  if $\gamma\in G$ is semisimple, we introduce 
 orbital integrals  associated with the family of   elliptic operators 
$\mathcal{L}^{X}_{\vartheta}\vert_{\vartheta\in\left[0,\frac{\pi}{2}\right[}$ considered in subsection \ref{subsec:dedx}.
We show that we can 
limit ourselves to the  case where $\gamma$ is nonelliptic.
 Also we introduce a $1$-form $\mathsf{a}$ on 
$\left[0,\frac{\pi}{2}\right[$.

Finally, in subsection \ref{subsec:onfob}, if $\gamma\in G$ is semisimple, we introduce  
 orbital integrals associated with the family of Dirac 
operators $\mathcal{L}^{X}_{\bt}\vert_{\left(\bt\right)\in 
\R_{+}^{*}\times \left[0,\frac{\pi}{2}\right[}$ considered in 
subsection \ref{subsec:des}, and we construct the closed $1$-form 
$\mathsf{b}$ on $\R^{*}_{+}\times \left[0,\frac{\pi}{2}\right[$.

In this section, we make the same assumptions as in sections \ref{sec:eta}
and \ref{sec:defdx}, and we use the corresponding notation. In 
particular, $G$ is still assumed to be simply connected.
\subsection{Traces and supertraces}%
\label{subsec:trasup}
If $m=\dim \mathfrak p$ is even, $S^{\overline{\mathfrak p}}$ is $\Z_{2}$-graded, and 
$\widehat{c}\left(\overline{\mathfrak p}\right)\otimes 
_{\R}\C=\End\left(S^{\overline{\mathfrak p}}\right)$ is  equipped with 
a  corresponding supertrace. When combined with the usual trace on 
$\End\left(E\right)$, we get a supertrace 
\index{Trs@$\Trs$}%
$\Trs:\Lambda\ac\left(\R^{2* 
}\right)\ho\widehat{c}\left(\overline{\mathfrak p}\right) \otimes 
\End\left(E\right)\to \Lambda\ac\left(\R^{2*}\right) \otimes _{\R}\C$, 
with the  convention  that if 
$\eta\in\Lambda\ac\left(\R^{2*}\right),a\in \Lambda\ac\left(\R^{2* 
}\right)\ho\widehat{c}\left(\overline{\mathfrak p}\right) \otimes 
\End\left(E\right)$, 
\begin{equation}\label{eq:sixt1}
\Trs\left[\eta a\right]=\eta\Trs\left[a\right].
\end{equation}
Since $\Lambda\ac\left(\mathfrak g^{*}\right)$ is $\Z_{2}$-graded, 
$\End\left(\Lambda\ac\left(\mathfrak 
g^{*}\right)\right)=c\left(\mathfrak 
g\right)\ho\widehat{c}\left(\mathfrak g\right)$ is equipped with a 
supertrace $\Trs$, so that
 $\Lambda\ac\left(\R^{2*}\right)\ho 
\End\left(\Lambda\ac\left(\mathfrak g^{*}\right)\right) 
\ho\widehat{c}\left(\overline{\mathfrak p}\right)\otimes 
_{\R}\End\left(E\right)$ is equipped with a supertrace $\Trs$ with 
values in $\Lambda\ac\left(\R^{2*}\right) \otimes _{\R}\C$, that 
 vanishes on supercommutators.
 When quotienting by $K$, it descends to a supertrace $\Trs$ from 
 $$\Lambda\ac\left(\R^{2*}\right)\ho\End\left(\Lambda\ac\left(T^{*}X 
 \oplus N^{*}\right)\right)\ho\widehat{c}\left(\overline{\mathfrak p}\right) \otimes 
 \End\left(F\right)$$
 into 
 $\Lambda\ac\left(\R^{2*}\right) \otimes _{\R}\C$.

If $m=\dim \mathfrak p$ is odd, then $S^{\overline{\mathfrak p}}$ is not 
$\Z_{2}$-graded, and by (\ref{eq:Lie10}),  $\widehat{c}\left(\overline{\mathfrak 
p}\right)\otimes _{\R}\C=\End\left(S^{\overline{\mathfrak p}}\right) 
\oplus \End\left(S^{\overline{\mathfrak p}}\right)$. Instead of adopting the $\Tr_{\sigma}$ formalism of 
Quillen that was described in subsection \ref{subsec:elop}, we will 
instead exploit the $\Z_{2}$-grading of 
$\widehat{c}\left(\overline{\mathfrak p}\right)$. 
We denote by 
\index{Tro@$\Tr^{\odd}$}%
$\Tr^{\odd}$ the linear 
map that vanishes on 
$\widehat{c}^{\even}\left(\overline{\mathfrak p}\right)$ 
and coincides with $\Tr^{S^{\overline{\mathfrak p}} }$ on $\widehat{c}^{\odd}\left(
\overline{\mathfrak p}\right) $. Then $\Tr^{\odd}$ vanishes 
on supercommutators in $\widehat{c}\left(\overline{\mathfrak 
p}\right)$. We extend $\Tr^{\odd}$ to a map from 
$\Lambda\ac\left(\R^{2*}\right) \ho
\widehat{c}\left(\overline{\mathfrak p}\right) \otimes 
\End\left(E\right)$ into $\Lambda\ac\left(\R^{2*}\right)\otimes 
_{\R}\C$, with 
the convention that if $\eta\in \Lambda\ac\left(\R^{2*}\right),a\in \widehat{c}\left(\overline{\mathfrak p}\right) \otimes 
_{\R}\End\left(E\right)$, then
\begin{equation}\label{eq:sixt2}
\Tr^{\odd}\left[\eta a\right]=\eta\Tr^{\odd}\left[a\right].
\end{equation}
 When combining $\Tr^{\odd}$ with the supertrace on 
$\End\left(\Lambda\ac\left(\mathfrak g^{*}\right)\right)$, 
we get a linear map
\index{Trso@$\Trs^{\odd}$}%
 $\Trs^{\odd}: \Lambda\ac\left(\R^{2*}\right)\ho 
 \End\left(\Lambda\ac\left(\mathfrak g^{*}\right)\right)\ho\widehat{c}\left(\overline{\mathfrak p}\right) \otimes 
\End\left(E\right)\to \Lambda\ac\left(\R^{2*}\right) \otimes 
_{\R}\C$.
 It still vanishes on supercommutators. It
vanishes on the even part of $ \End\left(\Lambda\ac\left(\mathfrak 
g^{*}\right)\right)\ho\widehat{c}\left(\overline{\mathfrak p}\right)\otimes 
\End\left(E\right)$ and is an ordinary supertrace on the odd 
part.
If 
$$\beta\in \left( \Lambda\ac\left(\R^{2*}\right)\ho 
\End\left(\Lambda\ac\left(\mathfrak g^{*}\right)\right)\ho\widehat{c}\left(\overline{\mathfrak p}\right)\otimes 
\End\left(E\right)\right) ^{\even},$$
then 
$\Trs^{\odd}\left[\beta\right]$ is a $1$-form.
When quotienting by $K$, this map descend to a map $\Trs^{\odd}$ from 
$$\Lambda\ac\left(\R^{2*}\right)\ho \End\left(\Lambda\ac
\left(T^{*}X \oplus N^{*}\right)\right)\ho\widehat{c}\left(\overline{TX}\right)\otimes 
\End\left(F\right)$$
with 
values in 
$\Lambda\ac\left(\R^{2*}\right) \otimes _{\R}\C$. 

From now on, and \textbf{in the whole paper},  we assume that \textit{$m$ is odd}.
\subsection{Elliptic orbital integrals and the $1$-form $\mathsf{a}$}%
\label{subsec:oddorb}
Let $\gamma\in G$ . Let 
   \index{dg@$d_{\gamma}$}%
   $d_{\gamma}:X\to \R_{+}$  be the displacement function 
associated with $\gamma$, i.e., if $x\in X$,
\begin{equation}\label{eq:boul1}
d_{\gamma}\left(x\right)=d\left(x,\gamma x\right).
\end{equation}
Following \cite[2.19.21]{Eberlein96},  $\gamma$ is said to be
semisimple if $d_{\gamma}$ reaches its 
minimum value  on $X$. Semisimplicity is a property of the conjugacy 
class of $\gamma$ in $G$. 

By \cite[Theorem 3.1.2]{Bismut08b}, $\gamma$ is semisimple if and only 
if after conjugation, we
can write $\gamma$  in the form
\begin{align}\label{eq:dist9}
    &\gamma=e^{a}k^{-1},&a\in \mathfrak p,\\
    &k\in K,\qquad &\Ad\left(k\right)a=a, \nonumber 
    \end{align}
    the factorization in (\ref{eq:dist9}) being unique.
    By \cite[Theorem 3.1.2]{Bismut08b}, 
\begin{equation}\label{eq:boul2}
\inf_{x\in X}d_{\gamma}\left(x\right)=\left\vert  a\right\vert.
\end{equation}

Recall that the vector space 
\index{H@$\mathcal{H}$}%
$\mathcal{H}$ was defined in Definition \ref{Dspace}.
The left action of $\gamma$ on $G$ descends to an  action 
   on $\mathcal{H}$.
   
     Using the same 
   arguments as in \cite[section 4.4]{Bismut08b}, for $t>0$, we can define the 
   elliptic 
   orbital integrals
   \index{Trg@$\Tr^{\left[\gamma\right]}$}%
   $\Tr^{\left[\gamma\right]}\left[D^{X}\exp\left(-tD^{X,2}/2\right)
   \right]$. These integrals only depend on the conjugacy class of 
   $\gamma$ in $G$.
   
   Recall that since $\gamma$ is semisimple, $\theta\gamma$ is also 
   semisimple.
   \begin{prop}\label{Pzero}
       For $t>0$, the following identity holds:
       \begin{equation}\label{eq:idf1}
\Tr^{\left[\theta\gamma\right]}\left[D^{X}\exp\left(-tD^{X,2}/2\right)\right]=
-\Tr^{\left[\gamma\right]}\left[D^{X}\exp\left(-tD^{X,2}/2\right)\right].
\end{equation}
In particular, if $\gamma$ is elliptic, i.e., if $a=0$, for $t>0$, then
\begin{equation}\label{eq:co40x1}
\Tr^{\left[\gamma\right]}\left[D^{X}\exp\left(-t
D^{X,2}/2\right)\right]=0.
\end{equation}
\end{prop}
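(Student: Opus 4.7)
The strategy is to exploit the fact that the Cartan involution $\theta$ of $G$ lifts to an isometric involution $\theta_{\pm}$ of the bundle $S^{\overline{TX}}\otimes F$ under which $D^{X}$ is \emph{anti}-invariant, and then to use the equivariance of orbital integrals under automorphisms of $G$.

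First, since $D^{X,2}$ obviously commutes with $\theta_{\pm}$, Proposition \ref{Pinv} implies that for every $t>0$,
\begin{equation*}
\theta_{\pm}\, D^{X}\exp(-tD^{X,2}/2)\,\theta_{\pm}^{-1}=-D^{X}\exp(-tD^{X,2}/2).
\end{equation*}
Set $P_{t}=D^{X}\exp(-tD^{X,2}/2)$, with smooth kernel $p_{t}(x,x')$; the above identity is equivalent to the pointwise statement
\begin{equation*}
p_{t}(\theta x,\theta x')=-(\theta_{\pm})_{x}\,p_{t}(x,x')\,(\theta_{\pm})_{x'}^{-1}.
\end{equation*}

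Second, I would recall that the Cartan involution $\theta$, viewed as an automorphism of $G$, sends $\gamma=e^{a}k^{-1}$ with $a\in\mathfrak p$, $k\in K$ to $\theta\gamma=e^{-a}k^{-1}$, and that the induced diffeomorphism of $X=G/K$ is an isometry which lifts (in two ways) to $S^{\overline{TX}}\otimes F$. Using the standard recipe for elliptic orbital integrals recalled in \cite[section 4.4]{Bismut08b}, one has
\begin{equation*}
\Tr^{[\gamma]}[P_{t}]=\int_{Z(\gamma)\backslash G}\Tr\!\left[\gamma\cdot p_{t}(\gamma^{-1}xK,xK)\right]d\bar{x},
\end{equation*}
where $\gamma$ also denotes its induced action on the fibre. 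Performing the isometric change of variable $xK\mapsto\theta^{-1}(xK)$, together with the intertwining relation $(\theta_{\pm})\circ\gamma=(\theta\gamma)\circ(\theta_{\pm})$ on fibres and the displayed kernel identity, the $\theta_{\pm}$ factors cancel in the trace while the $\gamma$ factor is replaced by $\theta\gamma$, and a single overall sign is produced by the identity $\theta_{\pm} P_{t}\theta_{\pm}^{-1}=-P_{t}$. This yields
\begin{equation*}
\Tr^{[\theta\gamma]}[P_{t}]=-\Tr^{[\gamma]}[P_{t}],
\end{equation*}
which is (\ref{eq:idf1}).

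Finally, for the elliptic case, $\gamma=k^{-1}$ with $k\in K$, so $\theta\gamma=\gamma$ (as $\theta$ is the identity on $K$); then (\ref{eq:idf1}) reads $\Tr^{[\gamma]}[P_{t}]=-\Tr^{[\gamma]}[P_{t}]$, forcing (\ref{eq:co40x1}). The main point requiring care is the bookkeeping of the two lifts $\theta_{\pm}$: either choice works because both commute with the $G$-action up to a global sign, and that sign is absorbed by the trace. No analytic difficulty arises since $P_{t}$ is a smooth kernel operator and $\gamma$ is semisimple, so the orbital integral is absolutely convergent by the results of \cite[chapter 4]{Bismut08b}.
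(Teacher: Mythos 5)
Your proof is correct and follows essentially the same approach as the paper: both rely on the anti-invariance $\theta_{\pm}D^{X}\theta_{\pm}^{-1}=-D^{X}$ from Proposition \ref{Pinv}, the intertwining $\theta_{\pm}\gamma\theta_{\pm}^{-1}=\theta(\gamma)$, and the invariance of orbital integrals under conjugation by $\theta_{\pm}$, with the elliptic case handled by noting $\theta\gamma=\gamma$ when $\gamma\in K$. The only difference is that you unpack the conjugation at the level of heat kernels and a change of variables in the integral over $Z(\gamma)\backslash G$, whereas the paper states the conjugation of the operator $\gamma D^{X}\exp(-tD^{X,2}/2)$ directly and takes the orbital-integral trace; this is purely a presentational difference.
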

\begin{proof}
    As we saw in subsection \ref{subsec:twi},  $\theta_{\pm}$ acts on $C^{ \infty 
}\left(X,S^{\overline{TX}} \otimes F\right)$. Moreover, we have the 
identity of morphisms of $C^{\infty }\left(X,S^{\overline{TX} \otimes 
F}\right)$,
\begin{equation}\label{eq:brun1}
\theta_{\pm} \gamma\theta_{\pm}^{-1}=\theta\left(\gamma\right).
\end{equation}
Combining (\ref{eq:Lie16}) and (\ref{eq:brun1}), we get
    \begin{equation}\label{eq:Lie16bis}
\theta _{\pm}\left[\gamma 
D^{X}\exp\left(-tD^{X,2}/2\right)\right]\theta_{\pm}^{-1}=-\theta\left(\gamma\right) D^{X}\exp\left(-tD^{X,2}/2\right).
\end{equation}
By (\ref{eq:Lie16bis}), we get (\ref{eq:idf1}). If $\gamma$ is 
elliptic, after conjugation, we may  assume that $\gamma\in K$, so 
that $\theta\gamma=\gamma$, and so from 
(\ref{eq:idf1}), we get
(\ref{eq:co40x1}). The proof of our proposition  is completed. 
\end{proof}

In the sequel, we may and we will  assume that $\gamma$ is \textbf{nonelliptic}, i.e., $a\neq 
0$. 

Recall that $T^{X}$ was defined in (\ref{eq:defa4}). A formula for 
$T^{X}$ is also given in (\ref{eq:defa5x1b}). 
\begin{defin}\label{Dforma}
Let 
\index{a@$\mathsf{a}$}%
$\mathsf{a}$ be the $1$-form on  $\left[0,\frac{\pi}{2}\right[$,
\begin{equation}\label{eq:co47}
\mathsf{a}=\Tr^{\left[\gamma\right],\odd}\left[\exp\left(-T^{X}\right) \right].
\end{equation}
\end{defin}

By proceeding as in 
\cite[section 4.4]{Bismut08b}, the orbital integral in 
(\ref{eq:co47}) is well defined.
Like any other $1$-form on $\left[0,\frac{\pi}{2}\right[$,  the 
$1$-form $\mathsf{a}$ is closed. 
\begin{prop}\label{Psimp}
The following identity holds:
\begin{equation}\label{eq:co47x1}
\mathsf{a}=
-\Tr^{\left[\gamma\right]}\left[\frac{\cos\left(\vartheta\right)}{\sqrt{2}}\widehat{D}^{X}\exp\left(-
\mathcal{L}^{X}_{0,\vartheta}\right) \right]d\vartheta.
\end{equation}
\end{prop}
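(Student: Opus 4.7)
The plan is to start from the defining identity $\mathsf{a}=\Tr^{\left[\gamma\right],\odd}\left[\exp\left(-T^{X}\right)\right]$ and use the explicit decomposition
\begin{equation*}
T^{X}=\mathcal{L}^{X}_{0,\vartheta}+\frac{d\vartheta}{\sqrt{2}}\cos\left(\vartheta\right)\widehat{D}^{X}
\end{equation*}
from \eqref{eq:defa5x1b}, together with a Duhamel expansion of the heat semigroup. Write $T^{X}=A+C$ with $A=\mathcal{L}^{X}_{0,\vartheta}$ and $C=\frac{d\vartheta}{\sqrt{2}}\cos\left(\vartheta\right)\widehat{D}^{X}$. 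The element $C$ is a product of a $1$-form and the odd operator $\widehat{D}^{X}$, hence even in the $\Z_2$-grading, and moreover $C^{2}=0$ since $d\vartheta^{2}=0$. Duhamel's formula truncates:
\begin{equation*}
\exp\left(-T^{X}\right)=\exp\left(-A\right)-\int_{0}^{1}e^{-sA}\,C\,e^{-(1-s)A}\,ds.
\end{equation*}

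Next I would apply $\Tr^{\left[\gamma\right],\odd}$ to both sides. The first term contributes nothing: inspection of \eqref{eq:qsic-9a1} shows that $\mathcal{L}^{X}_{0}$ only involves the even elements $\widehat{c}(\ad(e_i)|_{\overline{TX}})$ of $\widehat{c}(\overline{TX})$ (coming from \eqref{eq:ors1}) and scalar operators; adding $\frac{1}{2}\sin^{2}(\vartheta)\widehat{D}^{X,2}$ preserves this, so $\mathcal{L}^{X}_{0,\vartheta}$ and hence $\exp(-\mathcal{L}^{X}_{0,\vartheta})$ lie in the even part of $\widehat{c}(\overline{TX})\otimes\End(F)$, where $\Tr^{\odd}$ vanishes by construction. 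For the second term, pull the scalar factor $\frac{d\vartheta\cos(\vartheta)}{\sqrt{2}}$ outside the trace using (\ref{eq:sixt2}); then invoke cyclicity of orbital integrals with respect to the supercommutator (which holds by the standard argument that $\Tr^{[\gamma]}$, being built from a supertrace, vanishes on supercommutators). Since $e^{-sA}$ and $e^{-(1-s)A}$ are even and commute with one another, cycling them together yields
\begin{equation*}
\int_{0}^{1}\Tr^{\left[\gamma\right],\odd}\!\left[e^{-sA}\widehat{D}^{X}e^{-(1-s)A}\right]\!ds=\Tr^{\left[\gamma\right],\odd}\!\left[\widehat{D}^{X}e^{-\mathcal{L}^{X}_{0,\vartheta}}\right].
\end{equation*}
The integrand is now odd in $\widehat{c}(\overline{TX})$, and on such elements $\Tr^{[\gamma],\odd}$ coincides with the ordinary orbital integral $\Tr^{[\gamma]}$. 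Combining gives \eqref{eq:co47x1}.

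The main obstacle I anticipate is technical rather than conceptual: one must verify that the Duhamel formula and cyclicity under supercommutators are genuinely valid for the elliptic orbital integrals $\Tr^{\left[\gamma\right]}$ attached to the (non‑trace‑class) heat operator $\exp(-\mathcal{L}^{X}_{0,\vartheta})$ in the presence of the unbounded odd operator $\widehat{D}^{X}$. This requires knowing that $\widehat{D}^{X}\exp(-\mathcal{L}^{X}_{0,\vartheta})$ has a smooth enough kernel along the orbit $[\gamma]$ to define and manipulate the orbital integral — a regularity statement whose proof for the present operators is parallel to the arguments in \cite[section 4.4]{Bismut08b} already invoked to justify the definition of $\mathsf{a}$, so no new analytic input is needed beyond what has been assumed. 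Once this is in place, the calculation above is essentially algebraic.
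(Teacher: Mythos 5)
Your proof is correct and follows essentially the same route the paper indicates: it starts from $\mathsf{a}=\Tr^{[\gamma],\odd}[\exp(-T^X)]$, uses the decomposition (\ref{eq:defa5x1b}) of $T^X$, and exploits the vanishing of $\Tr^{\odd}$ on supercommutators, deferring the analytic justification of manipulating non–trace-class heat operators under the orbital integral to the arguments of \cite[section 4.4 and Theorem 4.3.4]{Bismut08b} — which is exactly the set of ingredients the paper's terse proof invokes before leaving the "easy details" to the reader. You have simply filled in those details (the nilpotency $C^2=0$ truncating Duhamel, the evenness of $\exp(-\mathcal{L}^X_{0,\vartheta})$ killing the first term, cyclicity collapsing the $s$-integral, and the fact that $\Tr^{\odd}$ reduces to $\Tr$ on odd Clifford elements).
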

\begin{proof}
    To establish (\ref{eq:co47x1}), we use (\ref{eq:defa5x1b}), 
    (\ref{eq:co47}),   the fact that $\Tr^{\odd}$ 
vanishes on supercommutators, and also the arguments of \cite[chapter 
4]{Bismut08b}, and especially \cite[Theorem 4.3.4]{Bismut08b}. The easy details are left to the 
reader.
\end{proof}
\subsection{Hypoelliptic  orbital integrals and the $1$-form $\mathsf{b}$}%
\label{subsec:onfob}
  Note that $L^{X},\overline{L}^{X},L^{X \prime }$ were defined in  
  Definition \ref{Dmx}, and are given by (\ref{eq:co36}). They are both 
  even in the proper algebra. It is now crucial to use the formalism 
  of the second half of subsection \ref{subsec:trasup}. By \cite[chapter 4]{Bismut08b}, the  hypoelliptic orbital integrals 
   $$\Trs^{\left[\gamma\right],\odd}\left[\exp\left(-L^{X}\right)\right],\Trs^{\left[\gamma\right],\odd}\left[\exp\left(-\overline{L}^{X}\right)\right],
   \Trs^{\left[\gamma\right],\odd}\left[\exp\left(-L
  ^{X \prime } \right)\right]$$
   are  well-defined. These are $1$-forms
   on $\R^{*}_{+}\times \left[0,\frac{\pi}{2}\right[$. Using the same 
   arguments as in \cite[Theorem 4.3.4]{Bismut08b}, and also 
   (\ref{eq:co36}), we get
   \begin{align}\label{eq:co38}
&\Trs^{\left[\gamma\right],\odd}\left[\exp\left(-L^{X}\right)\right]=
-\Trs^{\left[\gamma\right]}\left[\frac{1}{\sqrt{2}}\n^{\mathcal{H}} \mathfrak D^{X}_{b,\vartheta}
\exp\left(-\mathcal{L}^{X}_{b,\vartheta}\right)\right],\nonumber \\
&\Trs^{\left[\gamma\right],\odd}\left[\exp\left(-\overline{L}^{X}\right)\right]=
-\Trs^{\left[\gamma\right]}\left[\frac{1}{\sqrt{2}}\overline{\n}^{\mathcal{H}} \overline{\mathfrak D}^{X}_{b,\vartheta}
\exp\left(-\overline{\mathcal{L}}^{X}_{b,\vartheta}\right)\right],\\
&\Trs^{\left[\gamma\right],\odd}\left[\exp\left(-L^{X \prime 
}\right)\right]=-\Trs^{\left[\gamma\right]}\left[\frac{1}{\sqrt{2}}\n^{\mathcal{H} \prime} \mathfrak D^{X \prime }_{b,\vartheta}
\exp\left(-\mathcal{L}^{X \prime }_{b,\vartheta}\right)\right]. \nonumber 
\end{align}
In the right-hand side of (\ref{eq:co38}), we have eliminated the 
mention \textit{odd}, because the morphism that appears inside is 
indeed odd. Equivalently, the supertrace that appears in the 
right-hand side is an ordinary supertrace associated with the 
$\Z_{2}$-grading of $\Lambda\ac\left(T^{*}X \oplus N^{*}\right)$.
\begin{thm}\label{Tclo}
We have the identity of closed $1$-forms on 
$\R^{*}_{+}\times\left[0,\frac{\pi}{2}\right[$,
\begin{equation}\label{eq:co39}
\Trs^{\left[\gamma\right],\odd}\left[\exp\left(-L^{X}\right)\right]=
\Trs^{\left[\gamma\right],\odd}\left[\exp\left(-\overline{L}^{X}\right)\right]=
\Trs^{\left[\gamma\right],\odd}\left[\exp\left(-L^{X \prime }
\right)\right].
\end{equation}
\end{thm}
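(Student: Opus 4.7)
The theorem makes two claims: closedness of each of the three 1-forms, and equality among them. My plan is to treat these separately, with closedness first and the two equalities next.

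\textbf{Closedness} will follow from the standard odd superconnection argument. The Bianchi identities of Proposition \ref{PBibis}, $\left[B^{X}, L^{X}\right] = 0$ and the analogues for the barred and primed versions, combined with the vanishing of $\Trs^{\left[\gamma\right],\odd}$ on supercommutators (inherited from the pointwise supertrace of subsection \ref{subsec:trasup}), give $d\,\Trs^{\left[\gamma\right],\odd}\left[\exp(-L^{X})\right] = \Trs^{\left[\gamma\right],\odd}\left[\left[B^{X}, \exp(-L^{X})\right]\right] = 0$ exactly as in (\ref{eq:bob3x0}), and similarly for $\overline{L}^{X}$ and $L^{X \prime}$.

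The \textbf{first equality $L^{X}\leftrightarrow \overline{L}^{X}$} will follow because (\ref{eq:co35x-1a}), $\overline{L}^{X} = K^{N}_{1/\cos^{1/2}\left(\vartheta\right)} L^{X} K^{N}_{\cos^{1/2}\left(\vartheta\right)}$, is a full operator identity on $\mathcal{H}$-valued differential forms over $\R^{*}_{+}\times\left[0,\frac{\pi}{2}\right[$: the $\vartheta$-dependence of the scaling $K^{N}_{\cos^{1/2}\left(\vartheta\right)}$ is absorbed into the definition of $\overline{\n}^{\mathcal{H}}$ via the gauge equivalence $\overline{B}^{X} = K^{N}_{1/\cos^{1/2}\left(\vartheta\right)} B^{X} K^{N}_{\cos^{1/2}\left(\vartheta\right)}$ of (\ref{eq:co31z1}). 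Exponentiating, $\exp(-\overline{L}^{X})$ is the $K^{N}$-conjugate of $\exp(-L^{X})$, and cyclicity of the orbital integral $\Trs^{\left[\gamma\right],\odd}$ then yields equality of the two 1-forms.

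The \textbf{second equality $L^{X}\leftrightarrow L^{X \prime}$} is more delicate, since $B^{X}$ and $B^{X \prime}$ are not gauge-equivalent: a direct computation using (\ref{eq:co30x-1a}) gives $\n^{\mathcal{H} \prime} - \widehat{R}_{\vartheta}^{-1}\n^{\mathcal{H}}\widehat{R}_{\vartheta} = d\vartheta\tan\left(\vartheta\right)(\n^{V}_{Y} + (m+n)/2) \neq 0$. I would apply the explicit formula (\ref{eq:co38}) together with the pointwise conjugations $\mathfrak D^{X \prime} = \widehat{R}_{\vartheta}^{-1}\mathfrak D^{X}\widehat{R}_{\vartheta}$ and $\mathcal{L}^{X \prime} = \widehat{R}_{\vartheta}^{-1}\mathcal{L}^{X}\widehat{R}_{\vartheta}$; conjugating by $\widehat{R}_{\vartheta}$ inside $\Trs^{\left[\gamma\right],\odd}$ (permissible by cyclicity, since $\widehat{R}_{\vartheta}\in \widehat{c}^{\even}$) and using the explicit expressions (\ref{eq:co29x1}), the equality of 1-forms will reduce to the identity
\begin{equation*}
\Trs^{\left[\gamma\right],\odd}\bigl[d\vartheta\tan\left(\vartheta\right)\left[\n^{V}_{Y}, \mathfrak D^{X}\right]\exp(-\mathcal{L}^{X})\bigr] = 0,
\end{equation*}
the $db$-components matching directly by $\widehat{R}_{\vartheta}$-conjugation (which carries $\mathcal{E}^{TX}_{-\vartheta}$ to $\mathcal{E}^{TX}$ and commutes both with $\mathcal{E}^{N}$ and with $c\left(\left[Y^{N},Y^{TX}\right]\right)$).

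\textbf{The main obstacle} will be this vanishing of the Euler-field supertrace. I expect to establish it through the scaling identity (\ref{eq:he-1}) of subsection \ref{subsec:scal}: rescaling the invariant form $B$ by $t$ is equivalent to conjugation by $K_{\sqrt{t}}\,t^{N^{\Lambda\ac\left(T^{*}X \oplus N^{*}\right)}/2}$ together with the $b$-rescaling $b\mapsto \sqrt{t}\,b$. Differentiating this identity at $t = 1$ expresses $\left[\n^{V}_{Y}, \mathfrak D^{X}\right]$ in terms of $b\partial_{b}\mathfrak D^{X}$ plus form-degree commutators; substituting back and invoking Bianchi together with cyclicity converts the discrepancy supertrace into a total $b$-derivative of the (already closed) 1-form from the first step of the proof, which vanishes.
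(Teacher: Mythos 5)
Your treatment of closedness and of the first equality $\Trs^{\left[\gamma\right],\odd}\left[\exp\left(-L^{X}\right)\right]=\Trs^{\left[\gamma\right],\odd}\left[\exp\left(-\overline{L}^{X}\right)\right]$ is exactly the paper's: Bianchi plus vanishing of $\Trs^{\odd}$ on supercommutators for the former, conjugation by $K^{N}_{\cos^{1/2}\left(\vartheta\right)}$ via (\ref{eq:co35x-1a}) for the latter. Your preliminary reduction for the second equality is also sound: conjugating by $\widehat{R}_{\vartheta}$ inside the trace (legitimate, since $\widehat{R}_{\vartheta}$ is a fibrewise even element commuting with the $\gamma$-action) carries $L^{X}$ to $\widetilde{M}^{X}=\widehat{R}_{\vartheta}^{-1}L^{X}\widehat{R}_{\vartheta}$, whose superconnection shares the operator part $\mathfrak D^{X\prime}_{\bt}$ with $B^{X\prime}$; the difference then sits entirely in the connection, and your formula $\n^{\mathcal{H}\prime}-\widehat{R}_{\vartheta}^{-1}\n^{\mathcal{H}}\widehat{R}_{\vartheta}=d\vartheta\tan\left(\vartheta\right)\left(\n^{V}_{Y}+\frac{m+n}{2}\right)$ is correct. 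The $db$-components indeed match automatically.

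The gap is in the final step. You assert that the discrepancy supertrace $\Trs^{\left[\gamma\right],\odd}\bigl[\left[\n^{V}_{Y}+\frac{m+n}{2},\mathfrak D^{X\prime}\right]\exp\left(-\mathcal{L}^{X\prime}\right)\bigr]$ reduces, via the scaling identity (\ref{eq:he-1}), to ``a total $b$-derivative of the (already closed) $1$-form from the first step, which vanishes.'' Two objections. First, the $b$-derivative of a closed $1$-form is not zero; a closed $1$-form $\omega_{b}\,db+\omega_{\vartheta}\,d\vartheta$ satisfies only $\partial_{b}\omega_{\vartheta}=\partial_{\vartheta}\omega_{b}$, and neither side need vanish for $\vartheta>0$ (the Clifford-parity argument that kills the $db$-component in subsection \ref{subsec:cont} is a $\vartheta=0$ phenomenon and does not transfer here). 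Second, differentiating (\ref{eq:he-1}) at $t=1$ produces $\left[\n^{V}_{Y},\mathfrak D^{X}\right]$ only together with $\left[N^{\Lambda\ac\left(T^{*}X\oplus N^{*}\right)},\mathfrak D^{X}\right]$ and the $t$-derivative of $\mathfrak D^{X}_{\bt,t}$, so the clean identification you want does not drop out without significantly more bookkeeping, none of which you supply. The paper avoids this entirely by a homotopy: linearly interpolate the connection between $\widetilde{\n}^{\mathcal{H}}$ and $\n^{\mathcal{H}\prime}$ with a parameter $\ell\in\left[0,1\right]$, form the corresponding odd closed form $\beta$ on $\R^{*}_{+}\times\left[0,\frac{\pi}{2}\right[\times\left[0,1\right]$, observe that $\beta^{(1)}$ has no $d\ell$-component because $\mathfrak D^{X\prime}_{\bt}$ is $\ell$-independent, and conclude from $d\beta^{(1)}=0$ that $\beta^{(1)}$ is $\ell$-independent. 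That argument is short and robust, and is the step you should replace your final paragraph with; alternatively, if you want to salvage a direct argument, note that the two $1$-forms are both closed and agree in $db$-component, so their difference is closed with only a $d\vartheta$-component and hence $b$-independent --- but you would then still have to identify its value by a limit argument, which is circular without the analysis of the later sections.
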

\begin{proof}
    Using equation (\ref{eq:co37}) in Proposition \ref{PBibis}, and proceeding as in \cite[proof of 
    Theorem 4.6.1]{Bismut08b} and in (\ref{eq:bob10x1}), we get
    \begin{equation}\label{eq:dep1}
d^{\R^{*}_{+}\times 
\left[0,\frac{\pi}{2}\right[}\Trs^{\left[\gamma\right],\odd}\left[\exp\left(-L^{X}\right)\right]
=\Trs^{\left[\gamma\right],\odd}\left[\left[B^{X},\exp\left(-L^{X}\right)\right]\right]
=0,
\end{equation}
i.e., the form 
$\Trs^{\left[\gamma\right],\odd}\left[\exp\left(-L^{X}\right)\right]$ 
is closed. The same argument shows that 
 the other forms in (\ref{eq:co39}) are also closed. By (\ref{eq:co35x-1a}), the 
first two forms in (\ref{eq:co39}) are equal.

Set
\begin{align}\label{eq:co39x1}
&\widetilde {\n}^{\mathcal{H}}=\widehat{R}^{-1}_{\vartheta}\n^{\mathcal{H}}\widehat{R}_{\vartheta},
&\widetilde{B}^{X}=\widetilde {\n}^{\mathcal{H}}+\frac{\mathfrak D^{X \prime }_{b,\vartheta}}{\sqrt{2}}.
\end{align}
By  (\ref{eq:co2bu}),  (\ref{eq:co31}), and (\ref{eq:co39x1}),   we get
\begin{equation}\label{eq:co39xx2}
\widetilde{B}^{X}=\widehat{R}_{\vartheta}^{-1}B^{X}\widehat{R}_{\vartheta}.
\end{equation}
Set
\begin{equation}\label{eq:co39xx2y}
\widetilde{M}^{X}=-\frac{1}{2}\widehat{D}^{\mathfrak g,X,2}+\widetilde{B}^{X,2}.
\end{equation}
By  (\ref{eq:co35x-1}), (\ref{eq:co39xx2}),  and (\ref{eq:co39xx2y}), we get
\begin{equation}\label{eq:co39xx2z}
\widetilde{M}^{X}=\widehat{R}_{\vartheta}^{-1}L^{X}\widehat{R}_{\vartheta}.
\end{equation}

By (\ref{eq:co39xx2z}), we get the identity of closed $1$-forms
\begin{equation}\label{eq:co39xx3}
\Trs^{\left[\gamma\right],\odd}\left[\exp\left(-L^{X}\right)\right]=
\Trs^{\left[\gamma\right],\odd}\left[\exp\left(-\widetilde{M}^{X}\right)\right].
\end{equation}
To establish  the last identity in (\ref{eq:co39}), we need to show that in 
(\ref{eq:co39xx3}), we can as well replace $\widetilde{\n}^{\mathcal{H}}$ by 
$\n^{\mathcal{H} \prime }$ without changing the corresponding 
$1$-form. We
introduce an extra interpolation parameter $\ell\in\left[0,1\right]$ 
to  interpolate linearly between $\widetilde{\n}^{\mathcal{H}}$ and 
$\n^{\mathcal{H} \prime}$ through connections 
$\n^{\mathcal{H}}_{\ell}$ on $\mathcal{H}$.  Let $\underline
{\n}^{\mathcal{H}}=d\ell\frac{\pa}{\pa \ell}+\n^{\mathcal{H}}_{\ell}$ 
denote the corresponding  connection over $\R_{+}^{*}\times 
\left[0,\frac{\pi}{2}\right[\times \left[0,1\right] $. By the same 
construction as before, we get an  odd closed 
form $\beta$ on $\R_{+}^{*}\times \left[0,\frac{\pi}{2}\right[\times 
\left[0,1\right]$. Since $\mathfrak 
D^{X \prime}_{b,\vartheta}$ does not depend on $\ell$, the component 
$\beta^{(1)}$
of  total degree $1$ of $\beta$ does not
contain
$d\ell $. Since $\beta^{(1)}$ is closed,   as a $1$-form on 
$\R_{+}^{*}\times \left[0,\frac{\pi}{2}\right[$, $\beta^{(1)}$ does 
not depend on $\ell$.  Combining (\ref{eq:co39xx3}) with this result, we 
get the last identity in (\ref{eq:co39}). The proof of our proposition  is completed. 
\end{proof}

In the sequel, we denote by 
\index{b@$\mathsf{b}$}%
$\mathsf{b}$ the $1$-form in 
(\ref{eq:co39}).

\section{A conserved quantity}%
\label{sec:pres}
In this section, we show that the integral of $\mathsf b$ on 
$\left[0,\frac{\pi}{2}\right[$ does not depend of $b>0$ and coincides 
with the integral of $\mathsf{a}$.

This section is organized as follows.
In subsection \ref{subsec:uniell}, we recall known estimates on the 
elliptic heat kernel over $X$.

In subsection \ref{subsec:bsm}, 
we give without proof uniform estimates on the hypoelliptic heat kernel for 
$\overline{L}^{X}\vert _{db=0}$ when $b>0$ remains uniformly bounded, 
and also a convergence result of the hypoelliptic heat kernels to their 
elliptic counterpart when $b\to 0$.  The proof 
of these  results is deferred to section 
\ref{sec:fin}.
 
In subsection 
\ref{subsec:evia}, we give a formula expressing  $\int_{0\le \vartheta\le 
\frac{\pi}{2}}^{}\mathsf{a}$ in terms of the orbital integrals 
$\Trs^{\left[\gamma\right]}\left[D^{X}\exp\left(-sD^{X,2}/2\right)\right]$. 

In subsection \ref{subsec:pres}, we state the conservation result 
that was mentioned before.

Finally, in subsection \ref{subsec:cont}, we prove the identity of subsection 
\ref{subsec:pres} by integrating the closed $1$-form $\mathsf b$ on a 
suitable contour, and by using the estimates of subsection 
\ref{subsec:bsm}.

We make the same assumptions as in section 
\ref{sec:defhyp}. In particular, we assume $m$ to be 
odd, and also that $\gamma$ is nonelliptic, i.e., $a\neq 0$.
\subsection{Uniform estimates on the elliptic heat kernel}%
\label{subsec:uniell}
Recall that the  operator 
\index{TX@$T^{X}$}%
$T^{X}$ was defined in Definition 
\ref{DTX}. By Proposition \ref{PLXt} and by (\ref{eq:defa5x1b}), for 
$\vartheta\in\left[0,\frac{\pi}{2}\right[$, $T^{X}$ is a second order 
elliptic operator.
\begin{defin}\label{DptX}
For $\vartheta\in \left[0,\frac{\pi}{2}\right[, t>0$, let 
\index{pXtt@$p^{X}_{\vartheta,t}\left(x,x'\right)$}%
$p^{X}_{\vartheta,t}\left(x,x'\right)$ be the smooth kernel 
associated with the operator 
$\exp\left(-t T^{X}\right)$ with respect to the volume $dx'$. We use the notation
\index{pXt@$p_{\vartheta}^{X}$}%
    \begin{equation}\label{eq:laus1}
p_{\vartheta}^{X}=p^{X}_{\vartheta,1}.
\end{equation}
\end{defin}

Recall that 
\index{d@$d$}%
$d$ is the Riemannian distance on $X$.
By  (\ref{eq:japo4}),   (\ref{eq:gzinc2}), 
and (\ref{eq:defa5x1b}), classical estimates on elliptic heat kernels 
show that 
given $0<\epsilon\le M < + \infty $, there exist $C>0,C'>0$ such that 
for $\vartheta\in\left[0,\frac{\pi}{2}\right[, \epsilon\le t\le M, x,x'\in X$, then
\begin{equation}\label{eq:gon1}
\left\vert  p_{\vartheta,t}^{X}\left(x,x'\right)\right\vert\le 
C\cos^{-m-1}\left(\vartheta\right)\exp\left(-C'\frac{d^{2}\left(x,x'\right)}{\cos^{2}\left(\vartheta\right)}\right).
\end{equation}
The uniformity of the estimate (\ref{eq:gon1}) on $X$ comes from the 
fact that $X$ is a symmetric space.
\subsection{Uniform estimates on the hypoelliptic heat kernel for $b$ small}%
\label{subsec:bsm}
Recall  that the projector
\index{P@$\mathbf{P}$}%
$\mathbf{P}$ was defined in Definition 
\ref{RtY}. Also 
\index{LX@$\overline{L}^{X}$}%
$\overline{L}^{X}$ was defined in (\ref{eq:co35x-1a}), and is given by 
(\ref{eq:co36}).  
\begin{defin}\label{Dplim}
    For $b>0,\vartheta\in \left[0,\frac{\pi}{2}\right[,t>0$,
let 
\index{qXbt@$\overline{q}^{X }_{b,\vartheta,t}\left(\left(x,Y\right),\left(x',Y'\right)\right)$}%
$\overline{q}^{X }_{b,\vartheta,t}\left(\left(x,Y\right),\left(x',Y'\right)\right)$ be 
the smooth kernel associated with the operator 
$\exp\left(-t\overline{L}^{X}\vert_{db=0}\right)$ with respect to the volume 
$dx'dY'$. We use the notation
\index{qXbt@$\overline{q}_{b,\vartheta}^{X}$}%
    \begin{equation}\label{eq:not1}
\overline{q}_{b,\vartheta}^{X}=\overline{q}^{X}_{b,\vartheta,1}.
\end{equation}

Set
\index{qX0t@$\overline{q}_{0,\vartheta,t}^{X}\left(\left(x,Y\right),\left(x',Y'\right)\right)$}%
\begin{equation}\label{eq:reo-1y1}
\overline{q}_{0,\vartheta,t}^{X}\left(\left(x,Y\right),\left(x',Y'\right)\right)=\mathbf{P}p^{X}_{\vartheta,t}\left(x,x'\right)\pi^{-\left(m+n\right)/2}
\exp\left(-\frac{1}{2}\left( \left\vert  Y\right\vert^{2}+\left\vert  
Y'\right\vert^{2} \right) \right)\mathbf{P}.
\end{equation}
\end{defin}

For the proper functional analytic setting showing that the heat 
kernels $\overline{q}^{X}_{b,\vartheta,t}$ are well defined, we refer 
to \cite[chapter 11]{Bismut08b}.

Now we state an extension of \cite[Theorem 4.5.2]{Bismut08b}.
\begin{thm}\label{Test}
Given $0< \epsilon\le M<+ \infty $, there exist $C>0,C'>0,k\in\N$ such
that for $0<b\le M,\vartheta\in\left[0,\frac{\pi}{2}\right[,\epsilon\le t\le M$,
$\left(x,Y\right),\left(x',Y'\right)\in \widehat{\mathcal{X}}$, then
\begin{multline}
    \left\vert  
   \overline{q}_{b,\vartheta,t}^{X}\left(\left(x,Y\right),\left(x',Y'\right)\right)\right\vert\le 
   C\cos^{-k}\left(\vartheta\right)\\
   \exp \Biggl( 
    -C'\Biggl( 
    \frac{d^{2}\left(x,x'\right)}{\cos^{2}\left(\vartheta\right)}+
    \left\vert  Y^{TX}\right\vert^{2}+\left\vert  
    Y^{TX \prime}\right\vert^{2} 
    +\cos\left(\vartheta\right)\left( \left\vert  Y^{N}\right\vert^{2}+\left\vert  
    Y^{N \prime }\right\vert^{2}\right) \Biggr)\Biggr).
    \label{EQ:BERN0}
\end{multline}
Moreover, as $b\to 0$, 
\begin{equation}
    \overline{q}_{b,\vartheta,t}^{X}\left(\left(x,Y\right),\left(x',Y'\right)\right)\to
    \overline{q}_{0,\vartheta,t}^{X}\left(\left(x,Y\right),\left(x',Y'\right)\right).
    \label{eq:sumex32bis}
\end{equation}
\end{thm}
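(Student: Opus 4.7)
The plan is to follow the structure announced in sections \ref{sec:extra}--\ref{sec:fin}, adapting the probabilistic analysis of \cite[chapters 12--14]{Bismut08b} to handle the additional parameter $\vartheta$.

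First I would reduce (\ref{EQ:BERN0}) to its scalar analogue. Dropping from the explicit formula (\ref{eq:co19x-1}) for $\overline{\mathcal{L}}^{X}_{b,\vartheta}$ all Clifford and form-degree contributions leaves a scalar H\"ormander operator whose principal part $\frac{1}{2b^{2}}(-\Delta^{TX\oplus N}+|Y^{TX}|^{2}+\cos^{2}(\vartheta)|Y^{N}|^{2})$ is an anisotropic harmonic oscillator, whose drift $\frac{\cos(\vartheta)}{b}\nabla_{Y^{TX}}$ is the scaled geodesic vector field, and whose lowest-order positive term $\frac{\cos^{2}(\vartheta)}{2}|[Y^{N},Y^{TX}]|^{2}$ is harmless. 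For this scalar operator, the associated hypoelliptic diffusion admits a smooth transition density, and Malliavin calculus delivers pointwise Gaussian bounds with $Y^{TX}$ confined at scale $1$, $Y^{N}$ confined at scale $\cos^{-1/2}(\vartheta)$, and the spatial variable propagating at effective speed $\cos(\vartheta)$, thus explaining the factor $\exp(-C' d^{2}(x,x')/\cos^{2}(\vartheta))$. Tracking the $(b,\vartheta)$-dependence uniformly is the goal of section \ref{sec:unisca}.

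Next I would lift the scalar estimate to the full operator. The discrepancy between $\overline{\mathcal{L}}^{X}_{b,\vartheta}$ and its scalar version consists of Clifford-valued operators involving $c(\cdot)$, $\widehat{c}(\cdot)$, and $R_\vartheta(Y)$, weighted by $1$, $\cos(\vartheta)/b$, or $\cos^{1/2}(\vartheta)/b$. The structural observation of Remark \ref{Rcons1}---that all linear-in-$Y$ terms share the common weight $\cos(\vartheta)/b$---is crucial: using the It\^o formula of subsection \ref{subsec:geito}, one can perform a Girsanov change of measure absorbing these potentially singular factors against the Gaussian weight provided by the harmonic oscillator. The resulting Feynman-Kac representation expresses $\overline{q}^{X}_{b,\vartheta,t}$ as the scalar kernel multiplied by a Clifford-valued holonomy factor; bounding this factor uniformly in $(b,\vartheta)$ reduces to the uniform ODE estimate stated in Theorems \ref{Tfuest} and \ref{Tunif}, which is itself proved by projecting the solution onto an auxiliary symmetric space and applying geometric bounds there.

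For the convergence (\ref{eq:sumex32bis}) as $b\to 0$, the operator-level input is Theorem \ref{Thfub}, i.e., the identity $P(\gamma_\vartheta - \beta_\vartheta \alpha_\vartheta^{-1}\beta_\vartheta)P = \mathcal{L}^{X}_{0,\vartheta}$: this is the standard singular perturbation format, showing that the effective dynamics on the kernel $H$ of $\alpha_\vartheta$ is governed by $\mathcal{L}^{X}_{0,\vartheta}$. Combined with the uniform upper bound from the previous step, which delivers equicontinuity and tightness of the family $\{\overline{q}^{X}_{b,\vartheta,t}\}_{b\to 0}$, a homogenization argument for the hypoelliptic diffusion shows that its law concentrates as $b\to 0$ on the Gaussian profile in $Y$ multiplied by the elliptic density $p^{X}_{\vartheta,t}$, which is exactly $\overline{q}^{X}_{0,\vartheta,t}$. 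The main obstacle throughout is the joint uniformity when $b$ is small and $\vartheta$ is close to $\pi/2$: as $\cos(\vartheta)\to 0$ the oscillator in $Y^{N}$ flattens, the spatial drift slows down, and the Clifford rotation $R_\vartheta$ becomes nontrivial, so the cancellations that were automatic in \cite{Bismut08b} must be redone while keeping explicit track of the $\cos(\vartheta)$ powers that eventually produce the factor $\cos^{-k}(\vartheta)$ in (\ref{EQ:BERN0}); the uniform control of the relevant linear ODE in this degenerating regime constitutes the hardest technical step.
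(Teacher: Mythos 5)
Your overall architecture matches the paper's: a scalar reduction, a lift to the full operator by controlling the matrix-valued process solving a linear ODE, and a separate convergence argument, with Theorems \ref{Tfuest} and \ref{Tunif} correctly identified as the technical heart. Two points deserve comment.

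First, a small but potentially misleading inaccuracy: you invoke Remark \ref{Rcons1} to claim that all linear-in-$Y$ terms in $\overline{\mathcal{L}}^{X}_{b,\vartheta}$ share the weight $\cos(\vartheta)/b$. That remark applies to $\mathcal{L}^{X}_{b,\vartheta}$, which is indeed what (\ref{eq:rio2abis}) exhibits. After the conjugation by $K^{N}_{1/\cos^{1/2}(\vartheta)}$ producing $\overline{\mathcal{L}}^{X}_{b,\vartheta}$, as you can see from (\ref{eq:co19x-1}), the $Y^{TX}$-linear terms keep the factor $\cos(\vartheta)/b$ but the $Y^{N}$-linear terms now carry $\cos^{1/2}(\vartheta)/b$. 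The point that makes the completion-of-squares Girsanov absorption work uniformly in $\vartheta$ is the different one: the fibrewise oscillator in $Y^{N}$ in $\overline{\mathcal{L}}^X_{b,\vartheta}$ has strength $\cos(\vartheta)/(2b^{2})$, so the ratio $(\cos^{1/2}(\vartheta)/b)^{2}\big/\bigl(\cos(\vartheta)/b^{2}\bigr)$ is exactly $1$. You would also need to acknowledge the structural obstruction that motivates the long section \ref{sec:fin}: since the coupling between $\Lambda\ac(T^{*}X\oplus N^{*})$ and $c(\overline{TX})$ is nontrivial for $\vartheta>0$, the process $U^{0}_{b,\vartheta,\cdot}$ does not factor as in the $\vartheta=0$ case, so a Gronwall argument on the full ODE fails; the paper's workaround is to peel off the geometric piece $H_{b,\vartheta,\cdot}$ (a parallel transport on $X_{K_{\C}}$), write $U^{0}_{b,\vartheta,\cdot}$ as an iterated-integral series via the projectors $\mathbf{P},\mathbf{P}^{\perp}$ (Proposition \ref{Pexp}), and only then apply Girsanov pathwise to control the remainder (Theorem \ref{Tfuest}). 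Your sentence on ``projecting the solution onto an auxiliary symmetric space'' nods at this, but a proof would need to make the $\mathbf{P}/\mathbf{P}^{\perp}$ splitting and the series expansion explicit.

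Second, your convergence argument genuinely diverges from the paper. You propose tightness plus identification of the limit through the operator identity of Theorem \ref{Thfub}, i.e.\ a homogenization reading of $P(\gamma_\vartheta-\beta_\vartheta\alpha_\vartheta^{-1}\beta_\vartheta)P=\mathcal{L}^{X}_{0,\vartheta}$. The paper instead establishes convergence pathwise: $U^{0}_{b,\vartheta,\cdot}\mathbf{P}^{\perp}\to 0$ (Theorem \ref{Tgres1}), $U^{0}_{b,\vartheta,\cdot}\mathbf{P}\to\exp(\pmb\delta^{0}_{\vartheta}\cdot)H_{0,\vartheta,\cdot}\mathbf{P}$ (Theorem \ref{TlimV}), and then computes the limit explicitly by evaluating $E^{Q}[H_{0,\vartheta,t}\otimes E_{0,t}]$ via It\^{o} (equation (\ref{eq:roba7})), matching the constant factor in (\ref{eq:stoch4}); the compactness needed to upgrade weak convergence to pointwise convergence of kernels comes from Malliavin-calculus bounds on derivatives, not from an abstract tightness argument. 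Both routes are defensible, but the homogenization route hides where the constants $\pmb\delta^{0}_{\vartheta}$ and $\frac{x^{4}}{16}\Tr^{\mathfrak p}[C^{\mathfrak k,\mathfrak p}]$ emerge from the averaging, which is exactly the content of Propositions \ref{Pidnew}, \ref{Pdelta}, \ref{Pidnez} and of the computation in Theorem \ref{Tfin}; if you pursue your route you would have to recover these constants from a spectral-gap averaging argument in the fibre, which is additional work the paper avoids by doing the stochastic calculus explicitly.
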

\begin{proof}
Our theorem will be established in sections \ref{sec:unisca} and  \ref{sec:fin}.
\end{proof}
\subsection{Evaluation of the integral $\int_{0\le \vartheta\le 
\frac{\pi}{2}}^{}\mathsf{a}$}%
\label{subsec:evia}
Recall that the displacement 
    function 
   \index{dg@$d_{\gamma}$}%
    $d_{\gamma}\left(x\right)$ was introduced in 
    (\ref{eq:boul1}).

    By  (\ref{eq:boul2}), (\ref{eq:gon1}),  there exist $C>0,C'>0$ such that
    \begin{equation}\label{eq:gon1y1}
\left\vert  p^{X}_{\vartheta}\left(x,\gamma x\right)\right\vert\le 
C\cos^{-m-1}\left(\vartheta\right)\exp\left(-C'\left(\frac{\left\vert  a\right\vert^{2}}{\cos^{2}\left(\vartheta\right)}+
d^{2}_{\gamma}\left(x\right)\right)\right).
\end{equation}

The $1$-form
\index{a@$\mathsf{a}$}%
$\mathsf{a}$ on $\left[0,\frac{\pi}{2}\right[$ was defined 
in Definition \ref{Dforma}. By proceeding as in \cite[Theorem 4.2.1]{Bismut08b}, using 
(\ref{eq:co47}),  (\ref{eq:gon1y1}) and the fact that $\left\vert  
a\right\vert>0$, for 
$\vartheta\in\left[0,\frac{\pi}{2}\right[$,  we get
\begin{equation}\label{eq:gon1y2}
\left\vert  \mathsf{a}\right\vert\le C\exp\left(-C'\frac{\left\vert  
a\right\vert^{2}}{\cos^{2}\left(\vartheta\right)}\right).
\end{equation}
By (\ref{eq:gon1y2}), the integral 
$\int_{0\le\vartheta\le\frac{\pi}{2}}^{}\mathsf{a}$ is well defined. 

By the same arguments as before, for $s\in \left]0,1\right]$, we get
\begin{equation}\label{eq:gon1y3}
\left\vert  \Tr^{\left[\gamma\right]}\left[\frac{D^{X}}{\sqrt{2}}\exp\left(
-s\frac{D^{X,2}}{2}\right)\right]\right\vert\le 
C\exp\left(-C'\frac{\left\vert  a\right\vert^{2}}{s}\right).
\end{equation}
Therefore the integral $\int_{0}^{1}\frac{1}{2}\Tr^{\left[\gamma\right]}\left[\frac{D^{X}}{\sqrt{2}}\exp\left(
-s\frac{D^{X,2}}{2}\right)\right]
\frac{1}{\sqrt{1-s}}ds$ is well defined.
\begin{prop}\label{Pelem}
The following identity holds:
\begin{multline}\label{eq:echo1}
    \int_{0\le\vartheta\le\frac{\pi}{2}}^{}\mathsf{a}=
-i\exp\left(-\frac{1}{48}\Tr^{\mathfrak k}\left[C^{\mathfrak k, 
\mathfrak k}\right]-\frac{1}{2}C^{\mathfrak k,E}\right)\\
\int_{0}^{1}\frac{1}{2}\Tr^{\left[\gamma\right]}\left[\frac{D^{X}}{\sqrt{2}}
\exp\left(-
sD^{X,2}/2\right)\right]
\frac{1}{\sqrt{1-s}}ds.
\end{multline}
\end{prop}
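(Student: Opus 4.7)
The proof is essentially a direct computation combining Proposition \ref{Psimp} with the first formula of Proposition \ref{PLXt} and a change of variable. No delicate analysis is needed, since (\ref{eq:gon1y2}) and (\ref{eq:gon1y3}) already guarantee that both integrals in (\ref{eq:echo1}) converge absolutely.

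First, by Proposition \ref{Psimp},
\begin{equation*}
\mathsf{a}=-\Tr^{[\gamma]}\Bigl[\tfrac{\cos(\vartheta)}{\sqrt{2}}\widehat{D}^X\exp(-\mathcal{L}^X_{0,\vartheta})\Bigr]d\vartheta.
\end{equation*}
The plan is to insert the first identity of (\ref{eq:japo4}), which gives
\begin{equation*}
\mathcal{L}^X_{0,\vartheta}=-\tfrac{1}{2}\cos^2(\vartheta)\widehat{D}^{X,2}+\tfrac{1}{48}\Tr^{\mathfrak k}[C^{\mathfrak k,\mathfrak k}]+\tfrac{1}{2}C^{\mathfrak k,F}.
\end{equation*}
Since the scalar operator $\tfrac{1}{48}\Tr^{\mathfrak k}[C^{\mathfrak k,\mathfrak k}]+\tfrac{1}{2}C^{\mathfrak k,F}$ commutes with everything in sight, it factors out as an exponential prefactor. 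Using $\widehat{D}^X=iD^X$ from (\ref{eq:Lie5y2}), and hence $\widehat{D}^{X,2}=-D^{X,2}$, this rewrites
\begin{equation*}
\mathsf{a}=-i\,e^{-\frac{1}{48}\Tr^{\mathfrak k}[C^{\mathfrak k,\mathfrak k}]-\frac{1}{2}C^{\mathfrak k,F}}\,\tfrac{\cos(\vartheta)}{\sqrt{2}}\Tr^{[\gamma]}\Bigl[D^X\exp\bigl(-\tfrac{1}{2}\cos^2(\vartheta)D^{X,2}\bigr)\Bigr]d\vartheta.
\end{equation*}

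Next I perform the change of variable $s=\cos^2(\vartheta)$, so that $ds=-2\cos(\vartheta)\sin(\vartheta)d\vartheta=-2\sqrt{s(1-s)}\,d\vartheta$. As $\vartheta$ runs over $[0,\pi/2[$, $s$ runs from $1$ down to $0$, giving
\begin{equation*}
\int_{0\le\vartheta\le\frac{\pi}{2}}\tfrac{\cos(\vartheta)}{\sqrt{2}}f(\cos^2\vartheta)\,d\vartheta=\int_0^1\frac{\sqrt{s}}{\sqrt{2}}\cdot\frac{f(s)\,ds}{2\sqrt{s(1-s)}}=\int_0^1\frac{f(s)}{2\sqrt{2}\sqrt{1-s}}\,ds,
\end{equation*}
applied with $f(s)=\Tr^{[\gamma]}[D^X\exp(-sD^{X,2}/2)]$. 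This yields exactly the right-hand side of (\ref{eq:echo1}) after identifying $C^{\mathfrak k,F}$ with the notation $C^{\mathfrak k,E}$ used in the statement.

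The only point requiring a word of justification is the exchange of the integral and the orbital integral/trace, and the fact that the scalar factor $e^{-\frac{1}{48}\Tr^{\mathfrak k}[C^{\mathfrak k,\mathfrak k}]-\frac{1}{2}C^{\mathfrak k,F}}$ really commutes through $\Tr^{[\gamma]}$; both are immediate from the uniform bound (\ref{eq:gon1y2}) (which dominates the integrand in $s$ near $s=0$ by (\ref{eq:gon1y3}) combined with the Jacobian), and from the fact that $C^{\mathfrak k,F}$ acts as a covariantly constant endomorphism of $F$ commuting with $D^{X,2}$. There is no genuine obstacle in this proposition; it is a bookkeeping step that prepares the identity $\int_0^{\pi/2}\mathsf{a}=\int_0^{\pi/2}\mathsf{b}|_{b=b_0}$ proved in the next subsections.
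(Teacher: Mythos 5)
Your proof is correct and takes essentially the same approach as the paper: Proposition \ref{Psimp} (equation (\ref{eq:co47x1})), the first identity in (\ref{eq:japo4}), the identification $\widehat{D}^{X}=iD^{X}$ from (\ref{eq:Lie5y2}), and the change of variables $s=\cos^{2}(\vartheta)$. The paper states this as a one-line computation; your expansion of the bookkeeping (including the Jacobian and the commutation of the covariantly constant factor $C^{\mathfrak k,F}$ with $D^{X}$) is accurate.
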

\begin{proof}
Using (\ref{eq:Lie5y2}), the first identity in (\ref{eq:japo4}), and (\ref{eq:co47x1}),   and  making the change of variables 
$s=\cos^{2}\left(\vartheta\right)$ in the integral of $\mathsf a$,  we get 
(\ref{eq:echo1}).
\end{proof}
\begin{remk}\label{Rfindi}
The right-hand side of equation (\ref{eq:echo1}) should be compared 
with equation (\ref{eq:bob14}) for
\index{FCt@$F_{C,t}\left(D\right)$}
$F_{C,t}\left(D\right)$.
\end{remk}

    By (\ref{eq:boul2}) and by equation (\ref{EQ:BERN0}) in Theorem 
    \ref{Test}, given $M>0$, there exist $C>0,C'>0$ such that  if $0<b\le M,\vartheta\in 
    \left[0,\frac{\pi}{2}\right[,\left(x,Y\right)\in 
   \widehat{\mathcal{X}} $, we get
    \begin{multline}\label{eq:boul3}
\left\vert  \overline{q}_{b,\vartheta}^{X 
}\left(\left(x,Y\right),\gamma\left(x,Y\right)\right) \right\vert\\
\le C\cos^{-k}\left(\vartheta\right)\exp\left(-C'\left(\frac{\left\vert  a\right\vert^{2}}{
\cos^{2}\left(\vartheta\right)}+d^{2}_{\gamma}\left(x\right)
+\left\vert  
Y^{TX}\right\vert^{2}+\cos\left(\vartheta\right)\left\vert  
Y^{N}\right\vert^{2}
\right)\right).
\end{multline}
Using 
 (\ref{eq:boul3}) and  proceeding as in \cite[section 4.3]{Bismut08b},  for $0<b\le M$, we get
\begin{equation}\label{eq:boul4}
\left\vert  
\Trs^{\left[\gamma\right],\odd}\left[\exp\left(-\overline{L}^{X}\vert_{db=0}\right)\right]\right\vert\le
C\cos^{-k-n/2}\left(\vartheta\right)
\exp\left(-C'\frac{\left\vert  a\right\vert^{2}}{\cos^{2}\left(\vartheta\right)}\right).
\end{equation}
By (\ref{eq:boul4}), we deduce that for  $B>0$, the integral 
$\int_{\substack{b=B \\
0\le\vartheta\le\frac{\pi}{2}}}^{}\mathsf{b}$ is well defined.
\subsection{A preserved quantity}%
\label{subsec:pres}
Now we state the following key result, that replaces for us the 
conservation result of 
\cite[Theorem 4.6.1]{Bismut08b} in the context of more classical orbital 
integrals not involving the Dirac operator $D^{X}$.
\begin{thm}\label{Ttempid}
For any $B>0$, the following identity holds:
\begin{equation}\label{eq:co60x4}
\int_{0\le\vartheta\le\frac{\pi}{2}}^{}\mathsf{a}=
\int_{\substack{b=B \\
0\le\vartheta\le\frac{\pi}{2}}}^{}\mathsf{b}.
\end{equation}
\end{thm}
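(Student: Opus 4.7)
Since $\mathsf{b}$ is a closed $1$-form on $\R_{+}^{*}\times \left[0,\frac{\pi}{2}\right[$ by Theorem \ref{Tclo}, the plan is to apply Stokes' formula to a rectangular contour $[b_{0},B]\times[0,\vartheta_{0}]$ and then let $b_{0}\to 0$ and $\vartheta_{0}\to \frac{\pi}{2}$. The integral on the boundary vanishes, so it suffices to show that the contributions from the two horizontal sides of the rectangle $b=B$ and $b=b_{0}$ in the limit yield respectively $\int_{0\le\vartheta\le\frac{\pi}{2}}^{}\mathsf{b}$ (at $b=B$) and $\int_{0\le\vartheta\le\frac{\pi}{2}}^{}\mathsf{a}$ (as $b_{0}\to 0$), while the contributions from the two vertical sides $\vartheta=0$ and $\vartheta=\vartheta_{0}$ vanish in the limit.

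First I would work with the representative $\overline{L}^{X}$ of $\mathsf{b}$, so that $\mathsf{b}=\Trs^{\left[\gamma\right],\odd}\left[\exp\left(-\overline{L}^{X}\right)\right]$. The vertical side at $\vartheta=0$ contributes nothing, since at $\vartheta=0$ the $d\vartheta$-component is an odd object which, by the considerations in Remark \ref{Rodd}, has vanishing orbital supertrace (the operator $\mathfrak D^{X}_{b,0}=\mathfrak D^{X}_{b}$ is a deformation of $0$ and only involves the even part of $c(\overline{TX})$, so no odd Clifford element appears and $\Trs^{\odd}$ vanishes identically on the $d\vartheta=0$ restriction). The vertical side at $\vartheta=\vartheta_{0}$ vanishes in the limit $\vartheta_{0}\to \frac{\pi}{2}$ thanks to the exponential decay (\ref{eq:boul4}), i.e.
\begin{equation*}
\left\vert  \Trs^{\left[\gamma\right],\odd}\left[\exp\left(-\overline{L}^{X}\vert_{db=0}\right)\right]\right\vert\le C\cos^{-k-n/2}\left(\vartheta\right)\exp\left(-C'\frac{\left\vert  a\right\vert^{2}}{\cos^{2}\left(\vartheta\right)}\right),
\end{equation*}
integrated along a segment $b\in[b_{0},B]$ of bounded length. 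Since $a\neq 0$, this goes to $0$ uniformly.

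The core of the argument is the $b_{0}\to 0$ limit of the horizontal side. On the line $db=0$, by (\ref{eq:co38}),
\begin{equation*}
\mathsf{b}\vert_{db=0}=-\Trs^{\left[\gamma\right]}\left[\tfrac{1}{\sqrt{2}}\overline{\n}^{\mathcal{H}}\overline{\mathfrak D}^{X}_{b,\vartheta}\exp\left(-\overline{\mathcal{L}}^{X}_{b,\vartheta}\right)\right]d\vartheta,
\end{equation*}
and I would combine the uniform estimate (\ref{EQ:BERN0}), the pointwise convergence (\ref{eq:sumex32bis}) of $\overline{q}^{X}_{b,\vartheta,t}$ to $\overline{q}^{X}_{0,\vartheta,t}$ as $b\to 0$, and the compression identity of Theorem \ref{Thfubter}, which asserts
\begin{equation*}
P\left(\underline{\gamma}_{\vartheta}-\underline{\beta}_{\vartheta}\alpha_{\vartheta}^{-1}\underline{\beta}_{\vartheta}\right)P=T^{X}.
\end{equation*}
Together these force $\mathsf{b}\vert_{b=b_{0}}\to \mathsf{a}$ pointwise in $\vartheta\in[0,\frac{\pi}{2}[$ as $b_{0}\to 0$, and the uniform bound (\ref{eq:boul4}) together with the $a\neq 0$ Gaussian decay provides a dominating function integrable on $[0,\frac{\pi}{2}[$. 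Dominated convergence then yields $\int_{0}^{\pi/2}\mathsf{b}\vert_{b=b_{0}}\to \int_{0}^{\pi/2}\mathsf{a}$.

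The main obstacle I anticipate is precisely the interchange of the limit $b\to 0$ with the orbital integral and with the $d\vartheta$-integral: one must promote the abstract compression identity of Theorem \ref{Thfubter} (which formally describes the limit of $\overline{\mathcal{L}}^{X}_{b,\vartheta}$) to a genuine convergence of heat kernels together with their $d\vartheta$-derivatives uniformly in $\vartheta\in[0,\frac{\pi}{2}[$. This uniformity is the reason for the delicate uniform estimates of Theorem \ref{Test}, which control the blow-up in powers of $\cos^{-1}(\vartheta)$ against the Gaussian decay in $|a|/\cos(\vartheta)$; the latter wins because $|a|>0$. Once this interchange is justified, the identity (\ref{eq:co60x4}) drops out immediately from Stokes' theorem applied to the closed $1$-form $\mathsf{b}$.
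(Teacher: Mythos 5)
Your proposal is correct and follows essentially the same route as the paper: Bismut integrates the closed $1$-form $\mathsf b$ over a rectangular contour in $\R_{+}^{*}\times\left[0,\frac{\pi}{2}\right[$, shows the $\vartheta=0$ side vanishes identically because only even elements of $c\left(\overline{TX}\right)$ appear in $\mathcal{L}^{X}_{b}$ (so $\Trs^{\odd}$ of the $db$-component is zero), shows the $\vartheta=\frac{\pi}{2}-\epsilon$ side tends to $0$ using the Gaussian decay in $\left\vert a\right\vert^{2}/\cos^{2}\left(\vartheta\right)$ since $a\neq 0$, and passes to the limit $B_{0}\to 0$ on the remaining horizontal side via Theorem \ref{Test} and dominated convergence. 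One small slip: on the vertical side at $\vartheta=0$ it is the $db$-component of $\mathsf b$ (not the $d\vartheta$-component) that one integrates; your parenthetical gives the correct reason for its vanishing, but the labeling should be fixed.
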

\begin{proof}
Our theorem will be established in subsection \ref{subsec:cont}.
\end{proof}
\subsection{A proof of Theorem \ref{Ttempid}}%
\label{subsec:cont}
Take $B_{0},B,\epsilon$ such that 
$0<B_{0}<B,0<\epsilon<\frac{\pi}{2}$.   Let $\Gamma=\Gamma_{B_{0},B,\epsilon}$ be 
the oriented contour in $\R_{+}^{*}\times \left[0,\frac{\pi}{2}\right[$ shown in 
Figure \ref{F1}. The contour $\Gamma$ is made of oriented 
segments $\Gamma_{i},1\le i\le 4$.
\begin{figure}
			\centerline{\includegraphics[width=3.5in,viewport=75 173 536 640]{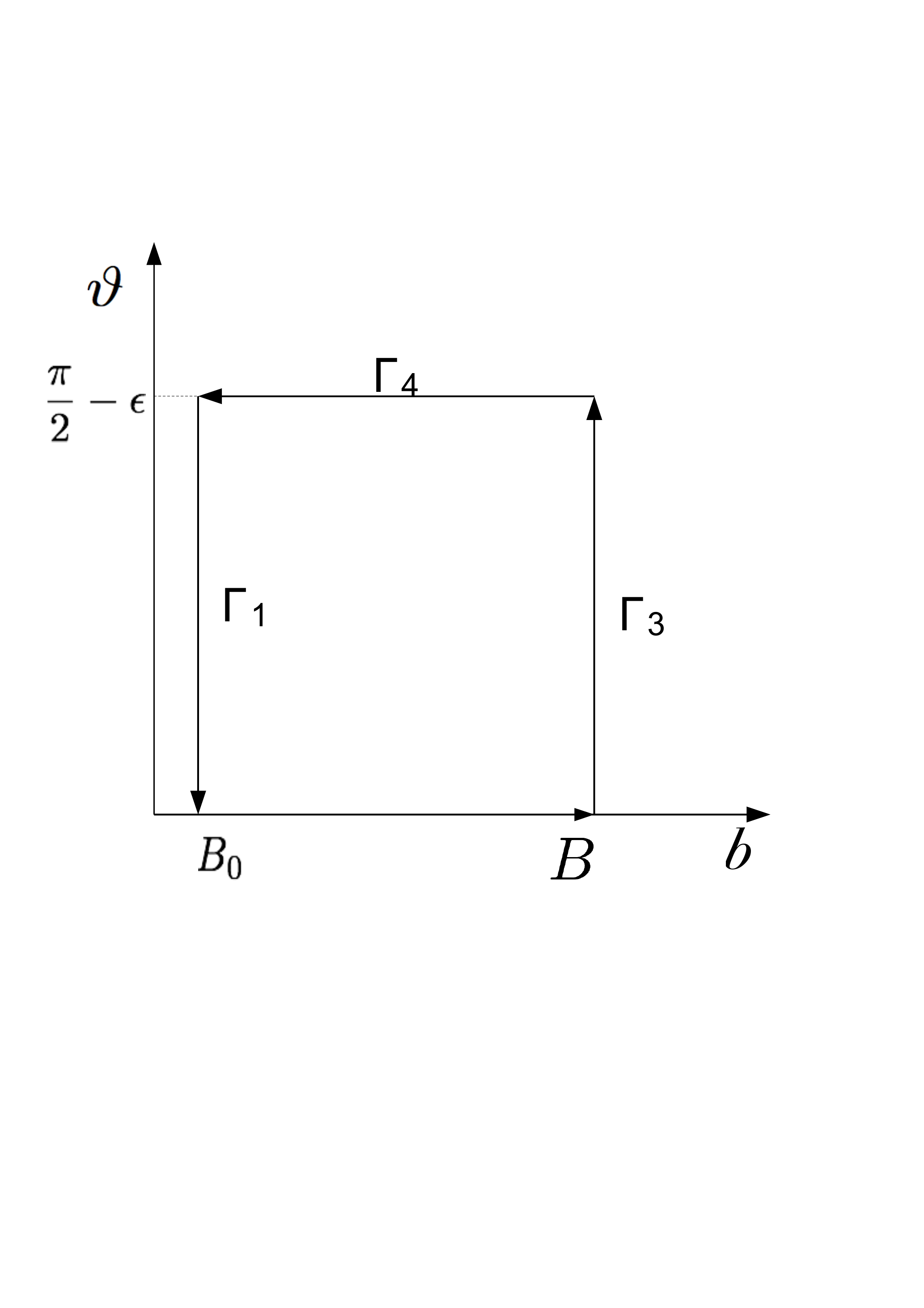}}
			\caption{} \label{F1}	
			\end{figure}
			
By Theorem \ref{Tclo}, the form $\mathsf{b}$ is closed, and so
\begin{equation}\label{eq:co41}
\int_{\Gamma}^{}\mathsf{b}=0.
\end{equation}

For $1\le j\le 4$, set
\begin{equation}\label{eq:co42}
I^{0}_{i}=\int_{\Gamma_{i}}^{}\mathsf{b}.
\end{equation}
By (\ref{eq:co41}), we get
\begin{equation}\label{eq:co43}
\sum_{i=1}^{4}I^{0}_{i}=0.
\end{equation}

We fix $B>0$. We will study the terms $I_{i}^{0},1\le i\le 4$ by making in succession 
$\epsilon\to 0,B_{0}\to 0$.

\noindent \underline{1) The term $I_{1}^{0}$}\\
We have the identity
\begin{equation}\label{eq:co44}
I_{1}^{0}=\int_{\Gamma_{1}}^{}\mathsf{b}.
\end{equation}
We can rewrite (\ref{eq:co44}) 
in the form
\begin{equation}\label{eq:co45}
I^{0}_{1}=-\int_{\substack{b=B_{0} \\
0\le \vartheta\le 
\frac{\pi}{2}-\epsilon}}^{}\Trs^{\left[\gamma\right],\odd}\left[\exp\left(-\overline{L}^{X}\vert_{db=0}\right)\right].
\end{equation}
\begin{enumerate}
    \item \underline{$\epsilon\to 0$}
By (\ref{eq:boul4}), as $\epsilon\to 0$, we get
\begin{equation}\label{eq:boul5}
I_{1}^{0}\to I_{1}^{1}=
-\int_{\substack{b=B_{0} \\
0\le \vartheta\le 
\frac{\pi}{2}}}^{}\Trs^{\left[\gamma\right],\odd}\left[\exp\left(-\overline{L}^{X}\vert_{db=0}\right)\right].
\end{equation}
\item  \underline{$B_{0}\to 0$}
We have the fundamental result.
\begin{thm}\label{Tonvb}
As $B_{0}\to 0$, 
\begin{equation}\label{eq:co48}
I^{1}_{1}\to I_{1}^{2}=-\int_{0\le \vartheta\le\frac{\pi}{2}}\mathsf{a}.
\end{equation}
\end{thm}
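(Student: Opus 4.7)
The plan is to establish the convergence $I_1^1 \to I_1^2$ by pointwise convergence of the integrand (in $\vartheta$) together with dominated convergence, using the uniform heat kernel estimates of Theorem \ref{Test} combined with the compression identity of Theorem \ref{Thfubter}.

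First, I would make the integrand concrete. Since only the $d\vartheta$-component of $\mathsf{b}$ survives on the segment $b=B_{0}$, applying Duhamel's formula to $\exp(-\overline{L}^{X}\vert_{db=0})$ and using the definition of $\Trs^{\left[\gamma\right],\odd}$ reduces the computation of the integrand at fixed $\vartheta$ to an integral over $\widehat{\mathcal{X}}/Z(\gamma)$ of the heat kernel $\overline{q}^{X}_{b,\vartheta,1}$ evaluated at $((x,Y),\gamma(x,Y))$, paired with the $d\vartheta$-contribution of $\overline{\n}^{\mathcal{H}}\overline{\mathfrak D}^{X}_{b,\vartheta}$ from (\ref{eq:co29x1}). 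The convergence statement (\ref{eq:sumex32bis}) of Theorem \ref{Test}, combined with the compression identity $P(\underline{\gamma}_{\vartheta}-\underline{\beta}_{\vartheta}\alpha_{\vartheta}^{-1}\underline{\beta}_{\vartheta})P=T^{X}$ of Theorem \ref{Thfubter}, guarantees that for every fixed $\vartheta\in[0,\frac{\pi}{2}[$, the pointwise limit of $\overline{q}^{X}_{B_{0},\vartheta,1}$ as $B_{0}\to 0$ is the product of the elliptic kernel $p^{X}_{\vartheta,1}(x,x')$ (for $T^{X}$) with the Gaussian $\pi^{-(m+n)/2}\exp(-\frac{1}{2}(|Y|^{2}+|Y'|^{2}))$, projected by $\mathbf{P}$.

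Next, I would perform the $Y$-integration in the limit. The Gaussian factor, together with the orthogonal projection $\mathbf{P}$ onto $S^{\overline{TX}}\otimes F$, precisely yields the projector $P$ appearing in Theorems \ref{Tfundid} and \ref{Thfubter}; the $d\vartheta$-component coming from $\overline{\n}^{\mathcal{H}}\vert_{db=0}$ acting on $\overline{\mathfrak D}^{X}_{b,\vartheta}$ collapses, via Proposition \ref{Pnewco}, to the $d\vartheta$-component of $A^{X}$, i.e., to $\frac{d\vartheta}{\sqrt{2}}\cos(\vartheta)\widehat{D}^{X}$ (see (\ref{eq:defa5x1b})). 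Combined with the supertrace identity that converts $\Trs^{\odd}$ on the full $\widehat{\mathcal{X}}$ to $\Tr^{\odd}$ on $X$ upon projection to degree zero in $\Lambda\ac(T^{*}X\oplus N^{*})$, this shows that pointwise in $\vartheta$,
\begin{equation*}
\Trs^{\left[\gamma\right],\odd}\left[\exp(-\overline{L}^{X}\vert_{db=0,b=B_{0}})\right]\longrightarrow \Tr^{\left[\gamma\right],\odd}\left[\exp(-T^{X})\right]=\mathsf{a},
\end{equation*}
as $B_{0}\to 0$, which formally gives $I_{1}^{1}\to I_{1}^{2}$.

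To justify passing the limit under the $\vartheta$-integral, I would invoke the uniform estimate (\ref{EQ:BERN0}) for $0<b\le M,\vartheta\in[0,\frac{\pi}{2}[$, applied at $x'=\gamma x$. Using $d_{\gamma}(x)\ge|a|$ from (\ref{eq:boul2}), one obtains exactly the bound (\ref{eq:boul4}), i.e.,
\begin{equation*}
\left|\Trs^{\left[\gamma\right],\odd}\left[\exp(-\overline{L}^{X}\vert_{db=0})\right]\right|\le C\cos^{-k-n/2}(\vartheta)\exp\!\left(-C'\frac{|a|^{2}}{\cos^{2}(\vartheta)}\right),
\end{equation*}
uniform in $0<B_{0}\le M$. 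Since $|a|>0$, the Gaussian decay in $1/\cos^{2}(\vartheta)$ at $\vartheta=\pi/2$ dominates any polynomial blow-up $\cos^{-k-n/2}(\vartheta)$, so the right-hand side is integrable on $[0,\frac{\pi}{2}[$. Dominated convergence then yields (\ref{eq:co48}). The main obstacle is the transition from pointwise kernel convergence to convergence of the $d\vartheta$-component of the orbital $1$-form: one must verify carefully that the Duhamel expansion producing the $d\vartheta$-contribution (involving $\widehat{c}(\overline{Y}^{TX})$ and $\frac{\sin(\vartheta)}{\cos^{1/2}(\vartheta)}i\mathcal{E}^{N}$ from (\ref{eq:co29x1})) admits the same uniform bound (EQ:BERN0) after the auxiliary Gaussian factors in $Y$ are absorbed — this requires tracking constants in $\vartheta$ near $\pi/2$, but the extra powers of $\cos^{-1/2}(\vartheta)$ are again absorbed by the Gaussian in $|a|^{2}/\cos^{2}(\vartheta)$.
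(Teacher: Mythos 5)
Your proposal is correct and follows essentially the same two-step strategy as the paper's proof: pointwise convergence of the orbital $1$-form via the heat-kernel convergence (\ref{eq:sumex32bis}) of Theorem \ref{Test} (which already encodes the compression content of Theorem \ref{Thfubter} and Proposition \ref{Pnewco} through the definition of $\overline{q}^{X}_{0,\vartheta,t}$), followed by dominated convergence using the uniform bound (\ref{eq:boul4}) obtained from (\ref{EQ:BERN0}) together with $|a|>0$. You unpack the Duhamel/compression details that the paper compresses into a reference to the analogous argument in \cite{Bismut08b}, but the underlying argument is the same.
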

\begin{proof}
By   (\ref{eq:sumex32bis}),   (\ref{eq:boul3}), 
and by proceeding as in \cite[proof of Theorem 4.6.1]{Bismut08b}, we 
obtain the pointwise convergence of $1$-forms on 
$\left[0,\frac{\pi}{2}\right[$,
\begin{equation}\label{eq:co48a1}
\Trs^{\left[\gamma\right],\odd}\left[\exp\left(-\overline{L}^{X}\vert_{db=0}\right)\right]\to
\Tr^{\left[\gamma\right],\odd}\left[\exp\left(-T^{X}\vert_{db=0}\right)\right].
\end{equation}
Using  the uniform bounds in 
(\ref{eq:boul4}), (\ref{eq:co48a1}), and dominated convergence, we get (\ref{eq:co48}).
\end{proof}
\end{enumerate}

\noindent\underline{2) The term $I_{2}^{0}$}\\
By (\ref{eq:co29x1}), (\ref{eq:co38}), we get
\begin{equation}\label{eq:co57}
I_{2}^{0}=\frac{1}{\sqrt{2}}\int_{B_{0}}^{B}\Trs^{\left[\gamma\right],\odd}\left[\left(
bic\left(\left[Y^{N},Y^{TX}\right]\right)+\mathcal{E}^{TX}+i
\mathcal{E}^{N}\right)
\exp\left(-\mathcal{L}^{X}_{b}\right)\right]\frac{2db}{b^{2}}.
\end{equation}
By equation (\ref{eq:tra2}), the term 
$-\frac{i}{b}\widehat{c}\left(\ad\left(Y^{N}\right)\vert_{\overline{TX}}\right)$ 
is the only term in $\mathcal{L}^{X}_{b}$ that contains Clifford variables in $c\left(\overline{TX}\right)$, 
and it lies in $c^{\even}\left(\overline{TX}\right)$. Also as 
we saw in subsection \ref{subsec:spin}, $K$ maps to
$c^{\even}\left(\overline{\mathfrak p}\right)$ via the 
$\mathrm{spin}$ representation. Therefore, the integrand in (\ref{eq:co57}) 
vanishes identically, so that
\begin{equation}\label{eq:co58}
I_{2}^{0}=0.
\end{equation}
A related argument is that in the right-hand side of (\ref{eq:co57}), 
only odd endomorphisms of $\Lambda\ac\left(T^{*}X \oplus  N^{*}\right) 
\otimes S^{\overline{TX}}\otimes F$ appear, so that the 
corresponding supertrace vanishes.

\noindent\underline{3) The term $I^{0}_{3}$}\\
By definition,
\begin{equation}\label{eq:co59}
I^{0}_{3}=\int_{\Gamma_{3}}^{}\mathsf{b}.
\end{equation}
We can rewrite (\ref{eq:co59}) in the form
\begin{equation}\label{eq:co59x1}
I^{0}_{3}=\int_{\substack{b=B\\
0\le\vartheta\le 
\frac{\pi}{2}-\epsilon}}^{}\Tr^{\left[\gamma\right],\odd}
\left[\exp\left(-\overline{L}^{X}\vert_{db=0}\right)\right].
\end{equation}

\begin{enumerate}
\item \underline{$\epsilon\to 0$}
    By (\ref{eq:boul4}), as $\epsilon\to 0$, 
  \begin{equation}\label{eq:co60}
I_{3}^{0}\to 
   I_{3}^{1}= \int_{\substack{b=B\\
0\le\vartheta\le 
\frac{\pi}{2}}}^{}\Tr^{\left[\gamma\right],\odd}
\left[\exp\left(-\overline{L}^{X}\vert_{db=0}\right)\right].
    \end{equation}
\item As $B_{0}\to 0$, $I_{3}^{1}$ remains constant and equal to 
$I_{3}^{2}$.

    \end{enumerate}
    
   \noindent\underline{4) The term $I_{4}^{0}$}\\
   By definition, we get
   \begin{equation}\label{eq:laus2}
I_{4}^{0}=\int_{\Gamma_{4}}^{}\mathsf{b}.
\end{equation}
   \begin{prop}\label{Plimgeg}
As $\epsilon\to 0$, 
\begin{equation}\label{eq:co60x3}
I_{4}^{0}\to 0.
\end{equation}
\end{prop}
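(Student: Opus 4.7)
The plan is to bound $I_4^0$ directly using the uniform hypoelliptic estimates of Theorem~\ref{Test}, exploiting the Gaussian decay in $\cos^{-1}\left(\vartheta\right)$ that is enforced by the nonellipticity of~$\gamma$ (i.e.\ $a\neq 0$).

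First, I would rewrite $I_4^0$ in concrete form. On the vertical segment $\Gamma_4$ we have $\vartheta=\frac{\pi}{2}-\epsilon$ fixed and $b\in\left[B_0,B\right]$, so $d\vartheta=0$ and only the $db$-component of $\mathsf b$ contributes. Using the expression of $\mathsf b$ coming from the second equation of \eqref{eq:co38} together with the second identity in \eqref{eq:co29x1}, we get
\begin{equation*}
I_4^{0}=\int_{B_0}^{B}\frac{\sqrt{2}}{b^{2}}\,\Trs^{\left[\gamma\right],\odd}\!\left[\left(b\cos^{1/2}\!\left(\vartheta\right)ic\left(\left[Y^{N},Y^{TX}\right]\right)+\mathcal{E}^{TX}+\cos^{1/2}\!\left(\vartheta\right)i\mathcal{E}^{N}\right)\exp\left(-\overline{\mathcal{L}}^{X}_{b,\vartheta}\right)\right]db,
\end{equation*}
evaluated at $\vartheta=\frac{\pi}{2}-\epsilon$.

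Second, I would estimate the integrand by the pointwise bound \eqref{EQ:BERN0} of Theorem~\ref{Test}. The orbital supertrace is an integral over $\widehat{\mathcal{X}}$ of the heat kernel $\overline{q}^{X}_{b,\vartheta}\left(\left(x,Y\right),\gamma\left(x,Y\right)\right)$ multiplied by the endomorphism prefactors. The prefactors $\widehat{c}\left(Y^{TX}\right),\widehat{c}\left(Y^{N}\right),ic\left(\left[Y^{N},Y^{TX}\right]\right)$ are at most quadratic in $Y$; combined with the Gaussian control in $Y^{TX},Y^{N}$ from \eqref{EQ:BERN0} (with weight $1$ for $Y^{TX}$ and $\cos\left(\vartheta\right)$ for $Y^{N}$), the fibre integration in $Y$ produces only polynomial factors in $\cos^{-1}\left(\vartheta\right)$. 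The remaining integration in $x$ on the centralizer of $\gamma$, together with the factor $\exp\left(-C'd^{2}\left(x,\gamma x\right)/\cos^{2}\left(\vartheta\right)\right)$ and the identity $\inf_{x\in X}d_{\gamma}\left(x\right)=\left\vert a\right\vert>0$ from \eqref{eq:boul2}, yields, exactly as in the derivation of \eqref{eq:boul4}, a bound
\begin{equation*}
\left\vert\mathsf b\bigl|_{\vartheta=\frac{\pi}{2}-\epsilon}\right\vert\le C\,\cos^{-N}\!\left(\vartheta\right)\exp\!\left(-C'\frac{\left\vert a\right\vert^{2}}{\cos^{2}\!\left(\vartheta\right)}\right)\left\vert db\right\vert
\end{equation*}
for some $N\in\N$, uniformly in $b\in\left[B_0,B\right]$.

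Third, integrating this bound against $db$ over the bounded interval $\left[B_0,B\right]$, we find
\begin{equation*}
\left\vert I_{4}^{0}\right\vert\le C\left(B-B_{0}\right)\cos^{-N}\!\left(\vartheta\right)\exp\!\left(-C'\frac{\left\vert a\right\vert^{2}}{\cos^{2}\!\left(\vartheta\right)}\right),\qquad \vartheta=\tfrac{\pi}{2}-\epsilon.
\end{equation*}
Since $\cos\left(\vartheta\right)=\sin\left(\epsilon\right)\to 0$ as $\epsilon\to 0$ and $\left\vert a\right\vert>0$, the Gaussian factor dominates the polynomial blow-up, so the right-hand side tends to $0$, yielding \eqref{eq:co60x3}. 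The only nontrivial point is to keep track of the polynomial powers of $\cos^{-1}\!\left(\vartheta\right)$ produced by the $Y$-prefactors and the Gaussian $Y$-integrations, but all of these are absorbed by the exponential $\exp\left(-C'\left\vert a\right\vert^{2}/\cos^{2}\!\left(\vartheta\right)\right)$ coming from the strict positivity of the minimal displacement of the nonelliptic element $\gamma$.
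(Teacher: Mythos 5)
Your proposal is correct and follows essentially the same route as the paper: rewrite $I_{4}^{0}$ via (\ref{eq:co38}) and the $db$-component of (\ref{eq:co29x1}), then bound the integrand using the heat kernel estimate of Theorem~\ref{Test} (in the form (\ref{eq:boul3})) and conclude from the Gaussian factor $\exp\left(-C'\left\vert a\right\vert^{2}/\sin^{2}\left(\epsilon\right)\right)$ with $\left\vert a\right\vert>0$. The only discrepancy is an overall sign relative to (\ref{eq:co60x2}) (coming from the orientation of $\Gamma_{4}$), which is immaterial since the argument proceeds by absolute-value bounds.
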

   \begin{proof}
   By (\ref{eq:co29x1}), (\ref{eq:co38}), we get
   \begin{multline}\label{eq:co60x2}
I_{4}^{0}=-\frac{1}{\sqrt{2}}\int_{B_{0}}^{B}\Trs^{\left[\gamma\right]}
\Biggl[\left( b\sin^{1/2}\left(\epsilon\right)ic\left(\left[Y^{N},Y^{TX}\right]\right)+\mathcal{E}
^{TX}
+\sin^{1/2}\left(\epsilon\right)i\mathcal{E}^{N}\right) \\
\exp\left(-\overline{\mathcal{L}}^{X }_{b,\frac{\pi}{2}-\epsilon}\right)
\Biggr]\frac{2db}{b^{2}}.
\end{multline}
By making $d\vartheta=0$ in (\ref{eq:boul3}) and using (\ref{eq:co60x2}),  
we find that given $0<B_{0}<B<+ \infty $,  there exist 
$C>0,C'>0$ such that
\begin{equation}\label{eq:co60x2y}
\left\vert  I_{4}^{0}\right\vert\le C\exp\left(-C'\frac{\left\vert  
a\right\vert^{2}}{\sin^{2}(\epsilon)}\right),
\end{equation}
which gives (\ref{eq:co60x3}).
\end{proof}

We are now ready to prove Theorem \ref{Ttempid}. By taking the limit 
of (\ref{eq:co43}) as $\epsilon\to 0,B_{0}\to 0$, we get
\begin{equation}\label{eq:simp1}
\sum_{i=1}^{4}I_{i}^{2}=0,
\end{equation}
which by (\ref{eq:co48}), (\ref{eq:co58}), (\ref{eq:co60}), and 
(\ref{eq:co60x3}) is just (\ref{eq:co60x4}).

\section{A geometric formula for 
$\int_{0\le\vartheta\le\frac{\pi}{2}}^{}\mathsf{a}$}\label{sec:oddorb}
In this section, by making $B\to + \infty $ in  Theorem \ref{Ttempid}, we give an explicit geometric 
formula  for the elliptic orbital integral 
$\int_{0\le\vartheta\le\frac{\pi}{2}}^{}\mathsf{a}$.
 This formula is in some sense the main result 
of this paper. It will be worked out in more detail in section 
\ref{sec:applic}, in order to make  the proper comparison with the results of 
Moscovici-Stanton \cite{MoscoviciStanton89}. 

The proof of our main result relies on the results of 
\cite{Bismut08b}, and also on uniform
estimates on the hypoelliptic heat kernels for $b>0$ large, that will 
be established in section \ref{sec:unilar}.  

The structure of this 
section is strictly similar to the structure of \cite[chapter 
9]{Bismut08b}, where corresponding results are established for 
standard orbital integrals. The main new ingredients with respect to 
\cite{Bismut08b} are 
the uniform upper 
bounds on certain integrands when
$\vartheta\in\left[0,\frac{\pi}{2}\right[$.

This section is organized as follows. In subsection 
\ref{subsec:minge}, we recall the results of \cite{Bismut08b} that 
identify the minimizing set $X\left(\gamma\right)$ for the 
displacement function $d_{\gamma}$.

In subsection \ref{subsec:lim}, we state our geometric formula for 
$\int_{0\le\vartheta\le\frac{\pi}{2}}^{}\mathsf{a}$  in terms of objects 
previously obtained in \cite{Bismut08b}. Among these objects, there 
is an important function $J_{\gamma}$ that will play an essential 
role in section \ref{sec:applic}. The remainder of this 
section is devoted to the proof of this result.

In subsection \ref{subsec:prth}, we give a formula for the limit as 
$b\to + \infty $ of the $1$-form 
$\Trs^{\left[\gamma\right],\odd}\left[\exp\left(-L^{X \prime 
}\vert_{db=0}\right)\right]$. Our main result in subsection 
\ref{subsec:lim} is a trivial consequence of this convergence result.
The subsections that follow are 
devoted to its proof.

In subsection \ref{subsec:estaw}, we give  estimates  for the 
smooth 
kernel for $\exp \left( -\underline{L}^{X \prime }\vert_{db=0} \right) $ in 
the range $b\ge 1,\vartheta\in\left[0,\frac{\pi}{2}\right[$ away from a submanifold of 
$\widehat{\mathcal{X}}$ that fibres over $X\left(\gamma\right)$. The 
proof of these estimates is deferred to section \ref{sec:unilar}.

In subsection \ref{subsec:rescfY}, if $\underline{L}^{X \prime}$ is a 
rescaled version of $L^{X \prime }$, we show that the orbital integral 
$\Trs^{\left[\gamma\right],\odd}\left[\exp\left(-\underline{L}^{X \prime 
}\vert_{db=0}\right)\right]$ localizes near a submanifold over 
$X\left(\gamma\right)$, and we suitably rescale coordinates on 
$\widehat{\mathcal{X}}$.

In subsection \ref{subsec:getres}, as in \cite[section 
9.5]{Bismut08b}, we introduce a conjugation on certain Clifford 
variables, which is equivalent to a suitable Getzler rescaling on the 
matrix part of the operator $\underline{L}^{X \prime }\vert_{db=0}$. From 
$\underline{L}^{X \prime }\vert_{db=0}$, we obtain an operator 
$\mathfrak L^{X \prime }_{b,\vartheta}$.

In subsection \ref{subsec:limloc}, we give a result on the limit as 
$b\to + \infty $ of a local supertrace of a rescaled heat kernel over a neighbourhood of 
$\widehat{\pi}^{-1}X\left(\gamma\right)$. The proof is deferred to 
subsections \ref{subsec:tras}--\ref{subsec:propi}. 

In subsection \ref{subsec:prlimi}, we complete the proof of the
result that was stated in subsection \ref{subsec:prth}.

In subsection \ref{subsec:tras}, we make the translation $Y^{TX}\to 
a^{TX}+Y^{TX}$ on  the operator $\mathfrak L^{X \prime 
}_{b,\vartheta}$, and we obtain a new operator $\mathcal{O}^{X \prime 
}_{a,b,\vartheta}$.

In subsection \ref{subsec:cotri}, as in \cite[section 
9.9]{Bismut08b}, we choose a coordinate system on  
$\widehat{\mathcal{X}}$ based at $x_{0}=p1$, and we trivialize our 
vector bundles. We obtain this way an operator $\mathcal{P}^{X \prime 
}_{a,b,\vartheta,\Yok}$.

In subsection \ref{subsec:aspy}, we show that as $b\to + \infty $, 
$\mathcal{P}^{X}_{a,b,\vartheta,\Yok}$ converges in the proper sense 
to an operator $P^{X \prime }_{a,\infty ,\vartheta,\Yok}$. The 
wonderful fact is that the dependence of this operator on 
$\vartheta$ is very mild, and that it differs very little from the 
operator $\mathcal{P}^{X \prime }_{a, \infty, 0,\Yok}$ already 
considered in \cite[section 9.10]{Bismut08b}.

Finally, in section \ref{subsec:propi}, we state a result of 
convergence of heat kernels, that implies the convergence results of 
section \ref{subsec:limloc}.  

The techniques used in this chapter are variations on the techniques 
of \cite{Bismut08b} in the range $b\ge 1$. The fact that 
$\vartheta$ may approach $\frac{\pi}{2}$ is handled using the fact 
that $a\neq 0$.

We make the same assumptions and we use the same notation as in section 
\ref{sec:pres}.
\subsection{The geometry of the minimizing set}%
\label{subsec:minge} 
We follow \cite[chapter 3]{Bismut08b}.  Let 
\index{Zg@$Z\left(\gamma\right) $}%
$Z\left(\gamma\right) \subset G$ be the centralizer of $\gamma$, 
and let 
\index{zg@$\mathfrak z\left(\gamma\right)$}%
$\mathfrak z\left(\gamma\right)$ be its Lie algebra. Let
\index{Za@$Z\left(a\right)$}%
$Z\left(a\right) \subset G$ be the stabilizer of $a$, and let 
\index{za@$\mathfrak z\left(a\right)$}%
$\mathfrak z\left(a\right)$ be its Lie algebra. By \cite[Proposition 
3.2.8]{Bismut08b}, we have the identity 
\begin{equation}\label{eq:supr1}
Z\left(e^{a} \right) =Z\left(a\right).
\end{equation}

By \cite[eq. 
(3.1.2)]{Bismut08b}, we have the identity
\begin{equation}\label{eq:grzu-1a1}
\mathfrak z\left(a\right)=\ker\ad\left(a\right).
\end{equation}
By 
\cite[eqs. (3.3.4) and (3.3.6)]{Bismut08b}, we have 
\begin{align}\label{eq:grzu0}
&Z\left(\gamma\right)=Z\left(e^{a}\right)\cap Z\left(k\right),
&\mathfrak z\left(\gamma\right) =\mathfrak z\left(e^{a}\right)\cap 
\mathfrak z\left(k\right).
\end{align}
Since $\theta a=-a,\theta k=k$,  by (\ref{eq:grzu0}), $\theta$ acts on 
$Z\left(\gamma\right), Z\left(e^{a}\right),Z\left(k\right)$. 

As in 
\cite[eq. (3.3.7)]{Bismut08b}, put
\begin{align}\label{eq:grzu1}
&\mathfrak p\left(\gamma\right)= \mathfrak z\left(\gamma\right)\cap 
\mathfrak p,
&\mathfrak k\left(\gamma\right)=\mathfrak z\left(\gamma\right)\cap 
\mathfrak k.
\end{align}
By \cite[eq. (3.3.8)]{Bismut08b}, we have the splitting
\begin{equation}\label{eq:grzu2}
\mathfrak z\left(\gamma\right)= \mathfrak p\left(\gamma\right) \oplus 
\mathfrak k\left(\gamma\right).
\end{equation}
Put
\index{r@$r$}%
\index{p@$p$}%
\index{q@$q$}%
\begin{align}
   & r=\dim \mathfrak z\left(\gamma\right),&p=\dim 
   \mathfrak p\left(\gamma\right),\qquad q=\dim \mathfrak k\left(\gamma\right).
    \label{eq:bonn0}
\end{align}
By (\ref{eq:grzu2}), we get
\begin{equation}
    r=p+q.
    \label{eq:sumex63-a}
\end{equation}

Put
\index{Kg@$K\left(\gamma\right)$}%
\begin{equation}\label{eq:tuz1}
K\left(\gamma\right)=K\cap Z\left(\gamma\right).
\end{equation}
Then $K\left(\gamma\right)$ is a compact Lie subgroup of $Z\left(\gamma\right)$ with Lie
algebra $\mathfrak k\left(\gamma\right)$.

Let 
\index{Z0g@$Z^{0}\left(\gamma \right)$}%
\index{K0g@$K^{0}\left(\gamma\right)$}%
$Z^{0}\left(\gamma \right) ,K^{0}\left(\gamma\right)$ be the 
connected components of the identity in 
$Z\left(\gamma\right),K\left(\gamma\right)$. The Cartan involution $\theta$ acts on 
$Z^{0}\left(\gamma\right)$. Then $Z^{0}\left(\gamma\right)$ is a 
connected reductive group with maximal compact subgroup 
$K^{0}\left(\gamma\right)$.  Also the symmetric space 
$Z^{0}\left(\gamma\right)/K^{0}\left(\gamma\right)$ embeds 
canonically in the symmetric space $X=G/K$. The same considerations still hold when 
replacing $\gamma$ by $e^{a}$ or $k$.

We denote by 
\index{zpg@$\mathfrak z^{\perp}\left(\gamma\right)$}%
$\mathfrak z^{\perp}\left(\gamma\right)$ the orthogonal 
to $\mathfrak z\left(\gamma\right)$ with respect to $B$. By 
(\ref{eq:grzu2}),  
$\mathfrak z^{\perp}\left(\gamma\right)$ splits as
\index{zpg@$\mathfrak z^{\perp}\left(\gamma\right)$}%
\index{ppg@$\mathfrak p^{\perp}\left(\gamma\right)$}%
\index{kpg@$\mathfrak k^{\perp}\left(\gamma\right)$}%
\begin{equation}\label{eq:grzu2x1}
\mathfrak z^{\perp}\left(\gamma\right)=\mathfrak 
p^{\perp}\left(\gamma\right) \oplus \mathfrak 
k^{\perp}\left(\gamma\right).
\end{equation}

In the sequel,  we use the notation
\index{Z0@$Z_{0}$}%
\index{K0@$K_{0}$}%
\index{z0@$\mathfrak z_{0}$}%
\begin{align}\label{eq:grzu3}
&Z_{0}=Z\left(a\right), &K_{0}=Z\left(a\right)\cap K,\qquad\mathfrak z_{0}=\mathfrak z\left(a\right).
\end{align}
We denote by 
\index{Z00@$Z^{0}_{0}$}%
\index{K00@$K^{0}_{0}$}%
$Z_{0}^{0},K_{0}^{0}$ the connected components of the 
identity in $Z_{0},K_{0}$.
Put
\index{p0@$\mathfrak p_{0}$}%
\index{k0@$\mathfrak k_{0}$}%
\begin{align}\label{eq:grzu4}
&\mathfrak p_{0}=\ker\ad\left(a\right)\cap \mathfrak p,
&\mathfrak k_{0}=\ker\ad\left(a\right)\cap \mathfrak k.
\end{align}
By \cite[eq. (3.5.5)]{Bismut08b}, we get
\begin{equation}\label{eq:grzu5}
\mathfrak z_{0}= \mathfrak p_{0} \oplus \mathfrak k_{0}.
\end{equation}
As was explained before,  $Z_{0}^{0}$ equipped with the involution $\theta$ is a connected reductive group with maximal compact 
subgroup $K_{0}^{0}$. Also (\ref{eq:grzu5}) is the Cartan splitting of 
$\mathfrak z_{0}$ associated with $\theta$.
Observe that $\mathfrak p_{0}$ and $\mathfrak p\left(k\right)$ 
intersect orthogonally along $\mathfrak p\left(\gamma\right)$.

Let 
\index{zp0@$\mathfrak z_{0}^{\perp}$}%
$\mathfrak z_{0}^{\perp}$ be the orthogonal vector space  to $\mathfrak z_{0}$ 
in $\mathfrak g$. Then $\mathfrak z_{0}$ splits as
\index{pp0@$\mathfrak p_{0}^{\perp}$}%
\index{kp0@$\mathfrak k_{0}^{\perp}$}%
\begin{equation}\label{eq:grzu6x1}
\mathfrak z_{0}^{\perp}=\mathfrak p^{\perp}_{0} \oplus \mathfrak 
k^{\perp}_{0}.
\end{equation}

Let $\mathfrak z_{0}^{\perp}\left(\gamma\right)$ be the orthogonal vector space 
to $\mathfrak z\left(\gamma\right)$ in $\mathfrak z_{0}$ with respect 
to $B$. Let $\mathfrak p_{0}^{\perp}\left(\gamma\right), \mathfrak 
k_{0}^{\perp}\left(\gamma\right)$ be the orthogonal  vector spaces to $\mathfrak 
p\left(\gamma\right),\mathfrak k\left(\gamma\right)$ in $\mathfrak 
p_{0},\mathfrak k_{0}$. By \cite[eq. (5.3.5)]{Bismut08b}, we have the splitting
\begin{equation}\label{eq:grzu6}
\mathfrak z_{0}^{\perp}\left(\gamma\right)=\mathfrak 
p_{0}^{\perp}\left(\gamma\right) \oplus \mathfrak 
k_{0}^{\perp}\left(\gamma\right).
\end{equation}

Recall that the 
displacement function
\index{dg@$d_{\gamma}$}%
$d_{\gamma}$ on $X$ was defined in 
(\ref{eq:boul1}).
Let 
\index{Xg@$X\left(\gamma\right)$}%
$X\left(\gamma\right) \subset X$ be the minimizing set for
$d_{\gamma}$. By \cite[p. 78]{BalGroSchro}, $X\left(\gamma\right)$
is a closed convex subset of $X$. By \cite[Theorem 3.3.1]{Bismut08b}, 
$X\left(\gamma\right)$ can be canonically identified with the 
symmetric space $Z^{0}\left(\gamma\right)/K^{0}\left(\gamma\right)$. 
The embedding $Z^{0}\left(\gamma\right)/K^{0}\left(\gamma\right) 
\subset G/K$ corresponds to the embedding $X\left(\gamma\right) 
\subset X$. The same considerations apply to 
$X\left(e^{a}\right),X\left(k\right)$.

By 
\cite[Theorem 3.3.1]{Bismut08b}, we have the identity 
\begin{equation}\label{eq:bres2}
X\left(\gamma\right)=X\left(e^{a}\right)\cap X\left(k\right).
\end{equation}
Also the manifolds $X\left(e^{a}\right)$ and $X\left(k\right)$ intersect 
orthogonally along $X\left(\gamma\right)$.
\subsection{A fundamental identity}%
\label{subsec:lim}
Note that if $\Yok\in \mathfrak k\left(\gamma\right)$, then 
$\ad\left(\Yok\right)$ acts on $\mathfrak 
z_{0}^{\perp}\left(\gamma\right)$ and preserves the splitting 
(\ref{eq:grzu6}). 

Set
\index{Ax@$\widehat{A}\left(x\right)$}%
\begin{equation}\label{eq:laus3}
\widehat{A}\left(x\right)=\frac{x/2}{\sinh\left(x/2\right)}.
\end{equation}
We identify $\widehat{A}$ with the corresponding multiplicative genus.
If $V$ is a finite dimensional  Hermitian vector space and if $B\in\End\left(V\right)$ is  
self-adjoint, then $\frac{B/2}{\sinh\left(B/2\right)}$ is a 
self-adjoint positive endomorphism. Set
\begin{equation}\label{eq:laus4}
\widehat{A}\left(B\right)=\det\,^{1/2}\left[\frac{B/2}{\sinh\left(B/2\right)}\right].
\end{equation}
In (\ref{eq:laus4}), the square root is taken to be the positive 
square root.

Now we follow \cite[Theorem 5.5.1]{Bismut08b}.
\begin{defin}\label{DJg}
Let 
\index{JgY@$J_{\gamma}\left(\Yok\right)$}%
$J_{\gamma}\left(\Yok\right)$ be the function defined on 
$\mathfrak k\left(\gamma\right)$ with values in $\C$ given by
\begin{multline}\label{eq:crub3}
J_{\gamma}\left(Y_{0}^{ \mathfrak 
k}\right)=\frac{1}{\left\vert  \det\left(1-\Ad\left(\gamma\right)\right)\vert_{ 
\mathfrak z_{0}^{\perp}}\right\vert^{1/2}}
\frac{\widehat{A}\left(i\ad\left(\Yok\right)\vert _{ \mathfrak p\left(\gamma\right)}\right)}{\widehat{A}\left(i\ad\left(\Yok\right)
\vert_{\mathfrak k\left(\gamma\right)}\right)}\\
\left[\frac{1}{\det\left(1-\Ad\left(k^{-1}\right)\right)\vert_{ \mathfrak z^{\perp}_{0}\left(\gamma\right)}
    }\frac{\det\left(1-\exp\left(-i\ad\left(\Yok\right)
    \right)\Ad\left(k^{-1}\right)\right)\vert_{\mathfrak k^{\perp}_{0}\left(\gamma\right)}}{\det\left(1-\exp\left(-i\ad\left(\Yok\right)\right)
    \Ad\left(k^{-1}\right)\right)\vert_{\mathfrak 
    p_{0}^{\perp}\left(\gamma\right)}}\right]^{1/2}.
\end{multline}
\end{defin}
As explained in \cite[section 5.5]{Bismut08b}, the square root 
appearing in the right-hand side of (\ref{eq:crub3}) is unambiguously 
defined.

We  now state the main result of this section. This result will be 
fully exploited in section \ref{sec:applic}.
\begin{thm}\label{Tkeyres}
    The following identity holds:
\begin{multline}\label{eq:key1}
i\exp\left(-\frac{1}{48}\Tr^{\mathfrak k}\left[C^{\mathfrak k, 
\mathfrak k}\right]-\frac{1}{2}C^{\mathfrak k,E}\right)\\
\int_{0}^{1}\frac{1}{2}\Tr^{\left[\gamma\right]}\left[\frac{D^{X}}{\sqrt{2}}
\exp\left(-
sD^{X,2}/2\right)\right]
\frac{1}{\sqrt{1-s}}ds \\
=\frac{1}{\left(2\pi\right)^{p/2}}
\int_{\mathfrak k\left(\gamma\right)
}^{}\frac{\sqrt{\pi}}{2}
\exp\left(-\frac{1}{2}\left(\left\vert  
a\right\vert^{2}+\left\vert  \Yok\right\vert^{2}\right)\right)\\
J_{\gamma}\left(\Yok\right)
\Tr^{S^{\mathfrak p}}
\left[\frac{\widehat{c}\left(a\right)}{\left\vert  a\right\vert}\Ad\left(k^{-1}\right)
\exp\left(-i\widehat{c}
\left(\ad\left(\Yok\right)\vert_{\mathfrak p}\right)\right)\right]\\
\Tr^{E}\left[\rho^{E}\left(k^{-1}\right)\exp\left(-i\rho^{E}\left(\Yok\right)\right)\right]
\frac{d\Yok}{\left(2\pi\right)^{q/2}}.
\end{multline}
\end{thm}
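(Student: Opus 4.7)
The plan is to combine the two identities already at our disposal, namely Proposition \ref{Pelem} and Theorem \ref{Ttempid}, with an asymptotic evaluation of the $1$-form $\mathsf{b}$ as $b \to +\infty$. Proposition \ref{Pelem} expresses the left-hand side of (\ref{eq:key1}) (up to a sign) as the elliptic integral $\int_{0 \le \vartheta \le \pi/2} \mathsf{a}$, and Theorem \ref{Ttempid} equates this to $\int_{b=B,\, 0 \le \vartheta \le \pi/2} \mathsf{b}$ for every $B > 0$. Since the left-hand side of (\ref{eq:key1}) is independent of $B$, the entire geometric content of the theorem is encoded in the limit
\begin{equation*}
\lim_{B \to +\infty} \int_{b=B,\, 0 \le \vartheta \le \pi/2} \mathsf{b} = -\int_{0 \le \vartheta \le \pi/2} \mathsf{a}.
\end{equation*}

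To analyze this limit, I will work with the superconnection $B^{X\prime}$ and the closed $1$-form $\Trs^{[\gamma],\odd}[\exp(-L^{X\prime}\vert_{db=0})]$, since by the announcement in subsection \ref{subsec:prth} the family $L^{X\prime}$ is the one best suited to the regime $b \to +\infty$. Concretely, I would introduce the rescaled operator $\underline{L}^{X\prime}_{b,\vartheta}$ and exploit the semisimplicity of $\gamma$ to localize the integrand near the submanifold $\widehat{\pi}^{-1}X(\gamma) \subset \widehat{\mathcal{X}}$: away from this submanifold, the uniform heat kernel estimates advertised in subsection \ref{subsec:estaw} (proved in section \ref{sec:unilar}) yield exponential smallness uniformly in $\vartheta \in [0, \pi/2[$. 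Because $a \neq 0$, the uniformity up to $\vartheta = \pi/2$ comes from the factor $\exp(-c|a|^2/\cos^2(\vartheta))$ inherited from (\ref{eq:gon1y1}) and (\ref{eq:boul3}).

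Near $\widehat{\pi}^{-1}X(\gamma)$, I would then perform the Getzler-type rescaling of subsection \ref{subsec:getres}, translate $Y^{TX} \mapsto a^{TX} + Y^{TX}$ (subsection \ref{subsec:tras}), and pass to the trivialized coordinates of subsection \ref{subsec:cotri} to produce the operator $\mathcal{P}^{X\prime}_{a,b,\vartheta,\Yok}$. The central analytic step is then to show, as in subsection \ref{subsec:aspy}, that $\mathcal{P}^{X\prime}_{a,b,\vartheta,\Yok}$ converges as $b \to +\infty$ to a $\vartheta$-independent limit operator $\mathcal{P}^{X\prime}_{a, \infty, \Yok}$ which differs only by a Gaussian weight from the operator already analyzed in \cite[\S 9.10]{Bismut08b}, and that the corresponding local supertraces converge, uniformly in $\vartheta$, to an explicit Mehler kernel evaluated at $a$. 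The residual integration over $\mathfrak{k}(\gamma)$ produces the function $J_\gamma(\Yok)$ of Definition \ref{DJg} exactly as in \cite[Theorem 5.5.1]{Bismut08b}, while the odd Clifford trace $\Trs^{\odd}$ converts the Clifford factor $\widehat{c}(a)$ into the factor $\widehat{c}(a)/|a|$ appearing in (\ref{eq:key1}); the prefactor $\frac{\sqrt{\pi}}{2}$ arises by integrating out the single remaining $\vartheta$-Gaussian direction corresponding to the parameter $\vartheta \in [0, \pi/2[$.

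The main obstacle will be the uniformity of all estimates in the direction $\vartheta \to \pi/2$, where the operator $\overline{L}^X$ degenerates in its $\mathfrak{k}$-direction (see the second identity of (\ref{eq:co19x-1})). The switch from $\overline{L}^X$ to $L^{X\prime}$ partially absorbs this degeneracy into a harmless conjugation by $\widehat{R}_\vartheta$ and a flat-direction rescaling along $\mathfrak k$, but one still has to prove that, in the large-$b$ Getzler rescaling, the limit operator $\mathcal{P}^{X\prime}_{a, \infty, \vartheta, \Yok}$ depends on $\vartheta$ only through a smooth Gaussian factor — this is exactly what will make the $d\vartheta$-integration trivial and produce the $\frac{\sqrt{\pi}}{2}$. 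Granting this, together with dominated convergence justified by the uniform bounds of section \ref{sec:unilar}, one obtains the right-hand side of (\ref{eq:key1}) and thus the theorem.
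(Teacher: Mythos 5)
Your proposal follows essentially the same route as the paper: Proposition \ref{Pelem} plus Theorem \ref{Ttempid} reduce the claim to evaluating the $B\to+\infty$ limit of $\int_{b=B}\mathsf{b}$, which is then extracted from $\Trs^{[\gamma],\odd}[\exp(-L^{X\prime}\vert_{db=0})]$ via localization to $\widehat\pi^{-1}X(\gamma)$, the translation $Y^{TX}\mapsto a^{TX}+Y^{TX}$, the Getzler conjugation by $\exp(-\frac{b^{2}}{\cos(\vartheta)}\widehat\alpha)$, the coordinate trivialization, and the convergence of the rescaled operator to $\mathfrak{P}^{X\prime}_{a,\infty,\vartheta,\Yok}$ — all packaged as Theorem \ref{Tlimi} and wrapped up by dominated convergence and the explicit $\vartheta$-integral. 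This is precisely how Remark \ref{Rdedu} deduces (\ref{eq:key1}) from Theorem \ref{Tlimi}.

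One inaccuracy: you attribute the $\widehat{c}(a)/|a|$ factor in (\ref{eq:key1}) to the action of $\Trs^{\odd}$, but in the paper the odd trace produces $\widehat{c}(a)/\cos^{2}(\vartheta)$ inside the limiting integrand (\ref{eq:key2}); the $1/|a|$ only appears after the $\vartheta$-integration, through the identity $\int_{0}^{\pi/2}\cos^{-2}(\vartheta)\exp(-x^{2}/2\cos^{2}(\vartheta))d\vartheta=\frac{\sqrt{\pi}}{\sqrt{2}x}\exp(-x^{2}/2)$ applied with $x=|a|$. Your description of the $\frac{\sqrt{\pi}}{2}$ as "integrating out the remaining $\vartheta$-Gaussian direction" is the right heuristic, but the same integral is also responsible for the $1/|a|$; they cannot be separated as you suggest. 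This is a misattribution rather than a gap, and does not affect the validity of the overall argument.
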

\begin{proof}
Subsection \ref{subsec:prth}  is devoted to the proof of our theorem.
\end{proof}
    \subsection{The limit of the forms 
    $\Trs^{\left[\gamma\right],\odd}\left[\exp\left(-
    L^{X \prime }\vert_{db=0}\right)\right]$ as $b\to + \infty$}%
\label{subsec:prth}
In this subsection, we  study the limit of the $1$-form $\Trs^{\left[\gamma\right],\odd}
\left[
\exp\left(-L^{X \prime }\vert_{db=0}\right)\right]$ as $b\to + \infty 
$.
\begin{thm}\label{Tlimi}
 As $b\to + \infty $, we have the pointwise convergence of $1$-forms 
 on $\left[0,\frac{\pi}{2}\right[$,
\begin{multline}\label{eq:key2}
\Trs^{\left[\gamma\right],\odd}
\left[
\exp\left(-L^{X \prime }\vert_{db=0}\right)\right]\to
-d\vartheta{\left(2\pi\right)^{-p/2}}\exp\left(-\left\vert  a\right\vert^{2}
    /2\cos^{2}\left(\vartheta\right) \right) \\
    \int_{\mathfrak k\left(\gamma\right)}^{}
   \frac{1}{\sqrt{2}} J_{\gamma}\left(\Yok\right)
    \Tr^{S^{\mathfrak p}}
    \left[\frac{\widehat{c}\left(a\right)}{\cos^{2}\left(\vartheta\right)}
    \Ad\left(k^{-1}\right)\exp\left(-i\widehat{c}\left(\ad\left(\Yok\right)\vert_{\mathfrak p}\right)\right)\right] \\
    \Tr^{E}\left[\rho^{E}\left(k^{-1}\right)
    \exp\left(-i\rho^{E}\left(\Yok\right)\right)\right]
    \exp\left(-\left\vert  \Yok\right\vert^{2}/2\right)
    \frac{d\Yok}{\left(2\pi\right)^{q/2}}. 
\end{multline}
Moreover, there exist $C>0,C'>0$ such that for $b\ge 
1,\vartheta\in\left[0,\frac{\pi}{2}\right[$,
\begin{equation}\label{eq:bub3}
\left\vert \Trs^{\left[\gamma\right]}\left[
\exp\left(-L^{X \prime }\vert_{db=0}\right)\right] \right\vert\le
C\exp\left(-C'\frac{\left\vert  
a\right\vert^{2}}{\cos^{2}\left(\vartheta\right)}\right).
\end{equation}
\end{thm}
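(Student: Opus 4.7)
The plan is to adapt the localization and Getzler rescaling machinery of \cite[Chapter 9]{Bismut08b} to the present two-parameter setting, where the new complication is keeping everything uniform in $\vartheta\in\left[0,\frac{\pi}{2}\right[$. I would first rewrite the left-hand side of (\ref{eq:key2}) using the second identity of (\ref{eq:co38}) as an orbital integral of the fiber supertrace of a smooth kernel on $\widehat{\mathcal{X}}$. The off-diagonal estimates for $\exp\left(-\underline{L}^{X\prime}\vert_{db=0}\right)$ in the regime $b\ge 1$ (to be proved in section \ref{sec:unilar}) will guarantee that the heat kernel is supported, up to exponentially small error, in a shrinking tubular neighborhood of $\widehat{\pi}^{-1}X\left(\gamma\right)$, and furthermore that its mass concentrates near the graph of the geodesic flow associated with $a\in\mathfrak p\left(\gamma\right)$.

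Having localized the problem, the key step is the translation $Y^{TX}\to a^{TX}+Y^{TX}$ in the fiber variable, combined with a suitable rescaling of $\left(b,Y\right)$ and a Getzler-type conjugation on the Clifford generators of $c\left(TX\oplus N\right)$, $\widehat{c}\left(TX\oplus N\right)$ and $c\left(\overline{TX}\right)$. This produces a rescaled operator $\mathcal{P}^{X\prime}_{a,b,\vartheta,\Yok}$ parametrized by $\Yok\in \mathfrak k\left(\gamma\right)$. The fundamental observation that makes uniformity in $\vartheta$ work is the one highlighted in Remark \ref{Rcons2}: in the third identity of (\ref{eq:co19x-1}), all linear terms in $Y$ carry the common factor $\cos\left(\vartheta\right)/b$, so the rescaling absorbs the $\vartheta$-dependence into harmless conjugations. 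Passing to the limit $b\to+\infty$, $\mathcal{P}^{X\prime}_{a,b,\vartheta,\Yok}$ converges to a generalized harmonic oscillator $\mathcal{P}^{X\prime}_{a,\infty,\vartheta,\Yok}$ whose dependence on $\vartheta$ reduces to overall scalar multiplications of the Gaussian weight, reducing the computation almost verbatim to \cite[section 9.10]{Bismut08b}.

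The final step is to compute the supertrace of the exponential of the limiting operator and integrate. The integration over the fibers of $TX\oplus N$ over $X\left(\gamma\right)$ produces, by a standard Mathai--Quillen-type Gaussian calculation, the factor $\widehat A\left(i\ad\left(\Yok\right)\vert_{\mathfrak p\left(\gamma\right)}\right)/\widehat A\left(i\ad\left(\Yok\right)\vert_{\mathfrak k\left(\gamma\right)}\right)$ together with the determinantal prefactors from the normal directions $\mathfrak z_{0}^{\perp}\left(\gamma\right)$ and $\mathfrak z^{\perp}_{0}$, i.e.\ exactly $J_{\gamma}\left(\Yok\right)$ as in (\ref{eq:crub3}). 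The new ingredient, absent in \cite{Bismut08b}, is the $\widehat c\left(\overline{TX}\right)$-valued trace coming from the $\sin\left(\vartheta\right)\widehat D^{\mathfrak g,X}$ component of $\mathfrak D^{X\prime}_{b,\vartheta}$: this term contributes precisely the factor $\widehat c\left(a\right)/\cos^{2}\left(\vartheta\right)$ multiplying $\Ad\left(k^{-1}\right)\exp\left(-i\widehat c\left(\ad\left(\Yok\right)\vert_{\mathfrak p}\right)\right)$ inside the supertrace on $S^{\overline{\mathfrak p}}$, once one uses that $a$ parametrizes the displacement and that $\Ad\left(k\right)a=a$. Similarly the action on $E$ gives $\Tr^{E}\left[\rho^{E}\left(k^{-1}\right)\exp\left(-i\rho^{E}\left(\Yok\right)\right)\right]$. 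Combining these pieces gives (\ref{eq:key2}), while the uniform bound (\ref{eq:bub3}) follows from the Gaussian factor $\exp\left(-\left\vert a\right\vert^{2}/2\cos^{2}\left(\vartheta\right)\right)$ coming out of the translation, using $a\neq 0$.

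The principal obstacle, and the reason this is not a cosmetic modification of \cite{Bismut08b}, is precisely the tracking of the Clifford variables $c\left(\overline{TX}\right)$ through the rescaling in a way that remains controlled as $\vartheta\to\frac{\pi}{2}$. The rotation $\widehat R_{\vartheta}$ couples $\Lambda\ac\left(T^{*}X\oplus N^{*}\right)$ with $c\left(\overline{TX}\right)$ in a nontrivial fashion, and the Getzler conjugation must be chosen so that the limit operator depends on $\vartheta$ only through factors of $\cos\left(\vartheta\right)$ that combine with the exponential decay in $\left\vert a\right\vert$ to yield the stated uniform bound. Once this is arranged, the rest of the argument follows the same pattern as \cite[section 9.10]{Bismut08b}, and Theorem \ref{Tkeyres} is an immediate consequence upon integrating (\ref{eq:key2}) in $\vartheta$ and invoking Proposition \ref{Pelem} together with Theorem \ref{Ttempid}.
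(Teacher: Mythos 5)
Your proposal follows the same route as the paper: localize near $\widehat{\pi}^{-1}X\left(\gamma\right)$ via the $b\ge 1$ off-diagonal estimates of Theorem \ref{TLARGE1QU}, translate $Y^{TX}\to a^{TX}+Y^{TX}$, apply the rescaling $H_{b,\vartheta,\Yok}$ and the Getzler conjugation by $\exp\left(-\frac{b^{2}}{\cos\left(\vartheta\right)}\widehat{\alpha}\right)$, identify the limit operator $\mathfrak{P}^{X\prime}_{a,\infty,\vartheta,\Yok}$, and carry out the Gaussian computation producing $J_{\gamma}$; the uniformity in $\vartheta$ rests, as you say, on the observation of Remark \ref{Rcons2}.

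One attribution in your sketch is off: you credit the factor $\widehat{c}\left(a\right)/\cos^{2}\left(\vartheta\right)$ in (\ref{eq:key2}) to ``the $\sin\left(\vartheta\right)\widehat{D}^{\mathfrak g,X}$ component of $\mathfrak D^{X\prime}_{b,\vartheta}$.'' There is no such component: $\mathfrak D^{X\prime}_{b,\vartheta}$ contains $\widehat{D}^{\mathfrak g,X}$ without a $\sin\left(\vartheta\right)$ prefactor (see (\ref{eq:vb4a})), and that operator involves the $\widehat{c}\left(\mathfrak g\right)$ Clifford variables, not $\widehat{c}\left(\overline{\mathfrak p}\right)$; the appearance of $\sin\left(\vartheta\right)\widehat{D}^{X}$ is only a statement about the compression $P\cdots P$ in Proposition \ref{Pcomp}. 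The factor $\widehat{c}\left(a\right)/\cos^{2}\left(\vartheta\right)$ actually enters through the $d\vartheta$-component of the superconnection curvature, i.e.\ the term $-\frac{d\vartheta}{b\cos\left(\vartheta\right)}\widehat{c}\left(\overline{Y}^{TX}\right)$ in $\n^{\mathcal{H}\prime}\mathfrak D^{X\prime}_{b,\vartheta}$ (last line of (\ref{eq:co29x1})): after the rescaling $K_{-b/\cos\left(\vartheta\right)}$ and the translation $Y^{TX}\to a^{TX}+Y^{TX}$, this becomes $\frac{d\vartheta}{\sqrt{2}\cos^{2}\left(\vartheta\right)}\widehat{c}\left(\overline{a}^{TX}+\overline{Y}^{TX}\right)$, whose $\overline{Y}^{TX}$ part is killed by the additional $\cos\left(\vartheta\right)/b^{2}$ rescaling in $H_{b,\vartheta,\Yok}$, leaving exactly $\frac{d\vartheta}{\sqrt{2}}\frac{\widehat{c}\left(\overline{a}\right)}{\cos^{2}\left(\vartheta\right)}$ as in $\mathfrak S_{a,\infty,\vartheta,\Yok}$ of (\ref{eq:co70}). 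This is the correct mechanism by which the odd Clifford content survives the limit; fixing that description, the rest of your outline is sound.
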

\begin{proof}
The proof of our theorem is delayed to subsections 
\ref{subsec:rescfY}--\ref{subsec:prlimi}. 
\end{proof}
\begin{remk}\label{Rdedu}
We claim that Theorem \ref{Tkeyres} follows from Theorem \ref{Tlimi}. 
Indeed, let  $\mathsf{c}$ be the $1$-form in the right-hand side of (\ref{eq:key2}). 
By  equation (\ref{eq:co60x4}) 
in Theorem \ref{Ttempid}, and by 
equations (\ref{eq:key2}), (\ref{eq:bub3}) in Theorem \ref{Tlimi},  
using dominated convergence, we get
\begin{equation}\label{eq:key3}
    \int_{0\le\vartheta\le\frac{\pi}{2}}^{}\mathsf{a}=\int_{0\le\vartheta\le\frac{\pi}{2}}^{}\mathsf{c}.
\end{equation}
By making the change of variables 
$v=\tan\left(\vartheta\right)$, for $x>0$, we get
\begin{equation}\label{eq:laus5}
\int_{0}^{\frac{\pi}{2}
}\cos^{-2}\left(\vartheta\right)\exp\left(-x^{2}/2\cos^{2}\left(\vartheta\right)\right)
d\vartheta=\exp\left(-x^{2}/2\right)\int_{0}^{+ 
\infty}\exp\left(-x^{2}v^{2}/2\right)dv.
\end{equation}
By (\ref{eq:laus5}), we obtain
\begin{equation}\label{eq:key4}
\int_{0}^{\frac{\pi}{2}
}\cos^{-2}\left(\vartheta\right)\exp\left(-x^{2}/2\cos^{2}\left(\vartheta\right)\right)
d\vartheta=\frac{\sqrt{\pi}}{\sqrt{2}x}\exp\left(-x^{2}/2\right).
\end{equation}
By equation (\ref{eq:echo1}) in Proposition \ref{Pelem}, 
and by equations (\ref{eq:key2}),  (\ref{eq:key3}), and 
(\ref{eq:key4}), we get (\ref{eq:key1}), which completes the proof of 
Theorem \ref{Tkeyres}.
\end{remk}
\subsection{Estimates on the  heat kernel  for $ \protect\underline{L}^{X 
\prime }\vert_{db=0}$  away 
from $\widehat{i}_{a}\mathcal{N}\left(k^{-1}\right)$}%
\label{subsec:estaw}
As in \cite[section 3.4]{Bismut08b}, we identify the total space 
of the normal bundle
$N_{X\left(\gamma\right)/X}$ to the symmetric space $X$ via the 
normal geodesic coordinate based at $X\left(\gamma\right)$. 

We proceed as in \cite[section 3.6]{Bismut08b}. Note that 
$\Ad\left(k^{-1}\right)$ acts on $N\vert_{X\left(\gamma\right)}$. Set
\index{Nk@$N\left(k^{-1}\right)$}%
\begin{equation}
    N\left(k^{-1}\right)=\left\{Y^{N}\in 
    N\vert_{X\left(\gamma\right)},\Ad\left(k^{-1}\right)Y^{N}=Y^{N}\right\}.
    \label{eq:bubble1}
\end{equation}
Then $N\left(k^{-1}\right)$ is the vector bundle on $X\left(\gamma\right)$ associated with the eigenspace of 
$\mathfrak k$ corresponding to the eigenvalue $1$ of 
$\Ad\left(k^{-1}\right)$, i.e., with the Lie algebra
\index{kk@$\mathfrak 
k\left(k^{-1}\right)$}%
$\mathfrak 
k\left(k^{-1}\right)$ of the centralizer of $k^{-1}$ in $K$.  Let 
\index{Nk@$\mathcal{N}\left(k^{-1}\right)$}%
$\mathcal{N}\left(k^{-1}\right)$ be the total space of $N\left(k^{-1}\right)$.

Let
\index{ia@$\widehat{i}_{a}$}%
$\widehat{i}_{a}$ be the embedding $\left(x,Y^{N}\right)\in 
\mathcal{N}\left(k^{-1}\right)\to \left(x,a^{TX},Y^{N}\right)\in 
\widehat{\mathcal{X}} $. For the geometric interpretation of the set 
$\widehat{i}_{a}\mathcal{N}\left(k^{-1}\right)$, we refer to 
\cite[Proposition 3.6.1]{Bismut08b}.

For $a>0$, set
\index{Ka@$K_{-a}$}%
\begin{equation}\label{eq:sma1}
K_{-a}s\left(x,Y\right)=a^{\left(m+n\right)/2}s\left(x,-aY\right).
\end{equation}

  For $b>0,\vartheta\in \left[0,\frac{\pi}{2}\right[$, by analogy 
    with \cite[eq. (9.1.2)]{Bismut08b}, where only the case  
    $\vartheta=0$ was considered,  set
    \index{LXbt@$\underline{\mathcal{L}}^{X \prime }_{b,\vartheta}$}%
    \index{LX@$\underline{L}^{X \prime}\vert_{db=0}$}%
    \begin{align}\label{eq:ors9}
&\underline{\mathcal{L}}^{X \prime }_{b,\vartheta}=K_{-b/\cos\left(\vartheta\right)}\mathcal{L}^{X 
\prime }_{b,\vartheta}K_{-b/\cos\left(\vartheta\right)}^{-1},
&\underline{L}^{X \prime}\vert_{db=0}=
K_{-b/\cos\left(\vartheta\right)}L^{X \prime }\vert_{db=0}K_{-b/\cos\left(\vartheta\right)}^{-1}
\end{align}
By (\ref{eq:glub2}), we get
 \begin{multline}\label{eq:glub2bis1}
\underline{\mathcal{L}}^{X \prime }_{b,\vartheta}=\frac{b^{4}}{2\cos^{2}\left(\vartheta\right)}\left\vert  \left[Y^{ N},Y^{TX}\right]\right\vert^{2}
     +\frac{1}{2} \Biggl( 
     -\frac{\cos^{2}\left(\vartheta\right)}{b^{4}}\Delta^{TX\oplus N}+\frac{\left\vert  
     Y^{TX}\right\vert^{2}}{\cos^{2}\left(\vartheta\right)}+\
     \left\vert  Y^{N}\right\vert^{2}  \\
     -\frac{1}{b^{2}}\left(m+\ct n\right)\Biggr) 
    +\frac{N^{\Lambda\ac\left(T^{*}X \oplus N^{*}\right) 
    \prime }_{-\vartheta}}{b^{2}}  \\
     -\Biggl(\n_{Y^{ TX}}^{C^{ \infty }\left(TX \oplus 
 N,\widehat{\pi}^{*} \left( \Lambda\ac\left(T^{*}X \oplus 
 N^{*}\right)\otimes S^{\overline{TX}} \otimes F
 \right) \right),f*,\widehat{f} }
	    \\
  -ic\left(
   \theta\ad\left(Y^{N}\right) \right) 
   -i\widehat{c}\left(\ad\left(Y^{N}\right)\vert_{\overline{TX}}\right)
   -i\rho^{F}\left(Y^{N}\right)\Biggr).
   \end{multline} 
   Also by (\ref{eq:co29x1}), (\ref{eq:co36}), and (\ref{eq:ors9}), we obtain
   \begin{multline}\label{eq:glub2bis1x}
\underline{L}^{X \prime 
}\vert_{db=0}=\underline{\mathcal{L}}^{X \prime }_{b,\vartheta}
+\frac{d\vartheta}{\sqrt{2}b}\Biggl(\frac{b^{3}\sin\left(\vartheta\right)}{\cos^{2}\left(\vartheta\right)}
ic\left(\left[Y^{N},Y^{TX}\right] \right)\\
+\frac{\sin\left(\theta\right)}{b}\left(\mathcal{D}^{TX}-i\mathcal{D}^{N}\right) 
+\frac{b}{\cos^{2}\left(\vartheta\right)}
\widehat{c}\left(\overline{Y}^{TX}\right)\Biggr).
\end{multline}
\begin{prop}\label{simide}
The following identity holds:
\begin{multline}\label{eq:fair1}
\Trs^{\left[\gamma\right],\odd}\left[\exp\left(-\underline{L}^{X 
\prime}\vert_{db=0}\right)\right]=
-\frac{d\vartheta}{\sqrt{2}b}\Trs^{\left[\gamma\right]}\Biggl[\Biggl(\frac{b^{3}\sin\left(\vartheta\right)}{\cos^{2}\left(\vartheta\right)}
ic\left(\left[Y^{N},Y^{TX}\right] \right)\\
+\frac{\sin\left(\theta\right)}{b}\left(\mathcal{D}^{TX}-i\mathcal{D}^{N}\right) 
+\frac{b}{\cos^{2}\left(\vartheta\right)}
\widehat{c}\left(\overline{Y}^{TX}\right)\Biggr)
\exp\left(-\underline{\mathcal{L}}^{X \prime }_{\bt}\right)\Biggr].
\end{multline}
\end{prop}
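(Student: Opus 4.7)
The plan is a short Duhamel calculation exploiting $\left(d\vartheta\right)^{2}=0$, followed by the cyclicity of the orbital supertrace.

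By (\ref{eq:glub2bis1x}), we can write
\begin{equation*}
\underline{L}^{X \prime }\vert_{db=0}=\underline{\mathcal{L}}^{X \prime }_{b,\vartheta}+d\vartheta\, M_{b,\vartheta},
\end{equation*}
where
\begin{equation*}
M_{b,\vartheta}=\frac{1}{\sqrt{2}b}\left(\frac{b^{3}\sin\left(\vartheta\right)}{\cos^{2}\left(\vartheta\right)}ic\left(\left[Y^{N},Y^{TX}\right]\right)+\frac{\sin\left(\vartheta\right)}{b}\left(\mathcal{D}^{TX}-i\mathcal{D}^{N}\right)+\frac{b}{\cos^{2}\left(\vartheta\right)}\widehat{c}\left(\overline{Y}^{TX}\right)\right)
\end{equation*}
is precisely the parenthesis in the right-hand side of (\ref{eq:fair1}) up to the $-\frac{d\vartheta}{\sqrt{2}b}$ prefactor. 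Since $\left[0,\frac{\pi}{2}\right[$ is $1$-dimensional, $\left(d\vartheta\right)^{2}=0$. Applying Duhamel's formula and discarding terms of degree $\geq 2$ in $d\vartheta$, I obtain
\begin{equation*}
\exp\left(-\underline{L}^{X \prime }\vert_{db=0}\right)=\exp\left(-\underline{\mathcal{L}}^{X \prime }_{b,\vartheta}\right)-d\vartheta\int_{0}^{1}\exp\left(-u\underline{\mathcal{L}}^{X \prime }_{b,\vartheta}\right)M_{b,\vartheta}\exp\left(-\left(1-u\right)\underline{\mathcal{L}}^{X \prime }_{b,\vartheta}\right)du.
\end{equation*}

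Next I apply $\Trs^{\left[\gamma\right],\odd}$. The key observation is that inspection of (\ref{eq:glub2bis1}) shows that $\underline{\mathcal{L}}^{X \prime }_{b,\vartheta}$ lies in the even part of $\widehat{c}\left(\overline{TX}\right)$: indeed the only Clifford variable from $\widehat{c}\left(\overline{TX}\right)$ appearing is $\widehat{c}\left(\ad\left(Y^{N}\right)\vert_{\overline{TX}}\right)$, which by (\ref{eq:ors1}) is quadratic in $\widehat{c}$, hence even. Consequently $\exp\left(-\underline{\mathcal{L}}^{X \prime }_{b,\vartheta}\right)$ is even in $\widehat{c}\left(\overline{TX}\right)$, and by the very definition of $\Tr^{\odd}$ recalled in subsection \ref{subsec:trasup}, the contribution of this term vanishes.

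It therefore remains to apply $\Trs^{\left[\gamma\right],\odd}$ to the Duhamel integral. Since $d\vartheta$ is odd while $\exp\left(-u\underline{\mathcal{L}}^{X \prime }_{b,\vartheta}\right)$ has total even parity, $d\vartheta$ commutes with it; pulling $d\vartheta$ outside the integrand and using the convention (\ref{eq:sixt2}) gives
\begin{equation*}
\Trs^{\left[\gamma\right],\odd}\left[\exp\left(-\underline{L}^{X \prime }\vert_{db=0}\right)\right]=-d\vartheta\int_{0}^{1}\Trs^{\left[\gamma\right],\odd}\left[\exp\left(-u\underline{\mathcal{L}}^{X \prime }_{b,\vartheta}\right)M_{b,\vartheta}\exp\left(-\left(1-u\right)\underline{\mathcal{L}}^{X \prime }_{b,\vartheta}\right)\right]du.
\end{equation*}
Each term inside the $u$-integral is odd in $\widehat{c}\left(\overline{TX}\right)$, so $\Trs^{\left[\gamma\right],\odd}$ reduces to the ordinary $\Z_{2}$-graded supertrace $\Trs^{\left[\gamma\right]}$ (as in the remark following (\ref{eq:co38})); moreover, by the super-cyclicity of the orbital supertrace --- which holds since $\underline{\mathcal{L}}^{X \prime }_{b,\vartheta}$ is even --- the $u$-dependence drops out and the integral equals $\Trs^{\left[\gamma\right]}\left[M_{b,\vartheta}\exp\left(-\underline{\mathcal{L}}^{X \prime }_{b,\vartheta}\right)\right]$. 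Substituting the explicit form of $M_{b,\vartheta}$ yields (\ref{eq:fair1}).

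The proof is essentially bookkeeping; the only genuine point is verifying the parity of $\underline{\mathcal{L}}^{X \prime }_{b,\vartheta}$ in $\widehat{c}\left(\overline{TX}\right)$, which is what forces the zeroth-order Duhamel term to vanish and thus produces the clean formula.
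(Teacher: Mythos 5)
Your overall strategy --- decompose via (\ref{eq:glub2bis1x}), Duhamel to first order in $d\vartheta$, then use that $\Trs^{\left[\gamma\right],\odd}$ vanishes on supercommutators to collapse the $u$-integral --- is exactly the paper's argument, just spelled out. However, there is a factual error in your justification of the vanishing of the zeroth-order term, and it recurs in your remark about the $u$-integral.

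You claim that inspection of (\ref{eq:glub2bis1}) shows $\underline{\mathcal{L}}^{X\prime}_{b,\vartheta}$ lies in $\widehat{c}^{\even}\left(\overline{TX}\right)$, since the only $\widehat{c}\left(\overline{TX}\right)$ variable visible is $\widehat{c}\left(\ad\left(Y^{N}\right)\vert_{\overline{TX}}\right)$, which is quadratic. This overlooks the term $N^{\Lambda\ac\left(T^{*}X\oplus N^{*}\right)\prime}_{-\vartheta}/b^{2}$. By (\ref{eq:comp1z1}),
\[
N_{-\vartheta}^{\Lambda\ac\left(T^{*}X \oplus N^{*}\right) \prime }
=\cos\left(\vartheta\right)N^{\Lambda\ac\left(T^{*}X\oplus N^{*}\right)}
-\frac{1}{2}\sin\left(\vartheta\right)\sum_{i=1}^{m}c\left(e_{i}\right)\widehat{c}\left(\overline{e}_{i}\right)
+\frac{m}{2}\left(1-\cos\left(\vartheta\right)\right),
\]
and $\widehat{c}\left(\overline{e}_{i}\right)$ is a generator, hence odd in $\widehat{c}\left(\overline{TX}\right)$. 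So for $\vartheta>0$ the operator $\underline{\mathcal{L}}^{X\prime}_{b,\vartheta}$ is \emph{not} in $\End\left(\Lambda\ac\right)\otimes\widehat{c}^{\even}\left(\overline{TX}\right)\otimes\End\left(F\right)$, and neither is its exponential. The same misconception underlies your assertion that each term inside the $u$-integral is ``odd in $\widehat{c}\left(\overline{TX}\right)$'': the terms $ic\left(\left[Y^{N},Y^{TX}\right]\right)$ and $\mathcal{D}^{TX}-i\mathcal{D}^{N}$ are even in $\widehat{c}\left(\overline{TX}\right)$ (they involve only $c\left(TX\oplus N\right)$).

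What is true --- and what actually makes the argument work --- is the \emph{total} $\Z_{2}$-parity in $\End\left(\Lambda\ac\left(T^{*}X\oplus N^{*}\right)\right)\ho\widehat{c}\left(\overline{TX}\right)$. In the problematic $N^{\prime}$-term, $c\left(e_{i}\right)$ (odd in $\Lambda\ac$) is paired with $\widehat{c}\left(\overline{e}_{i}\right)$ (odd in $\widehat{c}$), so the product is \emph{totally} even; likewise every other term in (\ref{eq:glub2bis1}) is totally even, so $\underline{\mathcal{L}}^{X\prime}_{b,\vartheta}$ and hence $\exp\left(-\underline{\mathcal{L}}^{X\prime}_{b,\vartheta}\right)$ are totally even. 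As recalled in subsection \ref{subsec:trasup}, $\Trs^{\odd}$ vanishes precisely on the totally even part of $\End\left(\Lambda\ac\right)\ho\widehat{c}\left(\overline{\mathfrak p}\right)\otimes\End\left(E\right)$ --- this, not $\widehat{c}$-parity alone, is what kills the zeroth-order term. Similarly, $M_{b,\vartheta}$ is totally odd (each summand is a product of one odd factor from $\Lambda\ac$ or $\widehat{c}$ with even factors), and it is on the totally odd part that $\Trs^{\odd}$ reduces to the ordinary $\Z_{2}$-graded supertrace, as in the remark following (\ref{eq:co38}). With the parity argument corrected in this way, the rest of your Duhamel-plus-supercyclicity bookkeeping goes through and matches the paper's (very terse) proof.
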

\begin{proof}
As we saw in subsection \ref{subsec:trasup}, 
$\Trs^{\left[\gamma\right],\odd}$ is  a 
 supertrace, i.e.,  it vanishes on 
supercommutators. Then (\ref{eq:fair1}) follows from 
(\ref{eq:glub2bis1x}).
\end{proof}
\begin{defin}\label{Dlastke}
 Let
  \index{qXbt@$\underline{q}^{X \prime }_{b,\vartheta,t}\left(\left(x,Y\right),\left(x',Y'\right)\right)$}%
  $\underline{q}^{X \prime }_{b,\vartheta,t}\left(\left(x,Y\right),\left(x',Y'\right)\right)$ be the smooth kernel associated with 
  the operator 
  $\exp\left(-t\underline{\mathcal{L}}^{X \prime}_{\bt}\right)$. When $t=1$, we use 
   the notation  
  \index{qXb@$\underline{q}^{X \prime}_{b,\vartheta}\left(\left(x,Y\right),\left(x',Y'\right)\right)$}%
  $\underline{q}^{X \prime 
  }_{b,\vartheta}\left(\left(x,Y\right),\left(x',Y'\right)\right)$.
\end{defin}

 First, we extend the estimates in \cite[Theorem 9.1.1]{Bismut08b} 
 that are valid for $\vartheta=0$.
\begin{thm}\label{TLARGE1QU}
    Given $0<\epsilon\le M <+ \infty$, there exist 
    $C_{\epsilon,M}>0,C'_{\epsilon,M}>0$ such that  for $b\ge 1,0\le 
    \vartheta<\frac{\pi}{2},\epsilon\le t\le M$,
    $\left(x,Y\right),\left(x',Y'\right)\in\widehat{\mathcal{X}}$,
    \begin{multline}
       \left\vert  \underline{q}^{X\prime}_{b,\vartheta,t}\left(\left(x,Y\right),\left(x',Y^{ \prime }\right)\right)\right\vert
       \le 
       C_{\epsilon,M}\left(b/\cos^{1/2}\left(\vartheta\right)\right)
       ^{4m+2n} \\
       \exp\left(-C'_{\epsilon,M}\left(\frac{d^{2}\left(x,x'\right)}{\cos^{2}\left(\vartheta\right)}
       +\left\vert  
	   Y\right\vert^{2}+\left\vert  Y^{\prime 
	   }\right\vert^{2}\right) \right) .
	\label{eq:large5globbis}
    \end{multline}
    
      Given 
       $\beta>0,0<\epsilon\le M<+ \infty$, there exist
       $\eta_{M}>0, C_{\epsilon,M}>0,C'_{\epsilon,M}>0, C''_{\gamma,\beta,M}>0$ such that  for 
       $b\ge 1,0\le\vartheta<\frac{\pi}{2},\epsilon\le t\le M$,
       $\left(x,Y\right)
       \in\widehat{\mathcal{X}}$, if 
       $d\left(x,X\left(\gamma\right)\right)\ge \beta$, 
       \begin{multline}
	   \left\vert 
	   \underline{q}^{X\prime}_{b,\vartheta,t}\left(\left(x,Y\right),
	   \gamma\left(x,Y\right)\right)\right\vert
	      \le C_{\epsilon,M}\left(b/\cos^{1/2}\left(\vartheta\right)\right)
	      ^{4m+2n} \\
	      \exp\left(-C'_{\epsilon,M}\left( \frac{d^{2}_{\gamma}\left(x\right)+
	      \left\vert  
	      a\right\vert^{2}}{\cos^{2}\left(\vartheta\right)}
	      +\left\vert  
	       Y\right\vert^{2}\right)-C''_{\gamma,\beta,M}
	       \exp\left(-2\eta_{M}\left\vert 
	       Y^{TX}\right\vert\right)b^{4}/\cos^{2}\left(\vartheta\right)\right) .
	   \label{eq:large5terqu}
       \end{multline}
  
       There exists $\eta>0$ such that given $\beta>0,\mu>0$, there exist  $C>0,C'>0, 
       C''_{\gamma,\beta,\mu}>0$ such that  for $b\ge 
       1,0\le\vartheta<\frac{\pi}{2},
	   \left(x,Y\right)
	   \in\widehat{\mathcal{X}}$, if 
	   $d\left(x,X\left(\gamma\right)\right)\le \beta,\left\vert  
	   Y^{TX}-a^{TX}\right\vert\ge \mu$,
	   \begin{multline}
	      \left\vert 
	      \underline{q}^{X\prime}_{b,\vartheta}\left(\left(x,Y\right),
	      \gamma\left(x,Y\right)\right)\right\vert
		  \le 
		  C\left(b/\cos^{1/2}\left(\vartheta\right)\right)^{4m+2n}\\
		  \exp\left(-C' \left( \frac{\left\vert  
		  a\right\vert^{2}}{
		  \cos^{2}\left(\vartheta\right)}+\left\vert  
		   Y\right\vert^{2} \right) -C''_{\gamma,\beta,\mu}
		   \exp\left(-2\eta\left\vert Y^{TX}\right\vert\right)b^{4}
		   /\cos^{2}\left(\vartheta\right)\right)  .
	       \label{eq:large5bisa2qu}
	   \end{multline}
	   
				     
There exist $c  >0,C>0,C' _{\gamma} >0$ 
				      such that for 
$b\ge 1, 0\le\vartheta<\frac{\pi}{2}, f\in \mathfrak p^{\perp}
\left(\gamma\right),\left\vert  f\right\vert\le 
				    1,
				    Y\in \left(TX \oplus N\right)_{x},\left\vert  
				      Y^{TX}-a^{TX}\right\vert\le 1$,
				      \begin{multline}
\left\vert\underline{ 
q}_{b,\vartheta}^{X 
\prime}\left(\left(e^{f}p1,Y\right),\gamma\left(e^{f}p1,Y\right)\right)\right\vert\\
\le c 
\left(b/\cos^{1/2}\left(\vartheta\right)\right)^{4m+2n} \exp\Biggl(-C 
\left( \frac{\left\vert  
a\right\vert^{2}}{\cos^{2}\left(\vartheta\right)}+
\left\vert Y^{N}\right\vert^{2} \right) \\
-C'_{\gamma}  \left(\left\vert f\right\vert^{2}+ \left\vert
Y^{TX}-a^{TX}\right\vert^{2} 
\right)b^{4}/\cos^{2}\left(\vartheta\right)\\
-C'_{\gamma}  \left\vert \left(
\Ad\left(k^{-1}\right)-1\right)Y^{N}\right\vert b^{2}/ \ct
-C'_{\gamma} \left\vert
\left[a^{TX},Y^{N}\right]\right\vert b^{2}/\cos\left(\vartheta\right)\Biggr).
					  \label{eq:bull2bisqu}
	\end{multline}
	
	The above inequalities remain valid when replacing 
	$\underline{q}_{\bt,t}^{X \prime }$ 
	or 
	$\underline{q}^{X \prime }_{\bt}$ by 
	$\frac{\ct}{b^{2}}\n^{V}\underline{q}_{\bt,t}^{X \prime }$ or 
	$\frac{\ct}{b^{2}}\n^{V}\underline{q}^{X \prime }_{\bt}$.
   \end{thm}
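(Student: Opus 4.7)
The plan is to extend the analysis of \cite[Theorem 9.1.1]{Bismut08b}, which handles the case $\vartheta=0$, by tracking uniformly the dependence on the extra parameter $\vartheta\in[0,\frac{\pi}{2}[$. Comparing (\ref{eq:glub2bis1}) with \cite[eq. (9.1.2)]{Bismut08b}, the rescaled operator $\underline{\mathcal{L}}^{X\prime}_{\bt}$ differs from its $\vartheta=0$ counterpart essentially by the substitution $b\mapsto b'=b/\cos^{1/2}(\vartheta)$ in the second-order part, together with the weights $\cos(\vartheta),\cos^{2}(\vartheta)$ appearing in the $Y^{N}$ direction and in the quartic term $\frac{1}{2}\left\vert[Y^{N},Y^{TX}]\right\vert^{2}$, and the twist of the flat connection. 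This heuristic explains both the factor $(b/\cos^{1/2}(\vartheta))^{4m+2n}$ and the weight $\cos^{-2}(\vartheta)$ attached to $d^{2}(x,x')$ in the Gaussian exponentials. I will therefore rewrite the operator so that $b'$ becomes the effective large parameter and transplant, line by line, the arguments of \cite[chapters 12--15]{Bismut08b}, checking that each constant depending on $\vartheta$ can either be absorbed into the universal constant $C$ or estimated by the exponential factor $\exp(-C'|a|^{2}/\cos^{2}(\vartheta))$ which appears in every off-diagonal estimate.

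For the diagonal bound (\ref{eq:large5globbis}), my plan is to use the probabilistic representation of $\exp(-t\underline{\mathcal{L}}^{X\prime}_{\bt})$ via the hypoelliptic diffusion $(x_{s},Y_{s})$ on $\widehat{\mathcal{X}}$. Small-time Gaussian upper bounds of Malliavin-Kusuoka-Stroock type, already established in \cite[chapter 14]{Bismut08b}, give an estimate whose principal scaling is governed by $b'$, together with Gaussian decay in $|Y|^{2}$ and $|Y'|^{2}$. The Gaussian decay in $d^{2}(x,x')/\cos^{2}(\vartheta)$ comes precisely from the fact that the horizontal displacement of the diffusion is driven by $\ct Y^{TX}/b$, as is visible in the drift in (\ref{eq:glub2bis1x}), so that a horizontal displacement of size $d$ costs time proportional to $d\cos(\vartheta)^{-1}$.

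The off-diagonal estimates (\ref{eq:large5terqu})--(\ref{eq:bull2bisqu}) localize the trace integrand near the submanifold $\widehat{i}_{a}\mathcal{N}(k^{-1})$. As in \cite[section 9.1]{Bismut08b}, the displacement function $d_{\gamma}^{2}$ grows quadratically in the normal directions to $X(\gamma)$, yielding the exponential suppression $\exp(-C'd_{\gamma}^{2}(x)/\cos^{2}(\vartheta))$; when $d_{\gamma}(x)\le\beta$ but $Y^{TX}-a^{TX}$ is not small, the extra suppression comes from the geodesic-flow limit: for $b\to+\infty$, $Y^{TX}/b$ must match the tangent vector of the geodesic from $x$ to $\gamma x$, which equals $a^{TX}/|a|$ on $X(\gamma)$, and a mismatch of size $\mu$ costs exponentially $\exp(-C''\exp(-2\eta|Y^{TX}|)b^{4}/\cos^{2}(\vartheta))$ because of the degeneration of the Brownian measure on broken path space. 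Finally, (\ref{eq:bull2bisqu}) is the sharpest version, recording how the remaining transverse variables $f\in\mathfrak{p}^{\perp}(\gamma)$, together with the components of $Y^{N}$ not commuting with $a$ or $k^{-1}$, enter with their intrinsic quadratic weights; these come respectively from the term $\frac{1}{2}|[Y^{N},Y^{TX}]|^{2}\cdot b^{4}/\cos^{2}(\vartheta)$ in (\ref{eq:glub2bis1}) and from a Taylor expansion of the distance function $d^{2}(e^{f}p1,\gamma e^{f}p1)$ near $X(\gamma)$. The derivative bound on $\frac{\ct}{b^{2}}\n^{V}\underline{q}^{X\prime}_{\bt,t}$ then follows by differentiating the heat equation and iterating, the factor $\ct/b^{2}$ being exactly the scale of $\n^{V}$ under $K_{-b/\ct}$.

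The main obstacle I foresee is the uniformity as $\vartheta\to\frac{\pi}{2}$, where $\underline{\mathcal{L}}^{X\prime}_{\bt}$ degenerates, both in its $Y^{N}$ kinetic term and through the blowing-up weight $\cos(\vartheta)^{-2}$ on $|Y^{TX}|^{2}$. The saving grace is that on the support of $\Trs^{[\gamma]}$ the prefactor $\exp(-C'|a|^{2}/\cos^{2}(\vartheta))$, coming from the geodesic lower bound (\ref{eq:boul2}) via the identification of the diffusion's endpoint with $\gamma\cdot(x,Y)$, provides an exponentially small factor that dominates any polynomial loss in $\cos(\vartheta)^{-1}$. The bookkeeping by which this compensation propagates through the Malliavin integration by parts, the broken-path probabilistic representation, and the subelliptic \emph{a priori} estimates, is expected to be the most delicate aspect of section \ref{sec:unilar}, and to be carried out by combining the general scheme of \cite[chapter 15]{Bismut08b} with the uniform scalar estimates of section \ref{sec:unisca}.
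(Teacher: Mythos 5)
Your outline captures the right heuristics — the effective parameter $b' = b/\cos^{1/2}(\vartheta)$, the prefactor $(b')^{4m+2n}$, and the drift-based intuition for the $\cos^{-2}(\vartheta)$ weight — but it leaves the crucial step unresolved. The reference estimate \cite[Theorem 9.1.1]{Bismut08b} that you plan to transplant has exponent $-C'd^{2}(x,x')$ with no $\vartheta$-weight, and the substitution $b\mapsto b'$ does not by itself turn this into the required $-C'd^{2}(x,x')/\cos^{2}(\vartheta)$. You say this weight comes from the horizontal drift, but you never convert that intuition into an estimate, nor indicate how it propagates through the Malliavin and probabilistic machinery of \cite[chapters 12--15]{Bismut08b} that you propose to re-derive uniformly in $\vartheta$. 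The paper's actual device is a single decomposition: introduce the auxiliary operator $\underline{\mathcal{L}}^{X\prime\prime}_{b,\vartheta}$ obtained from (\ref{eq:glub2bis1}) by replacing $|Y^{TX}|^{2}/\cos^{2}(\vartheta)$ with $\frac{1}{2}|Y^{TX}|^{2}$; modulo terms harmless for $b\ge 1$, this is $\underline{\mathcal{L}}^{X\prime}_{b',0}$, so \cite[Theorem 9.1.1]{Bismut08b} applies directly with $b'$ in place of $b$. The surplus $\underline{\mathcal{L}}^{X\prime}_{b,\vartheta}-\underline{\mathcal{L}}^{X\prime\prime}_{b,\vartheta}=\frac{1}{2}\bigl(\cos^{-2}(\vartheta)-\frac{1}{2}\bigr)|Y^{TX}|^{2}\ge\frac{1}{4\cos^{2}(\vartheta)}|Y^{TX}|^{2}$ is a nonnegative scalar potential, and since the Feynman--Kac path satisfies $\dot x_{s}=Y^{TX}_{s}$, Cauchy--Schwarz gives $\frac{1}{2}\bigl(\cos^{-2}(\vartheta)-\frac{1}{2}\bigr)\int_{0}^{t}|Y^{TX}_{s}|^{2}\,ds\ge d^{2}(x,x')/4\cos^{2}(\vartheta)t$. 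Thus the heat kernel of $\underline{\mathcal{L}}^{X\prime}_{b,\vartheta}$ is bounded by the product of $\exp(-d^{2}(x,x')/4\cos^{2}(\vartheta)t)$ and the known bound on the heat kernel of $\underline{\mathcal{L}}^{X\prime\prime}_{b,\vartheta}$. This one observation replaces the line-by-line transplant and delivers all four estimates at once.

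There is also a misconception about what controls the polynomial loss in $\cos^{-1}(\vartheta)$. You write that $\exp(-C'|a|^{2}/\cos^{2}(\vartheta))$ is the saving grace that dominates it; but that factor is absent from (\ref{eq:large5globbis}), which is a pointwise bound valid for all $(x,Y),(x',Y')$ with $d(x,x')$ possibly zero, so there is no such suppression and the prefactor $(b/\cos^{1/2}(\vartheta))^{4m+2n}$ is simply part of the statement. The suppression by $\exp(-C'|a|^{2}/\cos^{2}(\vartheta))$ enters only later, when the kernel is restricted to $(x,Y),\gamma(x,Y)$ and one uses $d_{\gamma}(x)\ge|a|$. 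Relatedly, you treat the blow-up of $\cos^{-2}(\vartheta)|Y^{TX}|^{2}$ as a degeneration to be overcome, when in the paper it is precisely the extra coercivity that makes the estimate go through.
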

   \begin{proof}
   The proof of our theorem is deferred to section \ref{sec:unilar}.
\end{proof}
\subsection{A rescaling of the coordinates $\left(f,Y\right)$}%
\label{subsec:rescfY}

In the sequel, we use the notation
\begin{multline}\label{eq:fair2}
\underline{r}^{X 
\prime}_{\bt}\left(\left(x,Y\right),\left(x',Y'\right)\right)=
-\frac{d\vartheta}{\sqrt{2}b}\Biggl(\frac{b^{3}\sin\left(\vartheta\right)}{\cos^{2}\left(\vartheta\right)}
ic\left(\left[Y^{N},Y^{TX}\right] \right)\\
+\frac{\sin\left(\theta\right)}{b}\left(\mathcal{D}^{TX}-i\mathcal{D}^{N}\right) 
+\frac{b}{\cos^{2}\left(\vartheta\right)}
\widehat{c}\left(\overline{Y}^{TX}\right)\Biggr)\underline{q}^{X 
\prime}_{\bt}\left(\left(x,Y\right),\left(x',Y'\right)\right).
\end{multline}

We start proving Theorem \ref{Tlimi}. We proceed as in \cite[section 9.2]{Bismut08b}.
When $f\in 
\mathfrak p^{\perp}\left(\gamma\right)$, we identify $e^{f}\in G$ and 
$e^{f}p1\in X$. By \cite[eq. (4.3.10)]{Bismut08b}, we get
\begin{multline}\label{eq:final1}
\Trs^{\left[\gamma\right],\odd}\left[\exp\left(-L^{X \prime }\vert_{db=0}\right)\right]\\
=
\int_{\widehat{\pi}^{-1}\mathfrak p^{\perp}\left(\gamma\right) 
}^{}\Trs^{\odd}
\left[\gamma 
\underline{r}^{X\prime}_{b,\vartheta}\left(\left(e^{f},Y\right),\gamma\left(e^{f},Y\right)\right)\right]r\left(f\right)dYdf.
\end{multline}
As explained in detail in \cite[section 4.2]{Bismut08b}, in the 
right-hand side of (\ref{eq:final1}), $\gamma$ is viewed  as 
acting on $X$, this action lifting to $\Lambda\ac\left(T^{*}X \oplus 
N^{*}\right) \otimes S^{\overline{TX}} \otimes F$.
Also  $r\left(f\right)>0$ is a Jacobian.  
By \cite[eq. (3.4.36)]{Bismut08b},  we have the estimate
\begin{equation}\label{eq:bub2}
r\left(f\right)\le C\exp\left(C'\left\vert  f\right\vert\right).
\end{equation}

By \cite[eq. (3.4.4)]{Bismut08b}, there exists $C_{\gamma}>0$ such 
that 
\begin{equation}\label{eq:bub1}
d_{\gamma}\left(e^{f}\right)\ge \left\vert  
a\right\vert+C_{\gamma}\left\vert  f\right\vert.
\end{equation}

By \cite[eq. (9.1.6)]{Bismut08b}, given $\eta>0, C'>0,C''>0,C'''>0$,  
there exist $c>0,d>0,e>0$ such that for $b\ge 1, Y^{TX}\in TX$, then
\begin{multline}
    C'\left\vert  
    Y^{TX}\right\vert^{2}+C''\exp\left(-2\eta\left\vert  
    Y^{TX}\right\vert\right)b^{4}-C'''\log\left(b\right)\ge \frac{C'}{2}\left\vert  
    Y^{TX}\right\vert^{2}\\
    +\frac{C''}{2}\exp\left(-2\eta
    \left\vert  Y^{TX}\right\vert\right)b^{4}-c\ge
    \frac{C'}{4}\left\vert  
    Y^{TX}\right\vert^{2}+e\log^{2}\left(b\right)-d.
    \label{eq:lowbd3}
\end{multline}

Take $\beta\in ]0,1]$. By 
equation (\ref{eq:large5terqu}) in Theorem 
\ref{TLARGE1QU}, by the last part of this theorem, and by 
(\ref{eq:fair2})--(\ref{eq:lowbd3}), there 
exist $C>0,C'>0,C''>0$ such that for $b\ge 1,\vartheta
\in\left[0,\frac{\pi}{2}\right[$, 
\begin{multline}\label{eq:bibi1}
\left\vert  \int_{\substack{\left(f,Y\right)\in\widehat{\pi}^{-1}\mathfrak p^{\perp}\left(\gamma\right) 
    \\ \left\vert  f\right\vert >\beta}}^{}\Trs^{\odd}
    \left[\gamma 
    \underline{r}^{X\prime}_{b,\vartheta}\left(\left(e^{f},Y\right),\gamma\left(e^{f},Y\right)\right)\right]r
    \left(f\right)dYdf\right\vert\\
    \le 
    C\exp\left(-C'\left\vert  a\right\vert^{2}/\cos^{2}\left(\vartheta\right)
    -C''\log^{2}\left(b\right)\right).
\end{multline}

By   equation  (\ref{eq:large5bisa2qu}) in Theorem \ref{TLARGE1QU}, 
by the last statement in this theorem, and by (\ref{eq:lowbd3}),  given $\beta>0,\mu>0$, we may as 
well obtain a similar result for the integral of 
\begin{equation}\label{eq:spli1y1}
\Trs^{\odd}
    \left[\gamma 
    \underline{r}^{X\prime}_{b,\vartheta}\left(\left(e^{f},Y\right),
    \gamma\left(e^{f},Y\right)\right)\right]r\left(f\right)
\end{equation}
    over the region considered
    in (\ref{eq:large5bisa2qu}). In particular, as $b\to + \infty $, 
    both integrals tend to $0$, and they can be estimated uniformly 
    by an expression like the right-hand side of (\ref{eq:bub3}).

As in \cite[section 9.2]{Bismut08b}, we trivialize $TX,N$ by parallel transport 
with respect to the connections $\n^{TX}, \n^{N}$ along the 
geodesics $t\in \R\to e^{tf}p1\in X$.  In this trivialization 
$Y^{N}\in N$ splits as
\begin{equation}\label{eq:spli1}
Y^{N}=\Yok+Y^{N,\perp}, \ \Yok\in \mathfrak k\left(\gamma\right), 
Y^{N,\perp}\in \mathfrak k^{\perp}\left(\gamma\right).
\end{equation}

     To control the behaviour of (\ref{eq:final1}) as $b\to + 
   \infty $, given $\beta>0$, we may as well consider the integral
\begin{multline}\label{eq:final3}
    \int_{\substack{\left\vert  f\right\vert\le \beta\\
    \left\vert  Y^{TX}-a^{TX}\right\vert\le \beta}}
    ^{}\Trs^{\odd}
    \left[\gamma 
    \underline{r}^{X\prime}_{b,\vartheta}\left(\left(e^{f},Y\right),\gamma\left(e^{f},Y\right)\right)\right]r\left(f\right)dY^{TX}dY^{N}df\\
    =
    \left(b^{2}/\cos\left(\vartheta\right)\right)^{-2m-n+r}\int_{\substack{\left\vert  f\right\vert\le \beta b^{2}/\cos\left(\vartheta\right)\\
	\left\vert  Y^{TX}\right\vert\le \beta b^{2}/\cos\left(\vartheta\right)}}
	^{}\\
\Trs^{\odd}	\Biggl[\gamma 
	\underline{r}^{X\prime}_{b,\vartheta}\Biggl(\left(e^{\cos\left(\vartheta\right)f/b^{2}},a^{TX}_{e^{\ct f/b^{2}}}
	+\cos\left(\vartheta\right)
	Y^{TX}/b^{2},\Yok+\cos\left(\vartheta\right)Y^{N,\perp}/b^{2}
	\right),\\
	\gamma\left(e^{\cos\left(\vartheta\right)f/b^{2}},a^{TX}_{e^{\ct 
	f/b^{2}}}+\cos\left(\vartheta\right)Y^{TX}/b^{2},\Yok+
	\cos\left(\vartheta\right)Y^{N,\perp}/b^{2}\right)\Biggr)\Biggr]\\
	r\left(\cos\left(\vartheta\right)f/b^{2}\right)dY^{TX}d\Yok dY^{N,\perp}df.
    \end{multline}
    
 Using  equation (\ref{eq:bull2bisqu}) in Theorem \ref{TLARGE1QU} and 
 the last part of this theorem,   and proceeding as in \cite[eqs. 
    (9.2.8)--(9.2.12)]{Bismut08b}, we find that for $\beta>0$ small 
    enough, for $\left\vert  f\right\vert\le \beta 
    b^{2}/\cos\left(\vartheta\right),\left\vert  Y^{TX}\right\vert\le
    \beta b^{2}/\cos\left(\vartheta\right)$, then
\begin{multline}
 \left(b^{2}/\cos\left(\vartheta\right)\right)^{-2m-n} \Biggl\vert
  \underline{q}^{X\prime}_{b,\vartheta}\Biggl(\Bigl(e^{\cos\left(\vartheta\right)
  f/b^{2}},a^{TX}_{e^{\ct f/b^{2}}}
+\cos\left(\vartheta\right)Y^{TX}/b^{2}, \\
  \Yok+\cos\left(\vartheta\right)Y^{N,\perp}/b^{2}
  \Bigr),\\
  \gamma\left(e^{\cos\left(\vartheta\right)f/b^{2}},a^{TX}_{e^{\ct f/b^{2}}}
+\cos\left(\vartheta\right)
  Y^{TX}/b^{2},\Yok+\cos\left(\vartheta\right)Y^{N,\perp}/b^{2}\right)\Biggr)
  \Biggr\vert \\
					 \le C 
					 \exp\Biggl(-C'\left\vert  
					 a^{2}
					 \right\vert/\cos^{2}\left(\vartheta\right)-C' \left\vert 
					 \Yok\right\vert^{2}\\
					 -C' _{\gamma} \left(\left\vert f\right\vert^{2}+ \left\vert
					 Y^{TX}\right\vert^{2} \right) -
					 C'_{\gamma}  \left\vert \left(
					 \Ad\left(k^{-1}\right)-1\right)
					 Y^{N,\perp}\right\vert  -C'_{\gamma} \left\vert
					 \left[a,Y^{N,\perp}\right]\right\vert \Biggr).
   \label{eq:mossbrugg3}
	 \end{multline} 
	 The same argument shows that in (\ref{eq:mossbrugg3}), we 
can replace $\underline{q}^{X \prime }_{\bt}$ by 
$\frac{\ct}{b^{2}}\n^{V}\underline{q}^{X \prime }_{\bt}$.
	By \cite[eq. (9.10.4)]{Bismut08b}, we have
\begin{equation}\label{eq:ronro1}
a^{TX}_{e^{\cos\left(\vartheta\right)f/b^{2}}}=
a+\mathcal{O}\left(\left\vert  f\right\vert^{2}/b^{4}\right).
\end{equation}
By (\ref{eq:ronro1}), we deduce that
\begin{multline}\label{eq:ronro2}
b^{2}\left[\Yok+\ct Y^{N,\perp}/b^{2},a^{TX}_{e^{\ct f/b^{2}}}+\ct Y^{TX}/b^{2}\right]=
\ct\left[\Yok,Y^{TX}\right] \\
+\ct\left[Y^{N,\perp},a^{TX}_{e^{\ct f/b^{2}}}\right]+\frac{\cos^{2}\left(\vartheta\right)}{b^{2}}\left[Y^{N,\perp},
Y^{TX}\right]+\left\vert  \Yok\right\vert\mathcal{O}\left(\left\vert  
f\right\vert^{2}/b^{2}\right).
\end{multline}
By (\ref{eq:fair2}), (\ref{eq:mossbrugg3}), and (\ref{eq:ronro2}), in 
(\ref{eq:mossbrugg3}), we may as well replace $\underline{q}^{X 
\prime }_{\bt}$ by $\underline{r}^{X \prime }_{\bt}$.

	 As in \cite[section 9.2]{Bismut08b}, we are still left with 
	 the diverging term 
	 $\left(b^{2}/\cos\left(\vartheta\right)\right)^{r}$ in 
	 the right-hand side of (\ref{eq:final3}). As in 
	 this reference, this term will be dealt with using local 
	 cancellation techniques.
\subsection{A conjugation of the hypoelliptic Laplacian}%
\label{subsec:getres}
We will proceed exactly as in \cite[sections 9.3--9.5]{Bismut08b}.
\begin{defin}\label{Dnewextalg}
    Let
    \index{zg@$\underline{\mathfrak z}\left(\gamma\right)$}%
    $\underline{\mathfrak z}\left(\gamma\right)$ denote 
    another copy of $ \mathfrak z\left(\gamma\right)$, and let
    $\underline{ \mathfrak z}\left(\gamma\right)^{*}$ denote the 
    corresponding copy of the dual of $\mathfrak 
    z\left(\gamma\right)$. If $\alpha\in \Lambda\ac\left( \mathfrak 
z\left(\gamma\right)^{*}\right)$, let $\underline{\alpha}$ be
the corresponding element in $\Lambda\ac\left( \underline{\mathfrak 
z}\left(\gamma\right)^{*}\right)$.
    Also 
\index{czg@$c\left( \mathfrak 
z\left(\gamma\right)\right)$}%
\index{czg@$\widehat{c}\left( \mathfrak 
z\left(\gamma\right)\right)$}%
$c\left( \mathfrak 
z\left(\gamma\right)\right),\widehat{c}\left( \mathfrak 
z\left(\gamma\right)\right)$ denote the Clifford algebras of 
$\left(\mathfrak z\left(\gamma\right),B\vert_{ \mathfrak 
z\left(\gamma\right)}\right),\left(\mathfrak z\left(\gamma\right),-B\vert_{ \mathfrak z\left(\gamma\right)}\right)$.
\end{defin}

Let $e_{1},\ldots,e_{r}$ be a basis of $ \mathfrak z\left(\gamma\right)$, let 
$e^{1},\ldots,e^{r}$ be the corresponding dual basis of $\mathfrak 
z\left(\gamma\right)^{*}$.

As in \cite[eq. (9.3.4)]{Bismut08b}, put
\index{a@$\alpha$}
\index{a@$\widehat{\alpha}$}%
\begin{align}\label{eq:bruggmoss1}
&\alpha=\sum_{i=1}^{r}c\left(e_{i}\right)\underline{e}^{i},
&\widehat{\alpha}=\sum_{1}^{r}\widehat{c}\left(e_{i}\right)\underline{e}^{i}.
\end{align}
Then
\begin{align}\label{eq:bruggmoss2}
&\alpha\in c\left( \mathfrak z\left(\gamma\right)\right)\ho \Lambda\ac\left( \underline{\mathfrak 
z}\left(\gamma\right)^{*}\right),
&\widehat{\alpha}\in\widehat{c}\left(\mathfrak z\left(\gamma\right)\right)\ho 
\Lambda\ac\left(\underline{ \mathfrak z}\left(\gamma\right)^{*}\right).
\end{align}
Since $\mathfrak z\left(\gamma\right) \subset \mathfrak g$, from 
(\ref{eq:bruggmoss2}), we get
\begin{align}\label{eq:bruggmoss2x1}
&\alpha\in c\left( \mathfrak g\right)\ho \Lambda\ac\left( \underline{\mathfrak 
z}\left(\gamma\right)^{*}\right),
&\widehat{\alpha}\in\widehat{c}\left(\mathfrak g\right)\ho 
\Lambda\ac\left(\underline{ \mathfrak z}\left(\gamma\right)^{*}\right).
\end{align}

We proceed as in \cite[sections 3.10, 9.3, and 9.4]{Bismut08b}. Under the 
identification $TX \oplus N = \mathfrak g$, we denote by $\left(TX 
\oplus N\right)\left(\gamma\right)$ the subvector bundle of $TX 
\oplus N$ corresponding to $\mathfrak z\left(\gamma\right)$. Then 
$\left(TX \oplus N\right)\left(\gamma\right)$ is a flat vector 
subbundle of $TX \oplus N$ with respect to the connection $\n^{TX 
\oplus N,f}$ which is preserved by the left action of $\gamma$. Let 
$\left( \underline{T}X \oplus  \underline{N}\right) \left(\gamma\right)$ be 
the vector bundle on $X$ corresponding to $\underline{\mathfrak 
z}\left(\gamma\right)$. Since $\left(TX \oplus 
N\right)\left(\gamma\right)$ is a subvector bundle of $TX \oplus N$, 
it inherits a scalar product. We equip  $\left( \underline{T}X \oplus  \underline{N}\right) \left(\gamma\right)$
with the corresponding scalar product.

Then $\alpha$ can be viewed as a section of $c\left(TX \oplus 
N\right)\ho\Lambda\ac\left(\left(\underline{T}X \oplus 
\underline{N}\right)\left(\gamma\right)^{*}\right)$,
and $\widehat{\alpha}$ as a section of $\widehat{c}\left(TX \oplus 
N\right)\ho\Lambda\ac\left(\left(\underline{T}X \oplus 
\underline{N}\right)\left(\gamma\right)^{*}\right)$. Moreover, 
$\alpha$ and $\widehat{\alpha}$
 can also be considered as sections of 
$\End \left( \Lambda\ac\left(T^{*}X \oplus N^{*}\right)\right)\ho 
\Lambda\ac \left( \left(\underline{T}X \oplus 
\underline{N}\right)\left(\gamma\right)^{*} \right) $. 

From the flat connection $\n^{\Lambda\ac\left(T^{*}X \oplus 
    N^{*} \right) ,f*,\widehat{f}}$ on $\Lambda\ac\left(T^{*}X 
    \oplus N^{*}\right)$ and from the trivial connection on 
    $\Lambda\ac\left(\underline{ \mathfrak 
    z}\left(\gamma\right)^{*}\right)$,
   we obtain a  flat connection on 
    $$\Lambda\ac\left(T^{*}X \oplus 
    N^{*}\right)\ho\Lambda\ac\left(\underline{ \mathfrak 
    z}\left(\gamma\right)^{*}\right),$$
    which is still denoted 
    $\n^{\Lambda\ac\left(T^{*}X \oplus 
	N^{*} \right) ,f*,\widehat{f}}$.  
	By 
\cite[Proposition 9.4.1]{Bismut08b}, we have
\begin{equation}\label{eq:grzu7}
\n^{\Lambda\ac\left(T^{*}X \oplus 
N^{*}\right),f*,\widehat{f}}\widehat{\alpha}=0.
\end{equation}
By \cite[Proposition 9.3.2]{Bismut08b} and by (\ref{eq:comp1z1}), we get
\begin{equation}\label{eq:grzu8}
\exp\left(-\frac{b^{2}}{\cos\left(\vartheta\right)}\widehat{\alpha}\right)
N_{-\vartheta}^{\Lambda\ac\left(T^{*}X \oplus N^{*}\right) \prime }\exp\left(\frac{b^{2}}{\cos\left(\vartheta\right)}
\widehat{\alpha}\right) 
=
N_{-\vartheta}^{\Lambda\ac\left(T^{*}X \oplus N^{*}\right) \prime }+b^{2}\alpha.
\end{equation}

 As in \cite[Definition 9.5.1]{Bismut08b}, set
 \index{LXbt@$\mathfrak L^{X \prime}_{\bt}$}%
 \index{LX@$\mathfrak L^{X \prime}$}%
\begin{align}\label{eq:co62}
	&\mathfrak L^{X \prime}_{\bt}=\exp\left(-\frac{b^{2}}{\cos\left(\vartheta\right)}
\widehat{\alpha}\right)\underline{\mathcal{L}}^{X \prime }_{\bt}
\exp\left(\frac{b^{2}}{\cos\left(\vartheta\right)}\widehat{\alpha}\right),\\
&\mathfrak L^{X \prime}=\exp\left(-\frac{b^{2}}{\cos\left(\vartheta\right)}
\widehat{\alpha}\right)\underline{L}^{X \prime }\vert_{db=0}
\exp\left(\frac{b^{2}}{\cos\left(\vartheta\right)}\widehat{\alpha}\right).\nonumber 
\end{align}
Using (\ref{eq:glub2bis1}), (\ref{eq:glub2bis1x}), and 
(\ref{eq:grzu7})--(\ref{eq:co62}),   we get
\begin{align}\label{eq:co63}
	&\mathfrak L^{X \prime }_{\bt}=\underline{\mathcal{L}}^{X \prime 
	}_{\bt}+\alpha,\\
&\mathfrak L^{X \prime }=\underline{L}^{X 
\prime }\vert_{db=0}+\alpha.\nonumber 
\end{align}
In (\ref{eq:co63}), we used the fact that in the factor of 
$d\vartheta$ in the right-hand side of (\ref{eq:glub2bis1x}), no term 
in $\widehat{c}\left(\mathfrak g\right)$ appears.

\begin{defin}\label{Dnewkernel}
For $t>0$, let 
\index{qXbtxY@$\mathfrak 
q_{b,\vartheta,t}^{X \prime}\left(\left(x,Y\right),\left(x',Y'\right)\right)$}%
$ \mathfrak 
q^{X \prime}_{b,\vartheta,t}\left(\left(x,Y\right),\left(x',Y'\right)\right)$ denote  the smooth 
kernel associated with $\exp\left(-t\mathfrak{L}^{X \prime}_{b,\vartheta}\right)$. 
Also we use the notation
\index{qXb@$\mathfrak q^{X \prime}_{b,\vartheta}$}%
$\mathfrak q^{X \prime}_{b,\vartheta}$ instead of $\mathfrak 
q^{X \prime}_{b,\vartheta,1}$.

By (\ref{eq:co62}), we get
\begin{equation}
    \mathfrak 
    q_{b,\vartheta}^{X \prime}\left(\left(x,Y\right),\left(x',Y'\right)\right)=
    \exp\left(-\frac{b^{2}}{\cos\left(\vartheta\right)}\widehat{\alpha}\right)
    \underline{q}_{b,\vartheta}^{X \prime}\left(\left(x,Y\right),\left(x',Y'\right)\right)
    \exp\left(\frac{b^{2}}{\cos\left(\vartheta\right)}\widehat{\alpha}\right).
    \label{eq:bonn12a}
\end{equation}
\end{defin}

Put
\index{G@$\mathcal{G}$}%
\begin{equation}
   \mathcal{G}=\End\left(\Lambda\ac\left( \mathfrak 
   g^{*}\right)\right)\ho 
    \Lambda\ac\left(\underline{ \mathfrak 
    z}\left(\gamma\right)^{*}\right).
    \label{eq:bonn9}
\end{equation}

A basis $e_{1},\ldots,e_{m+n}$ of $\mathfrak g$ is said to be 
unimodular if the determinant of $B$ on this basis is equal to 
$\left(-1\right)^{n}$. Let $e_{1},\ldots,e_{m+n}$ be a unimodular basis of $ \mathfrak g$, 
such that $e_{1},\ldots,e_{r}$ is a basis of $\mathfrak 
z\left(\gamma\right)$. Let 
$\underline{e}^{1},\ldots,\underline{e}^{r}$ be the basis of 
$\underline{ \mathfrak z}\left(\gamma\right)^{*}$ that is dual to 
$e_{1},\ldots,e_{r}$.
Let 
\index{Trs@$\widehat{\Trs}$}%
$\widehat{\Trs}$ be the linear map from 
$\mathcal{G}$ into $\R$ that, up to permutation, vanishes on all the 
monomials in the $c\left(e_{i}\right),\widehat{c}\left(e_{i}\right), 
1\le i\le m+n,\underline{e}^{j},1\le j\le r$ 
except on $c\left(e_{1}\right)\underline{e}^{1}\ldots 
c\left(e_{r}\right)\underline{e}^{r}c\left(e_{r+1}\right)\widehat{c}\left(e_{r+1}\right)\ldots
c\left(e_{m+n}\right)\widehat{c}\left(e_{m+n}\right)$, and moreover,
\begin{multline}
    \widehat{\Trs}\left[c\left(e_{1}\right)\underline{e}^{1}\ldots 
c\left(e_{r}\right)\underline{e}^{r}c\left(e_{r+1}\right)\widehat{c}\left(e_{r+1}\right)\ldots
c\left(e_{m+n}\right)\widehat{c}\left(e_{m+n}\right)\right]\\
=\left(-1\right)^{r}\left(-2\right)^{m+n-r}
\left(-1\right)^{n-q}.
    \label{eq:bonn9a}
\end{multline}
If we assume that $e_{r+1},\ldots,e_{m+n}$ is a basis of $\mathfrak 
z^{\perp}\left(\gamma\right)$, and 
that $e^{*}_{r+1},\ldots,e^{*}_{m+n}$ is the dual basis to 
$e_{r+1},\ldots,e_{m+n}$ with respect to $B\vert_{ \mathfrak 
z^{\perp}\left(\gamma\right)}$, then (\ref{eq:bonn9a}) can be replaced by
\begin{multline}
    \widehat{\Trs}\left[c\left(e_{1}\right)\underline{e}^{1}\ldots 
c\left(e_{r}\right)\underline{e}^{r}c\left(e^{*}_{r+1}\right)\widehat{c}\left(e_{r+1}\right)\ldots
c\left(e^{*}_{m+n}\right)\widehat{c}\left(e_{m+n}\right)\right]\\
=\left(-1\right)^{r}\left(
-2\right)^{m+n-r}.
    \label{eq:bonn9ax1}
\end{multline}

We can extend the map $\widehat{\Trs}$ to a map 
$\widehat{\Trs}^{\odd}$ that maps $\mathcal{G} \ho 
c\left(\overline{TX}\right) \otimes \End\left(F\right)$ into $\C$.

Set
\index{rXbt@$\mathfrak r^{X \prime 
}_{\bt}\left(\left(x,Y\right),\left(x',Y'\right)\right)$}%
\begin{multline}\label{eq:fair3}
\mathfrak r^{X \prime 
}_{\bt}\left(\left(x,Y\right),\left(x',Y'\right)\right)=-\frac{d\vartheta}{\sqrt{2}b}\Biggl(\frac{b^{3}\sin\left(\vartheta\right)}{\cos^{2}\left(\vartheta\right)}
ic\left(\left[Y^{N},Y^{TX}\right] 
\right)\\
+\frac{\sin\left(\theta\right)}{b}\left(\mathcal{D}^{TX}-i\mathcal{D}^{N}\right) 
+\frac{b}{\cos^{2}\left(\vartheta\right)}
\widehat{c}\left(\overline{Y}^{TX}\right)\Biggr)\mathfrak q^{X \prime 
}_{\bt}\left(\left(x,Y\right),\left(x',Y'\right)\right).
\end{multline}

We recall a result in \cite[Proposition 9.5.4]{Bismut08b}.
\begin{prop}\label{Pconjubis}
For $b>0$, the following identity holds:
\begin{equation}
    \Trs^{\odd}\left[\gamma\underline{r}^{X \prime}_{b,\vartheta}\left(\left(x,Y\right),\gamma\left(x,Y\right)\right)\right]
    =\left(b^{2}/\cos\left(\vartheta\right)\right)^{-r}\widehat{\Trs}^{\odd}\left[\gamma\mathfrak 
    r_{b,\vartheta}^{X \prime}\left(\left(x,Y\right),\gamma\left(x,Y\right)\right)\right].
    \label{eq:bonn13}
\end{equation}
\end{prop}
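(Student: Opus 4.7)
My plan for Proposition \ref{Pconjubis} is to adapt the argument of \cite[Proposition 9.5.4]{Bismut08b} verbatim, with $b^{2}$ there replaced throughout by $\beta := b^{2}/\cos(\vartheta)$. I first verify that the even element $\widehat{\alpha} = \sum_{i=1}^{r}\widehat{c}(e_{i})\underline{e}^{i}$ commutes with (a) the operator $\gamma$, since $\Ad(\gamma)$ acts as the identity on $\mathfrak{z}(\gamma)$ and the formal variables $\underline{e}^{i}$ are inert under the action of $\gamma$; and (b) the differential operator prefactor multiplying $\underline{q}^{X \prime}_{b,\vartheta}$ in (\ref{eq:fair2}), since each summand $c([Y^{N},Y^{TX}])$, $\mathcal{D}^{TX}$, $\mathcal{D}^{N}$, $\widehat{c}(\overline{Y}^{TX})$ is either a $c(\cdot)$ operator (which supercommutes with $\widehat{c}(e_{i})$ by (\ref{eq:kos7})), a $\widehat{c}(\overline{\cdot})$ operator (living in a Clifford factor independent of $\widehat{c}(\mathfrak{g})$), or a scalar differential operator, and in every case the anticommutation sign accrued by passing through $\widehat{c}(e_{i})$ is cancelled by the matching sign from passing through the odd generator $\underline{e}^{i}$.

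Combining these commutation properties with the conjugation formula (\ref{eq:bonn12a}) and applying the prefactor to both sides, I obtain
\begin{equation*}
\gamma\,\mathfrak{r}^{X \prime}_{b,\vartheta}\bigl((x,Y),\gamma(x,Y)\bigr) = \exp(-\beta\widehat{\alpha})\,\gamma\,\underline{r}^{X \prime}_{b,\vartheta}\bigl((x,Y),\gamma(x,Y)\bigr)\,\exp(\beta\widehat{\alpha}).
\end{equation*}

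The core algebraic identity, which I take as a black box from \cite[Proposition 9.5.4]{Bismut08b}, states that for any element $B$ in the operator algebra free of the formal variables $\underline{e}^{i}$, one has
\begin{equation*}
\widehat{\Trs}^{\odd}\bigl[\exp(-\beta\widehat{\alpha})\,B\,\exp(\beta\widehat{\alpha})\bigr] = \beta^{r}\,\Trs^{\odd}[B].
\end{equation*}
Its proof expands the two exponentials as polynomials in $\widehat{\alpha}$ (the series terminate since $(\underline{e}^{i})^{2}=0$), retains only the terms contributing to the top form $\underline{e}^{1}\wedge\cdots\wedge\underline{e}^{r}$ on which $\widehat{\Trs}$ is nonzero, reduces the resulting Clifford monomials to canonical form via (\ref{eq:kos7}), and verifies that the normalization chosen in (\ref{eq:bonn9a})--(\ref{eq:bonn9ax1}) is exactly that which produces the standard supertrace on the complementary Clifford factors. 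Applying this identity with $B = \gamma\,\underline{r}^{X \prime}_{b,\vartheta}((x,Y),\gamma(x,Y))$ and dividing by $\beta^{r}$ yields (\ref{eq:bonn13}).

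The principal obstacle, namely the combinatorial matching of $\widehat{\Trs}^{\odd}$ against $\Trs^{\odd}$ through the exponential conjugation by $\widehat{\alpha}$, has already been resolved in Bismut's book. The only novel bookkeeping here is tracking the $\vartheta$-dependent rescaling: since $\cos(\vartheta)$ enters the argument only as an overall scalar in $\beta$, it does not interact with any of the underlying algebraic identities, and the substitution $b^{2}\mapsto\beta$ propagates without modification through the entire computation.
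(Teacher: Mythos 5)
Your proposal is correct and amounts to writing out what the paper leaves implicit: the paper states the proposition without proof, noting only that it recalls \cite[Proposition 9.5.4]{Bismut08b}, and the intended argument is precisely the one you give — substitute $\beta = b^{2}/\cos(\vartheta)$ for $b^{2}$ in Bismut's conjugation-and-supertrace identity, check that the new $\vartheta$-dependent pieces in the prefactor (\ref{eq:fair2}) still commute with $\widehat{\alpha}$, and apply (\ref{eq:bonn12a}). Your verification of the commutation properties is sound (the $c(\cdot)$, $\mathcal{D}^{TX}$, $\mathcal{D}^{N}$, and $\widehat{c}(\overline{\cdot})$ parts each anticommute with $\widehat{c}(e_{i})$ and again with $\underline{e}^{i}$, so the signs cancel; $\gamma$ fixes $\mathfrak{z}(\gamma)$ so it commutes with $\widehat{\alpha}$), and the exponent bookkeeping through the nilpotent expansion of $\exp(\pm\beta\widehat{\alpha})$ correctly produces $\beta^{r}$. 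One small point you could make explicit: the passage from $\widehat{\Trs}$ to $\widehat{\Trs}^{\odd}$ (and $\Trs$ to $\Trs^{\odd}$), which introduces the extra $c(\overline{TX})$ Clifford factor absent in \cite{Bismut08b}, is harmless because $\exp(\pm\beta\widehat{\alpha})$ lives entirely in $\widehat{c}(\mathfrak{g})\ho\Lambda\ac(\underline{\mathfrak{z}}(\gamma)^{*})$ and does not touch $c(\overline{TX})$, so the $\Tr^{\odd}$ on that factor passes through the whole computation untouched.
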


In the sequel, the norm of $ \mathfrak 
r_{b,\vartheta}^{X}\left(\left(x,Y\right),\left(x',Y'\right)\right)$ will be 
evaluated with respect to the norms of $\Lambda\ac\left(T^{*}X \oplus 
N^{*}\right),\Lambda\ac\left(\left(\underline{T}X \oplus 
\underline{N}\right)\left(\gamma\right)^{*}\right)$, 
$S^{\overline{TX}}$, and  $F$.

Now, we give an extension of \cite[Theorem 9.5.6]{Bismut08b}.
\begin{thm}\label{TBEAUTY}
Given $\beta>0$, there exist $C>0,C'_{\gamma}>0$ such that for $b\ge 
1,\vartheta\in\left[0,\frac{\pi}{2}\right[,f\in 
\mathfrak p^{\perp}\left(\gamma\right), \left\vert  f\right\vert\le 
\beta b^{2}/\cos\left(\vartheta\right)$, and $\left\vert  
Y^{TX}\right\vert\le\beta b^{2}/\cos\left(\vartheta\right)$,
\begin{multline}
 \left(b^{2}/\cos\left(\vartheta\right)\right)^{-2m-n} \\
 \Biggl\vert
\mathfrak q 
_{b,\vartheta}^{X\prime}\Biggl(\left(e^{\cos\left(\vartheta\right)f/b^{2}},a^{TX}_{e^{\ct f/b^{2}}}+\cos\left(\vartheta\right)Y^{TX}/b^{2},\Yok+\cos\left(\vartheta\right)
Y^{N,\perp}/b^{2}
 \right),\\
 \gamma\left(e^{\cos\left(\vartheta\right)f/b^{2}},a^{TX}_{e^{\ct 
 f/b^{2}}}+\cos\left(\vartheta\right)Y^{TX}/b^{2},\Yok+\cos\left(\vartheta\right)Y^{N,\perp}/b^{2}\right)\Biggr)
 \Biggr\vert \\
					\le C
					\exp\Biggl(-C'\left(\frac{\left\vert  a\right\vert^{2}}{\cos^{2}\left(\vartheta\right)}+ \left\vert 
					\Yok\right\vert^{2} \right) \\
					-C' _{\gamma}\left(\left\vert f\right\vert^{2}+ \left\vert
					Y^{TX}\right\vert^{2} \right) -C' _{\gamma}\left\vert \left(
					\Ad\left(k^{-1}\right)-1\right)Y^{N,\perp}\right\vert  -C'_{\gamma}\left\vert
					\left[a,Y^{N,\perp}\right]\right\vert \Biggr).
  \label{eq:mossbrugg3bis}
	\end{multline}
The above inequality remains valid when replacing $\mathfrak q^{X 
\prime }_{\bt}$ by $\frac{\ct}{b^{2}}\n^{V}\mathfrak q^{X \prime }_{\bt}$.
\end{thm}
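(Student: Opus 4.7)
The plan is to combine the conjugation identity (\ref{eq:bonn12a}), which expresses $\mathfrak q^{X \prime}_{b,\vartheta}$ in terms of $\underline q^{X \prime}_{b,\vartheta}$ via conjugation by $\exp(\pm (b^2/\cos\vartheta)\widehat\alpha)$, with the uniform estimate (\ref{eq:bull2bisqu}) from Theorem \ref{TLARGE1QU}. For $\beta$ sufficiently small and $|f|, |Y^{TX}| \le \beta b^2/\cos\vartheta$, the rescaled base points $e^{\cos(\vartheta)f/b^2}$ and fibre coordinates $a^{TX}_{e^{\ct f/b^2}} + \cos(\vartheta)Y^{TX}/b^2$, $\Yok + \cos(\vartheta)Y^{N,\perp}/b^2$ satisfy $|\cos(\vartheta)f/b^2| \le \beta \le 1$ and $|\cos(\vartheta)Y^{TX}/b^2| \le \beta \le 1$, so (\ref{eq:bull2bisqu}) applies directly. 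The prefactor $(b/\cos^{1/2}\vartheta)^{4m+2n}$ in (\ref{eq:bull2bisqu}) combines with the $(b^2/\cos\vartheta)^{-2m-n}$ on the left side of (\ref{eq:mossbrugg3bis}) to give exactly $1$.

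The second step is translating each Gaussian factor. The rescaling converts $|f|^2 b^4/\cos^2\vartheta$ and $|Y^{TX}-a^{TX}|^2 b^4/\cos^2\vartheta$ into $|f|^2$ and $|Y^{TX}|^2$. The crucial algebraic observation is that $\Yok \in \mathfrak k(\gamma) \subset \mathfrak z(\gamma)$ satisfies both $[a, \Yok] = 0$ and $\Ad(k^{-1})\Yok = \Yok$; hence substituting $Y^N = \Yok + \cos(\vartheta)Y^{N,\perp}/b^2$, the quantities $(b^2/\cos\vartheta)|[a^{TX},Y^N]|$ and $(b^2/\cos\vartheta)|(\Ad(k^{-1}) - 1)Y^N|$ in (\ref{eq:bull2bisqu}) reduce exactly to $|[a^{TX}, Y^{N,\perp}]|$ and $|(\Ad(k^{-1})-1)Y^{N,\perp}|$, matching (\ref{eq:mossbrugg3bis}). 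The orthogonal decomposition $|Y^N|^2 = |\Yok|^2 + \cos^2(\vartheta)|Y^{N,\perp}|^2/b^4 \ge |\Yok|^2$ yields the remaining factor $\exp(-C|\Yok|^2)$.

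The main obstacle is controlling the conjugation by $\exp(\pm (b^2/\cos\vartheta)\widehat\alpha)$. Since $\widehat\alpha = \sum_{i=1}^r \widehat c(e_i) \underline e^i$ and the $\underline e^i$ anticommute, $\widehat\alpha$ is nilpotent of order $r+1$, so the exponentials are polynomials of degree $\le r$ in $b^2/\cos\vartheta$. A naive operator-norm estimate yields polynomial growth of order $(b^2/\cos\vartheta)^{2r}$ in the kernel, which must be absorbed into the right-hand side of (\ref{eq:mossbrugg3bis}) uniformly over $b \ge 1$ and $\vartheta \in [0, \pi/2[$. For $\vartheta$ near $\pi/2$ this growth is dominated by $\exp(-C'|a|^2/\cos^2\vartheta)$ since $|a| > 0$; for $b$ large with $\vartheta$ bounded away from $\pi/2$, the growth in $b$ is absorbed by slightly shrinking $C'_\gamma$ at the cost of some Gaussian decay in $|f|$ and $|Y^{TX}|$. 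This is the specialization to the present setting of the argument used in \cite[Theorem 9.5.6]{Bismut08b}; the new feature is that the dependence on $\vartheta$ enters only through the combinations $\cos\vartheta$ appearing in the already-rescaled variables, so the same pointwise estimates yield bounds uniform in $\vartheta \in [0,\pi/2[$.

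The final assertion for $\frac{\cos\vartheta}{b^2}\n^V \mathfrak q^{X \prime}_{b,\vartheta}$ follows by exactly the same strategy, using the companion estimate for $\frac{\cos\vartheta}{b^2}\n^V \underline q^{X \prime}_{b,\vartheta}$ stated in the last paragraph of Theorem \ref{TLARGE1QU}, since the conjugation by $\exp(\pm (b^2/\cos\vartheta)\widehat\alpha)$ commutes with the vertical differentiation $\n^V$ up to the same polynomial controls treated above.
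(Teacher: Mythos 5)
Your strategy of transferring the bound from $\underline q^{X\prime}_{b,\vartheta}$ to $\mathfrak q^{X\prime}_{b,\vartheta}$ via the conjugation identity (\ref{eq:bonn12a}) runs into a genuine obstruction at the step you yourself flag as the main obstacle. A naive operator-norm estimate of the conjugation gives an extra factor of order $\left(b^{2}/\cos\left(\vartheta\right)\right)^{2r}$, and the absorption argument you propose does not close in the regime $b\to+\infty$ with $\vartheta$ fixed. Indeed, after the rescaling the Gaussian decay in the exponential of (\ref{eq:mossbrugg3bis}) is in the \emph{rescaled} variables $|f|^{2}+|Y^{TX}|^{2}$ and in $\left\vert\Yok\right\vert^{2}$, $|a|^{2}/\cos^{2}\left(\vartheta\right)$, etc.; none of these depends on $b$. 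Taking $f=0$, $Y^{TX}=0$, $\vartheta=0$ and letting $b\to+\infty$, your bound would grow like $b^{4r}$ while the claimed right-hand side is constant, so no choice of $C$, $C'_{\gamma}$ makes the estimate hold. ``Slightly shrinking $C'_{\gamma}$'' cannot trade Gaussian decay in variables that are bounded for polynomial growth in $b$.

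The point of the Getzler-type conjugation by $\exp\left(\pm\frac{b^{2}}{\cos\left(\vartheta\right)}\widehat{\alpha}\right)$ is precisely that it should \emph{not} be estimated term by term as a product of operator norms. The correct route (and, reading the terse final paragraph of section \ref{sec:unilar}, the route the paper takes, following \cite[Theorem 9.5.6]{Bismut08b}) uses the algebraic identity (\ref{eq:co63}), namely $\mathfrak L^{X\prime}=\underline L^{X\prime}\vert_{db=0}+\alpha$: conjugation turns the singular term $N^{\Lambda\ac\left(T^{*}X\oplus N^{*}\right)\prime}_{-\vartheta}/b^{2}$ into $N^{\Lambda\ac\left(T^{*}X\oplus N^{*}\right)\prime}_{-\vartheta}/b^{2}+\alpha$ and leaves everything else untouched (this is (\ref{eq:grzu7})--(\ref{eq:grzu8})). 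The perturbation $\alpha$ is a matrix of norm $O(1)$ \emph{independent of $b$ and $\vartheta$} and is nilpotent, so one can estimate $\exp\left(-\mathfrak L^{X\prime}_{\bt}\right)$ directly -- through the probabilistic representation or a finite Duhamel/Volterra expansion with respect to $\alpha$ -- with the same Malliavin-calculus machinery used for $\underline q^{X\prime}_{\bt,t}$ in Theorem \ref{TLARGE1QU}, and there is no spurious $b$-power. That is the content of ``by exactly the same arguments'' in the paper's proof: one re-runs the argument for $\mathfrak L^{X\prime}_{\bt}$ rather than post-processing a finished estimate for $\underline q^{X\prime}_{\bt}$. (The rest of your proposal -- the matching of the prefactors $\left(b^{2}/\cos\left(\vartheta\right)\right)^{-2m-n}$ and $\left(b/\cos^{1/2}\left(\vartheta\right)\right)^{4m+2n}$, the reduction of the $b^{2}/\cos\left(\vartheta\right)$-weighted Lie-algebraic terms to $\left\vert\left[a,Y^{N,\perp}\right]\right\vert$ and $\left\vert\left(\Ad\left(k^{-1}\right)-1\right)Y^{N,\perp}\right\vert$ using $\Yok\in\mathfrak k\left(\gamma\right)$, and the extraction of $\exp\left(-C'\left\vert\Yok\right\vert^{2}\right)$ from the $|Y^{N}|^{2}$ factor -- is correct and indeed matches what one needs; but you should also note that (\ref{eq:bull2bisqu}) is stated only for $|f|\le 1$, $|Y^{TX}-a^{TX}|\le 1$, so for $\beta>1$ some further care is needed, whereas the theorem is for arbitrary $\beta>0$.)
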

\begin{proof}
The proof of our theorem will be given in section \ref{sec:unilar}.
\end{proof}
\subsection{The limit of the rescaled heat kernel}%
\label{subsec:limloc}
By (\ref{eq:final3}),  (\ref{eq:bonn13}), we get an analogue of 
\cite[eq. (9.6.1)]{Bismut08b}
\begin{multline}
    \int_{\substack{\left\vert  f\right\vert\le \beta\\
    \left\vert  Y^{TX}-a^{TX}\right\vert\le \beta}}
    ^{}\Trs^{\odd}
    \left[\gamma 
    \underline{r}^{X\prime}_{b,\vartheta}\left(\left(e^{f},Y\right),\gamma\left(e^{f},Y\right)\right)\right]r\left(f\right)dY^{TX}dY^{N}df\\
    =
    \int_{\substack{\left\vert  f\right\vert\le \beta b^{2}/\cos\left(\vartheta\right)\\
	\left\vert  Y^{TX}\right\vert\le \beta b^{2}/\cos\left(\vartheta\right)}}
	^{} 
	\left(b^{2}/\cos\left(\vartheta\right)\right)^{-2m-n}\widehat{\Trs}^{\odd}
	\Biggl[\gamma
	\mathfrak r 
	_{b,\vartheta}^{X\prime}\Biggl(\Bigl(e^{\cos\left(\vartheta\right)f/b^{2}},\\
	a^{TX}_{e^{\ct f/b^{2}}}+\cos\left(\vartheta\right)Y^{TX}/b^{2},\Yok+\cos\left(\vartheta\right)Y^{N,\perp}/b^{2}
	\Bigr),\\
\gamma\left(e^{\cos\left(\vartheta\right)f/b^{2}},a^{TX}_{e^{\ct 
f/b^{2}}}+\cos\left(\vartheta\right)Y^{TX}/b^{2},\Yok+
\cos\left(\vartheta\right)Y^{N,\perp}/b^{2}\right)\Biggr)\Biggr]\\
	r\left(\cos\left(\vartheta\right)f/b^{2}\right)dY^{TX}d\Yok dY^{N,\perp}df.
    \label{eq:final3bis}
\end{multline}

We consider the vector space 
\begin{equation}\label{eq:vspa1}
\mathfrak p\times \mathfrak g= \mathfrak p\times \left(\mathfrak p 
\oplus \mathfrak k\right).
\end{equation}
Recall that 
\index{Dpk@$\Delta^{ \mathfrak p \oplus \mathfrak k}$}%
$\Delta^{ \mathfrak p \oplus \mathfrak k}$ is the standard 
Laplacian on $\mathfrak p \oplus \mathfrak k$.
	This operator acts along 
    $\mathfrak p \oplus \mathfrak k$, i.e., on the second factor in 
    the right-hand side of (\ref{eq:vspa1}).
    
Let
\index{nH@$\n^{H}$}%
$\n^{H}$  denote differentiation in the  variable $y\in 
\mathfrak p$, and let
\index{nV@$\n^{V}$}%
$\n^{V}$ denote differentiation in the variable $Y^{ 
\mathfrak g}\in \mathfrak g$.

Let $e_{1},\ldots,e_{p}$ be an orthonormal basis of $\mathfrak 
p\left(\gamma\right)$, let $e_{p+1},\ldots,e_{r}$ be an orthonormal 
basis of $\mathfrak k\left(\gamma\right)$. We denote with upper 
scripts the corresponding dual bases. Then 
$\underline{e}^{1},\ldots,\underline{e}^{r}$ is a basis of 
$\underline{ \mathfrak z}\left(\gamma\right)^{*}$.

Put
\index{a@$\alpha$}%
  \begin{equation}
      \alpha=\sum_{i=1}^{r}c\left(e_{i}\right)\underline{e}^{i}.
      \label{eq:pins3}
  \end{equation}
The notation in (\ref{eq:pins3}) is compatible with 
(\ref{eq:bruggmoss1}).
\begin{defin}\label{Dneke}
Given $\Yok\in \mathfrak k\left(\gamma\right)$, let 
\index{PaY@$\mathcal{P}_{a,\Yok}$ }%
$\mathcal{P}_{a,\Yok}$ be the differential operator on $\mathfrak 
p\times \mathfrak g$ that was 
defined in \cite[Definition 5.1.2]{Bismut08b}, i.e., 
\begin{multline}\label{eq:co69x1}
	\mathcal{P}_{a,\Yok}=	\frac{1}{2}\left\vert  
	\left[Y^{ \mathfrak k},
	a\right]+\left[\Yok,Y^{ \mathfrak 
	p}\right]\right\vert^{2}
	-
	\frac{1}{2}\Delta^{\mathfrak p\oplus\mathfrak k}
	 +\alpha
	-\n^{H}_{Y^{ \mathfrak p}}\\
	-\n^{V}_{\left[a
	+\Yok,\left[a,y\right]\right]}
	-\widehat{c}\left(\ad\left(a\right)\right)	+c\left(
	\ad\left(a\right)+i\theta\ad\left(\Yok\right)\right).
\end{multline}
Let 
\index{RYy@$R_{\Yok}\left(\left(y,Y\right),\left(y',Y'\right)\right)$}%
$R_{\Yok}\left(\left(y,Y\right),\left(y',Y'\right)\right)$ be the 
smooth kernel for $\exp\left(-\mathcal{P}_{a,\Yok}\right)$ with 
respect to  $dy'dY'$.
\end{defin}

We have the following convergence result that is an analogue of 
\cite[Theorem 9.6.1]{Bismut08b}.
\begin{thm}\label{Tpia}
For  $\vartheta\in\left[0,\frac{\pi}{2}\right[$, as $b\to + \infty 
$, we have the convergence,
\begin{multline}\label{eq:cosu1}
\left(b^{2}/\cos\left(\vartheta\right)\right)^{-2m-n}
\widehat{\Trs}^{\odd}\Biggl[\gamma \mathfrak r^{X \prime}_{b,\vartheta}
\Biggl(e^{\cos\left(\vartheta\right)/b^{2}}f,\\ 
a^{TX}_{e^{\ct f/b^{2}}}+\cos\left(\vartheta\right)Y^{TX}/b^{2},
\Yok+\ct 
Y^{N,\perp}/b^{2}\Biggr),\\
\gamma\left(e^{\cos\left(\vartheta\right)f/b^{2}}, 
a^{TX}_{e^{\ct f/b^{2}}}+\cos\left(\vartheta\right)Y^{TX}/b^{2}, \Yok+\ct 
Y^{N,\perp}/b^{2}\right)\Biggr]\\
\to
- \frac{d\vartheta }{\sqrt{2}}\exp\left(-\left\vert  a\right\vert^{2}/2\cos^{2}\left(\vartheta\right)
\right)
\\
\widehat{\Trs}\left[
\Ad\left(k^{-1}\right)
    R_{\Yok}\left(\left(f,Y\right),\Ad\left(k^{-1}\right)
    \left(f,Y\right)\right)\right]\\
    \Tr^{S^{\mathfrak 
    p}}\left[\frac{\widehat{c}\left(a\right)}{\cos^{2}\left(\vartheta\right)}
 \Ad\left(k^{-1}\right)\exp\left(-i\widehat{c}\left(\ad\left(\Yok
   \right)\vert_{\mathfrak p}\right)\right)\right]\\
    \Tr^{E}\left[\rho^{E}\left(k^{-1}\right)
    \exp\left(-i\rho^{E}\left(\Yok\right)\right)\right]
    \exp\left(-\left\vert  \Yok\right\vert^{2}/2\right)
\end{multline}
\end{thm}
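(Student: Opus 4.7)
The strategy mirrors Bismut's proof of \cite[Theorem 9.6.1]{Bismut08b}, but now with the additional parameter $\vartheta\in[0,\pi/2)$ to track uniformly. I start from the rescaled representation (\ref{eq:final3bis}), which has already reduced the study of $\Trs^{[\gamma],\odd}[\exp(-L^{X\prime}\vert_{db=0})]$ to an integral over a shrinking neighbourhood of $X(\gamma)$ of the conjugated and rescaled heat kernel $\mathfrak{q}^{X\prime}_{b,\vartheta}$ multiplied by the first-order operator $d\vartheta$-piece appearing in (\ref{eq:fair3}). The task reduces to identifying the pointwise limit of the integrand as $b\to\infty$.

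First, I perform the translation $Y^{TX}\to a^{TX}+Y^{TX}$ (subsection \ref{subsec:tras}) to replace $\mathfrak L^{X\prime}_{\bt}$ by an operator $\mathcal{O}^{X\prime}_{a,b,\vartheta}$, then choose a normal geodesic chart centred at $x_{0}=p1$ and trivialize $TX,N,S^{\overline{TX}},F$ by parallel transport to arrive at an operator $\mathcal{P}^{X\prime}_{a,b,\vartheta,\Yok}$ acting on functions on $\mathfrak p\times\mathfrak g$, as in subsection \ref{subsec:cotri}. A Taylor expansion of the structure constants of $\mathfrak g$ and of the connection forms in the rescaled coordinates (using (\ref{eq:ronro1}) and its analogues) yields an asymptotic expansion $\mathcal{P}^{X\prime}_{a,b,\vartheta,\Yok}\to P^{X\prime}_{a,\infty,\vartheta,\Yok}$ as $b\to\infty$. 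The crucial point — made possible only by the conjugation (\ref{eq:co62}) — is that the potentially divergent terms absorb into the constant $\alpha$ by (\ref{eq:co63}), so the limit depends on $\vartheta$ only through mild constants and reduces, up to a $\vartheta$-conjugation, to the operator $\mathcal{P}_{a,\Yok}$ of (\ref{eq:co69x1}) whose kernel is $R_{\Yok}$.

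Next I upgrade the operator convergence to heat-kernel convergence. The uniform Gaussian estimates of Theorem \ref{TBEAUTY}, together with the parabolic regularity machinery developed in sections \ref{sec:fin} and \ref{sec:unilar} (and its prototype in \cite[chapter 14]{Bismut08b}), imply that $\mathfrak q^{X\prime}_{b,\vartheta}$ and $\tfrac{\ct}{b^{2}}\n^{V}\mathfrak q^{X\prime}_{b,\vartheta}$ converge pointwise to the heat kernel of the limit operator and to its derivative. Unpacking the structure of the limit operator, the base factor produces the $R_{\Yok}$ kernel and so the $\widehat{\Trs}[\Ad(k^{-1})R_{\Yok}]$ factor, the Clifford-matrix part on $S^{\overline{TX}}$ — once combined with the rescaled $\widehat{c}(\overline{Y}^{TX})$ of (\ref{eq:fair3}) after the translation $Y^{TX}\to a^{TX}+\ct Y^{TX}/b^{2}$ — produces the factor $\Tr^{S^{\mathfrak p}}[\widehat{c}(a)/\cos^{2}(\vartheta)\,\Ad(k^{-1})\exp(-i\widehat{c}(\ad(\Yok)\vert_{\mathfrak p}))]$, the trace on $F$ gives the $\Tr^{E}$ factor, and the Gaussian integration along the fibres of $N$ combined with the weight $\exp(-|a^{TX}|^{2}/2\cos^{2}(\vartheta))$ coming from the rescaled quadratic form in (\ref{eq:glub2bis1}) produces the required exponential weights.

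The main obstacle is to extract cleanly the $d\vartheta$-coefficient and to secure uniformity in $\vartheta\in[0,\pi/2)$. Each of the three summands in the $d\vartheta$-part of (\ref{eq:fair3}) is potentially of order $1$ after rescaling: the commutator term is $O(1)$ because $[a,\Yok]=0$ kills the would-be leading $b^{2}$-divergence; the pair $\mathcal{D}^{TX}-i\mathcal{D}^{N}$ carries the compensating factor $b^{2}/\ct$ from the change of variables; and the Clifford term $\widehat{c}(\overline{Y}^{TX})$ gives the advertised $\widehat{c}(a)/\cos^{2}(\vartheta)$. The delicate algebraic verification is that, once assembled and traced with $\widehat{\Trs}^{\odd}$ (which imposes strict parity constraints on $\alpha$ and on the $c,\widehat{c}$ monomials by (\ref{eq:bonn9a})), the first two contributions either cancel or reassemble into the $R_{\Yok}$-trace of the stated right-hand side. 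Once this algebraic step is performed, the Gaussian bounds (\ref{eq:mossbrugg3bis}) give domination uniform in $\vartheta$ on any $[0,\pi/2-\epsilon]$, while the explicit decay $\exp(-C'|a|^{2}/\cos^{2}(\vartheta))$, available because $a\neq 0$ by our standing nonelliptic assumption, controls the regime $\vartheta\to\pi/2$. Dominated convergence then delivers (\ref{eq:cosu1}).
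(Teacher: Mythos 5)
Your proposal follows the paper's proof almost step for step: translation $T_{a}$ of the variable $Y^{TX}$, geodesic coordinates and parallel trivialization near $x_{0}=p1$, the rescaling $H_{b,\vartheta,\Yok}$, identification of the limit operators $\mathfrak{P}^{X\prime}_{a,\infty,\vartheta,\Yok}$ and $\mathfrak{S}_{a,\infty,\vartheta,\Yok}$, upgrade to heat-kernel convergence via the uniform Gaussian estimates of Theorem \ref{TBEAUTY}, and the observation $[a,\Yok]=0$ (since $\Yok\in\mathfrak k(\gamma)\subset\mathfrak z(a)=\ker\ad(a)$) as the reason the leading $b^{2}$-divergence in the commutator term disappears. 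The route is the same.

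The one place you hedge --- ``the first two contributions either cancel or reassemble'' --- is the point the paper treats by a single parity observation that you should make precise. By (\ref{eq:co69x1}) and (\ref{eq:co70}), the limit operator $\mathfrak{P}^{X\prime}_{a,\infty,\vartheta,\Yok}$ is \emph{even} with respect to the $\Z_{2}$-grading of $c(\overline{\mathfrak p})$, so its heat kernel $\mathfrak p^{X\prime}_{a,\infty,\vartheta,\Yok}$ is even. The factor $\Ad(k^{-1})\vert_{\Lambda\ac(\mathfrak g^{*})}\otimes\rho^{S^{\overline{\mathfrak p}}}(k^{-1})\otimes\rho^{E}(k^{-1})$ from (\ref{eq:orsayx2-1}) is also even in $c(\overline{\mathfrak p})$, because $K$ maps into $\mathrm{Spin}\subset c^{\even}(\overline{\mathfrak p})$. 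Therefore, in the product $\mathfrak r^{X\prime}_{a,\infty,\vartheta,\Yok}=-\mathfrak{S}_{a,\infty,\vartheta,\Yok}\,\mathfrak p^{X\prime}_{a,\infty,\vartheta,\Yok}$, the only contribution with odd $c(\overline{\mathfrak p})$-parity --- the only thing $\widehat{\Trs}^{\odd}$ can see --- comes from the term $\widehat{c}(\overline{a})/\cos^{2}(\vartheta)$ inside $\mathfrak{S}_{a,\infty,\vartheta,\Yok}$. The terms $\tan(\vartheta)\,ic([Y^{\mathfrak k},a]+[\Yok,Y^{\mathfrak p}])$ and $\tan(\vartheta)(\mathcal D^{\mathfrak p}-i\mathcal D^{\mathfrak k})$ are even in $c(\overline{\mathfrak p})$, so they vanish identically under $\widehat{\Trs}^{\odd}$; they neither cancel against each other nor reassemble into anything. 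This is also why the limiting trace involves $S^{\mathfrak p}$ rather than $S^{\overline{\mathfrak p}}$: once the unique odd Clifford factor has been extracted, the distinction is immaterial. Finally, a small correction to your references: the heat-kernel convergence step (Theorem \ref{Tlififi}) relies on the large-$b$ analysis in section \ref{sec:unilar} and on \cite[chapter 15 and section 9.10--9.11]{Bismut08b}, not on section \ref{sec:fin}, which is the small-$b$ regime.
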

\begin{proof}
The proof of this result will be given in subsections  
\ref{subsec:tras}--\ref{subsec:propi}.
\end{proof}
\subsection{A proof of Theorem \ref{Tlimi}}%
\label{subsec:prlimi}
By (\ref{eq:bibi1}), and by the considerations following 
(\ref{eq:spli1y1}), 
we may as well study the behaviour of the integral in 
 (\ref{eq:final3bis}). By equation (\ref{eq:mossbrugg3bis}) in 
 Theorem \ref{TBEAUTY} and by the last statement in this theorem, we get the uniform bound (\ref{eq:bub3}). 
Using dominated convergence, we deduce from Theorems \ref{TBEAUTY} and  \ref{Tpia} that as $b\to 0$, 
 \begin{multline}\label{eq:co73}
\Trs^{\left[\gamma\right],\odd}\left[
\exp\left(-L^{X \prime }\vert_{db=0}\right)\right]\\
\to -\frac{d\vartheta}{\sqrt{2}} \exp\left(-\left\vert  a\right\vert^{2}/2\cos^{2}\left(\vartheta\right)\right)
\int_{\substack{\left(y,Y^{ \mathfrak 
g},\Yok\right)\\
\in \mpg\times \left(\mathfrak p \oplus \kpg\right)\times\mathfrak 
k\left(\gamma\right)}}^{}\\
\widehat{\Trs}\left[
\Ad\left(k^{-1}\right)
    R_{\Yok}\left(\left(y,Y \right),\Ad\left(k^{-1}\right)
    \left(y,Y\right)\right)\right]\\
    \Tr^{S^{\mathfrak p}}
    \left[\frac{\widehat{c}\left(a\right)}{\cos^{2}\left(\vartheta\right)}
    \Ad\left(k^{-1}\right)\exp\left(-i\widehat{c}\left(\ad\left(\Yok\right)\vert_{\mathfrak p}\right)\right)\right] \\
    \Tr^{E}\left[\rho^{E}\left(k^{-1}\right)
    \exp\left(-i\rho^{E}\left(\Yok\right)\right)\right]
    \exp\left(-\left\vert  \Yok\right\vert^{2}/2
    \right)dydY^{ 
    \mathfrak g}d\Yok.
\end{multline}
\begin{thm}\label{Teval}
If $\vartheta\in\left[0,\frac{\pi}{2}\right[$, the following identity holds:
\begin{multline}\label{eq:co74}
\exp\left(-\left\vert  a\right\vert^{2}/2\cos^{2}\left(\vartheta\right)\right)\int_{\substack{\left(y,Y^{ \mathfrak 
g},\Yok\right)\\
\in \mpg\times \left(\mathfrak p \oplus \kpg\right)\times\mathfrak 
k\left(\gamma\right)}}^{}\\
\widehat{\Trs}\left[
\Ad\left(k^{-1}\right)
    R_{\Yok}\left(\left(y,Y \right),\Ad\left(k^{-1}\right)
    \left(y,Y  \right)\right)\right]\\
    \Tr^{S^{\mathfrak p}}
    \left[\frac{\widehat{c}\left(a\right)}{\cos^{2}\left(\vartheta\right)}
    \Ad\left(k^{-1}\right)\exp\left(-i\widehat{c}\left(\ad\left(\Yok\right)\vert_{\mathfrak p}\right)\right)\right] \\
    \Tr^{E}\left[\rho^{E}\left(k^{-1}\right)
    \exp\left(-i\rho^{E}\left(\Yok\right)\right)\right]
    \exp\left(-\left\vert  \Yok\right\vert^{2}/2\right)dydY^{ 
    \mathfrak g}d\Yok\\
    =\left(2\pi\right)^{-r/2}\exp\left(-\left\vert  a\right\vert^{2}
    /2\cos^{2}\left(\vartheta\right) \right) 
    \int_{\mathfrak k\left(\gamma\right)}^{}
    J_{\gamma}\left(\Yok\right)\\
    \Tr^{S^{\mathfrak p}}
    \left[\frac{\widehat{c}\left(a\right)}{\cos^{2}\left(\vartheta\right)}
    \Ad\left(k^{-1}\right)\exp\left(-i\widehat{c}\left(\ad\left(\Yok\right)\vert_{\mathfrak p}\right)\right)\right] \\
    \Tr^{E}\left[\rho^{E}\left(k^{-1}\right)
    \exp\left(-i\rho^{E}\left(\Yok\right)\right)\right]
    \exp\left(-\left\vert  \Yok\right\vert^{2}/2\right)
    d\Yok.
\end{multline}
\end{thm}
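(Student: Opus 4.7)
The plan is to recognize the left-hand side as a supertrace of $\Ad(k^{-1})$ acting via the heat kernel of the model operator $\mathcal{P}_{a,\Yok}$ of \cite[Definition 5.1.2]{Bismut08b}, and then invoke the evaluation of this trace already carried out in \cite[Theorem 5.5.1]{Bismut08b}. Crucially, the only variables coupled to the kernel $R_{\Yok}$ and to the $\widehat{\Trs}$ are $(y, Y^{\mathfrak g})$; the factors involving $\widehat{c}(a)$, $\Ad(k^{-1})$ on $S^{\mathfrak p}$, $\rho^E(k^{-1}) e^{-i\rho^E(\Yok)}$ and $e^{-|\Yok|^2/2}$ are independent of these variables and can be pulled through the $dy\, dY^{\mathfrak g}$ integration. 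Hence the identity reduces, after integrating out $(y, Y^{\mathfrak g})$, to the assertion
\begin{equation*}
\int_{\mpg\times (\mathfrak p \oplus \kpg)}
\widehat{\Trs}\bigl[\Ad(k^{-1}) R_{\Yok}\bigl((y,Y),\Ad(k^{-1})(y,Y)\bigr)\bigr]\, dy\, dY^{\mathfrak g}
= (2\pi)^{-r/2} J_\gamma(\Yok).
\end{equation*}
This last identity is exactly the content of \cite[Theorem 5.5.1]{Bismut08b}, which defines $J_\gamma$ precisely through this localized supertrace computation.

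The concrete route is as follows. First I would decouple the integrand on the LHS, writing it as a product of a piece depending on $(y, Y^{\mathfrak g})$ (the kernel $R_{\Yok}$ together with the $\widehat{\Trs}$) and a piece depending only on $\Yok$ (involving $\widehat{c}(a)$, the two Gaussian weights, and the two ordinary traces on $S^{\mathfrak p}$ and $E$). Then, fixing $\Yok$, I would apply \cite[Theorem 5.5.1]{Bismut08b} to identify the $(y, Y^{\mathfrak g})$-integral with the explicit expression $(2\pi)^{-r/2} J_\gamma(\Yok)$; this is the standard Mehler-formula evaluation of the heat kernel of the quadratic operator $\mathcal{P}_{a,\Yok}$, followed by a Gaussian integration along the fibres of $\widehat{\pi}$ restricted to $X(\gamma)$, the resulting determinants assembling into the square-root expression in \eqref{eq:crub3}. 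Finally, both sides still carry the common prefactor $\exp(-|a|^2/2\cos^2(\vartheta))$ and the $\Yok$-integrand matches on the nose.

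The main (and only) nontrivial step is the identification of the $(y,Y^{\mathfrak g})$-integral with $J_\gamma(\Yok)$. This is already done in \cite[chapter 5]{Bismut08b}, so the new content here is bookkeeping: one must verify that the operator $\mathcal{P}_{a,\Yok}$ appearing in (\ref{eq:co69x1}) is genuinely the one of \cite[Definition 5.1.2]{Bismut08b}, that the factor $\widehat{c}(a)/\cos^2(\vartheta)$ and the $\cos^{2}(\vartheta)$-normalizations produced by the rescaling of section \ref{subsec:rescfY} do not affect the $(y,Y^{\mathfrak g})$-integration (they do not, since $\vartheta$ enters only through the external factors), and that the powers of $2\pi$, orientation signs inside $\widehat{\Trs}$, and the factor $|\det(1-\Ad(\gamma))\vert_{\mathfrak z_0^\perp}|^{-1/2}$ embedded in $J_\gamma$ match the Gaussian normalizations on the $\mpg$ and $\kpg$ factors. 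The remaining determinant ratio in \eqref{eq:crub3}, involving $\Ad(k^{-1})$ and $\exp(-i\ad(\Yok))$ restricted to $\mathfrak k_0^\perp(\gamma)$ and $\mathfrak p_0^\perp(\gamma)$, arises from evaluating the twisted trace of $\Ad(k^{-1})$ on the Mehler kernel along the $Y^{\mathfrak g}$-fibre in the directions transverse to $\mathfrak z(\gamma)$. The $\widehat{A}$-genus ratio similarly comes from integrating out $y\in \mpg$ against the quadratic form $\frac12|[Y^{\mathfrak k},a]+[\Yok,Y^{\mathfrak p}]|^2$ appearing in $\mathcal{P}_{a,\Yok}$, once one couples it with the harmonic oscillator; this is the part where the structure of $\ad(\Yok)$ acting on $\mathfrak p(\gamma)\oplus \mathfrak k(\gamma)$ produces the characteristic class factor.

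Since the entire computation has already been carried out in \cite[Theorem 5.5.1]{Bismut08b}, the main obstacle here is purely verifying that the geometric and algebraic conventions of the present setting (with the additional Clifford algebra $c(\overline{TX})$ and the external $\widehat{c}(a)/\cos^2(\vartheta)$ factor) are compatible with the formulation there, so that the identity can be invoked verbatim. Once this is confirmed, no further analysis is needed, and \eqref{eq:co74} follows immediately.
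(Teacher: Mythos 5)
Your proposal is correct and follows the same route as the paper, which simply cites \cite[eq.\ (5.1.11) and Theorem 5.5.1]{Bismut08b}. You correctly observe that the $\vartheta$-dependent and $\Yok$-only factors are external to the $(y,Y^{\mathfrak g})$-integration, so that (\ref{eq:co74}) reduces on the nose to the identity $\int_{\mpg\times (\mathfrak p\oplus\kpg)}\widehat{\Trs}[\Ad(k^{-1})R_{\Yok}((y,Y),\Ad(k^{-1})(y,Y))]\,dy\,dY^{\mathfrak g}=(2\pi)^{-r/2}J_\gamma(\Yok)$, which is precisely the cited result of \cite{Bismut08b}.
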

\begin{proof}
Equation (\ref{eq:co74}) follows from \cite[eq. (5.1.11) and  Theorem 
5.5.1]{Bismut08b}.
\end{proof}

Equation (\ref{eq:key2}) in Theorem \ref{Tlimi} now follows from 
(\ref{eq:co73}), (\ref{eq:co74}). This completes the proof of Theorem 
\ref{Tlimi}.
\subsection{A translation of the variable $Y^{TX}$}%
\label{subsec:tras}
We begin the proof of Theorem \ref{Tpia}. We proceed as in 
\cite[section 9.8]{Bismut08b}. 

Recall that $a\in \mathfrak p$. As explained in \cite[section 
2.17]{Bismut08b}, there are sections $a^{TX},a^{N}$ of 
$TX,N$ that are such that via the identification $TX 
\oplus N= \mathfrak g$, we have the identity $a=a^{TX}+a^{N}$. The 
section $a^{TX}$ is just the vector field associated with $a\in 
\mathfrak g$ via the left action of $G$ on $X$.  By 
\cite[eq. (2.17.10)]{Bismut08b}, if $A\in TX$, then
\begin{align}\label{eq:grzu10}
&\n^{TX}_{A}a^{TX}+\left[A,a^{N}\right]=0,
&\n^{N}_{A}a^{N}+\left[A,a^{TX}\right]=0.
\end{align}

If $ e\in TX \oplus N$, 
\index{nV@$\n^{V}_{e}$}%
$\n^{V}_{e}$ denotes 
differentiation along $e$.
Let $L_{a}$ be the Lie derivative operator associated with $a\in 
\mathfrak p$. Then $L_{a}$ acts on $\mathcal{H}$. By \cite[eq. 
(2.18.1)]{Bismut08b},  this action is 
given by
\begin{equation}
    L_{a}=\n^{C^{ \infty }\left(TX \oplus 
N,\widehat{\pi}^{*}\left(\Lambda\ac\left(T^{*}X \oplus N^{*}\right)\otimes 
S^{\overline{TX}}\otimes 
F\right)\right)}_{a^{TX}}+L^{V}_{\left[a^{N},Y\right]}
-\widehat{c}\left(\ad\left(a^{N}\right)\vert_{\overline{TX}}\right)-\rho^{F}\left(a^{N}\right).
    \label{eq:cupr32}
\end{equation}
In (\ref{eq:cupr32}), the fibrewise Lie derivative operator 
\index{LV@$L^{V}_{\left[a^{N},Y\right]}$}%
$L^{V}_{\left[a^{N},Y\right]}$ is given by  
\begin{equation}
    L^{V}_{\left[a^{N},Y\right]}=\n^{V}_{\left[a^{N},Y\right]}-\left(c+\widehat{c}\right)
    \left(\ad\left(a^{N}\right)\right).
    \label{eq:cupr33}
\end{equation}

Now we follow \cite[Definition 9.8.1]{Bismut08b}.
\begin{defin}\label{Dtrans}
Set
\index{Ta@$T_{a}$}
\begin{equation}
    T_{a}s\left(x,Y^{TX},Y^{N}\right)=s\left(x,a^{TX}+Y^{TX},Y^{N}\right).
    \label{eq:cips1}
\end{equation}
\end{defin}

By \cite[eq. (9.8.7)]{Bismut08b} or by using (\ref{eq:cupr32}), 
(\ref{eq:cupr33}),
we get
\begin{equation}\label{eq:cips1x1}
T_{a}L_{a}T_{a}^{-1}=L_{a}.
\end{equation}
By \cite[Proposition 9.3.2]{Bismut08b}, we get
\begin{equation}\label{eq:cips1z1}
\exp\left(-\frac{b^{2}}{\ct}\widehat{\alpha}\right)L_{a}\exp\left(\frac{b^{2}}{\ct}\widehat{\alpha}\right)=
L_{a}.
\end{equation}

As in \cite[eq. (9.8.2)]{Bismut08b}, set
\index{NXabt@$\mathfrak N^{X \prime}_{a,\bt}$}%
\index{NXa@$\mathfrak N^{X \prime}_{a}$}%
\begin{align}
	&\mathfrak N^{X \prime}_{a,\bt}=T_{a}\mathfrak L^{X 
	\prime}_{\bt}T_{a}^{-1},
   &\mathfrak N^{X \prime}_{a}=T_{a}\mathfrak{L}^{X \prime 
    }T_{a}^{-1}.
    \label{eq:cips2}
\end{align}

 Now we establish an extension of 
\cite[Proposition 9.8.2]{Bismut08b}.
\begin{prop}\label{POX}
The following identity holds:
\begin{multline}\label{eq:cips3}
\mathfrak N^{X \prime}_{a}=\frac{b^{4}}{2\cos^{2}\left(\vartheta\right)}\left\vert  \left[Y^{N},
    a^{TX}+Y^{TX}\right]\right\vert^{2}
     +\frac{1}{2} \Biggl( -\frac{\cos^{2}\left(\vartheta\right)}{b^{4}}\Delta^{ TX \oplus 
     N}\\
     +\frac{1}{\cos^{2}\left(\vartheta\right)}\left\vert 
     a^{TX}+
	  Y^{TX}\right\vert^{2}
	  +\left\vert  
	  Y^{N}\right\vert^{2}-\frac{1}{b^{2}}\left(m+\cos\left(\vartheta\right)n\right)\Biggr)\\
	 +\frac{N_{-\vartheta}^{\Lambda\ac\left(T^{*}X 
	 \oplus N^{*}\right) \prime }}{b^{2}}
    +\alpha
    -\Biggl(\n_{a^{TX}+Y^{ TX}}^{C^{ \infty }\left(TX \oplus 
     N,\widehat{\pi}^{*}\left(\Lambda\ac\left(T^{*}X \oplus N^{*}\right)\otimes 
     S^{\overline{TX}} \otimes  F\right) \right), f*,\widehat{f}}\\
     - ic \left( \theta\ad\left(Y^{N}\right)\right)  
     -i\widehat{c}\left(\ad\left(Y^{N}\right)\vert_{\overline{TX}}\right)
    -i\rho^{F}\left(Y^{ 
    N}\right)\Biggr)
    +\n^{V}_{\left[a^{N},a^{TX}+Y^{TX}\right]}\\
    +\frac{d\vartheta}{\sqrt{2}b}\Biggl(
\frac{b^{3}\sin\left(\vartheta\right)}{\cos^{2}\left(\vartheta\right)}
ic\left(\left[Y^{N},a^{TX}+Y^{TX}\right]\right)\\
+\frac{\sin\left(\vartheta\right)}{b}\left(\mathcal{D}^{TX}-i\mathcal{D}
^{N}\right)+\frac{b}{\cos^{2}\left(\vartheta\right)}\widehat{c}\left(\overline{a}^{TX}+
\overline{Y}^{TX} \right) \Biggr).
\end{multline}
\end{prop}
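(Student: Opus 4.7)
The approach is a direct computation. By equation \eqref{eq:co63}, we have $\mathfrak L^{X \prime}=\underline{L}^{X\prime}\vert_{db=0}+\alpha$, with the right-hand side given explicitly by \eqref{eq:glub2bis1} together with the $d\vartheta$ contribution in \eqref{eq:glub2bis1x}. Since $\mathfrak N^{X \prime}_a=T_a\mathfrak L^{X \prime}T_a^{-1}$, the plan is to conjugate each summand of $\mathfrak L^{X \prime}$ by $T_a$, where by \eqref{eq:cips1} the operator $T_a$ acts on functions on $\widehat{\mathcal X}$ by shifting only the fibre coordinate $Y^{TX}\mapsto a^{TX}+Y^{TX}$.

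The first step is to identify the summands on which conjugation is trivial. The fibrewise Laplacian $\Delta^{TX\oplus N}$ is translation invariant; the vertical derivatives $\mathcal{D}^{TX},\mathcal{D}^N$ differentiate in $Y^{TX}$ and $Y^N$, and hence commute with the translation $T_a$; the number operator $N_{-\vartheta}^{\Lambda\ac(T^{*}X\oplus N^*)\prime}$, the element $\alpha$, and the Clifford/representation terms $\widehat c(\mathrm{ad}(Y^N)\vert_{\overline{TX}})$, $c(\theta\,\mathrm{ad}(Y^N))$, $\rho^F(Y^N)$ depend only on fibre indices that are unaffected, so all these pieces are preserved. The remaining summands that involve the fibre coordinate $Y^{TX}$ as a polynomial/Clifford variable — namely $\tfrac{b^4}{2\cos^2(\vartheta)}\lv[Y^N,Y^{TX}]\rv^2$, $\tfrac{1}{2\cos^2(\vartheta)}\lv Y^{TX}\rv^2$ (in the harmonic oscillator), and in the $d\vartheta$ piece the terms $ic([Y^N,Y^{TX}])$ and $\widehat c(\overline{Y}^{TX})$ — are simply substituted according to $Y^{TX}\mapsto a^{TX}+Y^{TX}$, which accounts for all such entries visible in the right-hand side of \eqref{eq:cips3}.

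The nontrivial step, and the main obstacle, is the conjugation of the horizontal connection $\n^{C^{\infty}(TX\oplus N,\widehat{\pi}^{*}(\cdots)),f*,\widehat{f}}_{Y^{TX}}$, whose \emph{direction} of differentiation is itself the fibre variable. For a section $\phi(x,Y^{TX},Y^N)$, writing $\psi=T_a^{-1}\phi$, one has $\psi(x,Y^{TX},Y^N)=\phi(x,Y^{TX}-a^{TX}(x),Y^N)$, and differentiating horizontally in the direction $Y^{TX}$ produces, beyond the shift of the base point, the extra term $-\n^V_{\sum_i(Y^{TX})^i\n^{TX}_{e_i}a^{TX}}\phi$. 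Using the first identity of \eqref{eq:grzu10}, $\n^{TX}_{e_i}a^{TX}=-[e_i,a^N]$, this extra term equals $\n^V_{[Y^{TX},a^N]}\phi=-\n^V_{[a^N,Y^{TX}]}\phi$. Applying $T_a$ to the result yields
\[
T_a\,\n^{\cdots,f*,\widehat{f}}_{Y^{TX}}\,T_a^{-1}=\n^{\cdots,f*,\widehat{f}}_{a^{TX}+Y^{TX}}\;-\;\n^V_{[a^N,\,a^{TX}+Y^{TX}]};
\]
since this operator enters $\underline{L}^{X\prime}\vert_{db=0}$ with an overall minus sign, the vertical correction contributes $+\n^V_{[a^N,\,a^{TX}+Y^{TX}]}$ to $\mathfrak N^{X\prime}_a$, which is exactly the explicit summand appearing in \eqref{eq:cips3}.

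Assembling all the pieces — the unchanged terms, the polynomial substitution $Y^{TX}\mapsto a^{TX}+Y^{TX}$, and the vertical correction coming from \eqref{eq:grzu10} — produces \eqref{eq:cips3}. The delicate bookkeeping is limited to signs and to making sure the Clifford increments $-c(\mathrm{ad}(\cdot))+\widehat c(\mathrm{ad}(\cdot))$ built into the flat connection $\n^{\cdots,f*,\widehat f}$ are absorbed in the notation $\n^{\cdots,f*,\widehat f}_{a^{TX}+Y^{TX}}$; beyond this, no further input is needed and one does not expect any genuine difficulty, the computation being the exact analogue for $\mathfrak L^{X\prime}$ of \cite[Proposition 9.8.2]{Bismut08b} for $\mathcal L_b^X$.
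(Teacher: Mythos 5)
Your proposal is correct and follows the same route as the paper: start from $\mathfrak L^{X\prime}=\underline{L}^{X\prime}\vert_{db=0}+\alpha$ with the explicit expressions (\ref{eq:glub2bis1}), (\ref{eq:glub2bis1x}), conjugate term by term by $T_a$, and use the first identity of (\ref{eq:grzu10}) to account for the vertical correction $\n^V_{[a^N,\,a^{TX}+Y^{TX}]}$ produced by the horizontal derivative. Your bookkeeping of signs and of which pieces commute with $T_a$ is accurate, so there is nothing to add.
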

\begin{proof}
Using (\ref{eq:glub2bis1}), (\ref{eq:glub2bis1x}), (\ref{eq:co63}), and (\ref{eq:grzu10}), we get (\ref{eq:cips3}).
\end{proof}

Put
\index{OXabt@$\mathfrak{O}^{X \prime}_{a,b,\vartheta}$}%
\index{OXa@$\mathfrak O^{X \prime}_{a}$}%
\begin{align}
    &\mathfrak{O}^{X \prime}_{a,b,\vartheta}=\mathfrak{N}^{X \prime 
	}_{a,b,\vartheta}+L_{a},
	&\mathfrak O^{X \prime}_{a}=\mathfrak N^{X \prime }_{a}+L_{a}.
    \label{eq:cips4}
\end{align}

By (\ref{eq:co62}), (\ref{eq:cips1x1})--(\ref{eq:cips2}), and 
(\ref{eq:cips4}), we get the analogue of \cite[eq. 
(9.8.8)]{Bismut08b},
\begin{align}\label{eq:cips4z-1}
& \mathfrak{O}^{X 
 \prime}_{a,\bt}=T_{a}\exp\left(-\frac{b^{2}}{\ct}\widehat{\alpha}\right)
 \left(\underline{\mathcal{L}}^{X \prime }_{\bt}
 +L_{a}\right)\exp\left(\frac{b^{2}}{\ct}\widehat{\alpha}\right)T_{a}^{-1},\\
& \mathfrak{O}^{X 
 \prime}_{a}=T_{a}\exp\left(-\frac{b^{2}}{\ct}\widehat{\alpha}\right)
 \left(\underline{L}^{X \prime }\vert_{db=0}
 +L_{a}\right)\exp\left(\frac{b^{2}}{\ct}\widehat{\alpha}\right)T_{a}^{-1}. \nonumber 
\end{align}

Now we extend \cite[Proposition 9.8.3]{Bismut08b}.
\begin{prop}\label{POX1}
The following identity holds:
\begin{multline}\label{eq:cips5}
    \mathfrak{O}^{X \prime}_{a}=\frac{b^{4}}{2\cos^{2}\left(\vartheta\right)}
    \left\vert  
    \left[Y^{N},a^{TX}+Y^{ 
    TX}\right]\right\vert^{2}
     +\frac{1}{2} \Biggl( -\frac{\cos^{2}\left(\vartheta\right)}{b^{4}}
     \Delta^{ TX \oplus 
     N}\\
     +\frac{1}{\cos^{2}\left(\vartheta\right)}\left\vert  
    a^{TX}+
	  Y^{TX}\right\vert^{2}
	  +\left\vert  
	  Y^{N}\right\vert^{2}-\frac{1}{b^{2}}\left(m+\cos\left(\vartheta\right)n\right)\Biggr)\\
    +\frac{N^{\Lambda\ac\left( 
	 T^{*}X 
	 \oplus N^{*}\right) \prime }_{-\vartheta}}{b^{2}}+
	 \alpha-\Biggl(\n_{Y^{ TX}}^{C^{ \infty }\left(TX \oplus 
     N,\widehat{\pi}^{*}\left(\Lambda\ac\left(T^{*}X \oplus 
	 N^{*}\right)\otimes S^{\overline{TX}} \otimes  
     F\right) \right), f*,\widehat{f}}\\
     - ic \left( \theta\ad\left(Y^{N}\right)\right)  
     -i\widehat{c}\left(\ad\left(Y^{N}\right)\vert_{\overline{TX}}\right)
    -i\rho^{F}\left(Y^{ 
    N}\right)\Biggr)\\
    +c\left(\ad\left(a^{TX}-a^{N}\right)\right)-\widehat{c}\left(\ad\left(a\right)\right)
     -\widehat{c}\left(\ad\left(a^{N}\right)\vert_{\overline{TX}}\right) 
     -\rho^{F}\left(a^{N}\right)
    \\
   +\n^{V}_{\left[a^{N},a^{TX}+2Y^{TX}+Y^{N}\right]}+\frac{d\vartheta}{\sqrt{2}b}\Biggl(
\frac{b^{3}\sin\left(\vartheta\right)}{\cos^{2}\left(\vartheta\right)}
ic\left(\left[Y^{N},a^{TX}+Y^{TX}\right]\right)\\
+\frac{\sin\left(\vartheta\right)}{b}\left(\mathcal{D}^{TX}-i\mathcal{D}
^{N}\right)+\frac{b}{\cos^{2}\left(\vartheta\right)}\widehat{c}\left(\overline{a}^{TX}+
\overline{Y}^{TX} \right) \Biggr).
\end{multline}
\end{prop}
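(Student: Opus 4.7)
The plan is to verify Proposition \ref{POX1} by direct substitution and simplification, combining the formula for $\mathfrak{N}^{X\prime}_{a}$ given in Proposition \ref{POX} (equation \eqref{eq:cips3}) with the explicit expression for the Lie derivative $L_{a}$ in \eqref{eq:cupr32}--\eqref{eq:cupr33}, and invoking the definition \eqref{eq:glub1} of the flat connection $\n^{\Lambda\ac(T^{*}X\oplus N^{*}),f*,\widehat{f}}$. Since $\mathfrak{O}^{X\prime}_{a}=\mathfrak{N}^{X\prime}_{a}+L_{a}$ by \eqref{eq:cips4}, everything is purely algebraic: no heat kernel estimates or hypoelliptic analysis enter. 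The only nontrivial piece is bookkeeping of the terms lying in $c(\mathfrak{g})\ho\widehat{c}(\mathfrak{g})\ho\widehat{c}(\overline{TX})\otimes\End(F)$ generated by $a^{TX}$ and $a^{N}$.

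Concretely, I first observe that the quadratic, Laplacian, harmonic oscillator and number operator terms in \eqref{eq:cips3} are unaffected by the addition of $L_{a}$ (which is first order), so they reproduce the corresponding block in \eqref{eq:cips5} unchanged; likewise the $d\vartheta$-component is unchanged. Next I isolate the \emph{connection} part. The term $-\n^{..,f*,\widehat{f}}_{a^{TX}+Y^{TX}}$ appearing in \eqref{eq:cips3} splits as $-\n^{..,f*,\widehat{f}}_{Y^{TX}}-\n^{..,f*,\widehat{f}}_{a^{TX}}$, and by \eqref{eq:glub1},
\begin{equation*}
-\n^{..,f*,\widehat{f}}_{a^{TX}}=-\n^{C^{\infty}}_{a^{TX}}+c(\ad(a^{TX}))-\widehat{c}(\ad(a^{TX})).
\end{equation*}
Adding the $\n^{C^{\infty}}_{a^{TX}}$ contribution from $L_{a}$ cancels the first term, leaving the Clifford pieces $c(\ad(a^{TX}))-\widehat{c}(\ad(a^{TX}))$. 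Combining these with the $-(c+\widehat{c})(\ad(a^{N}))$ coming from the rewriting \eqref{eq:cupr33} of $L^{V}_{[a^{N},Y]}$ inside $L_{a}$ produces
\begin{equation*}
c(\ad(a^{TX}-a^{N}))-\widehat{c}(\ad(a^{TX}+a^{N}))=c(\ad(a^{TX}-a^{N}))-\widehat{c}(\ad(a)),
\end{equation*}
which is exactly the third-from-last line of \eqref{eq:cips5}.

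For the vertical-derivative terms, I simply add $\n^{V}_{[a^{N},a^{TX}+Y^{TX}]}$ from \eqref{eq:cips3} with $\n^{V}_{[a^{N},Y]}=\n^{V}_{[a^{N},Y^{TX}+Y^{N}]}$ extracted from $L_{a}$ via \eqref{eq:cupr32}--\eqref{eq:cupr33}, yielding $\n^{V}_{[a^{N},a^{TX}+2Y^{TX}+Y^{N}]}$, matching \eqref{eq:cips5}. Finally the scalar Clifford and representation terms $-\widehat{c}(\ad(a^{N})\vert_{\overline{TX}})-\rho^{F}(a^{N})$ appearing in $L_{a}$ are transported unchanged to $\mathfrak{O}^{X\prime}_{a}$, giving the last two terms on the third-from-last line of \eqref{eq:cips5}.

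There is no genuine obstacle here; the only place one could trip is in the sign and domain conventions for $\ad(a^{N})$ acting on the various Clifford factors (in particular distinguishing $\widehat{c}(\ad(a^{N}))\in\widehat{c}(\mathfrak{g})$ from $\widehat{c}(\ad(a^{N})\vert_{\overline{TX}})$ acting on $S^{\overline{TX}}$, and similarly for $c$). Once the grading bookkeeping in \eqref{eq:cupr32}--\eqref{eq:cupr33} and the flat connection identity \eqref{eq:glub1} are used carefully, all remaining terms match \eqref{eq:cips5} termwise. This essentially reproduces, with the extra $\overline{TX}$-Clifford factor and the $\vartheta$-dependent weights now in place, the computation of \cite[Proposition 9.8.3]{Bismut08b}.
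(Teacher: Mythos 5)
Your proof is correct and follows essentially the same route as the paper, which simply cites \eqref{eq:glub1}, \eqref{eq:cupr32}, \eqref{eq:cupr33}, \eqref{eq:cips3}, and \eqref{eq:cips4} and leaves the substitution to the reader; you have carried out that substitution explicitly and the term-by-term bookkeeping (cancellation of $\n^{C^{\infty}}_{a^{TX}}$, recombination into $c(\ad(a^{TX}-a^{N}))-\widehat{c}(\ad(a))$, and the vertical derivative sum) is accurate.
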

\begin{proof}
By  (\ref{eq:glub1}), (\ref{eq:cupr32}), (\ref{eq:cupr33}),   
(\ref{eq:cips3}), and (\ref{eq:cips4}), we get (\ref{eq:cips5}).
\end{proof}
\begin{remk}\label{Rbo}
Most of the 
terms in equation (\ref{eq:cips5}) for $\mathfrak{O}^{X \prime 
}_{a}\vert_{d\vartheta=0}=\mathfrak O^{X \prime }_{a,\bt}$ can be obtained from the corresponding terms in 
\cite[eq. (9.8.5)]{Bismut08b} for 
$\mathcal{O}^{X}_{a,b}=\mathcal{O}^{X 
\prime}_{a,b,0}\vert_{d\vartheta=0}$ by 
replacing $b$ by $b/\cos^{1/2}\left(\vartheta\right)$. Since 
$b/\cos^{1/2}\left(\vartheta\right)\ge b$, when $b\to + \infty $, the 
presence of $\vartheta$ improves the situation with respect to 
\cite{Bismut08b}. The only term for which this is not the case is 
$\frac{1}{2\cos^{2}\left(\vartheta\right)}\left\vert  
a^{TX}+Y^{TX}\right\vert^{2}$. However, because this term is positive, 
the fact that it is larger than when $\vartheta=0$ will  work again in 
our favour.
\end{remk}
\subsection{A coordinate system and a trivialization of  vector 
bundles near $X\left(\gamma\right)$}%
\label{subsec:cotri}
If $x_{0}=p1\in X\left(\gamma\right)$, we take the same coordinate 
system on $X$
near $x_{0}$ and the same trivialization of the vector bundles  as in \cite[section 
9.9]{Bismut08b}.  In particular, $S^{\overline{TX}}$ is treated 
exactly like $F$. 

We proceed  as in  \cite[Definitions 9.9.1 and 
9.10.1]{Bismut08b}. Let $\n^{H}$ denote differentiation in the 
coordinate $y\in \mathfrak p$, and $\n^{V}$ denote differentiation in 
$Y\in \mathfrak g$. Also $Y\in \mathfrak g$ splits as 
$Y=Y^{\mathfrak p}+Y^{\mathfrak k},Y^{\mathfrak p}\in \mathfrak 
p,Y^{\mathfrak k}\in \mathfrak k$.
\begin{defin}\label{DHYo} 
If $\Yok\in \mathfrak k\left(\gamma\right)$ \footnote{In \cite[Definition 
9.9.1]{Bismut08b}, $\Yok\in \mathfrak k^{\perp}\left(\gamma\right)$ 
should be corrected to $\Yok\in \mathfrak k\left(\gamma\right)$.}, set
\index{HbtY@$H_{b,\vartheta,\Yok}$}%
\begin{equation}
    H_{b,\vartheta,\Yok}s\left(y,Y\right)=
    s\left(\cos\left(\vartheta\right)y/b^{2},\Yok+\cos\left(\vartheta\right)Y/b^{2}\right).
    \label{eq:rips3}
\end{equation}
Put
\index{PXab@$\mathfrak{P}^{X \prime}_{a,b,\vartheta,\Yok}$}%
\index{PXab@$\mathfrak{P}^{X \prime}_{a,\Yok}$}%
    \begin{align}\label{eq:cupr18}
&\mathfrak{P}^{X \prime 
	}_{a,b,\vartheta,\Yok}=H_{b,\vartheta,\Yok}\mathfrak{O}^{X \prime 
	}_{a,b,\vartheta} H_{b,\vartheta,\Yok}^{-1}, \nonumber \\
&\mathfrak{S}^{X \prime}_{a,\bt,\Yok}=H_{b,\vartheta,\Yok}
\frac{d\vartheta}{\sqrt{2}b}\Biggl(
\frac{b^{3}\sin\left(\vartheta\right)}{\cos^{2}\left(\vartheta\right)}
ic\left(\left[Y^{N},a^{TX}+Y^{TX}\right]\right)  \\
&+\frac{\sin\left(\vartheta\right)}{b}\left(\mathcal{D}^{TX}-i\mathcal{D}
^{N}\right)+\frac{b}{\cos^{2}\left(\vartheta\right)}\widehat{c}\left(\overline{a}^{TX}+
\overline{Y}^{TX} \right) \Biggr)H_{b,\vartheta,\Yok}^{-1}, \nonumber 
\\
&\mathfrak{P}^{X \prime 
	}_{a,\Yok}=H_{b,\vartheta,\Yok}\mathfrak{O}^{X \prime 
	}_{a} H_{b,\vartheta,\Yok}^{-1}. \nonumber 
\end{align}
\end{defin}

By (\ref{eq:cips5}), (\ref{eq:cupr18}), we get
\begin{equation}\label{eq:cupr18z1}
\mathfrak P^{X \prime }_{a,\Yok}=\mathfrak P^{X \prime }_{a,\bt,\Yok}+
\mathfrak S^{X \prime }_{a,\bt,\Yok}.
\end{equation}
\begin{defin}\label{Dnewop}
 Let 
 \index{pXabt@$\mathfrak p^{X \prime}_{a,b,\vartheta,\Yok}\left(\left(y,Y\right),\left(y',Y'\right)\right)$}%
 $\mathfrak p^{X \prime}_{a,b,\vartheta,\Yok}\left(\left(y,Y\right),\left(y',Y'\right)\right)$ be the 
smooth kernel for the operator  $\exp\left(-\mathfrak{P}^{X 
\prime}_{a,b,\vartheta,\Yok}\right)$ with respect 
to $dy'dY'$. Set
\begin{equation}\label{eq:fufu8}
\mathfrak r^{X \prime 
}_{a,\bt,\Yok}\left(\left(y,Y\right),\left(y',Y'\right)\right)=-
\mathfrak{S}^{X \prime }_{a,\bt,\Yok}\mathfrak 
p^{X \prime}_{a,b,\vartheta,\Yok}\left(\left(y,Y\right),\left(y',Y'\right)\right).
\end{equation}
\end{defin}

By proceeding as in \cite[eq. (9.9.11)]{Bismut08b}, we get
\begin{multline}\label{eq:orsayx2-1}
    \left(b^{2}/\cos\left(\vartheta\right)\right)^{-2m-n}\gamma\mathfrak r _{b,\vartheta}^{X
    \prime } \Biggl( \\
   \left(e^{\ct f/b^{2}},a^{TX}+\ct 
    Y^{TX}/b^{2}, \Yok+\ct Y^{N,\perp}/b^{2}\right),\\
      \gamma\left(e^{\cos\left(\vartheta\right)f/b^{2}},a^{TX}+
      \cos\left(\vartheta\right)Y^{TX }/b^{2},\Yok+\cos\left(\vartheta\right)Y^{N,\perp 
      }/b^{2}\right)\Biggr)\delta\left(\ct f/b^{2}\right)\\
      =\Ad\left(k^{-1}\right)\vert_{\Lambda\ac\left(\mathfrak 
      g^{*}\right)}\otimes \rho^{S^{\overline{\mathfrak p}}}\left(k^{-1}\right)
      \otimes \rho^{E}\left(k^{-1}\right)
      \mathfrak 
      r^{X \prime 
      }_{a,b,\vartheta,\Yok}\left(\left(f,Y\right),\Ad\left(k^{-1}\right)\left(f,Y\right)\right).
\end{multline}
In (\ref{eq:orsayx2-1}), $\delta$ is the Jacobian of 
the geodesic exponential map based at $x_{0}=p1$. That function was 
 defined in \cite[eq. (4.1.11)]{Bismut08b}
\subsection{The asymptotics of the operator $\mathcal{P}^{X \prime 
}_{a,\Yok}$ as $b\to + \infty $}%
\label{subsec:aspy}
Recall that the operator 
\index{PaY@$\mathcal{P}_{a,\Yok}$ }%
$\mathcal{P}_{a,\Yok}$ was defined in 
Definition \ref{Dneke}.
\begin{defin}\label{Dstiop}
Set
	\index{PXa@$ \mathfrak{P}^{X \prime}_{a,\infty,\vartheta,\Yok}$}%
	\index{SXa@$\mathfrak S_{a,\infty ,\vartheta,\Yok}$}%
\begin{align}\label{eq:co70}
	\mathfrak{P}^{X \prime}_{a,\infty,\vartheta,\Yok}&=
	\mathcal{P}_{a,\Yok}+
	\frac{1}{2}\left(\frac{\left\vert  
	    a\right\vert^{2}}{\cos^{2}\left(\vartheta\right)}+\left\vert  \Yok\right\vert^{2}\right)+
	i\widehat{c}\left(\ad\left(\Yok\right)\vert_{\overline{\mathfrak p}}\right)+i\rho^{E}
	\left(\Yok\right), \nonumber \\
	\mathfrak S_{a,\infty ,\vartheta,\Yok}&=\frac{d\vartheta}{\sqrt{2}}
\Biggl( \tan\left(\vartheta\right)ic\Bigl(\left[Y^{\mathfrak 
k},a\right]
+\left[\Yok
,Y^{\mathfrak p}\right]\Bigr)\\
&\qquad\qquad +\tan\left(\vartheta\right)
\left(\mathcal{D}^{\mathfrak p}-i\mathcal{D}^{\mathfrak k}\right)+
\frac{\widehat{c}\left(\overline{a}\right)}{\cos^{2}\left(\vartheta\right)} \Biggr). \nonumber 
\end{align}
Put
\index{PXa@$\mathfrak P^{X \prime}_{a, \infty, \Yok}$}%
\begin{equation}\label{eq:co70za1}
\mathfrak P^{X \prime}_{a, \infty, \Yok}=\mathfrak{P}^{X \prime}_{a,\infty,\vartheta,\Yok} +\mathfrak S_{a,\infty ,\vartheta,\Yok}.
\end{equation}

Let 
\index{pa@$\mathfrak p^{X \prime}_{a,\infty ,\vartheta,\Yok}\left(\left(y,Y\right),\left(y',Y'\right)\right)$}%
$\mathfrak p^{X \prime}_{a,\infty 
,\vartheta,\Yok}\left(\left(y,Y\right),\left(y',Y'\right)\right)$ be the smooth kernel for  
 $\exp\left(-\mathfrak{P}^{X \prime }_{a, \infty,\vartheta,\Yok 
}\right)$ with respect to $dy'dY'$. Set
\begin{equation}\label{eq:co70x1}
\mathfrak r^{X \prime}_{a, \infty ,\vartheta,\Yok}
\left(\left(y,Y\right),\left(y',Y'\right)\right) 
=
- \mathfrak S_{a, \infty ,\vartheta,\Yok}
\mathfrak p^{X \prime}_{a, \infty, 
\vartheta,\Yok}\left(\left(y,Y\right),\left(y',Y'\right)\right).
\end{equation}
\end{defin}

By (\ref{eq:co70}), we get
\begin{multline}\label{eq:co70z-1}
\mathfrak p^{X \prime}_{a, \infty 
,\vartheta,\Yok}\left(\left(y,Y\right),\left(y',Y'\right)\right)=
\exp\left(-\frac{1}{2}\left(\frac{\left\vert  
a\right\vert^{2}}{\cos^{2}\left(\vartheta\right)}+\left\vert  
\Yok\right\vert^{2}\right)\right) \\
R_{\Yok}\left(\left(y,Y\right),\left(y',Y'\right)\right)
\exp\left(-i\widehat{c}\left(\ad\left(\Yok\right)\right)\vert_{\overline{\mathfrak p}}
\right)\exp\left(-i\rho^{E}\left(\Yok\right)\right).
\end{multline}

For $\vartheta=0$, $\mathfrak{P}^{X 
\prime}_{a,\infty,\vartheta,\Yok}$ coincides with the operator 
$\mathcal{P}^{X}_{a, 0,\infty ,\Yok}$ in \cite[eqs. (9.10.1), 
(9.10.2)]{Bismut08b}.

By \cite[eq. (9.10.4)]{Bismut08b}, we 
have
\begin{align}\label{eq:co68}
&a^{TX}_{y/b^{2}}=a+\mathcal{O}\left(\left\vert  
y\right\vert^{2}/b^{4}\right),
&a^{N}_{y/b^{2}}=\frac{\left[a,y\right]}{b^{2}}+\mathcal{O}\left(\left\vert  y\right\vert^{3}/b^{6}\right).
\end{align}

Using (\ref{eq:co69x1}), (\ref{eq:cips5})--(\ref{eq:cupr18}), (\ref{eq:co68}), and proceeding as in the proof of 
\cite[Theorem 9.10.2]{Bismut08b}, we find that 
 as $b\to + \infty $,
\begin{equation}\label{eq:co65}
\mathfrak{P}^{X \prime 
	}_{a,\Yok}\to \mathfrak{P}^{X 
	\prime}_{a,\infty,\Yok}.
\end{equation}
The convergence in (\ref{eq:co65}) just means that the coefficients 
of the operators together with their derivatives of arbitrary order 
converge uniformly on compact sets.

By Remark \ref{Rbo},  since with respect to \cite{Bismut08b}, for 
most terms in (\ref{eq:cips5}), the estimate of the difference in (\ref{eq:co65}) is 
better than in \cite{Bismut08b}, i.e., where positive powers of $1/b$ 
appear in \cite{Bismut08b}, they are replaced here by positive powers 
of $\cos^{1/2}\left(\vartheta\right)/b$, which are smaller.  The only 
exception comes from the term 
$\frac{1}{\cos^{2}\left(\vartheta\right)}\left\vert  
a^{TX}+Y^{TX}\right\vert^{2}$. However, because of (\ref{eq:co68}), 
proceeding as in \cite[eq. (9.10.6)]{Bismut08b}, we get
\begin{multline}\label{eq:rem1}
\frac{1}{\cos^{2}\left(\vartheta\right)}\left\vert  
a^{TX}+\cos\left(\vartheta\right)Y^{TX}/b^{2}\right\vert_{\cos\left(\vartheta\right)y/b^{2}}^{2}=
\frac{\left\vert  a\right\vert^{2}}{\cos^{2}\left(\vartheta\right)} \\
+\frac{1}{
\cos^{2}\left(\vartheta\right)}\left(\mathcal{O}\left(\cos^{4}\left(\vartheta\right)\left\vert  y\right\vert^{4}/b^{8}
+\cos^{2}\left(\vartheta\right)\left\vert  
Y^{TX}\right\vert^{2}/b^{4}\right)\right) \\
+\frac{1}{\cos^{2}\left(\vartheta\right)}
\mathcal{O}\left(\left\vert  
a\right\vert\left(\cos^{2}\left(\vartheta\right)\left\vert  
y\right\vert^{2}/b^{4}+\cos\left(\vartheta\right)\left\vert  
Y^{TX}\right\vert/b^{2}\right)\right).
\end{multline}
In (\ref{eq:rem1}), among the small terms as $b\to + \infty $, the 
only potentially annoying term, not uniform in $\vartheta$ is given by 
$\frac{1}{\cos^{2}\left(\vartheta\right)}\mathcal{O}\left(\ct\left\vert  
Y^{TX}\right\vert/b^{2}\right)$. But dominated convergence allows us 
to ignore these questions of uniformity.
\subsection{A proof of Theorem \ref{Tpia}}%
\label{subsec:propi}
Now, we establish an analogue of \cite[Theorem 9.11.1]{Bismut08b}.
\begin{thm}\label{Tlififi}
As $b\to + \infty $, 
\begin{equation}\label{eq:fifi1}
\mathfrak r^{X \prime 
}_{a,b,\vartheta,\Yok}\left(\left(y,Y\right),\left(y',Y'\right)\right)\to
\mathfrak r^{X \prime }_{a,\infty 
,\vartheta,\Yok}\left(\left(y,Y\right),\left(y',Y'\right)\right).
\end{equation}
\end{thm}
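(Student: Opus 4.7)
The plan is to adapt the argument of \cite[Theorem 9.11.1]{Bismut08b}, which treats the case $\vartheta=0$ and $d\vartheta=0$, to the present $\vartheta\in\left[0,\frac{\pi}{2}\right[$ setting. The convergence is to be deduced from two ingredients: (i) the convergence of the rescaled operator $\mathfrak P^{X\prime}_{a,\Yok}$ to the limit operator $\mathfrak P^{X\prime}_{a,\infty,\Yok}$, which was already recorded as (\ref{eq:co65}) and refined by the expansion (\ref{eq:rem1}); and (ii) the uniform Gaussian estimates on $\mathfrak q^{X\prime}_{b,\vartheta}$ and on $\frac{\ct}{b^{2}}\n^{V}\mathfrak q^{X\prime}_{b,\vartheta}$ furnished by Theorem \ref{TBEAUTY}. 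Via the identity (\ref{eq:orsayx2-1}) which relates the kernel $\mathfrak q^{X\prime}_{b,\vartheta}$ along $\gamma$-pairs to the kernel $\mathfrak r^{X\prime}_{a,b,\vartheta,\Yok}$, these estimates translate into uniform-in-$b$ Gaussian bounds on $\mathfrak p^{X\prime}_{a,b,\vartheta,\Yok}$ and its fibrewise gradient.

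First I would split $\mathfrak P^{X\prime}_{a,\Yok}$ as in (\ref{eq:cupr18z1}) into the purely parabolic part $\mathfrak P^{X\prime}_{a,b,\vartheta,\Yok}$ and the $d\vartheta$-perturbation $\mathfrak S^{X\prime}_{a,b,\vartheta,\Yok}$, and work on the heat kernel $\mathfrak p^{X\prime}_{a,b,\vartheta,\Yok}$ of $\mathfrak P^{X\prime}_{a,b,\vartheta,\Yok}$ first. Pulling the estimates of Theorem \ref{TBEAUTY} through the fibre rescaling $H_{b,\vartheta,\Yok}$ of (\ref{eq:rips3}) gives, for $\vartheta$ fixed and $b\ge 1$, a uniform majoration of $\mathfrak p^{X\prime}_{a,b,\vartheta,\Yok}$ by the Gaussian-type weight visible in the right-hand side of (\ref{eq:mossbrugg3bis}), and similarly for its fibrewise derivatives. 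Combined with the pointwise coefficientwise convergence (\ref{eq:co65}) of $\mathfrak P^{X\prime}_{a,b,\vartheta,\Yok}$ to $\mathfrak P^{X\prime}_{a,\infty,\vartheta,\Yok}$ --- in which the only new non-uniformity, namely the term $\frac{1}{\cos^{2}\vartheta}\left\vert a^{TX}+Y^{TX}\right\vert^{2}$, produces only the explicit Gaussian factor $\exp\left(-\left\vert a\right\vert^{2}/2\cos^{2}\vartheta\right)$ appearing in (\ref{eq:co70z-1}) plus harmless lower order errors by (\ref{eq:rem1}) --- a standard parabolic equicontinuity/uniqueness argument (Arzel\`a--Ascoli on compact sets, passage to the limit in the heat equation, uniqueness of the Gaussian-bounded solution) yields
\begin{equation*}
\mathfrak p^{X\prime}_{a,b,\vartheta,\Yok}\to\mathfrak p^{X\prime}_{a,\infty,\vartheta,\Yok}
\end{equation*}
uniformly on compact subsets, together with all fibrewise derivatives.

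Next I would handle the $d\vartheta$-piece. By (\ref{eq:fufu8}), $\mathfrak r^{X\prime}_{a,b,\vartheta,\Yok}=-\mathfrak S^{X\prime}_{a,b,\vartheta,\Yok}\mathfrak p^{X\prime}_{a,b,\vartheta,\Yok}$. After conjugation by $H_{b,\vartheta,\Yok}$, one  checks directly from the definition (\ref{eq:cupr18}) that the coefficient $\frac{b^{3}\sin\vartheta}{\cos^{2}\vartheta}ic\left(\left[Y^{N},a^{TX}+Y^{TX}\right]\right)$ becomes $\tan\vartheta\,ic\left(\left[Y^{\mathfrak k},a\right]+\left[\Yok,Y^{\mathfrak p}\right]\right)$ plus $O(\ct/b^{2})$ corrections; the first order operators $\mathcal{D}^{TX},\mathcal{D}^{N}$ transform into $\mathcal{D}^{\mathfrak p},\mathcal{D}^{\mathfrak k}$ up to $O(1/b^{2})$ corrections (because the geodesic-coordinate trivialization is flat to second order at $x_{0}$); and the Clifford term $\frac{b}{\cos^{2}\vartheta}\widehat c\left(\overline a^{TX}+\overline Y^{TX}\right)$, after conjugation and the prefactor $d\vartheta/(\sqrt 2 b)$, becomes $\frac{d\vartheta}{\sqrt 2\cos^{2}\vartheta}\widehat c(\overline a)$ plus $O(\ct/b^{2})$. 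Thus $\mathfrak S^{X\prime}_{a,b,\vartheta,\Yok}\to\mathfrak S_{a,\infty,\vartheta,\Yok}$ as differential operators with locally uniformly convergent coefficients, and combined with the convergence of $\mathfrak p^{X\prime}_{a,b,\vartheta,\Yok}$ and of its fibrewise derivatives, we obtain (\ref{eq:fifi1}).

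The main obstacle is to push the uniform Gaussian estimates of Theorem \ref{TBEAUTY} onto the rescaled kernel $\mathfrak p^{X\prime}_{a,b,\vartheta,\Yok}$ with enough regularity to permit taking the limit inside the first order operator $\mathfrak S^{X\prime}_{a,b,\vartheta,\Yok}$. The dangerous prefactor $b/\cos^{2}\vartheta$ in front of the Clifford term in $\mathfrak S^{X\prime}$ is precisely offset by the rescaling factor $\ct/b^{2}$ built into $H_{b,\vartheta,\Yok}$ when it acts on $Y^{TX}$, leaving only the bounded main term $\widehat c(\overline a)/\cos^{2}\vartheta$; and the first order differentiations are controlled by the derivative estimates on $\mathfrak q^{X\prime}_{b,\vartheta}$ at the very end of Theorem \ref{TBEAUTY}. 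Uniformity in $\vartheta$ up to $\frac{\pi}{2}$ is not needed for (\ref{eq:fifi1}) itself (which is a pointwise statement in $\vartheta$); it will only be needed later to apply dominated convergence in (\ref{eq:co73}), but there the factor $a\neq 0$ makes $\exp(-C'|a|^{2}/\cos^{2}\vartheta)$ decay fast enough, so the present pointwise convergence is all that is required.
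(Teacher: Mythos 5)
Your proposal follows essentially the same route as the paper: the paper's own proof is a one‑paragraph citation to \cite[Theorem 9.11.1]{Bismut08b}, asserting that, in view of (\ref{eq:co65}) and the ensuing remarks (\ref{eq:rem1}), the structure of $\mathfrak{P}^{X\prime}_{a,\Yok}$ is so close to the $\vartheta=0$ case that the argument carries over verbatim; your proposal simply unpacks exactly this, spelling out the coefficient convergence of the $d\vartheta$ piece $\mathfrak S^{X\prime}_{a,\bt,\Yok}\to\mathfrak S_{a,\infty,\vartheta,\Yok}$ and the role of the uniform bounds from Theorem \ref{TBEAUTY}, all of which the paper leaves implicit in the reference.
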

\begin{proof}
Given $\vartheta\in\left[0,\frac{\pi}{2}\right[$, the structure of 
the operator $\mathfrak{P}^{X \prime }_{a,\Yok}$ is very 
similar to the structure of the operator $\mathcal{P}^{X \prime 
}_{a,0,b,\Yok}$ that was considered in \cite[section 9.10 and 
15.10]{Bismut08b}. In view of (\ref{eq:co65}) and of the 
considerations that follow, the proof of (\ref{eq:fifi1}) is  exactly the 
same as the proof  of \cite[Theorem 9.11.1]{Bismut08b}.
\end{proof}

We are now ready to prove Theorem \ref{Tpia}. Indeed by
(\ref{eq:orsayx2-1})  and by equation (\ref{eq:fifi1}) in Theorem 
\ref{Tlififi}, as $b\to + \infty $, 
\begin{multline}\label{eq:fifi2}
\left(b^{2}/\cos\left(\vartheta\right)\right)^{-2m-n}
\widehat{\Trs}^{\odd}\Biggl[\gamma \mathfrak r^{X \prime}_{b,\vartheta}
\Biggl(e^{\cos\left(\vartheta\right)f/b^{2}}, 
a^{TX}_{e^{\ct f/b^{2}}}+\cos\left(\vartheta\right)Y^{TX}/b^{2},\Yok \\
+\ct 
Y^{N,\perp}/b^{2}\Biggr),
\gamma\Bigl(e^{\cos\left(\vartheta\right)f/b^{2}}, 
a^{TX}_{e^{\ct f/b^{2}}} 
+\cos\left(\vartheta\right)Y^{TX}/b^{2},  \\
\Yok+\ct 
Y^{N,\perp}/b^{2}\Bigr)\Biggr] 
\to
\widehat{\Trs}^{\odd}\Bigg[\Ad\left(k^{-1}\right)\vert_{\Lambda\ac\left(\mathfrak 
      g^{*}\right)}\otimes \rho^{S^{\overline{\mathfrak p}}}\left(k^{-1}\right)
      \otimes \rho^{E}\left(k^{-1}\right)\\
      \mathfrak 
      r^{X \prime 
      }_{a,\infty ,\vartheta,\Yok}\left(\left(f,Y\right),\Ad\left(k^{-1}\right)\left(f,Y\right)\right)
      \Biggr].
\end{multline}
By (\ref{eq:co69x1}), (\ref{eq:co70}),  the operator 
$\mathfrak{P}^{X \prime }_{a, \infty 
,\vartheta,\Yok}$ is even in 
every possible way, including in the Clifford variables in 
$c\left(\overline{\mathfrak p}\right)$. 
By  (\ref{eq:orsayx2-1}), (\ref{eq:co70x1}),  (\ref{eq:co70z-1}), and by
equation (\ref{eq:fifi1}) in Theorem \ref{Tlififi}, we get equation 
(\ref{eq:cosu1}) in Theorem \ref{Tpia}.
In that equation, we replaced 
$S^{\overline{\mathfrak p}}$ by $S^{\mathfrak p}$, since the 
distinction has now become irrelevant. This concludes the proof of 
Theorem \ref{Tpia}.

\section{An explicit formula for the odd orbital integrals}%
\label{sec:applic}
In this section,  as an application of the results of section 
\ref{sec:oddorb}, we give a simple formula for the 
orbital integrals 
$\Tr^{\left[\gamma\right]}\left[D^{X}\exp\left(-tD^{X,2}/2\right)\right]$. 
In particular, we recover all the results by Moscovici-Stanton 
\cite{MoscoviciStanton89} on the explicit evaluation of such orbital 
integrals.

This section is organized as follows. In subsection \ref{subsec:ref}, 
using Theorem \ref{Tkeyres},
we give a formula for 
$\Tr^{\left[\gamma\right]}\left[D^{X}\exp\left(-tD^{X,2}/2\right)\right]$.

In subsection \ref{subsec:van}, we find conditions under which the 
  above orbital integrals 
vanish identically. Our conditions are exactly the ones in Moscovici-Stanton 
\cite{MoscoviciStanton89}. 

In subsection \ref{subsec:siide}, we establish a simple convolution 
identity.

Finally, in subsection \ref{subsec:gen}, when the orbital integrals 
do not vanish, we give an explicit formula for these orbital 
integrals in terms of characteristic forms on 
$X\left(\gamma\right)$.  We recover this way the explicit geometric 
formulas of Moscovici-Stanton \cite{MoscoviciStanton89}.

We make the same assumptions and we use the same notation as in 
sections \ref{sec:pres} and \ref{sec:oddorb}. In particular $m$ is 
still assumed to be odd, and $\gamma$ to be semisimple and 
nonelliptic.
\subsection{A reformulation of Theorem \ref{Tkeyres}}%
\label{subsec:ref}
Let 
\index{Dzg@$\Delta^{ \mathfrak z\left(\gamma\right)}$}%
$\Delta^{ \mathfrak z\left(\gamma\right)}$ be the 
usual Laplacian on $\mathfrak z\left(\gamma\right)=\mathfrak 
p\left(\gamma\right) \oplus \mathfrak k\left(\gamma\right)$ with 
respect to the scalar product induced by the scalar product of 
$\mathfrak g$. For $t>0$, let $\exp\left(t\Delta^{ \mathfrak 
z\left(\gamma\right)}/2\right)$ be the corresponding heat operator, 
and let $\exp\left(t\Delta^{ \mathfrak 
z\left(\gamma\right)}/2\right)\left(\left(y,\Yok\right),\left(y',Y_{0}^{ \mathfrak k \prime }\right)\right)$ be the 
associated Gaussian heat kernel with respect 
to $dy'd\Yok$.

Let $\left(y,\Yok\right)$ be the generic element in $\mathfrak 
z\left(\gamma\right)=\mathfrak p\left(\gamma\right) \oplus \mathfrak 
k\left(\gamma\right)$. Observe that  
\begin{multline}\label{eq:laus7}
J_{\gamma}\left(\Yok\right)\Tr^{S^{\mathfrak p}}
\left[\frac{c\left(a\right)}{\left\vert  a\right\vert}\Ad\left(k^{-1}\right)
\exp\left(-ic
\left(\ad\left(\Yok\right)\vert_{\mathfrak p}\right)\right)\right]\\
\Tr^{E}\left[\rho^{E}\left(k^{-1}\right)\exp\left(-i\rho^{E}\left(\Yok\right)\right)\right]
\delta_{y=a}
\end{multline}
is a distribution on $\mathfrak 
z\left(\gamma\right)$.
For $t>0$, the heat operator  $\exp\left(t\Delta^{ 
\mathfrak z\left(\gamma\right)}/2\right)$ can be applied to this 
distribution and we obtain this way a smooth function on $\mathfrak 
z\left(\gamma\right)$. 
\begin{thm}\label{Tnew}
For any $t>0$, the following identity holds:
\begin{multline}\label{eq:conk4x1}
\Tr^{\left[\gamma\right]}\left[D^{X}\exp\left(-
sD^{X,2}/2\right)\right]*
\frac{1}{\sqrt{s}}\left(t\right)\\
=\sqrt{2\pi}\exp\left(\frac{t}{48}\Tr^{\mathfrak k}\left[C^{\mathfrak 
k, \mathfrak k}\right]+\frac{t}{2}C^{\mathfrak k,E}\right)\\
\frac{1}{\left(2\pi t\right)^{p/2}}
\int_{\mathfrak k\left(\gamma\right)
}^{}
\exp\left(-\frac{1}{2t}\left(\left\vert  
a\right\vert^{2}+\left\vert  \Yok\right\vert^{2}\right)\right)
J_{\gamma}\left(\Yok\right)\\
\Tr^{S^{\mathfrak p}}
\left[\frac{c\left(a\right)}{\left\vert  a\right\vert}\Ad\left(k^{-1}\right)
\exp\left(-ic
\left(\ad\left(\Yok\right)\vert_{\mathfrak p}\right)\right)\right]\\
\Tr^{E}\left[\rho^{E}\left(k^{-1}\right)\exp\left(-i\rho^{E}\left(\Yok\right)\right)\right]
\frac{d\Yok}{\left(2\pi t\right)^{q/2}}.
\end{multline}
Equivalently, for any $t>0$, 
\begin{multline}\label{eq:conk4x2}
    \Tr^{\left[\gamma\right]}\left[D^{X}\exp\left(-sD^{X,2}/2\right)\right]*\frac{1}{\sqrt{s}
}\left(t\right)
=\sqrt{2\pi}\exp\left(\frac{t}{48}\Tr^{\mathfrak k}\left[C^{\mathfrak 
k, \mathfrak k}\right]+\frac{t}{2}C^{\mathfrak k,E}\right)\\
\exp\left(\frac{t}{2}\Delta^{\mathfrak z\left(\gamma\right)}\right)
\Bigg(J_{\gamma}\left(\Yok\right)\Tr^{S^{\mathfrak p}}
\left[\frac{c\left(a\right)}{\left\vert  a\right\vert}\Ad\left(k^{-1}\right)
\exp\left(-ic
\left(\ad\left(\Yok\right)\vert_{\mathfrak p}\right)\right)\right]\\
\Tr^{E}\left[\rho^{E}\left(k^{-1}\right)\exp\left(-i\rho^{E}\left(\Yok\right)\right)\right]\delta_{Y_{0}^{\mathfrak p}=a}
 \Biggr) \left(0\right).
\end{multline}
\end{thm}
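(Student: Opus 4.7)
The goal is to establish (\ref{eq:conk4x1}) for all $t>0$ and to deduce (\ref{eq:conk4x2}) from it. The equivalence of the two identities is purely a matter of recognizing a Gaussian heat kernel: the heat kernel of $\frac{1}{2}\Delta^{\mathfrak z(\gamma)}$ on $\mathfrak z(\gamma)=\mathfrak p(\gamma)\oplus\mathfrak k(\gamma)$ with respect to Lebesgue measure is
\[
G_t\bigl((y,Y_{0}^{\mathfrak k}),(y',Y_{0}^{\mathfrak k\prime})\bigr)=(2\pi t)^{-r/2}\exp\!\left(-\frac{|y-y'|^{2}+|Y_{0}^{\mathfrak k}-Y_{0}^{\mathfrak k\prime}|^{2}}{2t}\right).
\]
Applying $\exp(\frac{t}{2}\Delta^{\mathfrak z(\gamma)})$ to the distribution $J_\gamma(Y_{0}^{\mathfrak k})\cdot\Tr^{S^{\mathfrak p}}[\cdots]\cdot\Tr^{E}[\cdots]\cdot\delta_{Y_{0}^{\mathfrak p}=a}$ and evaluating at the origin separates into a factor $(2\pi t)^{-p/2}\exp(-|a|^{2}/2t)$ (from integration against $\delta_{Y_{0}^{\mathfrak p}=a}$) and the Gaussian integral over $\mathfrak k(\gamma)$ appearing in (\ref{eq:conk4x1}); this identifies (\ref{eq:conk4x2}) with (\ref{eq:conk4x1}).

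\textbf{The case $t=1$.} In Theorem \ref{Tkeyres}, the change of variable $s\mapsto 1-s$ in the integral rewrites its LHS as $\tfrac{i}{2\sqrt{2}}\exp\!\bigl(-\tfrac{1}{48}\Tr^{\mathfrak k}[C^{\mathfrak k,\mathfrak k}]-\tfrac{1}{2}C^{\mathfrak k,E}\bigr)\bigl(\Tr^{[\gamma]}[D^{X}e^{-sD^{X,2}/2}]\ast 1/\sqrt{s}\bigr)(1)$, and its RHS contains the factor $\frac{\sqrt{\pi}}{2(2\pi)^{p/2}}$. The trace factor in Theorem \ref{Tkeyres} is written in terms of $\widehat{c}$, whereas that in (\ref{eq:conk4x1}) uses $c$; by subsection \ref{subsec:spin}, $\widehat{c}(\bar e)=ic(\bar e)$ so that $\widehat{c}(a)=ic(a)$, while (\ref{eq:bub1x-1}) gives $\widehat{c}\bigl(\ad(Y_{0}^{\mathfrak k})|_{\overline{\mathfrak p}}\bigr)=c\bigl(\ad(Y_{0}^{\mathfrak k})|_{\mathfrak p}\bigr)$. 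The single factor of $i$ coming from $\widehat{c}(a)/|a|$ combines with $\frac{2\sqrt{2}}{i}\cdot\frac{\sqrt{\pi}}{2}=-i\sqrt{2\pi}$ to produce the clean prefactor $\sqrt{2\pi}$ of (\ref{eq:conk4x1}) at $t=1$.

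\textbf{Extension to general $t>0$ via scaling.} The rescaling $B\to B/t$ studied in subsection \ref{subsec:scal} is precisely the mechanism for converting the identity at $t=1$ into its $t$-dependent form. Writing an orthonormal basis of $\mathfrak p$ for $B/t$ as $\sqrt{t}e_{i}$ and using the Clifford-algebra rule $c_{t}(v)=c(v)/\sqrt{t}$, one checks $D^{X}_{t}=\sqrt{t}\,D^{X}$; similarly $|a|^{2}\mapsto |a|^{2}/t$, $|Y_{0}^{\mathfrak k}|^{2}\mapsto |Y_{0}^{\mathfrak k}|^{2}/t$, and the Casimir actions $\Tr^{\mathfrak k}[C^{\mathfrak k,\mathfrak k}]$ and $C^{\mathfrak k,E}$ get multiplied by $t$ (since they are formed using $B$-dual bases). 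By contrast the length-normalized Clifford element $c(a)/|a|$ is invariant, $c(\ad(Y_{0}^{\mathfrak k})|_{\mathfrak p})$ is invariant by (\ref{eq:ors1}) under simultaneous rescaling of $B$ and the basis, the function $J_{\gamma}(Y_{0}^{\mathfrak k})$ of (\ref{eq:crub3}) depends only on Lie-algebraic data, and $\Tr^{E}[\rho^{E}(k^{-1})\exp(-i\rho^{E}(Y_{0}^{\mathfrak k}))]$ is purely representation-theoretic. The Lebesgue measure $dY_{0}^{\mathfrak k}$ on $\mathfrak k(\gamma)$ acquires a factor $t^{-q/2}$. Substituting all these scalings into the $t=1$ identity and applying the elementary change of variables $u=st$ in the convolution, which yields
\[
\left(\Tr^{[\gamma]}\bigl[\sqrt{t}D^{X}e^{-stD^{X,2}/2}\bigr]\ast\tfrac{1}{\sqrt{s}}\right)(1)=\left(\Tr^{[\gamma]}[D^{X}e^{-sD^{X,2}/2}]\ast\tfrac{1}{\sqrt{s}}\right)(t),
\]
produces (\ref{eq:conk4x1}) for arbitrary $t>0$ once the various powers of $t$ are collected.

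\textbf{Main obstacle.} The principal technical difficulty lies in tracking the normalization of the orbital integral $\Tr^{[\gamma]}$ itself under $B\to B/t$. The orbital integral is built from Haar measure on $Z(\gamma)\backslash G$ and the Riemannian volume on $X$, both of which scale with $B$; together with the factor $t^{-q/2}$ from the Lebesgue measure on $\mathfrak k(\gamma)$ and the $t^{-p/2}$ arising from the volume of $X(\gamma)$, these contributions must combine to produce exactly the factor $(2\pi t)^{-(p+q)/2}$ appearing in (\ref{eq:conk4x1}). This is a careful bookkeeping exercise, for which the scaling relations (\ref{eq:he-1})--(\ref{eq:he-2}) of subsection \ref{subsec:scal} and the functorial behaviour of semisimple orbital integrals with respect to the bilinear form are the key inputs.
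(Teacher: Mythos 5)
Your proposal is correct and follows essentially the same route as the paper's: derive the $t=1$ case from Theorem \ref{Tkeyres} (using $\widehat{c}(e)=ic(e)$ on $S^{\mathfrak p}$ and $\widehat{c}(A)=c(A)$ for antisymmetric $A$ to reconcile the Clifford conventions), then extend to general $t>0$ by the rescaling $B\to B/t$, then recognize (\ref{eq:conk4x2}) as the Gaussian heat-kernel reformulation of (\ref{eq:conk4x1}). The scaling bookkeeping you flag as the "main obstacle" is indeed the heart of the matter, but the paper leaves it implicit at the same level of detail, so your proposal is in line with the argument as actually given.
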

\begin{proof}
Recall that if $e\in \mathfrak p$, the action of 
$\widehat{c}\left(e\right)$ on $S^{\mathfrak p}$ is given by 
$\widehat{c}\left(e\right)=ic\left(e\right)$. When $t=1$, by equation 
(\ref{eq:key1}) in Theorem \ref{Tkeyres}, we get 
(\ref{eq:conk4x1}). For an arbitrary $t>0$,  
(\ref{eq:conk4x1}) is just the same equation with $t=1$ when  
 $B$ is replaced by $B/t$. By (\ref{eq:conk4x1}), we 
get (\ref{eq:conk4x2}).
\end{proof}

From now on, we assume that the representation $\rho^{E}:K\to 
\mathrm{U}\left(E\right)$ is irreducible. In particular $C^{\mathfrak 
k,E}$ is  scalar. Let 
\index{T@$T$}%
$T$ be a maximal torus in $K$, and let 
\index{t@$\mathfrak t$}%
$\mathfrak t$ be its Lie 
algebra. Let $R_{+}$ be an associated positive root system, and let $\rho$ be the 
half-sum of the positive roots. Let $\lambda\in \mathfrak t^{*}$ be the 
nonnegative weight that defines the irreducible representation 
$\rho^{E}$.

By \cite[eq. (7.2.15) and Proposition 7.5.2]{Bismut08b}, we get
\begin{equation}\label{eq:conk4y1}
\frac{1}{48}\Tr^{\mathfrak k}\left[C^{\mathfrak k, \mathfrak k}\right]+\frac{1}{2}C^{\mathfrak k,E}=-2\pi^{2}
\left\vert  \rho+\lambda\right\vert^{2}.
\end{equation}

\begin{thm}\label{Tanew}
For any $t>0$, the following identity holds:
\begin{multline}\label{eq:conk4x3}
 \Tr^{\left[\gamma\right]}\left[D^{X}
 \exp\left(-tD^{X,2}/2\right)\right]
=\frac{\sqrt{2}}{\sqrt{\pi}}\frac{d}{dt}\Biggl[\exp\left(
-2\pi^{2}s\left\vert  \rho+\lambda\right\vert^{2}\right)\\
\exp\left(\frac{s}{2}\Delta^{\mathfrak z\left(\gamma\right)}\right)
\Bigg(J_{\gamma}\left(\Yok\right)\Tr^{S^{\mathfrak p}}
\left[\frac{c\left(a\right)}
{\left\vert  
a\right\vert}\Ad\left(k^{-1}\right)\exp\left(-ic
\left(\ad\left(\Yok\right)\vert_{\mathfrak p}\right)\right)\right]\\
\Tr^{E}\left[\rho^{E}\left(k^{-1}\right)
\exp\left(-i\rho^{E}\left(\Yok\right)\right)\right]\delta_{Y_{0}^{\mathfrak p}=a}
\Biggr)\left(0\right)
*\frac{1}{\sqrt{s}}\Biggr]\left(t\right).
\end{multline}
\end{thm}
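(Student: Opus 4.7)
The plan is to derive Theorem \ref{Tanew} from Theorem \ref{Tnew} by inverting the convolution with $\frac{1}{\sqrt{s}}$, in exactly the same spirit as Proposition \ref{Psim} was deduced from (\ref{eq:bob15x1}) and (\ref{eq:conk2}). All the analytic content has already been absorbed into Theorem \ref{Tnew}; what remains is a purely formal manipulation of distributions on $\R_{+}$.

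First I would use (\ref{eq:conk4y1}) to rewrite the scalar prefactor $\exp\left(\frac{t}{48}\Tr^{\mathfrak k}\left[C^{\mathfrak k,\mathfrak k}\right]+\frac{t}{2}C^{\mathfrak k,E}\right)$ in (\ref{eq:conk4x2}) as $\exp\left(-2\pi^{2}t\left\vert \rho+\lambda\right\vert^{2}\right)$. Denote
\begin{multline*}
G(t)=\exp\left(-2\pi^{2}t\left\vert \rho+\lambda\right\vert^{2}\right)\exp\left(\tfrac{t}{2}\Delta^{\mathfrak z(\gamma)}\right)
\Bigl(J_{\gamma}(\Yok)\,\Tr^{S^{\mathfrak p}}\bigl[\tfrac{c(a)}{|a|}\Ad(k^{-1})\exp(-ic(\ad(\Yok)\vert_{\mathfrak p}))\bigr]\\
\times\Tr^{E}\bigl[\rho^{E}(k^{-1})\exp(-i\rho^{E}(\Yok))\bigr]\,\delta_{Y_{0}^{\mathfrak p}=a}\Bigr)(0),
\end{multline*}
so that Theorem \ref{Tnew} takes the compact form
\begin{equation*}
\Tr^{[\gamma]}\bigl[D^{X}\exp(-sD^{X,2}/2)\bigr]*\tfrac{1}{\sqrt{s}}\,(t)=\sqrt{2\pi}\,G(t).
\end{equation*}

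Next, I would convolve both sides of this identity with $\tfrac{1}{\sqrt{s}}$ and invoke the distributional identity (\ref{eq:conk2}), namely $\tfrac{1}{\sqrt{s}}*\tfrac{1}{\sqrt{s}}=\pi$. Since convolution with the constant $\pi$ on $\R_{+}$ is $\pi$ times the primitive, this yields
\begin{equation*}
\pi\int_{0}^{t}\Tr^{[\gamma]}\bigl[D^{X}\exp(-sD^{X,2}/2)\bigr]\,ds=\sqrt{2\pi}\,\bigl(G*\tfrac{1}{\sqrt{s}}\bigr)(t).
\end{equation*}
Differentiating in $t$ and dividing by $\pi$ gives
\begin{equation*}
\Tr^{[\gamma]}\bigl[D^{X}\exp(-tD^{X,2}/2)\bigr]=\frac{\sqrt{2}}{\sqrt{\pi}}\,\frac{d}{dt}\bigl(G*\tfrac{1}{\sqrt{s}}\bigr)(t),
\end{equation*}
which is precisely (\ref{eq:conk4x3}).

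There is no real obstacle at this stage: the hypoelliptic deformation and the $b\to+\infty$ localization have done the hard work in establishing Theorem \ref{Tnew}, and Proposition \ref{Psim} provides the blueprint for the convolution inversion. The only things to check are the numerical bookkeeping of the constants $\sqrt{2\pi}$, $\pi$, and $\sqrt{2}/\sqrt{\pi}$, and the mild smoothness required to differentiate the convolution $G*\tfrac{1}{\sqrt{s}}$, which follows because $\Tr^{[\gamma]}[D^{X}\exp(-sD^{X,2}/2)]$ is smooth and rapidly decaying in $s$ by the estimate (\ref{eq:gon1y3}).
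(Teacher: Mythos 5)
Your proposal is correct and follows exactly the same route as the paper's (very terse) proof, which simply cites the distributional identity (\ref{eq:conk2}) and equation (\ref{eq:conk4x2}); you merely fill in the convolution inversion explicitly and track the constants, just as Proposition \ref{Psim} models. Nothing is missing.
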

\begin{proof}
By (\ref{eq:conk2}) and (\ref{eq:conk4x2}), we get 
(\ref{eq:conk4x3}). 
\end{proof}
\subsection{The vanishing of the orbital integrals}%
\label{subsec:van}
Since $\mathfrak p$ is odd dimensional and since $\Ad\left(k\right)$ preserves the orientation of $\mathfrak p$, 
 $\dim \mathfrak p\left(k\right)$ is odd. Since $a\in 
\mathfrak p\left(k\right)$, then $\dim 
\mathfrak p\left(k\right)\ge 1$.

Now we make the same discussion as in \cite[section 7.9]{Bismut08b}.
Set
\index{b@$\mathfrak b$}%
\begin{equation}\label{eq:ptit1}
\mathfrak b=\left\{e\in \mathfrak p, \left[e,\mathfrak 
t\right]=0\right\}.
\end{equation}
Since $\mathfrak p$ is odd dimensional, $\mathfrak b$ is also odd 
dimensional.

 Put
 \index{h@$\mathfrak h$}%
\begin{equation}\label{eq:ptit2}
\mathfrak h=\mathfrak b \oplus \mathfrak t.
\end{equation}
By \cite[p. 129]{Knapp86}, $\mathfrak h$ is a 
Cartan subalgebra of 
$\mathfrak g$. Also $\dim \mathfrak t$ is the
complex rank of $K$, 
and $\dim \mathfrak h$ is the complex rank of $G$.

 By (\ref{eq:grzu0}), we get
\begin{equation}\label{eq:grzu-1}
K\left(\gamma\right) \subset Z\left(a\right)\cap  Z\left(k\right).
\end{equation}

Recall that
\index{K0g@$K^{0}\left(\gamma\right)$}%
$K^{0}\left(\gamma\right) \subset K\left(\gamma\right)$ is the connected component
of the identity.
Let 
\index{Tg@$T\left(\gamma\right)$}%
$T\left(\gamma\right) \subset K^{0}\left(\gamma\right)$ be a 
 maximal torus in $K^{0}\left(\gamma\right)$, and let 
 \index{tg@$\mathfrak t\left(\gamma\right)$}%
 $\mathfrak t \left(\gamma\right)\subset \mathfrak k\left(\gamma\right)$ be its Lie 
algebra. Since $T\left(\gamma\right)$ is 
commutative, and  since by (\ref{eq:grzu-1}),  $k$ commutes with 
$T\left(\gamma\right)$, we may and we will assume that 
$T\left(\gamma\right) \subset T, k\in T$. In particular $\mathfrak 
t\left(\gamma\right) \subset \mathfrak t$.

Set
\index{bg@$\mathfrak b\left(\gamma\right)$}%
\begin{equation}\label{eq:shak12x1}
\mathfrak b\left(\gamma\right)=\left\{e\in \mathfrak p\left(k\right), \left[e, \mathfrak 
t\left(\gamma\right)\right]=0\right\}.
\end{equation}
Since $\mathfrak p\left(k\right)$ is odd dimensional, $\mathfrak 
b\left(\gamma\right)$ is also odd dimensional,
so that
\begin{equation}\label{eq:shak12bx2}
\dim \mathfrak b\left(\gamma\right)\ge 1.
\end{equation}
Also $a\in \mathfrak b\left(\gamma\right)$.

Moreover, since $\mathfrak t\left(\gamma\right) \subset \mathfrak t$, 
and $k\in T$,  
\begin{equation}\label{eq:shak12bx3}
\mathfrak 
b\subset \mathfrak b\left(\gamma\right).
\end{equation}
Here, we will recover a first result of Moscovici-Stanton 
\cite{MoscoviciStanton89}.
\begin{thm}\label{Tvan}
If $\dim \mathfrak b\left(\gamma\right)\ge 3$,    for any $t>0$, 
\begin{equation}\label{eq:conk5}
\Tr^{\left[\gamma\right]}\left[D^{X}\exp\left(-t
D^{X,2}/2\right)\right]=0.
\end{equation} 
In particular, this is the case if $\dim \mathfrak b\ge 3$.
\end{thm}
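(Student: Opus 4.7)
The plan is to apply Theorem \ref{Tanew} and show that the integrand appearing inside the heat--smoothing in (\ref{eq:conk4x3}) vanishes identically on $\mathfrak k(\gamma)$. The cornerstone will be the vanishing of the Clifford trace
$$\Tr^{S^{\mathfrak p}}\bigl[\tfrac{c(a)}{|a|}\Ad(k^{-1})\exp\bigl(-ic(\ad(Y_{0}^{\mathfrak k})|_{\mathfrak p})\bigr)\bigr]$$
for every $Y_0^{\mathfrak k}\in\mathfrak k(\gamma)$. Once this is established, the distribution in (\ref{eq:conk4x3}) is zero, so (\ref{eq:conk5}) follows immediately; the statement about $\dim\mathfrak b\ge 3$ is then a consequence of (\ref{eq:shak12bx3}).

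First I would verify that the above Clifford trace, viewed as a function of $Y_0^{\mathfrak k}$, is $\Ad(K^0(\gamma))$-invariant. The key input is that $h\in Z(\gamma)$ forces $\Ad(h)a=a$ and $hk=kh$: uniqueness of the decomposition $\gamma=e^a k^{-1}$ from \cite[Theorem 3.1.2]{Bismut08b} gives the first relation, and cyclicity of the trace then moves $\Ad(h)$ past both $c(a)$ and $\Ad(k^{-1})$ to recover the untransformed expression. Since $\mathfrak t(\gamma)$ is a Cartan subalgebra of the compact Lie algebra $\mathfrak k(\gamma)$, every $\Ad(K^0(\gamma))$-orbit in $\mathfrak k(\gamma)$ meets $\mathfrak t(\gamma)$, so it suffices to prove the vanishing for $Y_0^{\mathfrak k}\in\mathfrak t(\gamma)$.

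For such $Y_0^{\mathfrak k}$, I claim that the orthogonal transformation $U:=\Ad(k^{-1})\exp(-i\ad(Y_0^{\mathfrak k})|_{\mathfrak p})$ of $\mathfrak p_{\mathbf C}$ fixes $\mathfrak b(\gamma)$ pointwise: by the definition (\ref{eq:shak12x1}) of $\mathfrak b(\gamma)$, $\ad(Y_0^{\mathfrak k})$ annihilates $\mathfrak b(\gamma)$; and $\mathfrak b(\gamma)\subset\mathfrak p(k)$ is fixed by $\Ad(k^{-1})$. Consequently, the lift $\tilde U$ of $U$ to the complexified spin group satisfies $\tilde U\, c(v)\, \tilde U^{-1}=c(Uv)=c(v)$ for every $v\in\mathfrak b(\gamma)$. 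Since $\dim\mathfrak b(\gamma)\ge 3>1$, I can pick a unit vector $e\in\mathfrak b(\gamma)$ orthogonal to $a$; then $c(e)^2=-1$ and $c(e)c(a)=-c(a)c(e)$, so cyclicity of $\Tr^{S^{\mathfrak p}}$ yields
$$\Tr^{S^{\mathfrak p}}\bigl[c(a)\tilde U\bigr]=\Tr^{S^{\mathfrak p}}\bigl[c(e)^{-1}c(a)\tilde U\,c(e)\bigr]=\Tr^{S^{\mathfrak p}}\bigl[c(e)^{-1}c(a)c(e)\tilde U\bigr]=-\Tr^{S^{\mathfrak p}}\bigl[c(a)\tilde U\bigr],$$
forcing the trace to vanish.

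The main subtlety is that this pointwise vanishing genuinely fails for a generic $Y_0^{\mathfrak k}\in\mathfrak k(\gamma)$, since then $\ad(Y_0^{\mathfrak k})|_{\mathfrak p}$ need not annihilate $\mathfrak b(\gamma)$; the reduction to $\mathfrak t(\gamma)$ via the $\Ad(K^0(\gamma))$-invariance is therefore indispensable, and is the only nontrivial step.
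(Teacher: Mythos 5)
Your proof is correct and follows essentially the same route as the paper: reduce to the vanishing of the Clifford trace, use $\Ad\left(K^{0}\left(\gamma\right)\right)$-invariance to restrict to $\mathfrak t\left(\gamma\right)$, note that the transformation then fixes $\mathfrak b\left(\gamma\right)$ pointwise, pick $e\in\mathfrak b\left(\gamma\right)$ orthogonal to $a$, and kill the trace by (anti)commutation. The only differences are presentational: the paper first proves the real version with $\exp\left(-c\left(\ad\left(\Yok\right)\vert_{\mathfrak p}\right)\right)$ and invokes analyticity to pass to the complexified (\ref{eq:gong2}), and packages the cyclicity step as a commutator identity, whereas you conjugate by $c\left(e\right)$ directly and spell out the invariance computation that the paper leaves implicit.
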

\begin{proof}
    By (\ref{eq:conk4x3}), to establish (\ref{eq:conk5}), we only 
    need to show that if $\Yok\in \mathfrak k\left(\gamma\right)$, 
    \begin{equation}\label{eq:gong2}
\Tr^{S^{\mathfrak p}}\left[\frac{c\left(a\right)}{\left\vert  
a\right\vert}\Ad\left(k^{-1}\right)\exp\left(-ic\left(\ad\left(\Yok\right)\vert_{\mathfrak p}\right)\right)\right]=0.
\end{equation}
Using the adjoint action of  $K^{0}\left(\gamma\right)$ on $\mathfrak 
k\left(\gamma\right)$, we may as well assume 
    that $\Yok\in \mathfrak t\left(\gamma\right)$.
    Then $\Ad\left(k^{-1}\right)$ acts like the 
    identity on $\mathfrak b\left(\gamma\right)$, and 
    $\ad\left(\Yok\right)$ vanishes on $\mathfrak 
    b\left(\gamma\right)$. Therefore $\mathfrak b\left(\gamma\right)$ 
    lies in the eigenspace for the action on $\mathfrak p$ of 
    $\Ad\left(k^{-1}\right)\exp\left(-\ad\left(\Yok\right)\right)$ 
    associated with
 the eigenvalue $1$. If $\dim \mathfrak b\left(\gamma\right)\ge 
    3$, this eigenspace   is of dimension $\ge 3$.

Let $\left\{a\right\}^{\perp}$ be the orthogonal space to $a$ in $\mathfrak 
p$. By the above,  the eigenspace for 
the action of 
$\Ad\left(k^{-1}\right)\exp\left(-\ad\left(\Yok\right)\right)$ on 
$\left\{a\right\}^{\perp}$  for 
the eigenvalue $1$
is of dimension $\ge 2$. Let $e$ be a unit vector in this eigenspace. We 
have the identity
\begin{equation}\label{eq:laus7a1}
\left[c\left(e\right),\frac{c\left(a\right)}{\left\vert  
a\right\vert}\Ad\left(k^{-1}\right)\exp\left(-c\left( 
\ad\left(\Yok\right)\vert_{\mathfrak p}\right) \right)\right]=0.
\end{equation}
By (\ref{eq:laus7a1}), we get 
\begin{multline}\label{eq:laus7a1b}
\frac{c\left(a\right)}{\left\vert  
a\right\vert}\Ad\left(k^{-1}\right)\exp\left(-c\left( 
\ad\left(\Yok\right)\vert_{\mathfrak p}\right) \right)\\
=\frac{1}{2}\left[c\left(e\right),\frac{c\left(a\right)}{\left\vert  
a\right\vert}\Ad\left(k^{-1}\right)\exp\left(-c\left( 
\ad\left(\Yok\right)\vert_{\mathfrak p}\right) \right)c\left(e\right)\right].
\end{multline}
Since $\Ad\left(k^{-1}\right)\exp\left(-c\left( 
\ad\left(\Yok\right)\vert_{\mathfrak p}\right) \right)\in c^{\even}\left(\mathfrak 
p\right)$, the right-hand side of (\ref{eq:laus7a1b}) is a 
commutator.
Since $\Tr^{S^{\mathfrak p}}$ 
vanishes on commutators, by (\ref{eq:laus7a1b}), we get
\begin{equation}\label{eq:gong1}
\Tr^{S^{\mathfrak p}}\left[\frac{c\left(a\right)}{\left\vert  
a\right\vert}\Ad\left(k^{-1}\right)\exp\left(-c\left( 
\ad\left(\Yok\right)\vert_{\mathfrak p}\right) \right) \right]=0.
\end{equation}
By analyticity, from (\ref{eq:gong1}), we get (\ref{eq:gong2}).
The proof of our theorem is completed. 
\end{proof}
\begin{remk}\label{Rgrox}
Now we reproduce the content of \cite[Remark 7.9.2]{Bismut08b}. For $p,q\in \N$, let $\mathrm{SO}^{0}\left(p,q\right)$ be the 
connected component of the identity in the real group 
$\mathrm{SO}\left(p,q\right)$. By  \cite[Table V p. 518]{Helgason78} 
and \cite[Table C1 p. 713, and Table C2  p. 
714]{Knapp86}, among the noncompact simple connected complex groups such that
$m$ is odd and $\dim \mathfrak b=1$, there is only 
$\mathrm{SL}_{2}\left(\C\right)$, and among the noncompact simple real 
connected groups with the same property,  there are only $\mathrm{SL}_{3}\left(\R\right)$,
$\mathrm{SL}_{4} \left(\R\right)$, $\mathrm{SL}_{2}\left(\mathbb 
H\right)$, and 
$\mathrm{SO}^{0}\left(p,q\right)$ with $pq$ odd $>1$. Also by 
\cite[pp. 519, 520]{Helgason78}, 
$\mathrm{sl}_{2}\left(\C\right)=\mathrm{so}\left(3,1\right)$, 
$\mathrm{sl}_{4}\left(\R\right)=\mathrm{so}\left(3,3\right)$, and 
$\mathrm{sl}_{2}\left(\mathbb H\right)=\mathrm{so}\left(5,1\right)$. 
Therefore the above list can be reduced to 
$\mathrm{SL}_{3}\left(\R\right)$ and $\mathrm{SO}^{0}\left(p,q\right)$ 
with $pq$ odd $>1$.\footnote{I am  indebted to Yves Benoist 
for providing the above information.} This is exactly the list given 
by \cite{MoscoviciStanton89} that implies the vanishing of the odd 
traces in (\ref{eq:conk5}).
\end{remk}
\subsection{A convolution identity}%
\label{subsec:siide}
For $x\ge 0$, set
\index{fx@$\phi\left(x\right)$}%
\begin{equation}\label{eq:nauf16x1}
\phi\left(x\right)=\int_{x}^{+ \infty 
}\exp\left(-\lambda\right)\lambda^{-1/2}d\lambda=
2\int_{\sqrt{x}}^{+ \infty }\exp\left(-\lambda^{2}\right)d\lambda.
\end{equation}
\begin{prop}\label{Poff}
For $x>0,t>0$, 
\begin{equation}\label{eq:nauf18}
\frac{1}{\sqrt{s}}\exp\left(-x/s\right)*\frac{1}{\sqrt{s}}\left(t\right)=
\sqrt{\pi}
\phi\left(x/t\right).
\end{equation}
\end{prop}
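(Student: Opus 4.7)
The plan is to reduce the convolution to a one-variable integral in $a = x/t$, then verify the identity by showing both sides agree at $a=0$ and that their derivatives with respect to $a$ coincide.

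First I would write out the convolution explicitly,
\begin{equation*}
\frac{1}{\sqrt{s}}\exp\left(-x/s\right)*\frac{1}{\sqrt{s}}\left(t\right)=\int_{0}^{t}\frac{1}{\sqrt{s}}e^{-x/s}\,\frac{1}{\sqrt{t-s}}\,ds,
\end{equation*}
and perform the substitution $s=tu$, which turns the integral into
\begin{equation*}
F(a):=\int_{0}^{1}\frac{e^{-a/u}}{\sqrt{u(1-u)}}\,du,\qquad a=x/t.
\end{equation*}
Thus the claim becomes $F(a)=\sqrt{\pi}\,\phi(a)$ for $a>0$, and it suffices to check this identity by comparing values and derivatives.

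At $a=0$, the substitution $u=\sin^{2}\theta$ gives $F(0)=\int_{0}^{\pi/2}2\,d\theta=\pi$, while $\sqrt{\pi}\,\phi(0)=2\sqrt{\pi}\int_{0}^{\infty}e^{-\lambda^{2}}d\lambda=\pi$, so the two sides agree at $a=0$. From the definition of $\phi$ one has $\sqrt{\pi}\,\phi'(a)=-\sqrt{\pi}\,e^{-a}/\sqrt{a}$, so it remains to establish
\begin{equation*}
F'(a)=-\int_{0}^{1}\frac{e^{-a/u}}{u\sqrt{u(1-u)}}\,du=-\frac{\sqrt{\pi}\,e^{-a}}{\sqrt{a}}.
\end{equation*}
The key step is this derivative computation: substituting $u=1/v$ transforms the integral into $\int_{1}^{\infty}e^{-av}(v-1)^{-1/2}dv$, and then $v=1+w^{2}$ gives $2e^{-a}\int_{0}^{\infty}e^{-aw^{2}}dw=\sqrt{\pi}\,e^{-a}/\sqrt{a}$, as required.

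I do not expect any real obstacle: both substitutions are standard, and the differentiation under the integral sign is justified by the rapid decay of $e^{-a/u}$ near $u=0$ for $a>0$. The only mildly delicate point is the behaviour at $a=0^{+}$, but since both $F$ and $\sqrt{\pi}\phi$ are continuous at $0$ and the derivative identity holds on $(0,\infty)$, integrating from $0$ yields (\ref{eq:nauf18}).
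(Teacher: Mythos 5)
Your proof is correct. The substitution $s=tu$ reducing the convolution to $F(a)=\int_0^1 e^{-a/u}(u(1-u))^{-1/2}du$ with $a=x/t$ is valid, the Beta-integral computation $F(0)=\pi$ matches $\sqrt{\pi}\,\phi(0)=\pi$, and the derivative computation
\begin{equation*}
F'(a)=-\int_0^1 \frac{e^{-a/u}}{u\sqrt{u(1-u)}}\,du
=-\int_1^\infty \frac{e^{-av}}{\sqrt{v-1}}\,dv
=-\sqrt{\pi}\,e^{-a}/\sqrt{a}=\sqrt{\pi}\,\phi'(a)
\end{equation*}
is right; differentiation under the integral sign and continuity at $a=0^+$ are both justified by the bounds you indicate.

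The route, however, is genuinely different from the paper's. The paper evaluates the convolution directly: after the substitutions $s\mapsto t/s$ and then $s\mapsto 1+s^2$ it reaches $2e^{-x/t}\int_0^\infty e^{-xs^2/t}(1+s^2)^{-1}ds$, inserts the Laplace representation $(1+s^2)^{-1}=\int_0^\infty e^{-(1+s^2)\lambda}d\lambda$, applies Fubini and the Gaussian integral, and shifts the $\lambda$-variable to produce $\sqrt{\pi}\,\phi(x/t)$ in one sweep. You instead normalize to a single parameter $a$, and prove the two sides agree by matching the value at $a=0$ and the derivative on $(0,\infty)$. Interestingly, the Gaussian integral $\int_1^\infty e^{-av}(v-1)^{-1/2}dv=\sqrt{\pi}e^{-a}/\sqrt{a}$ that drives your derivative step is essentially the same ingredient the paper extracts after the Fubini swap, so the underlying computation is the same; what you gain is avoiding the double integral and Fubini altogether, at the modest price of a separate check of the boundary value and a differentiation-under-the-integral argument.
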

\begin{proof}
Clearly,
\begin{multline}\label{eq:nauf15a}
\frac{1}{\sqrt{s}}\exp\left(-x/s\right)*\frac{1}{\sqrt{s}}\left(t\right)=
\int_{1}^{+ \infty 
}\exp\left(-\frac{xs}{t}\right)\frac{1}{s\sqrt{s-1}}ds\\
=2\exp\left(-x/t\right)\int_{0}^{+ \infty }\exp\left(-xs^{2}/t\right)
\left(1+s^{2}\right)^{-1}ds.
\end{multline}
Moreover,
\begin{multline}\label{eq:nauf16a}
2\int_{0}^{+ \infty }\exp\left(-xs^{2}/t\right)
\left(1+s^{2}\right)^{-1}ds\\
=2\int_{\R^{2}_{+}}^{}
\exp\left(-xs^{2}/t-\left(1+s^{2}\right)\lambda\right)dsd\lambda\\
=\sqrt{\pi}\int_{0}^{+ \infty }\exp\left(-\lambda\right)
\left(\lambda+x/t\right)^{-1/2}d\lambda
=\sqrt{\pi}\exp\left(x/t\right)\int_{x/t}^{+ \infty 
}\exp\left(-\lambda\right)\lambda^{-1/2}d\lambda.
\end{multline}
By (\ref{eq:nauf15a}), (\ref{eq:nauf16a}), we get
(\ref{eq:nauf18}).
\end{proof}
\subsection{The case where $\dim \mathfrak b\left(\gamma\right)=1$}%
\label{subsec:gen}
In the sequel, we assume that $\dim \mathfrak 
b\left(\gamma\right)=1$. By (\ref{eq:shak12bx3}),
$\mathfrak b=\mathfrak b\left(\gamma\right)$ is $1$-dimensional and 
generated by $a$. Since $\ad\left(\mathfrak t\right)$ vanishes on 
$a$, and $k\in T$, we get
\begin{align}\label{eq:cung1}
&\mathfrak t\left(\gamma\right)=\mathfrak t,&T\left(\gamma\right)=T.
\end{align}

Note that
$\ad\left(a\right)$ is an invertible endomorphism of $\mathfrak 
z_{0}^{\perp}$ that exchanges $\mathfrak p_{0}^{\perp}$ and 
$\mathfrak k_{0}^{\perp}$ and commutes with $\Ad\left(k^{-1}\right)$.

Let 
\index{ap@$\left\{a\right\}^{\perp} $}%
$\left\{a\right\}^{\perp} \subset \mathfrak p$ be the orthogonal 
vector space 
to $a$ in $\mathfrak p$. We have the orthogonal splitting
\begin{equation}\label{eq:freeb1}
\left\{a\right\}^{\perp}=\left\{a\right\}^{\perp}\cap \mathfrak p_{0} 
\oplus \mathfrak p_{0}^{\perp}.
\end{equation}
Then $\mathfrak t$ preserves $\left\{a\right\}^{\perp}$. Since 
$\mathfrak t \subset \mathfrak k_{0}$, $\mathfrak t$
   preserves the splitting (\ref{eq:freeb1}). Since $\mathfrak b$ is reduced to $\left\{a\right\}$, $\left\{a\right\}^{\perp}\cap
\mathfrak p_{0}$ and $\mathfrak 
p_{0}^{\perp}$ are even dimensional, and preserved by $T$. Since 
$k\in T$, the action of $\Ad\left(k\right)$ on these two vector spaces preserves 
their orientation. In particular, the eigenspaces of $k$ that are 
associated with the eigenvalue $1$ are even dimensional. 

As we saw in subsection \ref{subsec:minge},  $\mathfrak p_{0}$ and $\mathfrak p\left(k\right)$ 
intersect orthogonally along $\mathfrak p\left(\gamma\right)$.

Since $\left\{a\right\}^{\perp}\cap \mathfrak p_{0} $  is even dimensional,  $\mathfrak p_{0}$ is odd 
dimensional. Since $T=T\left(\gamma\right)$, then $T$ preserves 
$\mathfrak p_{0}$. In particular $k\in T$ acts like an oriented 
isomorphism of $\mathfrak p_{0}$. Since $\mathfrak 
p\left(\gamma\right)$ is the part of $\mathfrak p_{0}$ that is fixed 
by $\Ad\left(k\right)$, $\mathfrak p\left(\gamma\right)$ is also odd 
dimensional, and $\mathfrak p_{0}^{\perp}\left(\gamma\right)$ is even 
dimensional. In the same way, $\Ad\left(k\right)$ acts like an 
oriented isomorphism of $\mathfrak p_{0}^{\perp}$. Since $\mathfrak 
p_{0}^{\perp}$ is even dimensional,   $
\mathfrak p_{0}^{\perp}\cap \mathfrak p\left(k\right)$,  which is the vector subspace of 
$\mathfrak p_{0}^{\perp}$ fixed by $\Ad\left(k\right)$,  is even 
dimensional. Since $\mathfrak p_{0}$ and $\mathfrak p\left(k\right)$ 
intersect orthogonally along $\mathfrak p\left(\gamma\right)$, $\mathfrak p_{0}^{\perp}\cap \mathfrak 
p\left(k\right)$ is just the orthogonal space to $\mathfrak 
p\left(\gamma\right)$ in $\mathfrak p\left(k\right)$.

We orient $\left\{a\right\}^{\perp}$ so that 
when completing an oriented basis of $\left\{a\right\}^{\perp}$ by $a$, we obtain an oriented basis 
of $\mathfrak p$. 
We orient the vector spaces in the right-hand side of 
(\ref{eq:freeb1}), so that (\ref{eq:freeb1}) is an identity of 
oriented vector spaces.  The orientation of these two vector spaces 
is noncanonical. Since $\ad\left(a\right)$ induces an 
isomorphism from  
$\mathfrak p_{0}^{\perp}$ into $\mathfrak k_{0}^{\perp}$,
we equip $\mathfrak k_{0}^{\perp}$ with the corresponding 
orientation. Similarly, since $\left\{a\right\}^{\perp}\cap \mathfrak 
p_{0}$ is oriented, we orient $\mathfrak p_{0}$ by the procedure that 
was outlined before.

Since $k$ acts as an oriented isometry of $\mathfrak p$, $\mathfrak 
p\left(k\right)$ is odd dimensional, and $\mathfrak p^{\perp}\left(k\right)$
is even dimensional. Since $k\in T$, there is $t_{0}\in \mathfrak t$ such that 
\begin{equation}\label{eq:laus8}
    k=e^{t_{0}}.
\end{equation}
 In particular 
$\ad\left(t_{0}\right)$ acts as an invertible endomorphism of 
$\mathfrak p^{\perp}\left(k\right)$, so that $\mathfrak 
p^{\perp}\left(k\right)$ is canonically oriented (the orientation 
depending on the choice of $t_{0}$). It follows that the 
orientation line $\mathrm{o}\left(\mathfrak p\right)$ of $\mathfrak p$ is 
just the orientation line $\mathrm{o}\left(\mathfrak p\left(k\right)\right)$
 of 
$\mathfrak p\left(k\right)$, i.e.,
\begin{equation}\label{eq:laus9}
\mathrm{o}\left(\mathfrak p\right)=\mathrm{o}\left(\mathfrak p\left(k\right)\right).
\end{equation}
Other orientations lines will be denoted in the same way. By 
(\ref{eq:laus9}), we get
\begin{equation}\label{eq:laus9x1}
\mathrm{o}\left(\mathfrak p\right)=\mathrm{o}\left(\mathfrak 
p\left(\gamma\right)\right) \otimes \mathrm{o}\left(\mathfrak p_{0}^{\perp}\cap 
\mathfrak p\left(k\right)\right).
\end{equation}

Since the simply connected group $K$ acts on 
$S^{\mathfrak p}$, the action of $k$ on $S^{\mathfrak p}$ is 
unambiguously determined by (\ref{eq:rus1a}), (\ref{eq:laus8}).

 Let $S^{\left\{a\right\}^{\perp}}$ be the spinors associated with 
 the oriented Euclidean vector space $\left\{a\right\}^{\perp}$. 
 Then $S^{\left\{a\right\}^{\perp}}$ is a $\Z_{2}$-graded vector 
space.  Let $\tau=\pm 1$ be the endomorphism defining the
$\Z_{2}$-grading. Then
\begin{equation}\label{eq:free2}
S^{\mathfrak p}=S^{\left\{a\right\}^{\perp}}.
\end{equation}
Moreover,  $\frac{c\left(a\right)}{\left\vert  
a\right\vert}$ acts on  $S^{\left\{a\right\}^{\perp}}$ like $-i\tau$. 
Let $S^{\left\{a\right\}^{\perp}\cap 
\mathfrak p_{0}}, S^{\mathfrak p_{0}^{\perp}}$ be the spinors 
associated with the oriented Euclidean vector spaces $\left\{a\right\}^{\perp}\cap 
\mathfrak p_{0},\mathfrak p_{0}^{\perp}$. Again, these vector spaces 
are $\Z_{2}$-graded.
Moreover, because of the splitting (\ref{eq:freeb1}), we get
\begin{equation}\label{eq:free2x1}
S^{\left\{a\right\}^{\perp}}=S^{\left\{a\right\}^{\perp}\cap 
\mathfrak p_{0}}\ho S^{\mathfrak p_{0}^{\perp}}.
\end{equation}

The simply connected group $K$ acts on $S^{\mathfrak 
p}=S^{\left\{a\right\}^{\perp}}$. Recall that $K_{0}^{0}$ is the 
connected component of the identity in $K_{0}=Z\left(a\right)\cap K$. Since $K^{0}_{0} \subset 
K$, $K^{0}_{0}$ also acts on 
$S^{\left\{a\right\}^{\perp}}$. However, $K^{0}_{0}$ is not 
necessarily simply connected. While $K^{0}_{0}$ preserves 
the splitting (\ref{eq:freeb1}), the action of $K^{0}_{0}$ 
does not necessarily lift to an action of $K^{0}_{0}$ on $S^{\left\{a\right\}^{\perp}\cap 
\mathfrak p_{0}}, S^{\mathfrak p_{0}^{\perp}}$, the possible lift 
having a $\pm 1$ ambiguity. However, because of (\ref{eq:free2x1}), 
the ambiguity is the same when acting on both vector spaces.

Recall that (\ref{eq:supr1}) holds.
By the results of \cite{Bismut08b} that were explained in subsection 
\ref{subsec:minge}, $X\left(e^{a}\right)$ can be identified with the 
symmetric space associated with $Z^{0}_{0}$, so that 
$X\left(e^{a}\right)=Z^{0}_{0}/K^{0}_{0}$. Then $TX\left(e^{a}\right)$ is the 
vector bundle associated with the action of $K^{0}_{0}$ on 
$\mathfrak p_{0}$, and the normal bundle $N_{X\left(e^{a}\right)/X}$ is 
associated with the action of $K^{0}_{0}$ on $\mathfrak 
p_{0}^{\perp}$. In particular $X\left(e^{a}\right)$ is odd dimensional. Moreover, $k^{-1}$ acts on 
$X\left(e^{a}\right)$, and its fixed point set is given by 
$X\left(\gamma\right)$. Also 
$N_{X\left(\gamma\right)/X\left(e^{a}\right)}$ is just the vector bundle 
on $X\left(\gamma\right)$ associated with the action of 
$K^{0}\left(\gamma\right)$ on $\mathfrak 
p_{0}^{\perp}\left(\gamma\right)$.

Then $k^{-1}$ acts naturally on 
$TX\left(e^{a}\right)\vert_{X\left(\gamma\right)}$. Also 
$TX\left(\gamma\right)$ is the eigenbundle of this action associated 
with the eigenvalue $1$. The distinct angles  
$\pm\theta_{1},\ldots,\pm \theta_{s},0<\theta_{i}\le \pi$  of the 
action of $k^{-1}$ on $N_{X\left(\gamma\right)/X\left(e^{a}\right)}$ are exactly 
the nonzero
angles of the action of $\Ad\left(k^{-1}\right)$ on $\mathfrak 
p^{\perp}_{0}\left(\gamma\right)$. Let 
$N_{X\left(\gamma\right)/X\left(e^{a}\right),\theta_{i}},1\le i\le s$ be the part of 
$N_{X\left(\gamma\right)/X\left(e^{a}\right)}$ on which $\Ad\left(k^{-1}\right)$ acts 
by a rotation of angle 
$\theta_{i}$. 

We will consider characteristic 
forms of homogeneous vector bundles on 
$X\left(\gamma\right)$. Since these vector bundles are 
equipped with canonical connections, when noting their 
corresponding characteristic forms, we will not note the connection 
forms explicitly.

If $\theta\in \R\setminus 2\pi\Z$, set
\index{Atx@$ \widehat{A}^{\theta}\left(x\right)$}%
\begin{equation}
    \widehat{A}^{\theta}\left(x\right)=\frac{1}{2\sinh\left(\frac{x+i\theta}{2}\right)}.
    \label{eq:tomsk1}
\end{equation}
Given $\theta$, we identify $\widehat{A}^{\theta}\left(x\right)$ with 
the corresponding multiplicative genus.
We define the following closed form on $X\left(\gamma\right)$,
\index{AkTX@$\widehat{A}^{k^{-1}}\left(TX\left(e^{a}\right)\vert_{X\left(\gamma\right)}\right)$}%
\begin{equation}
    \widehat{A}^{k^{-1}}\left(TX\left(e^{a}\right)\vert_{X\left(\gamma\right)}
    \right)=
    \widehat{A}\left(TX\left(\gamma\right)\right)
    \prod_{i=1}^{s}
    \widehat{A}^{\theta_{i}}\left(N_{X\left(\gamma\right)/X\left(e^{a}\right),\theta_{i}}\right).
    \label{eq:tomsk2}
\end{equation}
As usual in such formulas, there is a $\pm 1$ sign ambiguity in the 
right-hand side of (\ref{eq:tomsk2}). However, 
because the action of $k\in T$ lifts to $S^{\mathfrak p}$, the 
ambiguity disappears when considering instead $\left(\widehat{A}^{k^{-1}\vert_{\mathfrak 
p_{0}^{\perp}}}
\left(0\right)\right)^{-1}\widehat{A}^{k^{-1}}\left(TX\left(e^{a}\right)\right)$.

Similarly, the group $Z\left(a\right)$ acts on $E$ via the 
representation $\rho^{E}$. The corresponding vector bundle on 
$X\left(e^{a}\right)$ is just the restriction of $F$ to 
$X\left(e^{a}\right)$. Also $k^{-1}$ acts on the left on this vector 
bundle. Recall that 
\index{RF@$R^{F}$}%
$R^{F}$ is the curvature of $\n^{F}$.
Let 
\index{chk@$\ch^{k^{-1}}\left(F\right)$}%
$\ch^{k^{-1}}\left(F\right)$ denote the Chern 
character form on 
$X\left(\gamma\right)$ that is given by
\index{chkF@$\ch^{k^{-1}}\left(F\vert_{X\left(\gamma\right)}\right)$}%
\begin{equation}
    \ch^{k^{-1}}\left(F\vert_{X\left(\gamma\right)}\right)
    =\Tr\left[\rho^{F}\left(k^{-1}\right)\exp\left(-\frac{R^{F}\vert_{X\left(\gamma\right)}}{2i\pi}\right)\right].
    \label{eq:tomsk3}
\end{equation}
The closed forms in (\ref{eq:tomsk2}), (\ref{eq:tomsk3}) on $X\left(\gamma\right)$  are exactly the ones that appear 
in the Lefschetz fixed point formula of 
Atiyah-Bott \cite{AtiyahBott67,AtiyahBott68} when considering  the 
action of $k^{-1}$ on $X\left(e^{a}\right)$. Note that there are 
questions of signs  to be taken care of, because of the need 
to distinguish between $\theta_{i}$ and $-\theta_{i}$. We refer to 
the above references for more detail.

Let 
$N_{X\left(e^{a}\right)/X}\left(k\right)$ be the  subvector bundle of  
$N_{X\left(e^{a}\right)/X}\vert_{X\left(\gamma\right)}$ that 
is fixed by $k$.  This vector bundle is associated with the action of 
$K^{0}\left(\gamma\right)$ on $\mathfrak 
p^{\perp}_{0}\cap \mathfrak p\left(k\right)$. Since
$\mathfrak 
p^{\perp}_{0}\cap \mathfrak p\left(k\right)$ is even 
dimensional, $N_{X\left(e^{a}\right)/X}\left(k\right)$ is an even 
dimensional  vector bundle. Since $X\left(\gamma\right)$ and 
$X\left(e^{a}\right)$ intersect orthogonally, 
$N_{X\left(e^{a}\right)/X}\left(k\right)$ is also the normal bundle 
$N_{X\left(\gamma\right)/X\left(k\right)}$.

 Let 
 \index{eNXg@$e\left(N_{X\left(\gamma\right)/X\left(k\right)}\right)$}%
 $e\left(N_{X\left(\gamma\right)/X\left(k\right)}\right)$ denote 
 the  Euler form of $N_{X\left(\gamma\right)/X\left(k\right)}$ on $X\left(\gamma\right)$. The form 
 $e\left(N_{X\left(\gamma\right)/X\left(k\right)}\right)$ is a 
 section of 
$\Lambda^{\dim \mathfrak p_{0}^{\perp}\cap \mathfrak 
p\left(k\right)}\left(T^{*}X\left(\gamma\right)\right) 
\otimes \mathrm{o}\left(N_{X\left(\gamma\right)/X\left(k\right)}\right)$.
   By (\ref{eq:laus9x1}), the form 
 $e\left(N_{X\left(\gamma\right)/X\left(k\right)}\right)$ can be 
 considered as a section of $\Lambda^{\dim \mathfrak p_{0}^{\perp}\cap \mathfrak 
p\left(k\right)}\left(T^{*}X\left(\gamma\right)\right)  \otimes 
\mathrm{o}\left(TX\left(\gamma\right)\right) \otimes 
\mathrm{o}\left(TX\right)$.  Also 
$\mathrm{o}\left(TX\left(\gamma\right)\right) \otimes 
\mathrm{o}\left(TX\right)=
 \mathrm{o}\left(\mathfrak p\left(\gamma\right)\right) \otimes 
 \mathrm{o}\left(\mathfrak p\right)$.

Also $K_{0}^{0}$ 
preserves the splitting $\mathfrak k=\mathfrak k_{0} \oplus \mathfrak 
k_{0}^{\perp}$. Let $N_{0},  N_{0}^{\perp}$ denote 
the corresponding vector bundles on $X\left(e^{a}\right)$.

The Lie group $Z_{0}$ acts  on $\mathfrak 
z\left(a\right)=\mathfrak z_{0}$, and so it acts on $\mathfrak 
z_{0}^{\perp}= \mathfrak p_{0}^{\perp} \oplus \mathfrak 
k_{0}^{\perp}$. By proceeding as in \cite[eq. 
(7.7.5)]{Bismut08b}, we find that on $X\left(e^{a}\right)$, 
$N_{X\left(e^{a}\right)/X}\oplus N_{0}^{\perp}$ is equipped 
with a Euclidean connection preserving the splitting, and also with a flat connection. Also 
$k^{-1}$ acts on the restriction of these vector bundles to 
$X\left(\gamma\right)$. Our 
characteristic forms will  be computed using  the relevant Euclidean 
connections. 
By proceeding as in \cite[eq. (7.7.5)]{Bismut08b}, we get the 
identity of forms on $X\left(\gamma\right)$
\begin{equation}\label{eq:nauf11}
\widehat{A}^{k^{-1}}\left(N_{X\left(e^{a}\right)/X}\right)
\widehat{A}^{k^{-1}}\left(N_{0}^{\perp}\right)=
\widehat{A}^{k^{-1}
\vert _{\mathfrak z_{0}^{\perp}}}\left(0\right).
\end{equation}

Also $\ad\left(a\right)$ is a parallel isomorphism from 
$N_{X\left(e^{a}\right)/X}$ into $N_{0}^{\perp}$ with respect to their 
canonical Euclidean connections.  It follows that on 
$X\left(\gamma\right)$, we have the identity of differential forms
\begin{equation}\label{eq:nauf12}
\widehat{A}^{k^{-1}}\left(N_{X\left(e^{a}\right)/X}\right)=
\widehat{A}^{k^{-1}}\left(N_{0}^{\perp}\right).
\end{equation}
By (\ref{eq:nauf11}), (\ref{eq:nauf12}), we get
\begin{equation}\label{eq:nauf13}
\widehat{A}^{k^{-1}}\left(N_{X\left(e^{a}\right)/X}\right)=\widehat{A}^{k^{-1}\vert
_{\mathfrak p_{0}^{\perp}}
}\left(0\right).
\end{equation}

 Let 
$\eta\vert_{X\left(\gamma\right)}$ be 
the canonical section of norm $1$ in 
$\Lambda^{p}\left(T^{*}X\left(\gamma\right)\right) \otimes 
\mathrm{o}\left(TX\left(\gamma\right)\right)$. Equival{ently, 
$\eta\vert_{X\left(\gamma\right)}$ is the volume form on 
$X\left(\gamma\right)$. If $\alpha\in 
\Lambda\ac\left(T^{*}X\left(\gamma\right)\right) \otimes 
\mathrm{o}\left(TX\left(\gamma\right)\right) \otimes 
\mathrm{o}\left(TX\right)$, and if 
$\alpha^{(p)}$ denotes its component of top degree $p$, let 
$\alpha^{\max}$ be the section of $ \mathrm{o}\left(TX\right)$   given by
\begin{equation}\label{eq:laus10}
\alpha^{\max}=\frac{\alpha^{(p)}}{\eta\vert_{X\left(\gamma\right)}}.
\end{equation}
We will also view $	\alpha^{\max}$ as a section of 
$\mathrm{o}\left(\mathfrak p\right)$.

Let 
\index{a@$a^{*}$}%
$a^{*}$  be the $1$-form on $X\left(\gamma\right) $  which is dual 
to $a^{TX}$. Put
\index{a@$\mathbf{a}$}%
\begin{equation}\label{eq:bres5}
\mathbf{a}=\frac{a^{*}}{\left\vert  a\right\vert}.
\end{equation}

As we saw in subsection \ref{subsec:minge},  $Z^{0}_{0}$ is a 
reductive group. By the above, we know that 
$k\in Z^{0}_{0}$. To the couple 
$\left(Z^{0}\left(a\right),k^{-1}\right)$, we will apply the 
constructions we made before for $\left(G,\gamma\right)$. The analogue of $\mathfrak 
k\left(\gamma\right)$  is still  
equal to $\mathfrak k\left(\gamma\right)$.
Let 
\index{JkY@$J'_{k^{-1}}\left(Y_{0}^{\mathfrak k_{0}}\right)$}%
$J'_{k^{-1}}\left(\Yok\right),\Yok\in \mathfrak k\left(\gamma\right)$ be the function defined in 
\cite[Theorem 5.5.1]{Bismut08b} and in Definition \ref{DJg} which is 
associated with the group $Z^{0}_{0}$ and with $k^{-1}\in 
Z^{0}_{0}$. 
 By \cite[Theorem 5.5.1]{Bismut08b} or by (\ref{eq:crub3}), for $\Yok\in 
\mathfrak k\left(\gamma\right)$,  we get
\begin{equation}\label{eq:free1}
J_{\gamma}\left(\Yok\right)=\frac{J'_{k^{-1}}\left(\Yok\right)}
{\left\vert  
\det\left(1-\Ad\left(\gamma\right)\right)\vert_{ \mathfrak 
z_{0}^{\perp}}\right\vert^{1/2}}.
\end{equation}

We will now recover the explicit formula by Moscovici-Stanton 
\cite[Theorem 5.10]{MoscoviciStanton89} for the orbital integrals 
$\Tr^{\left[\gamma\right]}\left[D^{X}\exp\left(-tD^{X,2}/2\right)\right]$. Since they depend on the choice of an orientation of $\mathfrak p$, they are just sections of $\mathrm{o}\left(\mathfrak p\right)$.
\begin{thm}\label{Tgen}
When $\dim \mathfrak b\left(\gamma\right)=1$, for $t>0$, the 
following identity of sections of $\mathrm{o}\left(\mathfrak p\right)$ holds:
\begin{multline}\label{eq:free9}
\Tr^{\left[\gamma\right]}\left[D^{X}
\exp\left(-tD^{X,2}/2\right)\right]
=-i\frac{\left(-1\right)^{\dim \mathfrak p_{0}^{\perp}/2}\left(\widehat{A}^{k^{-1}\vert_{\mathfrak 
p_{0}^{\perp}}}
\left(0\right)\right)^{-1}}{\left\vert  
\det\left(1-\Ad\left(\gamma\right)\right)\vert_{ \mathfrak 
z_{0}^{\perp}}\right\vert^{1/2}}\\
\left[\widehat{A}^{k^{-1}}\left(TX\left(e^{a}\right)\right)
e\left(N_{X\left(\gamma\right)/X\left(k\right)}\right)\ch
^{k^{-1}}\left(F\right)\frac{\mathbf{a}}{\sqrt{2\pi}}\right]^{\max}\\
\frac{\sqrt{2}}{\sqrt{\pi}}\frac{d}{dt}
\left[\frac{1}{\sqrt{s}}\exp\left(-\frac{\left\vert  
a\right\vert^{2}}{2s}\right)*\frac{1}{\sqrt{s}}\right]\left(t\right)\\
=-i\frac{\left(-1\right)^{\dim \mathfrak p_{0}^{\perp}/2}\left(\widehat{A}^{k^{-1}\vert_{\mathfrak 
p_{0}^{\perp}}}
\left(0\right)\right)^{-1}}{\left\vert  
\det\left(1-\Ad\left(\gamma\right)\right)\vert_{ \mathfrak 
z_{0}^{\perp}}\right\vert^{1/2}}\\
\left[\widehat{A}^{k^{-1}}\left(TX\left(e^{a}\right)\right)e\left(N_{X\left(\gamma\right)/X\left(k\right)}\right)\ch
^{k^{-1}}\left(F\right)\frac{a^{*}}{\sqrt{2\pi}}\right]^{\max}
t^{-3/2}\exp\left(-\frac{\left\vert  
a\right\vert^{2}}{2t}\right).
\end{multline}
\end{thm}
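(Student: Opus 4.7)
The plan is to start from Theorem~\ref{Tanew} and evaluate the action of $\exp\bigl(\tfrac{s}{2}\Delta^{\mathfrak z(\gamma)}\bigr)$ applied to the distribution $J_\gamma(\Yok)\,\mathrm{Tr}^{S^{\mathfrak p}}[\cdots]\,\Tr^E[\cdots]\,\delta_{Y_0^{\mathfrak p}=a}$, at the origin $(0,0)\in\mathfrak p(\gamma)\oplus\mathfrak k(\gamma)$. Using the orthogonal splitting and the fact that the $\mathfrak p(\gamma)$--factor sees only the delta at $a$, the $\mathfrak p(\gamma)$--heat kernel produces the Gaussian $(2\pi s)^{-p/2}\exp\bigl(-|a|^2/(2s)\bigr)$, leaving a Gaussian integral over $\mathfrak k(\gamma)$ with weight $(2\pi s)^{-q/2}\exp(-|\Yok|^2/(2s))$ multiplying $J_\gamma(\Yok)$ and the two trace factors. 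After the substitution $\Yok=\sqrt{s}\,Y'$, the $\sqrt{s}$--rescaling is exactly the one required by the local index formalism.

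Next I would use the key identity (\ref{eq:free1}) to factor $J_\gamma$ as $J'_{k^{-1}}/\bigl|\det(1-\Ad\gamma)\vert_{\mathfrak z_0^{\perp}}\bigr|^{1/2}$, isolating the denominator that appears in the statement of Theorem~\ref{Tgen}. Since $m$ is odd and $\dim\mathfrak b(\gamma)=1$, I would then exploit the crucial algebraic identity $S^{\mathfrak p}=S^{\{a\}^{\perp}}$, where $\tfrac{c(a)}{|a|}$ acts as $-i\tau$, the $\Z_2$--grading of $S^{\{a\}^{\perp}}$. This converts the naked trace into a supertrace:
\begin{equation*}
\Tr^{S^{\mathfrak p}}\!\!\left[\tfrac{c(a)}{|a|}\Ad(k^{-1})\exp\!\bigl(-ic(\ad\Yok)\vert_{\mathfrak p}\bigr)\right]=-i\,\Trs^{S^{\{a\}^{\perp}}}\!\!\left[\Ad(k^{-1})\exp\!\bigl(-ic(\ad\Yok)\vert_{\{a\}^{\perp}}\bigr)\right].
\end{equation*}
Without the $c(a)$ twist, this trace would vanish by the argument of Theorem~\ref{Tvan}; its presence is precisely what breaks the vanishing, and the appearance of the supertrace is what will produce characteristic classes via Mathai--Quillen--Berezin evaluation.

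The resulting Gaussian integral over $\mathfrak k(\gamma)$ is now in standard equivariant local index form, adapted to the reductive pair $(Z^0_0,k^{-1})$, with the Gaussian in $Y'$ playing the role of the Bismut--Goette--style superconnection curvature. This is the same type of computation as the one carried out in \cite[Chapter~7]{Bismut08b} to recover Moscovici--Stanton for $\Tr^{[\gamma]}[\exp(-tD^{X,2}/2)]$; the only novelty is the extra $\tau$--grading from $\frac{c(a)}{|a|}$, which localizes the Berezin integration on the $k^{-1}$--fixed part $N_{X(\gamma)/X(k)}$ of $\mathfrak p_0^{\perp}\cap\mathfrak p(k)$, producing both the Euler form $e\bigl(N_{X(\gamma)/X(k)}\bigr)$ and the one--form factor $\mathbf a=a^*/|a|$ on $X(\gamma)$. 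The fractions $\bigl(\widehat A^{k^{-1}|_{\mathfrak p_0^{\perp}}}(0)\bigr)^{-1}$ and $\widehat A^{k^{-1}}(TX(e^a))$ emerge from the determinantal part of $J'_{k^{-1}}$ together with the supertrace evaluation, and (\ref{eq:conk4y1}) shows that the factor $\exp(-2\pi^2 s|\rho+\lambda|^2)$ in Theorem~\ref{Tanew} cancels against the corresponding contribution of $\tfrac{1}{48}\Tr^{\mathfrak k}[C^{\mathfrak k,\mathfrak k}]+\tfrac12 C^{\mathfrak k,E}$ produced by the Gaussian integration over $\mathfrak k(\gamma)$, exactly as in Bismut's book. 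Combining everything, the bracket in Theorem~\ref{Tanew} collapses to the form $C_\gamma\cdot s^{-1/2}\exp(-|a|^2/(2s))$, with $C_\gamma$ the precise characteristic class prefactor appearing in Theorem~\ref{Tgen}.

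Finally, Proposition~\ref{Poff} evaluates the convolution $s^{-1/2}\exp(-|a|^2/(2s))*s^{-1/2}$ as $\sqrt{\pi}\,\phi\bigl(|a|^2/(2t)\bigr)$, and using $\phi'(x)=-e^{-x}/\sqrt{x}$ one obtains
\begin{equation*}
\tfrac{\sqrt 2}{\sqrt\pi}\tfrac{d}{dt}\!\left[s^{-1/2}e^{-|a|^2/(2s)}*s^{-1/2}\right]\!(t)=\tfrac{|a|}{t^{3/2}}e^{-|a|^2/(2t)},
\end{equation*}
and the remaining factor $|a|$ combines with $\mathbf a/\sqrt{2\pi}=a^*/(|a|\sqrt{2\pi})$ to yield the clean expression $a^*/\sqrt{2\pi}$ inside the bracket. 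The main obstacle is step three: the precise identification of the Gaussian integral over $\mathfrak k(\gamma)$ as the stated characteristic-class expression, including tracking the sign $(-1)^{\dim\mathfrak p_0^{\perp}/2}$, the orientation conventions on $\{a\}^\perp$, $\mathfrak p_0^\perp$ and $N_{X(\gamma)/X(k)}$ discussed in subsection~\ref{subsec:gen}, and the cancellation of the exponential in $|\rho+\lambda|^2$. All of this, however, is a direct adaptation of the computations already carried out in detail in \cite[Chapter~7]{Bismut08b}, the only genuine new ingredient being the $\tau$--reduction induced by $c(a)/|a|$.
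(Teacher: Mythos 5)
Your proposal is correct and follows essentially the same route as the paper's proof: Theorem \ref{Tanew}, the supertrace conversion $\Tr^{S^{\mathfrak p}}[\frac{c(a)}{|a|}\cdots]=-i\Trs^{S^{\{a\}^{\perp}}}[\cdots]$ using $c(a)/|a|=-i\tau$, the factorization of $J_\gamma$ via (\ref{eq:free1}), the Weyl-character/equivariant-index evaluation of the Gaussian integral over $\mathfrak k(\gamma)$ (which the paper spells out in equations (\ref{eq:nauf3})--(\ref{eq:nauf15}) while you defer to the analogous computation in Chapter~7 of \cite{Bismut08b}), and the convolution identity of Proposition \ref{Poff} to produce the final $t^{-3/2}\exp(-|a|^2/(2t))$ factor. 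The only detail you compress is the intermediate passage from $S^{\mathfrak p_0^{\perp}}$ to $S^{\mathfrak k_0^{\perp}}$ via the intertwiner $\ad(a)$ (equation (\ref{eq:free3x1})) before Weyl's formula brings the supertrace back to the Euler-form/Pfaffian, but this is a bookkeeping step and your outline otherwise matches the argument in full.
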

\begin{proof}
 By the considerations we made after equation (\ref{eq:free2}), we get 
\begin{multline}\label{eq:free3}
\Tr^{S^{\mathfrak 
p}}\left[\frac{c\left(a\right)}{\left\vert  
a\right\vert}\Ad\left(k^{-1}\right)\exp\left(-ic\left(\ad\left(\Yok\right)\vert_{\mathfrak p}\right)\right)\right]
= \\
-i\Trs^{S^{\left\{a\right\}^{\perp}}}
\left[\Ad\left(k^{-1}\right)\exp\left(-ic\left(\ad\left(\Yok\right)\vert_{\mathfrak p}
\right)\right)\right].
\end{multline}
In the right-hand side of (\ref{eq:free3}), the supertrace is 
associated with the $\Z_{2}$-grading 
$S^{\left\{a\right\}^{\perp}}=S^{\left\{a\right\}^{\perp}}_{+} \oplus 
S^{\left\{a\right\}^{\perp}}_{-}$.
In general $\ad\left(a\right)$ does  not induce an isometry from 
$\mathfrak p_{0}^{\perp}$ into $\mathfrak k_{0}^{\perp}$. However, it 
intertwines the action of 
$\Ad\left(k^{-1}\right)\exp\left(-i\ad\left(\Yok\right)\right)$  on 
these two vector spaces. Let $S^{\mathfrak k_{0}^{\perp}}$ denote the 
spinors associated with $\mathfrak k_{0}^{\perp}$. By (\ref{eq:free3}), we get
\begin{multline}\label{eq:free3x1}
\Tr^{S^{\mathfrak 
p}}\left[\frac{c\left(a\right)}{\left\vert  
a\right\vert}\Ad\left(k^{-1}\right)\exp\left(-ic\left(\ad\left(\Yok\right)\vert_{\mathfrak p}\right)\right)\right]
= \\
-i\Trs^{S^{\left\{a\right\}^{\perp}\cap \mathfrak p_{0}}}
\left[\Ad\left(k^{-1}\right)\exp\left(-ic\left(\ad\left(\Yok\right)\vert_{\left\{a\right\}^{\perp}\cap \mathfrak p_{0}}
\right)\right)\right]\\
\Trs^{S^{\mathfrak k_{0}^{\perp}}}
\left[\Ad\left(k^{-1}\right)\exp\left(-ic\left(\ad\left(\Yok\right)_{\mathfrak k_{0}^{\perp}}
\right)\right)\right].
\end{multline}

By (\ref{eq:free1}),  (\ref{eq:free3x1}), we get
\begin{multline}\label{eq:free4}
\exp\left(\frac{1}{2}\Delta^{\mathfrak z\left(\gamma\right)}\right)
\Bigg(J_{\gamma}\left(\Yok\right)\Tr^{S^{\mathfrak p} }
\left[\frac{c\left(a\right)}
{\left\vert  a\right\vert}\Ad\left(k^{-1}\right)\exp\left(-ic
\left(\ad\left(\Yok\right)\vert_{\mathfrak p}\right)\right)\right]\\
\Tr^{E}\left[\rho^{E}\left(k^{-1}\right)
\exp\left(-i\rho^{E}\left(\Yok\right)\right)\right]\delta_{Y_{0}^{\mathfrak p}=a}
\Biggr)\left(0\right)\\
=-\frac{i}{\left\vert  
\det\left(1-\Ad\left(\gamma\right)\right)\vert_{ \mathfrak 
z_{0}^{\perp}}\right\vert^{1/2}}\frac{\exp\left(-\left\vert  a\right\vert^{2}/2\right)}{\left(2\pi 
\right)^{p/2}}
\exp\left(\frac{1}{2}\Delta^{\mathfrak 
k\left(\gamma\right)}\right)\\
\Biggl( J'_{k^{-1}}\left(\Yok\right)
\Trs^{S^{\left\{a\right\}^{\perp}\cap \mathfrak p_{0}}}
\left[\Ad\left(k^{-1}\right)\exp\left(-ic\left(\ad\left(\Yok\right)\vert_{\left\{a\right\}^{\perp}\cap \mathfrak p_{0}}
\right)\right)\right]\\
\Trs^{S^{\mathfrak k_{0}^{\perp}}}
\left[\Ad\left(k^{-1}\right)\exp\left(-ic\left(\ad\left(\Yok\right)_{\mathfrak k_{O}^{\perp}}
\right)\right)\right]
\Tr^{E}\left[\rho^{E}\left(k^{-1}\right)
\exp\left(-i\rho^{E}\left(\Yok\right)\right)\right]\Biggr) .
\end{multline}

Let $\Lambda \subset \mathfrak t$ be the coroot lattice associated 
with $K$, so
that $T = \mathfrak t/\Lambda$.
Recall that $R _{+}\subset \mathfrak t^{*}$ is a positive root system associated 
with the group $K$. Let $R_{+} \left(a\right)\subset R_{+}$ be a positive subroot 
system associated with the group $K^{0}_{0}=K^{0}\left(a\right)$. Let $W,W_{0}$ be the Weyl groups of $K,K_{0}^{0}$. Then $W_{0} 
\subset W$.

Let 
$\sigma\left(t\right),\sigma_{0}\left(t\right),
t\in 
T$ be the denominators of Weyl's character formulas for the groups 
$K,K_{0}^{0}$. Then
\begin{align}\label{eq:nauf3}
&\sigma\left(t\right)=\prod_{\alpha\in R_{+}}\left(e^{i\pi
\left\langle  \alpha,t\right\rangle}-e^{-i\pi\left\langle  
\alpha,t\right\rangle}\right),
&\sigma_{0}\left(t\right)=\prod_{\alpha\in R_{+,0}}^{}\left(e^{i\pi
\left\langle  \alpha,t\right\rangle}-e^{-i\pi\left\langle  
\alpha,t\right\rangle}\right).
\end{align}
Since $K$ is simply connected, the function $\sigma\left(t\right)$ is well defined 
on $T$. Since $K^{0}\left(a\right)$ is not necessarily simply 
connected, the function $\sigma_{0}\left(t\right)$ is only defined up 
to sign on $T$. 
Moreover, for $t\neq 0$, we have the identity up to sign
\begin{equation}\label{eq:nauf4}
\Trs^{S^{\mathfrak k_{0}^{\perp}}}
\left[\Ad\left(t^{-1}\right)\right]=\frac{\sigma}{\sigma_{0}}\left(t\right).
\end{equation}
Note here again that  by nature, both sides of (\ref{eq:nauf4}) are only defined up to sign.

  Let $\chi_{\lambda}$ be the character of the 
representation of the irreducible representation $\rho^{E}$ of $K$ with highest 
weight $\lambda$.  By Weyl's character formula, for 
$t\in T$, we get
\begin{equation}\label{eq:nauf5}
\frac{\sigma}{\sigma_{0}}\left(t\right)\chi_{\lambda}\left(t\right)=
\frac{1}{\sigma_{0}\left(t\right)}\sum_{w\in 
W}^{}\epsilon_{w}\exp\left(2i\pi\left\langle \rho+ 
\lambda,wt\right\rangle\right),
\end{equation}
the two sides being well-defined up to sign.
Note that (\ref{eq:nauf5}) is $W_{0}$-invariant. In the sequel, we 
denote by $\chi_{\lambda}^{0}$ the associated  function  on 
$K_{0}^{0}$. By (\ref{eq:nauf4}), (\ref{eq:nauf5}), for $t\in T$, we get
\begin{equation}\label{eq:nauf5x1}
\chi_{\lambda}^{0}\left(t\right)=\left(-1\right)^{\left\vert  
R_{+}\setminus R_{+,0}\right\vert}\Trs^{S^{\mathfrak 
k_{0}^{\perp}}}\left[\Ad\left(t\right)\right]\chi_{\lambda}\left(t\right).
\end{equation}
Again, both sides in (\ref{eq:nauf5x1}) have the same sign ambiguity.

By (\ref{eq:nauf5}), we get
\begin{multline}\label{eq:nauf6}
	\frac{1}{\left(2\pi\right)^{p/2}}\exp\left(\Delta^{\mathfrak 
k\left(\gamma\right)}/2\right)\Biggl( J'_{k^{-1}}\left(\Yok\right)\\
\Trs^{S^{\left\{a\right\}^{\perp}\cap \mathfrak p_{0}}}
\left[\Ad\left(k^{-1}\right)\exp\left(-ic\left(\ad\left(\Yok\right)\vert_{\left\{a\right\}^{\perp}\cap \mathfrak p_{0}}
\right)\right)\right]\\
\Trs^{S^{\mathfrak k_{0}^{\perp}}}
\left[\Ad\left(k^{-1}\right)\exp\left(-ic\left(\ad\left(\Yok\right)\vert_{\mathfrak k_{0}^{\perp}}
\right)\right)\right]
\Tr^{E}\left[\rho^{E}\left(k^{-1}\right)
\exp\left(-i\rho^{E}\left(\Yok\right)\right)\right]\Biggr)\left(0\right)\\
=\frac{\left(-1\right)^{\left\vert  R_{+}\setminus 
R_{+,0}\right\vert}}{\left(2\pi\right)^{p/2}}
\exp\left(\Delta^{\mathfrak k\left(\gamma\right)}/2\right)
\Biggl(J'_{k^{-1}}\left(\Yok\right)\\
\Trs^{S^{\left\{a\right\}^{\perp}\cap \mathfrak p_{0}}}
\left[\Ad\left(k^{-1}\right)\exp\left(-ic\left(\ad\left(\Yok\right)\vert_{\left\{a\right\}^{\perp}\cap \mathfrak p_{0}}
\right)\right)\right] 
\chi_{\lambda}^{0}\left(k^{-1}e^{-i\Yok}\right)\Biggr)\left(0\right).
\end{multline}
Again, the last two terms in the right-hand side of (\ref{eq:nauf6}) 
suffer from a $\pm 1$ ambiguity, but their product is unambiguously 
defined.
We denote by $L$ the expression in  the right-hand side (\ref{eq:nauf6}).

Let $\Omega^{\mathfrak z\left(\gamma\right)}$ be the curvature of the 
canonical connection on the $K^{0}\left(\gamma\right)$ principal 
bundle $Z^{0}\left(\gamma\right)\to X\left(\gamma\right)$. Then 
$\Omega^{\mathfrak z\left(\gamma\right)}$ is a
$\mathfrak k\left(\gamma\right)$-valued $2$-form. By proceeding as in \cite[eqs. (7.7.7)--(7.7.13)]{Bismut08b}, we 
deduce from (\ref{eq:nauf5}), (\ref{eq:nauf6}) that
\begin{multline}\label{eq:nauf7}
L=\left(-1\right)^{\left\vert  R_{+}\setminus 
R_{+,0}\right\vert}\frac{\exp\left(2\pi^{2}\left\vert  
\rho+\lambda\right\vert^{2}\right)}{\left(2\pi\right)^{p/2}}\\
\left[\widehat{A}^{k^{-1}}\left(i\ad\left(\Omega^{\mathfrak 
z\left(\gamma\right)}\right)\vert_{\mathfrak 
p_{0}}\right)\chi_{\lambda}^{0}\left[k^{-1}
\exp\left(i\Omega^{\mathfrak 
z\left(\gamma\right)}\right)\right]\mathbf{a}\right]^{\max}.
\end{multline}
The reason why $\mathbf{a}$  appears is because  when following the methods 
of \cite{Bismut08b}, we should obtain forms of maximal degree on 
the orthogonal space to $a$ in $TX\left(\gamma\right)$. When multiplying by $\mathbf{a}$, we obtain the 
corresponding term of maximal degree on $TX\left(\gamma\right)$.

By the argument that was given after (\ref{eq:free3}), and by (\ref{eq:nauf5x1}), we can rewrite 
(\ref{eq:nauf7}) in the form
\begin{multline}\label{eq:nauf8}
L=\frac{\exp\left(2\pi^{2}\left\vert  
\rho+\lambda\right\vert^{2}\right)}{\left(2\pi\right)^{p/2}}
\Biggl[\widehat{A}^{k^{-1}}\left(i\ad\left(\Omega^{\mathfrak 
z\left(\gamma\right)}\right)\vert_{\mathfrak 
p_{0}}\right)\\
\Trs^{S^{\mathfrak p_{0}^{\perp}}}\left[
\Ad\left(k^{-1}\right)\exp\left(ic\left(\ad\left(\Omega^{\mathfrak 
z\left(\gamma\right)}\right)\vert_{\mathfrak p_{0}^{\perp}}\right)\right)\right]\\
\Tr^{E}\left[ \rho^{E}  \left(k^{-1}\right) 
\exp\left(i\rho^{E}\left(\Omega^{\mathfrak 
z\left(\gamma\right)}\right)\right) \right]\mathbf{a}\Biggr]^{\max}.
\end{multline}

By proceeding as in \cite[eq. (7.7.7)]{Bismut08b}, we get
\begin{multline}\label{eq:nauf9}
\Trs^{S^{\mathfrak p_{0}^{\perp}}}\left[
\Ad\left(k^{-1}\right)\exp\left(ic\left(\ad\left(\Omega^{\mathfrak z\left(\gamma\right)}\right)\right)\right)\right]\\
=\mathrm{Pf}\left[-\ad\left(\Omega^{\mathfrak z\left(\gamma\right)}\right)\vert_{\mathfrak 
p_{0}^{\perp}\cap \mathfrak p\left(k\right)}\right]
\widehat{A}^{-1}\left(i\ad\left(\Omega^{\mathfrak z\left(\gamma\right)}\right)\vert_{\mathfrak 
p_{0}^{\perp}\cap \mathfrak 
p\left(k\right)}\right) \\
\left(\widehat{A}^{ke^{-i\Omega^{\mathfrak 
z\left(\gamma\right)}}\vert_{\mathfrak p_{0}^{\perp}\cap \mathfrak 
p\left(k\right)^{\perp}}}\left(0\right)\right)^{-1}.
\end{multline}
By (\ref{eq:nauf8}), (\ref{eq:nauf9}), we obtain
\begin{multline}\label{eq:nauf10}
L=\left(-1\right)^{\dim \mathfrak p_{0}^{\perp}/2}\exp\left(2\pi^{2}\left\vert  
\rho+\lambda\right\vert^{2}\right)\\
\left[\widehat{A}^{k^{-1}}
\left(TX\left(e^{a}\right)\right)e\left(N_{X\left(\gamma\right)/X\left(k\right)}\right)
\left(\widehat{A}^{k^{-1}}\left( 
N_{X\left(e^{a}\right)/X}\right)\right)^{-1}\ch^{k^{-1}}\left(F\right)\frac{\mathbf{a}}{\sqrt{2\pi}}\right]^{\max}.
\end{multline}

By (\ref{eq:nauf13}), (\ref{eq:nauf10}), we obtain
\begin{multline}\label{eq:nauf14}
L=\left(-1\right)^{\dim \mathfrak p_{0}^{\perp}/2}\exp\left(2\pi^{2}\left\vert  
\rho+\lambda\right\vert^{2}\right)\\
\left( \widehat{A}^{k^{-1}
\vert_{\mathfrak p_{0}^{\perp}}
}\left(0\right) \right) ^{-1}\left[\widehat{A}^{k^{-1}}\left(TX\left(e^{a}\right)\right)
e\left(N_{X\left(\gamma\right)/X\left(k\right)}\right)\ch^{k^{-1}}\left(F\right)
\frac{\mathbf{a}}{\sqrt{2\pi}}\right]^{\max}.
\end{multline}

For $s>0$, let $L_{s}$ be the obvious analogue of $L$, when replacing 
$B$ by $B/s$. We can  use equation (\ref{eq:nauf14}) to evaluate 
$L_{s}$. Recall that to properly use equation 
(\ref{eq:nauf14}), we need to modify our definition of $\alpha^{\max}$. 
Ultimately, we find that
\begin{multline}\label{eq:nauf15}
L_{s}=s^{\left(p-1\right)/2}\left(-1\right)^{\dim \mathfrak p_{0}^{\perp}/2}\exp
\left(2\pi^{2}s\left\vert  
\rho+\lambda\right\vert^{2}\right)\\
\left( \widehat{A}^{k^{-1}\vert
_{\mathfrak p_{0}^{\perp}}
}\left(0\right) \right) ^{-1}\left[\widehat{A}^{k^{-1}}\left(TX\left(e^{a}\right)\right)
e\left(N_{X\left(\gamma\right)/X\left(k\right)}\right)\ch^{k^{-1}}\left(F\right)
\frac{\mathbf{a}}{\sqrt{2\pi}}\right]^{\max}.
\end{multline}

By (\ref{eq:conk4x3}), (\ref{eq:free1}),  (\ref{eq:free4}), 
(\ref{eq:nauf6}),  and 
(\ref{eq:nauf15}), we get the first identity in (\ref{eq:free9}). 
By (\ref{eq:nauf16x1}), (\ref{eq:nauf18}), and the identity just 
proved, we get the second identity in (\ref{eq:free9}). The proof of our theorem is completed. 
\end{proof}
\begin{remk}\label{Rintb}
We will check that the two terms in the right-hand side of 
(\ref{eq:free9}) lie in $\mathrm{o}\left(\mathfrak p\right)$, i.e., 
they behave properly under change of the orientation of $\mathfrak p$.  Indeed note that 
$e\left(N_{X\left(\gamma\right)}/X\left(k\right)\right)$ takes its 
values in $\mathrm{o}\left(N_{X\left(\gamma\right)/X\left(k\right)}\right)$, 
which is modelled on the orientation line $\mathrm{o}\left(
\mathfrak p_{0}^{\perp}\cap \mathfrak p\left(k\right)\right)$. 
Therefore, the right-hand side of (\ref{eq:free9}) takes its values 
in $\mathrm{o}\left(\mathfrak p\left(\gamma\right)\right) \otimes 
\mathrm{o}\left(\mathfrak p_{0}^{\perp}\cap \mathfrak 
p\left(k\right)\right)$. Using (\ref{eq:laus9x1}), we find that 
the right-hand side of (\ref{eq:free9}) is indeed a section of 
$\mathrm{o}	\left(\mathfrak p\right)$.
\end{remk}
 
\section{Scalar hypoelliptic operators and their 
corresponding diffusions}%
\label{sec:extra}
The purpose of this section is to establish uniform estimates on 
 the heat kernel 
$r^{X}_{b,t}\left(\left(x,Y\right),\left(x',Y'\right)\right)$ of a 
scalar version $\mathcal{A}^{X}_{b}$ of the operator $\mathcal{L}^{X}_{b}$. These estimates 
were established in \cite[chapters 12 and 13]{Bismut08b} in the range 
$0<b\le b_{0}, \epsilon\le t\le M$ with $0<\epsilon\le M <+ \infty $. 
For later purposes, we need to extend such estimates in the range 
$0<b, \tau b^{2}\le t\le M$, with $\tau>0,M>0$. 

The techniques  we use to establish our estimates are exactly the 
ones in \cite{Bismut08b}. They combine the Malliavin calculus with 
uniform estimates
on the rate of escape of an open ball of 
the associated hypoelliptic diffusions, a result of independent 
interest, and a slight reinforcement of the results in 
\cite{Bismut08b}.

This section is organized as follows. In subsection 
\ref{subsec:caeu}, if $E$ is a Euclidean vector space, we recall the definition of the harmonic oscillator, 
we give Mehler formula for its heat kernel, and we describe its 
probabilistic interpretation in terms of the Ornstein-Uhlenbeck 
process $Y_{\cdot}$.

In subsection \ref{subsec:unibo}, we obtain a uniform bound on 
$\sup_{0\le s\le t}\left\vert  bY_{s/b^{2}}\right\vert$ that will be 
used in section \ref{sec:fin}.

In subsection \ref{subsec:hypopro}, we define the scalar hypoelliptic 
Laplacian $\mathcal{A}^{E}_{b}$ on $E\times E$,  and we give the 
formula for its heat kernel that was obtained in \cite{Bismut08b}.

In subsection \ref{subsec:bro},  if $X=G/K$, we define the scalar 
elliptic 
heat kernel, which we relate to classical Brownian motion on $X$. 
Also we state It\^{o}'s formula, and we recall a well-known result on 
the rate of escape of Brownian motion from a ball.

In subsection \ref{subsec:scaana}, we introduce the hypoelliptic 
scalar operator $\mathcal{A}^{X}_{b}$ on the total space 
$\mathcal{X}$ of $TX$.

In subsection \ref{subsec:probhea}, we construct the corresponding 
hypoelliptic heat operators, and the associated hypoelliptic 
diffusions
on $\mathcal{X}$.

In subsection \ref{subsec:geito}, we give a generalized formula of 
It\^{o} for the hypoelliptic diffusion. This formula was already 
established in \cite{Bismut08b}. However, we present it here as a 
suitable convolution of the classical It\^{o} formula for the 
Brownian motion on $X$. In the next sections, we will use  
this convolution formula. This approximate It\^{o} formula is of 
independent interest.

In subsection \ref{subsec:esc}, we establish the uniform rate of 
escape of the projection on $X$ of our hypoelliptic diffusions. We 
need to establish these estimates with the proper Gaussian weight on 
the coordinate $Y\in TX$. The techniques we rely on are the ones in 
\cite{Bismut08b}.

In subsection \ref{subsec:copro}, we recall the result established in 
\cite{Bismut08b} stating that as $b\to 0$, the projection on $X$ of 
the hypoelliptic diffusion converges in probability law to Brownian 
motion on $X$.

Finally, in subsection \ref{subsec:heatsce}, we establish the suitable uniform 
upper bound for the hypoelliptic heat kernel 
$r^{X}_{b,t}\left(\left(x,Y\right),\left(x',Y'\right)\right)$ on $\mathcal{X}$.
\subsection{Harmonic oscillator and Mehler formula}%
\label{subsec:caeu}
Let $E$ still be a Euclidean vector space of dimension $m$, and let 
$Y$ be the generic element of $E$. As in \cite[Proposition 
10.3.1]{Bismut08b}, put
\begin{equation}\label{eq:phan1}
H^{*}_{t}\left(Y,Y'\right)=
\frac{1}{2}\left( \tanh\left(t/2\right)
    \left(\left\vert  Y  \right\vert^{2}+\left\vert  
    Y'\right\vert^{2}\right)+\frac{1}{\sinh\left(t\right)}\left\vert  
    Y'-Y  \right\vert^{2} \right).
\end{equation}

Let
\index{DE@$\Delta ^{E}$}%
$\Delta ^{E}$ be the Laplacian 
on $E$. 
Let 
\index{OE@$O^{E}$}%
$O ^{E}$ be the 
harmonic 
oscillator on $E$, 
\begin{equation}
    O ^{E}=\frac{1}{2}\left(-\Delta ^{E}+\left\vert  
    Y\right\vert^{2}-m\right).
    \label{eq:glab14}
\end{equation}
Given $t>0$, let  
\index{hEY@$h_{t} ^{E}\left(Y ,Y'\right)$}%
$h_{t} ^{E}\left(Y ,Y'\right)$ be the smooth kernel 
associated with $\exp\left(-tO^{E}\right)$ with respect to $dY'$.
By 
Mehler's formula \cite{GlimmJaffe87}, \cite[eq. (10.4.2)]{Bismut08b}, we get
\begin{equation}
    h_{t}^{E}\left(Y  ,Y'\right)
    =\left(\frac{e^{t}}{
    2\pi\sinh\left(t\right)}\right)^{m/2}
    \exp\left(-H^{*}_{t}\left(Y,Y'\right)\right).
    \label{eq:glab16a}
\end{equation}
By (\ref{eq:glab16a}), we deduce that
\begin{equation}
    h_{t}^{E}\left(Y  ,Y'\right)
    \le\left(\frac{e^{t}}{
    2\pi\sinh\left(t\right)}\right)^{m/2}
    \exp\left(-\frac{1}{2}\tanh\left(t/2\right)
    \left(\left\vert  Y  \right\vert^{2}+\left\vert  
   Y'\right\vert^{2}\right)\right).
    \label{eq:glab17}
\end{equation}
As $t\to + \infty $, we get
\begin{equation}\label{eq:phan16}
h^{E}_{t}\left(Y,Y'\right)\to 
\left(\frac{1}{\pi}\right)^{m/2}\exp\left(-\frac{1}{2}\left(\left\vert  Y\right\vert^{2}
+\left\vert  Y'\right\vert^{2}\right)\right).
\end{equation}

From (\ref{eq:glab16a}), we deduce the identity in \cite[eq. 
(10.7.12)]{Bismut08b},
\begin{equation}\label{eq:glab17a}
\int_{E}^{}h^{E}_{t}\left(Y,Y'\right)dY'=\left(\frac{e^{t}}{\cosh\left(t\right)}\right)
^{m/2}\exp\left(-\frac{1}{2}\tanh\left(t\right)\left\vert  
Y\right\vert^{2}\right).
\end{equation}

Set
\index{PE@$P^{E}$}%
\begin{equation}\label{eq:phan0}
P^{E}=\exp\left(\left\vert  
Y\right\vert^{2}/2\right)O^{E}\exp\left(-\left\vert  
Y\right\vert^{2}/2\right).
\end{equation}
Let 
\index{nEV@$\n^{E,V}_{Y}$}%
$\n^{E,V}_{Y}$ denote the radial vector field on $E$. 
Then
\begin{equation}\label{eq:phan-1}
P^{E}=\frac{1}{2}\left(-\Delta^{E}+2\n^{E,V}_{Y}\right).
\end{equation}

Let $w^{E}_{\cdot}$ denote the Brownian motion in $E$ with $w^{E}_{0}=0$. 
Given $Y\in E$, consider the stochastic differential equation
\begin{align}\label{eq:phan-2}
&\dot Y=-Y+\dot w^{E},&Y_{0}=Y.
\end{align}
Then $Y_{\cdot}$ is given by
\begin{equation}\label{eq:phan-3}
Y_{t}=e^{-t}Y+\int_{0}^{t}e^{-\left(t-s\right)}dw^{E}_{s}.
\end{equation}
The process $Y_{\cdot}$ is called an Ornstein-Uhlenbeck process.
Let 
\index{Q@$Q$}%
$Q$ be the probability law of $Y_{\cdot}$ on 
$\mathcal{C}\left(\R_{+},E\right)$ in (\ref{eq:phan-3}), and let 
\index{EQ@$E^{Q}$}%
$E^{Q}$ 
be the corresponding expectation operator.
\begin{prop}\label{Psemgr}
For $t>0$, if  $f\in C^{ \infty, c}\left(E,\R\right)$, then 
\begin{align}\label{eq:phan-4}
&\exp\left(-tO^{E}\right)f\left(Y\right)=\exp\left(-\left\vert  
Y\right\vert^{2}/2\right)E^{Q}\left[\exp\left(\left\vert  
Y_{t}\right\vert^{2}/2\right)f\left(Y_{t}\right)\right],\\
&\exp\left(-tQ^{E}\right)f\left(Y\right)=E^{Q}\left[f\left(Y_{t}\right)\right]. \nonumber 
\end{align}
\end{prop}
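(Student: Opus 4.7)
The plan is to prove the second identity first by identifying $-P^E$ as the infinitesimal generator of the Ornstein-Uhlenbeck process $Y_\cdot$, and then to obtain the first identity from the second by conjugation with $\exp(-|Y|^2/2)$, using the defining relation (\ref{eq:phan0}).

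First I would treat the second identity (noting that $Q^E$ in the statement should read $P^E$, as $Q$ is already reserved for the probability law). Given the explicit solution (\ref{eq:phan-3}), the process $Y_\cdot$ is a continuous semimartingale with $dY_t = -Y_t\,dt + dw^E_t$. Applying the classical It\^o formula to $f(Y_t)$ for $f\in C^{\infty,c}(E,\mathbf{R})$ gives
\begin{equation*}
df(Y_t)=\nabla f(Y_t)\cdot dw^E_t + \Bigl(-\n^V_{Y_t}f(Y_t)+\tfrac{1}{2}\Delta^E f(Y_t)\Bigr)dt.
\end{equation*}
The stochastic integral is a true martingale for $f$ of compact support, so taking $E^Q$-expectations and comparing with (\ref{eq:phan-1}) yields
\begin{equation*}
\tfrac{d}{dt}E^Q[f(Y_t)] = E^Q\bigl[-P^E f(Y_t)\bigr].
\end{equation*}
By the Markov property of the OU process, the function $u(t,Y)=E^Q[f(Y_t)]$ therefore solves the parabolic equation $\partial_t u + P^E u = 0$ with $u(0,\cdot)=f$, which identifies it with $\exp(-tP^E)f(Y)$.

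For the first identity, I would simply conjugate. From the definition (\ref{eq:phan0}) we have $O^E = e^{-|Y|^2/2}P^E e^{|Y|^2/2}$, hence
\begin{equation*}
\exp(-tO^E)=e^{-|Y|^2/2}\exp(-tP^E)e^{|Y|^2/2}.
\end{equation*}
Applying this operator identity to $f$ and using the second identity with $f$ replaced by $e^{|\cdot|^2/2}f$ (the compact-support hypothesis can be relaxed to polynomial growth by a standard approximation, since Mehler's formula (\ref{eq:glab16a}) gives exponential Gaussian decay of $h^E_t$), we obtain
\begin{equation*}
\exp(-tO^E)f(Y)=e^{-|Y|^2/2}E^Q\bigl[e^{|Y_t|^2/2}f(Y_t)\bigr],
\end{equation*}
which is the claimed first identity.

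There is no real obstacle: the only mild subtlety is justifying the conjugation step when $e^{|Y|^2/2}f$ is not of compact support, which is handled by truncation together with the Gaussian bound (\ref{eq:glab17}) that ensures $\exp(-tP^E)$ acts on exponentially growing data of rate less than $\tfrac{1}{2}\tanh(t/2)^{-1/2}$. This is routine and consistent with the setup in \cite[section 10]{Bismut08b}.
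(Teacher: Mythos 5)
Your proof is correct and takes the same route as the paper, which simply invokes It\^{o}'s formula: you establish the second identity by It\^{o} applied to $f(Y_t)$ and then obtain the first by conjugating with $\exp(-\left\vert Y\right\vert^{2}/2)$ via (\ref{eq:phan0}). One small simplification: the closing paragraph about relaxing compact support is unnecessary, since $f$ has compact support and hence so does $e^{\left\vert\cdot\right\vert^{2}/2}f$ -- the second identity applies to it directly, with no truncation or Mehler-kernel growth estimate needed.
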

\begin{proof}
This is a trivial consequence of It\^{o}'s formula.
\end{proof}

Instead of (\ref{eq:phan-2}), we consider the equation
\begin{align}\label{eq:phan-5}
&\dot Y=\dot w^{E},&Y_{0}=Y,
\end{align}
so that 
\begin{equation}\label{eq:phan-6}
Y_{t}=Y+w^{E}_{t}.
\end{equation}
In (\ref{eq:phan-6}), $Y_{\cdot}$ is just a Brownian motion.
We denote by
\index{P@$P$}%
$P$ the probability law of $Y_{\cdot}$ on $\mathcal{C}\left(\R_{+},E\right)$ and by 
\index{EP@$E^{P}$}%
$E^{P}$ the 
corresponding expectation operator.
\begin{prop}\label{Psemgrbi}
For $t>0$, if  $f\in C^{ \infty,c}\left(E,\R\right)$, then 
\begin{equation}\label{eq:phan-7}
\exp\left(-tO^{E}\right)f\left(Y\right)=E^{P}\left[\exp\left(\frac{mt}{2}-\frac{1}{2}\int_{0}^{t}\left\vert  Y
_{s}\right\vert^{2}ds\right)f\left(Y_{t}\right)\right].
\end{equation}
\end{prop}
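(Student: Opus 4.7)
The plan is to recognize equation \eqref{eq:phan-7} as the Feynman--Kac representation for the semigroup generated by the harmonic oscillator. Writing
\begin{equation*}
O^{E}=\frac{1}{2}\left(-\Delta^{E}\right)+V,\qquad V(Y)=\frac{1}{2}\left(\left\vert Y\right\vert^{2}-m\right),
\end{equation*}
we see that $O^{E}$ is a Schr\"odinger operator whose kinetic part $\frac{1}{2}(-\Delta^{E})$ is minus the generator of the Brownian motion $Y_{\cdot}$ starting at $Y$ under $P$. The Feynman--Kac formula then asserts that
\begin{equation*}
\exp\left(-tO^{E}\right)f(Y)=E^{P}\left[\exp\left(-\int_{0}^{t}V(Y_{s})\,ds\right)f(Y_{t})\right],
\end{equation*}
and substituting the explicit form of $V$ yields precisely \eqref{eq:phan-7}.

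To justify the Feynman--Kac identity directly, I would set $u(s,Z)=\exp(-(t-s)O^{E})f(Z)$ for $0\le s\le t$, which satisfies $u(t,Z)=f(Z)$ and the backward equation $\partial_{s}u=O^{E}u$. I would then consider the process
\begin{equation*}
M_{s}=u(s,Y_{s})\exp\left(\frac{ms}{2}-\frac{1}{2}\int_{0}^{s}\left\vert Y_{r}\right\vert^{2}dr\right),\qquad 0\le s\le t.
\end{equation*}
Applying It\^o's formula to $M_{s}$ and using $\dot{Y}=\dot{w}^{E}$ under $P$, the bounded variation part of $dM_{s}$ is
\begin{equation*}
\left[\partial_{s}u+\frac{1}{2}\Delta^{E}u+\left(\frac{m}{2}-\frac{1}{2}\left\vert Y_{s}\right\vert^{2}\right)u\right](s,Y_{s})\exp\left(\frac{ms}{2}-\frac{1}{2}\int_{0}^{s}\left\vert Y_{r}\right\vert^{2}dr\right)ds,
\end{equation*}
and this vanishes by the PDE $\partial_{s}u=\frac{1}{2}(-\Delta^{E}+\left\vert Z\right\vert^{2}-m)u$. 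Thus $M_{\cdot}$ is a local martingale, and equating $E^{P}[M_{0}]=E^{P}[M_{t}]$ gives exactly \eqref{eq:phan-7}.

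The only real issue is promoting the local martingale property to a genuine martingale identity, i.e. controlling the integrability of $M_{s}$. Since $f\in C^{\infty,c}(E,\R)$, Mehler's formula \eqref{eq:glab16a} shows that $u(s,\cdot)=\exp(-(t-s)O^{E})f$ has Gaussian decay in $Z$ uniformly for $s\in[0,t]$; combined with the nonnegativity of $\frac{1}{2}\int_{0}^{s}\left\vert Y_{r}\right\vert^{2}dr$ and the deterministic bound $\exp(ms/2)\le\exp(mt/2)$, this yields a uniform $L^{\infty}$ (hence uniformly integrable) bound on $M_{s}$, so a standard stopping-time argument applied to $\tau_{n}=\inf\{s:\left\vert Y_{s}\right\vert\ge n\}\wedge t$ followed by $n\to\infty$ via dominated convergence gives the desired equality of expectations. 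This is the only technical step; the rest is a one-line application of It\^o's formula.
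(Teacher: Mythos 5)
Your proof is correct and takes the same route as the paper, which simply observes that \eqref{eq:phan-7} follows from \eqref{eq:glab14} and the Feynman--Kac formula. You have filled in the standard It\^o/martingale verification of Feynman--Kac that the paper leaves implicit, and the integrability check via Mehler's formula and a stopping-time argument is the right way to close the local-to-true martingale gap.
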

\begin{proof}
Equation (\ref{eq:phan-7}) follows from (\ref{eq:glab14}) and from 
the formula of Feynman-Kac.
\end{proof}
\begin{remk}\label{RGirs}
The first equation in (\ref{eq:phan-4}) and (\ref{eq:phan-7}) are 
equivalent. Indeed an elementary version of a formula of Girsanov 
shows that if $Q^{t}, P^{t}$ are the probability laws of $Y_{\cdot}$ 
on $\mathcal{C}\left(\left[0,t\right],E\right)$ in (\ref{eq:phan-2}), (\ref{eq:phan-5}), 
 then
\begin{equation}\label{eq:phan-8}
\frac{dQ^{t}}{dP^{t}}=\exp\left(-\int_{0}^{t}\left\langle  
Y_{s},\delta 
w^{E}_{s}\right\rangle-\frac{1}{2}\int_{0}^{t}\left\vert  
Y_{s}\right\vert^{2}ds\right).
\end{equation}
Using  (\ref{eq:phan-5}) and It\^{o}'s formula, we get
\begin{equation}\label{eq:phan-9}
\frac{1}{2}\left\vert  Y_{t}\right\vert^{2}=\frac{1}{2}\left\vert  
Y\right\vert^{2}+\frac{mt}{2}+\int_{0}^{t}\left\langle  
Y_{s},\delta w^{E}_{s}\right\rangle.
\end{equation}
By (\ref{eq:phan-9}), we can rewrite (\ref{eq:phan-8}) in the form
\begin{equation}\label{eq:phan-10}
\frac{dQ^{t}}{dP^{t}}=\exp\left(\frac{mt}{2}-\frac{1}{2}\int_{0}^{t}\left\vert  Y_{s}\right\vert^{2}ds
+\frac{1}{2}\left\vert  Y\right\vert^{2}-\frac{1}{2}\left\vert  
Y_{t}\right\vert^{2}\right).
\end{equation}
By (\ref{eq:phan-10}), the two versions of 
$\exp\left(-tO^{E}\right)f\left(Y\right)$ in (\ref{eq:phan-4}), 
(\ref{eq:phan-7}) are indeed equivalent.
\end{remk}

 Let $Y_{\cdot}$ be as in (\ref{eq:phan-5}).
\begin{prop}\label{Pidpha}
Given $\beta\ge 0$, the following identity holds:
\begin{multline}\label{eq:phan-11b}
E^{P}\left[\exp\left(-\frac{\beta^{2}}{2}\int_{0}^{t}\left\vert  
Y_{s}\right\vert^{2}ds\right)\right]=\\
\left(\frac{1}{\cosh\left(\beta 
t\right)}\right)^{m/2}\exp\left(-\frac{1}{2}\tanh\left(\beta 
t\right)\beta\left\vert  Y\right\vert^{2}\right).
\end{multline}
\end{prop}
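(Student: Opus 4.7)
The plan is to reduce the identity to the special case $\beta = 1$ by a Brownian scaling argument, and to handle that special case by combining the Feynman--Kac representation of $\exp(-tO^E)$ in Proposition~\ref{Psemgrbi} with the explicit integral formula (\ref{eq:glab17a}).

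For $\beta = 1$, I would take $f \equiv 1$ in equation (\ref{eq:phan-7}) to obtain
$$\exp(-tO^E)\mathbf{1}(Y) \;=\; e^{mt/2}\, E^{P}\!\left[\exp\!\left(-\frac{1}{2}\int_{0}^{t}\left\vert Y_{s}\right\vert^{2}ds\right)\right].$$
On the other hand, the same quantity equals $\int_{E} h_{t}^{E}(Y,Y')\,dY'$, which by (\ref{eq:glab17a}) equals $\bigl(e^{t}/\cosh(t)\bigr)^{m/2}\exp\bigl(-\tfrac{1}{2}\tanh(t)|Y|^{2}\bigr)$. Dividing by $e^{mt/2}$ yields (\ref{eq:phan-11b}) in the case $\beta = 1$.

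For general $\beta > 0$, the plan is to use the scaling property of Brownian motion: setting $\tilde{w}_{u} = \sqrt{\beta}\, w_{u/\beta}$ defines a new Brownian motion, and the change of variable $u = \beta s$ together with $Y_{s} = Y + w_{s}$ gives
$$\frac{\beta^{2}}{2}\int_{0}^{t}\left\vert Y_{s}\right\vert^{2}ds \;=\; \frac{1}{2}\int_{0}^{\beta t}\bigl\vert\sqrt{\beta}\,Y + \tilde{w}_{u}\bigr\vert^{2}du.$$
Thus the left-hand side of (\ref{eq:phan-11b}) equals the $\beta = 1$ expression evaluated at time $\beta t$ and at initial point $\sqrt{\beta}\,Y$, which by the preceding step is
$$\bigl(1/\cosh(\beta t)\bigr)^{m/2}\exp\!\bigl(-\tfrac{1}{2}\tanh(\beta t)\,\vert\sqrt{\beta}\,Y\vert^{2}\bigr) \;=\; \bigl(1/\cosh(\beta t)\bigr)^{m/2}\exp\!\bigl(-\tfrac{\beta}{2}\tanh(\beta t)\,\vert Y\vert^{2}\bigr),$$
which is exactly the right-hand side of (\ref{eq:phan-11b}). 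The degenerate case $\beta = 0$ is trivial since both sides reduce to $1$.

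No step presents a serious obstacle; the only care required is in tracking the prefactor $e^{mt/2}$ introduced by Feynman--Kac and in verifying the Brownian scaling computation. An alternative route, which avoids routing through $\exp(-tO^{E})\mathbf{1}$, is to apply Feynman--Kac directly to the operator $H^{E}_{\beta} = \tfrac{1}{2}(-\Delta^{E} + \beta^{2}\vert Y\vert^{2})$ and integrate the Mehler kernel for $H^{E}_{\beta}$ in the variable $Y'$; this gives the same result by a parallel calculation and is perhaps conceptually cleaner, but requires quoting Mehler's formula for a one-parameter family rather than just the form (\ref{eq:glab16a}) already established.
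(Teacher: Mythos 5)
Your proof is correct and uses exactly the same two ingredients as the paper's proof: the Feynman--Kac representation (\ref{eq:phan-7}) applied to $f\equiv 1$, and the integrated Mehler formula (\ref{eq:glab17a}). The only cosmetic difference is that you reduce the general $\beta$ to $\beta=1$ by Brownian scaling on the path side, whereas the paper achieves the same reduction by conjugating the operator with the dilation $K_{\sqrt{\beta}}$ (equation (\ref{eq:phan-12a})); these are dual formulations of the same scaling argument.
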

\begin{proof}
For $a>0$, set
\begin{equation}\label{eq:phan-11a}
K_{a}s\left(Y\right)=s\left(aY\right).
\end{equation}
For $\beta>0$, we get
\begin{equation}\label{eq:phan-12a}
K_{\sqrt{\beta}}\beta 
O^{E}K_{\sqrt{\beta}}^{-1}=\frac{1}{2}\left(-\Delta^{E}+\beta^{2}
\left\vert  Y\right\vert^{2}-m\beta\right).
\end{equation}
By (\ref{eq:phan-7}), (\ref{eq:phan-12a}),  we 
deduce that
\begin{equation}\label{eq:phan-13}
E^{P}\left[\exp\left(-\frac{\beta^{2}}{2}\int_{0}^{t}\left\vert  
Y_{s}\right\vert^{2}ds\right)\right]=\exp\left(-m\beta 
t/2\right)\int_{E}^{}h^{E}_{\beta t}\left(\sqrt{\beta} Y,Y'\right)dY'.
\end{equation}
By (\ref{eq:glab17a}), (\ref{eq:phan-13}), we get 
(\ref{eq:phan-11b}).
\end{proof}

For $0\le\beta\le1$, let $\rho_{\beta}\ge 0$ be defined by 
\begin{equation}\label{eq:rot6a}
\rho^{2}_{\beta}=1-\beta^{2}.
\end{equation}
Let $Y_{\cdot}$ be as in (\ref{eq:phan-2}).
\begin{prop}\label{PMehla}
The following identity holds:
\begin{multline}\label{eq:rot7a}
   E^{Q}\left[\exp\left(\frac{\beta^{2}}{2}
\int_{0}^{t}\left\vert  
Y _{s}\right\vert^{2}ds\right)\right] \\
=
\left[\frac{\exp\left( t\right)}
{\cosh\left(\rho_{\beta} t\right) 
+\frac{\sinh\left(\rho_{\beta} t\right)}{\rho
_{\beta}}}
\right]^{m/2}
\exp\left(\beta^{2}
\frac{\tanh\left(\rho_{\beta}t\right)}{
\rho_{\beta}+
\tanh\left(\rho_{\beta}t \right)}
\left\vert  Y \right\vert^{2}/2\right). 
\end{multline}
In particular, we have
\begin{equation}\label{eq:rot7ay1}
  E^{Q}\left[\exp\left(\frac{\beta^{2}}{2}
\int_{0}^{t}\left\vert  
Y _{s}\right\vert^{2}ds\right)\right] \le
\exp\left(\frac{\beta^{2}}{2}\left(mt+\left\vert  
Y\right\vert^{2}\right)\right).
\end{equation}
\end{prop}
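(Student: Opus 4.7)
The plan is to reduce the $Q$-expectation to a heat-kernel computation via Feynman–Kac, conjugate away the linear drift of the Ornstein–Uhlenbeck generator, and then read off the answer from Mehler's formula. Since $Y_\cdot$ under $Q$ solves (\ref{eq:phan-2}) and has generator $-P^E$ with $P^E$ given in (\ref{eq:phan-1}), Feynman–Kac gives
\begin{equation*}
E^Q\left[\exp\left(\frac{\beta^2}{2}\int_0^t|Y_s|^2\,ds\right)\right]=\exp(-tA^\beta)\cdot 1\,(Y),\qquad A^\beta=P^E-\frac{\beta^2}{2}|Y|^2.
\end{equation*}

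First I would conjugate $A^\beta$ by $\exp(|Y|^2/2)$. Using the identity (\ref{eq:phan0}) that $\exp(-|Y|^2/2)P^E\exp(|Y|^2/2)=O^E$, the conjugated operator reads
\begin{equation*}
\exp(-|Y|^2/2)\,A^\beta\,\exp(|Y|^2/2)=O^E-\frac{\beta^2}{2}|Y|^2=\frac{1}{2}\left(-\Delta^E+\rho_\beta^{2}|Y|^2-m\right),
\end{equation*}
which by the rescaling (\ref{eq:phan-12a}) coincides with the harmonic oscillator at frequency $\rho_\beta$ shifted by the constant $m(\rho_\beta-1)/2$. Consequently the kernel of $\exp(-tA^\beta)$ is, up to the conjugation factors $\exp(|Y|^2/2-|Y'|^2/2)$ and the constant $\exp(tm(1-\rho_\beta)/2)$, the rescaled Mehler kernel $\rho_\beta^{m/2}h^E_{\rho_\beta t}(\sqrt{\rho_\beta}Y,\sqrt{\rho_\beta}Y')$.

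Next I would apply this operator to the constant $1$, so that the problem reduces to computing the Gaussian integral $\int h^E_{\rho_\beta t}(\sqrt{\rho_\beta}Y,\tilde Y)\exp(-|\tilde Y|^2/(2\rho_\beta))\,d\tilde Y$ after the substitution $\tilde Y=\sqrt{\rho_\beta}Y'$. Plugging in Mehler's formula (\ref{eq:glab16a}), the exponent is quadratic in $\tilde Y$, and a completion of the square — the same algebra that produces (\ref{eq:glab17a}) from (\ref{eq:glab16a}) — uses the identity $\tanh(s/2)+1/\sinh s=\coth s$ and collects a Gaussian prefactor $[e^t/(\cosh(\rho_\beta t)+\sinh(\rho_\beta t)/\rho_\beta)]^{m/2}$ together with a residual quadratic form whose coefficient simplifies to $\beta^2\tanh(\rho_\beta t)/[2(\rho_\beta+\tanh(\rho_\beta t))]$. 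Combined with $\exp(|Y|^2/2)$ from the conjugation, this is exactly (\ref{eq:rot7a}). The only real obstacle is the bookkeeping of the scaling powers $\rho_\beta^{\pm m/2}$, the shift $m(1-\rho_\beta)/2$, and the matching of the quadratic-form coefficients, but these are all routine once the structural reduction above is in place.

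Finally, the bound (\ref{eq:rot7ay1}) follows from the explicit formula by two elementary estimates. Since $\rho_\beta\ge 0$, the coefficient satisfies $\tanh(\rho_\beta t)/(\rho_\beta+\tanh(\rho_\beta t))\le 1$, which bounds the $Y$-dependent factor by $\exp(\beta^2|Y|^2/2)$. For the prefactor, since $\rho_\beta\le 1$ we have $\sinh(\rho_\beta t)/\rho_\beta\ge\sinh(\rho_\beta t)$, so
\begin{equation*}
\cosh(\rho_\beta t)+\frac{\sinh(\rho_\beta t)}{\rho_\beta}\ge e^{\rho_\beta t}\ge e^{\rho_\beta^2 t}=e^{(1-\beta^2)t},
\end{equation*}
whence the prefactor is bounded by $\exp(m\beta^2 t/2)$, which combined with the previous estimate yields (\ref{eq:rot7ay1}).
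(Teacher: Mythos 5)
Your argument is correct, and it does something the paper does not: the paper's proof of (\ref{eq:rot7a}) consists solely of a citation to \cite[eq.\ (13.2.54)]{Bismut08b}, whereas you supply a complete first-principles derivation. Your route — write the expectation as $\exp(-tA^\beta)\cdot 1$ by Feynman--Kac with $A^\beta=P^E-\tfrac{\beta^2}{2}|Y|^2$, undo the conjugation (\ref{eq:phan0}) to reach the shifted harmonic oscillator $\tfrac12(-\Delta^E+\rho_\beta^2|Y|^2-m)$, use the dilation (\ref{eq:phan-12a}) to express its kernel in terms of Mehler's formula (\ref{eq:glab16a}), then integrate the Gaussian — closes exactly: the $\rho_\beta^{\pm m/2}$ scaling factors cancel in the substitution $\tilde Y=\sqrt{\rho_\beta}Y'$, the shift contributes $\exp(tm(1-\rho_\beta)/2)$, which converts the raw Mehler prefactor $\left[e^{\rho_\beta t}\big/\left(\cosh(\rho_\beta t)+\sinh(\rho_\beta t)/\rho_\beta\right)\right]^{m/2}$ into the stated $\left[e^{t}/(\cdots)\right]^{m/2}$, and after completing the square and using $1-\rho_\beta^2=\beta^2$ the residual quadratic coefficient reduces to $\beta^2\tanh(\rho_\beta t)/\left(\rho_\beta+\tanh(\rho_\beta t)\right)$, matching (\ref{eq:rot7a}). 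For (\ref{eq:rot7ay1}) your argument coincides in substance with the paper's: the paper first deduces $\exp\left(m(1-\rho_\beta)t/2+\beta^2|Y|^2/2\right)$ from $\cosh(\rho_\beta t)+\sinh(\rho_\beta t)/\rho_\beta\ge e^{\rho_\beta t}$ and $\tanh(\rho_\beta t)/(\rho_\beta+\tanh(\rho_\beta t))\le 1$, then applies $1-\rho_\beta\le 1-\rho_\beta^2=\beta^2$; you perform the same two bounds but fold the last step into the inequality $e^{\rho_\beta t}\ge e^{\rho_\beta^2 t}$ directly. Both yield (\ref{eq:rot7ay1}).
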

\begin{proof}
Equation (\ref{eq:rot7a}) was established in \cite[eq. 
(13.2.54)]{Bismut08b}. By (\ref{eq:rot7a}), we get
\begin{equation}\label{eq:rot7ay2}
E^{Q}\left[\exp\left(\frac{\beta^{2}}{2}
\int_{0}^{t}\left\vert  
Y _{s}\right\vert^{2}ds\right)\right] 
\le\exp\left(m\left(1-\rho_{\beta}\right)t/2+\beta^{2}\left\vert  
Y\right\vert^{2}/2\right).
\end{equation}
Since $1-\rho_{\beta}\le 1-\rho_{\beta}^{2}=\beta^{2}$, from 
(\ref{eq:rot7ay2}), we get (\ref{eq:rot7ay1}). The proof of our proposition  is completed. 
\end{proof}
\begin{remk}\label{Rproa}
Equation (\ref{eq:rot7a}) can be extended by analyticity  to
\begin{equation}\label{eq:rot7a1}
\beta^{2}\le 1+\left(\frac{\pi}{2t}\right)^{2}.
\end{equation}
\end{remk}
\begin{prop}\label{Plines}
For $c\ge 0,t\ge 0$, for $0< \beta\le 1$, then
\begin{equation}\label{eq:af1}
E^{Q}\left[\exp\left(c\int_{0}^{t}\left\vert  
Y_{s}\right\vert 
ds\right)\right]\le\exp\left(c^{2}t/2\beta^{2}+\beta^{2}mt/2+c\left(1-e^{-t}\right)\left\vert  Y
\right\vert\right).
\end{equation}
\end{prop}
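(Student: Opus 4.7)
The plan is to decompose the Ornstein--Uhlenbeck process $Y_\cdot$ into its deterministic and stochastic parts, and then apply a Young-type inequality together with Proposition \ref{PMehla}.

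By (\ref{eq:phan-3}), we can write $Y_s = e^{-s}Y + M_s$, where
$M_s = \int_0^s e^{-(s-u)} dw_u^E$
is itself an Ornstein--Uhlenbeck process starting at $0$ under $Q$. The triangle inequality gives $|Y_s| \le e^{-s}|Y| + |M_s|$, so
\begin{equation*}
c\int_0^t |Y_s|\,ds \le c(1-e^{-t})|Y| + c\int_0^t |M_s|\,ds.
\end{equation*}
The first term is deterministic and contributes the factor $\exp(c(1-e^{-t})|Y|)$ in the desired bound.

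Next I would estimate $E^Q[\exp(c\int_0^t |M_s|\,ds)]$. The elementary inequality $cx \le c^2/(2\beta^2) + \beta^2 x^2/2$, applied pointwise at $x = |M_s|$ and then integrated in $s$, yields
\begin{equation*}
c\int_0^t |M_s|\,ds \le \frac{c^2 t}{2\beta^2} + \frac{\beta^2}{2}\int_0^t |M_s|^2\,ds.
\end{equation*}
Taking expectations and applying Proposition \ref{PMehla} to the process $M_\cdot$, which is the OU process starting at $Y=0$, we get $E^Q[\exp(\tfrac{\beta^2}{2}\int_0^t |M_s|^2\,ds)] \le \exp(\beta^2 mt/2)$ by the bound (\ref{eq:rot7ay1}) specialized at $Y=0$.

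Combining the two steps gives the claim, since the deterministic factor $\exp(c(1-e^{-t})|Y|)$ pulls out of the expectation. The only subtlety is verifying that the application of Proposition \ref{PMehla} is legitimate, i.e.\ that $\beta \le 1$ satisfies the hypothesis there; since that proposition is stated for $0 \le \beta \le 1$, this is immediate. No delicate probabilistic estimate is needed, and there is no real obstacle beyond bookkeeping; the key ingredients are the AM--GM inequality and the already-established Mehler-type identity.
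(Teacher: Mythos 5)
Your proposal is correct and follows essentially the same route as the paper's own proof: the paper also decomposes $Y_s = e^{-s}Y + \underline{Y}_s$ with $\underline{Y}_\cdot$ the OU process started at $0$, then applies the triangle inequality, Young's inequality $cx \le c^2/2\beta^2 + \beta^2 x^2/2$, and the bound (\ref{eq:rot7ay1}) with $Y=0$. The bookkeeping and the identification of $0<\beta\le 1$ as the valid range are handled correctly.
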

\begin{proof}
Let $\underline{Y}_{\cdot}$ be $Y_{\cdot}$ in (\ref{eq:phan-3}) with 
$Y_{0}=0$. We can rewrite (\ref{eq:phan-3}) in the form
\begin{equation}\label{eq:af2}
Y_{t}=e^{-t}Y+\underline{Y}_{t}.
\end{equation}
By (\ref{eq:af2}), we get
\begin{equation}\label{eq:af3}
\int_{0}^{t}\left\vert  Y_{s}\right\vert ds\le\int_{0}^{t}\left\vert  
\underline{Y}_{s}\right\vert+\left(1-e^{-t}\right)\left\vert  
Y\right\vert.
\end{equation}
Also
\begin{equation}\label{eq:af4}
c\int_{0}^{t}\left\vert  \underline{Y}_{s}\right \vert ds\le 
\frac{\beta^{2}}{2}\int_{0}^{t}\left\vert  
\underline{Y}_{s}\right\vert^{2}ds+\frac{c^{2}t}{2\beta^{2}}.
\end{equation}
By (\ref{eq:rot7ay1}), (\ref{eq:af3}), (\ref{eq:af4}), we get 
(\ref{eq:af1}).
\end{proof}
\subsection{A uniform estimate on $bY_{\cdot/b^{2}}$}%
\label{subsec:unibo}
We still consider equation (\ref{eq:phan-2}) and the corresponding 
probability law $Q$. 
Given $b>0$, set
\begin{equation}\label{eq:fus1}
Y_{b,\cdot}=Y_{s/b^{2}}.
\end{equation}
There is a Brownian motion, which is still denoted $w^{E}$, such that
\begin{equation}\label{eq:fus2}
dY_{b,\cdot}=-\frac{Y_{b,\cdot}}{b^{2}}+\frac{\dot w^{E}}{b}.
\end{equation}
By (\ref{eq:fus2}), we deduce that
\begin{equation}\label{eq:fus3}
Y_{b,t}=e^{-t/b^{2}}Y+\int_{0}^{t}e^{-\left(t-s\right)/b^{2}}\frac{d w^{E}_{s}}{b}.
\end{equation}
By (\ref{eq:fus3}), we deduce that
\begin{equation}\label{eq:fus4}
Y_{b,t}=e^{-t/b^{2}}Y+\frac{w^{E}_{t}}{b}-\int_{0}^{t}\frac{\exp\left(-\left(t-s\right)/b^{2}\right)}{b^{2}}
\frac{w^{E}_{s}}{b}ds.
\end{equation}
By (\ref{eq:fus4}), we deduce that
\begin{equation}\label{eq:lom7}
\sup_{0\le s\le t}\left\vert  bY_{b,s}\right\vert\le \left\vert  
bY\right\vert+\left(2-e^{-t/b^{2}}\right)\sup_{0\le s\le 
t}\left\vert  w^{E}_{s}\right\vert.
\end{equation}

By \cite[Proposition 14.10.1]{Bismut08b},  for 
$\alpha>1/2$, as $b\to 0$, $\left\vert  
\log\left(b\right)\right\vert^{-\alpha}Y_{b,\cdot}$ converges 
uniformly to $0$   over compact subsets  in probability. Equation 
(\ref{eq:lom7}) is not strong enough to produce this result.
\subsection{The hypoelliptic Laplacian on $E\times E$}%
\label{subsec:hypopro}
The generic element of $E\times E$ will be 
denoted $\left(x,Y\right)$. As in \cite[Proposition 
10.3.2]{Bismut08b}, for $b>0,t>0,x,x',Y,Y'\in E$, set
\begin{multline}\label{eq:stan6a}
H_{b,t}\left(\left(x,Y\right),\left(x',Y'\right)\right)=\frac{b^{2}}{2}\left(\tanh\left(t/2b^{2}\right)\left(\left\vert  Y\right\vert^{2}+\left\vert  Y'\right\vert^{2}
\right) 
+\frac{\left\vert  
Y'-Y\right\vert^{2}}{\sinh\left(t/b^{2}\right)}\right) \\
+\frac{1}{2\left(t-2b^{2}\tanh\left(t/2b^{2}\right)\right)}
\left\vert x'-x-b^{2}\tanh\left(t/2b^{2}\right)
\left(Y+Y'\right)\right\vert^{2}.
\end{multline}
Put
\begin{equation}\label{eq:stan6x1a}
\beta=b/\sqrt{t}.
\end{equation}
By (\ref{eq:stan6a}), we get
\begin{equation}\label{eq:stan6x2a}
H_{b,t}\left(\left(x,Y/b\right),\left(x',Y'/b\right)\right)=H_{\beta,1}\left(\left(
x/\sqrt{t},Y/\beta\right),\left(x'/\sqrt{t},Y'/\beta\right)\right).
\end{equation}
Also
\begin{multline}\label{eq:stan6x3a}
H_{\beta,1}\left(\left(
x/\sqrt{t},Y/\beta\right),\left(x'/\sqrt{t},Y'/\beta\right)\right)\\
=
\frac{1}{2} \left( \tanh\left(1/2\beta^{2}\right)\left( \left\vert  
Y\right\vert^{2}  +\left\vert  Y'\right\vert^{2}\right)+\frac{\left\vert  
Y'-Y\right\vert^{2}}{\sinh\left(1/\beta^{2}\right)}\right)\\
+\frac{1}{2\left(1-2\beta^{2}\tanh\left(1/2\beta^{2}\right)\right)}
\left\vert  \frac{x'-x}{\sqrt{t}}-\beta\tanh\left(1/2\beta^{2}\right)
\left(Y+Y'\right)\right\vert^{2}.
\end{multline}

We give the following extension of \cite[eq. (10.3.52)]{Bismut08b}.
\begin{prop}\label{Pineq}
There exists $C>0$ such that for $b>0,t>0$,
\begin{multline}\label{eq:stan8a}
H_{b,t}\left(\left(x,Y/b\right),\left(x',Y'/b\right)\right) \\
\ge 
C\left(\frac{\left\vert  
x'-x\right\vert^{2}}{t}+\left(1-e^{-t/b^{2}}\right).
\left(\left\vert  Y\right\vert^{2}+\left\vert  
Y'\right\vert^{2}\right)\right).
\end{multline}
In particular  if   $\tau>0$ is such that $t\ge\tau b^{2}$, then 
\begin{equation}\label{eq:stan8b}
H_{b,t}\left(\left(x,Y/b\right),\left(x',Y'/b\right)\right)\ge C
\left(\frac{\left\vert  x'-x\right\vert^{2}}{t}+\left(1-e^{-\tau}\right)\left( \left\vert  
Y\right\vert^{2}+\left\vert  Y'\right\vert^{2}\right) \right) .
\end{equation}
\end{prop}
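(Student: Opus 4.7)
The plan is to reduce to the unit-time case via the scaling identity (\ref{eq:stan6x2a}), and then diagonalize the quadratic form. With $\beta = b/\sqrt{t}$ and $u = 1/\beta^{2} = t/b^{2}$, I will introduce $A = (x'-x)/\sqrt{t}$, $S = Y+Y'$, $D = Y'-Y$. Using $|Y|^{2}+|Y'|^{2} = (|S|^{2}+|D|^{2})/2$ together with the identity $\tanh(u/2) + 2/\sinh(u) = \coth(u/2)$ (a direct consequence of $\sinh^{2}+1 = \cosh^{2}$), the expression (\ref{eq:stan6x3a}) rewrites as
\begin{equation*}
H_{\beta,1} = \tfrac{1}{4}\tanh(u/2)\,|S|^{2} + \tfrac{1}{4}\coth(u/2)\,|D|^{2} + \frac{1}{2(1-2\beta^{2}\tanh(u/2))}\,\bigl|A - \beta\tanh(u/2)\,S\bigr|^{2}.
\end{equation*}
The $D$-block is immediate: $\coth(u/2) \ge 1 \ge 1-e^{-u}$, so this term is bounded below by $\tfrac{1-e^{-u}}{4}|D|^{2}$.

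The core of the argument is the analysis of the coupled $(A,S)$-block $H_{AS}$. Setting $\alpha = \tanh(u/2)$, $\gamma = \beta\alpha$, $\eta = 1-2\beta^{2}\alpha$, I will exploit the algebraic identities $\alpha\eta + 2\gamma^{2} = \alpha$ and $\eta + 2\beta^{2}\alpha = 1$ to complete the square in $A$, obtaining
\begin{equation*}
H_{AS} = \tfrac{1}{2}|A|^{2} + a\,\bigl|S - (b/2a)A\bigr|^{2}, \qquad a = \alpha/(4\eta),\ b = \gamma/\eta,
\end{equation*}
whose coefficient of $|A|^{2}$ simplifies to exactly $1/2$ uniformly in $\beta$. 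Minimizing over $S$ for fixed $A$ yields $H_{AS} \ge \tfrac{1}{2}|A|^{2}$; dually, minimizing over $A$ for fixed $S$ (the minimizer being $A = \gamma S$) yields $H_{AS} \ge \tfrac{\alpha}{4}|S|^{2}$. Averaging the two lower bounds and using $\tanh(u/2) \ge (1-e^{-u})/2$, I get $H_{AS} \ge \tfrac{1}{4}|A|^{2} + \tfrac{1-e^{-u}}{16}|S|^{2}$. Combining with the $D$-estimate gives
\begin{equation*}
H_{\beta,1} \ge \tfrac{1}{4}|A|^{2} + \tfrac{1-e^{-u}}{16}\bigl(|S|^{2}+|D|^{2}\bigr) = \tfrac{1}{4}|A|^{2} + \tfrac{1-e^{-u}}{8}\bigl(|Y|^{2}+|Y'|^{2}\bigr),
\end{equation*}
which is (\ref{eq:stan8a}) with $C = 1/16$. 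The second inequality (\ref{eq:stan8b}) is then immediate from monotonicity of $u \mapsto 1-e^{-u}$.

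The main obstacle is the regime $u \to 0$ (equivalently $t \ll b^{2}$), where $\eta \to 0$ and the coefficient in front of the coupled term in (\ref{eq:stan6x3a}) blows up. Crude estimates such as $|A-\gamma S|^{2} \ge \tfrac{1}{2}|A|^{2} - \gamma^{2}|S|^{2}$ absorb too much and leave a coefficient of $|S|^{2}$ that turns negative as $u \to 0$. The identities $\alpha\eta + 2\gamma^{2} = \alpha$ and $\eta + 2\beta^{2}\alpha = 1$ are precisely what make the completion of the square uniform in $\beta$; they reflect the algebra already underlying Mehler's formula (\ref{eq:glab16a}) for the two-dimensional hypoelliptic heat kernel, and without them there is no reason to expect a constant $C$ independent of $b$ and $t$.
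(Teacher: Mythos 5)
Your proof is correct, and it takes a genuinely different route from the paper's. The paper proceeds by a case analysis on the size of the cross term $\beta\tanh\left(1/2\beta^{2}\right)\left(\left\vert Y\right\vert+\left\vert Y'\right\vert\right)$ relative to $\left\vert x'-x\right\vert/2\sqrt{t}$: if the cross term is small, the last summand of (\ref{eq:stan6x3a}) already controls $\left\vert x'-x\right\vert^{2}/t$, while if it is large, the inequality $\tanh\left(y\right)\le y$ shows the first summand of (\ref{eq:stan6x3a}) controls $\left\vert x'-x\right\vert^{2}/t$; in both cases the $Y$-terms are handled by $\tanh\left(1/2\beta^{2}\right)\ge\frac{1}{2}\left(1-e^{-1/\beta^{2}}\right)$. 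Your argument instead passes to the symmetric/antisymmetric variables $S=Y+Y'$, $D=Y'-Y$, uses the hyperbolic identity $\tanh\left(u/2\right)+2/\sinh\left(u\right)=\coth\left(u/2\right)$ to decouple the $D$-block, and then diagonalizes the remaining $\left(A,S\right)$-block by an exact completion of the square in which the identities $\alpha\eta+2\gamma^{2}=\alpha$ and $\eta=1-2\beta^{2}\alpha$ force the coefficient of $\left\vert A\right\vert^{2}$ to equal $1/2$ for all $\beta$; averaging the two one-sided minimizations then yields a uniform two-sided lower bound. Both approaches establish (\ref{eq:stan8a}), but yours is case-free, makes the cancellation underlying Mehler's formula explicit, and produces an explicit constant ($C=1/16$, or even $1/8$ from your displayed estimate), whereas the paper's route is shorter to state but leaves the constant implicit. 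One tiny slip worth noting: you describe the diagonalization as "completing the square in $A$," but what you actually compute is the completion of the square in $S$ (the expression $\left\vert S-\left(2\gamma/\alpha\right)A\right\vert^{2}$); the algebra and conclusion are nonetheless correct, including the check that $\eta>0$ for all $\beta>0$ so that no division by zero occurs.
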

\begin{proof}
By (\ref{eq:stan6x2a}), (\ref{eq:stan6x3a}), if 
$\beta\tanh\left(1/2\beta^{2}\right)\left(\left\vert  
Y\right\vert+\left\vert  Y'\right\vert\right)\le\left\vert  
x'-x\right\vert/2\sqrt{t}$, (\ref{eq:stan8a}) holds true. If 
$\beta\tanh\left(1/2\beta^{2}\right)\left(\left\vert  
Y\right\vert+\left\vert  Y'\right\vert\right)>\left\vert  
x'-x\right\vert/2\sqrt{t}$, then
\begin{equation}\label{eq:stan8z1a}
\tanh\left(1/2\beta^{2}\right)\left(\left\vert  
Y\right\vert^{2}+\left\vert  Y'\right\vert^{2}\right)\ge 
C\frac{1}{\beta^{2}\tanh\left(1/2\beta^{2}\right)}\frac{\left\vert  
x'-x\right\vert^{2}}{t}.
\end{equation}
Since for $x\ge 0$, $\tanh\left(x\right)\le x$, from 
(\ref{eq:stan8z1a}), we get
\begin{equation}\label{eq:stan8z2a}
\tanh\left(1/2\beta^{2}\right)\left(\left\vert  
Y\right\vert^{2}+\left\vert  Y'\right\vert^{2}\right)\ge 
2C\frac{\left\vert  x'-x\right\vert^{2}}{t},
\end{equation}
so that (\ref{eq:stan8a}) still holds. From (\ref{eq:stan8a}), we get 
(\ref{eq:stan8b}). The proof of our proposition is 
completed is completed. 
\end{proof}

 We will consider hypoelliptic differential operators on $E\times E$. 
 Recall that the generic element of $E\times E$  is denoted $\left(x,Y\right)$. The harmonic oscillator  $O^{E}$ will act 
on the second factor $E$. We denote by $\n^{H}_{Y}$ the vector field 
on $E\times E$ that differentiates along the first copy $E$ in the 
direction $Y$. 
\begin{defin}\label{DAb}
Given $b>0$, let 
\index{AEb@$\mathcal{A}^{E}_{b}$}%
$\mathcal{A}^{E}_{b}$ be the differential operator 
\begin{equation}\label{eq:phan2}
\mathcal{A}^{E}_{b}=\frac{O^{E}}{b^{2}}-\frac{\n^{H}_{Y}}{b}.
\end{equation}
\end{defin}
Then $\frac{\pa}{\pa t}+\mathcal{A}^{E}_{b}$ is a hypoelliptic 
operator on 
$E\times E$.
\begin{defin}\label{Drkereu}
For $b>0,t>0$, let 
\index{rEbt@$r_{b,t}^{E}\left(\left(x,Y\right),\left(x',Y'\right)\right)$}%
$r_{b,t}^{E}\left(\left(x,Y\right),\left(x',Y'\right)\right)$ be the 
smooth kernel for the operator 
$\exp\left(-t\mathcal{A}^{E}_{b}\right)$ with respect to the volume 
$dx'dY'$.
\end{defin}

Here is a result established in \cite[Proposition 10.5.1]{Bismut08b}.
\begin{prop}\label{Pideker}
For $b>0,t>0$, the following identity holds:
\begin{multline}\label{eq:phan3}
r^{E}_{b,t}\left(\left(x,Y\right),\left(x',Y'\right)\right)=
       \left[\frac{e^{t/b^{2}}}{4\pi^{2}\sinh\left(t/b^{2}\right)\left(t-2b^{2}
       \tanh\left(t/2b^{2}\right)\right)}\right]^{m/2}  \\
       \exp\left(-H_{b,t}\left(\left(x,Y/b\right),\left(x',Y'/b\right)\right)\right).
\end{multline}
\end{prop}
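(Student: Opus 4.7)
The plan is to view $\mathcal{A}^{E}_{b}$ as the generator, up to a gauge transformation, of an explicit linear Gaussian diffusion on $E\times E$, and to read off its heat kernel from the corresponding Gaussian density. First, I would observe that the scaling identity \eqref{eq:stan6x2a}, combined with the scaling \eqref{eq:phan-12a} for the harmonic oscillator and the change of variables $x\mapsto x/\sqrt{t}$, reduces the claim to the case $t=1$, with $b$ replaced by $\beta=b/\sqrt t$. Next, I would conjugate by $\exp(|Y|^{2}/2)$ as in \eqref{eq:phan0}, which turns $O^{E}$ into the drift-diffusion generator $P^{E}$ of \eqref{eq:phan-1}; the conjugated $\mathcal{A}^{E}_{b}$ then becomes the generator of the linear system
\begin{equation*}
dY_{s}=-\frac{Y_{s}}{b^{2}}\,ds+\frac{dw^{E}_{s}}{b},\qquad \dot x_{s}=\frac{Y_{s}}{b},
\end{equation*}
with $w^{E}$ a standard Brownian motion in $E$, in exactly the spirit of Proposition \ref{Psemgrbi}.

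Since this system is linear, the joint law of $(x_{t},Y_{t})$ started from $(x,Y)$ is Gaussian, and can be obtained by Duhamel's formula: $Y_{s}=e^{-s/b^{2}}Y+\int_{0}^{s}e^{-(s-u)/b^{2}}dw^{E}_{u}/b$, and $x_{s}=x+\int_{0}^{s}Y_{u}/b\,du$. From these explicit representations one computes the mean vector and the $2\times 2$ block covariance matrix. The $Y$-block is the usual Ornstein-Uhlenbeck variance $\tfrac{1}{2}(1-e^{-2t/b^{2}})\mathrm{Id}$; the $x$-block and cross-block are obtained by a single and double time integration and evaluate to expressions involving $t$, $\sinh(t/b^{2})$, and $\tanh(t/2b^{2})$. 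Inverting the resulting covariance matrix produces a quadratic form in $(x'-x,Y,Y')$; reassembling the gauge factor $\exp((|Y|^{2}-|Y'|^{2})/2)$ with the Mehler-type part converts the asymmetric exponent into the symmetric expression in $Y,Y'$ visible in \eqref{eq:stan6a}, and the $\det^{-1/2}$ of the covariance produces the prefactor in \eqref{eq:phan3}.

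The main effort is bookkeeping: one must check that the determinant of the covariance matrix factorises as $\bigl[\sinh(t/b^{2})\bigl(t-2b^{2}\tanh(t/2b^{2})\bigr)\bigr]^{m}$ up to powers of $4\pi^{2}$ and the factor $e^{t/b^{2}}$ coming from the zeroth-order term $-m/2$ in $O^{E}$, and that the quadratic form coming from the inverse covariance, after folding in the gauge factor and using $\tanh(t/2b^{2})=(\cosh(t/b^{2})-1)/\sinh(t/b^{2})$, matches $H_{b,t}$ term by term. This is the only genuinely computational step; the non-trivial cross-block of the covariance is precisely what accounts for the mixed $(x'-x)\cdot(Y+Y')$ coupling in \eqref{eq:stan6a}. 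As a sanity check I would independently verify that the claimed right-hand side of \eqref{eq:phan3} satisfies Kolmogorov's equation $(\partial_{t}+\mathcal{A}^{E}_{b})r^{E}_{b,t}=0$ and converges to $\delta_{(x,Y)}$ as $t\downarrow 0$, which pins down the formula uniquely.
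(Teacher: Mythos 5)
Your approach is correct and is the standard way to establish Mehler-type formulas for quadratic generators; note that the paper itself does not prove this proposition but simply cites \cite[Proposition~10.5.1]{Bismut08b}, and your Gaussian-diffusion derivation is almost certainly the same argument that appears there. Two small remarks. First, the initial scaling reduction to $t=1$ is a convenience but not necessary: your explicit Duhamel computation of the mean and covariance of $(x_t,Y_t)$ works uniformly in $t$ and directly produces the $t$-dependent coefficients in \eqref{eq:stan6a}. Second, there is a small sign slip in the gauge factor: since $\mathcal{B}^{E}_{b}=\exp(|Y|^{2}/2)\,\mathcal{A}^{E}_{b}\,\exp(-|Y|^{2}/2)$ (the analogue of \eqref{eq:glab6}), the kernel of $\exp(-t\mathcal{A}^{E}_{b})$ is obtained from that of $\exp(-t\mathcal{B}^{E}_{b})$ by multiplying by $\exp\bigl((|Y'|^{2}-|Y|^{2})/2\bigr)$, not by $\exp\bigl((|Y|^{2}-|Y'|^{2})/2\bigr)$; this does not alter the structure of the argument. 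The closing sanity check --- verifying directly that the right-hand side of \eqref{eq:phan3} satisfies $(\partial_{t}+\mathcal{A}^{E}_{b})r=0$ and converges to $\delta_{(x,Y)}$ as $t\downarrow 0$, and appealing to uniqueness of the hypoelliptic heat semigroup established in \cite[section 11.5]{Bismut08b} --- is a legitimate and complete substitute for the determinant/inversion bookkeeping once the formula has been guessed, and is probably the least error-prone way to certify it.
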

\begin{remk}\label{Reuuhea}
By (\ref{eq:stan8a}), since for $y\ge 0$, the function 
$\tanh\left(y\right)/y$ is 
decreasing, given $\tau>0$, there exist
$C_{\tau}>0,C'>0$  such that if $b>0,t\ge \tau b^{2}$, then
\begin{equation}\label{eq:phan4}
r^{E}_{b,t}\left(\left(x,Y\right),\left(x',Y'\right)\right)\le 
\frac{C_{\tau}}{t^{m/2}}\exp\left(-C'\left(\frac{\left\vert  
x'-x\right\vert^{2}}{t}+\left(1-e^{-\tau}\right)\left(\left\vert  
Y\right\vert^{2}+\left\vert  Y'\right\vert^{2}\right)\right) \right) .
\end{equation}
\end{remk}
\subsection{Elliptic heat kernel and Brownian motion}%
\label{subsec:bro}
Let 
\index{DX@$\Delta^{X}$}%
$\Delta^{X}$ be the Laplace-Beltrami operator on $X$. 
For $t>0$, let
\index{pt@$p_{t}\left(x,x'\right)$}%
$p_{t}\left(x,x'\right)$ be the smooth 
kernel associated with 
$\exp\left(t\Delta^{X}/2\right)$ with respect to the volume $dx'$.
Classically, given $M>0$, there exist
$C>0,C'>0$ such that for $0<t\le M,x,x'\in X$, 
\begin{equation}\label{eq:stan0}
\left\vert  p_{t}\left(x,x'\right)\right\vert\le 
Ct^{-m/2}\exp\left(-C'd^{2}\left(x,x'\right)/t\right).
\end{equation}
The uniformity of the constants is a consequence of the fact 
that $X$ is a symmetric space.

Set $x_{0}=p1$. Let $P_{x_{0}}$ be the probability law on 
$\mathcal{C}\left(\R_{+},X\right)$ of the Brownian motion in $X$ starting at 
$x_{0}$.  There is a Brownian motion $w^{TX}_{\cdot}$ with values in 
$T_{x_{0}}X$ such that
\begin{equation}\label{eq:anst-1}
\dot x=\dot w^{TX}.
\end{equation}
In (\ref{eq:anst-1}), $\dot w^{TX}$ denotes the differential of 
$w^{TX}$ in the sense of Stratonovitch
\footnote{Brownian motion is nowhere differentiable. Still, there is 
an efficient calculus along its trajectories, the It\^o calculus. Because it is based on a mean-variance 
description of local variations, it is not coordinate invariant. 
The main advantage of the calculus of Stratonovitch is that it is 
invariant under change of coordinates. The two calculi can be deduced 
from each other. We refer to Ikeda-Watanabe \cite{IkedaWatanabe89} and Le Gall \cite{LeGall16} for more details.}. The precise meaning of (\ref{eq:anst-1}) is that $\dot x$ is the 
parallel transport along $x_{\cdot}$ of $\dot w^{TX}\in T_{x_{0}}X$ 
with respect to the Levi-Civita connection. The theory of 
stochastic differential equations gives an unambiguous 
meaning to (\ref{eq:anst-1}). Let $E$ be the corresponding 
expectation operator.

We denote by 
\index{Cc@$C^{\infty ,c}\left(X,\R\right)$}%
$C^{\infty ,c}\left(X,\R\right)$ the vector space of 
smooth real functions with compact support. The same notation will be 
used when $X$ is replaced by any other space.

If $f\in C^{\infty,c}\left(X,\R\right)$, the basic relation 
between the heat operator and Brownian motion is that
for $t>0$, 
\begin{equation}\label{eq:phan-14}
\exp\left(t\Delta^{X}/2\right)f\left(x_{0}\right)=E\left[f\left(x_{t}\right)\right].
\end{equation}

Let $f:X\to \R$ be a smooth function. Let 
\index{dwTX@$\delta w^{TX}$}%
$\delta w^{TX}$ be the It\^{o} 
differential of $w^{TX}$.  By It\^{o}'s formula, from (\ref{eq:anst-1}), we get
\begin{equation}\label{eq:anfs-2}
f\left(x_{t}\right)=f\left(x_{0}\right)+\int_{0}^{t}\frac{1}{2}\Delta^{X}f\left(x_{s}\right)ds+\int_{0}^{t}\left\langle  \n f\left(x_{s}\right)
,\delta w^{TX}_{s}\right\rangle.
\end{equation}

For later purposes, we first reprove a well-known result on the 
process $x_{\cdot}$.
\begin{prop}\label{Pesbr}
Given $M>0$, there exist $C>0,C'>0$ such that for $0<t\le M, r>0$, 
\begin{equation}\label{eq:anfst1}
P_{x_{0}}\left[\sup_{0\le s\le t}d\left(x_{0},x_{s}\right)\ge 
r\right]\le C\exp\left(-C'r^{2}/t\right).
\end{equation}
\end{prop}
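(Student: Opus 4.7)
The plan is to use the exponential martingale inequality applied to an $r$-adapted smooth approximation of $\rho(x) = d(x_{0},x)$. Since $B$ is positive on $\mathfrak{p}$ and negative on $\mathfrak{k}$, the symmetric space $X = G/K$ has nonpositive sectional curvature (with a possible Euclidean factor coming from the center of $G$), hence is Cartan--Hadamard. Consequently $\rho$ is smooth on $X \setminus \{x_{0}\}$ with $|\nabla \rho| = 1$, and the Laplace comparison theorem, applied against a space form of constant sectional curvature $-\kappa_{0}^{2}$ (where $-\kappa_{0}^{2}$ is a lower sectional curvature bound for $X$), gives $\Delta^{X}\rho \le (m-1)\kappa_{0}\coth(\kappa_{0}\rho)$ uniformly on $X \setminus \{x_{0}\}$.

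For each $r > 0$, fix once and for all a smooth profile $\phi : [0,\infty) \to [0,\infty)$ with $0 \le \phi' \le 1$, $\phi(s) = s$ for $s \ge 1$, and $\phi(s) = s^{2}/2 + 1/2$ for $s \le 1/2$, and set $f_{r}(x) = r\,\phi(\rho(x)/r)$. Near $x_{0}$, $f_{r} = \rho^{2}/(2r) + r/2$, which is smooth since $\rho^{2}$ is smooth on a Hadamard manifold; away from $x_{0}$, smoothness is automatic. A direct computation gives $|\nabla f_{r}| \le 1$ and
\begin{equation*}
\Delta^{X} f_{r} = \phi'(\rho/r)\, \Delta^{X}\rho + \frac{1}{r}\phi''(\rho/r).
\end{equation*}
The Laplace comparison bound, combined with the uniform boundedness of $s\coth(\kappa_{0}s)$ on $[0,1]$ (which controls $\phi'(\rho/r)\Delta^{X}\rho = (\rho/r)\Delta^{X}\rho$ on the quadratic region $\rho \le r/2$), yields $\Delta^{X} f_{r} \le \kappa/r$ for a constant $\kappa$ depending only on $X$. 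Moreover $f_{r}(x_{0}) = r/2$ and $f_{r}(x) \ge r$ whenever $\rho(x) \ge r$.

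Applying It\^o's formula $(\ref{eq:anfs-2})$ to $f_{r}$ yields $f_{r}(x_{t}) - r/2 = M_{t} + A_{t}$, where $M$ is a martingale with $\langle M\rangle_{t} \le t$ and $|A_{t}| \le \kappa t/(2r)$. Hence the event $\{\sup_{0 \le s \le t} \rho(x_{s}) \ge r\}$ is contained in $\{\sup_{0 \le s \le t} M_{s} \ge r/2 - \kappa t/(2r)\}$. Since $|\nabla f_{r}| \le 1$, the exponential process $\exp(\lambda M_{t} - \lambda^{2} t/2)$ is a nonnegative supermartingale started at $1$, and Doob's maximal inequality optimized in $\lambda > 0$ gives
\begin{equation*}
P_{x_{0}}\Bigl[\sup_{0 \le s \le t} M_{s} \ge u\Bigr] \le \exp(-u^{2}/(2t)), \qquad u > 0.
\end{equation*}
When $r^{2} \ge 2\kappa t$, this yields $P_{x_{0}}[\sup_{s \le t}\rho(x_{s}) \ge r] \le \exp(-r^{2}/(32t))$; when $r^{2} < 2\kappa t$ with $t \le M$, the ratio $r^{2}/t$ is bounded by $2\kappa$, so the trivial bound $P \le 1$ is absorbed into $C \exp(-C' r^{2}/t)$ by enlarging $C$. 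The main obstacle is verifying the uniform Laplacian bound $\Delta^{X} f_{r} \le \kappa/r$, which reduces via Laplace comparison to the boundedness of $s \coth(\kappa_{0} s)$ on $[0,1]$ and ultimately rests on the homogeneity of $X$ providing a global Ricci lower bound.
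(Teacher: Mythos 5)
Your overall strategy — smooth off the distance function, apply It\^o's formula, control the drift by a Laplacian bound, and finish with an exponential martingale inequality — is the same as the paper's. (The paper uses a fixed small cut-off scale $\epsilon$, obtains $\Delta^{X}f\le C$ from \cite[Proposition 13.1.2]{Bismut08b}, invokes Stroock--Varadhan for the martingale part, and handles small $r$ separately via coordinate functions. Your $r$-scaled profile $f_{r}$ is a nice simplification, as it handles all $r>0$ in one stroke and removes the need for the coordinate-function trick.)

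However, the stated bound $\Delta^{X}f_{r}\le \kappa/r$ is false for large $r$. The Laplace comparison gives $\Delta^{X}\rho\le(m-1)\kappa_{0}\coth(\kappa_{0}\rho)$, which tends to $(m-1)\kappa_{0}$ as $\rho\to+\infty$, and on the linear region $\rho\ge r$ (where $\phi'=1$, $\phi''=0$) one only obtains $\Delta^{X}f_{r}=\Delta^{X}\rho$, a quantity of order $1$, not $1/r$, once $r$ exceeds a fixed length. Your parenthetical justification — boundedness of $s\coth(\kappa_{0}s)$ on $[0,1]$ — only controls the quadratic region for $\rho\le 1$, whereas $\rho$ ranges up to $r/2$ there and can be large. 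The correct uniform bound is $\Delta^{X}f_{r}\le C\bigl(1+r^{-1}\bigr)$, hence $\Delta^{X}f_{r}\le C/r$ only for $r$ bounded and $\Delta^{X}f_{r}\le C$ for $r$ bounded away from $0$. The rest of your argument is insensitive to this change: with $|A_{t}|\le\frac{C}{2}t+\frac{C}{2r}t$, the event inclusion and Doob's inequality still yield $\exp\bigl(-r^{2}/32t\bigr)$ whenever $r\ge 4Ct$ and $r^{2}\ge 4Ct$, and when either of these fails with $0<t\le M$ the ratio $r^{2}/t$ is bounded and the trivial estimate suffices. So the scheme is salvageable, but you should correct the Laplacian bound and the accompanying justification.
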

\begin{proof}
Let $k:\R_{+}\to \R_{+}$ be a smooth increasing  function such that 
\begin{align}\label{eq:glab41}
    k\left(u\right)=&\ u^{2}\ \mathrm{for} \ u\le 
1/2,\\
&\ u\ \mathrm{for}\ u\ \ge 1. \nonumber 
\end{align}
We fix $\epsilon>0$. For $z\in X$, set
\begin{equation}
    f\left(z\right)=\epsilon k\left(\frac{d\left(x_{0},z\right)}{\epsilon}\right).
    \label{eq:glab42}
\end{equation}
Then $f$ is a smooth function on $X$. Moreover, 
$d\left(x_{0},z\right)\ge \epsilon$ if and only if 
$f\left(z\right)\ge \epsilon$. 

We use (\ref{eq:anfs-2}) with the previous choice of $f$. By 
\cite[Proposition 13.1.2]{Bismut08b}, we get
\begin{equation}\label{eq:ansf-3}
\Delta^{X}f\le C.
\end{equation}
By (\ref{eq:anfs-2}), (\ref{eq:ansf-3}), for $r\ge\epsilon$, we get
\begin{equation}\label{eq:ansf-4}
P_{x_{0}}\left[\sup_{0\le s\le t}d\left(x_{0},x_{s}\right)\ge 
r\right]\le P_{x_{0}}\left[\sup_{0\le s\le 
t}\int_{0}^{t}\left\langle  \n f\left(x_{s}\right),\delta 
w^{TX}_{s}\right\rangle\ge r-Ct/2\right].
\end{equation}
Since $\left\vert  \n d\left(x_{0},x\right)\right\vert=1$, 
$\n_{\cdot}f$ is uniformly bounded,. By \cite[eq. (2.1) in Theorem 
 4.2.1]{StroockVaradhan79}, we get
 \begin{equation}\label{eq:ansf-5}
P_{x_{0}}\left[\sup_{0\le s\le 
t}\int_{0}^{t}\left\langle  \n f\left(x_{s}\right),\delta 
w^{TX}_{s}\right\rangle\ge r-Ct/2\right]\le 
c\exp\left(-c'\left(r-Ct/2\right)^{2}/t\right).
\end{equation}
Also for $ 0<t\le M$, we get
\begin{equation}\label{eq:ansf-6}
\left(r-Ct/2\right)^{2}/t\ge r^{2}/t-Cr\ge r^{2}/2t -C^{2}t/2\ge 
r^{2}/2t-C^{2}M/2.
\end{equation}
By (\ref{eq:ansf-4})--(\ref{eq:ansf-6}), when $r\ge \epsilon$, we get (\ref{eq:anfst1}). 

Using the exponential map, we can identify a neighbourhood of $x_{0}=p1$ in 
$X$ with a neighbourhood  of $0$ in $\mathfrak p$. To handle the case where 
$r>0$ is small, we can instead take smooth 
functions $f_{1},\ldots,f_{m}$ on $X$ with compact support, which 
coincide with coordinates $x^{1},\ldots,x^{m}$ near $p1 \simeq 0$.  
By proceeding  as before with the functions 
$f_{1},\ldots,f_{m}$ instead of $f$, we obtain (\ref{eq:anfst1}) in the case of a 
small $r>0$.
The proof of our proposition  is completed. 
\end{proof}
\subsection{The scalar analogues of the operator $\mathcal{L}^{X}_{b}$}%
\label{subsec:scaana}
Let $\pi:\mathcal{X}\to X$ be the total space of $TX = G\times _{K} \mathfrak 
p$, and let $Y^{TX}$ be the tautological section of $\pi^{*}TX$ on 
$\mathcal{X}$. We denote  by $\n_{Y^{TX}}$ the generator of the 
geodesic flow on $\mathcal{X}$.  Let $\n^{V}$ denotes differentiation along the 
    fibre $TX$. Then $\n^{V}_{Y^{TX}}$ is the fibrewise radial vector 
    field.  As in \cite[eqs. (11.1.1) and (11.1.2)]{Bismut08b}, let 
\index{AbX@$\mathcal{A}_{b}^{X}$}%
\index{BbX@$\mathcal{B}_{b}^{X}$}%
$\mathcal{A}_{b}^{X},\mathcal{B}^{X}_{b}$ be the scalar differential 
operators  
on $\mathcal{X}$, 
\begin{align}\label{eq:glab5}
    &\mathcal{A}_{b}^{X}=\frac{1}{2b^{2}}\left(-\Delta^{TX}+\left\vert  
    Y^{TX}\right\vert^{2}-m\right)-\frac{1}{b}\n_{Y^{TX}},\\
    &\mathcal{B}_{b}^{X}=\frac{1}{2b^{2}}\left(-\Delta^{TX}+2\n^{V}_{Y^{TX}}\right)-
    \frac{1}{b}\n_{Y^{TX}}. \nonumber 
    \end{align}
Then
\begin{equation}
    \mathcal{B}_{b}^{X}=\exp\left(\left\vert  Y^{TX}\right\vert^{2}/2\right)
    \mathcal{A}_{b}^{X}\exp\left(-\left\vert  Y^{TX}\right\vert^{2}/2\right).
    \label{eq:glab6}
\end{equation}
By H\"{o}rmander \cite{Hormander67}, the operators $\frac{\pa}{\pa 
t}+\mathcal{A}^{X}_{b},\frac{\pa}{\pa t}+\mathcal{B}^{X}_{b}$ are 
hypoelliptic. 

By  \cite[section 11.5]{Bismut08b}, for 
$t>0$, the heat operators 
$\exp\left(-t\mathcal{A}^{X}_{b}\right),\exp\left(-t\mathcal{B}^{X}_{b}\right)$ are unambiguously defined. 
The difficulty in defining them properly is that $X$ is noncompact.
\subsection{Hypoelliptic heat operators and probability}%
\label{subsec:probhea}
Recall that $x_{0}=p1$, and that $TX_{x_{0}} \simeq \mathfrak p$.  
Let 
\index{wp@$w_{\cdot}^{\mathfrak p}$}%
$w_{\cdot}^{\mathfrak p}$ be a Brownian motion with values in 
$\mathfrak p$ such that $w^{\mathfrak p}_{0}=0$,
and let
\index{wTX@$w^{TX}_{\cdot}$}%
$w^{TX}_{\cdot}$ denote the corresponding 
Brownian motion in $T_{x_{0}}X$. Let  
\index{Q@$Q$}%
$Q$ be the probability law of 
$w^{\mathfrak p}_{\cdot}$ on $\mathcal{C}\left(\R_{+},\mathfrak 
p\right)$, and let 
\index{EQ@$E^{Q}$}%
$E^{Q}$ denote the corresponding 
expectation operator.

    Now we follow 
    \cite[section 14.2]{BismutLebeau06a}, and   \cite[section 
    12.2]{Bismut08b}. Fix $Y^{ \mathfrak p}\in  \mathfrak p$.
    Consider the stochastic differential equation on $\mathcal{X}$,
    \begin{align}\label{eq:glab9a6a}
    &\dot x=\frac{Y^{TX}}{b},
    &\dot Y^{TX}=-\frac{Y^{TX}}{b^{2}}+\frac{\dot w^{TX}}{b},\\
    &x_{0}=p1,  &Y^{TX}_{0}=Y^{ \mathfrak p}. \nonumber 
    \end{align}
    In (\ref{eq:glab9a6a}), $\dot Y^{TX}$ denotes the covariant 
    derivative of $Y^{TX}$ with respect to $\n^{TX}$.
    The first line of equation (\ref{eq:glab9a6a}) can  be rewritten in the form 
    \begin{align}\label{eq:glab9a7}
    &b^{2}\ddot x+  \dot x = \dot w^{TX},
    &Y^{TX}=b\dot x.
    \end{align}

    Instead of (\ref{eq:glab9a6a}), as in \cite[aq. 
    (12.2.8)]{Bismut08b}, we may instead consider the horizontal lift 
    $g_{\cdot}$ of $x_{\cdot}$ in $G$, i.e., consider  the 
    system 
    \begin{align}\label{eq:glab9a6ay1}
&\dot g=\frac{Y^{\mathfrak p}}{b},
    &\dot Y^{\mathfrak p}=-\frac{Y^{\mathfrak p}}{b^{2}}+\frac{\dot 
    w^{\mathfrak p}}{b},\\
    &g_{0}=1,  &Y^{\mathfrak  p}_{0}=Y^{ \mathfrak p}, \nonumber 
\end{align}
so that
\begin{equation}\label{eq:jar7}
x_{\cdot}=\pi g_{\cdot}.
\end{equation}
\begin{prop}\label{Prep1}
If $F\in C^{\infty, c}\left(\mathcal{X},\R\right)$, for $t>0$,
then
\begin{align}\label{eq:tch2}
 &\exp\left(-t\mathcal{A}^{X}_{b}\right)F\left(x_{0},Y^{TX}_{0}\right)=\exp\left(-\left\vert  
Y^{TX}\right\vert^{2}/2\right) \nonumber \\
&\qquad\qquad E^{Q}\left[\exp\left(\left\vert  
Y^{TX}_{t}\right\vert^{2}/2\right)F\left(x_{t},Y^{TX}_{t}\right)\right],\\   
&\exp\left(-t\mathcal{B}^{X}_{b}\right)F\left(x_{0},Y^{TX}_{0}\right)=E^{Q}\left[F\left(x_{t},Y^{TX}_{t}\right)\right].\nonumber 
\end{align}
\end{prop}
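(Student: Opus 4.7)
The plan is to identify the infinitesimal generator of the diffusion $(x_{\cdot}, Y^{TX}_{\cdot})$ defined by (\ref{eq:glab9a6a}), then apply It\^{o}'s formula to obtain the second identity, and finally deduce the first identity from the conjugation relation (\ref{eq:glab6}).

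First I would compute the generator $L$ of the Markov process $(x_{t}, Y^{TX}_{t})$. The drift is $Y^{TX}/b$ in the horizontal direction and $-Y^{TX}/b^{2}$ in the vertical (fibre) direction, while the diffusion coefficient along the fibre is $1/b$, so that the fibrewise quadratic variation contributes $\frac{1}{2b^{2}}\Delta^{TX}$. Hence
\begin{equation*}
L = \frac{1}{2b^{2}}\Delta^{TX} - \frac{1}{b^{2}}\n^{V}_{Y^{TX}} + \frac{1}{b}\n_{Y^{TX}}.
\end{equation*}
Comparing with the definition of $\mathcal{B}^{X}_{b}$ in (\ref{eq:glab5}), one sees immediately that $L = -\mathcal{B}^{X}_{b}$. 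This identification is the conceptual core of the proof; the horizontal drift, the Ornstein-Uhlenbeck-type vertical drift, and the fibre Laplacian each match term-for-term.

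Next, for $F\in C^{\infty,c}(\mathcal{X},\R)$ and $t>0$, I set $u(s,x,Y^{TX})=\exp(-(t-s)\mathcal{B}^{X}_{b})F(x,Y^{TX})$ and apply It\^{o}'s formula to $s\mapsto u(s,x_{s},Y^{TX}_{s})$ along the solution of (\ref{eq:glab9a6a}). Since $\partial_{s}u + Lu = \partial_{s}u - \mathcal{B}^{X}_{b}u = 0$, the process $u(s,x_{s},Y^{TX}_{s})$ is a local $Q$-martingale. A standard localization argument, using the fact that $F$ has compact support and that the exponential of $\mathcal{B}^{X}_{b}$ produces a function which is integrable against the law of $(x_{\cdot},Y^{TX}_{\cdot})$ (cf. the $L_{p}$-bounds on the Ornstein-Uhlenbeck component coming from (\ref{eq:fus3})), promotes this to a genuine martingale. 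Taking expectation at $s=0$ and $s=t$ yields the second identity
\begin{equation*}
\exp(-t\mathcal{B}^{X}_{b})F(x_{0},Y^{TX}_{0}) = E^{Q}[F(x_{t},Y^{TX}_{t})].
\end{equation*}

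Finally, I would invoke the conjugation relation (\ref{eq:glab6}), which gives
\begin{equation*}
\exp(-t\mathcal{A}^{X}_{b}) = \exp(-|Y^{TX}|^{2}/2)\,\exp(-t\mathcal{B}^{X}_{b})\,\exp(|Y^{TX}|^{2}/2).
\end{equation*}
Applying both sides to $F$ and using the already established second identity with $F$ replaced by $\exp(|\cdot|^{2}/2)F$ yields the first identity. The main obstacle is the rigorous justification of the martingale property: because $\mathcal{X}$ is noncompact and the weight $\exp(|Y^{TX}|^{2}/2)$ in the first identity is unbounded, one must combine the compact support of $F$ with the Gaussian tail estimates on $Y^{TX}_{t}$ available from the explicit representation (\ref{eq:fus3}) (and the uniform bound (\ref{eq:lom7})) to control the integrability of the pre-limit martingales. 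Once this is done, passing to the limit in the localizing sequence is routine.
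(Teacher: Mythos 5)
Your argument is correct and matches the paper's proof: the paper establishes the second identity by invoking the It\^{o}-calculus argument of [Bismut08b, Theorem 12.2.1] (which is exactly the generator identification and martingale computation you spell out), and then obtains the first identity from the conjugation relation (\ref{eq:glab6}), as you do. One small clarification on your final remark: since $F$ has compact support, so does $\exp(|Y^{TX}|^2/2)F$, so passing between the two identities via (\ref{eq:glab6}) requires no extra integrability control; the only place a localization argument is genuinely needed is in the martingale step for the second identity, where $u(s,\cdot)=\exp(-(t-s)\mathcal{B}^X_b)F$ need not have compact support.
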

\begin{proof}
The second identity in (\ref{eq:tch2}) was established in 
 \cite[Theorem 12.2.1]{Bismut08b} using the It\^{o} calculus. Using (\ref{eq:glab6}), we also 
obtain the first identity. The proof of our proposition  is completed. 
\end{proof}

We give another construction of the semigroup 
$\exp\left(-t\mathcal{A}^{X}_{b}\right)$. 
Instead of (\ref{eq:glab9a6a}), we consider the stochastic differential 
equation
 \begin{align}\label{eq:glab9a6ag}
    &\dot x=\frac{Y^{TX}}{b},
    &\dot Y^{TX}=\frac{\dot w^{TX}}{b},\\
    &x_{0}=p1,  &Y^{TX}_{0}=Y^{ \mathfrak p}. \nonumber 
    \end{align}
    By (\ref{eq:glab9a6ag}), we get
    \begin{equation}\label{eq:gagl1}
b^{2}\ddot x=\dot w^{TX}.
\end{equation}
Instead of (\ref{eq:glab9a6ay1}), we consider the system
\begin{align}\label{eq:flab9a6ay1a}
&\dot g=\frac{Y^{\mathfrak p}}{b},&\dot Y^{\mathfrak p}=\frac{\dot 
w^{\mathfrak p}}{b},\\
&g_{0}=1,&Y_{0}^{\mathfrak p}=Y^{\mathfrak p}, \nonumber 
\end{align}
so that (\ref{eq:jar7}) still holds.
To distinguish the systems (\ref{eq:glab9a6a}), 
(\ref{eq:glab9a6ay1}), and (\ref{eq:glab9a6ag}),
(\ref{eq:flab9a6ay1a}), for the last two equations, we denote the 
associated expectation operator by 
\index{EP@$E^{P}$}%
$E^{P}$. 
\begin{prop}\label{Prep2}
If $F\in C^{\infty, c}\left(\mathcal{X},\R\right)$, 
then
\begin{equation}\label{eq:gagl2}
\exp\left(-t\mathcal{A}^{X}_{b}\right)F\left(x_{0},Y^{TX}_{0}\right)=
E^{P}\left[\exp\left(\frac{mt}{2b^{2}}-\frac{1}{2b^{2}}\int_{0}^{t}\left\vert  Y^{TX}\right\vert^{2}ds\right)
F\left(x_{t},Y^{TX}_{t}\right)\right].
\end{equation}
\end{prop}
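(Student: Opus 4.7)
The plan is to derive (\ref{eq:gagl2}) from the first identity of (\ref{eq:tch2}) in Proposition \ref{Prep1} by a Girsanov transformation converting the Ornstein--Uhlenbeck system (\ref{eq:glab9a6ay1}) into the driftless Brownian system (\ref{eq:flab9a6ay1a}). This is the exact symmetric-space analogue of the Euclidean computation carried out in Remark \ref{RGirs}, which converted (\ref{eq:phan-8}) into (\ref{eq:phan-10}). Since the two systems share the same horizontal equation $\dot g=Y^{\mathfrak p}/b$ and differ only in the vertical drift $-Y^{\mathfrak p}/b^{2}$, the Radon--Nikod\'ym derivative is a purely fibrewise quantity, and the entire argument reduces to a computation on $\mathfrak p$.

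Concretely, I would first record that the Radon--Nikod\'ym derivative relating the two laws on path space up to time $t$ is
\begin{equation*}
\frac{dQ^{t}}{dP^{t}}=\exp\Bigl(-\tfrac{1}{b}\int_{0}^{t}\langle Y^{\mathfrak p}_{s},\delta w^{\mathfrak p}_{s}\rangle-\tfrac{1}{2b^{2}}\int_{0}^{t}|Y^{\mathfrak p}_{s}|^{2}\,ds\Bigr).
\end{equation*}
Next, applying It\^o's formula to $|Y^{\mathfrak p}|^{2}/2$ under the $P$-dynamics $Y^{\mathfrak p}_{t}=Y^{\mathfrak p}+w^{\mathfrak p}_{t}/b$ yields $\tfrac{1}{b}\int_{0}^{t}\langle Y^{\mathfrak p}_{s},\delta w^{\mathfrak p}_{s}\rangle=\tfrac{1}{2}|Y^{\mathfrak p}_{t}|^{2}-\tfrac{1}{2}|Y^{\mathfrak p}|^{2}-\tfrac{mt}{2b^{2}}$, which rewrites
\begin{equation*}
\frac{dQ^{t}}{dP^{t}}=\exp\Bigl(\tfrac{1}{2}|Y^{\mathfrak p}|^{2}-\tfrac{1}{2}|Y^{\mathfrak p}_{t}|^{2}+\tfrac{mt}{2b^{2}}-\tfrac{1}{2b^{2}}\int_{0}^{t}|Y^{\mathfrak p}_{s}|^{2}\,ds\Bigr).
\end{equation*}
Substituting this into the first identity of (\ref{eq:tch2}) via $E^{Q}[\,\cdot\,]=E^{P}[\tfrac{dQ^{t}}{dP^{t}}\,\cdot\,]$, the two prefactors $\exp(\pm|Y^{\mathfrak p}|^{2}/2)$ and $\exp(\pm|Y^{\mathfrak p}_{t}|^{2}/2)$ telescope, leaving exactly (\ref{eq:gagl2}).

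There is no real obstacle. The only points to verify are standard: the Girsanov change of measure is justified since the required integrability (Novikov's condition) follows directly from the Gaussian bound (\ref{eq:rot7ay1}), and can in any case be bypassed by localizing at the first exit time from balls of radius $R\to\infty$; and the right-hand side of (\ref{eq:gagl2}) is unambiguously integrable because $F$ is compactly supported while the exponential weight is dominated by $\exp(mt/(2b^{2}))$. An equally short alternative route is to recognize (\ref{eq:gagl2}) as a direct Feynman--Kac representation for the driftless process (\ref{eq:glab9a6ag}), whose infinitesimal generator is $\mathcal{G}^{X}_{b}=\frac{1}{2b^{2}}\Delta^{TX}+\frac{1}{b}\n_{Y^{TX}}$: the decomposition $-\mathcal{A}^{X}_{b}=\mathcal{G}^{X}_{b}-\bigl(\frac{1}{2b^{2}}|Y^{TX}|^{2}-\frac{m}{2b^{2}}\bigr)$ read off from (\ref{eq:glab5}) delivers (\ref{eq:gagl2}) through the standard Feynman--Kac formula, in complete parallel with the Euclidean Proposition \ref{Psemgrbi}.
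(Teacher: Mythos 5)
Your proposal is correct, and both of your routes are sound. The paper's own proof of this proposition is just a citation to [Bismut08b, eq.\ (13.2.12)], where the identity is established by It\^o calculus (essentially your second, Feynman--Kac, route), while your first route is exactly what the paper's Remark~\ref{Req} asserts is available, namely that the two representations (\ref{eq:tch2}) and (\ref{eq:gagl2}) can be deduced from each other by the Girsanov transformation carried out in Remark~\ref{RGirs} in the Euclidean case. Your execution of the Girsanov route is correct: the Radon--Nikod\'ym derivative you write down (with $\theta_s=-Y^{\mathfrak p}_s/b$) is right, the It\^o identity $\tfrac{1}{b}\int_0^t\langle Y^{\mathfrak p}_s,\delta w^{\mathfrak p}_s\rangle=\tfrac12|Y^{\mathfrak p}_t|^2-\tfrac12|Y^{\mathfrak p}|^2-\tfrac{mt}{2b^2}$ is the exact transcription of (\ref{eq:phan-9}) to the rescaled setting, and the telescoping of the Gaussian prefactors proceeds exactly as in Remark~\ref{RGirs}. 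The Feynman--Kac route is even shorter and is the literal symmetric-space analogue of Proposition~\ref{Psemgrbi}, which is presumably what the cited reference does. The only thing worth noting is that your Girsanov change of measure can indeed be justified either by Novikov via (\ref{eq:rot7ay1}) or more simply by localization, as you say, so there is no gap.
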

\begin{proof}
Equation (\ref{eq:gagl2}) was established in \cite[eq. 
(13.2.12)]{Bismut08b} using the It\^{o} calculus. 
\end{proof}
\begin{remk}\label{Req}
As  explained in Remark \ref{RGirs}, equations (\ref{eq:tch2}) and
(\ref{eq:gagl2}) can be deduced from each other. The first 
equation will be useful when $b\to 0$, the second equation when $b\to + 
\infty $.
\end{remk}
\subsection{A generalized It\^{o} formula}%
\label{subsec:geito}
From now on, we assume that (\ref{eq:glab9a6a}), 
(\ref{eq:glab9a6ay1}) hold.

In the sequel, 
\index{dwTX@$dw^{TX}$}%
$dw^{TX}$ denotes the Stratonovitch differential of 
$w^{TX}$. As before, 
\index{dwTX@$\delta w^{TX}$}%
$\delta w^{TX}$ is our notation for  its It\^{o} differential. First, we give  formula established in \cite[eq. 
(12.3.19)]{Bismut08b}.
\begin{prop}\label{Pitw}
Let $f: X\to \R$ be a smooth function. Then
\begin{multline}\label{eq:glab40}
f\left(x_{t}\right)+b\n_{Y^{TX}_{t}}f\left(x_{t}\right)=f\left(x_{0}\right)
    +b\n_{Y^{ TX}_{0}}f\left(x_{0}\right) \\
    +\int_{0}^{t}\n^{TX}_{Y^{TX}_{s}}\n_{Y^{TX}_{s}}
    f\left(x_{s}\right)ds+\int_{0}^{t} \n_{\delta w^{TX}_{s}}f\left(x_{s}\right) .
\end{multline}
\end{prop}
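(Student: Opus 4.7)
The approach is to apply It\^o's formula to the function $F: \mathcal{X} \to \R$ defined by $F(x, Y) = f(x) + b\nabla_Y f(x)$ along the hypoelliptic diffusion $(x_t, Y^{TX}_t)$ determined by (\ref{eq:glab9a6a}). The left-hand side of (\ref{eq:glab40}) is exactly $F(x_t, Y^{TX}_t)$, so the identity will follow by matching Stratonovitch differentials and integrating from $0$ to $t$.

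First I would pass to the horizontal lift $(g_t, Y^{\mathfrak p}_t)$ in $G \times \mathfrak p$ satisfying (\ref{eq:glab9a6ay1}), where the noise $dw^{\mathfrak p}/b$ enters only $Y^{\mathfrak p}_t$ while $g_t$ is of finite variation with velocity $Y^{\mathfrak p}_t/b$. The pullback of $F$ is $\widetilde F(g, Y^{\mathfrak p}) = \widetilde f(g) + b L_{Y^{\mathfrak p}} \widetilde f(g)$ with $\widetilde f = f \circ \pi$. Applying the Stratonovitch chain rule to $\widetilde F(g_t, Y^{\mathfrak p}_t)$ produces three contributions. The term $d\widetilde f(g_t)$ gives $\tfrac{1}{b}\nabla_{Y^{TX}_t} f(x_t)\,dt$. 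The $g$-derivative of $b L_{Y^{\mathfrak p}} \widetilde f$ gives $L_{Y^{\mathfrak p}_t} L_{Y^{\mathfrak p}_t} \widetilde f(g_t)\,dt = \nabla^{TX}_{Y^{TX}_t} \nabla_{Y^{TX}_t} f(x_t)\,dt$, the last equality being the standard identification, on the symmetric space $X = G/K$, of iterated left-invariant derivatives of a $K$-invariant function with iterated Levi-Civita covariant derivatives (the integral curves of $L_{Y^{\mathfrak p}}$ project to geodesics). The $Y^{\mathfrak p}$-derivative of $b L_{Y^{\mathfrak p}} \widetilde f$ applied to the Stratonovitch differential $dY^{\mathfrak p}_t = -Y^{\mathfrak p}_t/b^2\,dt + dw^{\mathfrak p}_t/b$ gives $-\tfrac{1}{b}\nabla_{Y^{TX}_t} f(x_t)\,dt + \nabla_{dw^{TX}_t} f(x_t)$. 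The two $\tfrac{1}{b}\nabla_{Y^{TX}} f\,dt$ pieces cancel, so that
\begin{equation*}
dF(x_t, Y^{TX}_t) = \nabla^{TX}_{Y^{TX}_t} \nabla_{Y^{TX}_t} f(x_t)\,dt + \nabla_{dw^{TX}_t} f(x_t).
\end{equation*}

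The remaining step is to convert the Stratonovitch stochastic integral into an It\^o integral. Since $dx_t = (Y^{TX}_t/b)\,dt$ is of finite variation, the $1$-form $s \mapsto \nabla f(x_s)$ along $x_\cdot$ has no quadratic covariation with $w^{TX}$, so the Stratonovitch-It\^o correction vanishes and $\int_0^t \nabla_{dw^{TX}_s} f(x_s) = \int_0^t \nabla_{\delta w^{TX}_s} f(x_s)$. Integrating over $[0,t]$ yields (\ref{eq:glab40}). The only substantive point in the argument is the horizontal-lift identity for $L_{Y^{\mathfrak p}} L_{Y^{\mathfrak p}} \widetilde f$; the miraculous cancellation of the two $\tfrac{1}{b}\nabla_{Y^{TX}} f\,dt$ drift terms, which is what makes the combination $f + b\nabla_Y f$ the correct quantity to consider, is then automatic and encodes the compatibility between the geodesic flow term $-\tfrac{1}{b}\nabla_{Y^{TX}}$ and the harmonic oscillator part of $\mathcal{A}^X_b$.
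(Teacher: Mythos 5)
Your argument is correct and is the same one the paper has in mind: the paper's entire proof is the one-line remark that \eqref{eq:glab40} follows from an easy application of It\^o's formula to the process $f(x_t)+b\n_{Y^{TX}_t}f(x_t)$, and you have carried that computation out in full (via the horizontal lift, which is a convenient way to make the covariant Stratonovitch differentiation of $\n_{Y^{TX}_t}f(x_t)$ unambiguous). The two points you flag as substantive — the identification of the iterated left-invariant derivative with $\n^{TX}_{Y^{TX}}\n_{Y^{TX}}f$ via geodesics, and the vanishing of the Stratonovitch--It\^o correction because $x_\cdot$ is of finite variation — are exactly the details the paper leaves to the reader, and you handle both correctly.
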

\begin{proof}
Equation (\ref{eq:glab40}) follows from an easy application of 
It\^{o}'s formula to the process 
$f\left(x_{t}\right)+b\n_{Y^{TX}_{t}}f\left(x_{t}\right)$.
\end{proof}

Now we establish our generalized It\^{o} formula.
\begin{thm}\label{Tfueq}
    Set
    \begin{equation}\label{eq:glab40a-1}
A^{f}_{t}=\int_{0}^{t}\n^{TX}_{Y^{TX}_{s}}\n_{Y^{TX}_{s}}
    f\left(x_{s}\right)ds+\int_{0}^{t} \n_{\delta w^{TX}_{s}}f\left(x_{s}\right) .
\end{equation}
The following identity holds:
\begin{equation}\label{eq:fex1}
\left( b^{2}\frac{d}{dt}+1 
\right)f\left(x_{t}\right)=\left(b^{2}\frac{d}{dt}+1\right)f\left(x_{t}\right)\vert_{t=0}+ A_{t}^{f} .
\end{equation}
Moreover, 
\begin{equation}\label{eq:fex2}
f\left(x_{t}\right)=f\left(x_{0}\right) +
b^{2}\frac{d}{dt}f\left(x_{t}\right)\vert_{t=0}\left(1-e^{-t/b^{2}}\right)
+\int_{0}^{t}\frac{e^{-\left(t-s\right)/b^{2}}}{b^{2}}A^{f}_{s}ds.
\end{equation}
\end{thm}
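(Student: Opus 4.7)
The plan is to deduce both identities from the existing Proposition \ref{Pitw} via a purely deterministic manipulation: first translating the left-hand side of (\ref{eq:glab40}) into the operator $(b^{2}\tfrac{d}{dt}+1)$ applied to $f(x_{t})$, and then inverting that operator as an ordinary linear ODE in $t$, treating $A^{f}_{t}$ as a known driving term.

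For (\ref{eq:fex1}), I would first observe that since (\ref{eq:glab9a6a}) gives $\dot x_{t}=Y^{TX}_{t}/b$, the chain rule yields
\begin{equation*}
\frac{d}{dt}f(x_{t})=\n_{\dot x_{t}}f(x_{t})=\frac{1}{b}\n_{Y^{TX}_{t}}f(x_{t}),
\end{equation*}
so $b\,\n_{Y^{TX}_{t}}f(x_{t})=b^{2}\tfrac{d}{dt}f(x_{t})$. Substituting this and the analogous identity at $t=0$ into the statement of Proposition \ref{Pitw} immediately produces (\ref{eq:fex1}). No further stochastic input is needed at this step: Proposition \ref{Pitw} has already absorbed It\^{o}'s formula, and the only remaining work is to recognize that the combination $f+b\n_{Y^{TX}}f$ evaluated along the trajectory is precisely $(b^{2}\tfrac{d}{dt}+1)f(x_{\cdot})$.

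For (\ref{eq:fex2}), I would regard (\ref{eq:fex1}) as a first-order linear inhomogeneous ODE
\begin{equation*}
b^{2}y'(t)+y(t)=C+A^{f}_{t},\qquad y(t)=f(x_{t}),
\end{equation*}
where the constant $C=(b^{2}\tfrac{d}{dt}+1)f(x_{t})\vert_{t=0}=f(x_{0})+b^{2}\tfrac{d}{dt}f(x_{t})\vert_{t=0}$ is explicit and $A^{f}_{\cdot}$ is a known (continuous) driving term. Standard variation of constants gives
\begin{equation*}
y(t)=f(x_{0})e^{-t/b^{2}}+\frac{1}{b^{2}}\int_{0}^{t}e^{-(t-s)/b^{2}}\bigl(C+A^{f}_{s}\bigr)\,ds.
\end{equation*}
Computing the constant part $\frac{1}{b^{2}}\int_{0}^{t}e^{-(t-s)/b^{2}}C\,ds=C(1-e^{-t/b^{2}})$ and combining with the $f(x_{0})e^{-t/b^{2}}$ term cancels $f(x_{0})e^{-t/b^{2}}$ against the $f(x_{0})$ piece of $C(1-e^{-t/b^{2}})$, leaving exactly the right-hand side of (\ref{eq:fex2}).

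There is essentially no obstacle: the content of the theorem is \emph{algebraic}, once Proposition \ref{Pitw} is granted. The mild point to be careful about is that $A^{f}_{t}$ contains an It\^{o} integral, so the pathwise ODE interpretation is only a.s.; however, since $A^{f}_{\cdot}$ has continuous (in fact $C^{1/2-}$) sample paths, the variation-of-constants formula makes sense pathwise, and no further stochastic analysis beyond what is already used to justify Proposition \ref{Pitw} is required. The real interest of the statement — as emphasized in the introduction — is its interpretation as a convolution of the classical It\^{o} increment $A^{f}_{\cdot}$ with the kernel $\frac{e^{-t/b^{2}}}{b^{2}}\mathbf{1}_{t\ge 0}$, but the proof itself reduces to chain rule plus ODE integration.
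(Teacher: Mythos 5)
Your proposal is correct and follows precisely the same route as the paper: the paper's (terse) proof reads ``From (\ref{eq:glab9a6a}), (\ref{eq:glab40}), we get (\ref{eq:fex1}). By integrating the differential equation (\ref{eq:fex1}), we get (\ref{eq:fex2}),'' which is exactly your two steps (chain rule via $\dot x = Y^{TX}/b$ to recast Proposition \ref{Pitw}, then pathwise variation of constants); you have merely spelled out the computation the paper leaves to the reader.
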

\begin{proof}
From (\ref{eq:glab9a6a}), (\ref{eq:glab40}), we get (\ref{eq:fex1}). By 
integrating the differential equation (\ref{eq:fex1}), we get 
(\ref{eq:fex2}). The proof of our theorem is completed. 
\end{proof}
\begin{remk}\label{Rcomp}
The standard form of It\^{o}'s formula (\ref{eq:anfs-2}) should be 
compared with its approximate version (\ref{eq:fex2}). As  was 
shown in \cite[Theorem 12.8.1]{Bismut08b}, as $b\to 0$, the 
probability law of $x_{\cdot}$ in (\ref{eq:glab9a6a}) converges to 
the probability law of $x_{\cdot}$ in (\ref{eq:anst-1}). Equation 
(\ref{eq:glab40}) plays a key role  in the proof 
of this result given in \cite{Bismut08b}.
\end{remk}
\subsection{A uniform estimate on the rate of escape of the process 
$x_{\cdot}$}%
\label{subsec:esc}
We still consider the probability measure $Q$ in subsection 
\ref{subsec:probhea}, and the corresponding stochastic differential 
equation  in (\ref{eq:glab9a6a}). The corresponding process 
$x_{\cdot}$ depends on the parameter $b>0$.  We will improve on the results obtained in \cite[section 
13.2]{Bismut08b}.
\begin{thm}\label{Tesc}
    Given $M>0 $, there exist $C>0,C'>0$ such that for 
 $b>0,\epsilon\le t\le M, r>0$, then
\begin{equation}\label{eq:led19}
Q\left[\sup_{0\le s\le t}d  \left(x_{0},x_{s}\right)\ge r\right]\le 
C\exp\left(-C'r^{2}/t+\frac{1}{2}\left\vert  
Y_{0}^{TX}\right\vert^{2}\right).
\end{equation}
Given $ M>0 $, there exist $c>0,C>0,C'>0$ such that for 
$b>0, 0<t\le M,r>0$, then 
\begin{multline}\label{eq:led19sa1}
\exp\left(-\left\vert  Y_{0}^{TX}\right\vert^{2}/2\right)E^{Q}\left[
1_{\sup_{0\le s\le t}d\left(x_{0},x_{s}\right)\ge 
r}\exp\left(\left\vert  Y^{TX}_{t}\right\vert^{2}/2\right)\right]\\
\le
C\exp\left(-C' \left( r^{2}/t+
\left(1-e^{-ct/b^{2}}\right)\left\vert  
Y^{TX}_{0}\right\vert^{2} \right) \right).
\end{multline}
Given $M>0,\tau>0$, there exist $C>0,C'>0$ such that for $b>0,\tau 
b^{2}\le t\le M,r>0$, then
\begin{multline}\label{eq:led19s1}
\exp\left(-\left\vert  Y_{0}^{TX}\right\vert^{2}/2\right)E^{Q}\left[
1_{\sup_{0\le s\le t}d\left(x_{0},x_{s}\right)\ge 
r}\exp\left(\left\vert  Y^{TX}_{t}\right\vert^{2}/2\right)\right]\\
\le
C\exp\left(-C' \left( r^{2}/t+\left\vert  
Y^{TX}_{0}\right\vert^{2} \right) \right).
\end{multline}
\end{thm}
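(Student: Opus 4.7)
The plan is to adapt the martingale argument in the proof of Proposition \ref{Pesbr}, using the generalized It\^{o} formula of Theorem \ref{Tfueq} in place of the classical one. Set $x_{0}=p1$, fix $\epsilon>0$, and let $f(z)=\epsilon k(d(x_{0},z)/\epsilon)$ with $k$ as in (\ref{eq:glab41}). Then $|\n f|\le 1$, the Hessian $\n^{2}f$ is uniformly bounded (using that $X$ is a symmetric space of nonpositive curvature together with the chain rule through $k$, and that $d(x_0,\cdot)\ge \epsilon/2$ on the support of $\n^{2} f$), and $d(x_{0},z)\ge\epsilon$ iff $f(z)\ge\epsilon$; in particular $|\n^{TX}_{V}\n_{V}f|\le C|V|^{2}$ for every $V\in TX$.

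Applying (\ref{eq:fex2}) to $f$, using $\dot x_{0}=Y^{TX}_{0}/b$ to bound the boundary term $b^{2}\frac{d}{dt}f(x_{t})|_{t=0}=b\,\n_{Y^{TX}_{0}}f(x_{0})$ by $b|Y^{TX}_{0}|$, and splitting $A^{f}_{s}=V^{f}_{s}+M_{s}$ with $V^{f}_{s}=\int_{0}^{s}\n^{TX}_{Y^{TX}_{u}}\n_{Y^{TX}_{u}}f(x_{u})\,du$ (satisfying $|V^{f}_{s}|\le C\int_{0}^{s}|Y^{TX}_{u}|^{2}\,du$) and $M_{s}=\int_{0}^{s}\n_{\delta w^{TX}_{u}}f(x_{u})$ an It\^{o} martingale with quadratic variation at most $s$, Fubini yields
\begin{equation*}
\frac{1}{b^{2}}\int_{0}^{t}e^{-(t-s)/b^{2}}V^{f}_{s}\,ds=\int_{0}^{t}(1-e^{-(t-u)/b^{2}})\n^{TX}_{Y^{TX}_{u}}\n_{Y^{TX}_{u}}f(x_{u})\,du,
\end{equation*}
bounded by $C\int_{0}^{t}|Y^{TX}_{u}|^{2}\,du$, while the convolved martingale contribution is bounded by $(1-e^{-t/b^{2}})\sup_{s\le t}|M_{s}|\le \sup_{s\le t}|M_{s}|$. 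Hence
\begin{equation*}
\sup_{0\le s\le t}|f(x_{s})-f(x_{0})|\le b|Y^{TX}_{0}|+C\int_{0}^{t}|Y^{TX}_{u}|^{2}\,du+\sup_{0\le s\le t}|M_{s}|,
\end{equation*}
which for $r\ge 2\epsilon$ reduces (\ref{eq:led19}) to an exponential moment estimate; the small-$r$ regime is handled as at the end of the proof of Proposition \ref{Pesbr} by applying the same argument to local coordinate functions. Exponential Chebyshev with parameter $\lambda$ of order $r/t$ combined with Cauchy--Schwarz separates the Hessian integral from the martingale. After the time-rescaling $\tilde Y_{u}:=Y^{TX}_{b^{2}u}$ (which by the second line of (\ref{eq:glab9a6a}) yields the Ornstein--Uhlenbeck equation (\ref{eq:phan-2})), Proposition \ref{PMehla} bounds $E^{Q}\bigl[\exp\bigl(\tfrac{\alpha^{2}}{2}\int_{0}^{t}|Y^{TX}_{u}|^{2}\,du\bigr)\bigr]$ by $\exp\bigl(\tfrac{\alpha^{2}}{2}(mt+b^{2}|Y^{TX}_{0}|^{2})\bigr)$ for $\alpha b\le 1$, while standard exponential martingale inequalities (e.g.\ \cite[Theorem 4.2.1]{StroockVaradhan79}) give $E^{Q}[\exp(\mu\sup_{s\le t}|M_{s}|)]\le C\exp(c\mu^{2}t)$. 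Optimising $\lambda$ subject to the constraint $\lambda b^{2}\le c$ (harmless since $t\le M$) produces the Gaussian factor $\exp(-C'r^{2}/t)$, and the boundary and Hessian contributions exponentiate into $\exp(|Y^{TX}_{0}|^{2}/2)$, proving (\ref{eq:led19}).

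For (\ref{eq:led19sa1}) and (\ref{eq:led19s1}), set $\tau=\inf\{s:d(x_{0},x_{s})\ge r\}\wedge t$, so that the event in question is $\{\tau<t\}$. The Gaussian density of the OU process (or (\ref{eq:tch2}) with $F\equiv 1$) yields the exact identity $\exp(-|Y|^{2}/2)\,E^{Q}_{Y}[\exp(|Y^{TX}_{s}|^{2}/2)]=e^{ms/b^{2}}$ for every starting velocity $Y$, and the strong Markov property applied at $\tau$ rewrites the left side of (\ref{eq:led19sa1}) as
\begin{equation*}
E^{Q}\bigl[1_{\tau<t}\,e^{m(t-\tau)/b^{2}}\exp\bigl(\tfrac{1}{2}(|Y^{TX}_{\tau}|^{2}-|Y^{TX}_{0}|^{2})\bigr)\bigr].
\end{equation*}
Expanding $(|Y^{TX}_{\tau}|^{2}-|Y^{TX}_{0}|^{2})/2$ via the time-rescaled It\^{o} identity (\ref{eq:phan-9}) into a deterministic drift $m\tau/(2b^{2})$, a negative integral $-b^{-2}\int_{0}^{\tau}|Y^{TX}_{s}|^{2}\,ds$, and an It\^{o} martingale, and combining the generalised It\^{o}\,/\,exponential Chebyshev argument of the previous paragraph run on the random interval $[0,\tau]$ with Mehler's formula (\ref{eq:phan-11b}), produces (\ref{eq:led19sa1}); the Gaussian damping $\exp(-C'(1-e^{-ct/b^{2}})|Y^{TX}_{0}|^{2})$ comes from the negative integral together with (\ref{eq:phan-11b}). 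The main obstacle is keeping the constants $C,C'$ uniform on the full range $b>0,\,0<t\le M$: without a positive lower bound on $t/b^{2}$ the factor $1-e^{-t/b^{2}}$ can be arbitrarily small, which forces the weaker damping in (\ref{eq:led19sa1}); the hypothesis $t\ge\tau b^{2}$ in (\ref{eq:led19s1}) precisely ensures $1-e^{-t/b^{2}}\ge 1-e^{-\tau}$, converting this into the clean $\exp(-C'|Y^{TX}_{0}|^{2})$.
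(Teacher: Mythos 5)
Your outline captures the main skeleton of the paper's argument for (\ref{eq:led19}) — generalized It\^{o} formula (\ref{eq:fex2}), split of $A^{f}_{s}$ into a Hessian part and a martingale part, Chebyshev on the Hessian integral controlled by Mehler's formula, Stroock--Varadhan on the martingale, coordinate functions for small $r$ — but there is a genuine gap in the claim that the constraint $\lambda b^{2}\le c$ is ``harmless since $t\le M$.'' The constraint comes from the requirement $\alpha b\le 1$ in Mehler's bound and is \emph{not} controlled by an upper bound on $t$: when $b^{2}/t$ is large (which occurs for $b$ large and $t\le M$), the constraint caps $\lambda$ at order $1/b^{2}$, and with $\lambda=c/b^{2}$ the Hessian--Chebyshev estimate only produces $\exp\bigl(-cr/(4Cb^{2})+mt/(2b^{2})+\tfrac{1}{2}|Y^{TX}_{0}|^{2}\bigr)$, which fails to dominate $C\exp\bigl(-C'r^{2}/t+\tfrac{1}{2}|Y^{TX}_{0}|^{2}\bigr)$ when $r$ is large and $t/b^{2}$ is small (take, e.g., $b^{2}\gg t$ and $r$ of order $1$ or larger; then $r/b^{2}\ll r^{2}/t$). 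The paper fills this gap with a second, independent estimate obtained by a Girsanov change of measure from the Ornstein--Uhlenbeck process to Brownian motion, namely (\ref{eq:ito5}), which gives $\exp\bigl(-r^{2}/(2t)+mt/(2b^{2})+\tfrac{1}{2}|Y^{TX}_{0}|^{2}\bigr)$ and is precisely effective in the regime $t/b^{2}$ small. The two bounds are then combined by a geometric mean to obtain (\ref{eq:ito6}), and a case split on whether $t\le r/(2mC)$ or not finishes the proof. Without the Girsanov input, no choice of $\lambda$ in the Chebyshev argument alone gives uniformity over all $b>0$.

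For (\ref{eq:led19sa1}) and (\ref{eq:led19s1}) you propose a strong Markov argument at the exit time $\tau$, which is a genuinely different route from the paper. Beyond the factor discrepancy (the Mehler identity gives $(e^{s/b^{2}}/\cosh(s/b^{2}))^{m/2}\exp(-\tfrac{1}{2}\tanh(s/b^{2})|Y|^{2})$, not $e^{ms/b^{2}}$), the central difficulty is unaddressed: ``running the exponential Chebyshev argument on the random interval $[0,\tau]$ with Mehler's formula'' is not a licensed operation, since Mehler's formula (\ref{eq:phan-11b}) is for a fixed time horizon, the velocity $Y^{TX}_{\tau}$ at the stopping time has an unknown law, and the boundary term $\exp(\tfrac{1}{2}|Y^{TX}_{\tau}|^{2})$ is exactly what needs to be controlled. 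The paper instead applies H\"older's inequality directly, bounding $\bigl\|\exp(|Y^{TX}_{t}|^{2}/2)\bigr\|_{\theta}$ via (\ref{eq:rito2}) with $\theta$ close to $1$, multiplying by $Q[\sup_{s\le t}d(x_{0},x_{s})\ge r]^{(\theta-1)/\theta}$ from the first part, and carefully tracking the resulting exponents (equations (\ref{eq:rito5})--(\ref{eq:rito10})); this decouples the Gaussian weight from the tail estimate without ever conditioning at a random time, and is where the damping factor $(1-e^{-ct/b^{2}})|Y^{TX}_{0}|^{2}$ actually comes from. A separate small-$t/b^{2}$ computation ((\ref{eq:rito0})--(\ref{eq:rito-1})) and the small-$r$ coordinate-function argument, both parallel to the first part, complete (\ref{eq:led19sa1}); passing to (\ref{eq:led19s1}) under $t\ge\tau b^{2}$ is then immediate, as you correctly observe.
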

\begin{proof}
    We temporarily assume that equation (\ref{eq:glab9a6a}) is replaced by
(\ref{eq:glab9a6ag}). 
Let $P$ denote the corresponding probability measure. Using 
Girsanov's transformation as in Remark \ref{RGirs}, we get
\begin{multline}\label{eq:sto2}
Q\left[\sup_{0\le s\le t}d\left(x_{0},x_{s}\right)\ge r\right] \\
=
E^{P}\left[\exp\left(-\frac{1}{b}\int_{0}^{t}\left\langle  Y^{TX}_{s},\delta 
w^{TX}_{s}\right\rangle-\frac{1}{2b^{2}}\int_{0}^{t}\left\vert  
Y^{TX}_{s}\right\vert^{2}ds\right)1_{\sup_{0\le s\le 
t}d\left(x_{0},x_{s}\right)\ge r}\right].
\end{multline}
By It\^{o}'s formula,  as in (\ref{eq:phan-9}), we get
\begin{equation}\label{eq:ito1}
\frac{1}{2} \left( \left\vert  Y^{TX}_{t}\right\vert^{2}-\left\vert  Y^{TX}_{0}\right\vert^{2}
\right) =\frac{mt}{2b^{2}}+\frac{1}{b}\int_{0}^{t}\left\langle  
Y^{TX}_{s},\delta w^{TX}_{s}\right\rangle.
\end{equation}
Using (\ref{eq:ito1}), equation (\ref{eq:sto2}) can be rewritten in 
the form
\begin{multline}\label{eq:ito2}
Q\left[\sup_{0\le s\le t}d\left(x_{0},x_{s}\right)\ge r\right] 
=
E^{P} \Biggl[
\exp\Biggl(-\frac{1}{2b^{2}}\int_{0}^{t}\left\vert  
Y^{TX}_{s}\right\vert^{2}ds \\
+\frac{mt}{2b^{2}}-\frac{1}{2}\left(\left\vert  
Y^{TX}_{t}\right\vert^{2}-\left\vert  
Y^{TX}_{0}\right\vert^{2}\right)\Biggr)1_{\sup_{0\le s\le 
t}d\left(x_{0},x_{s}\right)\ge r}\Biggr].
\end{multline}

By equation (\ref{eq:glab9a6ag}),  we get
\begin{equation}\label{eq:ito3}
\frac{1}{2b^{2}}\int_{0}^{t}\left\vert  Y^{TX}_{s}\right\vert^{2}ds\ge 
\frac{1}{2}\sup_{0<s\le t}\frac{d^{2}\left(x_{0},x_{s}\right)}{s}.
\end{equation}
From (\ref{eq:ito3}), we obtain
\begin{equation}\label{eq:ito4}
\frac{1}{2b^{2}}\int_{0}^{t}\left\vert  Y^{TX}_{s}\right\vert^{2}ds\ge 
\frac{1}{2t}\sup_{0<s\le t}d^{2}\left(x_{0},x_{s}\right).
\end{equation}
By (\ref{eq:ito2}), (\ref{eq:ito4}), we get
\begin{equation}\label{eq:ito5}
Q\left[\sup_{0\le s\le t}d\left(x_{0},x_{s}\right)\ge r\right]
\le \exp\left(\frac{1}{2}\left\vert  Y^{TX}_{0}\right\vert^{2}\right)\exp
\left(-\frac{r^{2}}{2t}+\frac{mt}{2b^{2}}\right).
\end{equation}
By (\ref{eq:ito5}), if $\tau>0$ is fixed, and if $0<t\le 
\tau b^{2}$, equation (\ref{eq:led19}) holds. This is the case if $b>0$ is bounded 
away from $0$, and if $0<t\le M$.

In the sequel, we may as well 
assume that there is $b_{0}>0$ small enough so that $0<b\le b_{0}$ 
and that  $ b^{2}\le t\le M$.  Even in this case, the estimate 
(\ref{eq:ito5})  will still be used.

We proceed as in \cite[proof of Theorem 13.2.2]{Bismut08b}.  We 
fix $\epsilon>0$. We take
$f:X\to \R$ as in (\ref{eq:glab42}).
We define $A_{t}^{f}$ as in (\ref{eq:glab40a-1}).
Since 
$f\left(x_{0}\right)=0,\frac{d}{dt}f\left(x_{t}\right)\vert_{t=0}=0$, by equation (\ref{eq:fex2}) in Theorem 
\ref{Tfueq}, we get
\begin{equation}\label{eq:add3}
f\left(x_{t}\right)=\frac{1}{b^{2}}\int_{0}^{t}e^{-\left(t-s\right)/b^{2}}A^{f}_{s}ds.
\end{equation}
By (\ref{eq:add3}), we deduce that
\begin{equation}\label{eq:add4}
\sup_{0\le s\le t}f\left(x_{s}\right)\le \sup_{0\le s\le t}A^{f}_{s}.
\end{equation}

For $r\ge \epsilon$,  $\sup_{0\le s\le t}d\left(x_{0},x_{s}\right)\ge r$ if 
and only if $\sup_{0\le s\le t}f\left(x_{s}\right)\ge r$. By 
(\ref{eq:add4}), this is the case only if $\sup_{0\le s\le t}A^{f}_{s}\ge 
r$.
If $\sup_{0\le s\le t}A^{f}_{s}\ge r$,  at least one of the 
$\sup$ of  the two terms in 
the right-hand side of (\ref{eq:glab40a-1}) is larger than 
$r/2$.  

By \cite[eq. (13.1.17)]{Bismut08b}, there is $C>0$ such that
\begin{equation}
    \n^{TX}_{\cdot}\n_{\cdot}f\le \frac{C}{2}.
    \label{eq:glab42m-1}
\end{equation}
By (\ref{eq:glab42m-1}), for $0\le s\le t$, we get
\begin{equation}
    \int_{0}^{s}\n^{TX}_{Y^{TX}_{u}}\n_{Y^{TX}_{u}}f\left(x_{u}\right)du\le 
    \frac{C}{2}\int_{0}^{t}\left\vert  Y^{TX}_{u}\right\vert^{2}du.
    \label{eq:glab42m}
\end{equation}
By (\ref{eq:glab42m}), using Chebyshev's inequality,  for any $\alpha>0$, we get
\begin{multline}
    Q\left[ 
    \int_{0}^{t}\n^{TX}_{Y^{TX}_{u}}\n_{Y^{TX}_{u}}f\left(x_{u}\right)du\ge 
    r/2\right]\le
    \exp\left(-\alpha  
    r/2b^{2}\right)\\
    E^{Q}\left[\exp\left(\frac{\alpha C}{2b^{2}}\int_{0}^{t}
    \left\vert  Y^{TX}_{u}\right\vert^{2}du\right)\right].
    \label{eq:glab42n}
\end{multline}
In the sequel,  we choose $\alpha$ given 
by 
\begin{equation}
    \alpha=\frac{1}{C}.
    \label{eq:glab42ha}
\end{equation}

Using equation (\ref{eq:rot7a}) in Proposition \ref{PMehla} with 
$\beta=1$, and $t$ replaced by $t/b^{2}$,   we get
\begin{multline}\label{eq:glab42u1}
 E^{Q}\left[\exp\left(\frac{1}{2b^{2}}\int_{0}^{t}
		   \left\vert  Y^{TX}_{u}\right\vert^{2}du\right)\right]
		   =\exp\left(mt/2b^{2}\right) \\
\left[\frac{1}{
1+\frac{t}{b^{2}}}\right]
	^{m/2}
\exp\left(\frac{1}{2}\frac{t/b^{2}}{1+t/b^{2}}\left\vert  
Y^{TX}_{0}\right\vert^{2}\right).
\end{multline}
By (\ref{eq:glab42n})--(\ref{eq:glab42u1}), we obtain
\begin{equation}\label{eq:beri1}
 Q\left[ 
    \int_{0}^{t}\n^{TX}_{Y^{TX}_{u}}\n_{Y^{TX}_{u}}f\left(x_{u}\right)du\ge 
    r/2\right]\le 
    \exp\left(-\frac{1}{b^{2}}\left(\frac{r}{2C}-\frac{mt}{2}\right)+\frac{1}{2}\left\vert  
    Y_{0}^{TX}\right\vert^{2}\right).
\end{equation}

Since $\n_{\cdot}f$ is uniformly bounded, by \cite[eq. (2.1) in Theorem 
 4.2.1]{StroockVaradhan79}, we get
\begin{equation}\label{eq:led21}
Q\left[\sup_{0\le s\le t}\int_{0}^{s}\n_{\delta 
w_{u}}f\left(x_{u}\right)\ge r/2\right]\le 
C\exp\left(-C'r^{2}/t\right).
\end{equation}

By 
 (\ref{eq:beri1}),  (\ref{eq:led21}),   if 
 $r\ge \epsilon$, then
\begin{multline}\label{eq:flab42u6}
Q\left[\sup_{0\le s\le t}d\left(x_{0},x_{s}\right)\ge r\right]\\
\le \exp\left(-
    \frac{1}{b^{2}}\left(\frac{r}{2C}-\frac{mt}{2}\right)
   +\frac{1}{2}\left\vert  Y^{TX}_{0}\right\vert^{2}\right)+C\exp\left(-C'r^{2}/t\right).
\end{multline}
Combining  (\ref{eq:ito5}) and (\ref{eq:flab42u6}), if $r\ge 
\epsilon$, then
\begin{multline}\label{eq:ito6}
Q\left[\sup_{0\le s\le t}d\left(x_{0},x_{s}\right)\ge r\right]
 \\ \le
\exp\left(-
    \frac{1}{b^{2}}\left(\frac{r}{4C}-\frac{mt}{2}\right)-\frac{r^{2}}{4t}
   +\frac{1}{2}\left\vert  Y^{TX}_{0}\right\vert^{2}\right)+C\exp\left(-C'r^{2}/t\right).
\end{multline}
If $t\le r/2mC$, by (\ref{eq:ito6}), we get
\begin{equation}\label{eq:ito7}
Q\left[\sup_{0\le s\le t}d\left(x_{0},x_{s}\right)\ge r\right]
\le \exp\left(-r^{2}/4t
   +\frac{1}{2}\left\vert  Y^{TX}_{0}\right\vert^{2}\right)
   +C\exp\left(-C'r^{2}/t\right),
\end{equation}
which is compatible with (\ref{eq:led19}). If $t>r/2mC$, then
\begin{equation}\label{eq:ito8}
\frac{r^{2}}{t}\le 4m^{2}C^{2}t.
\end{equation}
If $M>0$ is given, and if $\frac{r}{2mC}< t\le M$, by 
(\ref{eq:ito8}), we get
\begin{equation}\label{eq:ito9}
1\le \exp\left(4m^{2}C^{2}M-r^{2}/t\right).
\end{equation}
By (\ref{eq:ito9}),  equation (\ref{eq:led19}) still holds, which 
completes the proof of (\ref{eq:led19}) when $r\ge \epsilon$.

We will now establish (\ref{eq:led19}) for $r>0$ is small. As before, 
we will assume that $b^{2}\le t\le M$. We take 
$f_{1},\ldots, f_{m}$ as in the proof of Proposition \ref{Pesbr}. We 
still consider equation (\ref{eq:fex2}) with $f=\pm f_{i}, 1\le i\le 
m$. With respect to what we did before,  we have the extra term 
$b^{2}\frac{d}{dt}f\left(x_{t}\right)\vert_{t=0}\left(1-e^{-t/b^{2}}\right)$. Note that
\begin{equation}\label{eq:extr1-a}
\left\vert 
b^{2}\frac{d}{dt}f\left(x_{t}\right)\vert_{t=0}\left(1-e^{-t/b^{2}}\right) \right\vert\le C''b
\left(1-e^{-t/b^{2}}\right)\left\vert  Y^{TX}_{0}\right\vert\le 
C''b\left\vert  Y^{TX}_{0}\right\vert.
\end{equation}
By proceeding as before, for $r>0$ small, we get the obvious analogue 
of (\ref{eq:flab42u6}), where $r$ is replaced by $\left(r-C''b
\left\vert  Y^{TX}_{0}\right\vert\right) 
_{+}$. More precisely, for $r>0$ small, we obtain
\begin{multline}\label{eq:flab42u6bis}
Q\left[\sup_{0\le s\le t}d\left(x_{0},x_{s}\right)\ge r\right]
\le \exp\left(-
    \frac{1}{b^{2}}\left(\frac{\left(r-C''b
\left\vert  Y^{TX}_{0}\right\vert\right) 
_{+}}{2C}-\frac{mt}{2}\right)
   +\frac{1}{2}\left\vert  Y^{TX}_{0}\right\vert^{2}\right)\\
   +C\exp\left(-C'\left(r-C''b
\left\vert  Y^{TX}_{0}\right\vert\right) 
_{+}^{2}/t\right).
\end{multline}
By combining (\ref{eq:ito5}) and (\ref{eq:flab42u6bis}), we get
\begin{multline}\label{eq:ito6bis}
Q\left[\sup_{0\le s\le t}d\left(x_{0},x_{s}\right)\ge r\right]
 \le
\exp\Biggl(-
    \frac{1}{b^{2}}\left(\frac{\left(r-C''b\left\vert  Y^{TX}_{0}\right\vert\right)_{+}}{4C}
    -\frac{mt}{2}\right) \\
    -\frac{r^{2}}{4t}
   +\frac{1}{2}\left\vert  Y^{TX}_{0}\right\vert^{2}\Biggr)
   +C\exp\left(-C'\left(r-C''b\left\vert  Y^{TX}_{0}\right\vert\right)^{2}_{+}/t\right).
\end{multline}

We claim that  if $t\ge  b^{2}$, then
\begin{equation}\label{eq:thom1}
C'\left(r-C''b\left\vert  Y^{TX}_{0}\right\vert\right)^{2}_{+}/t+\frac{1}{2}
\left\vert  Y^{TX}_{0}\right\vert^{2}\ge C'''r^{2}/t.
\end{equation}
Indeed this is the case if $C''b\left\vert  Y^{TX}_{0}\right\vert\le
r/2$. If $C''b\left\vert  Y^{TX}_{0}\right\vert>r/2$, then
\begin{equation}\label{eq:thom2}
\frac{1}{2}\left\vert  Y^{TX}_{0}\right\vert^{2}\ge 
r^{2}/8C^{ \prime \prime 2}b^{2}\ge r^{2}/8C^{\prime \prime 2}t.
\end{equation}
By (\ref{eq:thom1}), we conclude that the second term in 
(\ref{eq:flab42u6bis}) can be dominated by the right-hand side of 
(\ref{eq:led19}). 
If $t\le\left(r-C''b\left\vert  
Y^{TX}_{0}\right\vert\right)_{+}/2mC$, this also the case for the 
first term. 

If  $t>\left(r-C''b\left\vert  
Y^{TX}_{0}\right\vert\right)_{+}/2mC$, then
\begin{equation}\label{eq:thom3}
\left(r-C''b\left\vert  
Y^{TX}_{0}\right\vert\right)_{+}^{2}/t\le 4m^{2}C^{2}t.
\end{equation}
If $M>0$ is given and
$\left(r-C''b\left\vert  
Y^{TX}_{0}\right\vert\right)_{+}/2mC<t\le M$, by (\ref{eq:thom3}), 
 we 
obtain
\begin{equation}\label{eq:thom4}
1\le \exp\left(\left(4m^{2}C^{2}M-\left(r-C''b\left\vert  
Y^{TX}_{0}\right\vert\right)^{2}_{+}/t\right)\right).
\end{equation}
By (\ref{eq:thom1}), (\ref{eq:thom4}),  we obtain
\begin{equation}\label{eq:thom5}
1\le \exp\left(4m^{2}C^{2}M-C'''r^{2}/t+\frac{1}{2}\left\vert  
Y^{TX}_{0}\right\vert^{2}\right), 
\end{equation}
 so that (\ref{eq:led19}) also holds in this 
case. This concludes the proof of (\ref{eq:led19}).

Now we establish (\ref{eq:led19sa1}). By  equation
(\ref{eq:glab17a}) with $t$ replaced by $t/b^{2}$ and by 
(\ref{eq:phan-4}), we get
\begin{multline}\label{eq:rito0}
\exp\left(-\left\vert  Y_{0}^{TX}\right\vert^{2}\right)E^{Q}
\left[\exp\left(\left\vert  
Y^{TX}_{t}\right\vert^{2}/2\right)\right]\\
=
\left(\frac{e^{t/b^{2}}}{\cosh\left(t/b^{2}\right)}\right)^{m/2}\exp\left(
-\frac{1}{2}\tanh\left(t/b^{2}\right)\left\vert  
Y_{0}^{TX}\right\vert^{2}\right).
\end{multline}
Therefore  when $r^{2}/t$ remains 
uniformly bounded, (\ref{eq:led19sa1}) holds. In the sequel, we may 
take $r^{2}/t$ as large as needed.

The same argument as in (\ref{eq:ito2})  shows that
\begin{multline}\label{eq:ito2bi}
\exp\left(-\left\vert  Y^{TX}_{0}\right\vert^{2}/2\right)E^{Q}\left[
1_{\sup_{0\le s\le t}d\left(x_{0},x_{s}\right)\ge r}\exp\left(
\left\vert  Y^{TX}_{t}\right\vert^{2}/2\right)\right] \\
=
E^{P}\left[\exp\left(-\frac{1}{2b^{2}}\int_{0}^{t}\left\vert  
Y^{TX}_{s}\right\vert^{2}ds+\frac{mt}{2b^{2}}\right)1_{\sup_{0\le s\le 
t}d\left(x_{0},x_{s}\right)\ge r}\right].
\end{multline}
By (\ref{eq:ito4}), (\ref{eq:ito2bi}), we get
\begin{multline}\label{eq:ito2bi1}
\exp\left(-\left\vert  Y^{TX}_{0}\right\vert^{2}/2\right)E^{Q}\left[
1_{\sup_{0\le s\le t}d\left(x_{0},x_{s}\right)\ge r}\exp\left(
\left\vert  Y^{TX}_{t}\right\vert^{2}/2\right)\right] \\
\le \exp\left(-r^{2}/4t+\frac{mt}{2b^{2}}\right)
E^{P}\left[\exp\left(-\frac{1}{4b^{2}}\int_{0}^{t}\left\vert  
Y^{TX}_{s}\right\vert^{2}ds\right)\right].
\end{multline}

By equation (\ref{eq:phan-11b}) in Proposition \ref{Pidpha}, we get 
\begin{multline}\label{eq:ito2bi2}
E^{P}\left[\exp\left(-\frac{1}{4b^{2}}\int_{0}^{t}\left\vert  
Y^{TX}_{s}\right\vert^{2}ds\right)\right]
=\left(\cosh\left(t/\sqrt{2}b^{2}\right)\right)^{-m/2} \\
\exp\left(-\frac{1}{2\sqrt{2}}
\tanh\left(t/\sqrt{2}b^{2} \right) \left\vert  
Y_{0}^{TX}\right\vert^{2}\right).
\end{multline}
By (\ref{eq:ito2bi1}), (\ref{eq:ito2bi2}), we get
\begin{multline}\label{eq:rito-1}
\exp\left(-\left\vert  Y^{TX}_{0}\right\vert^{2}/2\right)E^{Q}\left[
1_{\sup_{0\le s\le t}d\left(x_{0},x_{s}\right)\ge r}\exp\left(
\left\vert  Y^{TX}_{t}\right\vert^{2}/2\right)\right] \\
\le\exp\left(-r^{2}/4t+mt/2b^{2}-\frac{1}{2\sqrt{2}}\tanh\left(t/\sqrt{2}b^{2}\right)\left\vert  
Y_{0}^{TX}\right\vert^{2}\right).
\end{multline}
By (\ref{eq:rito-1}), if $\tau>0$ is fixed, if 
$0<t\le\tau b^{2}$, equation (\ref{eq:rito-1})  is compatible with 
 (\ref{eq:led19sa1}). 
In the sequel, we may as well assume that $b^{2}\le t$ and that 
$r^{2}/t$ is large. 

In the sequel,  H\"older norms are calculated with respect to the 
probability
measure Q. By \cite[eq. (10.7.1)]{Bismut08b}, for $1\le \theta<2$, we get
\begin{equation}\label{eq:rito2}
\left\Vert  \exp\left(\left\vert  
Y^{TX}_{t}\right\vert^{2}/2\right)\right\Vert_{\theta}
\le \frac{1}{\left(1-\theta/2\right)^{m/2\theta}}\exp
\left(\frac{1}{2}e^{-2t/b^{2}}\frac{\left\vert  Y_{0}^{TX}\right\vert^{2}}
{1-\theta e^{-t/b^{2}}\sinh\left(t/b^{2}\right)} \right) .
\end{equation}
By (\ref{eq:rito2}), we deduce that
\begin{multline}\label{eq:rito2x1}
\exp\left(-\frac{1}{2}\left\vert  Y^{TX}_{0}\right\vert^{2}\right)
\left\Vert  \exp\left(\left\vert  
Y^{TX}_{t}\right\vert^{2}/2\right)\right\Vert_{\theta} \\
\le\frac{1}{ \left( 1-\theta/2\right)^{m/2\theta}}
\exp\left(-\frac{\left( 1-e^{-2t/b^{2}}\right)\left(1-\theta/2\right)}
{1-\theta e^{-t/b^{2}}
\sinh\left(t/b^{2}\right)}\frac{\left\vert  
Y^{TX}_{0}\right\vert^{2}}{2}\right).
\end{multline}
By (\ref{eq:rito2x1}), we  get
\begin{multline}\label{eq:rito2x2}
\exp\left(-\frac{1}{2}\left\vert  Y^{TX}_{0}\right\vert^{2}\right)
\left\Vert  \exp\left(\left\vert  
Y^{TX}_{t}\right\vert^{2}/2\right)\right\Vert_{\theta} \\
\le 
\frac{1}{\left(1-\theta/2\right)^{m/2\theta}}\exp\left(-\left(1-e^{-2t/b^{2}}\right)
\left(1-\theta/2\right)\left\vert  Y^{TX}_{0}
\right\vert^{2}/2\right).
\end{multline}

 We take $r\ge \epsilon$. Let $f$ be as in (\ref{eq:glab42}). We 
 still use (\ref{eq:add4}) and the arguments that follow.
 Using H\"{o}lder's inequality, we get
\begin{multline}\label{eq:rito4}
E^{Q}\left[1_{\sup_{0\le s\le t}\int_{0}^{s}\n^{TX}_{Y^{TX}}\n_{Y^{TX}} 
f\left(x_{u}\right)du\ge r/2}\exp\left(\left\vert  
Y^{TX}_{t}\right\vert^{2}/2\right)\right]\\
\le \left\Vert  \exp\left(\left\vert  Y^{TX}_{t}\right\vert^{2}/2\right)\right\Vert_{\theta}
\left(Q\left[\sup_{0\le s\le t}\int_{0}^{s}\n^{TX}_{Y^{TX}}\n_{Y^{TX}} 
f\left(x_{u}\right)du\ge 
r/2\right]\right)^{\left(\theta-1\right)/\theta}.
\end{multline}
Let $\beta, 0<\beta<1$. We will use (\ref{eq:glab42n}) with  $\alpha=\beta^{2}/C$. By 
equation (\ref{eq:rot7a}) in Proposition \ref{PMehla} and by 
(\ref{eq:rito2x2}), (\ref{eq:rito4}),
we obtain
\begin{multline}\label{eq:rito5}
\exp\left(-\left\vert  Y_{0}^{TX}\right\vert^{2}/2\right)E^{Q}\left[1_{\sup_{0\le s\le t}\int_{0}^{s}\n^{TX}_{Y^{TX}}\n_{Y^{TX}} 
f\left(x_{u}\right)du\ge r/2}\exp\left(\left\vert  
Y^{TX}_{t}\right\vert^{2}/2\right)\right]\\
\le 
\frac{1}{\left(1-\theta/2\right)^{m/2}}\exp\left(-\left(1-e^{-2t/b^{2}}\right)\left(1-\theta/2\right)\left\vert  Y_{0}^{TX}
\right\vert^{2}/2\right)\\
\Biggl[
\exp\left(-\beta^{2}r/2Cb^{2}+m\left(1-\rho_{\beta}\right)t/2b^{2}\right)\\
\exp\left(\beta^{2}\frac{\tanh\left(\rho_{\beta}t/b^{2}\right)}{
\rho_{\beta}+\tanh\left(\rho_{\beta}t/b^{2}\right)}\left\vert  
Y_{0}^{TX}\right\vert^{2}/2\right)\Biggr]^{\left(\theta-1\right)/\theta}.
\end{multline}

Observe that since $\rho_{\beta}<1$, we have
\begin{multline}\label{eq:rito5y1}
\left(1-e^{-2t/b^{2}}\right)\left(1-\theta/2\right)-\frac{\theta-1}{\theta}
\beta^{2}\frac{\tanh\left(\rho_{\beta}t/b^{2}\right)}{\rho_{\beta}+
\tanh\left(\rho_{\beta}t/b^{2}\right)} \\
\ge
\left( 1-\theta/2-\frac{\theta-1}{\theta}
\frac{\beta^{2}}{\rho_{\beta}} \right) \left(1-e^{-2\rho_{\beta} 
t/b^{2}}\right).
\end{multline}
Given $\beta$, by taking $\theta$ close enough to $1$ in  
(\ref{eq:rito5y1}), there is $c>0$ such that
\begin{equation}\label{eq:rito5y2}
\left(1-e^{-2t/b^{2}}\right)\left(1-\theta/2\right)-\frac{\theta-1}{\theta}
\beta^{2}\frac{\tanh\left(\rho_{\beta}t/b^{2}\right)}{\rho_{\beta}+
\tanh\left(\rho_{\beta}t/b^{2}\right)} \\
\ge c\left(1-e^{-2\rho_{\beta} t/b^{2}}\right).
\end{equation}

By (\ref{eq:rito5}), (\ref{eq:rito5y2}), we get
\begin{multline}\label{eq:rito8}
\exp\left(-\left\vert  Y_{0}^{TX}\right\vert^{2}/2\right)E^{Q}\left[1_{\sup_{0\le s\le t}\int_{0}^{s}\n^{TX}_{Y^{TX}}\n_{Y^{TX}} 
f\left(x_{u}\right)du\ge r/2}\exp\left(\left\vert  
Y^{TX}_{t}\right\vert^{2}/2\right)\right]\\
\le C'\Biggl[
\exp\left(-\beta^{2}r/2Cb^{2}+m\left(1-\rho_{\beta}\right)t/2b^{2}\right)
\Biggr]^{\left(\theta-1\right)/\theta}\\
\exp\left(-c\left(1-e^{-2\rho
_{\beta}t/b^{2}}\right)\left\vert  Y^{TX}_{0}\right\vert^{2}/2\right).
\end{multline}
Observe that
\begin{equation}\label{eq:rito8a1}
\beta^{2}r/4C-m\left(1-\rho_{\beta}\right)t/2=\sqrt{t}\left( 
\beta^{2}\frac{r}{4C\sqrt{t}}-m\left(1-\rho_{\beta}\right)\sqrt{t}/2 \right) .
\end{equation}
Given $M>0$, from (\ref{eq:rito8a1}), we deduce that if $r^{2}/t$ is 
large enough and $0<t\le M$, (\ref{eq:rito8a1}) is nonnegative. By 
(\ref{eq:rito8}),  we get
\begin{multline}\label{eq:rito8a2}
\exp\left(-\left\vert  Y_{0}^{TX}\right\vert^{2}/2\right)E^{Q}\left[1_{\sup_{0\le s\le t}\int_{0}^{s}\n^{TX}_{Y^{TX}}\n_{Y^{TX}} 
f\left(x_{u}\right)du\ge r/2}\exp\left(\left\vert  
Y^{TX}_{t}\right\vert^{2}/2\right)\right]\\
\le C\exp\left(-C'\left(r/b^{2}+\left(1-e^{-2\rho_{\beta}t/b^{2}}\right)\left\vert  
Y_{0}^{TX}\right\vert^{2}\right)\right).
\end{multline}

By H\"{o}lder's inequality, for 
$\theta\in \left]1,+ \infty \right[$,  we get
\begin{multline}\label{eq:rito1}
E^{Q}\left[
1_{\sup_{0\le s\le t}\int_{0}^{s}\n_{\delta w_{u}}f\left(x_{u}\right)\ge 
r/2}\exp\left(\left\vert  Y^{TX}_{t}\right\vert^{2}/2\right)\right]\\
\le \left\Vert  \exp\left(\left\vert  Y^{TX}_{t}\right\vert^{2}/2\right)\right\Vert_{\theta}
\left(Q\left[\sup_{0\le s\le t}\int_{0}^{s}\n_{\delta w^{TX}_{u}}f\left(x_{u}\right)\ge 
r\right]\right)^{\left(\theta-1\right)/\theta}.
\end{multline}

By (\ref{eq:led21}), (\ref{eq:rito2x2}), and (\ref{eq:rito1}),   we get
\begin{multline}\label{eq:rito3}
\exp\left(-\left\vert  Y^{TX}_{0}\right\vert^{2}/2\right)E^{Q}\left[
1_{\sup_{0\le s\le t}\int_{0}^{s}\n_{\delta w^{TX}_{u}}f\left(x_{u}\right)\ge 
r/2}\exp\left(\left\vert  Y^{TX}_{t}\right\vert^{2}/2\right)\right]\\
\le C\exp\left(-C'\left( r^{2}/t+\left(1-e^{-2t/b^{2}}\right)\left\vert  
Y^{TX}_{0}\right\vert^{2} \right) \right).
\end{multline}
Equation (\ref{eq:rito3}) is compatible with (\ref{eq:led19sa1}).

By (\ref{eq:rito8a2}), (\ref{eq:rito3}),  we find that under the above 
conditions, 
\begin{multline}\label{eq:rito10}
\exp\left(-\left\vert  Y^{TX}_{0}\right\vert^{2}/2\right)E^{Q}\left[
1_{\sup_{0\le s\le t}d\left(x_{0},x_{s}\right)\ge r}\exp\left(
\left\vert  Y^{TX}_{t}\right\vert^{2}/2\right)\right] \\
\le C\exp\left(- C'\left( 
r/b^{2}
+\left(1-e^{-2\rho_{\beta}t/b^{2}}\right)\left\vert  
Y_{0}^{TX}\right\vert^{2} \right) \right) \\
+C\exp\left(-C'\left( r^{2}/t+\left(1-e^{-2t/b^{2}}\right)\left\vert  
Y^{TX}_{0}\right\vert^{2} \right) \right).
\end{multline}

We will now combine the estimates (\ref{eq:rito-1}) and 
(\ref{eq:rito10}). We proceed as in (\ref{eq:rito8a1}). Observe that 
\begin{equation}\label{eq:anfs1}
C'r-mt/2=\sqrt{t}\left(C'r/\sqrt{t}-m\sqrt{t}/2\right).
\end{equation}
Given $M>0$, we deduce that if $0<t\le M$, if $r/\sqrt{t}$ is large enough, 
(\ref{eq:anfs1}) is positive. By (\ref{eq:rito-1}), 
(\ref{eq:rito10}), for $r/\sqrt{t}$ large enough, 
we get the full estimate (\ref{eq:led19sa1}). This completes the 
proof of (\ref{eq:led19sa1}) when $r\ge \epsilon$. 

 Let us now extend (\ref{eq:rito10}) with $r>0$ 
small, while $r^{2}/t $ can be taken to be large and $t\ge b^{2}$. We 
take again
$f_{1},\ldots, f_{m}$ as in the proof of Proposition \ref{Pesbr}. We 
still consider equation (\ref{eq:fex2}) with $f=\pm f_{i}, 1\le i\le 
m$. With respect to what we did before,  we have the extra term 
$b^{2}\frac{d}{dt}f\left(x_{t}\right)\vert_{t=0}\left(1-e^{-t/b^{2}}\right)$. We still use the estimate
(\ref{eq:extr1-a}).
By proceeding the way we did before, the 
estimate (\ref{eq:rito10}) is still valid 
when replacing $r$ by $\left(r-Cb\left\vert  Y^{TX}_{0}\right\vert
\right)_{+}$ in the right-hand side. In 
particular, for $b>0,0<t\le M,r>0$, we have
\begin{multline}\label{eq:extr3}
\exp\left(-\left\vert  Y^{TX}_{0}\right\vert^{2}/2\right)E^{Q}\left[
1_{\sup_{0\le s\le t}d\left(x_{0},x_{s}\right)\ge r}\exp\left(
\left\vert  Y^{TX}_{t}\right\vert^{2}/2\right)\right] \\
\le \exp\left(- C'\left( 
\left(r-Cb\left\vert  
Y^{TX}_{0}\right\vert\right)_{+}/b^{2}
+\left\vert  Y_{0}^{TX}\right\vert^{2} \right) \right)  \\
+C\exp\left(-C'\left( 
\left(r-Cb\left\vert  
Y^{TX}_{0}\right\vert\right)_{+}^{2}/t+\left\vert  
Y^{TX}_{0}\right\vert^{2} \right) \right).
\end{multline}

We claim that if $t\ge b^{2}$, 
\begin{equation}\label{eq:extr4}
\left(r-Cb\left\vert  
Y^{TX}_{0}\right\vert\right)_{+}^{2}/t+\left\vert  
Y^{TX}_{0}\right\vert^{2} \ge C''\left(r^{2}/t+\left\vert  
Y_{0}^{TX}\right\vert^{2}\right).
\end{equation}
Indeed (\ref{eq:extr4}) holds if $Cb\left\vert  
Y^{TX}_{0}\right\vert\le r/2$. If $Cb\left\vert  
Y^{TX}_{0}\right\vert>r/2$, since $t\ge b^{2}$, then 
\begin{equation}\label{eq:extr5}
\left\vert  Y^{TX}_{0}\right\vert^{2}\ge r^{2}/4C^{2}b^{2}\ge 
r^{2}/4C^{2}t,
\end{equation}
so that we still get (\ref{eq:extr4}). By (\ref{eq:extr4}), 
the second term in the right-hand side of (\ref{eq:extr3}) is 
compatible with (\ref{eq:led19sa1}).
Consider the first term in the right-hand side of (\ref{eq:extr3}). 
By (\ref{eq:extr5}), only the case $Cb\left\vert  
Y^{TX}_{0}\right\vert\le r/2$  should be considered. In this case, we 
get
\begin{equation}\label{eq:extr6}
\exp\left(- C'\left( 
\left(r-Cb\left\vert  
Y^{TX}_{0}\right\vert\right)_{+}/b^{2}
+\left\vert  Y_{0}^{TX}\right\vert^{2} \right) \right)\le
\exp\left(-C'\left(r/2b^{2}+\left\vert  
Y_{0}^{TX}\right\vert^{2}\right)\right).
\end{equation}
Using (\ref{eq:rito-1}), (\ref{eq:anfs1}),  and (\ref{eq:extr6}), we 
find that the first 
term in the right-hand side of (\ref{eq:extr3}) is also 
compatible with (\ref{eq:led19sa1}), when taking into account the 
fact that $t\ge b^{2}$.

Finally, (\ref{eq:led19s1}) follows from (\ref{eq:led19sa1}).
The proof of our theorem is completed. 
\end{proof}
\subsection{Convergence in probability of $x_{\cdot}$}%
\label{subsec:copro}
Let $\left(\Omega,\mathcal{F},\Pi\right)$ be a probability space, and 
let
$\left(F,\delta\right)$ be a metric space. Let $X_{n}\vert_{n\in\N}$ 
be a family of random variables defined on $\Omega$ with values in 
$F$. If $X_{\infty }$ is another such random variable, we say that as 
$n\to + \infty $, $X_{n}$ converges to $X_{\infty}$ in probability if for any 
$\epsilon>0$, as $n\to + \infty $, 
$\Pi\left[\delta\left(X_{n},X_{\infty}•\right)\ge\epsilon\right]$ tends to $0$.

For greater clarity, we will now write the dependence of 
$x_{\cdot},g_{\cdot}$ 
on $b>0$ in equations (\ref{eq:glab9a6a}), (\ref{eq:glab9a6ay1}) explicitly, i.e., $x_{\cdot},g_{\cdot}$ will now be denoted 
$x_{b,\cdot},g_{b,\cdot}$.
\begin{defin}\label{Dfinb}
Let $x_{0,\cdot}$ be the solution of the stochastic differential 
equation
\begin{align}\label{eq:jar6}
&\dot x_{0,\cdot}=\dot w^{TX},
&x_{0,0}=p1.
\end{align}
Let $g_{0,\cdot}$ denote the horizontal lift of $x_{0,\cdot}$, so that
$g_{0,\cdot}$ is the solution of
\begin{align}\label{eq:jar7a}
&\dot g_{0,\cdot}=\dot w^{\mathfrak p}, &g_{0,0}=1
\end{align}
\end{defin}
Note that
\begin{equation}\label{eq:jar8}
x_{0,\cdot}=pg_{0,\cdot}.
\end{equation}

Observe that $x_{b,\cdot},g_{b,\cdot}$ in (\ref{eq:glab9a6a}) and 
$x_{0,\cdot},g_{0,\cdot}$ 
in (\ref{eq:jar6}) have been constructed on the same probability 
space. 

We will  improve on a result  established 
in \cite[proof of Theorem 12.8.1]{Bismut08b}, where a  weaker result 
of convergence in probability law was established. This stronger form 
of convergence will not be used in the paper.
\begin{thm}\label{Tconci}
For any $M>0$, as $b\to 0$, $\left( x_{b,\cdot},g_{b,\cdot} \right) $ 
converges to 
$ \left( x_{0,\cdot},g_{0,\cdot} \right) $ 
uniformly over $\left[0,M\right]$ in probability.
\end{thm}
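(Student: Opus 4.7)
The plan is to couple $x_{b,\cdot}$ and $x_{0,\cdot}$ through the common Brownian motion $w^{TX}_\cdot$ (already true by construction) and to pass pathwise to the limit $b \to 0$ using the generalized It\^o formula of Theorem \ref{Tfueq}. First, by Proposition \ref{Pesbr} and the uniform escape estimate (\ref{eq:led19}) of Theorem \ref{Tesc}, the paths of both $x_{b,\cdot}$ and $x_{0,\cdot}$ stay in a fixed compact ball $B \subset X$ on $[0,M]$ with probability $\geq 1 - \epsilon$ uniformly in $b \in (0,1]$, and the same applies to $g_{b,\cdot}, g_{0,\cdot}$ by $G$-equivariance. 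It therefore suffices to show that for a finite family of smooth compactly supported functions $f_1,\dots,f_N : X \to \R$ whose restrictions separate points of $B$, one has $\sup_{0\le t\le M}|f_i(x_{b,t}) - f_i(x_{0,t})| \to 0$ in probability.

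Next, for each such $f$, apply formula (\ref{eq:fex2}) to $f(x_{b,t})$ and the classical It\^o formula (\ref{eq:anfs-2}) to $f(x_{0,t})$. Writing $K_b(u) = b^{-2}e^{-u/b^2}\mathbf{1}_{u\ge 0}$, the generalized formula reads
\begin{equation*}
f(x_{b,t}) = f(p1) + b\,\nabla_{Y^{\mathfrak p}}f(p1)(1-e^{-t/b^2}) + (K_b \ast A^f_{b,\cdot})(t),
\end{equation*}
where $A^f_{b,\cdot}$ is given by (\ref{eq:glab40a-1}) along the hypoelliptic path. The boundary term is $O(b|Y^{\mathfrak p}|)$ and so vanishes in probability. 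Since $K_b \ge 0$, $\int_0^\infty K_b = 1-O(e^{-M/b^2})$ and $K_b$ concentrates on $[0,b^2]$, the convolution is an approximate identity, and for any process $h_\cdot$ with H\"older-$\tfrac12$ sample paths (which $A^f_{b,\cdot}$ is, uniformly in $b$, by the Burkholder--Davis--Gundy inequality applied to the martingale part and by the escape estimates applied to the drift part), $\sup_t|(K_b\ast h_\cdot)(t) - h_t| \to 0$ in probability. This reduces the problem to showing $A^f_{b,t}$ converges uniformly in probability to $\int_0^t \tfrac12 \Delta^X f(x_{0,s})\,ds + \int_0^t \nabla_{\delta w^{TX}_s} f(x_{0,s})$.

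The martingale term $\int_0^t \nabla_{\delta w^{TX}_s}f(x_{b,s})$ is driven by the same $w^{TX}$, so its difference from the $x_{0,\cdot}$-version is controlled by It\^o's isometry and $\sup_s d(x_{b,s},x_{0,s})$, feeding into a Gronwall loop. The main obstacle is the quadratic bulk term: one has to prove the homogenization estimate
\begin{equation*}
\sup_{0\le t\le M}\left|\int_0^t \nabla^{TX}_{Y^{TX}_{b,s}}\nabla_{Y^{TX}_{b,s}} f(x_{b,s})\,ds - \int_0^t \tfrac{1}{2}\Delta^X f(x_{0,s})\,ds\right| \longrightarrow 0
\end{equation*}
in probability. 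This is where the Ornstein--Uhlenbeck equilibration of $Y^{TX}_{b,\cdot}$ with stationary covariance $\tfrac12\mathrm{Id}_{\mathfrak p}$ is used: substituting the explicit stochastic convolution (\ref{eq:fus3}) expressing $Y^{TX}_{b,\cdot}$ in terms of $w^{TX}_\cdot$, one computes moments of the difference, obtains a rate $O(b^\alpha)$ for some $\alpha>0$ on mesoscopic intervals of length $\gg b^2$, and promotes this to a uniform-in-$t$ bound via Doob's inequality combined with the uniform $L^p$ bounds of Proposition \ref{PMehla}.

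Finally, plugging the homogenization estimate back into the comparison of $f(x_{b,t})$ and $f(x_{0,t})$ yields an inequality of the form $\sup_{s\le t}d^2(x_{b,s},x_{0,s}) \le \eta_b + C\int_0^t \sup_{u\le s}d^2(x_{b,u},x_{0,u})\,ds$ on the localization event, with $\eta_b \to 0$ in probability; Gronwall closes the loop and proves the convergence of $x_{b,\cdot}$ to $x_{0,\cdot}$. The lift $g_{b,\cdot}\to g_{0,\cdot}$ follows either by repeating the argument with right-invariant test functions on $G$ using (\ref{eq:glab9a6ay1}) and (\ref{eq:jar7a}), or by lifting $x_{b,\cdot} \to x_{0,\cdot}$ via the parallel transport construction intrinsic to (\ref{eq:glab9a6a}).
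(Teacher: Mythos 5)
Your approach takes a genuinely different route from the paper's. The paper's proof is only a few lines: it cites the earlier weak-convergence result \cite[proof of Theorem 12.8.1 and Remark 12.8.2]{Bismut08b} that the law of $x_{b,\cdot}$ on $\mathcal{C}([0,M],X)$ converges to that of $x_{0,\cdot}$; observes that $H_{b,t}=\int_0^t Y^{\mathfrak p}_s/b\,ds=w^{\mathfrak p}_t-b(Y^{\mathfrak p}_t-Y^{\mathfrak p}_0)$ is a small perturbation of $w^{\mathfrak p}_\cdot$, so the joint law of $(x_{b,\cdot},w^{\mathfrak p}_\cdot)$ converges to that of $(x_{0,\cdot},w^{\mathfrak p}_\cdot)$; and then invokes the classical measure-theoretic lemma of \cite[p.\ 48]{Bismut81a}: if $(X_n,W)$ converges in law to $(\phi(W),W)$ for a fixed random variable $W$ and a Borel map $\phi$, then $X_n\to\phi(W)$ in probability, which upgrades convergence in law to convergence in probability because $x_{0,\cdot}$ is a measurable function of the common driving noise. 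The same argument treats $g_{b,\cdot}$. Your proof bypasses weak convergence entirely and performs a direct pathwise comparison via localization, the generalized It\^{o} formula (\ref{eq:fex2}) read as an approximate identity, a homogenization estimate for the fast Ornstein--Uhlenbeck variable, and Gronwall. This is more self-contained, but the hard analysis (tightness, martingale-problem identification, ergodic averaging) that your homogenization step would re-derive is exactly what the paper's cited result already encapsulates, which is why the paper's proof is so much shorter.

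One point in your sketch is a genuine gap as stated. In the Gronwall loop you claim the martingale difference is controlled by $\sup_s d(x_{b,s},x_{0,s})$ via It\^{o}'s isometry. But $\nabla_{\delta w^{TX}_s}f(x_{b,s})$ means: parallel-transport $\delta w^{TX}_s\in T_{x_0}X$ along $x_{b,\cdot}$ to $T_{x_{b,s}}X$ and pair with $\nabla f(x_{b,s})$, while the comparison term transports along $x_{0,\cdot}$. The It\^{o}-isometry difference therefore involves the discrepancy between the two stochastic parallel transports, i.e.\ $d(g_{b,s},g_{0,s})$, and not merely $d(x_{b,s},x_{0,s})$; the scalar Gronwall inequality you wrote does not close. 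The clean fix is to run the whole pathwise comparison directly on $G$ using (\ref{eq:glab9a6ay1}) and (\ref{eq:jar7a}), where the noise lives in the fixed vector space $\mathfrak p$ and no transport ambiguity arises, then recover convergence of $x_{b,\cdot}=pg_{b,\cdot}$ by continuity of $p$. Your closing sentence gestures at the $G$-argument, but applies it only to the lift after a Gronwall on $X$ that is not closed; the order should be reversed or the two Gronwalls coupled. Separately, the homogenization estimate that you rightly call the main obstacle is only outlined; proving it uniformly in the initial $Y_0^{TX}$ with a uniform-in-$t$ rate is nontrivial and is implicitly the content of the cited \cite[Theorem 12.8.1]{Bismut08b}.
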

\begin{proof}
By \cite[proof of Theorem 12.8.1 and Remark 12.8.2]{Bismut08b},  as $b\to 
0$, the probability law of $x_{b,\cdot}$ on 
$\mathcal{C}\left(\left[0,M\right],X\right)$ converges to the 
probability law of $x_{0,\cdot}$. 

Set
\begin{align}\label{eq:jar5a}
&H_{b,t}=\int_{0}^{t}\frac{Y^{TX}_{s}}{b}ds,
&H_{0,t}=w^{ \mathfrak p}_{t}.
\end{align}
The same argument shows that as $b\to 0$, the probability law of 
$\left(x_{b,\cdot}, H_{b,\cdot}\right) $ on 
$\mathcal{C}\left(\left[0,M\right],X\times \mathfrak p\right)$ converges to the 
probability law of $\left(x_{0,\cdot},H_{0,\cdot}\right)$. It is 
easy to deduce from this that the probability law of 
$\left(x_{b,\cdot},w^{ \mathfrak p}_{\cdot}\right)$ converges to the 
probability law of $\left(x_{0,\cdot},w^{ \mathfrak 
p}_{\cdot}\right)$. An elementary argument detailed in \cite[p. 
48]{Bismut81a} allows us to obtain our theorem for $x_{b,\cdot}$. The 
same argument can be used also for $g_{b,\cdot}$. The proof of our theorem is completed. 
\end{proof}
\subsection{A uniform estimate on the heat kernel for $\mathcal{A}^{X}_{b}$}%
\label{subsec:heatsce}
By the results of \cite[section 11.5]{Bismut08b}, 
	  for  $t>0$, the  operators 
	  $\exp\left(-t\mathcal{A} 
	  ^{X}_{b}\right),\exp\left(-t\mathcal{B}^{X}_{b}\right)$ are well-defined, 
	  and
	there are associated smooth kernels
    \index{rXbt@$r_{b,t}^{X}\left(\left(x,Y^{TX}\right),\left(x',Y^{TX \prime }\right)\right)$}%
    \index{sXbt@$s^{X}_{b,t}\left(\left(x,Y^{TX}\right),\left(x',Y^{TX \prime}\right) \right) $}%
    $$r_{b,t}^{X}\left(\left(x,Y^{TX}\right),\left(x',Y^{TX \prime}\right)\right),s_{b,t}^{X}\left(\left(x,Y^{TX}\right),\left(x',Y^{TX \prime }
    \right)\right)$$
     with respect to the volume $dx'dY^{TX \prime}$. By Proposition 
     \ref{Prep1}, we get
     \begin{align}\label{eq:gougl1}
&r^{X}_{b,t}\ge 0,&s_{\bt}^{X}\ge 0.
\end{align}
Recall that the smooth kernel 
\index{pt@$p_{t}\left(x,x'\right)$}%
$p_{t}\left(x,x'\right)$ was defined in subsection 
\ref{subsec:bro}.
\begin{defin}\label{Dheatlimbis}	 
  For $t>0,\left(x,Y^{TX}\right),\left(x',Y^{TX \prime}\right)\in 
	 \mathcal{X}$, put
\index{r0t@$r_{0,t}^{X}\left(\left(x,Y^{TX}\right),\left(x',Y^{TX \prime}\right)\right)$}%
	  \begin{equation}
	      r_{0,t}^{X}\left(\left(x,Y^{TX}\right),\left(x',Y^{TX\prime }\right)\right)\\
		  =
		  p_{t}\left(x,x'\right)\pi^{-m/2}\exp\left(-\frac{1}{2}\left( \left\vert  
		  Y^{TX}\right\vert^{2}+\left\vert  Y^{TX \prime}\right\vert^{2} \right) \right).
	      \label{eq:paris-7bis}
	  \end{equation}
\end{defin}
We explain results  established in \cite[Theorems 12.8.1 and 
13.2.4]{Bismut08b}, where we  extend the range of parameters.
\begin{thm}\label{Testardbis}
Given $\tau>0,M>0$,  there exist $k\in\N,C>0,C'>0$ such
that if $b>0,\tau b^{2}\le t\le M$,
$\left(x,Y^{TX}\right),\left(x',Y^{TX \prime }\right)\in \mathcal{X}$,
\begin{equation}
     r_{b,t}^{X}\left(\left(x,Y^{TX}\right),\left(x',Y^{TX\prime}\right)\right)\le \frac{C}{t^{k}}
    \exp\left(-C'\left(
    \frac{d^{2}\left(x,x'\right)}{t}+\left\vert  Y^{TX}\right\vert^{2}+\left\vert  
    Y^{TX\prime}\right\vert^{2}\right)\right).
    \label{EQ:BERN0ardbis}
\end{equation}
Given $t>0$, as $b\to 0$, we have the pointwise convergence
\begin{equation}
    r_{b,t}^{X}\left(\left(x,Y^{TX}\right),\left(x',Y^{TX\prime}\right)\right)\to
    r_{0,t}^{X}\left(\left(x,Y^{TX}\right),\left(x',Y^{TX\prime}\right)\right).
    \label{eq:sumex32bisardbis}
\end{equation}
\end{thm}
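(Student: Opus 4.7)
The plan is to adapt the Malliavin calculus approach of \cite[chapters 12--14]{Bismut08b}, using as fundamental input the probabilistic representation of Proposition \ref{Prep1} together with the refined rate-of-escape estimates of Theorem \ref{Tesc}. By the first identity in (\ref{eq:tch2}),
\begin{equation*}
\exp(-t\mathcal{A}^{X}_{b}) F(x_0, Y^{TX}_0) = \exp(-|Y^{TX}_0|^{2}/2)\, E^{Q}\bigl[\exp(|Y^{TX}_t|^{2}/2) F(x_t, Y^{TX}_t)\bigr],
\end{equation*}
so that $r^{X}_{b,t}$ is a weighted disintegration of the joint law of $(x_t, Y^{TX}_t)$. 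I would write $r^{X}_{b,t}$ as an explicit expectation via Malliavin integration by parts, which, as in \cite[chapter 14]{Bismut08b}, requires control of the inverse Malliavin covariance matrix of the hypoelliptic diffusion.

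To establish the pointwise bound (\ref{EQ:BERN0ardbis}), I would combine three ingredients: (i) the Euclidean model estimate (\ref{eq:phan4}) from Remark \ref{Reuuhea}, which already provides the $\exp(-C'(d^{2}/t + |Y|^{2}+|Y'|^{2}))$ decay uniformly in the range $t \ge \tau b^{2}$; (ii) the sharpened rate-of-escape estimate (\ref{eq:led19s1}) of Theorem \ref{Tesc}, valid precisely for $\tau b^{2}\le t \le M$, which controls the contribution of trajectories travelling macroscopically far; and (iii) a localization argument that, near $x$, compares the curved kernel to the Euclidean model via parallel transport of the fibres $TX$ along short geodesics, the geometric correction being uniformly bounded because $X$ is a symmetric space. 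The Malliavin weight $W$ produces the polynomial loss $t^{-k}$, as in \cite{Bismut08b}.

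For the convergence (\ref{eq:sumex32bisardbis}) as $b\to 0$ at fixed $t>0$, I would invoke Theorem \ref{Tconci}, which gives convergence in probability of $(x_{b,\cdot}, g_{b,\cdot})$ to $(x_{0,\cdot}, g_{0,\cdot})$ on $[0,M]$, together with the observation that for $b$ small the Ornstein-Uhlenbeck component $Y^{TX}_\cdot$ equilibrates to its invariant measure $\pi^{-m/2}\exp(-|Y|^{2}/2)\,dY$, making $Y^{TX}_t$ asymptotically independent of $x_{b,t}$. Combined with the Malliavin-type uniform integrability coming from the bound (\ref{EQ:BERN0ardbis}) and the elliptic bound (\ref{eq:stan0}), this upgrades weak convergence to the pointwise statement (\ref{eq:sumex32bisardbis}), the Gaussian factor in (\ref{eq:paris-7bis}) being precisely the invariant density of the rescaled velocity process.

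The main obstacle is the extension of the range from $\epsilon \le t\le M$, which was the situation treated in \cite{Bismut08b}, to the broader range $\tau b^{2}\le t\le M$. In the regime where $t$ is comparable to $b^{2}$, the hypoelliptic diffusion has not fully mixed in the velocity variable, and one cannot simply quote the heat-kernel estimates of \cite[chapters 12--14]{Bismut08b}. The new technical input that unlocks this range is the Gaussian-weighted escape estimate (\ref{eq:led19sa1})--(\ref{eq:led19s1}) established in Theorem \ref{Tesc} by a Girsanov-plus-Mehler argument; it is precisely this estimate that matches the Euclidean bound (\ref{eq:phan4}) at the correct threshold $t \ge \tau b^{2}$ and allows the Malliavin calculus to close uniformly in this extended parameter range.
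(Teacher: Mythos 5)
Your outline captures the right probabilistic framework and correctly identifies the refined rate-of-escape estimates of Theorem \ref{Tesc} as the main new ingredient over \cite{Bismut08b}, but it stops short of the mechanism that actually converts those ingredients into the bound (\ref{EQ:BERN0ardbis}). Two gaps are concrete.

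First, on the Malliavin side, the issue in the extended range $\tau b^{2}\le t\le M$ is not controlled by ``uniform integrability'' from (\ref{eq:phan4}) plus a localization to the Euclidean model via parallel transport; the paper never performs such a local comparison. What it does instead is rerun the control problem (\ref{eq:crip2a}) on $[0,t]$ and rescale to $[0,1]$ by $K^{TX}_{s}=J^{TX}_{st}$, $w_{s}=tv_{st}$, producing the effective parameter $\beta=b/\sqrt{t}$ (equations (\ref{eq:stan9a})--(\ref{eq:stan11a})). This shows $t|v|^{2}_{L_{2,t}}$ and $J^{TX}_{\cdot}$, $b\sqrt{t}\dot J^{TX}_{\cdot}$ stay bounded, which translates into the explicit $1/\sqrt{t}$ deterioration of the Malliavin covariance bound (\ref{eq:stan11b}) and, via (\ref{eq:stan12a}), into the polynomial bound that $t^{k'}r^{X}_{b,t}$ and its derivatives are uniformly bounded and rapidly decreasing. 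Without this scaling step, your claim that the Malliavin weight ``produces the polynomial loss $t^{-k}$'' is unsubstantiated in the regime $t\sim b^{2}$.

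Second, and more importantly, the Malliavin argument by itself gives no Gaussian decay in $d(x,x')$ or in $Y^{TX}$, $Y^{TX\prime}$; the crude bound one obtains is only polynomial in $t^{-1}$ with rapid decay. The mechanism that upgrades it to (\ref{EQ:BERN0ardbis}) is the semigroup composition identity (\ref{eq:phan5}), $r^{X}_{b,t}=r^{X}_{b,t/2}\circ r^{X}_{b,t/2}$, used three times: combined with the fibrewise integral identity $\int r^{X}_{b,t/2}\,dz\,dZ^{TX}=\int h^{\mathfrak p}_{t/2b^{2}}\,dZ^{TX}$ (equation (\ref{eq:phan5b})) and Mehler's formula (\ref{eq:glab17a}) to extract the Gaussian in $Y^{TX}$; combined with the fact that the $L_{2}$-adjoint $\mathcal{A}^{X}_{b,-}$ is $\mathcal{A}^{X}_{b}$ under $Y^{TX}\mapsto -Y^{TX}$ to get the symmetric Gaussian in $Y^{TX\prime}$ (equation (\ref{eq:phan5e})); and combined with the splitting $d(x,z)\ge d(x,x')/2$ or $d(x',z)\ge d(x,x')/2$, the probabilistic representation (\ref{eq:tch2}), and the weighted escape estimate (\ref{eq:led19s1}) to get the Gaussian in $d(x,x')$ (equations (\ref{eq:phan6})--(\ref{eq:phan11})). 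Your proposal cites the right escape estimate but does not say how it is threaded through the kernel; the semigroup trick is the missing idea. Finally, for (\ref{eq:sumex32bisardbis}) the paper simply quotes \cite[Theorem 12.8.1]{Bismut08b}; your appeal to Theorem \ref{Tconci} and an equilibration heuristic is not wrong in spirit, but Theorem \ref{Tconci} only gives convergence of the trajectories, not of the kernel, so on its own it does not yield a pointwise statement.
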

\begin{proof}
Equation (\ref{eq:sumex32bisardbis}) was proved in \cite[Theorem 
12.8.1]{Bismut08b}. Equation (\ref{EQ:BERN0ardbis}) was established in \cite[Theorem 
13.2.4]{Bismut08b} when $0<b\le b_{0}, \epsilon\le t\le M$.
Also observe that if $X$ is a 
Euclidean vector space $E$, equation (\ref{EQ:BERN0ardbis}) was 
already established in (\ref{eq:phan4}).

We will explain how to extend the arguments of 
\cite{Bismut08b} to the general case. 
We consider the Euclidean vector space  $E= \mathfrak p$. For  $t>0$, let $L_{2,t}$ be the vector space of $L_{2}$ functions 
from $\left[0,t\right]$ with values in $\mathfrak p$ with respect to 
the Lebesgue measure. For $t=1$, we write $L_{2}$ instead of 
$L_{2,1}$.

We proceed as in \cite[section 12.5]{Bismut08b}. For $b>0,t>0,e\in \mathfrak p,f\in 
\mathfrak p ,v\in L_{2,t}$, as in \cite[eq. (12.5.2)]{Bismut08b}, consider the differential equation on the interval 
$\left[0,t\right]$ on the function $J^{TX}_{\cdot}: 
\left[0,t\right]\to \mathfrak p$,
\begin{align}\label{eq:crip2a}
&b^{2}\ddot J^{TX}+\dot J^{TX}=v,
&J^{TX}_{0}=0,\qquad\dot J^{TX}_{0}=0,\\
&J^{TX}_{t}=e,&\dot J^{TX}_{t}=f/b. \nonumber 
\end{align}
For $s\in \left[0,1\right]$, set
\begin{align}\label{eq:stan9a}
&K^{TX}_{s}=J^{TX}_{st}, &w_{s}=tv_{st}.
\end{align}
If $\beta=b/\sqrt{t}$, we have
\begin{align}\label{eq:stan10a}
&\beta^{2}\ddot K^{TX}+\dot K^{TX}=w,
&K^{TX}_{0}=0,\qquad\dot K^{TX}_{0}=0,\\
&K^{TX}_{1}=e,& \dot K^{TX}_{1}=\sqrt{t}f/\beta. \nonumber 
\end{align}

  We take in (\ref{eq:stan10a}) the $w$ 
minimizing $\left\vert  w\right\vert^{2}_{L_{2}}$ so that the given 
boundary conditions are verified.  By the considerations after 
\cite[eq. (10.3.47)]{Bismut08b}, as long as $e,f$ remain uniformly 
bounded, and $\beta>0,t>0$ remain uniformly bounded, $\left\vert  
w\right\vert^{2}_{L_{2}}$ remains uniformly bounded, and the functions 
$K^{TX}_{\cdot},\beta\dot K^{TX}_{\cdot}$ also remain uniformly bounded.

We can now transfer these results to corresponding results on 
equation (\ref{eq:crip2a}).  Given $b,e,f$, let $v\in 
L_{2,t}$ minimize $\left\vert  v\right\vert^{2}_{L_{2,t}}$ so that 
the boundary conditions in (\ref{eq:crip2a}) are verified. By 
(\ref{eq:stan9a}), we get
\begin{equation}\label{eq:stan11a}
\left\vert  v\right\vert^{2}_{L_{2,t}}=\frac{\left\vert  
w\right\vert^{2}_{L_{2}}}{t}.
\end{equation}
Under the same conditions as before, $t\left\vert  v\right\vert^{2}_{L_{2,t}}$ remains 
uniformly bounded, and  the functions $J^{TX}_{\cdot},b\sqrt{t}\dot 
J^{TX}_{\cdot}$ remain 
uniformly bounded.

In general, to estimate  the kernels $r_{b,t}^{X},s_{b,t}^{X}$, we proceed exactly as in 
   \cite[chapter 12]{Bismut08b}, where $t>0$ was fixed.  The main point is to 
   keep track of the dependence on $t$ of the estimates in 
   \cite{Bismut08b}. Let us first consider \cite[Theorems 12.6.1]{Bismut08b}. Then the above uniform estimates on 
   $t\left\vert  v\right\vert^{2}_{t},J_{\cdot},b\sqrt{t}\dot 
   J_{\cdot}$ show that in the estimate \cite[eq. 
   (12.6.2)]{Bismut08b} in   \cite[Theorem 
   12.6.1]{Bismut08b}, we have an extra constant $1/\sqrt{t}$ in the 
   right-hand side. Namely, given $c'>0,M>0,p\in \left[1,+ \infty 
   \right[$, there is $C>0$ such that for $0<b\le M,0<t\le M,\beta\le 
   c'$, \footnote{We refer to \cite{Bismut08b} for more details on $M_{t}$. Let us just 
mention that an integration by parts formula coming from the 
Malliavin calculus was established in \cite[Theorem 12.5.1]{Bismut08b}, in 
which the random tensor $M_{t}$ appears. Estimating $\left\Vert  
M_{t}\right\Vert_{p}$ is used in \cite{Bismut08b} to control the 
kernel $r_{b,t}$ for bounded $b$ and fixed $t>0$.}
   \begin{equation}\label{eq:stan11b}
\left\Vert  M_{t}\right\Vert_{p}\le 
\frac{C}{\sqrt{t}}\left(1+\left\vert  Y^{\mathfrak 
p}\right\vert^{2}\right).
\end{equation}

We can still use the formula of integration by parts in \cite[Theorem 
12.7.1]{Bismut08b}. We claim that given $\tau>0,
M>0$, there exist $\theta, 1<\theta<2$ and $C_{\theta}>0$ such that for $0<b\le 
M,\tau b^{2}\le t\le M$, we have the obvious analogue of \cite[eq. 
(12.7.2)]{Bismut08b},
\begin{equation}\label{eq:stan12a}
\left\Vert  \exp\left(\left\vert  
Y_{t}^{TX}\right\vert^{2}/2\right)\right\Vert_{\theta}\le 
C_{\theta}\exp\left(\left\vert  Y^{TX}_{0}\right\vert^{2}/2\theta\right).
\end{equation}
This is just a consequence of (\ref{eq:rito2}),  of the fact that if 
$1<\theta\le\frac{e^{t}}{\cosh\left(t\right)}$, then
\begin{equation}\label{eq:phan-14a}
\frac{e^{-2t}}{1-\theta e^{-t}\sinh\left(t\right)}\le  
\frac{1}{\theta},
\end{equation}
and of the fact that $\frac{e^{t}}{\cosh\left(t\right)}$ is an 
increasing function of $t$.

By proceeding as in \cite[Theorem 12.7.4]{Bismut08b}, we find that given $c'>0, 
M>0,k\in \N$, there exists $k'\in \N$ such that if 
$0<b\le M,0<t\le M,0<\beta\le c'$, the covariant derivatives of 
$t^{k'}r_{b,t}^{X}\left(\left(x,Y^{TX}\right),\left(x',Y^{TX \prime 
}\right)\right)$ of order $\le k $ are uniformly bounded and 
uniformly rapidly decreasing  when $\left\vert  Y\right\vert^{TX}$ or 
$\left\vert  Y^{TX \prime }\right\vert$ tend to $+ \infty $.

Now we use the semigroup identity
\begin{multline}\label{eq:phan5}
r^{X}_{b,t}\left(\left(x,Y^{TX}\right),\left(x',Y^{TX\prime}\right)\right) \\
=\int_{\mathcal{X}}^{}r^{X}_{b,t/2}\left(\left(x,Y^{TX}\right),\left(z,Z^{TX}\right)\right)
r^{X}_{b,t/2}\left(\left(z,Z^{TX}\right),\left(x',Y^{TX\prime}\right)\right)dzdZ^{TX}.
\end{multline}

We proceed as in \cite[section 13.2]{Bismut08b}.    First, using the 
above uniform bounds, we deduce from (\ref{eq:phan5}) that
\begin{equation}\label{eq:phan5a}
r^{X}_{b,t}\left(\left(x,Y^{TX}\right),\left(x',Y^{TX\prime}\right)\right)\le Ct^{-k'}
\int_{X}^{}r^{X}_{b,t/2}\left(\left(x,Y^{TX}\right),\left(z,Z^{TX}\right)\right)dz dZ^{TX}.
\end{equation}
Recall that the heat 
kernel 
\index{hEY@$h_{t} ^{\mathfrak p}\left(Y ,Y'\right)$}%
$h^{\mathfrak p}_{t}\left(Y,Y'\right)$ is given by 
(\ref{eq:glab16a}). As explained in \cite[eq. (13.2.5)]{Bismut08b}, we have the identity 
\begin{equation}\label{eq:phan5b}
\int_{X}^{}r^{X}_{b,t/2}\left(\left(x,Y^{TX}\right),\left(z,Z^{TX}\right)\right)dz dZ^{TX}
=
\int_{\mathfrak p}^{}h_{t/2b^{2}}^{\mathfrak p}\left(Y^{TX},Z^{TX}\right)dZ^{TX}.
\end{equation}
By (\ref{eq:glab17a}), (\ref{eq:phan5b}), we get
\begin{multline}\label{eq:phan5c}
\int_{X}^{}r^{X}_{b,t/2}\left(\left(x,Y^{TX}\right),\left(z,Z^{TX}\right)\right)dz dZ^{TX} \\
=
\left(\frac{e^{t/2b^{2}}}{\cosh\left(t/2b^{2}\right)}\right)^{m/2}\exp\left(
-\frac{\tanh\left(t/2b^{2}\right)}{2}\left\vert  Y^{TX}\right\vert^{2}\right).
\end{multline}
By combining (\ref{eq:phan5a}) and (\ref{eq:phan5c}), we deduce that 
given $\tau>0$, there exist $C>0,C'>0$ such that if $t\ge\tau 
b^{2}$, then
\begin{equation}\label{eq:phan5d}
r^{X}_{b,t}\left(\left(x,Y^{TX}\right),\left(x',Y^{TX\prime}\right)\right)\le Ct^{-k'}
\exp\left(-C'\left\vert  Y^{TX}\right\vert^{2}\right).
\end{equation}

The $L_{2}$ formal adjoint of $\mathcal{A}^{X}_{b}$ is the operator 
$\mathcal{A}^{X}_{b,-}$ deduced from $\mathcal{A}^{X}_{b}$ by making 
the change of variables $Y^{TX}\to -Y^{TX}$. By interchanging the 
roles of $Y^{TX}$ and $Y^{TX \prime}$ in (\ref{eq:phan5d}), under the 
same conditions, we get
\begin{equation}\label{eq:phan5e}
r^{X}_{b,t}\left(\left(x,Y^{TX}\right),\left(x',Y^{TX\prime}\right)\right)\le Ct^{-k'}
\exp\left(-C'\left\vert  Y^{TX \prime }\right\vert^{2}\right).
\end{equation}

In the integral in (\ref{eq:phan5}), either $d\left(x,z\right)\ge 
d\left(x,x'\right)/2$ or $d\left(x',z\right)\ge 
d\left(x,x'\right)/2$. From the  uniform bounds on $r^{X}_{b,t}$ that 
were described before equation (\ref{eq:phan5}), we deduce 
from (\ref{eq:phan5}) that under the conditions of our theorem,
\begin{multline}\label{eq:phan6}
r^{X}_{b,t}\left(\left(x,Y^{TX}\right),\left(x',Y^{TX\prime}\right)\right) \\
\le 
Ct^{-k'}\int_{\substack{\left(z,Z^{TX}\right)\in \mathcal{X}\\d\left(x,z\right)\ge 
d\left(x,x'\right)/2}}^{}r^{X}_{b,t/2}\left(\left(x,Y^{TX}\right),\left(z,Z^{TX}\right)\right)dzdZ^{TX}\\
+Ct^{-k'}
\int_{\substack{\left(z,Z^{TX}\right)\in \mathcal{X}\\d\left(x',z\right)\ge 
d\left(x,x'\right)/2}}^{}r^{X}_{b,t/2}\left(\left(z,Z^{TX}\right),\left(x',Y^{TX\prime}\right)
\right)dzdZ^{TX}.
\end{multline}

By equation (\ref{eq:tch2}) in Proposition \ref{Prep1}, we get
\begin{multline}\label{eq:phan7}
\int_{\substack{\left(z,Z^{TX}\right)\in \mathcal{X}\\d\left(x,z\right)\ge 
d\left(x,x'\right)/2}}^{}r^{X}_{b,t/2}\left(\left(x,Y^{TX}\right),\left(z,Z^{TX}\right)\right)dzdZ^{TX} \\
=
\exp\left(-\left\vert  Y^{TX}\right\vert^{2}/2\right)
E^{Q}\left[1_{d\left(x,x_{t/2}\right)\ge 
d\left(x,x'\right)/2}\exp\left(\left\vert  Y^{TX}_{t/2}\right\vert^{2}/2\right)\right].
\end{multline}
From (\ref{eq:phan7}), we deduce that
\begin{multline}\label{eq:phan8}
\int_{\substack{\left(z,Z^{TX}\right)\in \mathcal{X}\\d\left(x,z\right)\ge 
d\left(x,x'\right)/2}}^{}r^{X}_{b,t/2}\left(\left(x,Y^{TX}\right),\left(z,Z^{TX}\right)\right)dzdZ^{TX} \\
\le
\exp\left(-\left\vert  Y^{TX}\right\vert^{2}/2\right)
E^{Q}\left[1_{\sup_{0\le s\le t/2}d\left(x,x_{s}\right)\ge 
d\left(x,x'\right)/2}\exp\left(\left\vert  Y^{TX}_{t/2}\right\vert^{2}/2\right)\right].
\end{multline}
By equation (\ref{eq:led19s1}) in Theorem \ref{Tesc} and by 
(\ref{eq:phan8}), if $0<\tau b^{2}\le t\le M$, we get
\begin{multline}\label{eq:phan9}
\int_{\substack{z\in \mathcal{X}\\d\left(x,z\right)\ge 
d\left(x,x'\right)/2}}^{}r^{X}_{b,t}\left(\left(x,Y^{TX}\right),\left(z,Z^{TX}\right)\right)dzdZ^{TX} \\
\le C\exp\left(-C'\left(d^{2}\left(x,x'\right)/t+\left\vert  
Y^{TX}\right\vert^{2}\right)\right).
\end{multline}
Still using the fact that the $L_{2}$ adjoint of $\mathcal{A}^{X}$ is 
just $\mathcal{A}^{X}_{b,-}$, we also get the estimate
\begin{multline}\label{eq:phan10}
\int_{\substack{\left(z,Z^{TX}\right)\in \mathcal{X}\\d\left(x',z\right)\ge 
d\left(x,x'\right)/2}}^{}r^{X}_{b,t}\left(\left(z,Z^{TX}\right),\left(x',Y^{TX\prime}\right)
\right)dzdZ^{TX}\\
\le 
C\exp\left(-C'\left(d^{2}\left(x,x'\right)/t+\left\vert  
Y^{TX \prime}\right\vert^{2}\right)\right).
\end{multline}

By (\ref{eq:phan6}), (\ref{eq:phan9}), and (\ref{eq:phan10}), we get
\begin{multline}\label{eq:phan11}
r^{X}_{b,t}\left(\left(x,Y^{TX},\left(x',Y^{TX \prime }\right)\right)\right)
\le Ct^{-k'}\left(\exp\left(-C'd^{2}\left(x,x'\right)/t+\left\vert  
Y^{TX}\right\vert^{2}\right)\right) \\
+Ct^{-k'}\exp\left(-C'\left(d^{2}\left(x,x'\right)/t+\left\vert  
Y^{TX \prime }\right\vert^{2}\right)\right).
\end{multline}

By combining (\ref{eq:phan5d}), (\ref{eq:phan5e}), and 
(\ref{eq:phan11}), we get (\ref{EQ:BERN0ardbis}). The proof of our theorem is completed. 
\end{proof}

\section{Uniform estimates for $b$ small: the 
scalar case}%
\label{sec:unisca}
The purpose of this section is to establish uniform estimates for the 
 heat kernels of a scalar version $\mathfrak A^{X}_{\bt}$ of the operator 
 $\overline{\mathcal{L}}^{X}_{b,\vartheta}$. These estimates correspond to the estimates 
 stated in Theorem \ref{Test} for the nonscalar heat kernels 
 $\overline{q}^{X}_{b,\vartheta,t}$.  They are established 
 for bounded values of $b>0$. The main difficulty in the proof 
 of these estimates is that when 
 $\vartheta$ gets close to $\frac{\pi}{2}$, our scalar operators 
 become singular. Our estimates will be essentially consequences of 
  results of \cite{Bismut08b} that were properly extended in 
 section \ref{sec:extra}.
 
As will be shown in section \ref{sec:fin} and in dramatic contrast 
 with \cite{Bismut08b}, this analogy is no longer valid for the 
 kernels $\overline{q}^{X}_{b,\vartheta,t}$ themselves, because the matrix terms 
 in $\overline{\mathcal{L}}^{X}_{b,\vartheta}$ are different from the 
 ones in 
 $\mathcal{L}^{X}_{b}$.
 
 This section is organized as follows. In subsection 
 \ref{subsec:scaanab}, we introduce a scalar analogue 
 $\mathcal{A}^{X}_{\bt}$ of 
 $\overline{\mathcal{L}}^{X}_{\bt}$ over $\mathcal{X}$.
 
 In subsection \ref{subsec:unisca}, we show that the estimates on the 
 heat kernel $r^{X}_{\bt,t}$ for $\mathcal{A}^{X}_{\bt}$ are 
 trivial consequences of the results of section
 \ref{sec:extra}.
 
 Finally, in  subsection \ref{subsec:unibis}, we introduce the scalar operator 
 $\mathfrak A^{X}_{\bt}$ on $\widehat{\mathcal{X}}$, and we establish 
 the required estimates on its heat kernel.
 
 We use the same notation as in section \ref{sec:extra}.
\subsection{The scalar analogues of the operator 
  $\mathcal{L}^{X}_{\bt}$ on $\mathcal{X}$}%
  \label{subsec:scaanab}
    By analogy with  equation (\ref{eq:co19x-1}) for 
    $\mathcal{L}^{X}_{\bt}$ and with equation (\ref{eq:glab5}) for 
	$\mathcal{A}^{X}_{b},\mathcal{B}^{X}_{b}$, given 
    $b>0,\vartheta\in\left[0,\frac{\pi}{2}\right[$, 
   let $\mathcal{A}^{X}_{\bt},\mathcal{B}^{X}_{\bt}$ be the scalar 
   differential operators on $\mathcal{X}$,
    \index{AXbt@$\mathcal{A}^{X}_{b,\vartheta}$}%
    \index{BXbt@$\mathcal{B}^{X}_{b,\vartheta}$}%
    \begin{align}\label{eq:scal1}
&\mathcal{A}^{X}_{b,\vartheta}=\frac{1}{2b^{2}}\left(-\Delta^{TX}+\left\vert  Y^{TX}\right\vert^{2}-m\right)-
\frac{\cos\left(\vartheta\right)}{b}\n_{Y^{TX}},\\
&\mathcal{B}^{\mathcal{X}}_{b,\vartheta}=\frac{1}{2b^{2}}\left(-\Delta^{TX}+2\n^{V}_{Y^{TX}}\right)
-\frac{\cos\left(\vartheta\right)}{b}\n_{Y^{TX}}. \nonumber 
\end{align}
Then
\begin{equation}\label{eq:scal3}
    \mathcal{B}^{X}_{b,\vartheta}=\exp\left(\left\vert  
Y^{TX}\right\vert^{2}/2\right)\mathcal{A}_{b,\vartheta}^{X}\exp\left(-\left\vert  Y^{TX}\right\vert^{2}/2\right).
\end{equation}

Comparing with (\ref{eq:glab5}),  we get
\begin{align}\label{eq:scal4}
&\mathcal{A}^{X}_{b,\vartheta}=\cos^{2}\left(\vartheta\right)\mathcal{A}^{X}_{\cos\left(\vartheta\right)b},
&\mathcal{B}^{X}_{b,\vartheta}=\cos^{2}\left(\vartheta\right)\mathcal{B}_{\cos\left(\vartheta\right)b}^{X}.
\end{align}
For $\vartheta=0$, our operators coincide with 
$\mathcal{A}^{X}_{b},\mathcal{B}^{X}_{b}$.
\subsection{A uniform estimate on the heat kernel for 
$\mathcal{A}^{X}_{\bt}$}
  \label{subsec:unisca}
  Recall that the  kernel 
  \index{pt@$p_{t}\left(x,x'\right)$}%
  $p_{t}\left(x,x'\right)$  was defined in subsection 
  \ref{subsec:bro},  and  the  kernel
\index{rXbt@$r_{b,t}^{X}$}%
$r^{X}_{b,t}\left(\left(x,Y^{TX}\right),\left(x',Y^{TX \prime 
}\right)\right)$ was defined in subsection \ref{subsec:heatsce}.
  \begin{defin}\label{Dker}
    For $t>0$, let 
    \index{rXbtt@$r^{X}_{b,\vartheta,t}\left(\left(x,Y^{TX}\right),\left(x',Y^{TX \prime}\right)\right)$}%
    $r^{X}_{b,\vartheta,t}\left(\left(x,Y^{TX}\right),\left(x',Y^{TX 
    \prime}\right)\right)$ be the smooth kernel associated with the operator 
    $\exp\left(-t\mathcal{A}^{X}_{b,\vartheta}\right)$ with respect 
    to  the volume $dx'dY^{TX \prime}$. Set
\index{rX0@$r^{X}_{0,\vartheta,t}\left(\left(x,Y^{TX}\right),\left(x',Y^{TX \prime 
}\right)\right)$}%
\begin{multline}\label{eq:scal5x1}
r^{X}_{0,\vartheta,t}\left(\left(x,Y^{TX}\right),\left(x',Y^{TX \prime 
}\right)\right)=p_{\cos^{2}\left(\vartheta\right)t}\left(x,x'\right) 
\\
\pi^{-m/2}\exp\left(
-\frac{1}{2}\left(\left\vert  Y^{TX}\right\vert^{2}+\left\vert  Y^{TX 
\prime }\right\vert^{2}\right)\right).
\end{multline} 
\end{defin}
 By (\ref{eq:paris-7bis}), (\ref{eq:scal4}), and (\ref{eq:scal5x1}), 
 for $b\ge 0, \vartheta\in \left[0,\frac{\pi}{2}\right[,t>0$,  we get
\begin{equation}\label{eq:scal5}
r^{X}_{b,\vartheta,t}=r^{X}_{\cos\left(\vartheta\right)b,\cos^{2}\left(\vartheta\right)t}.
\end{equation}

Now we establish an analogue of Theorem \ref{Test} for the kernel 
$r^{X}_{b,\vartheta,t}$, which is also an extension of Theorem 
\ref{Testardbis} for the kernel $r^{X}_{b,t}$. 
\begin{thm}\label{Tinsi}
Given $0<\epsilon\le M<+ \infty $, there exist $C>0,C'>0,k\in\N$ 
such that for $0<b\le M,0\le\vartheta<\frac{\pi}{2}, \epsilon\le t\le 
M, \left(x,Y^{TX}\right),\left(x',Y^{TX \prime }\right)\in\mathcal{X}$, 
\begin{multline}\label{eq:scal6}
r^{X}_{b,\vartheta,t}\left(\left(x,Y^{TX}\right),\left(x',Y^{TX \prime 
}\right)\right)\\
\le 
\frac{C}{\cos^{k}\left(\vartheta\right)}
\exp\left(-C'\left(
\frac{d^{2}\left(x,x'\right)}{\cos^{2}\left(\vartheta\right)}+\left\vert  Y^{TX}
\right\vert^{2}+\left\vert  Y^{TX \prime 
}\right\vert^{2}\right)\right).
\end{multline}
Given $\vartheta\in\left[0,\frac{\pi}{2}\right[, t>0$, as $b\to 0$, we have the pointwise convergence,
\begin{equation}\label{eq:scal6a1}
r_{b,\vartheta,t}^{X}\left(\left(x,Y^{TX}\right),\left(x',Y^{TX 
\prime }\right)\right)\to 
r^{X}_{0,\vartheta,t}\left(\left(x,Y^{TX}\right),\left(x',Y^{TX 
\prime }\right)\right).
\end{equation}
\end{thm}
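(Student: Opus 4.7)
The plan is to derive Theorem \ref{Tinsi} essentially as a corollary of Theorem \ref{Testardbis}, exploiting the scaling identity (\ref{eq:scal5}):
\begin{equation*}
r^{X}_{b,\vartheta,t}=r^{X}_{\cos\left(\vartheta\right)b,\,\cos^{2}\left(\vartheta\right)t}.
\end{equation*}
This identity, which follows directly from (\ref{eq:scal4}) together with the defining relation $\mathcal{A}^{X}_{b,\vartheta}=\cos^{2}(\vartheta)\mathcal{A}^{X}_{\cos(\vartheta)b}$ and the obvious behaviour of heat kernels under rescaling time by a constant, converts a bound uniform in the extra parameter $\vartheta$ into a bound on $r^{X}_{b',t'}$ with $b'=\cos(\vartheta)b$ and $t'=\cos^{2}(\vartheta)t$. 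The strategy is then to verify that the allowable range $(b,\vartheta,t)\in (0,M]\times[0,\pi/2[\times[\epsilon,M]$ translates into a range of $(b',t')$ on which Theorem \ref{Testardbis} applies with uniform constants, and to rewrite the resulting upper bound in terms of the original variables.

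First, I would check the parameter constraints. Since $\cos(\vartheta)\le 1$, we have $b'=\cos(\vartheta)b\le M$, so $b'$ remains bounded. The essential condition in Theorem \ref{Testardbis} is $\tau (b')^{2}\le t'\le M$ for some fixed $\tau>0$. Here $t'=\cos^{2}(\vartheta)t\le t\le M$, so the upper bound holds automatically. For the lower bound, $\tau(b')^{2}=\tau\cos^{2}(\vartheta)b^{2}\le \tau \cos^{2}(\vartheta) M^{2}$, while $t'=\cos^{2}(\vartheta)t\ge \cos^{2}(\vartheta)\epsilon$. Choosing $\tau=\epsilon/M^{2}$ makes the inequality $\tau(b')^{2}\le t'$ valid uniformly in $\vartheta$ (the common factor $\cos^{2}(\vartheta)$ cancels on both sides). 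Applying Theorem \ref{Testardbis} to this range gives constants $C,C',k$ (independent of $\vartheta$) such that
\begin{equation*}
r^{X}_{b',t'}\bigl(\left(x,Y^{TX}\right),\left(x',Y^{TX\prime}\right)\bigr)\le \frac{C}{(t')^{k}}\exp\Bigl(-C'\Bigl(\frac{d^{2}(x,x')}{t'}+\left\vert Y^{TX}\right\vert^{2}+\left\vert Y^{TX\prime}\right\vert^{2}\Bigr)\Bigr).
\end{equation*}
Substituting $t'=\cos^{2}(\vartheta)t$ and using $\epsilon\le t\le M$ to absorb the factor $t^{-k}$ into the constant, we obtain (\ref{eq:scal6}) with $2k$ in place of $k$ (one may then rename); in particular, the denominator $(t')^{k}=\cos^{2k}(\vartheta)t^{k}$ produces the singular factor $\cos^{-2k}(\vartheta)$, and the term $d^{2}(x,x')/t'$ produces $d^{2}(x,x')/(\cos^{2}(\vartheta)t)$, which dominates $d^{2}(x,x')/\cos^{2}(\vartheta)$ up to the constant $1/M$.

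For the pointwise convergence (\ref{eq:scal6a1}), I would fix $\vartheta\in[0,\pi/2[$ and $t>0$ and apply (\ref{eq:sumex32bisardbis}) to the right-hand side of (\ref{eq:scal5}) as $b\to 0$ (equivalently, $b'=\cos(\vartheta)b\to 0$). This yields
\begin{equation*}
r^{X}_{b,\vartheta,t}\to r^{X}_{0,\cos^{2}(\vartheta)t}=p_{\cos^{2}(\vartheta)t}(x,x')\,\pi^{-m/2}\exp\Bigl(-\tfrac{1}{2}\bigl(\left\vert Y^{TX}\right\vert^{2}+\left\vert Y^{TX\prime}\right\vert^{2}\bigr)\Bigr),
\end{equation*}
which coincides with the definition of $r^{X}_{0,\vartheta,t}$ in (\ref{eq:scal5x1}). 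No obstacle is expected here: the entire argument is a mechanical translation through the scaling relation (\ref{eq:scal5}), and the only point requiring care is ensuring that the cancellation of the $\cos^{2}(\vartheta)$ factors in the parameter constraint $\tau(b')^{2}\le t'$ really does make Theorem \ref{Testardbis} applicable uniformly in $\vartheta\to\pi/2$, which it does because that constraint is homogeneous under the rescaling.
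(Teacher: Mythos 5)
Your proposal is correct and is precisely the argument the paper intends: the paper's proof of Theorem \ref{Tinsi} is the one-line statement that it follows from Theorem \ref{Testardbis} and equation (\ref{eq:scal5}), and your expansion fills in exactly the verification that the rescaling $(b,t)\mapsto(\cos(\vartheta)b,\cos^{2}(\vartheta)t)$ preserves the constraint $\tau(b')^{2}\le t'$ uniformly in $\vartheta$ (with $\tau=\epsilon/M^{2}$) and that substituting back produces the stated $\cos^{-k}(\vartheta)$ and $d^{2}(x,x')/\cos^{2}(\vartheta)$ factors.
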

\begin{proof}
   Our theorem is a consequence of   Theorem \ref{Testardbis} and of 
   equation (\ref{eq:scal5}).
\end{proof}
\subsection{A uniform estimate on a scalar heat kernel over 
$\widehat{\mathcal{X}}$}%
\label{subsec:unibis}
Recall that
\index{X@$\widehat{\mathcal{X}}$}%
$\widehat{\pi}:\widehat{\mathcal{X}}\to X$ is the total space of 
$TX \oplus N$, and that $Y=Y^{TX} \oplus Y^{N}$ is the canonical 
section of $\widehat{\pi}^{*}\left(TX \oplus N\right)$. Let 
\index{AXbt@$\mathfrak A^{X}_{b,\vartheta}$}%
\index{BXbt@$\mathfrak B^{X}_{\bt}$}%
$\mathfrak A^{X}_{b,\vartheta}, \mathfrak B^{X}_{\bt}$ be 
the scalar  differential operators   on $\widehat{\mathcal{X}}$,
\begin{align}\label{eq:scal11}
&\mathfrak A^{X}_{b,\vartheta}=\frac{1}{2b^{2}}\left(-\Delta^{TX}+\left\vert  
Y^{TX}\right\vert^{2}-m\right)
+
\frac{\cos\left(\vartheta\right)}{2b^{2}}\left(-\Delta^{N}+\left\vert  Y^{N}\right\vert^{2}-n\right) \nonumber \\
&\qquad\qquad -\frac{\cos\left(\vartheta\right)}{b}\n_{Y^{TX}},\\
&\mathfrak B^{X}_{\bt}=\frac{1}{2b^{2}}\left(-\Delta^{TX}+2\n^{V}_{Y^{TX}}\right)
+
\frac{\cos\left(\vartheta\right)}{2b^{2}}\left(-\Delta^{N}+2\n^{V}_{Y^{N}}\right)
-\frac{\cos\left(\vartheta\right)}{b}\n_{Y^{TX}}. \nonumber 
\end{align}

Then
\begin{equation}\label{eq:scal11y1}
\mathfrak B^{X}_{\bt}=\exp\left(\left\vert  
Y\right\vert^{2}/2\right)\mathfrak A^{X}_{\bt}\exp\left(-\left\vert  
Y\right\vert^{2}/2\right).
\end{equation}
Note 
that
 $\mathfrak A^{X}_{b,0}, \mathfrak B^{X}_{b,0}$ are just the operators $\mathfrak 
A^{X}_{b},\mathfrak B^{X}_{b}$ defined in \cite[eqs. (11.6.1) and 
(11.6.2)]{Bismut08b}.

For 
$t>0$, let
\index{rXbtt@ $\mathfrak r^{X}_{b,\vartheta,t}\left(\left(x,Y\right),\left(x',Y'\right)\right)$}%
\index{sXbtt@ $\mathfrak s^{X}_{b,\vartheta,t}\left(\left(x,Y\right),\left(x',Y'\right)\right)$}%
$\mathfrak 
r^{X}_{b,\vartheta,t}\left(\left(x,Y\right),\left(x',Y'\right)\right) 
,
\mathfrak 
s^{X}_{\bt,t}\left(\left(x,Y\right),\left(x',Y'\right)\right)$ be the 
smooth kernels associated with the
operators $\exp\left(-t \mathfrak  
A^{X}_{b,\vartheta}\right),\exp\left(-t \mathfrak  
B^{X}_{b,\vartheta}\right)$. The existence of such kernels follows 
from \cite[section 11.6]{Bismut08b}.

Set
\index{rX0btt@$r^{X}_{0,\vartheta,t}\left(\left(x,Y\right),\left(x',Y'\right)\right)$}%
\begin{multline}\label{eq:stan16x1}
\mathfrak 
r^{X}_{0,\vartheta,t}\left(\left(x,Y\right),\left(x',Y'\right)\right)=
p_{\cos^{2}\left(\vartheta\right)t}\left(x,x'\right)\pi^{-\left(m+n\right)/2}
\exp\left(-\frac{1}{2}\left(\left\vert  
Y\right\vert^{2}+\left\vert  Y'\right\vert^{2}\right)\right).
\end{multline}

Now we establish an analogue of Theorem \ref{Test} for the kernel 
$ \mathfrak r^{X}_{b,\vartheta,t}$. These results extend corresponding results 
established in \cite[Theorems 12.10.2 and 13.3.1]{Bismut08b}.
\begin{thm}\label{Tinsi2}
Given $0<\epsilon\le M<+ \infty $, there exist $C>0,C'>0,k\in\N$ 
such that for $0<b\le M,\vartheta\in\left[0,\frac{\pi}{2}\right[, \epsilon\le t\le 
M$, $\left(x,Y\right),\left(x',Y'\right)\in\widehat{\mathcal{X}}$,
\begin{multline}\label{eq:scal7a-1}
\mathfrak r^{X}_{b,\vartheta,t}\left(\left(x,Y\right),\left(x',Y^{\prime 
}\right)\right)
\le 
\frac{C}{\cos^{k}\left(\vartheta\right)}
\exp \Biggl( -C'\left( 
\frac{d^{2}\left(x,x'\right)}{\cos^{2}\left(\vartheta\right)}+\left\vert  Y^{TX}
\right\vert^{2}+\left\vert  Y^{TX \prime } \right\vert^{2} \right) \\
-\frac{1}{4}\left(1-e^{-\cos\left(\vartheta\right)\epsilon/b^{2}}\right)\left(\left\vert  
Y^{N}\right\vert^{2}+\left\vert  Y^{N \prime 
}\right\vert^{2}\right)\Biggr).
\end{multline}
 There exist $C>0,C'>0,k\in\N$ such that under the above conditions,
\begin{multline}\label{eq:scal7}
\mathfrak r^{X}_{b,\vartheta,t}\left(\left(x,Y\right),\left(x',Y^{\prime 
}\right)\right)
\le 
\frac{C}{\cos^{k}\left(\vartheta\right)}
\exp \Biggl( -C'\Biggl( 
\frac{d^{2}\left(x,x'\right)}{\cos^{2}\left(\vartheta\right)}+\left\vert  Y^{TX}
\right\vert^{2}+\left\vert  Y^{TX \prime } \right\vert^{2}\\
+\cos\left(\vartheta\right)\left(\left\vert  
Y^{N}\right\vert^{2}+\left\vert  Y^{N \prime 
}\right\vert^{2}\right)\Biggr)\Biggr).
\end{multline}
Given $t>0,\vartheta\in\left[0,\frac{\pi}{2}\right[$, as $b\to 0$, we have the 
pointwise convergence,
 \begin{equation}\label{eq:scal8}
\mathfrak r_{b,\vartheta,t}^{X}\left(\left(x,Y\right),\left(x',Y^{
\prime }\right)\right)
\to \mathfrak 
r^{X}_{0,\vartheta,t}\left(\left(x,Y\right),\left(x',Y'\right)\right).
\end{equation}
\end{thm}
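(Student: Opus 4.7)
The key observation is the tensor product structure of $\mathfrak{A}^{X}_{b,\vartheta}$. Comparing (\ref{eq:scal11}) with (\ref{eq:scal1}), we see that
\begin{equation*}
\mathfrak{A}^{X}_{b,\vartheta} = \mathcal{A}^{X}_{b,\vartheta} + \frac{\cos(\vartheta)}{b^{2}} O^{N},
\end{equation*}
where $\mathcal{A}^{X}_{b,\vartheta}$ acts only on the variables $(x,Y^{TX})\in\mathcal{X}$, and the fibrewise harmonic oscillator $O^{N}=\tfrac{1}{2}(-\Delta^{N}+|Y^{N}|^{2}-n)$ acts only on $Y^{N}$. These two operators commute, and consequently the heat semigroup factorises, yielding the product formula
\begin{equation*}
\mathfrak{r}^{X}_{b,\vartheta,t}\bigl((x,Y),(x',Y')\bigr) = r^{X}_{b,\vartheta,t}\bigl((x,Y^{TX}),(x',Y^{TX\prime})\bigr)\, h^{N}_{t\cos(\vartheta)/b^{2}}\bigl(Y^{N},Y^{N\prime}\bigr),
\end{equation*}
where $h^{N}_{s}$ is the Euclidean Mehler kernel on fibres of $N\simeq\mathfrak k$.

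The first factor is controlled directly by Theorem \ref{Tinsi}, which already delivers the $\cos^{-k}(\vartheta)$ prefactor together with the Gaussian decay in $d(x,x')/\cos(\vartheta)$ and in $Y^{TX},Y^{TX\prime}$. For the second factor I use the explicit bound (\ref{eq:glab17}) with $s=t\cos(\vartheta)/b^{2}$. The elementary inequality $\tanh(u)\ge\tfrac12(1-e^{-2u})$ for $u\ge 0$ (which follows from rewriting $2\tanh(u)/(1-e^{-2u})=2e^{2u}/(e^{2u}+1)\ge 1$) gives $\tanh(s/2)\ge\tfrac12(1-e^{-s})$, so for $t\ge\epsilon$,
\begin{equation*}
\tanh(s/2)\ge \tfrac12\bigl(1-e^{-\epsilon\cos(\vartheta)/b^{2}}\bigr).
\end{equation*}
This produces the exponential weight in (\ref{eq:scal7a-1}). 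The Mehler prefactor $(e^{s}/2\pi\sinh(s))^{n/2}$ blows up like $(b^{2}/\cos(\vartheta)t)^{n/2}$ when $\cos(\vartheta)\to 0$, which is absorbed into the factor $\cos^{-k}(\vartheta)$ after enlarging $k$.

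To pass from (\ref{eq:scal7a-1}) to the stronger (\ref{eq:scal7}), I use the two-regime estimate $1-e^{-u}\ge u/2$ for $0\le u\le 1$ and $1-e^{-u}\ge 1-e^{-1}$ for $u\ge 1$. Applied to $u=\epsilon\cos(\vartheta)/b^{2}$ with $0<b\le M$, this gives $1-e^{-\epsilon\cos(\vartheta)/b^{2}}\ge c(\epsilon,M)\cos(\vartheta)$, which upgrades the exponential factor as required. Finally, the pointwise convergence (\ref{eq:scal8}) follows by combining the convergence (\ref{eq:scal6a1}) of $r^{X}_{b,\vartheta,t}$ to $r^{X}_{0,\vartheta,t}$ supplied by Theorem \ref{Tinsi} with the limit (\ref{eq:phan16}) applied to $h^{N}_{s}$ at $s=t\cos(\vartheta)/b^{2}\to+\infty$; multiplying and comparing with definitions (\ref{eq:scal5x1}) and (\ref{eq:stan16x1}) identifies the product with $\mathfrak{r}^{X}_{0,\vartheta,t}$.

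No serious obstacle remains at this stage, since the technically hard work was already performed in Theorem \ref{Testardbis} and transferred to Theorem \ref{Tinsi} via the rescaling (\ref{eq:scal5}); the present statement is essentially a bookkeeping combination of those results with the explicit Mehler formula, where the only subtlety is keeping track of the correct powers of $\cos(\vartheta)$ uniformly as $\vartheta\to\pi/2$.
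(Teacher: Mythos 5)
Your proposed factorisation
\begin{equation*}
\mathfrak{r}^{X}_{b,\vartheta,t}\bigl((x,Y),(x',Y')\bigr) = r^{X}_{b,\vartheta,t}\bigl((x,Y^{TX}),(x',Y^{TX\prime})\bigr)\, h^{N}_{t\cos(\vartheta)/b^{2}}\bigl(Y^{N},Y^{N\prime}\bigr)
\end{equation*}
is false in general, and this is the key gap. The operator $\mathcal{A}^{X}_{b,\vartheta}$, viewed as an operator on $\widehat{\mathcal{X}}$, contains $\n_{Y^{TX}}$, whose flow parallel transports $Y^{N}$ along the random trajectory $x_{\cdot}$ via $\n^{N}$. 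The two operators $\mathcal{A}^{X}_{b,\vartheta}$ and $\frac{\cos(\vartheta)}{b^{2}}O^{N}$ do commute (parallel transport is a fibrewise isometry, hence preserves $\Delta^{N}$ and $|Y^{N}|^{2}$), so the semigroups compose; but the kernel does not split into a product on $\mathcal{X}\times N$ because the first factor produces, conditionally on $(x_{t},Y^{TX}_{t})=(x',Y^{TX\prime})$, the random point $\tau^{0}_{t}Y^{N}$, and the holonomy operator $\tau^{0}_{t}$ depends on the whole path. What the paper actually proves (equations (\ref{eq:scal16})--(\ref{eq:scal18})) is that $\mathfrak{r}^{X}_{b,\vartheta,t}$ equals $r^{X}_{b,\vartheta,t}$ times the \emph{conditional expectation} of $h^{N_{x'}}_{s}(\tau^{0}_{t}Y^{N},Y^{N\prime})$, and then passes to the inequality (\ref{eq:scal18}) precisely because the Mehler bound (\ref{eq:glab17}) depends on $Y^{N},Y^{N\prime}$ only through $|Y^{N}|^{2}+|Y^{N\prime}|^{2}$, so the random inner product $\langle\tau^{0}_{t}Y^{N},Y^{N\prime}\rangle$ disappears upon majorising.

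Concretely: the argument you give for the two estimates (\ref{eq:scal7a-1}) and (\ref{eq:scal7}) is salvageable, since you only invoke the bound (\ref{eq:glab17}) (write $\le$ instead of $=$ in the displayed product and everything you do afterwards goes through). But the proof of the pointwise convergence (\ref{eq:scal8}) genuinely fails: from the correct formula, all one gets directly is the convergence of the semigroups integrated against test functions (the paper's (\ref{eq:phan17})), plus a one-sided inequality $\limsup_{b\to 0}\mathfrak{r}^{X}_{b,\vartheta,t}\le\mathfrak{r}^{X}_{0,\vartheta,t}$. To upgrade this to pointwise convergence of the kernels, the paper needs a separate argument supplying uniform bounds on $\mathfrak{r}^{X}_{b,\vartheta,t}$ and its $(x',Y')$-derivatives over compacta (via the Malliavin calculus, referenced from \cite[Theorems 12.10.1--12.10.2]{Bismut08b}), after which weak convergence plus equicontinuity yields the pointwise statement. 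You should not claim (\ref{eq:scal8}) follows by ``multiplying and comparing''; it requires this additional regularity input.
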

\begin{proof}
  Take $x_{0}\in X$. Let  $w_{\cdot}=\left(w^{TX}_{\cdot},w^{N}_{\cdot}\right)$ be a 
Brownian motion in $\left(TX \oplus N\right)_{x_{0}}$, and let $E$ 
be the corresponding expectation operator. Consider the stochastic 
differential equation
\begin{align}\label{eq:scal12}
&\dot x=\frac{\cos\left(\vartheta\right)}{b}Y^{TX},&\dot Y^{TX}= \frac{1}{b}\dot w^{TX}, \qquad 
&\dot Y^{N}=\frac{\cos^{1/2}\left(\vartheta\right)}{b}\dot w^{N},\\
&\left(x,Y\right)_{0}=\left(x_{0},Y_{0}\right). \nonumber 
\end{align}
In (\ref{eq:scal12}), $\dot Y^{TX},\dot Y^{N}$ denote the covariant 
derivatives along the path $x_{\cdot}$ with respect to the 
connections $\n^{TX},\n^{N}$. As explained in \cite[eq. 
(13.2.12)]{Bismut08b}, an application of Itô and Feynman Kac formulas shows that if 
$F\in C^{\infty ,c}\left(X,\R\right)$, then
\begin{multline}\label{eq:scal13}
\exp\left(-t \mathfrak A^{X}_{b,\vartheta}\right)F\left(x_{0},Y_{0}\right)=
\exp\left(\left(m+\cos\left(\vartheta\right)n\right)\frac{t}{2b^{2}}\right)\\
E\left[\exp\left(-\frac{1}{2b^{2}}\int_{0}^{t}\left(\left\vert  
Y^{TX}\right\vert^{2}+\cos\left(\vartheta\right)\left\vert  
Y^{N}\right\vert^{2}\right)ds\right)F\left(x_{t},Y_{t}\right)\right].
\end{multline}

We will now be more precise on the proper interpretation of 
(\ref{eq:scal12}), (\ref{eq:scal13}). Indeed let 
$\overline{Y}^{N}_{\cdot}\in N_{x_{0}}$ be the path corresponding to 
$Y^{N}_{\cdot}$, so that if $\tau^{0}_{s}$ is the parallel transport 
from $N_{x_{0}}$ to $N_{x_{s}}$ along $x_{\cdot}$,   and if $\tau^{s}_{0}$ denotes its 
inverse, we have
\begin{equation}\label{eq:scal14}
Y^{N}_{s}=\tau^{0}_{s}\overline{Y}^{N}_{s}.
\end{equation}
Then we rewrite (\ref{eq:scal13}) in the form
\begin{multline}\label{eq:scal15}
\exp\left(-t\mathfrak 
A^{X}_{b,\vartheta}\right)F\left(x_{0},Y_{0}\right)=
\exp\left(\left(m+\cos\left(\vartheta\right)n\right)\frac{t}{2b^{2}}\right)\\
E\left[\exp\left(-\frac{1}{2b^{2}}\int_{0}^{t}\left(\left\vert  
Y^{TX}\right\vert^{2}+\cos\left(\vartheta\right)\left\vert  
\overline{Y}^{N}\right\vert^{2}\right)ds\right)F\left(x_{t},\tau^{0}_{t}\overline{Y}_{t}\right)\right].
\end{multline}

We will use the fact that $x_{\cdot}$ and $\overline{Y}^{N}_{\cdot}$ are 
independent processes.  Recall that the heat kernel 
\index{hNt@$h^{N}_{t}\left(Y,Y'\right)$}%
$h^{N}_{t}\left(Y,Y'\right)$ is given by (\ref{eq:glab16a}). By (\ref{eq:scal15}), we get
\begin{multline}\label{eq:scal16}
\exp\left(-t\mathfrak 
A^{X}_{b,\vartheta}\right)F\left(x_{0},Y_{0}\right)=
\exp\left(\frac{mt}{2b^{2}}\right)
E\Biggl[\exp\left(-\frac{1}{2b^{2}}\int_{0}^{t}\left\vert  
Y^{TX}\right\vert^{2}ds\right) \\
\int_{N_{x_{0}}}^{}h^{N_{x_{0}}}_{\cos\left(\vartheta\right)t/b^{2}}
\left(Y^{N}_{0},Y^{N}\right)F\left(x_{t},Y^{TX}_{t},
\tau^{0}_{t}Y^{N}\right)dY^{N}\Biggr].
\end{multline}
Since $\tau^{0}_{s}$ is an isometry, we can rewrite (\ref{eq:scal16}) 
in the form
\begin{multline}\label{eq:scal16z1}
\exp\left(-t\mathfrak 
A^{X}_{b,\vartheta}\right)F\left(x_{0},Y_{0}\right)=
\exp\left(\frac{mt}{2b^{2}}\right)
E\Biggl[\exp\left(-\frac{1}{2b^{2}}\int_{0}^{t}\left\vert  
Y^{TX}\right\vert^{2}ds\right) \\
\int_{N_{x_{t}}}^{}h^{N_{x_{t}}}_{\cos\left(\vartheta\right)t/b^{2}}
\left(\tau^{0}_{t}Y^{N}_{0},Y^{N}\right)F\left(x_{t},Y^{TX}_{t},
Y^{N}\right)dY^{N}\Biggr].\end{multline}

Assume  that $F$ is nonnegative. Using 
(\ref{eq:scal16z1}), we obtain
\begin{multline}\label{eq:scal17}
\exp\left(-t\mathfrak 
A^{X}_{b,\vartheta}\right)F\left(x_{0},Y_{0}\right)\le
\exp\left(\frac{mt}{2b^{2}}\right)\\
E\Biggl[\exp\left(-\frac{1}{2b^{2}}\int_{0}^{t}\left\vert  
Y^{TX}\right\vert^{2}ds\right)
\int_{N_{x_{t}}}^{}\left(\frac{e^{\cos\left(\vartheta\right)t/b^{2}}}
{2\pi\sinh\left(\cos\left(\vartheta\right)t/b^{2}\right)} \right) 
^{n/2} \\
\exp\left(
-\frac{1}{2}\tanh\left(\cos\left(\vartheta\right)t/2b^{2}\right)\left(\left\vert  Y_{0}^{N}\right\vert^{2}+
\left\vert  Y^{N }\right\vert^{2}\right)\right)\\
F\left(x_{t},Y^{TX}_{t},
Y^{N }\right)dY^{N}\Biggr].
\end{multline}
By (\ref{eq:scal17}), we deduce  that 
\begin{multline}\label{eq:scal18}
\mathfrak 
r^{X}_{b,\vartheta}\left(\left(x,Y\right),\left(x',Y'\right)\right)\le r^{X}_{b,\vartheta}
\left(\left(x,Y^{TX}\right),\left(x',Y^{TX \prime }\right)\right) \\
\left( \frac{e^{\cos\left(\vartheta\right)t/b^{2}}}
{2\pi\sinh\left(\cos\left(\vartheta\right)t/b^{2}\right)} \right) 
^{n/2}\exp\left(-\frac{1}{2}\tanh\left(\cos\left(\vartheta\right)t/2b^{2}\right)
\left(\left\vert Y^{N}\right\vert^{2}+\left\vert  Y^{N \prime 
}\right\vert^{2}\right) \right) .
\end{multline}
For $x\ge 0$, we get
\begin{equation}\label{eq:phan14}
\tanh\left(x\right)=\frac{1-e^{-2x}}{1+e^{-2x}}\ge\frac{1}{2}\left(1-e^{-2x}\right).
\end{equation}
For $0<b\le M,t\ge \epsilon$, then $t/b^{2}\ge\epsilon/M^{2}$, and so
\begin{align}\label{eq:scal19}
&\frac{e^{\cos\left(\vartheta\right)t/b^{2}}}{\sinh\left(\cos\left(\vartheta\right)t/b^{2}\right)}\le
\frac{e^{\cos\left(\vartheta\right)\epsilon/M^{2}}}{\sinh\left(\cos\left(\vartheta\right)\epsilon/M^{2}\right)}
\le \frac{C}{\cos\left(\vartheta\right)},\\
&\tanh\left(\cos\left(\vartheta\right)t/2b^{2}\right)\ge\tanh\left(\cos\left(\vartheta\right)\epsilon
/2M^{2}\right)\ge C'\cos\left(\vartheta\right). \nonumber 
\end{align}
By combining equation (\ref{eq:scal6}) in Theorem \ref{Tinsi} and 
(\ref{eq:scal18})--(\ref{eq:scal19}), we get (\ref{eq:scal7a-1}), 
(\ref{eq:scal7}).

By equation (\ref{eq:sumex32bisardbis}) in Theorem \ref{Testardbis}, 
 by  (\ref{eq:stan16x1}) and  (\ref{eq:scal18}), we get
\begin{equation}\label{eq:phan16a}
\limsup_{b\to 0}\mathfrak 
r^{X}_{\bt,t}\left(\left(x,Y\right),\left(x',Y'\right)\right)\le 
\mathfrak 
r_{0,\vartheta,t}\left(\left(x,Y\right),\left(x',Y'\right)\right).
\end{equation}
Also by (\ref{eq:glab17}), (\ref{eq:sumex32bisardbis}), and 
(\ref{eq:scal16}),  as $b\to 0$, 
\begin{equation}\label{eq:phan17}
\exp\left(-t \mathfrak  A^{X}_{\bt}\right)F\left(x,Y\right)\to
\int_{\widehat{\mathcal{X}}}^{}\mathfrak 
r^{X}_{0,\vartheta,t}\left(\left(x,Y\right),\left(x',Y'\right)\right)F\left(x',Y'\right)
dx'dY'.
\end{equation}
The above is not enough to obtain the pointwise convergence in 
(\ref{eq:scal8}). However, given $\vartheta\in 
\left[0,\frac{\pi}{2}\right[$, as explained in \cite[Theorems 12.10.1 
and 12.10.2]{Bismut08b}, we can still use the Malliavin calculus, and 
show that given $\left(x,Y\right)\in \widehat{\mathcal{X}}$, 
$\mathfrak 
r^{X}_{\bt}\left(\left(x,Y\right),\left(x',Y'\right)\right)$ and its 
derivatives in $\left(x',Y'\right)$ of arbitrary order are uniformly 
bounded on any compact set. By combining this with equation 
(\ref{eq:phan17}), we get (\ref{eq:scal8}). The proof of our theorem is completed. 
\end{proof}

\section{The hypoelliptic heat kernel $\overline{q}^{X}_{b,\vartheta,t}$ for small $b$}%
\label{sec:fin}
The purpose of this section is to prove Theorem \ref{Test}. More 
precisely, we establish uniform estimates on the smooth kernel 
$\overline{q}^{X}_{\bt,t}\left(\left(x,Y\right),\left(x',Y'\right)\right)$ when $0<b\le 1$, and we prove that as $b\to 0$, $\overline{q}^{X}_{\bt,t}$
converges to $\overline{q}^{X}_{0,\vartheta,t}$. A scalar version of 
our estimates was established in section \ref{sec:unisca}. While in 
\cite{Bismut08b}, when $\vartheta=0$, the corresponding estimates for 
$\overline{q}^{X}_{\bt,t}$ could be derived relatively easily from 
the estimates on the scalar version of this heat kernel, this is not 
the case here, because of the matrix structure of our operator. 
At a technical level, while  the solutions of certain linear 
stochastic differential equations could be controlled in 
\cite{Bismut08b} using a version of Gronwall's lemma, this is no 
longer possible. This alone explains the length of this section. 

This section is organized as follows. In subsection 
\ref{subsec:probell}, we give a probabilistic expression for the 
semigroup $\exp\left(-tT^{X}\right)$.

In subsection \ref{subsec:diff}, we give a crucial identity on the 
operator $\overline{\mathcal{M}}^{X}_{\bt}$, which is conjugate to 
the operator 
$\overline{\mathcal{L}}^{X}_{\bt}$

In subsection \ref{subsec:sign}, we make the innocuous change of 
coordinates $Y\to -Y$.

In subsection \ref{subsec:prcs}, we give a probabilistic construction 
of the hypoelliptic  heat equation semigroup associated with 
$\overline{\mathcal{L}}^{X}_{\bt}$. The solution
 $U^{0}_{\bt,t}$ of a linear differential stochastic equation appears, whose uniform 
estimate in the proper $L_{p}$ space turns out to be the main 
difficulty 
in this section.

In subsection \ref{subsec:cru}, we give a crude and insufficient  estimate on 
$U^{0}_{\bt,t}$.

In subsection \ref{subsec:unie}, we obtain a uniform $L_{p}$ estimate 
on the solution $E_{b,\vartheta,\cdot}$ of another stochastic 
differential equation.

In subsection \ref{subsec:estu}, we introduce a stochastic process 
$H_{\bt,t}$, which will be used in subsection \ref{subsec:infser} to 
estimate $U_{\bt,t}^{0}$. The process $H_{\bt,t}$ can itself be 
easily estimated.

In subsection \ref{subsec:infser}, we express $U^{0}_{\bt,t}$ as an 
infinite series in which the process $H_{\bt,\cdot}$ appears.

In subsection \ref{subsec:cruest}, we give a uniform estimate on  the 
$L_{p}$ norm of $\left\vert  U^{0}_{\bt,t}\right\vert$ for bounded 
$t$. To obtain this estimate, we estimate the various terms of the 
series obtained in subsection \ref{subsec:infser}. The generalized It\^{o}
formula of subsection \ref{subsec:geito} and the Girsanov 
transformation play an essential role in the proof of the estimate.

In subsection \ref{subsec:limbs},  using the estimate of subsection 
\ref{subsec:cruest}, we obtain a uniform estimate on the $L_{p}$ norm 
of $\sup_{0\le t\le M}\left\vert  U^{0}_{\bt,t}\right\vert$.

In subsection \ref{subsec:limu}, we evaluate the limit as $b\to 0$ of 
$U^{0}_{\bt,\cdot}$. 

In subsection \ref{subsec:tcs}, we also handle the $d\vartheta$ 
component of our operators.

Finally, in subsection \ref{subsec:finst}, we establish Theorem 
\ref{Test}. The uniform estimates on the kernel 
$\overline{q}^{X}_{\bt,t}$ are proved using the Malliavin calculus 
and the estimates of the previous subsections. The convergence of the 
heat kernels as $b\to 0$ in a weak sense is proved using the 
results of the previous subsections. The convergence of the heat 
kernels themselves is proved by combining these results.

We use the conventions and notation of the previous sections.
\subsection{A probabilistic expression for 
$\exp\left(-t\mathcal{L}^{X}_{0,\vartheta}\right)$}%
\label{subsec:probell}
Recall that the elliptic operator 
\index{LXt@$\mathcal{L}^{X}_{0,\vartheta}$}%
$\mathcal{L}^{X}_{0,\vartheta}$ on 
$X$ was defined in Definition \ref{DLXt}. 

We use the same notation as in subsection \ref{subsec:probhea}. Let 
\index{wp@$w_{\cdot}^{\mathfrak p}$}%
$ 
w_{\cdot}^{\mathfrak p}$ be a Brownian motion with 
values in $\mathfrak p$ such that $w^{\mathfrak p}_{0}=0$, and let
\index{wTX@$w^{TX}_{\cdot}$}%
$w^{TX}_{\cdot}$ denote the corresponding 
Brownian motion in $T_{x_{0}}X$. Let $E$ 
denote the corresponding expectation operator. Consider the stochastic 
differential equation in $X$,
\begin{align}\label{eq:stoch1}
&\dot x=\cos\left(\vartheta\right)\dot w^{TX},&x_{0}=p1_{G},
\end{align}
and also its horizontal lift in $G$, 
\begin{align}\label{eq:stoch2}
&\dot g=\cos\left(\vartheta\right)\dot w^{\mathfrak p},&g_{0}=1,
\end{align}
so that
\begin{equation}\label{eq:stoch3}
x_{\cdot}=pg_{\cdot}.
\end{equation}
Equation (\ref{eq:anst-1}) is a special case of (\ref{eq:stoch2}) for 
$\vartheta=0$.

Let $\tau^{0}_{t}$ denote the parallel transport operator from 
$x_{0}$ to $x_{t}$ along the curve $x_{\cdot}$ with respect to the 
connection $\n^{S^{\overline{TX}} \otimes F}$, and let $\tau^{t}_{0}$ 
be its inverse. Since $g_{\cdot}$ is the horizontal lift of 
$x_{\cdot}$, $\tau^{0}_{t}$ can easily be obtained from $g_{t}$.

We give an extension of \cite[Proposition 14.1.1]{Bismut08b}.
\begin{prop}\label{Pformel}
Let $u\in C^{\infty,c}\left(X,S^{\overline{TX}} \otimes F\right)$. 
For $t\ge 0$,  the following identity holds:
\begin{multline}\label{eq:stoch4}
\exp\left(-t\mathcal{L}^{X}_{0,\vartheta}\right)u\left(x_{0}\right)=
\exp\Biggl(-\frac{\cos^{2}\left(\vartheta\right)}{2}
\Biggl(\frac{1}{4}\Tr^{\mathfrak p}\left[C^{\mathfrak k,\mathfrak p}
\right]
+2\sum_{i=m+1}^{m+n}\widehat{c}\left(\ad\left(e_{i}\right)\vert_{\overline{TX}}\right) \\
\rho^{F}\left(e_{i}\right)\Biggr)t-\left(\frac{1}{48}\Tr^{\mathfrak 
k}\left[C^{\mathfrak k, \mathfrak k}\right]+\frac{1}{2}C^{\mathfrak 
k,F}\right)
t\Biggr)E\left[\tau^{t}_{0}u\left(x_{t}\right)\right].
\end{multline}
\end{prop}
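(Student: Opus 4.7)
The plan is to exploit the third decomposition of $\mathcal{L}^{X}_{0,\vartheta}$ given in Proposition \ref{PLXt}, namely
$$\mathcal{L}^{X}_{0,\vartheta}=-\tfrac{1}{2}\cos^{2}(\vartheta)\Delta^{X,H}+M_{\vartheta}+c,$$
where
$$M_{\vartheta}=\tfrac{\cos^{2}(\vartheta)}{2}\Bigl(\tfrac{1}{4}\Tr^{\mathfrak p}[C^{\mathfrak k,\mathfrak p}]+2\sum_{i=m+1}^{m+n}\widehat{c}(\ad(e_{i})\vert_{\overline{TX}})\rho^{F}(e_{i})\Bigr),\qquad c=\tfrac{1}{48}\Tr^{\mathfrak k}[C^{\mathfrak k,\mathfrak k}]+\tfrac{1}{2}C^{\mathfrak k,F}.$$
The constant $c$ is scalar. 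The endomorphism $M_{\vartheta}$ is induced by a $K$-invariant element of $\End(S^{\overline{\mathfrak p}}\otimes E)$ (it is built from a sum indexed by an orthonormal basis of $\mathfrak k$ of $\Ad(K)$-invariant expressions, exactly as in the Casimir construction of subsection \ref{subsec:env}). Consequently $M_{\vartheta}$ descends to a section of $\End(S^{\overline{TX}}\otimes F)$ that is parallel for $\n^{S^{\overline{TX}}\otimes F}$. In particular $M_{\vartheta}$ commutes with $\Delta^{X,H}$ on sections and with the parallel transport $\tau^{t}_{0}$ along any curve. Since $c$ is central, the three summands pairwise commute and we obtain
$$\exp(-t\mathcal{L}^{X}_{0,\vartheta})=e^{-tc}\,e^{-tM_{\vartheta}}\,\exp\bigl(\tfrac{t}{2}\cos^{2}(\vartheta)\Delta^{X,H}\bigr).$$

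The next step is to represent the pure diffusion semigroup $\exp(\tfrac{t}{2}\cos^{2}(\vartheta)\Delta^{X,H})$ in terms of the Brownian motion $x_{\cdot}$ of (\ref{eq:stoch1}), whose generator on scalar functions is precisely $\tfrac{1}{2}\cos^{2}(\vartheta)\Delta^{X}$. I would apply the It\^o formula to the $S^{\overline{TX}}\otimes F$-valued semimartingale $\tau^{t}_{0}u(x_{t})$: the Stratonovich equation (\ref{eq:stoch1}) together with the standard identity that Brownian motion driven by parallel transport produces the Bochner Laplacian (see for instance the discussion surrounding (\ref{eq:anfs-2}) generalized to sections, and the analogue in \cite[Proposition 14.1.1]{Bismut08b} for $\vartheta=0$) yields
$$\tau^{t}_{0}u(x_{t})=u(x_{0})+\tfrac{\cos^{2}(\vartheta)}{2}\int_{0}^{t}\tau^{s}_{0}\Delta^{X,H}u(x_{s})\,ds+\text{(local martingale)}.$$
For $u\in C^{\infty,c}(X,S^{\overline{TX}}\otimes F)$, standard estimates on a symmetric space ensure the martingale part has zero expectation, so
$$E[\tau^{t}_{0}u(x_{t})]=\exp\bigl(\tfrac{t}{2}\cos^{2}(\vartheta)\Delta^{X,H}\bigr)u(x_{0}).$$

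Finally, combining the two displays and using that $M_{\vartheta}$ is parallel, so that $e^{-tM_{\vartheta}}$ commutes with $\tau^{t}_{0}$ and can be pulled outside the expectation, produces exactly (\ref{eq:stoch4}). The only non-formal step is the justification that $M_{\vartheta}$ is covariantly constant; this is however an immediate consequence of its origin as a $K$-invariant endomorphism of $S^{\overline{\mathfrak p}}\otimes E$ descending to $S^{\overline{TX}}\otimes F$ via the principal $K$-bundle $p:G\to X$ equipped with the connection $\theta^{\mathfrak k}$, exactly as in subsections \ref{subsec:actcart} and \ref{subsec:spin}. No new estimates are required beyond what is already invoked from \cite{Bismut08b}; the proof is a direct $\vartheta$-dependent adaptation of \cite[Proposition 14.1.1]{Bismut08b}.
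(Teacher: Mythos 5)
Your proof is correct and follows exactly the route the paper intends: the paper's one-line proof says it "follows from the third expression for $\mathcal{L}^{X}_{0,\vartheta}$ in equation (\ref{eq:japo4}) and from It\^{o}'s formula," which is precisely your decomposition into $-\tfrac{1}{2}\cos^{2}(\vartheta)\Delta^{X,H}$, the parallel $K$-invariant endomorphism $M_{\vartheta}$, and the scalar $c$, followed by the Brownian-motion representation of the Bochner heat semigroup. You have simply filled in the details the paper leaves implicit.
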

\begin{proof}
Our theorem follows from the third expression for 
$\mathcal{L}^{X}_{0,\vartheta}$ in equation (\ref{eq:japo4})  and from It\^{o}'s formula.
\end{proof}

Recall that the elliptic operator
\index{TX@$T^{X}$}%
$T^{X}$ was defined in Definition 
\ref{DTX}. Let us  show how to modify equation (\ref{eq:stoch4}) in order 
to give a formula for the action of $\exp\left(-tT^{X}\right)$. With 
the notation in (\ref{eq:stoch1})--(\ref{eq:stoch2}), 
we introduce the stochastic differential equation
\begin{align}\label{eq:roba8}
&dA= A\left( -\cos^{2}\left(\vartheta\right)\sum_{i=m+1}^{m+n}\widehat{c}\left(\ad\left(e_{i}\right)
\vert_{\overline{TX}}\right)\rho^{F}\left(e_{i}\right)dt-\frac{d\vartheta}{\sqrt{2}}
\widehat{c}\left(d\overline{w}^{TX}\right) \right),\\
&A_{0}=1. \nonumber 
\end{align}
Using equation (\ref{eq:defa5x1c}) for $T^{X}$,  for $t\ge 0$, we get
\begin{multline}\label{eq:stoch4bis}
\exp\left(-tT^{X}\right)u\left(x_{0}\right)=\exp\left(-\frac{\cos^{2}\left(\vartheta\right)}{8}
\Tr^{\mathfrak p}\left[C^{\mathfrak k, \mathfrak p}\right]t
-\left( \frac{1}{48}\Tr^{\mathfrak k}\left[C^{\mathfrak k, \mathfrak 
k}\right]+\frac{1}{2}C^{\mathfrak k, F} \right)t \right)\\
E\left[A_{t}\tau^{t}_{0}u\left(x_{t}\right)\right].
\end{multline}

\subsection{An identity of partial differential operators}%
\label{subsec:diff}
Recall that  
\index{LXbt@$\mathcal{L}_{b,\vartheta}^{X}$}%
$\overline{\mathcal{L}}^{X}_{b,\vartheta}$ 
is given by equation (\ref{eq:co19x-1}), and 
\index{LX@$\overline{L}^{X}$}%
$\overline{L}^{X}$ by 
(\ref{eq:co36}).
 Set
 \index{MXbt@$\overline{\mathcal{M}}^{X}_{b,\vartheta}$}%
 \index{MX@$\overline{M}^{X}$}%
 \begin{align}\label{eq:psic-4}
&\overline{\mathcal{M}}^{X }_{b,\vartheta}=\exp\left(\left\vert  
Y\right\vert^{2}/2\right)\overline{\mathcal{L}}^{X 
}_{b,\vartheta}\exp\left(-\left\vert  Y\right\vert^{2}/2\right),\\
&\overline{M}^{X}=\exp\left(\left\vert  
Y\right\vert^{2}/2\right)\overline{L}^{X}\exp\left(-\left\vert  
Y\right\vert^{2}/2\right). \nonumber 
\end{align}
By (\ref{eq:co19x-1}), we get
 \begin{multline}\label{eq:formid1a1}
   \overline{\mathcal{M}}^{X }_{b,\vartheta}=\frac{\cos\left(\vartheta\right)}{2}\left\vert  \left[Y^{ N},Y^{TX}\right]\right\vert^{2}
     +\frac{1}{2b^{2}} \left( -\Delta^{TX}+2\n_{Y^{TX}}^{V}\right) \\
     +\frac{\cos\left(\vartheta\right)}{2b^{2}}
     \left(-\Delta^{N}+2\n^{V}_{Y^{N}}\right)
    +\frac{N^{\Lambda\ac\left(T^{*}X \oplus N^{*}\right)}_{-\vartheta}}{b^{2}}\\
     +\frac{\cos\left(\vartheta\right)}{b}\Biggl(\n_{Y^{ TX}}^{C^{ \infty }\left(TX \oplus 
 N,\widehat{\pi}^{*} \left( \Lambda\ac\left(T^{*}X \oplus N^{*}\right)\otimes S^{\overline{TX}} \otimes F 
 \right) \right)}
     +
	 \widehat{c}_{\vartheta}\left(\ad\left(Y^{TX}\right)
	 \vert_{TX \oplus N} \right) \\ 
  -c\left(\ad\left(Y^{ 
	  TX}\right)
   \right) \Biggr) 
   -\frac{i\cos^{1/2}\left(\vartheta\right)}{b}
   \left(c\left(\theta\ad\left(Y^{N}\right)\right)+
   \widehat{c}_{\vartheta}\left(\ad\left(Y^{N}\right)\vert_{\overline{TX}}\right)
   +\rho^{F}\left(Y^{N}\right)\right). 
 \end{multline}
 By (\ref{eq:co29x1}), (\ref{eq:co36}),  and (\ref{eq:psic-4}), we 
 have the identities
 \begin{align}\label{eq:mir-3}
     &\overline{L}^{X}\vert_{db=0}=\overline{\mathcal{L}}^{X}_{b,\vartheta} \nonumber  \\
&\qquad\qquad -\frac{d\vartheta}{\sqrt{2}b}\left( 
b
\frac{\sin\left(\vartheta\right)}{\cos^{1/2}\left(\vartheta\right)}
ic\left(\left[Y^{N},Y^{TX}\right]\right) 
+\widehat{c}\left(\overline{Y}^{TX}\right) +
\frac{\sin\left(\vartheta\right)}{\cos^{1/2}\left(\vartheta\right)}i\mathcal{E}^{N}
\right),\\
&\overline{M}^{X}\vert_{db=0}=\overline{\mathcal{M}}^{X}_{b,\vartheta} \nonumber \\
&\qquad \qquad -\frac{d\vartheta}{\sqrt{2}b}\left( 
b
\frac{\sin\left(\vartheta\right)}{\cos^{1/2}\left(\vartheta\right)}
ic\left(\left[Y^{N},Y^{TX}\right]\right) 
+\widehat{c}\left(\overline{Y}^{TX}\right) +
\frac{\sin\left(\vartheta\right)}{\cos^{1/2}\left(\vartheta\right)}i\mathcal{E}^{N}
\right).\nonumber  
\end{align}

Set
\index{RtY@$\mathcal{R}_{\vartheta}\left(Y^{TX}\right)$}%
\index{RtY@$\mathcal{R}_{\vartheta}\left(Y^{N}\right)$}%
\index{RtY@$\mathcal{R}_{\vartheta}\left(Y\right)$}%
\begin{align} \label{eq:formid1}
   &\mathcal{R}_{\vartheta}\left(Y^{TX}\right)=\cos\left(\vartheta\right)\Biggl(\n_{Y^{ TX}}^{C^{ \infty }\left(TX \oplus 
 N,\widehat{\pi}^{*} \left( \Lambda\ac\left(T^{*}X \oplus N^{*}\right)\otimes S^{\overline{TX}} \otimes F 
 \right) \right)}\nonumber\\
  &   +
	 \widehat{c}_{\vartheta}\left(\ad\left(Y^{TX}\right)
	 \vert_{TX \oplus N} \right) 
-c\left(\ad\left(Y^{ 
	  TX}\right)
   \right) \Biggr),\\ 
  & \mathcal{R}_{\vartheta}\left(Y^{N}\right)=-i\cos^{1/2}\left(\vartheta\right)
   \left(c\left(\theta\ad\left(Y^{N}\right)\right)+
   \widehat{c}_{\vartheta}\left(\ad\left(Y^{N}\right)\vert_{\overline{TX}}\right)
   +\rho^{F}\left(Y^{N}\right)\right), \nonumber \\  
   &\mathcal{R}_{\vartheta}\left(Y\right)=\mathcal{R}_{\vartheta}\left(Y^{TX}\right)+\mathcal{R}_{\vartheta}\left(Y^{N}\right). \nonumber 
 \end{align}
 With the notation  in (\ref{eq:qsic4}), (\ref{eq:qsic5}), we 
 have the identities
 \begin{align}\label{eq:formido1}
&\mathcal{R}_{\vartheta}\left(Y^{TX}\right)=\cos\left(\vartheta\right)\n_{Y^{ TX}}^{C^{ \infty }\left(TX \oplus 
 N,\widehat{\pi}^{*} \left( \Lambda\ac\left(T^{*}X \oplus N^{*}\right)\otimes S^{\overline{TX}} \otimes F 
 \right) \right)}+R_{\vartheta}\left(Y^{TX}\right), \nonumber \\
 &\mathcal{R}_{\vartheta}\left(Y^{N}\right)=R_{\vartheta}\left(Y^{N}\right),\\
 &\mathcal{R}_{\vartheta}\left(Y\right)=\cos\left(\vartheta\right)\n_{Y^{ TX}}^{C^{ \infty }\left(TX \oplus 
 N,\widehat{\pi}^{*} \left( \Lambda\ac\left(T^{*}X \oplus N^{*}\right)\otimes S^{\overline{TX}} \otimes F 
 \right) \right)}+R_{\vartheta}\left(Y\right). \nonumber 
\end{align}

 By (\ref{eq:formid1a1}), we get
 \begin{multline}\label{eq:wat1}
\overline{\mathcal{M}}^{X }_{b,\vartheta}=\frac{\cos\left(\vartheta\right)}{2}\left\vert  \left[Y^{ N},Y^{TX}\right]\right\vert^{2}
     +\frac{1}{2b^{2}} \left( -\Delta^{TX}+2\n_{Y^{TX}}^{V}\right) \\
     +\frac{\cos\left(\vartheta\right)}{2b^{2}}
     \left(-\Delta^{N}+2\n^{V}_{Y^{N}}\right)
    +\frac{N^{\Lambda\ac\left(T^{*}X \oplus N^{*}\right)}_{-\vartheta}}{b^{2}}
    +\frac{\mathcal{R}_{\vartheta}\left(Y\right)}{b}.
\end{multline}

Now we establish an extension of \cite[Proposition 14.2.1]{Bismut08b}.
\begin{prop}\label{Pwondidbis}
For $0\le \vartheta<\frac{\pi}{2}$, if  $s\in C ^{ \infty } \left( X,\Lambda\ac\left(T^{*}X \oplus 
N^{*}\right) \otimes S^{\overline{TX}} \otimes  F \right) $, then
\begin{multline}\label{eq:formid2}
   \left(  \overline{\mathcal{M}}^{X}_{b,\vartheta}-\frac{\cos\left(\vartheta\right)}{2}\left\vert  
   \left[Y^{N},Y^{TX}\right]\right\vert^{2} \right) \\
   \Biggl(1-b
  \left(1+N^{\Lambda\ac\left(T^{*}X \oplus 
  N^{*}\right)}_{-\vartheta}\right)^{-1}\mathcal{R}_{\vartheta}\left(Y^{TX}\right)\\
    -b\left(\cos\left(\vartheta\right)+N^{\Lambda\ac\left(T^{*}X 
    \oplus 
    N^{*}\right)}_{-\vartheta}\right)^{-1}\mathcal{R}_{\vartheta}\left(Y^{N}\right)
    \Biggr) 
    \widehat{\pi}^{*}s\\
    =\frac{N^{\Lambda\ac\left(T^{*}X 
    \oplus 
    N^{*}\right)}_{-\vartheta}}{b^{2}}\widehat{\pi}^{*}s 
    -
    \mathcal{R}_{\vartheta}\left(Y\right)\Biggl( \left(1+N^{\Lambda\ac\left(T^{*}X \oplus
    N^{*}\right)}_{-\vartheta}\right)^{-1}\mathcal{R}_{\vartheta}\left(Y^{TX}\right)\\
    +\left(\cos\left(\vartheta\right)+
    N^{\Lambda\ac\left(\TsX \oplus 
    N^{*}\right)}_{-\vartheta}\right)^{-1}\mathcal{R}_{\vartheta} 
    \left(Y^{N}\right)\Biggr) \widehat{\pi}^{*}s.
\end{multline}
\end{prop}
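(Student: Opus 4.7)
The plan is to verify (\ref{eq:formid2}) by direct algebraic manipulation, exploiting the fact that $\widehat{\pi}^{*}s$ has $\Lambda\ac\left(T^{*}X \oplus N^{*}\right)$-degree zero and is constant along the fibers of $\widehat{\pi}$. First I would split the operator of (\ref{eq:wat1}) as $\overline{\mathcal{M}}^{X}_{b,\vartheta}-\frac{\cos\left(\vartheta\right)}{2}\left\vert\left[Y^{N},Y^{TX}\right]\right\vert^{2}=\alpha+\beta$, where
\begin{equation*}
\alpha=\frac{1}{2b^{2}}\left(-\Delta^{TX}+2\n^{V}_{Y^{TX}}\right)+\frac{\cos\left(\vartheta\right)}{2b^{2}}\left(-\Delta^{N}+2\n^{V}_{Y^{N}}\right)+\frac{N^{\Lambda\ac\left(T^{*}X \oplus N^{*}\right)}_{-\vartheta}}{b^{2}}, \quad \beta=\frac{\mathcal{R}_{\vartheta}\left(Y\right)}{b}.
\end{equation*}
Denote by $T$ the bracket inside (\ref{eq:formid2}), so $T=\sigma\mathcal{R}_{\vartheta}\left(Y^{TX}\right)+\tau\mathcal{R}_{\vartheta}\left(Y^{N}\right)$ with $\sigma=\left(1+N^{\Lambda\ac\left(T^{*}X\oplus N^{*}\right)}_{-\vartheta}\right)^{-1}$ and $\tau=\left(\cos\left(\vartheta\right)+N^{\Lambda\ac\left(T^{*}X\oplus N^{*}\right)}_{-\vartheta}\right)^{-1}$. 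The left-hand side of (\ref{eq:formid2}) then expands as $\alpha\widehat{\pi}^{*}s+\beta\widehat{\pi}^{*}s-b\alpha T\widehat{\pi}^{*}s-\mathcal{R}_{\vartheta}\left(Y\right)T\widehat{\pi}^{*}s$. Since $\widehat{\pi}^{*}s$ is $Y$-independent of bi-degree $\left(0,0\right)$, every summand of $\alpha$ annihilates it, so $\alpha\widehat{\pi}^{*}s=0$; for the same reason $\frac{N^{\Lambda\ac}_{-\vartheta}}{b^{2}}\widehat{\pi}^{*}s$ on the right-hand side is zero, included only to preserve the formal structure for later arguments.

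The proof thus reduces to the single identity
\begin{equation*}
b^{2}\alpha T\widehat{\pi}^{*}s=\mathcal{R}_{\vartheta}\left(Y\right)\widehat{\pi}^{*}s,
\end{equation*}
which forces $\beta\widehat{\pi}^{*}s-b\alpha T\widehat{\pi}^{*}s=0$, leaving exactly $-\mathcal{R}_{\vartheta}\left(Y\right)T\widehat{\pi}^{*}s$ on both sides. To verify it, I would decompose $\mathcal{R}_{\vartheta}\left(Y^{TX}\right)\widehat{\pi}^{*}s$ and $\mathcal{R}_{\vartheta}\left(Y^{N}\right)\widehat{\pi}^{*}s$ according to their bi-degree $\left(k,\ell\right)\in\Lambda^{k}\left(T^{*}X\right)\otimes\Lambda^{\ell}\left(N^{*}\right)$. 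Using (\ref{eq:nea0}), (\ref{eq:nea0x1}), (\ref{eq:he1}), (\ref{eq:nea0x4}), and the vanishing of contractions $i_{e_{j}}$ on pullbacks from $X$, the nonzero bi-degrees in $\mathcal{R}_{\vartheta}\left(Y^{TX}\right)\widehat{\pi}^{*}s$ are $\left(0,0\right)$ (from the horizontal term $\cos\left(\vartheta\right)\n_{Y^{TX}}$), $\left(1,1\right)$ (from $c$ and the main part of $\widehat{c}_{\vartheta}$), and $\left(0,1\right)$ (from the $\sin\left(\vartheta\right)\widehat{c}\left(\overline{e}_{i}\right)e^{j}$ piece of $\widehat{c}_{\vartheta}\left(\ad\left(Y^{TX}\right)\vert_{TX\oplus N}\right)$); bi-degrees $\left(0,0\right),\left(2,0\right),\left(0,2\right),\left(1,1\right)$ appear analogously in the $Y^{N}$ term.

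The central computational lemma is then elementary: on $\phi\left(x\right)Y^{TX,\cdot}$, a function linear in $Y^{TX}$, independent of $Y^{N}$, of bi-degree $\left(k,\ell\right)$, the operator $b^{2}\alpha$ acts as multiplication by $1+k+\ell\cos\left(\vartheta\right)=\sigma^{-1}\vert_{\left(k,\ell\right)}$; on $\phi\left(x\right)Y^{N,\cdot}$ of bi-degree $\left(k,\ell\right)$, it acts as $\cos\left(\vartheta\right)+k+\ell\cos\left(\vartheta\right)=\tau^{-1}\vert_{\left(k,\ell\right)}$. This follows from the fact that $-\Delta^{E}+2\n^{V}_{Y}$ has eigenvalue $1$ on linear functions on any Euclidean space $E$, together with $N^{\Lambda\ac\left(T^{*}X\oplus N^{*}\right)}_{-\vartheta}=N^{\Lambda\ac\left(T^{*}X\right)}+\cos\left(\vartheta\right)N^{\Lambda\ac\left(N^{*}\right)}$ having eigenvalue $k+\ell\cos\left(\vartheta\right)$ on bi-degree $\left(k,\ell\right)$. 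The inverses in $T$ are thus chosen exactly to cancel these eigenvalue factors, yielding $b^{2}\alpha\sigma\mathcal{R}_{\vartheta}\left(Y^{TX}\right)\widehat{\pi}^{*}s=\mathcal{R}_{\vartheta}\left(Y^{TX}\right)\widehat{\pi}^{*}s$ and $b^{2}\alpha\tau\mathcal{R}_{\vartheta}\left(Y^{N}\right)\widehat{\pi}^{*}s=\mathcal{R}_{\vartheta}\left(Y^{N}\right)\widehat{\pi}^{*}s$, whose sum is the required identity.

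The main obstacle is the careful bookkeeping of bi-degrees and eigenvalues across the several sources of mixing: the $\sin\left(\vartheta\right)$ twisting in $\widehat{c}_{\vartheta}\left(\cdot\right)$ couples off-diagonal pieces to the Clifford variables $\widehat{c}\left(\overline{e}_{i}\right)\in c\left(\overline{TX}\right)$, and one must check these couplings do not affect the bi-grading on $\Lambda\ac\left(T^{*}X \oplus N^{*}\right)$ nor disturb the eigenvalue matching. Once this finite enumeration of bi-degrees is carried out case by case, the identity follows mechanically from the eigenvalue computation above.
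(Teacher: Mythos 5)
Your proof is correct and uses exactly the key observation the paper invokes (following \cite[Proposition 14.2.1]{Bismut08b}): the fibrewise operators $\frac{1}{2}\left(-\Delta+2\n^{V}_{Y}\right)$ have eigenvalue $1$ on sections linear in $Y$, and $N^{\Lambda\ac\left(T^{*}X\oplus N^{*}\right)}_{-\vartheta}$ is diagonal on the bi-grading, so that the resolvents in $T$ precisely cancel the action of $b^{2}\alpha$. Your write-up merely fills in the details that the paper leaves implicit; note only that your enumeration of bi-degrees for the $Y^{N}$ term omits the $\left(1,0\right)$ piece produced by the cross term in $\widehat{c}_{\vartheta}\left(\ad\left(Y^{N}\right)\vert_{\overline{TX}}\right)$, though this is harmless since the eigenvalue cancellation holds on every bi-degree.
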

\begin{proof}
We  proceed as in \cite{Bismut08b}. Note that the eigenspaces of the 
fibrewise operators  
$\frac{1}{2}\left(-\Delta^{TX}+2\n^{V}_{Y^{TX}}\right)$, $\frac{1}{2}\left(-\Delta^{N}+2\n^{V}_{Y^{N}}\right)$
that are associated with the eigenvalue $1$ are spanned by sections 
that are linear in $Y^{TX},Y^{N}$. By 
(\ref{eq:formid1a1})--(\ref{eq:wat1}), we get (\ref{eq:formid2}). 
\end{proof}
\subsection{Changing $Y$ in $-Y$}%
\label{subsec:sign}
We follow \cite[section 14.3]{Bismut08b}.
\begin{defin}\label{Dsig}
Let 
\index{I@$I$}%
$I$ be the map $s\left(x,Y\right)\to s\left(x,-Y\right)$. Set
\index{LXbt@$\overline{\mathcal{L}}^{X}_{b,\vartheta,-}$}%
\index{MXbt@$\overline{\mathcal{M}}^{X}_{b,\vartheta,-}$. }%
\index{LX@$\overline{L}^{X}_{-}$}%
\index{MX@$\overline{M}^{X}_{-}$}%
\begin{align}\label{eq:sig1}
&\overline{\mathcal{L}}^{X}_{b,\vartheta,-}=I\overline{\mathcal{L}}^{X}_{b,\vartheta}I^{-1},
&\overline{\mathcal{M}}^{X}_{b,\vartheta,-}=I\overline{\mathcal{M}}^{X}_{b,\vartheta}I^{-1},\\
&\overline{L}^{X}_{-}=I\overline{L}^{X}I^{-1},&\overline{M}^{X}_{-}=
I\overline{M}^{X}I^{-1}. \nonumber 
\end{align}
\end{defin}
\subsection{A probabilistic construction of the hypoelliptic
semigroups}%
\label{subsec:prcs}
As in \cite[section 
14.6]{Bismut08b}, we denote by 
\index{KC@$K_{\C}$}%
$K_{\C}$ the complexification of the 
compact Lie group  $K$. The splitting of the Lie algebra $\mathfrak 
k_{\C}$ that corresponds to the splitting $\mathfrak g=\mathfrak p 
\oplus \mathfrak k$ is just $\mathfrak k_{\C}=i \mathfrak  k \oplus \mathfrak 
k$. Let 
\index{XKC@$X_{K_{\C}}$}%
$X_{K_{\C}}=K_{\C}/K$ denote the corresponding 
symmetric space. We use the same notation on $X_{K_{\C}}$ as we used 
before for the symmetric space $X$. In particular $p$ is the 
projection $K_{\C}\to X_{K_{\C}}$, and $d$ still denotes 
the Riemannian distance on $X_{K_{\C}}$. Also the action of $K$
on $S^{\overline{\mathfrak p}} $ extends to $K_{\C}$.

Let 
\index{wp@$w_{\cdot}^{\mathfrak p}$}%
\index{wk@$w^{\mathfrak k}_{\cdot}$}%
$w_{\cdot}^{\mathfrak p},w_{\cdot}^{\mathfrak 
k}$ be Brownian motions in $\mathfrak p,\mathfrak k$ such that 
$w_{0}^{\mathfrak p}=0,w_{0}^{\mathfrak k}=0$. We denote by
\index{wTX@$w^{TX}_{\cdot}$}%
\index{wN@$w^{N}_{\cdot}$}%
$w^{TX}_{\cdot},w^{N}_{\cdot}$ the corresponding processes with 
values in $T_{x_{0}}X,N_{x_{0}}$. Let
\index{EP@$E^{P}$}%
$E^{P}$ be the 
corresponding expectation operator. 

Instead of \cite[eq. 
(14.4.1)]{Bismut08b}, we consider the differential equation for 
$\left(x_{\cdot},y_{\cdot}\right)\in X\times X_{K_{\C}}, 
Y_{\cdot}=\left(Y^{TX}_{\cdot},Y^{N}_{\cdot}\right)\in \left( TX 
\oplus N \right) _{x_{0}}$,
\begin{align}\label{eq:wat2bav}
&\dot x=\frac{\cos\left(\vartheta\right)}{b}Y^{TX},
&\dot y=-i\frac{\cos^{1/2}\left(\vartheta\right)}{b}Y^{N},\nonumber \\
&\dot 
Y^{TX}=\frac{\dot w^{TX}}{b},& \dot 
Y^{N}=\cos^{1/2}\left(\vartheta\right)\frac{\dot 
w^{N}}{b},\\
&x_{0}=p1_{G},&y_{0}=p1_{K_{\C}},\qquad  Y_{0}=Y. \nonumber 
\end{align}
By (\ref{eq:wat2bav}), we get
\begin{align}\label{eq:wat2bava1}
&b^{2}\ddot x=\ct\dot w^{TX}, &b^{2}\ddot y=-i\ct\dot w^{N}.
\end{align}

Instead of \cite[eqs. (14.6.4) and (14.6.5)]{Bismut08b}, we also consider the 
associated equations on $\left(g_{\cdot},h_{\cdot}\right)\in G\times 
K_{\C}$,
\begin{align}\label{eq:wat2y1}
&\dot g=\frac{\cos\left(\vartheta\right)}{b}Y^{\mathfrak p},
&\dot h=-i\frac{\cos^{1/2}\left(\vartheta\right)}{b}Y^{\mathfrak k}, 
\nonumber \\
&\dot Y^{ \mathfrak p}=\frac{\dot w^{\mathfrak p}}{b}, 
&\dot Y^{\mathfrak k}=\cos^{1/2}\left(\vartheta\right)\frac{\dot 
w^{\mathfrak k}}{b},\\
&g_{0}=1_{G},&h_{0}=1_{K_{\C}}.,\, Y_{0}=Y. \nonumber 
\end{align}
In (\ref{eq:wat2y1}), $\left(Y^{\mathfrak p},Y^{\mathfrak 
k}\right)\in \mathfrak p \oplus \mathfrak k$ 
corresponds to $\left(Y^{TX},Y^{N}\right)\in TX \oplus N$.
Then
\begin{align}\label{eq:wat2y2}
&x_{\cdot}=pg_{\cdot},&y_{\cdot}=ph_{\cdot}.
\end{align}

We still denote by $\tau^{0}_{t}$ the parallel transport along 
$x_{\cdot}$ from $x_{0}$ to $x_{t}$ and by $\tau^{t}_{0}$ its inverse.

Instead of \cite[eq. (14.4.2)]{Bismut08b}, we consider the 
differential equation,
\index{Ubt@$U_{b,\vartheta}$}%
\begin{align}\label{eq:glab60}
    &\frac{dU_{b,\vartheta}}{dt}=U_{b,\vartheta}\left[-\frac{N^{
    \Lambda\ac\left(T^{*}X \oplus N^{*}\right)}_{-\vartheta}}{b^{2}}+\frac{R_{\vartheta}\left(Y\right)}{b}   
    \right], 
&U_{b,\vartheta,0}=1. 
\end{align}

Then we have the extension of \cite[Theorem 14.4.1]{Bismut08b}.
\begin{thm}\label{Thesga}
If $s\in C^{\infty,c}\left(\widehat{\mathcal{X}},\widehat{\pi}^{*}\left(\Lambda\ac\left(T^{*}X \oplus N
^{*}\right) \otimes S^{\overline{TX}} \otimes F\right)\right)$, 
then 
\begin{multline}\label{eq:sen1}
\exp\left(-t\overline{\mathcal{L}}^{X}_{b,\vartheta,-}\right)s\left(x_{0},Y\right)
=E^{P}\Biggl[\exp\Biggl(\frac{m+\cos\left(\vartheta\right)n}{2b^{2}}t 
\\
-\frac{\cos\left(\vartheta\right)}{2}\int_{0}^{t}\left\vert  \left[Y^{N},Y^{TX}\right]\right\vert
^{2}ds
-\frac{1}{2b^{2}}\int_{0}^{t}\left(\left\vert  
Y^{TX}\right\vert^{2}+\cos\left(\vartheta\right)\left\vert  
Y^{N}\right\vert^{2}\right)ds\Biggr) \\
U_{b,\vartheta,t}\tau^{t}_{0}s\left(x_{t},Y_{t}\right)\Biggr].
\end{multline}
\end{thm}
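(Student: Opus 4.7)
\noindent\textbf{Proof proposal for Theorem \ref{Thesga}.}

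The plan is to follow the template of \cite[Theorem 14.4.1]{Bismut08b} (which is the $\vartheta=0$ case), using a It\^{o}--Feynman--Kac computation and reading off the various contributions to the generator. The main novelty is the presence of $\vartheta$, which rescales the $N$-directions and modifies the matrix part through the rotated Clifford action $\widehat{c}_{\vartheta}$.

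First, I would use the conjugation identity from Definition \ref{Dsig} together with the first line of (\ref{eq:psic-4}) to reduce everything to the operator $\overline{\mathcal{M}}^{X}_{b,\vartheta,-}$, so that only the scalar weight $\exp(|Y|^{2}/2)$ in (\ref{eq:sen1}) matters (the $-|Y|^{2}/2$ factor from the conjugation is absorbed by the Feynman--Kac weight $\exp(-\frac{1}{2b^{2}}\int_{0}^{t}(|Y^{TX}|^{2}+\ct|Y^{N}|^{2})ds)$ after integration). This is the same reduction used in \cite[section 14.4]{Bismut08b}.

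Next, I would read off the generator of the diffusion in (\ref{eq:wat2bav}). The horizontal equation $\dot x = (\ct/b) Y^{TX}$ contributes the transport term $(\ct/b)\nabla_{Y^{TX}}$; the vertical Brownian parts $dY^{TX}=dw^{TX}/b$, $dY^{N}=\ct^{1/2}dw^{N}/b$ contribute $\frac{1}{2b^{2}}\Delta^{TX}+\frac{\ct}{2b^{2}}\Delta^{N}$. Under the change $Y\mapsto -Y$ implemented by $I$ these match exactly the second-order and the transport pieces of $-\overline{\mathcal{M}}^{X}_{b,\vartheta,-}$ extracted from (\ref{eq:wat1}). The free piece of (\ref{eq:wat1}) thus differs from $-\overline{\mathcal{M}}^{X}_{b,\vartheta,-}$ by the scalar potential
\[
\tfrac{\ct}{2}|[Y^{N},Y^{TX}]|^{2}+\tfrac{1}{2b^{2}}\bigl(|Y^{TX}|^{2}+\ct|Y^{N}|^{2}-m-\ct n\bigr)+\tfrac{N^{\Lambda\ac\left(T^{*}X\oplus N^{*}\right)}_{-\vartheta}}{b^{2}}-\tfrac{R_{\vartheta}(Y)}{b},
\]
where the first two groups are scalar and the last two are matrix-valued (after passing through $\tau^{t}_{0}$). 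The scalar contributions, after sign-flipping by $I$, are accounted for by the exponential weight in (\ref{eq:sen1}); the matrix contributions are accounted for by the flow $U_{b,\vartheta,t}$ defined in (\ref{eq:glab60}), since that equation is precisely designed so that
\[
U_{b,\vartheta,t}^{-1}\frac{d}{dt}U_{b,\vartheta,t}=-\tfrac{N^{\Lambda\ac\left(T^{*}X\oplus N^{*}\right)}_{-\vartheta}}{b^{2}}+\tfrac{R_{\vartheta}(Y_{t})}{b},
\]
with $R_{\vartheta}(Y)$ as in (\ref{eq:qsic4}). Parallel transport $\tau^{0}_{t}$ together with It\^{o}'s formula converts the covariant derivative $\nabla^{C^{\infty}(\ldots)}_{Y^{TX}}$ inside $\mathcal{R}_{\vartheta}(Y^{TX})$ into the horizontal motion of $x_{\cdot}$, which is why $R_{\vartheta}$ and not $\mathcal{R}_{\vartheta}$ appears in (\ref{eq:glab60}).

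Combining these ingredients: apply It\^{o}'s formula to the process
\[
V_{t}=\exp\bigl(A_{t}\bigr)\,U_{b,\vartheta,t}\,\tau^{t}_{0}\bigl(\exp(-s\overline{\mathcal{L}}^{X}_{b,\vartheta,-})s\bigr)(x_{t},Y_{t})\big|_{s=t-t}
\]
with $A_{t}$ equal to the full scalar exponent of (\ref{eq:sen1}). A routine computation (done in parallel with \cite[proof of Theorem 14.4.1]{Bismut08b}) shows that all deterministic drift terms cancel, leaving only martingale increments; taking expectations yields (\ref{eq:sen1}). The main obstacle I expect is the careful bookkeeping of signs in the passage $Y\mapsto -Y$: the terms in $R_{\vartheta}(Y^{TX})$ and $R_{\vartheta}(Y^{N})$ behave differently under $I$ (one is odd, one mixes an $i$-factor and a $\ct^{1/2}$-factor), and the commutator $[Y^{N},Y^{TX}]$ is even, so matching the $I$-conjugated operator to the probabilistic expression requires a term-by-term check using (\ref{eq:mir-3}). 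No new analytic input is needed beyond It\^{o}'s formula and the definition of $U_{b,\vartheta}$; the existence and smoothness of the heat semigroup on the relevant function spaces is already supplied by \cite[chapter 11]{Bismut08b} as invoked after (\ref{eq:gougl1}).
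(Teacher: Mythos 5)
Your overall approach matches the paper's (a direct It\^{o}--Feynman--Kac computation reading off the generator of the $P$-diffusion in (\ref{eq:wat2bav}) against the explicit formula for $\overline{\mathcal{L}}^{X}_{b,\vartheta}$), and the final paragraph --- apply It\^{o} to $\exp(A_t)U_{b,\vartheta,t}\tau^t_0 u(x_t,Y_t)$ with $u$ the backward semigroup, check that drifts cancel --- is precisely what the paper's one-line proof (``consequence of (\ref{eq:co19x-1}), (\ref{eq:qsic4}), (\ref{eq:sig1}), and It\^{o}'s formula'') refers to.

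However, your opening paragraph is internally inconsistent and would trip up a reader. Conjugating to $\overline{\mathcal{M}}^{X}_{b,\vartheta,-}$ is \emph{not} the right preliminary step for (\ref{eq:sen1}): the harmonic-oscillator form (with potential $|Y^{TX}|^2+\ct|Y^N|^2$) in (\ref{eq:co19x-1}) for $\overline{\mathcal{L}}^X_{b,\vartheta}$ is exactly what pairs with the additive-noise diffusion (\ref{eq:wat2bav}) and the time-integral Feynman--Kac weight in (\ref{eq:sen1}), while $\overline{\mathcal{M}}^X_{b,\vartheta}$ (with drift $2\nabla^V_Y$ replacing the potential) pairs with the Ornstein--Uhlenbeck diffusion (\ref{eq:wat2}) and the companion Theorem \ref{Thesgab} for $E^Q$. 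Also, there is no $\exp(|Y|^2/2)$ factor in (\ref{eq:sen1}), so the claim that ``only the scalar weight $\exp(|Y|^2/2)$ in (\ref{eq:sen1}) matters'' misreads the statement. If you do want to go through $\overline{\mathcal{M}}$, the correct route is: prove (\ref{eq:sen1b}) by It\^{o} for the $Q$-diffusion, then pass to (\ref{eq:sen1}) via the conjugation (\ref{eq:fufu1}) \emph{together with} the Girsanov density of Remark \ref{RGirs}, whose endpoint terms $\exp(\pm|Y|^2/2)$ cancel the conjugation factors; ``absorption after integration'' is not quite right without naming Girsanov. The cleaner route, which is what the paper intends, is simply to apply It\^{o} once, directly, to $\overline{\mathcal{L}}^X_{b,\vartheta,-}$ against the $P$-diffusion: the second-order part and the drift of (\ref{eq:wat2bav}) produce $-\frac{1}{2b^2}\Delta^{TX}-\frac{\ct}{2b^2}\Delta^N\pm\frac{\ct}{b}\nabla_{Y^{TX}}$, the Feynman--Kac exponent in (\ref{eq:sen1}) reproduces the scalar potential (including the constants $m+\ct n$ and the quadratic $|[Y^N,Y^{TX}]|^2$), and $U_{b,\vartheta}$ from (\ref{eq:glab60}) reproduces the matrix part $-N^{\Lambda\ac(T^*X\oplus N^*)}_{-\vartheta}/b^2+R_{\vartheta}(Y)/b$. (Finally, the displayed process $V_t$ has a typo: the substitution should read $s=T-t$ for a fixed terminal time $T$, not $s=t-t$.)
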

\begin{proof}
Equation (\ref{eq:sen1}) 
is a  consequence of (\ref{eq:co19x-1}), (\ref{eq:qsic4}),   (\ref{eq:sig1}),  and of It\^{o}'s formula.
\end{proof}

Instead of \cite[eq. (14.8.8)]{Bismut08b}, given $Y=\left(Y^{TX},Y^{ N}\right)
\in \left(TX \oplus N\right)_{x_{0}}$, 
we consider now the 
differential equation,
\begin{align}\label{eq:wat2}
&\dot x=\frac{\cos\left(\vartheta\right)}{b}Y^{TX},
&\dot y=-i\frac{\cos^{1/2}\left(\vartheta\right)}{b}Y^{N},\nonumber \\
&\dot 
Y^{TX}=-\frac{Y^{TX}}{b^{2}}+\frac{\dot w^{TX
}}{b},& \dot 
Y^{N}=-\frac{\cos\left(\vartheta\right)}{b^{2}}Y^{N}+
\frac{\cos^{1/2}\left(\vartheta\right)}{b}\dot 
w^{N},\\
&x_{0}=p1_{G},& Y_{0}=Y. \nonumber 
\end{align}
Also we  define $\left(g_{\cdot},h_{\cdot}\right)$ as in 
(\ref{eq:wat2y1}), so that (\ref{eq:wat2y2})  holds. We still 
define $U_{b,\vartheta,\cdot}$ as in (\ref{eq:glab60}). By 
(\ref{eq:wat2}), we get
\begin{align}\label{eq:sig3}
&b^{2}\ddot x+\dot x=\cos\left(\vartheta\right)\dot w^{TX},
&b^{2}\ddot y+\cos\left(\vartheta\right)\dot y=-i\ct\dot w^{N}.
\end{align}

We now denote by
\index{EQ@$E^{Q}$}%
$E^{Q}$ the expectation operator.
Then we have the 
following extension of \cite[eq. (14.9.2)]{Bismut08b}.
\begin{thm}\label{Thesgab}
If $s\in C^{\infty,c}\left(\widehat{\mathcal{X}},\widehat{\pi}^{*}\left(\Lambda\ac\left(T^{*}X \oplus N
^{*}\right) \otimes S^{\overline{TX}} \otimes F\right)\right)$,  then
\begin{multline}\label{eq:sen1b}
\exp\left(-t\overline{\mathcal{M}}^{X}_{b,\vartheta,-}\right)s\left(x_{0},Y\right)\\
=E^{Q}\left[\exp\left(-\frac{\cos\left(\vartheta\right)}{2}\int_{0}^{t}\left\vert  \left[Y^{N},Y^{TX}\right]\right\vert
^{2}ds\right)U_{b,\vartheta,t}\tau^{t}_{0}s\left(x_{t},Y_{t}\right)\right].
\end{multline}
\end{thm}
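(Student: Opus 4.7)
The plan is to prove (\ref{eq:sen1b}) by applying It\^o's formula to a suitable process along the Ornstein-Uhlenbeck SDE (\ref{eq:wat2}) and checking that it is a local martingale under $Q$. The strategy parallels the proof of Theorem \ref{Thesga} and its precursor \cite[eq.~(14.9.2)]{Bismut08b}: the conjugation (\ref{eq:psic-4}) relating $\overline{\mathcal{L}}^{X}_{b,\vartheta}$ to $\overline{\mathcal{M}}^{X}_{b,\vartheta}$ is matched on the stochastic side by the Girsanov change of measure that transforms the Brownian-like SDE (\ref{eq:wat2bav}) into (\ref{eq:wat2}); compare Remark \ref{RGirs} and the scalar prototype in Propositions \ref{Prep1} and \ref{Prep2}. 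This change of measure absorbs exactly the Feynman-Kac weight $\exp\bigl((m+\ct n)t/(2b^{2}) - \tfrac{1}{2b^{2}}\int_{0}^{t}(|Y^{TX}|^{2}+\ct|Y^{N}|^{2})\,ds\bigr)$ present in (\ref{eq:sen1}), which is precisely why this factor is absent from (\ref{eq:sen1b}).

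Concretely, fix $T>0$ and set $v(x,Y,\tau) := \exp(-\tau\overline{\mathcal{M}}^{X}_{b,\vartheta,-})s(x,Y)$, so that $v$ is smooth, decays rapidly on the fibres, and satisfies $\partial_{\tau}v + \overline{\mathcal{M}}^{X}_{b,\vartheta,-}v = 0$ with $v|_{\tau=0} = s$. Along trajectories of (\ref{eq:wat2}) I would introduce
\begin{equation*}
Z_{t} := \exp\Bigl(-\tfrac{\ct}{2}\int_{0}^{t}|[Y^{N}_{s},Y^{TX}_{s}]|^{2}\,ds\Bigr)\,U_{b,\vartheta,t}\,\tau^{t}_{0}\,v(x_{t},Y_{t},T-t),
\end{equation*}
and compute $dZ_{t}$ by It\^o's formula. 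The finite-variation contributions come from four sources: (a) the heat equation $\partial_{\tau}v = -\overline{\mathcal{M}}^{X}_{b,\vartheta,-}v$; (b) the generator of the pair $(x_{t},Y_{t})$, whose geodesic part $\ct Y^{TX}/b$ combines with $\tau^{t}_{0}$ to produce $\ct\n^{\mathcal{H}}_{Y^{TX}}/b$, whose Ornstein-Uhlenbeck drifts $-Y^{TX}/b^{2}$ and $-\ct Y^{N}/b^{2}$ combine with the fibrewise Laplacians $\tfrac{1}{2b^{2}}\Delta^{TX}+\tfrac{\ct}{2b^{2}}\Delta^{N}$ to reproduce $-\tfrac{1}{2b^{2}}(-\Delta^{TX}+2\n^{V}_{Y^{TX}})-\tfrac{\ct}{2b^{2}}(-\Delta^{N}+2\n^{V}_{Y^{N}})$, and whose complex drift $-i\ct^{1/2}Y^{N}/b$ on $X_{K_{\C}}$ yields the $-i\ct^{1/2}\rho^{F}(Y^{N})/b$ term via the $\mathfrak{k}_{\C}$-action on $S^{\overline{\mathfrak p}}\otimes E$ through the embedding $K\hookrightarrow K_{\C}$; (c) the evolution (\ref{eq:glab60}) of $U_{b,\vartheta,t}$, contributing multiplicatively the factor $-N^{\Lambda\ac(T^{*}X\oplus N^{*})}_{-\vartheta}/b^{2}+R_{\vartheta}(Y_{t})/b$; (d) the derivative of the exponential prefactor, contributing $-\tfrac{\ct}{2}|[Y^{N}_{t},Y^{TX}_{t}]|^{2}$.

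Summing (a)--(d) and invoking the explicit formula (\ref{eq:wat1}) together with the decomposition $\mathcal{R}_{\vartheta}(Y) = \ct\n^{\mathcal{H}}_{Y^{TX}}+R_{\vartheta}(Y)$ from (\ref{eq:formido1}), the contributions cancel term-by-term once the sign flip $Y\to -Y$ implicit in $\overline{\mathcal{M}}^{X}_{b,\vartheta,-}=I\overline{\mathcal{M}}^{X}_{b,\vartheta}I^{-1}$ is reconciled with the signs of the drifts in (\ref{eq:wat2}). Thus $Z_{t}$ is a local martingale; the Gaussian bounds on the Ornstein-Uhlenbeck process $Y_{\cdot}$ (as in Proposition \ref{PMehla}), the compact support of $s$, and the linearity of $R_{\vartheta}$ in $Y$ promote this to a true martingale, and hence $E^{Q}[Z_{T}]=Z_{0}=v(x_{0},Y,T)$, which is (\ref{eq:sen1b}).

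The main obstacle I expect is step (b): one must simultaneously track the involution $I$, the Ornstein-Uhlenbeck drift of $Y$, and the complex-valued drift $-i\ct^{1/2}Y^{N}/b$ of the auxiliary process $y_{\cdot}$ on $X_{K_{\C}}$, which is what implements the factor $-i$ in $R_{\vartheta}(Y^{N})$. Once this bookkeeping is carried out carefully, the remaining computation is mechanical and is the direct analogue of the argument that proves Theorem \ref{Thesga}.
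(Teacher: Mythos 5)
Your strategy — form the process $Z_t$ and verify that it is a local martingale under $Q$, then pass to expectations — is exactly the paper's argument, which is given as a one-line appeal to (\ref{eq:qsic4}), (\ref{eq:formid1a1}), (\ref{eq:sig1}), and It\^o's formula. The identity (\ref{eq:sig1}) supplies the involution $I$, (\ref{eq:formid1a1}) together with (\ref{eq:formido1}) gives the decomposition of $\overline{\mathcal{M}}^{X}_{b,\vartheta}$ you need, and (\ref{eq:glab60}) with $R_\vartheta(Y)$ from (\ref{eq:qsic4}) supplies the matrix factor $U_{b,\vartheta,t}$; your items (a), (c), (d), and the first part of (b) carry the computation.

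However, there is a bookkeeping error in your step (b). The term $-i\cos^{1/2}(\vartheta)\rho^{F}(Y^{N})/b$ does \emph{not} come from the drift of the auxiliary process $y_{\cdot}$ on $X_{K_{\C}}$: the heat-equation solution $v(x,Y,\tau)=\exp(-\tau\overline{\mathcal{M}}^{X}_{b,\vartheta,-})s(x,Y)$ depends only on $(x,Y)$, so the SDE for $y_{\cdot}$ cannot contribute to $dZ_{t}$ through It\^o's formula — there is simply no $\partial_{y}v$ to hit; moreover, $\tau^{t}_{0}$ is parallel transport along $x_{\cdot}$ alone with respect to $\n^{S^{\overline{TX}}\otimes F}$. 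The $\rho^{F}(Y^{N})$ contribution already sits inside $R_{\vartheta}(Y)$ in (\ref{eq:qsic4}) and therefore enters through the evolution of $U_{b,\vartheta,t}$ in your step (c). If you count it in both (b) and (c), the term-by-term cancellation against $\overline{\mathcal{M}}^{X}_{b,\vartheta,-}$ fails; if instead you mean to move it from (c) to (b), you are tacitly replacing $R_{\vartheta}(Y)$ by something else than what (\ref{eq:glab60}) and (\ref{eq:qsic4}) define. The process $y_{\cdot}$ in (\ref{eq:wat2}) is introduced only for the later estimates exploiting the factorization $U_{b,\vartheta,\cdot}=U^{0}_{b,\vartheta,\cdot}\otimes E_{b,\vartheta,\cdot}$ of (\ref{eq:diff2}) (see Theorem \ref{Teste}); it plays no role in the present martingale identity. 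Once this is corrected, the remainder of your argument is sound.
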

\begin{proof}
This follows from (\ref{eq:qsic4}), (\ref{eq:formid1a1}),  (\ref{eq:sig1}), and from 
It\^{o}'s formula.
\end{proof}

We will show how to modify (\ref{eq:sen1}), (\ref{eq:sen1b}) when replacing 
$\overline{\mathcal{L}}^{X}_{\bt,-},\overline{\mathcal{M}}^{X}_{\bt,-}$ by 
$\overline{L}^{X}_{-}\vert_{db=0},\overline{M}^{X}_{-}\vert_{db=0}$. 
Set
\index{Rt@$R_{\vartheta}'\left(Y\right)$}%
\begin{equation}\label{eq:bobi1}
R_{\vartheta}'\left(Y\right)=R_{\vartheta}\left(Y\right)-
\frac{d\vartheta}{\sqrt{2}}\left(\widehat{c}\left(\overline{Y}^{TX}\right)
+\frac{\sin\left(\vartheta\right)}{\cos^{1/2}\left(\vartheta\right)}i\mathcal{E}^{N}\right).
\end{equation}
Let 
\index{Ubt@$U'_{\bt,\cdot}$}%
$U'_{\bt,\cdot}$ be the solution of the differential equation,
\begin{align}\label{eq:bobi2}
&\frac{dU'_{\bt}}{dt}=U'_{\bt}\left[-\frac{N_{-\vartheta}^{\LXN}}{b^{2}}
+\frac{R'_{\vartheta}\left(Y\right)}{b}+\frac{d\vartheta}{\sqrt{2}}
\frac{\sin\left(\vartheta\right)}{\cos^{1/2}\left(\vartheta\right)}ic\left(\left[Y^{N},Y^{TX}\right]
\right)\right],\\
&U'_{\bt,0}=1. \nonumber 
\end{align}
By (\ref{eq:mir-3}), when replacing $\overline{\mathcal{L}}^{X}_{\bt,-}$ by 
$\overline{L}^{X}_{-}\vert_{db=0}$, or  $\overline{\mathcal{M}}^{X}_{\bt,-}$ by 
$\overline{M}^{X}_{-}\vert_{db=0}$, in (\ref{eq:sen1}), (\ref{eq:sen1b}), one should simply 
replace $U_{\bt,t}$ by $U'_{\bt,t}$.

Recall that 
\index{RtY@$R_{\vartheta}\left(Y\right)$}%
$R_{\vartheta}\left(Y\right)$ is given by 
(\ref{eq:qsic4}). In Definition \ref{Dst}, 
\index{R0Y@$R^{0}_{\vartheta}\left(Y\right)$}%
$R_{\vartheta}^{0}\left(Y\right)$ was defined to be 
$R_{\vartheta}\left(Y\right)$ when $E=\C$ is the trivial 
representation of $K$.  By (\ref{eq:qsic4}), we get
\begin{multline}\label{eq:qsic4b}
R^{0}_{\vartheta}\left(Y\right)=\cos\left(\vartheta\right)\left(
	 \widehat{c}_{\vartheta}\left(\ad\left(Y^{TX}\right)
	 \vert_{TX \oplus N} \right)
  -c\left(\ad\left(Y^{ 
	  TX}\right)
   \right) \right) \\
   -i\cos^{1/2}\left(\vartheta\right)
   \left(c\left(\theta\ad\left(Y^{N}\right)\right)+
   \widehat{c}_{\vartheta}\left(\ad\left(Y^{N}\right)\vert_{\overline{TX}}\right)\right). 
 \end{multline}
 By (\ref{eq:qsic4b}), we get
 \begin{equation}\label{eq:est0}
\left\vert  
R^{0}_{\vartheta}\left(Y\right)\right\vert\le 
C' \left( \cos\left(\vartheta\right)\left\vert  Y^{TX}\right\vert
+\cos^{1/2}\left(\vartheta\right)\left\vert  Y^{N}\right\vert \right) .
\end{equation}

Let 
\index{U0bt@$U^{0}_{b,\vartheta,\cdot}$}%
$U^{0}_{b,\vartheta,\cdot}$ be 
$U_{b,\vartheta,\cdot}$ in this special case. More precisely, 
$U^{0}_{b,\vartheta,\cdot}$ is the solution of the differential 
equation,
\begin{align}\label{eq:diff0}
&\frac{dU^{0}_{b,\vartheta}}{ds}=U^{0}_{b,\vartheta}
\left[-\frac{N^{\Lambda\ac\left(T^{*}X \oplus N^{*}\right)}_{-\vartheta}}{b^{2}}+\frac{R^{0}_{\vartheta}\left(Y\right)}{b}
\right],
&U^{0}_{b,\vartheta,0}=1.
\end{align}
Let 
\index{Ebt@$E_{b,\vartheta,\cdot}$}%
$E_{b,\vartheta,\cdot}$ be the 
solution of the differential equation,
\begin{align}\label{eq:diff1}
&\frac{dE_{b,\vartheta}}{ds}=E_{b,\vartheta}\left
[-i\frac{\cos^{1/2}\left(\vartheta\right)}{b}\rho^{E}\left(Y^{N}\right)\right],
&E_{b,\vartheta,0}=1.
\end{align}
Then we have the obvious identity 
\begin{equation}\label{eq:diff2}
U_{b,\vartheta,\cdot}=U^{0}_{b,\vartheta,\cdot} \otimes E_{b,\vartheta,\cdot}.
\end{equation}

Put
\index{RY@$R^{1}\left(Y\right)$}%
\begin{equation}\label{eq:qsic4d}
R^{1}\left(Y\right)=\widehat{c}\left(\ad\left(Y^{TX}\right)\vert_{TX \oplus N}\right)-c\left(\ad\left(Y^{TX}\right)\right)
-i\left(c\left(\theta\ad\left(Y^{N}\right)\right)\right).
\end{equation}
By (\ref{eq:qsic4b}),  $R^{0}_{0}\left(Y\right)$ splits as the sum of two commuting 
pieces,
\begin{equation}\label{eq:qsic4e}
R^{0}_{0}\left(Y\right)=R^{1}\left(Y\right)-i\widehat{c}\left(\ad\left(Y^{N}\right)\vert_{\overline{TX}}\right).
\end{equation}

We make temporarily $\vartheta=0$ in (\ref{eq:wat2bav}), 
(\ref{eq:wat2}). Let $V_{b,\cdot},W_{b,\cdot}$ be the solutions of the differential 
equations,
\begin{align}\label{eq:qsic4ea}
&\frac{dV_{b}}{ds}=V_{b}\left[-\frac{N^{\Lambda\ac\left(T^{*}X \oplus N^{*}
\right)}}{b^{2}}+\frac{R^{1}\left(Y\right)}{b}\right],&V_{b,0}=1,\\
&\frac{dW_{b}}{ds}=W_{b}\left[-\frac{i}{b}\widehat{c}\left(\ad\left(Y^{N}\right)
\vert_{\overline{TX}}\right)\right],&W_{b,0}=1. \nonumber 
\end{align}
As in (\ref{eq:diff2}), $U^{0}_{b,\cdot}=U^{0}_{b,0,\cdot}$ can be 
written in the form,
\begin{equation}\label{eq:qsic4f}
U^{0}_{b,\cdot}=V_{b,\cdot} \otimes W_{b,\cdot}.
\end{equation}
The factorization (\ref{eq:qsic4f}) was already obtained 
in \cite[eq. (14.4.6)]{Bismut08b}.  
Using the fact that $R^{1}\left(Y\right)$ maps forms of 
degree zero to forms of positive degree, a coarse estimate on 
$V_{b,\cdot}$ could be obtained from the differential equation 
(\ref{eq:qsic4ea}) using Gronwall's lemma. 
This coarse estimate played a critical role in obtaining the 
behaviour of $V_{b,\cdot}$ as $b\to 0$. The behaviour of $W_{b,\cdot}$ as 
$b\to 0$ was obtained via the analysis of the hypoelliptic Laplacian 
on the complexification $K_{\C}$ of $K$.

In the sequel, the main difficulty is that for $\vartheta>0$, there is no 
factorization of $U^{0}_{b,\vartheta,\cdot}$ similar to (\ref{eq:qsic4f}), 
to which the above methods could be applied. 

For $\vartheta=\frac{\pi}{2}$, in (\ref{eq:wat2bav}) or in 
(\ref{eq:wat2}), we have
\begin{equation}\label{eq:prin-1}
Y^{N}_{t}=Y^{N}_{0}.
\end{equation}
By (\ref{eq:gip1}), (\ref{eq:qsic4}),  and (\ref{eq:glab60}),  we get
\begin{equation}\label{eq:prin-2}
U_{b,\frac{\pi}{2},t}=\exp\left(-tN^{\Lambda\ac\left(T^{*}X\right)}/b^{2}\right).
\end{equation}
Let $\mathbf{P}^{\Lambda\ac\left(N^{*}\right)}$ be the orthogonal 
projection from $\Lambda\ac\left(T^{*}X \oplus N^{*}\right)$ on 
$\Lambda\ac\left(N^{*}\right)$. By (\ref{eq:prin-2}), as $b\to 0$, for $t>0$, 
\begin{equation}\label{eq:prin-3}
U^{0}_{b,\frac{\pi}{2},t}\to \mathbf{P}^{\Lambda\ac\left(N^{*}\right)}, 
\end{equation}
and the convergence is uniform when $t$ stays away from $0$.

When $E=\C$, is the trivial representation, let 
\index{Ubt@$U^{\prime 0}_{\bt,\cdot} $}%
$U^{\prime 0}_{\bt,\cdot}$  be $U'_{\bt,\cdot}$ in this special 
case.  Instead of (\ref{eq:diff2}), we have the identity 
\begin{equation}\label{eq:diff2a1}
U'_{\bt,\cdot}=U^{\prime 0}_{\bt,\cdot}\otimes E_{\bt,\cdot}.
\end{equation}
\subsection{A crude estimate on $U^{0}_{b,\vartheta,\cdot}, 
E_{b,\vartheta,\cdot}$}%
\label{subsec:cru}
We will now establish a crude estimate on 
$U^{0}_{\bt,\cdot},E_{\bt,\cdot}$.
\begin{prop}\label{Pcru}
There exists $C>0$ such that for  
$b>0,\vartheta\in\left[0,\frac{\pi}{2}\right],t\ge 0$, then
\begin{align}\label{eq:mel1}
&\left\vert  
U^{0}_{b,\vartheta,t}\right\vert\le\exp\left(\frac{C}{b} \left( 
\cos\left(\vartheta\right)\int_{0}^{t}\left\vert  
Y^{TX}_{s}\right\vert ds+\cos^{1/2}\left(\vartheta\right)
\int_{0}^{t}
\left\vert  Y^{N}_{s}\right\vert ds\right) \right),\\
&\left\vert  
E_{b,\vartheta,t}\right\vert\le\exp\left(C\frac{\cos^{1/2}\left(\vartheta\right)}{b}
\int_{0}^{t}
\left\vert  Y^{N}_{s}\right\vert ds\right). \nonumber 
\end{align}
Moreover, there exists $C>0$ such that for 
$b>0,\vartheta\in\left[0,\frac{\pi}{2}\right],t>0$, 
\begin{equation}\label{eq:mel1x1}
\left\vert  U^{0}_{b,\vartheta,t}\right\vert\le \exp\left(
\frac{C\cos\left(\vartheta\right)}{2}\int_{0}^{t}\left\vert  
Y^{TX}_{s}\right\vert^{2}ds+C\frac{\cos^{1/2}\left(\vartheta\right)}{b}
\int_{0}^{t}\left\vert  Y^{N}_{s}\right\vert ds\right).
\end{equation}
\end{prop}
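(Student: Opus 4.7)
The plan is to derive all three estimates via Gronwall's inequality applied to the matrix-valued ODEs (\ref{eq:diff0}) and (\ref{eq:diff1}), exploiting the structural input from (\ref{eq:gip1}) that
\begin{equation*}
N^{\Lambda\ac\left(T^{*}X\oplus N^{*}\right)}_{-\vartheta}=N^{\Lambda\ac\left(T^{*}X\right)}+\cos\left(\vartheta\right)N^{\Lambda\ac\left(N^{*}\right)},
\end{equation*}
which is a self-adjoint nonnegative operator whose eigenvalues are all $\ge\cos\left(\vartheta\right)$ on forms of positive total degree.

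First I will treat the estimate on $E_{b,\vartheta,t}$. Since $\rho^{E}:K\to\mathrm{U}\left(E\right)$ is a finite-dimensional representation, there is $C>0$ with $\left\vert\rho^{E}\left(a\right)\right\vert\le C\left\vert a\right\vert$ for all $a\in\mathfrak{k}$. Pairing (\ref{eq:diff1}) with $E^{*}_{b,\vartheta,t}$ and applying Gronwall immediately gives the claimed bound. For the first estimate in (\ref{eq:mel1}), I differentiate $\left\vert U^{0}_{b,\vartheta,t}\right\vert^{2}$ using (\ref{eq:diff0}). Because $N_{-\vartheta}$ is self-adjoint and nonnegative, the contribution of $-N_{-\vartheta}/b^{2}$ is nonpositive and may be discarded, leaving
\begin{equation*}
\frac{d}{dt}\left\vert U^{0}_{b,\vartheta,t}\right\vert^{2}\le\frac{2}{b}\left\vert R^{0}_{\vartheta}\left(Y_{t}\right)\right\vert\left\vert U^{0}_{b,\vartheta,t}\right\vert^{2}.
\end{equation*}
Invoking (\ref{eq:est0}) and integrating yields (\ref{eq:mel1}).

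The crux is (\ref{eq:mel1x1}). The strategy is to trade the factor $1/b$ in the $Y^{TX}$-contribution against the dissipation $-N_{-\vartheta}/b^{2}$. Using the elementary inequality
\begin{equation*}
\frac{C\cos\left(\vartheta\right)\left\vert Y^{TX}\right\vert}{b}\le\frac{C^{2}\cos\left(\vartheta\right)\left\vert Y^{TX}\right\vert^{2}}{2}+\frac{\cos\left(\vartheta\right)}{2b^{2}},
\end{equation*}
the residual $\cos\left(\vartheta\right)/\left(2b^{2}\right)$ is absorbed by $2N_{-\vartheta}/b^{2}$ on forms of strictly positive degree, where $N_{-\vartheta}\ge\cos\left(\vartheta\right)$. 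The delicate point is the degree-zero component of $U^{0}_{b,\vartheta,t}$ on $\Lambda^{0}\left(T^{*}X\oplus N^{*}\right)\otimes S^{\overline{TX}}$, where the dissipation vanishes. I would verify from the explicit expression (\ref{eq:qsic4b}), using (\ref{eq:nea0x1}) together with (\ref{eq:ors1}), that the degree-preserving component of $R^{0}_{\vartheta}\left(Y^{TX}\right)$ on this subspace is identically zero: the quadratic Clifford monomials $c\left(e_{i}\right)c\left(e_{j}\right)$ and $\widehat{c}\left(e_{i}\right)\widehat{c}\left(e_{j}\right)$ with one of $e_{i},e_{j}$ in $\mathfrak{p}$ and the other in $\mathfrak{k}$ send scalars to $2$-forms. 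With this vanishing, a refined Gronwall argument applied to a Lyapunov function weighted by exterior degree (so that coefficients decay across degree strata) should close the estimate.

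I expect the hardest step to be this last one: the careful bookkeeping of the two-way coupling between $\Lambda^{0}$ and $\Lambda^{\ge 2}$ through the degree-lowering part of $R^{0}_{\vartheta}\left(Y^{TX}\right)$, in such a way that the bound remains uniform in $\vartheta\in\left[0,\frac{\pi}{2}\right]$ as $\cos\left(\vartheta\right)\to 0$. The $Y^{N}$-dependence in (\ref{eq:mel1x1}) is kept as $\left\vert Y^{N}\right\vert/b$ because the piece $\widehat{c}_{\vartheta}\left(\ad\left(Y^{N}\right)\vert_{\overline{TX}}\right)$ of $R^{0}_{\vartheta}\left(Y^{N}\right)$ acts on the factor $S^{\overline{TX}}$ while preserving the $\Lambda\ac\left(T^{*}X\oplus N^{*}\right)$-grading, offering no room for an analogous $\left\vert Y^{N}\right\vert^{2}$ improvement.
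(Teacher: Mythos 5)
Your argument for the two estimates in (\ref{eq:mel1}) and for $E_{b,\vartheta,t}$ matches the paper: pass to the adjoint $U^{0*}_{b,\vartheta,\cdot}$ (or $E^{*}_{b,\vartheta,\cdot}$), differentiate $\left\vert U^{0*}_{b,\vartheta,t}f\right\vert^{2}$, drop the nonpositive contribution of $-N^{\Lambda\ac\left(T^{*}X\oplus N^{*}\right)}_{-\vartheta}/b^{2}$, invoke (\ref{eq:est0}), and apply Gronwall.

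For (\ref{eq:mel1x1}) there is a genuine gap, though you correctly identify the two structural facts that make it work: $\mathbf{P}R^{0}_{\vartheta}\left(Y^{TX}\right)\mathbf{P}=0$ (this is exactly the first identity of Proposition \ref{Pproj}), and the lower bound $N^{\Lambda\ac\left(T^{*}X\oplus N^{*}\right)}_{-\vartheta}\ge\cos\left(\vartheta\right)$ on $\mathbf{P}^{\perp}$. The problem is the mechanism you propose. Applying Young to the scalar quantity $C\cos\left(\vartheta\right)\left\vert Y^{TX}\right\vert/b$ produces the residual $\cos\left(\vartheta\right)/\left(2b^{2}\right)$, which can be absorbed by the dissipation only on $\mathbf{P}^{\perp}$; on $\mathbf{P}$ nothing is available to absorb it, and you appeal to an unspecified ``Lyapunov function weighted by exterior degree,'' which is precisely where your argument stalls. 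The paper sidesteps this by bounding the \emph{quadratic form} rather than the operator norm: using $\mathbf{P}R^{0}_{\vartheta}\left(Y^{TX}\right)\mathbf{P}=0$, at least one of the two arguments of $\left\langle R^{0}_{\vartheta}\left(Y^{TX}\right)f,f\right\rangle$ lies in $\mathbf{P}^{\perp}$, and there $\left\vert\mathbf{P}^{\perp}f\right\vert\le\cos^{-1/2}\left(\vartheta\right)\left\vert\sqrt{N^{\Lambda\ac\left(T^{*}X\oplus N^{*}\right)}_{-\vartheta}}\,f\right\vert$, so that (\ref{eq:he7}) holds:
\begin{equation*}
\left\vert\left\langle R^{0}_{\vartheta}\left(Y^{TX}\right)f,f\right\rangle\right\vert\le C\cos^{1/2}\left(\vartheta\right)\left\vert Y^{TX}\right\vert\left\vert\sqrt{N^{\Lambda\ac\left(T^{*}X\oplus N^{*}\right)}_{-\vartheta}}\,f\right\vert\left\vert f\right\vert.
\end{equation*}
Now Young is applied to the product $\left(C\cos^{1/2}\left(\vartheta\right)\left\vert Y^{TX}\right\vert\left\vert f\right\vert\right)\cdot\left(\frac{1}{b}\left\vert\sqrt{N^{\Lambda\ac\left(T^{*}X\oplus N^{*}\right)}_{-\vartheta}}\,f\right\vert\right)$ so that the square of the second factor is \emph{exactly} cancelled by $-\frac{1}{b^{2}}\left\langle N^{\Lambda\ac\left(T^{*}X\oplus N^{*}\right)}_{-\vartheta}f,f\right\rangle$, leaving only $C\cos\left(\vartheta\right)\left\vert Y^{TX}\right\vert^{2}\left\vert f\right\vert^{2}$ with no leftover singular term; this is (\ref{eq:he8}), after which a single Gronwall step closes the proof. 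No degree-weighted Lyapunov functional or bookkeeping across degree strata is needed.
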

\begin{proof}
   For the first estimate in (\ref{eq:mel1}), we proceed as in 
   \cite[Theorem 14.5.2]{Bismut08b}.  Let
    $U^{0*}_{b,\vartheta,\cdot}$ be the adjoint of 
    $U^{0}_{b,\vartheta,\cdot}$ with respect to the standard 
    Hermitian product on $\Lambda\ac\left(T^{*}X \oplus 
	N^{*}\right)\otimes S^{\overline{TX}}$. Since $N^{\Lambda\ac\left(T^{*}X \oplus 
    N^{*}\right)}_{-\vartheta}, R^{0}_{\vartheta}\left(Y\right)$ are 
    self-adjoint,  by (\ref{eq:diff0}), we get
    \begin{align}\label{eq:he3}
&\frac{dU^{0*}_{b,\vartheta}}{ds}=\left[-\frac{N^{\Lambda\ac\left(T^{*}X \oplus N^{*}\right)}_{-\vartheta}}{b^{2}}
+\frac{R^{0}_{\vartheta}\left(Y\right)}{b}\right]U^{0*}_{b,\vartheta},
&U^{0*}_{b,\vartheta,0}=1.
\end{align}
By (\ref{eq:he3}), if $f\in\Lambda\ac\left(T^{*}X \oplus 
N^{*}\right)\otimes S^{\overline{TX}}$, then
\begin{equation}\label{eq:he4}
\frac{d}{ds}\left\vert  U^{0*}_{b,\vartheta}f\right\vert^{2}=2\left\langle \left(-\frac{N^{\Lambda\ac\left(T^{*}X \oplus N^{*}\right)}_{-\vartheta}}{b^{2}}
+\frac{R^{0}_{\vartheta}\left(Y\right)}{b}\right)U^{0*}_{b,\vartheta}f,U^{0*}_{b,\vartheta}f \right\rangle.
\end{equation}
By (\ref{eq:est0}), (\ref{eq:he4}), we obtain
\begin{equation}\label{eq:he5}
\frac{d}{ds}\left\vert  U^{0*}_{b,\vartheta}f\right\vert^{2}
\le \frac{C}{b}\left(\cos\left(\vartheta\right)\left\vert  
Y^{TX}\right\vert+\cos^{1/2}\left(\vartheta\right)\left\vert  
Y\right\vert^{N}\right)\left\vert  
U^{0*}_{b,\vartheta}f\right\vert^{2}.
\end{equation}
By (\ref{eq:he5}), using Gronwall's lemma, we get the first 
inequality in (\ref{eq:mel1}). The proof of the second inequality in (\ref{eq:mel1}) is
similar.

By equation (\ref{eq:nea1}) in Proposition \ref{Pproj}, we get
\begin{equation}\label{eq:he6}
\mathbf{P}R^{0}_{\vartheta}\left(Y^{TX}\right)\mathbf{P}=0.
\end{equation}
By (\ref{eq:gip1}),  (\ref{eq:est0}), and (\ref{eq:he6}), we get
\begin{equation}\label{eq:he7}
\left\vert  
\left\langle  
R^{0}_{\vartheta}\left(Y^{TX}\right)f,f\right\rangle\right\vert\le C\cos^{1/2}\left(\vartheta\right)\left\vert  
Y^{TX}\right\vert\left\vert  \sqrt{N^{\Lambda\ac\left(T^{*}X 
\oplus N^{*}\right)}_{-\vartheta}}f\right\vert\left\vert f \right\vert.
\end{equation}
By  (\ref{eq:he7}), we deduce that 
\begin{equation}\label{eq:he8}
\left\langle  \left(-\frac{N^{\Lambda\ac\left(T^{*}X \oplus N^{*}\right)}_{-\vartheta}}{b^{2}}
+\frac{R^{0}_{\vartheta}\left(Y^{TX}\right)}{b}\right)f,f\right\rangle\le C\cos\left(\vartheta\right)\left\vert  Y^{TX}\right\vert^{2}
\left\vert  f\right\vert^{2}.
\end{equation}
Using  (\ref{eq:est0}),  
(\ref{eq:he4}), (\ref{eq:he8}), and proceeding as before, we get 
(\ref{eq:mel1x1}). The proof of our proposition  is completed. 
\end{proof}
\begin{remk}\label{Rintel}
  Under $Q$, our $Y^{TX}_{\cdot}$ has the same probability 
law as $Y_{\cdot/b^{2}}$ where $Y_{\cdot}$ is taken as in  
(\ref{eq:phan-2}) with 
$E=TX$, and our $Y^{N}_{\cdot}$ has the same probability law as 
$Y_{\ct \cdot/b^{2}}$, and $Y_{\cdot}$ is  taken as in 
(\ref{eq:phan-2}) with $E=N$.   Using (\ref{eq:af1}) and (\ref{eq:mel1}),  given  $p\ge 1$, there 
    is a constant $C>0$ such that  for $0\le\beta\le 1$, 
     we get
    \begin{multline}\label{eq:prin10}
E^{Q}\left[\sup_{0\le s\le t}\left\vert  
U^{0}_{b,\vartheta,s}\right\vert^{p}\right]\le
\exp\Biggl(C^{2}t/\beta^{2}+\frac{1}{2}\frac{\left(m+\cos\left(\vartheta\right)n\right)\beta^{2}}{b^{2}}t\\
+Cb\left(1-e^{-t/b^{2}}\right)\cos\left(\vartheta\right)\left\vert  
Y_{0}^{TX}\right\vert+C\frac{b}{\cos^{1/2}\left(\vartheta\right)}
\left(1-e^{-t\cos\left(\vartheta\right)/b^{2}}\right)\left\vert  
Y_{0}^{N}\right\vert\Biggr).
\end{multline}
Since $1-e^{-x}\le x$, from (\ref{eq:prin10}), we get
\begin{multline}\label{eq:prin10x1}
E^{Q}\left[\sup_{0\le s\le t}\left\vert  
U^{0}_{b,\vartheta,s}\right\vert^{p}\right]\le
\exp\Biggl(C^{2}t/\beta^{2}+\frac{1}{2}\frac{\left(m+\cos\left(\vartheta\right)n\right)\beta^{2}}{b^{2}}t\\
+C\frac{t}{b}\cos\left(\vartheta\right)\left\vert  Y_{0}^{TX}\right\vert
+Ct\frac{\cos^{1/2}\left(\vartheta\right) }{b}\left\vert  
Y_{0}^{N}\right\vert\Biggr).
\end{multline}
In particular, when $b$ stays away from $0$, when $\left\vert  
Y_{0}\right\vert,t\ge 0$ are  uniformly bounded,  (\ref{eq:prin10x1}) remains uniformly bounded. 

Also by (\ref{eq:rot7ay1}), (\ref{eq:af1}), and (\ref{eq:mel1x1}), 
given $p\ge 1$, there is $C>0$ such that for
$b\le C^{-1/2}$, we get
\begin{multline}\label{eq:prin10y1}
E^{Q}\left[\sup_{0\le s\le t}\left\vert  
U^{0}_{b,\vartheta,s}\right\vert^{p}\right]\le\exp\Biggl(\frac{C}{2}\cos\left(\vartheta\right)mt
+\frac{1}{2}\cos\left(\vartheta\right)Cb^{2}\left\vert  
Y_{0}^{TX}\right\vert^{2}+\frac{C^{2}t}{2}\\
+\frac{1}{2}
\frac{\cos\left(\vartheta\right)}{b^{2}}nt
+C\frac{b}{\cos^{1/2}\left(\vartheta\right)}\left(1-e^{-t\cos\left(\vartheta\right)
/b^{2}}\right) \left\vert  Y_{0}^{N}\right\vert\Biggr).
\end{multline}
Using again the fact that $1-e^{-x}\le x$, from (\ref{eq:prin10y1}), 
we deduce that if $b\le C^{-1/2}$, then
\begin{multline}\label{eq:prin10y2}
E^{Q}\left[\sup_{0\le s\le t}\left\vert  
U^{0}_{b,\vartheta,s}\right\vert^{p}\right]\le\exp\Biggl(\frac{C}{2}\cos\left(\vartheta\right)mt
+\frac{1}{2}\cos\left(\vartheta\right)\left\vert  
Y_{0}^{TX}\right\vert^{2}+\frac{C^{2}t}{2}\\
+\frac{1}{2}
\frac{\cos\left(\vartheta\right)}{b^{2}}nt
+C\frac{\cos^{1/2}\left(\vartheta\right)}{b}t \left\vert  Y_{0}^{N}\right\vert\Biggr).
\end{multline}

The estimates in (\ref{eq:mel1}), (\ref{eq:mel1x1}), 
(\ref{eq:prin10})--(\ref{eq:prin10y2}) deteriorate as $b\to 0$. As we 
will see in subsection \ref{subsec:unie}, a  stronger estimate 
can be found for $E_{b,\vartheta,\cdot}$. The problem is much more 
serious for $U^{0}_{\bt,\cdot}$.
\end{remk}
\subsection{A uniform estimate on $E_{b,\vartheta,\cdot}$}%
\label{subsec:unie}
We assume equation (\ref{eq:wat2}) to be in force. 

In equation (\ref{eq:diff1}) for $E_{b,\vartheta,\cdot}$, 
$Y^{N}_{\cdot}$  in (\ref{eq:wat2}) also depends on $b,\vartheta$.   In the sequel, we use the notation
\index{Eb@$E_{b,\cdot}$}%
\begin{equation}\label{eq:sig4}
E_{b,\cdot}=E_{b,0,\cdot}.
\end{equation}
In the definition of $E_{b,0}$, $\vartheta$ is made equal 
to $0$ also in the definition of $Y^{N}_{\cdot}$.

Note that
\begin{equation}\label{eq:he9}
E_{b,\vartheta,\cdot}=E_{b/\cos^{1/2}\left(\vartheta\right),\cdot}.
\end{equation}

 Let 
 \index{E0@$E_{0,\cdot}$ }%
 $E_{0,\cdot}$ 
be the solution of the stochastic differential equation
\begin{align}\label{eq:roba2}
&dE_{0}=-E_{0}\rho^{E}\left(idw^{\mathfrak k}\right),
&E_{0,0}=1.
\end{align}
Observe that both $E_{b,\cdot}$ and $E_{0,\cdot}$ are constructed on 
the same probability space for  $w^{\mathfrak k}$. 
\begin{thm}\label{Teste}
Given $M\ge 0,p\ge 1$, there exist $C_{p}>0,C'>0$ such that for 
$b>0,\vartheta\in\left[0,\frac{\pi}{2}\right[$,
\begin{equation}\label{eq:prin-6}
E^{Q}\left[\sup_{0\le t\le M}\left\vert  E_{b,\vartheta,t}\right\vert^{p}\right]\le 
C_{p}\exp\left(\frac{1}{2}\left\vert  Y_{0}^{N}\right\vert^{2}\right).
\end{equation}
Given $b_{0}>0,0<\epsilon\le M<+ \infty ,p\ge 1$, there exist 
$C_{b_{0},M,p}>0,C'_{b_{0},\epsilon,M}>0$ 
such that for $0<b\le b_{0}, 
\vartheta\in\left[0,\frac{\pi}{2}\right[,\epsilon\le t\le M$, then
\begin{multline}\label{eq:prin-6z1}
\exp\left(-\left\vert  Y_{0}^{N}\right\vert^{2}/2\right)
E^{Q}\left[\sup_{0\le t\le M}\left\vert  
E_{b,\vartheta,t}\right\vert^{p}\exp\left(\left\vert  
Y^{N}_{t}\right\vert^{2}/2\right)\right] \\
\le
C_{b_{0},M,p}\exp\left(-C'_{b_{0},\epsilon,M}\cos\left(\vartheta\right)\left\vert  Y_{0}^{N}\right\vert^{2}\right).
\end{multline}
Finally, given $M>0$, as $b\to 0$, with respect to $Q$, $E_{b,\cdot}$ converges 
uniformly in 
probability  to $E_{0,\cdot}$ on $\left[0,M\right]$.
\end{thm}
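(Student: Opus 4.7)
The plan is to reduce the theorem to uniform estimates on the undeformed process $E_{b,\cdot}=E_{b,0,\cdot}$, lift that process to the complex group $K_{\C}$, and control the lift by applying the rate-of-escape estimates of Theorem \ref{Tesc} on the symmetric space $X_{K_{\C}}=K_{\C}/K$. The identity (\ref{eq:he9}) shows that $E_{b,\vartheta,t}=E_{b',t}$ with $b'=b/\cos^{1/2}(\vartheta)$ running over $(0,+\infty)$, so once (\ref{eq:prin-6}) and (\ref{eq:prin-6z1}) are established uniformly in $b'>0$ for $\vartheta=0$, the general statements follow (the factor $\cos(\vartheta)$ in (\ref{eq:prin-6z1}) being produced by reinstating the change of variable after the Mehler bound is applied). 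The key structural observation is that the ODE (\ref{eq:diff1}) is the image under $\rho^{E}$ of the right-invariant stochastic equation in (\ref{eq:wat2y1}) for a $K_{\C}$-valued process $h_{\cdot}$ driven by $Y^{\mathfrak k}_{\cdot}$, so one can write $E_{b,t}=\rho^{E}(h_{t})$, and the projection $y_{\cdot}=ph_{\cdot}$ on $X_{K_{\C}}$ is precisely a hypoelliptic diffusion of the type studied in Section \ref{sec:extra} (with $G,K,X,\mathfrak p$ replaced by $K_{\C},K,X_{K_{\C}},i\mathfrak k$).

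Using the polar decomposition $K_{\C}=K\cdot\exp(i\mathfrak k)$, standard estimates on the finite-dimensional representation $\rho^{E}$ yield
\begin{equation*}
|\rho^{E}(h_{t})|\le C_{\rho^{E}}\exp\left(c_{\rho^{E}}d_{X_{K_{\C}}}(y_{0},y_{t})\right),
\end{equation*}
with $c_{\rho^{E}}$ depending only on the highest weight of $\rho^{E}$. Since $t\mapsto d(y_{0},y_{t})$ is bounded above by $\sup_{0\le s\le t}d(y_{0},y_{s})$, the estimate (\ref{eq:prin-6}) follows directly from equation (\ref{eq:led19}) of Theorem \ref{Tesc} applied to $y_{\cdot}$ on $X_{K_{\C}}$: the Gaussian tail bound on $\sup_{0\le s\le M}d(y_{0},y_{s})$ integrates to give an exponential moment, and the factor $\exp(|Y_{0}^{N}|^{2}/2)$ on the right of (\ref{eq:prin-6}) is exactly the weight already appearing in (\ref{eq:led19}). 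For the weighted estimate (\ref{eq:prin-6z1}), I would use H\"older's inequality with some $\theta\in\left]1,2\right[$ to split
\begin{equation*}
E^{Q}\left[\sup_{0\le t\le M}|E_{b,t}|^{p}\exp(|Y_{t}^{N}|^{2}/2)\right]\le \left\Vert\sup|E_{b,t}|^{p}\right\Vert_{\theta}\cdot\left\Vert\exp(|Y_{t}^{N}|^{2}/2)\right\Vert_{\theta'},
\end{equation*}
bound the first factor by the analogue of (\ref{eq:prin-6}) with exponent $p\theta$, and the second by Mehler's formula (\ref{eq:glab17a}) applied to the rescaled Ornstein--Uhlenbeck process $Y^{N}_{\cdot}$, whose $\tanh(\cos(\vartheta)t/b^{2})$ factor produces the decay $\exp(-C'\cos(\vartheta)|Y_{0}^{N}|^{2})$ once $b\le b_{0}$ and $t\ge\epsilon$.

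The convergence $E_{b,\cdot}\to E_{0,\cdot}$ in probability uniformly on $[0,M]$ should be proved by adapting Theorem \ref{Tconci} to the $K_{\C}$-valued process $h_{\cdot}$: the projection $y_{\cdot}$ converges in probability, uniformly on $[0,M]$, to Brownian motion on $X_{K_{\C}}$, and this lifts to uniform convergence in probability of $h_{\cdot}$ to the $K_{\C}$-valued Brownian motion $h_{0,\cdot}$ whose image under $\rho^{E}$ satisfies (\ref{eq:roba2}) by It\^o's formula. The main obstacle will be the proof of (\ref{eq:prin-6z1}): this estimate does not follow from (\ref{eq:prin-6}) alone, because the weight $\exp(|Y_{t}^{N}|^{2}/2)$ forces the use of the sharper form (\ref{eq:led19sa1}) of the rate-of-escape inequality, and the exponents in H\"older's inequality must be balanced delicately against the Mehler factor to extract the precise $\cos(\vartheta)$-dependent decay on the right. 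A secondary difficulty is that the scaling $b'=b/\cos^{1/2}(\vartheta)$ from (\ref{eq:he9}) makes $b'$ range over all of $(0,+\infty)$, which is why the extension of the rate-of-escape estimates to bounded $b$ established in Theorem \ref{Tesc} (beyond the $0<b\le b_{0}$ range of \cite{Bismut08b}) is indispensable here.
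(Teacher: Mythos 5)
Your reduction of (\ref{eq:prin-6}) and of the convergence statement to the rate-of-escape bound (\ref{eq:led19}) on the $K_{\C}$-valued diffusion, via (\ref{eq:he9}), (\ref{eq:priun-7}), and the identification $E_{b,\cdot}=\rho^{E}k_{b,\cdot}$, is exactly the paper's argument, and both parts are correct.

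The proposed H\"older split for (\ref{eq:prin-6z1}) does not work, and this is the gap. You propose
$E^{Q}\bigl[\sup_{s}|E_{b,s}|^{p}\exp(|Y_{t}^{N}|^{2}/2)\bigr]\le \|\sup_{s}|E_{b,s}|^{p}\|_{\theta}\,\|\exp(|Y_{t}^{N}|^{2}/2)\|_{\theta'}$,
bounding the first factor by (\ref{eq:prin-6}) at exponent $p\theta$ and the second by a Mehler-type bound. Whichever way you assign the conjugate exponents $\theta,\theta'$, this fails in the regime $\cos(\vartheta)\to 0$. If $\theta\in\left]1,2\right[$ so $\theta'>2$, the $\theta'$-norm of $\exp(|Y_{t}^{N}|^{2}/2)$ is infinite for $\cos(\vartheta)t/b^{2}$ large, since the stationary Ornstein--Uhlenbeck variance is $1/2$ and $\theta'>2$ puts one past the integrability threshold. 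If instead $\theta>2$ so $\theta'\in\left]1,2\right[$, then (\ref{eq:prin-6}) gives $\|\sup_{s}|E_{b,s}|^{p}\|_{\theta}\le C\exp(|Y_{0}^{N}|^{2}/2\theta)$, while (\ref{eq:rito2x2}) (the analogue for $Y^{N}$) gives $\exp(-|Y_{0}^{N}|^{2}/2)\|\exp(|Y_{t}^{N}|^{2}/2)\|_{\theta'}\le C\exp\bigl(-(1-e^{-2\cos(\vartheta)t/b^{2}})(1-\theta'/2)|Y_{0}^{N}|^{2}/2\bigr)$. The net exponent coefficient on $|Y_{0}^{N}|^{2}$ is $\tfrac{1}{2\theta}-\tfrac{1}{2}(1-e^{-2\cos(\vartheta)t/b^{2}})(1-\theta'/2)$, whose second term tends to $0$ as $\cos(\vartheta)\to 0$ while the first is a fixed positive constant; the total is growth, not decay, contradicting (\ref{eq:prin-6z1}).

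The reason the split loses is that $\sup_{s}|E_{b,s}|$ and $\exp(|Y_{t}^{N}|^{2}/2)$ are both driven by the same noise $w^{N}$, and separating them by H\"older discards the cancellation. The paper keeps them coupled: it uses $|E_{b,\vartheta,s}|\le e^{Cd(y_{0},y_{s})}$ together with the layer-cake representation $E[e^{aZ}W]=E[W]+\int_{0}^{\infty}ae^{ar}E[1_{Z\ge r}W]\,dr$ applied with $Z=\sup_{s\le M}d(y_{0},y_{s})$ and $W=\exp(|Y_{t}^{N}|^{2}/2)$, which reduces (\ref{eq:prin-6z1}) to the already-weighted rate-of-escape bound (\ref{eq:led19sa1}). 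After passing to the rescaled parameters the factor $(1-e^{-ct/b^{\prime 2}})=(1-e^{-c\cos(\vartheta)t/b^{2}})$ there is converted into $C'_{b_{0},\epsilon}\cos(\vartheta)$ by (\ref{eq:jar9b}), producing the correct $\cos(\vartheta)$-dependent decay. You correctly anticipated that (\ref{eq:led19sa1}) would be needed, but the H\"older chain you actually wrote down bypasses it and cannot be repaired by choosing exponents.
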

\begin{proof}
     As in \cite[eq. (14.6.3)]{Bismut08b}, if $h\in 
    K_{\C}$ then
    \begin{align}\label{eq:priun-7}
&\left\vert  \rho^{E}\left(h\right)\right\vert\le e^{Cd\left(p 1,p 
h\right)},
&\left\vert  \rho^{E}\left(h^{-1}\right)\right\vert\le e^{Cd\left(p 1,p 
h\right)}
\end{align}
Let $Y^{\mathfrak k}_{\cdot} \simeq Y^{N}_{\cdot}$ be as in (\ref{eq:wat2}) 
with $b>0, \vartheta=0$.
Let $k_{b,\cdot}\in K_{\C}$ be the  solution of the 
differential equation,
\begin{align}\label{eq:jar8a}
&\dot k_{b}=-i\frac{Y^{\mathfrak k}}{b},
&k_{b,0}=1.
\end{align}
Let $k_{0,\cdot}\in K_{\C}$ be the solution of the stochastic differential 
equation,
\begin{align}\label{eq:jar9}
&\dot k_{0,\cdot}=-i\dot w^{\mathfrak k},&k_{0,0}=1.
\end{align}
By (\ref{eq:diff1}),    (\ref{eq:roba2}), (\ref{eq:jar8a}), and
(\ref{eq:jar9}), we get
\begin{align}\label{eq:jar9a}
&E_{b,\cdot}=\rho^{E}k_{b,\cdot},
&E_{0,\cdot}=\rho^{E}k_{0,\cdot}.
\end{align}
Equation (\ref{eq:prin-6})  follows from equations 
(\ref{eq:led19})
in Theorem \ref{Tesc}, and from (\ref{eq:he9}),  
(\ref{eq:priun-7}),  and (\ref{eq:jar9a}). 

Given $b_{0}>0,\epsilon>0 $, there exists $C'_{b_{0},\epsilon}>0$ 
such that if $0<b\le b_{0},
\vartheta\in\left[0,\frac{\pi}{2}\right[, t\ge\epsilon$, then
\begin{equation}\label{eq:jar9b}
1-e^{-c\cos\left(\vartheta\right)t/b^{2}}\ge 
C'_{b_{0},\epsilon}\cos\left(\vartheta\right).  
\end{equation}
By equation (\ref{eq:led19sa1}) in Theorem \ref{Tesc}, by 
(\ref{eq:he9}), (\ref{eq:priun-7}),  (\ref{eq:jar9a}), and 
(\ref{eq:jar9b}),  we get 
(\ref{eq:prin-6z1}). By Theorem \ref{Tconci} and by (\ref{eq:jar9a}), 
we obtain the last statement in our theorem, whose  proof is completed. 
\end{proof}
 \subsection{The process $H_{\bt,\cdot}$}%
\label{subsec:estu}
When $\vartheta> 0$, even when $F$ is trivial, it is no 
longer true that $R^{0}_{\vartheta}\left(Y\right)$ maps forms of 
degree  $0$
into forms of positive degree. More precisely, the spin representation 
$S^{\overline{\mathfrak p}}$ is now coupled to the forms in 
$\Lambda\ac\left(\mathfrak p^{*} \oplus \mathfrak k^{*}\right)$, so that the product 
technique used in \cite{Bismut08b} does not apply any more. We will 
be forced to suitably combine the two sorts of techniques used in 
\cite{Bismut08b}.

We use the same notation as in Definition \ref{RtY}. Here
\index{P@$\mathbf{P}$}%
$\mathbf{P}$ denotes the orthogonal projection from 
$\Lambda\ac\left(\TsX \oplus N^{*}\right) \otimes 
S^{\overline{TX}}$ on $S^{\overline{TX}} $. Set
\index{Pp@$\mathbf{P}^{\perp}$}%
\begin{equation}\label{eq:depp}
\mathbf{P}^{\perp}=1-\mathbf{P}.
\end{equation}
\begin{defin}\label{DRp}
Put
\index{Rt@$R_{\vartheta}^{0,\perp}\left(Y\right)$}%
\begin{equation}\label{eq:nea2}
R_{\vartheta}^{0,\perp}\left(Y\right)=R^{0}_{\vartheta}\left(Y\right)+\cos^{5/2}\left(\vartheta\right)
\widehat{c} \left(i \ad\left(Y^{N}\right) \vert_{\overline{TX}}\right) .
\end{equation}
\end{defin}

By Proposition \ref{Pproj}, we get
\begin{equation}\label{eq:nea2x1}
\mathbf{P}R_{\vartheta}^{0,\perp}\left(Y\right)\mathbf{P}=0.
\end{equation}

We still consider the stochastic differential equation 
(\ref{eq:wat2}), i.e., we work under the probability measure $Q$. Let 
\index{Hbt@$H_{b,\vartheta,\cdot}$}%
$H_{b,\vartheta,\cdot}$ be the solution of the differential equation,
\begin{align}\label{eq:nea3}
&\frac{dH_{b,\vartheta}}{ds}=H_{b,\vartheta}\frac{\cos^{5/2}\left(\vartheta\right)}{b}\widehat{c}\left(-i\ad
\left(Y^{N}\right)\vert_{\overline{TX}}\right),
&H_{b,\vartheta,0}=1.
\end{align}
By proceeding as in  the proof of Proposition \ref{Pcru}, we have the 
trivial bound,
\begin{equation}\label{eq:nea3pr1}
\left\vert  
H_{b,\vartheta,t}\right\vert\le\exp\left(C\frac{\cos^{5/2}\left(\vartheta\right)}{b}
\int_{0}^{t}\left\vert  Y^{N}_{s}\right\vert ds\right).
\end{equation}

\begin{defin}\label{DHsp}
    Let 
   \index{hbt@$h_{b,\vartheta,\cdot}$}%
    $h_{b,\vartheta,\cdot}\in K_{\C}$ be the solution of the 
	differential equation,
\begin{align}\label{eq:nea3y-1}
&\dot 
h_{b,\vartheta}=-\frac{\cos^{5/2}\left(\vartheta\right)}{b}iY^{\mathfrak k},
&h_{b,\vartheta,0}=1.
\end{align}
Set
\index{zbt@$z_{b,\vartheta,\cdot}$}%
\begin{equation}\label{eq:hope4}
z_{b,\vartheta,\cdot}=p h_{b,\vartheta,\cdot}.
\end{equation}
\end{defin}

Then $H_{b,\vartheta,\cdot}$ is the image of $h_{b,\vartheta,\cdot}$
by the spin representation.
By \cite[eq. (14.6.3)]{Bismut08b}, as in (\ref{eq:priun-7}), we get
\begin{align}\label{eq:mai1}
&\left\vert  H_{\bt,\cdot}\right\vert\le e^{Cd\left(p
1,z_{\bt,\cdot}\right)},
&\left\vert  H_{\bt,\cdot}^{-1}\right\vert\le e^{Cd\left(p
1,z_{\bt,\cdot}\right)}.
\end{align}

 By 
(\ref{eq:nea3y-1}),  we get
\begin{align}\label{eq:hope5x1}
&\dot 
z_{b,\vartheta}=-\frac{\cos^{5/2}\left(\vartheta\right)}{b}iY^{N},&z_{b,\vartheta,0}
=p 1_{K_{\C}}.
\end{align}
By (\ref{eq:wat2}), (\ref{eq:hope5x1}), $z_{b,\vartheta,\cdot}$ is a solution of the differential equation,
\begin{align}\label{eq:hope5xy1}
&b^{2}\ddot z_{b,\vartheta}+\cos\left(\vartheta\right)\dot 
z_{b,\vartheta}=-\cos^{3}\left(\vartheta\right)i\dot w^{N},\\
&z_{b,\vartheta,0}=p 1_{K_{\C}}, \qquad\dot 
z_{b,\vartheta,0}=-\frac{\cos^{5/2}\left(\vartheta\right)}{b}iY^{N}_{0}. \nonumber 
 \end{align}

We denote by 
\index{XKC@$\mathcal{X}_{K_{\C}}$}%
$\mathcal{X}_{K_{\C}}$ the analogue of 
$\mathcal{X}$ for $K_{\C}$. Namely, $\mathcal{X}_{K_{\C}}$ is the 
total space of $TX_{K_{\C}}$. We define the scalar differential operators 
\index{AXKC@$\mathcal{A}^{X_{K_{\C}}}_{b}$}%
\index{BXKC@$\mathcal{B}^{X_{K_{\C}}}_{b}$}%
$\mathcal{A}^{X_{K_{\C}}}_{b},\mathcal{B}^{X_{K_{\C}}}_{b}$ acting on 
 $\mathcal{X}_{K_{\C}}$ as in (\ref{eq:glab5}). One verifies 
easily that the operator on $\mathcal{X}_{K_{\C}} $ associated with the process 
$\left(z_{\bt,\cdot},Y^{N}_{\cdot}\right)$ is given by 
$\cos^{4}\left(\vartheta\right)\mathcal{B}^{X_{K_{\C}}}_{\cos^{3/2}\left(\vartheta\right)b}$.

To make our notation more transparent, we will briefly note the dependence of 
$Y^{N}_{\cdot}$ on $\bt$ explicitly. Put
\begin{align}\label{eq:sup1}
&z_{b,\cdot}=z_{b,0,\cdot},&Y^{N}_{b,\cdot}=Y^{N}_{b,0,\cdot}.
\end{align}
 By the above 
considerations, it follows that the probability law of 
$\left(z_{b,\vartheta,\cdot},Y^{N}_{\bt,\cdot}\right)$ is the same as the 
probability law of 
$\left(z_{\cos^{3/2}\left(\vartheta\right)b,\cos^{4}\left(\vartheta\right)\cdot},Y^{N}_{\cos^{3/2}\left(\vartheta\right)b,\cos^{4}\left(\vartheta\right)\cdot}\right)$.

We omit again the subscript $\bt$ in $Y^{N}$. We  proceed as in Theorem \ref{Tfueq}.
Let $f:X_{K_{\C}}\to \R$ be a smooth function. 
Set
\begin{equation}\label{eq:aca1x1}
B_{t}^{f}=\cos^{2}\left(\vartheta\right)\int_{0}^{t}
\n^{TX_{K_{\C}}}_{iY^{N}_{s}}\n_{iY^{N}_{s}}
f\left(z_{b,\vartheta,s}\right) ds +\int_{0}^{t}
\n_{-i\delta w^{N}_{s}} f\left(z_{b,\vartheta,s}\right).
\end{equation}

By (\ref{eq:wat2}), (\ref{eq:hope5x1}), we obtain an analogue of 
equation (\ref{eq:fex1}),
\begin{multline}\label{eq:aca3}
\left( b^{2}\frac{d}{dt}+\cos\left(\vartheta\right) \right) f\left(z_{b,\vartheta,t}\right)
=\left( b^{2}\frac{d}{dt}+\ct \right) 
f\left(z_{b,\vartheta,t}\right)\vert_{t=0}\\
+\cos^{3}\left(\vartheta\right)B^{f}_{t}.
\end{multline}
From (\ref{eq:aca3}), we get the  analogue of (\ref{eq:fex2}), 
\begin{multline}\label{eq:aca3x1}
f\left(z_{b,\vartheta,t}\right)=f\left(z_{b,\vartheta,0}\right)+
\frac{b^{2}}{\cos\left(\vartheta\right)}\frac{d}{dt}f\left(z_{b,\vartheta,t}\right)\vert_{t=0}
\left(1-e^{-\cos\left(\vartheta\right)t/b^{2}}\right)\\
+\frac{1}{b^{2}}\int_{0}^{t}e^{-\cos\left(\vartheta\right)\left(t-s\right)/b^{2}}\cos^{3}\left(\vartheta\right)B^{f}_{s}ds.
\end{multline}

Let 
\index{H0t@ $H_{0,\vartheta,\cdot}$}%
$H_{0,\vartheta,\cdot}$ be the solution of the stochastic differential 
equation
\begin{align}\label{eq:mir0}
&dH_{0,\vartheta}=H_{0,\vartheta}\cos^{2}\left(\vartheta\right)\widehat{c}\left(-i\ad\left(
dw^{\mathfrak k}\right)\vert_{\overline{TX}}\right),
&H_{0,\vartheta,0}=1.
\end{align}
Let $h_{0,\vartheta,\cdot}\in K_{\C}$ be the solution of
\begin{align}\label{eq:miro1}
&\dot h_{0,\cdot}=-\cos^{2}\left(\vartheta\right)i\dot w^{\mathfrak 
k},&h_{0,\vartheta,0}=p1_{K_{\C}}.
\end{align}
Then $H_{0,\cdot}$ is the image of $h_{0,\vartheta,\cdot}$ via the spin 
representation. If $z_{0,\vartheta\cdot}=ph_{0,\vartheta,\cdot}$, then
\begin{align}\label{eq:miro2}
&\dot z_{0,\vartheta}=-\cos^{2}\left(\vartheta\right)i\dot w^{\mathfrak k},
&z_{0,\vartheta,0}=p1_{K_{\C}}.
\end{align}

\begin{thm}\label{Tpou}
Given $M>0,p\ge 1$, there exist $C_{p}>0, C'$ such that for 
$b>0,\vartheta\in\left[0,\frac{\pi}{2}\right[$,
\begin{align}\label{eq:mai2}
&E^{Q}\left[\sup_{0\le t\le M}\left\vert  
H_{\bt,t}\right\vert^{p}\right]\le C_{p}\exp\left(\left\vert  
Y_{0}^{N}\right\vert^{2}/2\right),\\
&E^{Q}\left[\sup_{0\le t\le M}\left\vert  
H_{\bt,t}^{-1}\right\vert^{p}\right]\le C_{p}\exp\left(\left\vert  
Y_{0}^{N}\right\vert^{2}/2\right). \nonumber 
\end{align}
Given $b_{0}>0,0<\epsilon\le M,p\ge 1$, there exist 
$C_{p,M}>0,C'_{b_{0},\epsilon,M}>0$ such that for $0<b\le 
b_{0},\vartheta\in\left[0,\frac{\pi}{2}\right[,\epsilon\le t\le M$, 
then
\begin{align}\label{eq:mai3}
&\exp\left(-\left\vert  
Y^{N}_{0}\right\vert^{2}/2\right)E^{Q}\left[\sup_{0\le t\le 
M}\left\vert  H_{\bt,t}\right\vert^{p}\exp\left(\left\vert  
Y^{N}_{t}\right\vert^{2}/2\right)\right] \nonumber \\
&\le 
C_{p,M}\exp\left(-C'_{b_{0},\epsilon,M}\cos\left(\vartheta\right)\left\vert  Y_{0}^{N}\right\vert^{2}\right),\\
&\exp\left(-\left\vert  
Y^{N}_{0}\right\vert^{2}/2\right)E^{Q}\left[\sup_{0\le t\le 
M}\left\vert  H^{-1}_{\bt,t}\right\vert^{p}\exp\left(\left\vert  
Y^{N}_{t}\right\vert^{2}/2\right)\right] \nonumber \\
&\le 
C_{b_{0},p,M}\exp\left(-C'_{b_{0},\epsilon,M}\cos\left(\vartheta\right)\left\vert  Y_{0}^{N}\right\vert^{2}\right). \nonumber 
\end{align}
Given $\vartheta\in\left[0,\frac{\pi}{2}\right[$, as $b\to 0$, 
$H_{\bt,\cdot}$ converges to $H_{0,\vartheta,\cdot}$ uniformly on 
$\left[0,M\right]$ in probability.
\end{thm}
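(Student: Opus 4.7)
The plan is to reduce every claim in the statement to an estimate on the displacement function $d(p1,z_{\bt,\cdot})$ on the symmetric space $X_{K_{\C}}$ via the pointwise inequality from (\ref{eq:mai1}), and then exploit the rescaling identity recalled just before (\ref{eq:sup1}) to transfer to the $\vartheta=0$ regime, where the escape estimates of Theorem \ref{Tesc} apply. Concretely, the rescaling identity says that the joint law of $(z_{\bt,\cdot},Y^{N}_{\bt,\cdot})$ on $[0,M]$ coincides with the joint law of $(z_{b',\cos^{4}(\vartheta)\cdot},Y^{N}_{b',\cos^{4}(\vartheta)\cdot})$ with $b'=\cos^{3/2}(\vartheta)b$; after substituting $t'=\cos^{4}(\vartheta)t\in[0,M]$, expectations in $\bt$ become expectations for the standard $\vartheta=0$ diffusion on $\mathcal{X}_{K_{\C}}$ with parameter $b'$. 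The decisive arithmetic identity will be $t'/b'^{2}=\cos(\vartheta)t/b^{2}$, which converts the weights appearing in (\ref{eq:led19sa1}) into the weight $\cos(\vartheta)|Y^{N}_{0}|^{2}$ in (\ref{eq:mai3}).

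For (\ref{eq:mai2}), I would combine (\ref{eq:mai1}) with the escape estimate (\ref{eq:led19}) of Theorem \ref{Tesc} on $X_{K_{\C}}$:
\begin{equation*}
Q\left[\sup_{0\le s\le t'}d(p1,z_{b',s})\ge r\right]\le C\exp\left(-C'r^{2}/t'+|Y_{0}^{N}|^{2}/2\right).
\end{equation*}
Since $t'\le M$, the inequality $r^{2}/t'\ge r^{2}/M$ holds, and an integration by parts in $r$ against $pC\,e^{pCr}\,dr$ then produces an upper bound of the form $C_{p}\exp(|Y_{0}^{N}|^{2}/2)$ uniformly in $b>0$, $\vartheta\in[0,\pi/2[$ and $t\in[0,M]$. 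Combined with (\ref{eq:mai1}), this yields (\ref{eq:mai2}).

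For (\ref{eq:mai3}), the same scheme applies, but using instead the weighted escape estimate (\ref{eq:led19sa1}) of Theorem \ref{Tesc} on $X_{K_{\C}}$. Under the rescaling, the decay factor $1-e^{-ct'/b'^{2}}$ becomes $1-e^{-c\cos(\vartheta)t/b^{2}}$, which by (\ref{eq:jar9b}) is bounded below by $C'_{b_{0},\epsilon}\cos(\vartheta)$ in the range $0<b\le b_{0}$, $\epsilon\le t\le M$, $\vartheta\in[0,\pi/2[$. Handling the extra factor $e^{|Y^{N}_{t}|^{2}/2}$ inside the supremum follows exactly the same pattern as the proof of Theorem \ref{Teste}: the integration-by-parts identity for $\sup$ functionals keeps the endpoint weight $e^{|Y^{N}_{t}|^{2}/2}$ attached to the indicator $\mathbf{1}_{\sup d\ge r}$, and the weighted estimate (\ref{eq:led19sa1}) then directly delivers the decay factor $\exp(-C'_{b_{0},\epsilon,M}\cos(\vartheta)|Y_{0}^{N}|^{2})$.

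For the convergence statement, fix $\vartheta\in[0,\pi/2[$. The rescaling identity shows that $h_{\bt,\cdot}$ has the same law as $h_{b',\cos^{4}(\vartheta)\cdot}$, with $b'=\cos^{3/2}(\vartheta)b\to 0$ as $b\to 0$. Theorem \ref{Tconci} applied on $X_{K_{\C}}$ yields uniform convergence in probability of the horizontal lift $h_{b',\cdot}$ to the limiting Brownian motion in $K_{\C}$ of (\ref{eq:jar9}); unscaling and passing through the spin representation gives $H_{\bt,\cdot}\to H_{0,\vartheta,\cdot}$ uniformly in probability on $[0,M]$. The main technical obstacle throughout is ensuring that the constants remain uniform as $\vartheta\to\pi/2$, i.e.~as $t'=\cos^{4}(\vartheta)t\to 0$: for the unweighted bound this is handled by the uniformity of (\ref{eq:led19}) over $t'\in(0,M]$, and for the weighted bound by (\ref{eq:jar9b}), which keeps the degenerate factor $\cos(\vartheta)|Y_{0}^{N}|^{2}$ under control precisely in the limit where $Y^{N}_{t}$ ceases to relax and $z_{\bt,\cdot}$ becomes stationary.
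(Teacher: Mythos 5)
Your proposal is correct and follows essentially the same route as the paper's proof: the bound (\ref{eq:mai1}) on $|H_{\bt,\cdot}|$ and $|H_{\bt,\cdot}^{-1}|$ by the displacement on $X_{K_{\C}}$, the rescaling identity reducing $(z_{\bt,\cdot},Y^{N}_{\bt,\cdot})$ to the $\vartheta=0$ diffusion with parameter $b'=\cos^{3/2}(\vartheta)b$ and rescaled time $\cos^4(\vartheta)\cdot$, the arithmetic fact $\cos^{4}(\vartheta)/(\cos^{3/2}(\vartheta)b)^{2}=\cos(\vartheta)/b^{2}$ (the paper's (\ref{eq:mai4})), and then the escape estimates (\ref{eq:led19}), (\ref{eq:led19sa1}) of Theorem \ref{Tesc} together with (\ref{eq:jar9b}), with Theorem \ref{Tconci} for the last claim. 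The only difference is that you spell out the integration-by-parts step converting the tail probabilities into moment bounds, which the paper leaves implicit.
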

\begin{proof}
Using the considerations we made after (\ref{eq:hope5xy1}), 
(\ref{eq:sup1}), equation
  (\ref{eq:mai2}) follows from equation (\ref{eq:led19}) in Theorem 
\ref{Tesc} and from (\ref{eq:mai1}). Also note that
\begin{equation}\label{eq:mai4}
\cos^{4}\left(\vartheta\right)/\left(\cos^{3/2}\left(\vartheta\right)b\right)^{2}=\cos\left(\vartheta\right)/b^{2}.
\end{equation}
Equation (\ref{eq:mai3}) now follows from equation (\ref{eq:led19sa1}) in 
Theorem \ref{Tesc}  and from (\ref{eq:jar9b}), (\ref{eq:mai1}) and (\ref{eq:mai4}). The last part of our 
theorem is a consequence of Theorem \ref{Tconci}.
\end{proof}
\subsection{The process $U^{0}_{\bt,\cdot}$ as an infinite series}%
\label{subsec:infser}
Set
\begin{equation}\label{eq:nea3x2}
K_{b,\vartheta,s}=H_{b,\vartheta,s}\exp\left(-sN^{\Lambda\ac\left(T^{*}X \oplus N^{*}\right)}_{-\vartheta}/b^{2}\right).
\end{equation}
Then 
\index{Kbt@$K_{b,\vartheta,\cdot}$}%
$K_{b,\vartheta,\cdot}$ commutes with $\mathbf{P}$ and $\mathbf{P}^{\perp}$.
\begin{prop}\label{Pexp}
The following identity holds:
\begin{multline}\label{eq:nea5y1}
U^{0}_{b,\vartheta,t}=\sum_{k=0}^{+ \infty }\int_{0\le s_{1}\le s_{2}\ldots 
\le s_{k}\le t}^{}
K_{b,\vartheta,s_{1}}\frac{R^{0,\perp}_{\vartheta}\left(Y_{s_{1}}\right)}{b}K_{b,\vartheta,s_{1}}^{-1}
K_{b,\vartheta,s_{2}}\frac{R^{0,\perp}_{\vartheta}\left(Y_{s_{2}}\right)}{b}K_{b,\vartheta,s_{2}}^{-1} \\
K_{b,\vartheta,s_{3}}
\ldots 
K_{b,\vartheta,s_{k-1}}^{-1}K_{b,\theta,s_{k}}\frac{R^{0,\perp}_{\vartheta}\left(Y_{s_{k}}\right)}{b}
K_{b,\vartheta,s_{k}}^{-1}K_{b,\vartheta,t}
ds_{1}\ldots ds_{k},
\end{multline}
and the series in (\ref{eq:nea5y1}) is uniformly convergent over 
compact sets in $\R_{+}$.
\end{prop}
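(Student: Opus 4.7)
The plan is to establish \eqref{eq:nea5y1} by a variation-of-parameters argument based on the operator $K_{b,\vartheta,s}$, which absorbs the ``easy'' part of the generator of $U^{0}_{b,\vartheta}$. First I would verify that $K_{b,\vartheta,s}$ satisfies a linear ODE. Since $\widehat{c}(-i\ad(Y^{N})\vert_{\overline{TX}})$ acts on $S^{\overline{TX}}$ while $N^{\Lambda\ac(T^{*}X\oplus N^{*})}_{-\vartheta}$ acts on $\Lambda\ac(T^{*}X\oplus N^{*})$, these two operators commute, so from \eqref{eq:nea3} and \eqref{eq:nea3x2} a direct differentiation gives
\begin{equation*}
\frac{dK_{b,\vartheta,s}}{ds}=K_{b,\vartheta,s}\mathcal{A}_{0,s},\qquad
\mathcal{A}_{0,s}=-\frac{N^{\Lambda\ac(T^{*}X\oplus N^{*})}_{-\vartheta}}{b^{2}}+\frac{\cos^{5/2}(\vartheta)}{b}\widehat{c}(-i\ad(Y_{s}^{N})\vert_{\overline{TX}}).
\end{equation*}
The point of the definition \eqref{eq:nea2} of $R^{0,\perp}_{\vartheta}(Y)$ is precisely that $R^{0}_{\vartheta}(Y)=R^{0,\perp}_{\vartheta}(Y)+\cos^{5/2}(\vartheta)\widehat{c}(-i\ad(Y^{N})\vert_{\overline{TX}})$, so by \eqref{eq:diff0} we can rewrite
\begin{equation*}
\frac{dU^{0}_{b,\vartheta,s}}{ds}=U^{0}_{b,\vartheta,s}\mathcal{A}_{0,s}+U^{0}_{b,\vartheta,s}\frac{R^{0,\perp}_{\vartheta}(Y_{s})}{b}.
\end{equation*}

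Next I would set $\widetilde{U}_{s}=U^{0}_{b,\vartheta,s}K_{b,\vartheta,s}^{-1}$. Differentiating and using the two ODEs above, the $\mathcal{A}_{0,s}$ contributions cancel, leaving
\begin{equation*}
\frac{d\widetilde{U}_{s}}{ds}=\widetilde{U}_{s}\,B_{s},\qquad B_{s}=K_{b,\vartheta,s}\frac{R^{0,\perp}_{\vartheta}(Y_{s})}{b}K_{b,\vartheta,s}^{-1},\qquad \widetilde{U}_{0}=1.
\end{equation*}
The standard Dyson/Picard iteration for such a time-ordered linear equation yields
\begin{equation*}
\widetilde{U}_{t}=\sum_{k=0}^{+\infty}\int_{0\le s_{1}\le\ldots\le s_{k}\le t}B_{s_{1}}\cdots B_{s_{k}}\,ds_{1}\cdots ds_{k},
\end{equation*}
and multiplying on the right by $K_{b,\vartheta,t}=\widetilde{U}_{t}^{-1}U^{0}_{b,\vartheta,t}\cdot K_{b,\vartheta,t}$ reproduces exactly \eqref{eq:nea5y1} once the product $K_{s_{i}}^{-1}K_{s_{i+1}}$ is kept unsimplified in consecutive factors, as in the statement.

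For the convergence assertion, I would argue pathwise on any compact interval $[0,T]$. The continuous $K_{\C}$-valued path $h_{b,\vartheta,\cdot}$ and its image $H_{b,\vartheta,\cdot}$ are locally bounded together with their inverses, and so is $\exp(-sN^{\Lambda\ac(T^{*}X\oplus N^{*})}_{-\vartheta}/b^{2})$ on $[0,T]$; thus there exists a pathwise constant $\Gamma_{T}$ with $\sup_{s\le T}(|K_{b,\vartheta,s}|+|K_{b,\vartheta,s}^{-1}|)\le \Gamma_{T}$. From \eqref{eq:est0} and \eqref{eq:nea2}, the integrand $R^{0,\perp}_{\vartheta}(Y_{s})/b$ is bounded on $[0,T]$ by another pathwise constant $M_{T}$. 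The $k$th term of the series is therefore bounded by $\Gamma_{T}^{2k+1}M_{T}^{k}T^{k}/k!$, giving absolute and uniform convergence on $[0,T]$. Term-by-term differentiation is then legitimate, and the sum solves the same ODE \eqref{eq:diff0} with the same initial condition as $U^{0}_{b,\vartheta,t}$, so by uniqueness equality holds. The only mildly delicate point is the commutativity assertion used above, but it follows immediately from the bidegree decomposition of $c(\overline{TX})\ho\End(\Lambda\ac(T^{*}X\oplus N^{*}))$.
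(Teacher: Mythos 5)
Your argument is correct and is essentially the paper's proof: you conjugate by $K_{b,\vartheta,\cdot}$, observe that the transformed process $\widetilde U_s = U^{0}_{b,\vartheta,s}K_{b,\vartheta,s}^{-1}$ satisfies $\dot{\widetilde U} = \widetilde U\, K\frac{R^{0,\perp}_{\vartheta}(Y)}{b}K^{-1}$, and expand by Picard/Dyson iteration — this is precisely the paper's introduction of $L_{b,\vartheta,\cdot}$ in (\ref{eq:nea4}) and its ODE (\ref{eq:nea5}). You supply somewhat more detail than the paper (explicitly verifying the commutation of $N^{\Lambda\ac(T^{*}X\oplus N^{*})}_{-\vartheta}$ with $\widehat{c}(-i\ad(Y^{N})\vert_{\overline{TX}})$ and the pathwise factorial bound for convergence), which the paper leaves implicit, but the approach is the same.
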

\begin{proof}
Put
\begin{equation}\label{eq:nea4}
L_{b,\vartheta,\cdot}=U^{0}_{b,\vartheta,\cdot}K_{b,\vartheta,\cdot}^{-1}.
\end{equation}
By (\ref{eq:diff0}), (\ref{eq:nea2}), (\ref{eq:nea3}), and 
(\ref{eq:nea4}), 
 $L_{b,\vartheta,\cdot}$ is the solution of the differential 
equation,
\begin{align}\label{eq:nea5}
&\frac{dL_{b,\vartheta}}{ds}=L_{b,\vartheta}
\frac{K_{b,\vartheta}R^{0,\perp}_{\vartheta}\left(Y\right)K_{b,\vartheta}^{-1}}{b},
&L_{b,\vartheta,0}=1.
\end{align}
We can express $L_{b,\vartheta,\cdot}$ as the sum of iterated integrals
which converges uniformly over compact subsets of $\R_{+}$. 
Using (\ref{eq:nea4}),  we get (\ref{eq:nea5y1}).
\end{proof}
\subsection{A uniform estimate on $\sup_{0\le s\le 
t}E^{Q}\left[\left\vert  U^{0}_{\bt,s}\right\vert^{p}\right]$}%
\label{subsec:cruest}
Now we establish the following key estimate.
\begin{thm}\label{Tfuest}
    Given $p\ge 1, M>0$, there exist $C>0,C'>0$ such that for $0<b\le 
1,\vartheta\in\left[0,\frac{\pi}{2}\right[$, then
\begin{equation}\label{eq:couac-1}
\sup_{0\le t\le M}E^{Q}\left[\left\vert  
U^{0}_{b,\vartheta,t}\right\vert^{p}\right]\le C\exp\left(C' \left(\cos\left(\vartheta\right) \left\vert  
Y_{0}^{TX}\right\vert^{2}+\left\vert  Y_{0}^{N}\right\vert^{2} \right) \right).
\end{equation}
Given $p\ge 1,0<\epsilon\le M<+ \infty$, there exist $C>0, C'>0$ such that for 
$0<b\le 1, \vartheta\in\left[0,\frac{\pi}{2}\right[, \epsilon\le t\le M$,
\begin{multline}\label{eq:couac-1y1}
\exp\left(-\left\vert  Y_{0}\right\vert^{2}/2\right)
E^{Q}\left[\left\vert  
U^{0}_{b,\vartheta,t}\right\vert^{p}\exp\left(\left\vert  Y_{t}\right\vert
^{2}/2\right)\right] \\
\le C\exp\left(-C' \left( \left\vert  
Y_{0}^{TX}\right\vert^{2}+\cos\left(\vartheta\right)\left\vert  
Y^{N}_{0}\right\vert^{2} \right) \right).
\end{multline}
\end{thm}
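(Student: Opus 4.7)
The plan is to exploit the series representation (\ref{eq:nea5y1}) of Proposition \ref{Pexp}, which writes $U^{0}_{b,\vartheta,t}$ as an iterated integral of products of $K_{b,\vartheta,s}\, R^{0,\perp}_{\vartheta}(Y_{s})\, K^{-1}_{b,\vartheta,s}/b$. The crucial algebraic input is the vanishing $\mathbf{P}R^{0,\perp}_{\vartheta}\mathbf{P}=0$ from Proposition \ref{Pproj}, which forces every $R^{0,\perp}_{\vartheta}$-factor leaving $\mathrm{Im}(\mathbf{P})$ to land in $\mathrm{Im}(\mathbf{P}^{\perp})$. Between successive factors, the operator $K^{-1}_{b,\vartheta,s_{i}}K_{b,\vartheta,s_{i+1}}$ equals $\exp(s_{i}N^{\Lambda\ac}_{-\vartheta}/b^{2})H^{-1}_{b,\vartheta,s_{i}}H_{b,\vartheta,s_{i+1}}\exp(-s_{i+1}N^{\Lambda\ac}_{-\vartheta}/b^{2})$; on $\mathrm{Im}(\mathbf{P}^{\perp})$, the operator $N^{\Lambda\ac}_{-\vartheta}=N^{\Lambda\ac(\TsX)}+\cos(\vartheta)N^{\Lambda\ac(N^{*})}$ has positive spectrum bounded below by $c\cos(\vartheta)$, producing the exponential decay $\exp(-c\cos(\vartheta)(s_{i+1}-s_{i})/b^{2})$.

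I would then analyse pairs of consecutive $R^{0,\perp}_{\vartheta}$-factors. From the explicit expression (\ref{eq:qsic4}) and the correction (\ref{eq:nea2}), each matrix element of $R^{0,\perp}_{\vartheta}$ whose image lies in the low-eigenvalue subspace $\Lambda\ac(N^{*})$ of $N^{\Lambda\ac}_{-\vartheta}$ carries a prefactor of the form $\cos(\vartheta)\sin(\vartheta)|Y^{TX}|$ or $\cos^{1/2}(\vartheta)|Y^{N}|$. The time integration $\int_{0}^{\infty}\exp(-c\cos(\vartheta)s/b^{2})\,ds=b^{2}/(c\cos(\vartheta))$ combined with two such $R^{0,\perp}_{\vartheta}$-factors thus gives a contribution bounded by $\cos(\vartheta)\sin^{2}(\vartheta)|Y^{TX}|^{2}+|Y^{N}|^{2}$, uniform in $b\in(0,1]$ and $\vartheta\in[0,\pi/2[$; the analogous analysis for transitions through the high-eigenvalue part of $\mathrm{Im}(\mathbf{P}^{\perp})$, where the decay rate is at least $1$, gives a contribution of order $b^{2}(\cos^{2}(\vartheta)|Y^{TX}|^{2}+\cos(\vartheta)|Y^{N}|^{2})\le\cos(\vartheta)|Y^{TX}|^{2}+|Y^{N}|^{2}$. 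Summing over $k$ yields an absolutely convergent series whose pathwise sum is controlled by $\exp(C(\cos(\vartheta)|Y^{TX}|^{2}+|Y^{N}|^{2}))$ up to $H_{b,\vartheta,\cdot}$-factors.

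Taking the $L^{p}(Q)$-norm, I would apply H\"older's inequality to separate the $H_{b,\vartheta,\cdot}$-conjugation from the polynomial factors in $|Y^{TX}|,|Y^{N}|$. The moment bounds on $H_{b,\vartheta,\cdot}, H^{-1}_{b,\vartheta,\cdot}$ from Theorem \ref{Tpou} contribute the $\exp(c|Y^{N}_{0}|^{2})$ factor, while the Mehler identity in Proposition \ref{PMehla} together with Proposition \ref{Plines} controls the pathwise moments of $|Y^{TX}_{s}|, |Y^{N}_{s}|$; the final $\cos(\vartheta)|Y^{TX}_{0}|^{2}$ contribution in the exponent comes from the Gaussian weight in Mehler's formula applied to the $\cos(\vartheta)|Y^{TX}|$ prefactor of $R^{0,\perp}_{\vartheta}$. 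This proves (\ref{eq:couac-1}). The weighted estimate (\ref{eq:couac-1y1}) is deduced from (\ref{eq:couac-1}) by the Girsanov argument from the proof of (\ref{eq:led19sa1}): the Gaussian factor $\exp(|Y_{t}|^{2}/2)$ is rewritten via It\^o's formula as an exponential martingale times $\exp(-\frac{1}{2b^{2}}\int_{0}^{t}|Y_{s}|^{2}ds)$ up to bounded prefactors, and the negative quadratic integrand produces the extra decay $\exp(-C'(|Y^{TX}_{0}|^{2}+\cos(\vartheta)|Y^{N}_{0}|^{2}))$ after Mehler.

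The main obstacle is the combinatorial control of the series. The naive Gronwall-type estimate of Proposition \ref{Pcru} and Remark \ref{Rintel} degenerates as $b\to 0$ because it ignores the cancellation $\mathbf{P}R^{0,\perp}_{\vartheta}\mathbf{P}=0$; one must pair the $R^{0,\perp}_{\vartheta}$-factors so as to exploit simultaneously this vanishing and the spectral lower bound on $N^{\Lambda\ac}_{-\vartheta}\vert_{\mathrm{Im}(\mathbf{P}^{\perp})}$. The small spectral gap $c\cos(\vartheta)$ in the $\Lambda\ac(N^{*})$-direction must be exactly compensated by the $\cos(\vartheta)$-weighted prefactors of matrix elements of $R^{0,\perp}_{\vartheta}$ landing in that direction; verifying this compensation for every product in the series, including for the diagonal contributions $\mathbf{P}^{\perp}R^{0,\perp}_{\vartheta}\mathbf{P}^{\perp}$ which do not return to $\mathrm{Im}(\mathbf{P})$, is the technical heart of the argument.
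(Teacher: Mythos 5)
Your outline correctly identifies most of the paper's ingredients: the series representation (\ref{eq:nea5y1}), the vanishing $\mathbf{P}R^{0,\perp}_\vartheta(Y)\mathbf{P}=0$ from Proposition \ref{Pproj}, the trade-off between the spectral gap $\ge c\cos(\vartheta)$ of $N^{\Lambda\ac(T^*X\oplus N^*)}_{-\vartheta}$ on $\mathrm{Im}(\mathbf{P}^\perp)$ and the $\cos(\vartheta)$-weighted prefactors of $R^{0,\perp}_\vartheta$ (what you call ``pairing'' is exactly the Young inequality that the paper packages into the Gronwall bound (\ref{eq:nab19})--(\ref{eq:nab28}) on the auxiliary $2\times 2$ matrix process $V_{b,\vartheta,\cdot}$), and the passage from (\ref{eq:couac-1}) to (\ref{eq:couac-1y1}) via H\"older with the Gaussian weight (\ref{eq:rito2x2}).

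There is however a genuine gap, and it sits precisely where you assume the argument is routine: the control of the $H_{b,\vartheta,\cdot}$-conjugations. The $k$-th term of the series carries $k+1$ factors $|H^{-1}_{b,\vartheta,s_i}H_{b,\vartheta,s_{i+1}}|$ interleaved with the $R^{0,\perp}/b$-factors; they do not telescope, their number is unbounded, and the pointwise bound coming from (\ref{eq:nea3}), namely $\prod_i|H^{-1}_{s_i}H_{s_{i+1}}|\le\exp\bigl(C\cos^{5/2}(\vartheta)\,b^{-1}\int_0^t|Y^N_s|\,ds\bigr)$, degenerates as $b\to 0$. You cannot ``separate them by H\"older'' since they sit inside the iterated time integrals and their count grows with $k$. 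The paper's resolution is the actual technical heart of the proof (and the reason the generalized It\^o formula was developed): each factor is bounded by $\exp\bigl(C+Cf(z_{b,\vartheta,s_i},z_{b,\vartheta,s_{i+1}})\bigr)$ with $f$ the truncated distance (\ref{eq:glab42a}) on $X_{K_\C}$; the increment $f(z_{s_i},z_{s_{i+1}})$ is written via (\ref{eq:aca3x1}), (\ref{eq:aca8}) as a small quadratic finite-variation term plus an It\^o integral, and the It\^o-integral parts of all $k+1$ factors are absorbed into a single Girsanov change of measure $dQ_{s_1,\ldots,s_k,t}=\prod_i N^f_{s_i,s_{i+1}}\,dQ$ in (\ref{eq:nab10})--(\ref{eq:nab16}), under which $Y^N_\cdot$ is shifted only by an $O(\cos^{3/2}(\vartheta)\,b)$ drift so that Mehler (Proposition \ref{PMehla}) still applies. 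Without this step the series cannot be summed in $L^p(Q)$ uniformly for $0<b\le1$; the pairing of $R^{0,\perp}$-factors, by contrast, is comparatively easy once the $V_{b,\vartheta}$-equation is set up.
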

\begin{proof}
    As we saw in Remark \ref{Rintel}, equation (\ref{eq:couac-1}) 
    is trivial when $b$ stays away from $0$. In the sequel, we may as 
    well assume that $b_{0}>0$ is small enough and that $0<b\le 
    b_{0}$.
    
Note that
\begin{equation}\label{eq:nea6}
R_{\vartheta}^{0,\perp}\left(Y\right)=\left(\mathbf{P} + 
\mathbf{P}^{\perp}\right)R^{0,\perp}_{\vartheta}\left(Y\right)\left(\mathbf{P}+\mathbf{P}^{\perp}\right).
\end{equation}
By (\ref{eq:nea2x1}), (\ref{eq:nea6}), we get
\begin{equation}\label{eq:nea6y1}
R_{\vartheta}^{0,\perp}\left(Y\right)=\mathbf{P} 
R_{\vartheta}^{0,\perp}\left(Y\right)\mathbf{P}^{\perp}+\mathbf{P}^{\perp}R_{\vartheta}^{0,\perp}\left(Y\right) \mathbf{P}+
\mathbf{P}^{\perp}R_{\vartheta}^{0,\perp}\left(Y\right) \mathbf{P}^{\perp}.
\end{equation}

Consider the splitting
\begin{equation}\label{eq:nea6ya1}
\Lambda\ac\left( TX^{*} \oplus N^{*}\right)\ho 
\widehat{c}\left(\overline{TX}\right)=\widehat{c}\left(\overline{TX}\right)
\oplus \left( \Lambda^{(>0)}
\left(T^{*}X \oplus 
N^{*}\right)\ho\widehat{c}\left(\overline{TX}\right) \right) .
\end{equation}
As a subalgebra of 
$\End\left(\Lambda\ac\left(\overline{T^{*}X}\right)\right)$, 
$\widehat{c}\left(\overline{TX}\right)$ inherits a corresponding 
norm, and $\End\left(\Lambda\ac\left(T^{*}X \oplus 
N^{*}\right)\right)$ inherits a  norm from the Euclidean norm of 
$\Lambda\ac\left(T^{*}X \oplus N^{*}\right)$.

We will write elements of $\End \left( \Lambda\ac\left(T^{*}X \oplus 
N^{*}\right) \right) \ho\widehat{c}\left(\overline{TX}\right)$ as 
$\left(2,2\right)$ matrices with respect to the splitting 
(\ref{eq:nea6ya1}).  In particular the norm of such an element is not 
a nonnegative number but a $\left(2,2\right)$ matrix of nonnegative 
numbers. Such a norm is still such that 
\begin{equation}\label{eq:nea6ya2}
\left\vert  AB\right\vert\le \left\vert  A\right\vert\left\vert  
B\right\vert.
\end{equation}
The inequality just means that each of the $4$ coefficients in the 
left-hand side is dominated by the corresponding coefficient in the 
right-hand side.

For $0\le s_{1}\le\ldots\le s_{k}\le t$, set
\begin{multline}\label{eq:hope1}
R_{s_{1},\ldots,s_{k},t}=
\left\vert  H_{b,\vartheta,s_{1}}\right\vert \left\vert  
\exp\left(-s_{1}N^{\Lambda\ac\left(T^{*}X \oplus N^{*}\right)}_{-\vartheta}/b^{2}\right)\right\vert
\left\vert
\frac{R^{0,\perp}_{\vartheta}\left(Y_{s_{1}}\right)}{b}\right\vert 
\\
\left\vert  H_{b,\vartheta,s_{1}}^{-1}
H_{b,\vartheta,s_{2}}\right\vert  
\left\vert  
\exp\left(-\left(s_{2}-s_{1}\right)N^{\Lambda\ac\left(T^{*}X \oplus N^{*}\right)}_{-\vartheta}/b^{2}\right)
\right\vert 
\left\vert  
\frac{R^{0,\perp}_{\vartheta}\left(Y_{s_{2}}\right)}{b}\right\vert 
\ldots\\
\ldots 
\left\vert  H_{b,\vartheta,s_{k-1}}^{-1}
H_{b,\vartheta,s_{k}}\right\vert  
\left\vert  
\exp\left(-\left(s_{k}-s_{k-1}\right)N^{\Lambda\ac\left(T^{*}X \oplus N^{*}\right)}_{-\vartheta}/b^{2}\right)\right\vert 
\left\vert  
\frac{R^{0,\perp}_{\vartheta}\left(Y_{s_{k}}\right)}{b}\right\vert\\
\left\vert  H_{b,\vartheta,s_{k}}^{-1}H_{b,\vartheta,t}\right\vertﬁ
\left\vert  
\exp\left(-\left(t-s_{k}\right)N^{\Lambda\ac\left(T^{*}X \oplus N^{*}\right)}_{-\vartheta}/b^{2}\right)\right\vert.
\end{multline}
By (\ref{eq:nea3x2}),
(\ref{eq:nea5y1}), and (\ref{eq:hope1}), we get
\begin{equation}\label{eq:hope1y1}
\left\vert  U^{0}_{b,\vartheta,t}\right\vert\le 
\sum_{k=0}^{+ \infty }\int_{0\le s_{1}\le\ldots\le s_{k}\le 
t}^{}R_{s_{1},\ldots,s_{k},t}ds_{1}ds_{2}\ldots ds_{k}.
\end{equation}

We fix temporarily $s_{1}\ge 0$. By \cite[eq. (14.6.3)]{Bismut08b} or 
by (\ref{eq:mai1}), there exists 
$C>0$ such that
for 
$s_{1}\le s_{2}$,  
\begin{equation}\label{eq:rot1}
\left\vert  H_{b,\vartheta,s_{1}}^{-1}H_{b,\vartheta,s_{2}}\right\vert\le \exp\left(
Cd\left(z_{b,\vartheta,s_{1}},z_{b,\vartheta,s_{2}}\right)\right).
\end{equation}

Let $k:\R_{+}\to \R_{+}$ be a smooth increasing  function such that 
\begin{align}\label{eq:glab41a}
    k\left(u\right)=&\ 0\ \mathrm{for} \ u\le 
1/2,\\
=&\ u\ \mathrm{for}\ u\ \ge 1. \nonumber 
\end{align}
For $y,z\in X_{K_{\C}}$, set
\begin{equation}
    f\left(y,z\right)=k\left(d\left(y,z\right)\right).
    \label{eq:glab42a}
\end{equation}
Then $f\left(y,z\right)$ is a smooth function such that
\begin{equation}\label{eq:glab42z1}
d\left(y,z\right)\le f\left(y,z\right)+1.
\end{equation}
Moreover, if $d\left(y,z\right)\le 1/2$, then $f\left(y,z\right)=0$. In the sequel $\n f\left(y,z\right) $ denotes the gradient of $f$ with 
respect to the second variable $z$. Note that $f$ and $\n f$ vanish on the 
diagonal. 

By (\ref{eq:rot1}), (\ref{eq:glab42z1}), for $0\le s_{1}\le s¨_2$, we get
\begin{equation}\label{eq:rot2}
\left\vert  H_{b,\vartheta,s_{1}}^{-1}H_{b,\vartheta,s_{2}}\right\vert\le
\exp\left(C+Cf\left(z_{b,\vartheta,s_{1}},z_{b,\vartheta,s_{2}}\right)\right).
\end{equation}

Now we use the formulas in (\ref{eq:aca1x1})--(\ref{eq:aca3x1}). As in 
(\ref{eq:aca1x1}), for $s\ge s_{1}$, set
\begin{equation}\label{eq:aca7}
B^{f}_{s_{1},s}=\cos^{2}\left(\vartheta\right)\int_{s_{1}}^{s}
\n_{iY^{N}_{u}}^{TX_{K_{\C}}}\n_{iY^{N}_{u}}
f\left(z_{b,\vartheta,s_{1}},z_{b,\vartheta,u}\right)du+\int_{s_{1}}^{s}\n_{-i\delta 
w^{N}_{u}}f\left(z_{b,\vartheta,s_{1}},z_{b,\vartheta,u}\right).
\end{equation}
By (\ref{eq:aca3x1}), since $f\left(z_{b,\vartheta,s_{1}},\cdot\right)$ vanishes 
near $z_{b,\vartheta,s_{1}}$,  we get
\begin{equation}\label{eq:aca8}
f\left(z_{b,\vartheta,s_{1}},z_{b,\vartheta,s_{2}}\right)=
\frac{1}{b^{2}}\int_{s_{1}}^{s_{2}}e^{-\cos\left(\vartheta\right)
\left(s_{2}-s\right)/b^{2}}\cos^{3}\left(\vartheta\right)B^{f}_{s_{1},s}ds.
\end{equation}

By \cite[Proposition 13.1.2]{Bismut08b}, $f\left(z_{1},z_{2}\right)$ 
and its first and second covariant derivatives in the variable 
$z_{2}$ are uniformly bounded. In particular,
\begin{align}\label{eq:aca8y}
    &\left\vert  \n_{-iY^{N}}f\left(z_{1},z_{2}\right)\right\vert\le 
    c\left\vert  Y^{N}\right\vert,
&\n^{TX_{K_{\C}}}_{iY^{N}}\n_{iY^{N}} f\left(z_{1},z_{2}\right)\le 
c\left\vert  Y^{N}\right\vert^{2}.
\end{align}

For $s\ge s_{1}$, put
\begin{equation}\label{eq:aca8z}
M^{f}_{s_{1},s}=\int_{s_{1}}^{s}\n_{-i\delta 
w^{N}_{u}}f\left(z_{\bt,s_{1}},z_{\bt,u}\right).
\end{equation}
By (\ref{eq:aca7}), (\ref{eq:aca8y}), and (\ref{eq:aca8z}), we get
\begin{equation}\label{eq:aca8za1}
B^{f}_{s_{1},s}\le c\cos^{2}\left(\vartheta\right)\int_{s_{1}}^{s}\left\vert  
Y^{N}_{s}\right\vert^{2}ds+M^{f}_{s_{1},s}.
\end{equation}
By (\ref{eq:aca8}), (\ref{eq:aca8za1}), we get
\begin{multline}\label{eq:aca8za2}
f\left(z_{b,\vartheta,s_{1}},z_{b,\vartheta,s_{2}}\right) \\
\le 
c\cos^{4}\left(\vartheta\right)\int_{s_{1}}^{s_{2}}\left\vert  
Y^{N}_{s}\right\vert^{2}ds+
\frac{1}{b^{2}}\int_{s_{1}}^{s_{2}}e^{-\cos\left(\vartheta\right)
\left(s_{2}-s\right)/b^{2}}\cos^{3}\left(\vartheta\right)M^{f}_{s_{1},s}ds.
\end{multline}
Also using (\ref{eq:aca8z}) and integration by parts, we obtain
\begin{multline}\label{eq:nab0}
\frac{1}{b^{2}}\int_{s_{1}}^{s_{2}}e^{-\cos\left(\vartheta\right)
\left(s_{2}-s\right)/b^{2}}\cos^{3}\left(\vartheta\right)M^{f}_{s_{1},s}ds\\
=
\int_{s_{1}}^{s_{2}}\cos^{2}\left(\vartheta\right)\left(1-e^{-\cos\left(\vartheta\right)\left(s_{2}-s\right)/b^{2}}\right)
\n_{-i\delta w^{N}_{s}}f\left(z_{b,\vartheta,s_{1}},z_{b,\vartheta,s}\right).
\end{multline}

By (\ref{eq:rot2}), (\ref{eq:aca8za2}), and (\ref{eq:nab0}),  we obtain
\begin{multline}\label{eq:aca10}
\left\vert  H_{b,\vartheta,s_{1}}^{-1}H_{b,\vartheta,s_{2}}\right\vert\le 
\exp\left(C+cC\cos^{4}\left(\vartheta\right)\int_{s_{1}}^{s_{2}}\left\vert  Y^{N}_{s}
\right\vert^{2}ds \right) \\
\exp \left( C
\int_{s_{1}}^{s_{2}}\cos^{2}\left(\vartheta\right)\left(1-e^{-\cos\left(\vartheta\right)\left(s_{2}-s\right)/b^{2}}\right)
\n_{-i\delta w^{N}_{s}}f\left(z_{b,\vartheta,s_{1}},z_{b,\vartheta,s}\right)\right).
\end{multline}

We fix $s_{1},s_{2}$ with $0\le s_{1}\le s_{2}$. For $s\ge s_{1}$, set
\begin{multline}\label{eq:nab1}
N^{f}_{s_{1},s_{2},s}=\exp\Biggl(C
\int_{s_{1}}^{s}\cos^{2}\left(\vartheta\right)\left(1-e^{-\cos\left(\vartheta\right)\left(s_{2}-u\right)/b^{2}}\right)
\n_{-i\delta 
w^{N}_{u}}f\left(z_{b,\vartheta,s_{1}},z_{b,\vartheta,u}\right)\\
-\frac{C^{2}}{2}\int_{s_{1}}^{s}\cos^{4}\left(\vartheta\right)
\left(1-e^{-\cos\left(\vartheta\right)\left(s_{2}-u\right)/b^{2}}\right)^{2}
\left\vert  
\n_{\cdot}f\left(z_{b,\vartheta,s_{1}},z_{b,\vartheta,u}\right)\right\vert^{2}du\Biggr).
\end{multline}
Then $N^{f}_{s_{1},s_{2},\cdot}$ is the unique solution of the It\^{o} stochastic 
differential equation,
\begin{multline}\label{eq:nab2}
N^{f}_{s_{1},s_{2},s}=1+\int_{s_{1}}^{s}N^{f}_{s_{1},s_{2},u}C\cos^{2}\left(\vartheta\right)\left(1-e^{-\cos\left(\vartheta\right)\left(s_{2}-u\right)/b^{2}}
\right)\\
\n_{-i\delta 
w^{N}_{u}}f\left(z_{b,\vartheta,s_{1}},z_{b,\vartheta,u}\right).
\end{multline}
In particular $N^{f}_{s_{1},s_{2},\cdot}$ is a martingale. In the sequel, we 
will write $N^{f}_{s_{1},s_{2}}$ instead of $N^{f}_{s_{1},s_{2},s_{2}}$. 
By (\ref{eq:aca8y}), (\ref{eq:aca10}), and (\ref{eq:nab1}), we get
\begin{multline}\label{eq:nab3}
\left\vert  
H_{b,\vartheta,s_{1}}^{-1}H_{b,\vartheta,s_{2}}\right\vert \\
\le
\exp\left(C+cC\cos^{4}\left(\vartheta\right)\int_{s_{1}}^{s_{2}}\left\vert  Y^{N}_{s}\right\vert^{2}ds+\frac{1}{2}
C^{2}c^{2}\left(s_{2}-s_{1}\right)\right)N^{f}_{s_{1},s_{2}}.
\end{multline}

For $s_{1}\le s\le s_{2}$, set
\begin{equation}\label{eq:nab4}
\overline{w}^{N}_{s}=w^{N}_{s}-\int_{s_{1}}^{s}C\cos^{2}\left(\vartheta\right)
\left(1-e^{-\cos\left(\vartheta\right)\left(s_{2}-u\right)/b^{2}}\right)\n_{-i\cdot}
f\left(z_{b,\vartheta,s_{1}},z_{b,\vartheta,u}\right)du.
\end{equation}
Using the properties of the Girsanov transformation, we know that 
with respect to  the probability measure $dQ'=N^{f}_{s_{1},s_{2}}dQ$,  $\overline{w}^{N}_{\cdot}$ is a 
Brownian motion on $\left[s_{1},s_{2}\right]$. By (\ref{eq:wat2}),  
for $s_{1}\le s\le s_{2}$, 
we get
\begin{equation}\label{eq:nab5}
Y^{N}_{s}=e^{-\cos\left(\vartheta\right)\left(s-s_{1} \right)   
/b^{2}}Y^{N}_{s_{1}}+\frac{\cos^{1/2}\left(\vartheta\right)}{b}\int_{s_{1}}^{s}e^{-\cos \left( \vartheta\right)\left(s-
u\right)/b^{2}}dw^{N}_{u}.
\end{equation}
By (\ref{eq:nab4}), we can rewrite (\ref{eq:nab5}) in the form,
\begin{multline}\label{eq:nab6}
Y^{N}_{s}=e^{-\cos\left(\vartheta\right)\left(s-s_{1} \right)   
/b^{2}}Y^{N}_{s_{1}}+\frac{\cos^{1/2}\left(\vartheta\right)}{b}
\int_{s_{1}}^{s}e^{-\cos \left( \vartheta\right)\left(s-u\right)/b^{2}}
d\overline{w}^{N}_{u}\\
+\frac{\cos^{1/2}\left(\vartheta\right)}{b}\int_{s_{1}}^{s}e^{-\cos\left(\vartheta\right)\left(s-u\right)/b^{2}}C
\cos^{2}\left(\vartheta\right)\left(1-e^{-\cos\left(\vartheta\right)\left(s_{2}-u\right)/b^{2}}\right) \\
\n_{-i\cdot}f\left(z_{b,\vartheta,s_{1}},z_{b,\vartheta,u}\right)du.
\end{multline}

By (\ref{eq:aca8y}), for $s_{1}\le s\le s_{2}$,  we get
\begin{multline}\label{eq:nab7}
\Biggl\vert\frac{\cos^{1/2}\left(\vartheta\right)}{b}\int_{s_{1}}^{s}e^{-\cos\left(\vartheta\right)\left(s-u\right)/b^{2}}C
\cos^{2}\left(\vartheta\right)\left(1-e^{-\cos\left(\vartheta\right)\left(s_{2}-u\right)/b^{2}}\right)\\
\n_{-i\cdot}f\left(z_{b,\vartheta,s_{1}},z_{b,\vartheta,u}\right)du\Biggr\vert
\le cC\cos^{3/2}\left(\vartheta\right)b.
\end{multline}

Set
\begin{equation}\label{eq:nab8}
\overline{Y}^{N}_{s}=e^{-\cos\left(\vartheta\right)\left(s-s_{1}\right)/b^{2}}Y^{N}_{s_{1}}+
\frac{\cos^{1/2}\left(\vartheta\right)}{b}
\int_{s_{1}}^{s}e^{-\cos\left(\vartheta\right)\left(s-u\right)/b^{2}}
d\overline{w}^{N}_{u}.
\end{equation}
Under $Q'$, the process $\overline{Y}^{N}_{\cdot}$ on $\left[s_{1},s_{2}\right]$ has the same probability law as 
$Y^{N}_{\cdot}$ under $Q$. By (\ref{eq:nab6}), (\ref{eq:nab8}), we get
\begin{equation}\label{eq:nab9}
\left\vert  Y^{N}_{s}\right\vert\le \left\vert  
\overline{Y}^{N}_{s}\right\vert+cC\cos^{3/2}\left(\vartheta\right)b.
\end{equation}

By (\ref{eq:hope1}),  
(\ref{eq:nab3}), given $k\in \N$, with the convention that $s_{0}=0, s_{k+1}=t$, we get
\begin{multline}\label{eq:rot4}
R_{s_{1},\ldots,s_{k},t}\le 
\exp \left( 
\left(k+1\right)C+cC\cos^{4}\left(\vartheta\right)\int_{0}^{t}\left\vert  
Y^{N}_{s}\right\vert^{2}ds+\frac{1}{2}C^{2}c^{2}t\right)
\prod_{i=0}^{k}N^{f}_{s_{i},s_{i+1}}
\\
\left\vert  \exp\left(-s_{1}N^{\Lambda\ac\left(T^{*}X \oplus N^{*}\right)}_{-\vartheta}/b^{2}\right)\right\vert
\left\vert  
\frac{R^{0,\perp}_{\vartheta}\left(Y_{s_{1}}\right)}{b}\right\vert 
\left\vert 
\exp\left(-\left(s_{2}-s_{1}\right)N^{\Lambda\ac\left(T^{*}X \oplus 
N^{*}\right)}_{-\vartheta}/b^{2}\right)\right\vert \\
\left\vert  
\frac{R^{0,\perp}_{\vartheta}\left(Y_{s_{2}}\right)}{b}\right\vert 
\ldots \left\vert  
\frac{R^{0,\perp}_{\vartheta}\left(Y_{s_{k}}\right)}{b}\right\vert
\left\vert  \exp\left(-
\left(t-s_{k}\right)N^{\Lambda\ac\left(T^{*}X \oplus N^{*}\right)}_{-\vartheta}/b^{2}\right)\right\vert.
\end{multline}

Again $\prod_{i=0}^{k}N^{f}_{s_{i},s_{i+1}}$ is a Girsanov exponential. 
Instead of (\ref{eq:nab4}), on $\left[0,t\right]$, we define $\overline{w}^{N}_{\cdot}$ by 
the formula
\begin{multline}\label{eq:nab10}
\overline{w}^{N}_{s}=w^{N}_{s}-
\int_{0}^{s}\sum_{i=0}^{k}1_{s_{i}\le u\le s_{i+1}}C\cos^{2}\left(\vartheta\right)
\left(1-e^{-\cos\left(\vartheta\right)\left(s_{i+1}-u\right)/b^{2}}\right) \\
\n_{-i\cdot}
f\left(z_{b,\vartheta,s_{i}},z_{b,\vartheta,u}\right).
\end{multline}
Under the probability law 
$dQ_{s_{1},\ldots,s_{k},t}=\prod_{i=0}^{k}N^{f}_{s_{i},s_{i+1}}dQ$, on 
$\left[0,t\right]$, $\overline{w}^{N}_{\cdot}$ is a Brownian motion.
Instead of (\ref{eq:nab6}), we now have 
\begin{multline}\label{eq:nab11}
Y^{N}_{s}=e^{-\cos\left(\vartheta\right)s/b^{2}}Y^{N}_{0}+\frac{\cos^{1/2}\left(\vartheta\right)}{b}\int_{0}^{s}e^{-\cos\left(\vartheta
\right)\left(s-u\right)}d\overline{w}^{N}_{u}\\
+\frac{\cos^{1/2}\left(\vartheta\right)}{b}\int_{0}^{s}e^{-\cos\left(\vartheta\right)
\left(s-u\right)/b^{2}}\\
\sum_{i=0}^{k}1_{s_{i}\le u\le 
s_{i+1}}C\cos^{2}\left(\vartheta\right)
\left(1-e^{-\cos\left(\vartheta\right)\left(s_{i+1}-u\right)/b^{2}}\right)
\n_{-i\cdot}f\left(z_{b,\vartheta,s_{i}},z_{b,\vartheta,u}\right).
\end{multline}

Instead of (\ref{eq:nab8}), set
\begin{equation}\label{eq:nab12}
\overline{Y}^{N}_{s}=e^{-\cos\left(\vartheta\right)s/b^{2}}Y^{N}_{0}+\frac{\cos^{1/2}\left(\vartheta\right)}{b}
\int_{0}^{s}e^{-\cos\left(\vartheta\right)\left(s-u\right)/b^{2}}d\overline{w}^{N}_{s}.
\end{equation}
Under $Q_{s_{1},\ldots,s_{k},t}$, on $\left[0,t\right]$, $\overline{Y}^{N}_{\cdot}$ as the 
same probability law as $Y^{N}_{\cdot}$ under $Q$.

In the sequel, we denote by $\overline{Y}_{\cdot}$ the process whose 
$TX$ component coincides with 
$Y^{TX}_{\cdot}$, and whose $N$ component coincides with 
$\overline{Y}^{N}_{\cdot}$. It is still true that under 
$Q_{s_{1},\ldots,s_{k},t}$, the 
probability law of $\overline{Y}_{\cdot}$  on $\left[0,t\right]$ is the same as the 
probability law of $Y_{\cdot}$ under $Q$.

Instead of (\ref{eq:nab7}), for $0\le s\le t$, we obtain
\begin{multline}\label{eq:nab13}
\Biggl\vert  
\frac{\cos^{1/2}\left(\vartheta\right)}{b}\int_{0}^{s}e^{-\cos\left(\vartheta\right)\left(s-u\right)/b^{2}}
\Biggl(\sum_{i=0}^{k}1_{s_{i}\le u\le 
s_{i+1}}C\cos^{2}\left(\vartheta\right)\\
\left(1-e^{-\cos\left(\vartheta\right)\left(s_{i+1}-u\right)/b^{2}}\right)
\n_{-i\cdot}f\left(z_{b,\vartheta,s_{i}},z_{b,\vartheta,u}\right)\Biggr)du\Biggr\vert
\le cC\cos^{3/2}\left(\vartheta\right)b.
\end{multline}
By (\ref{eq:nab11})--(\ref{eq:nab13}), as in (\ref{eq:nab9}), we get
\begin{equation}\label{eq:nab14}
\left\vert  Y^{N}_{s}\right\vert\le\left\vert  
\overline{Y}^{N}_{s}\right\vert+cC\cos^{3/2}\left(\vartheta\right)b.
\end{equation}
By (\ref{eq:nab14}), we get
\begin{equation}\label{eq:nab16}
\int_{0}^{t}\left\vert  Y^{N}_{s}\right\vert^{2}ds\le 
2\int_{0}^{t}\left\vert \overline{Y}^{N}_{s} 
\right\vert^{2}ds+2c^{2}C^{2}\cos^{3}\left(\vartheta\right)b^{2}t.
\end{equation}

Let $E^{Q_{s_{1},\ldots,s_{k},t}}$ be the expectation operator with 
respect to $Q_{s_{1},\ldots,s_{k},t}$. By (\ref{eq:rot4}), we obtain
\begin{multline}\label{eq:nab15}
E^{Q}\left[R_{s_{1},\ldots,s_{k},t}\right]\le 
\exp\left(\left(k+1\right)C+\frac{1}{2}C^{2}c^{2}t\right)\\
E^{Q_{s_{1},\ldots,s_{k},t}}\Biggl[\exp\left(cC\cos^{4}\left(\vartheta\right)\int_{0}^{t}\left\vert  Y^{N}_{s}
\right\vert^{2}ds\right) \\
\left\vert  
\exp\left(-s_{1}N^{\Lambda\ac\left(T^{*}X \oplus N^{*}\right)}_{-\vartheta}/b^{2}\right)\right\vert 
\left\vert  
\frac{R^{0,\perp}_{\vartheta}\left(Y_{s_{1}}\right)}{b}\right\vert 
\left\vert  
\exp\left(-\left(s_{2}-s_{1}\right)N^{\Lambda\ac\left(T^{*}X \oplus 
N^{*}\right)}_{-\vartheta}/b^{2}\right)\right\vert \\
\left\vert  
\frac{R^{0,\perp}_{\vartheta}\left(Y_{s_{2}}\right)}{b}\right\vert 
\ldots \left\vert\frac{R^{0,\perp}_{\vartheta}\left(Y_{s_{k}}\right)}{b}\right\vert
\left\vert  \exp\left(-
\left(t-s_{k}\right)N^{\Lambda\ac\left(T^{*}X \oplus 
N^{*}\right)}_{-\vartheta}/b^{2}\right)\right\vert\Biggr].
\end{multline}

Let $K$ be the $\left(2,2\right)$ matrix
\begin{equation}\label{eq:nab17}
K=
\begin{bmatrix}
    0 & 1 \\
    1 & 1
\end{bmatrix}.
\end{equation}
For simplicity, we will write $\R^{2}$ in the form
\begin{equation}\label{eq:nab18ax1}
\R^{2}=\R \oplus \R^{0,\perp},
\end{equation}
so that $K$ acts on $\R \oplus \R^{0,\perp}$.

By (\ref{eq:est0}), (\ref{eq:nea2x1}), and (\ref{eq:nab14}), we get
\begin{equation}\label{eq:nab18}
\left\vert  
R_{\vartheta}^{0,\perp}\left(Y_{s}\right)\right\vert\le\left\vert  
R_{\vartheta}^{0,\perp}\left(\overline{Y}_{s}\right)\right\vert+cCC'\cos^{2}\left(\vartheta\right)bK.
\end{equation}
Using  (\ref{eq:nab16}), (\ref{eq:nab15}),    (\ref{eq:nab18}), 
and the fact that under $Q_{s_{1},\ldots,s_{k},t}$, the probability 
law of $\overline{Y}_{\cdot}$ is the same as the probability law of 
$Y_{\cdot}$ under $Q$,  we get
\begin{multline}\label{eq:nab17a}
E^{Q}\left[R_{s_{1},\ldots,s_{k},t}\right]\le
\exp\left(\left(k+1\right)C+\frac{1}{2}C^{2}c^{2}t+2c^{3}C^{3}\cos^{7}\left(\vartheta\right)b^{2}t\right)\\
E^{Q}\Biggl[\exp\left(2cC\cos^{4}\left(\vartheta\right)\int_{0}^{t}\left\vert 
Y^{N}_{s}\right\vert^{2}ds\right)
\left\vert  
\exp\left(-s_{1}N^{\Lambda\ac\left(T^{*}X \oplus N^{*}\right)}_{-\vartheta}/b^{2}\right)\right\vert \\
\left( \left\vert  
\frac{R^{0,\perp}_{\vartheta}\left(Y_{s_{1}}\right)}{b}\right\vert+cCC'\cos^{2}\left(\vartheta\right)K \right) 
\left\vert  
\exp\left(-\left(s_{2}-s_{1}\right)N^{\Lambda\ac\left(T^{*}X \oplus N^{*}\right)}_{-\vartheta}/b^{2}\right)\right\vert\\
\left( \left\vert  
\frac{R^{0,\perp}_{\vartheta}\left(Y_{s_{2}}\right)}{b}\right\vert 
+cCC'\cos^{2}\left(\vartheta\right)K \right) 
\ldots \left( \left\vert  
\frac{R^{0,\perp}_{\vartheta}\left(Y_{s_{k}}\right)}{b}\right\vert 
+cCC'\cos^{2}\left(\vartheta\right)K \right) \\
\left\vert  \exp\left(-
\left(t-s_{k}\right)N^{\Lambda\ac\left(T^{*}X \oplus N^{*}\right)}_{-\vartheta}/b^{2}\right)\right\vert\Biggr].
\end{multline}

By (\ref{eq:hope1y1}), (\ref{eq:nab17a}), we get
\begin{multline}\label{eq:nab18a}
E^{Q}\left[\left\vert  U^{0}_{b,\vartheta,t}\right\vert\right]\le
\sum_{k=0}^{+ 
\infty}\exp\left(\left(k+1\right)C+\frac{1}{2}C^{2}c^{2}t+2c^{3}C^{3}\cos^{7}\left(\vartheta\right)b^{2}t\right)\\
E^{Q}\Biggl[\int_{0\le s_{1}\ldots s_{k}\le t}^{}\Biggl[\exp\left(2cC\cos^{4}\left(\vartheta\right)\int_{0}^{t}\left\vert  Y^{N}_{s}\right\vert^{2}ds\right)
\left\vert  
\exp\left(-s_{1}N^{\Lambda\ac\left(T^{*}X \oplus N^{*}\right)}_{-\vartheta}/b^{2}\right)\right\vert \\
\left( \left\vert  
\frac{R^{0,\perp}_{\vartheta}\left(Y_{s_{1}}\right)}{b}\right\vert+cCC'\cos^{2}\left(\vartheta\right)K \right) 
\left\vert  
\exp\left(-\left(s_{2}-s_{1}\right)N^{\Lambda\ac\left(T^{*}X \oplus N^{*}\right)}_{-\vartheta}/b^{2}\right)\right\vert\\
\left( \left\vert  
\frac{R^{0,\perp}_{\vartheta}\left(Y_{s_{2}}\right)}{b}\right\vert 
+cCC'\cos^{2}\left(\vartheta\right)K \right) 
\ldots \left( \left\vert  
\frac{R^{0,\perp}_{\vartheta}\left(Y_{s_{k}}\right)}{b}\right\vert 
+cCC'\cos^{2}\left(\vartheta\right)K \right) \\
\left\vert  \exp\left(-
\left(t-s_{k}\right)N^{\Lambda\ac\left(T^{*}X \oplus N^{*}\right)}_{-\vartheta}/b^{2}\right)\right\vert\Biggr]ds_{1}\ldots ds_{k}\Biggr].
\end{multline}

Let 
\index{Vbt@$V_{b,\vartheta,\cdot}$}%
$V_{b,\vartheta,\cdot}$ be the solution of the differential equation,
\begin{align}\label{eq:nab19}
&\frac{dV_{b,\vartheta}}{ds}=V_{b,\vartheta}\left[-
\frac{\left\vert  N^{\Lambda\ac\left(T^{*}X \oplus N^{*}\right)}_{-\vartheta}\right\vert}{b^{2}}+\left\vert e^{C} 
\frac{R^{0,\perp}_{\vartheta}\left(Y_{s}\right)}{b}\right\vert+cCC'e^{C}\cos^{2}\left(\vartheta\right)K\right],\\
&V_{b,\vartheta,0}=1. \nonumber 
\end{align}
In (\ref{eq:nab19}), $V_{b,\vartheta,\cdot}$ is a process of 
$\left(2,2\right)$ matrices.
One verifies easily that equation (\ref{eq:nab18a}) can be rewritten 
in the form,
\begin{multline}\label{eq:nab20}
E^{Q}\left[\left\vert  U^{0}_{b,\vartheta,t}\right\vert\right]\le 
\exp\left(C+\frac{1}{2}C^{2}c^{2}t+2c^{3}C^{3}\cos^{7}\left(\vartheta\right)t\right) \\
E^{Q}\left[
\exp\left(2cC\cos^{4}\left(\vartheta\right)\int_{0}^{t}\left\vert  
Y_{s}^{N}\right\vert^{2}ds\right)V_{b,\vartheta,t}\right].
\end{multline}

We   will estimate $V_{b,\vartheta,\cdot}$ by the method used in 
\cite[Theorem 14.5.2]{Bismut08b} and in the proof of Proposition 
\ref{Pcru}.  Put
\begin{equation}\label{eq:nab21}
M_{b,\vartheta}=-
\frac{\left\vert  N^{\Lambda\ac\left(T^{*}X \oplus N^{*}\right)}_{-\vartheta}\right\vert}{b^{2}}+\left\vert e^{C} 
\frac{R^{0,\perp}_{\vartheta}\left(Y\right)}{b}\right\vert+cCC'e^{C}\cos^{2}\left(\vartheta\right)K.
\end{equation}
Then $M_{b,\vartheta}$ is a self-adjoint matrix.  Equation 
(\ref{eq:nab19}) can be written in the form,
\begin{align}\label{eq:nab20x1}
&\frac{dV_{b,\vartheta}}{ds}=V_{b,\vartheta,s}M_{b,\vartheta},&V_{b,\vartheta,0}
=1.
\end{align}

Since the component 
of $\left\vert  R_{\vartheta}^{0,\perp}\left(Y\right)\right\vert$  
mapping $\R$ into $\R$ vanishes identically, using (\ref{eq:est0}), if 
$f\in\R^{2}$, then
\begin{equation}\label{eq:nab22}
\left\vert\left\langle  \left\vert  
R_{\vartheta}^{0,\perp}\left(Y\right)\right\vert 
f,f\right\rangle\right\vert\le C'\left( \cos^{1/2}\left(\vartheta\right)\left\vert  
Y^{TX}\right\vert +\left\vert  Y^{N}\right\vert\right) \left\vert  f\right\vert\left\vert \sqrt{ \left\vert  
N^{\Lambda\ac\left(T^{*}X \oplus N^{*}\right)}_{-\vartheta}\right\vert} f\right\vert.
\end{equation}
For  $\eta>0$, we get
\begin{multline}\label{eq:nab23}
\frac{1}{b}\left(\cos^{1/2}\left(\vartheta\right)\left\vert  
Y^{TX}\right\vert+\left\vert  Y^{N}\right\vert\right)\left\vert  f\right\vert\left\vert \sqrt{ \left\vert  
N^{\Lambda\ac\left(T^{*}X \oplus N^{*}\right)}_{-\vartheta}\right\vert} 
f\right\vert \\
\le\frac{1}{2}\Biggl(\frac{\eta}{b^{2}}\left\langle  
\left\vert N^{\Lambda\ac\left(T^{*}X \oplus N^{*}\right)}_{-\vartheta}\right\vert f,f\right\rangle
+
\frac{2}{\eta} \left( \cos\left(\vartheta\right)
\left\vert  Y^{TX}\right\vert^{2}+\left\vert  Y^{N}\right\vert^{2} \right) 
\left\vert  f\right\vert^{2}\Biggr).
\end{multline}
By (\ref{eq:nab21})--(\ref{eq:nab23}), we obtain
\begin{multline}\label{eq:nab24}
\left\langle  M_{b,\vartheta}f,f\right\rangle\le 
-\frac{1}{b^{2}}\left\langle  \left\vert  
N^{\Lambda\ac\left(T^{*}X \oplus N^{*}\right)}_{-\vartheta}f,f\right\vert\right\rangle\\
+e^{C}C'\frac{1}{2}\left(\frac{\eta}{b^{2}}\left(\langle  
N^{\Lambda\ac\left(T^{*}X \oplus N^{*}\right)}_{-\vartheta}f,f\right\rangle
+
\frac{2}{\eta}\left(\cos\left(\vartheta\right)\left\vert  
Y^{TX}\right\vert^{2}+\left\vert  Y^{N}\right\vert^{2}\right) 
\right)\left\vert  f\right\vert^{2}\\
+cCC'e^{C}\cos^{2}\left(\vartheta\right)\left\vert  f\right\vert^{2}.
\end{multline}
By taking $\eta>0$ small enough, we deduce from (\ref{eq:nab24}) that 
there exists $C''>0$ such that
\begin{equation}\label{eq:nab25}
\left\langle  M_{b,\vartheta}f,f\right\rangle\le \frac{C''}{2}\left( \cos\left(\vartheta\right)
\left\vert  Y^{TX}\right\vert^{2}+\left\vert  
Y^{N}\right\vert^{2}+\cos^{2}\left(\vartheta\right) \right)  \left\vert f\right\vert^{2}.
\end{equation}

By (\ref{eq:nab20x1}), we get
\begin{equation}\label{eq:nab25x2}
\frac{d}{ds}\left\vert  V^{*}_{b,\vartheta,s}f\right\vert^{2}=2
\left\langle  
M_{b,\vartheta,s}V^{*}_{b,\vartheta,s},V^{*}_{b,\vartheta,s}f\right\rangle.
\end{equation}
Using (\ref{eq:nab25}), (\ref{eq:nab25x2}), we get
\begin{equation}\label{eq:nab26}
\frac{d}{ds}\left\vert  V^{*}_{b,\vartheta,s}f\right\vert^{2}\le
C''\left( \cos\left(\vartheta\right)
\left\vert  Y_{s}^{TX}\right\vert^{2}+\left\vert  
Y^{N}_{s}\right\vert^{2}+\cos^{2}\left(\vartheta\right) \right) \left\vert  V^{*}_{\vartheta,s}f\right\vert^{2}.
\end{equation}
By Gronwall's lemma, from (\ref{eq:nab26}), we obtain
\begin{equation}\label{eq:nab27}
\left\vert  V^{*}_{b,\vartheta,t}f\right\vert^{2}\le 
\exp\left(C''\int_{0}^{t}\left( \cos\left(\vartheta\right)\left\vert  
Y^{TX}_{s}\right\vert^{2}+\left\vert  Y^{N}_{s}\right\vert^{2} \right) ds+C''
\cos^{2}\left(\vartheta\right)t\right).
\end{equation}
Equation (\ref{eq:nab27}), is equivalent to
\begin{equation}\label{eq:nab28}
\left\vert  V_{b,\vartheta,t}\right\vert\le 
\exp\left(\frac{C''}{2}\int_{0}^{t}\left( \cos\left(\vartheta\right)\left\vert  
Y^{TX}_{s}\right\vert^{2}+\left\vert  Y^{N}_{s}\right\vert^{2} 
\right) ds+\frac{C''}{2}
\cos^{2}\left(\vartheta\right)t\right).
\end{equation}

By (\ref{eq:nab20}), (\ref{eq:nab28}), we get
\begin{equation}\label{eq:nab29}
E^{Q}\left[\left\vert  U^{0}_{b,\vartheta,t}\right\vert\right]\le
e^{C}E^{Q}\left[\exp\left(\frac{C'''}{2}\int_{0}^{t}\left(\cos\left(\vartheta\right)\left\vert  Y^{TX}_{s}\right\vert^{2}+
\left\vert  Y^{N}_{s}\right\vert^{2}\right)ds+C'''t\right)\right].
\end{equation}
 By (\ref{eq:rot7ay1}), (\ref{eq:nab29}), and using the same 
 arguments as in Remark \ref{Rintel},  given $c_{0}, 0<c_{0}\le 1$, if $b>0$ is such that 
$C'''b^{2}/\cos\left(\vartheta\right)\le c_{0}$, then
\begin{multline}\label{eq:nab30}
E^{Q}\left[\left\vert  U^{0}_{b,\vartheta,t}\right\vert\right]\le
e^{C}\exp\Biggl(\frac{C'''}{2}\cos\left(\vartheta\right)\left(mt+
b^{2}\left\vert  Y_{0}^{TX}
\right\vert^{2}\right) \\
+C'''\left(\frac{n}{2}+1\right)t+\frac{c_{0}}{2}\left\vert  
Y_{0}^{N}\right\vert^{2}\Biggr).
\end{multline}
Also by (\ref{eq:prin10y2}), if $b\le C^{-1/2}, C'''b^{2}/\cos\left(\vartheta\right)>1$, then
\begin{multline}\label{eq:nab31}
E^{Q}\left[\left\vert  
U^{0}_{b,\vartheta,t}\right\vert\right]\le\exp\Biggl(\frac{1}{2}C\cos\left(\vartheta\right)mt+
\frac{1}{2}\cos\left(\vartheta\right)\left\vert  
Y_{0}^{TX}\right\vert^{2}+\frac{C^{2}t}{2}\\
+\frac{1}{2}C'''nt+CC^{\prime \prime \prime 
1/2}t\left\vert  Y^{N}_{0}\right\vert\Biggr).
\end{multline}
By (\ref{eq:nab30}), (\ref{eq:nab31}), we get (\ref{eq:couac-1}) when 
$p=1$. 

To obtain (\ref{eq:couac-1}) for arbitrary $p\ge 1$, we may 
limit ourselves to the case where $p\in \N^{*}$. However, it is 
easy to verify that the $p^{\mathrm{th}}$ power of the right-hand 
side of (\ref{eq:hope1y1}) has a similar expansion. The arguments that were given before 
then extend easily to the case of a general $p$. 

Let us now establish (\ref{eq:couac-1y1}). By H\"{o}lder's 
inequality\footnote{As before, the H\"older norms are calculated with 
respect to $Q$.}, for $1<\theta<2$, we get
\begin{multline}\label{eq:rito11}
\exp\left(-\left\vert  Y_{0}\right\vert^{2}/2\right)
E^{Q}\left[\left\vert  U^{0}_{b,\vartheta,t}\right\vert^{p}\exp\left(
\left\vert  Y_{t}\right\vert^{2}/2\right)\right]\\
\le \left\Vert  \left\vert  
U^{0}_{b,\theta,t}\right\vert^{p}\right\Vert_{\theta/\left(\theta-1\right)}
\exp\left(-\left\vert  Y_{0}\right\vert^{2}/2\right)\left\Vert  \exp\left(\left\vert  
Y_{t}\right\vert^{2}/2\right)\right\Vert_{\theta}.
\end{multline}
 By 
(\ref{eq:rito2x2}),  (\ref{eq:prin10}), (\ref{eq:jar9b}), and (\ref{eq:rito11}),   for $0<b\le 1,t\ge \epsilon$
\begin{multline}\label{eq:rito12-1}
\exp\left(-\left\vert  Y_{0}\right\vert^{2}/2\right)
E^{Q}\left[\left\vert  U^{0}_{b,\vartheta,t}\right\vert^{p}\exp\left(
\left\vert  Y_{t}\right\vert^{2}/2\right)\right]
\le 
C\exp\Biggl(C^{2}t+\frac{1}{2}\frac{\left(m+\cos\left(\vartheta\right)n\right)}{b^{2}}
t\\
+C\cos\left(\vartheta\right)b\left\vert  
Y_{0}^{TX}\right\vert
+C\frac{b}{\cos^{1/2}\left(\vartheta\right)}
\left(1-e^{-t\cos\left(\vartheta\right)/b^{2}}\right)\left\vert  
Y_{0}^{N}\right\vert\Biggr)
\\
\exp\left(-C' \left(  \left\vert  Y_{0}^{TX}\right\vert^{2}+\cos\left(\vartheta\right)
\left\vert  Y_{0}^{N}\right\vert^{2}\right) \right).
\end{multline}
By (\ref{eq:rito12-1}), if $b_{0}$ such that $0<b_{0}\le 1$ is given, 
if $b_{0}\le b\le 1,\epsilon\le t\le M$,  then
\begin{multline}\label{eq:rito12-2}
\exp\left(-\left\vert  Y_{0}\right\vert^{2}/2\right)
E^{Q}\left[\left\vert  U^{0}_{b,\vartheta,t}\right\vert^{p}\exp\left(
\left\vert  Y_{t}\right\vert^{2}/2\right)\right]
\le \\
C\exp\left(C\cos\left(\vartheta\right)b\left\vert  
Y_{0}^{TX}\right\vert
+C\frac{\cos^{1/2}\left(\vartheta\right)}{b}\left\vert  
Y_{0}^{N}\right\vert \right) \\
\exp\left(-C' \left(  \left\vert  Y_{0}^{TX}\right\vert^{2}+\cos\left(\vartheta\right)
\left\vert  Y_{0}^{N}\right\vert^{2}\right) \right).
\end{multline}
By (\ref{eq:rito12-2}), we deduce that (\ref{eq:couac-1y1}) holds in 
the above range of parameters. In the sequel, we may as well take $b$ 
to be arbitrarily small.

By  (\ref{eq:rito2x2}),  (\ref{eq:prin10y1}), and (\ref{eq:rito11}),   
for $b>0$ small enough, and $\epsilon\le t\le M$, we get
\begin{multline}\label{eq:iga1}
\exp\left(-\left\vert  Y_{0}\right\vert^{2}/2\right)
E^{Q}\left[\left\vert  U^{0}_{b,\vartheta,t}\right\vert^{p}\exp\left(
\left\vert  Y_{t}\right\vert^{2}/2\right)\right]\\
\le C\exp\left(\frac{1}{2}\frac{\cos\left(\vartheta\right)}{b^{2}}nM+
C\frac{b}{\cos^{1/2}\left(\vartheta\right)}\left(1-
e^{-t\cos\left(\vartheta\right)/b^{2}}\right)\left\vert  
Y_{0}^{N}\right\vert\right)\\
\exp\left(-C'\left(\left\vert  Y_{0}^{TX}\right\vert^{2}
+\left(1-e^{-2t\cos\left(\vartheta\right)/b^{2}}\right)\left\vert  
Y_{0}^{N}\right\vert^{2}\right)\right).
\end{multline}
Take $c_{0}>0$. If $b>0$ is small enough, if $b^{2}/\cos\left(\vartheta\right)\ge c_{0},\epsilon\le 
t\le M$, then
\begin{multline}\label{eq:iga2}
C'\left(1-e^{-2t\cos\left(\vartheta\right)/b^{2}}\right)\left\vert  
Y_{0}^{N}\right\vert^{2}-C\frac{b}{\cos^{1/2}\left(\vartheta\right)}
\left(1-e^{-t\cos\left(\vartheta\right)/b^{2}}\right)\left\vert  Y_{0}^{N}\right\vert
\\
\ge c'\frac{\cos\left(\vartheta\right)}{b^{2}}\left\vert  
Y_{0}^{N}\right\vert^{2}-C\frac{\cos^{1/2}\left(\vartheta\right)}{b}
\left\vert  Y_{0}^{N}\right\vert\ge 
\frac{c'}{2}\frac{\cos\left(\vartheta\right)}{b^{2}}\left\vert  
Y^{N}_{0}\right\vert^{2}-C'.
\end{multline}
By (\ref{eq:iga2}), if we have also $0<b\le 1$, we get
\begin{multline}\label{eq:iga3}
C'\left(1-e^{-2t\cos\left(\vartheta\right)/b^{2}}\right)\left\vert  
Y_{0}^{N}\right\vert^{2}-C\frac{b}{\cos^{1/2}\left(\vartheta\right)}
\left(1-e^{-t\cos\left(\vartheta\right)/b^{2}}\right)\left\vert  
Y_{0}^{N}\right\vert \\
\ge c'\cos\left(\vartheta\right)\left\vert  
Y_{0}^{N}\right\vert^{2}-C'.
\end{multline}
By (\ref{eq:iga1}), (\ref{eq:iga3}), we deduce that for $0<b\le 1$, 
for $b$ small enough, $b^{2}/\cos\left(\vartheta\right)\ge c_{0}, 
\epsilon\le t\le M$, then (\ref{eq:couac-1y1}) still holds.

By the above, in the sequel, we can take $b>0$ and $b^{2}/\cos\left(\vartheta\right)$
as small as needed.

By (\ref{eq:rito2x2}), by (\ref{eq:nab30}) which 
we use with an arbitrary power instead of with the power $1$, and (\ref{eq:rito11}),  given 
$c_{0}\in \left]0,1\right]$,  for 
$0<b\le 1,\epsilon\le t\le M,C'''b^{2}/\cos\left(\vartheta\right)\le c_{0}$, we get
\begin{multline}\label{eq:rito12}
\exp\left(-\left\vert  Y_{0}\right\vert^{2}/2\right)
E^{Q}\left[\left\vert  U^{0}_{b,\vartheta,t}\right\vert^{p}\exp\left(
\left\vert  Y_{t}\right\vert^{2}/2\right)\right]\\
\le C
\exp\left(- 
C\left( 
\left\vert  Y_{0}^{TX}\right\vert^{2}+\left\vert  
Y_{0}^{N}\right\vert^{2}\right) \right) \\
\exp\left(\frac{C'''}{2}\cos\left(\vartheta\right)b^{2}\left\vert  
Y_{0}^{TX}\right\vert^{2}+\frac{c_{0}}{2}\left\vert  Y_{0}^{N}\right\vert^{2}\right).
\end{multline}
Since we can take $b,c_{0}$ to be arbitrarily small, from 
(\ref{eq:rito12}), we still get (\ref{eq:couac-1y1}).

The proof of our theorem is completed. 
\end{proof}

\subsection{A uniform estimate on $\left\Vert  \sup_{0\le t\le 
M}\left\vert  U^{0}_{b,\vartheta,t}\right\vert\right\Vert_{p}$}%
\label{subsec:limbs}
\begin{thm}\label{Tgres1}
Given $p>2 , M>0$, there exist $C_{p,M}>0, C'>0$ such that 
for $0<b\le 1,\vartheta\in\left[0,\frac{\pi}{2}\right[$, 
\begin{multline}
   \left\Vert  \sup_{0\le t\le M} \left\vert \left( U^{0}_{b,\vartheta,t}-
   \exp\left(-tN^{\Lambda\ac\left(T^{*}X \oplus N^{*}\right)}_{-\vartheta}/b^{2}\right) \right)  
       \mathbf{P}^{\perp}\right\vert\right\Vert_{p}\\
       \le
       C_{p,M}\exp\left(C'\left(\cos\left(\vartheta\right)\left\vert  
    Y_{0}^{TX}\right\vert^{2}+\left\vert  
    Y_{0}^{N}\right\vert^{2}\right)\right)\\
    \inf\left(\left(b/\cos^{1/2}\left(\vartheta\right)
    \right)^{\left(p-2\right)/p},\cos^{1/2}\left(\vartheta\right)/b\right).
    \label{eq:flimsw5}
\end{multline}
In particular, given $Y_{0}$,  for $0<b\le 1,
\vartheta\in\left[0,\frac{\pi}{2}\right[, 0\le t\le M$, the left-hand 
side of (\ref{eq:flimsw5}) is uniformly bounded.

Given $M>0, \vartheta\in \left[0,\frac{\pi}{2}\right[$, as $b\to 0$, 
$\left( 
U^{0}_{b,\vartheta,\cdot}-\exp\left(-tN^{\Lambda\ac\left(T^{*}X \oplus N^{*}\right)}_{-\vartheta}/b^{2}\right)
\right) \mathbf{P}^{\perp}$
converges uniformly to $0$ on $\left[0,M\right]$ in probability. In 
particular, given
$0<\epsilon\le M<+ \infty ,\vartheta\in\left[0,\frac{\pi}{2}\right[$, as $b\to 0$, 
$U^{0}_{b,\vartheta,\cdot}\mathbf{P^{\perp}}$ converges uniformly to 
$0$ on $\left[\epsilon,M\right]$ in probability.
\end{thm}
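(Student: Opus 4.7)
The plan is to apply Duhamel's formula to the ODE \eqref{eq:diff0} governing $U^{0}_{b,\vartheta,t}$. Since the operator $N_{-\vartheta}^{\LXN}$ is constant in time and commutes with $\mathbf{P}^{\perp}$, one obtains the pathwise identity
\begin{equation*}
(U^{0}_{b,\vartheta,t}-e^{-tN_{-\vartheta}^{\LXN}/b^{2}})\mathbf{P}^{\perp}=\int_{0}^{t}U^{0}_{b,\vartheta,s}\,\frac{R^{0}_{\vartheta}(Y_{s})}{b}\,e^{-(t-s)N_{-\vartheta}^{\LXN}/b^{2}}\mathbf{P}^{\perp}\,ds.
\end{equation*}
Using \eqref{eq:gip1}, namely $N_{-\vartheta}^{\LXN}=N^{\Lambda\ac(T^{*}X)}+\cos(\vartheta)N^{\Lambda\ac(N^{*})}$, this operator is diagonal in the form degree and bounded below by $\cos(\vartheta)$ on the range of $\mathbf{P}^{\perp}$, so that $\|e^{-(t-s)N_{-\vartheta}^{\LXN}/b^{2}}\mathbf{P}^{\perp}\|\le e^{-(t-s)\cos(\vartheta)/b^{2}}$.

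Combined with the norm bound \eqref{eq:est0}, which factors $\cos^{1/2}(\vartheta)/b$ out of $R^{0}_{\vartheta}(Y)/b$, and with the elementary inequality $\int_{0}^{t}e^{-(t-s)\cos(\vartheta)/b^{2}}\,ds\le\min(M,b^{2}/\cos(\vartheta))$, I would derive the pathwise estimate
\begin{equation*}
\sup_{0\le t\le M}\bigl|(U^{0}_{b,\vartheta,t}-e^{-tN_{-\vartheta}^{\LXN}/b^{2}})\mathbf{P}^{\perp}\bigr|\le C\,\Phi(b,\vartheta)\cdot\sup_{0\le s\le M}|U^{0}_{b,\vartheta,s}|\cdot G_{M},
\end{equation*}
with $\Phi(b,\vartheta):=\min\bigl(b/\cos^{1/2}(\vartheta),\,M\cos^{1/2}(\vartheta)/b\bigr)$ and $G_{M}:=\sup_{0\le s\le M}(\cos^{1/2}(\vartheta)|Y^{TX}_{s}|+|Y^{N}_{s}|)$. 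Since $p>2$ and $(b/\cos^{1/2}(\vartheta))^{(p-2)/p}\ge b/\cos^{1/2}(\vartheta)$ whenever $b/\cos^{1/2}(\vartheta)\le 1$, the deterministic prefactor $\Phi(b,\vartheta)$ is dominated, up to a constant depending on $M$, by the infimum appearing in \eqref{eq:flimsw5}.

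Taking $L^{p}(\Omega,Q)$-norms then reduces the proof to H\"older bounds. By H\"older with exponents $2p$ and $2p$, and by Gaussian tails for the Ornstein--Uhlenbeck processes $Y^{TX}_{\cdot},Y^{N}_{\cdot}$ of the type used in the proof of Theorem \ref{Tesc}, $\|G_{M}\|_{2p}\le C$ uniformly in $b,\vartheta$, so everything reduces to a supremum-in-time version of Theorem \ref{Tfuest}, namely
\begin{equation*}
\left\|\sup_{0\le s\le M}|U^{0}_{b,\vartheta,s}|\right\|_{2p}\le C\exp\bigl(C'(\cos(\vartheta)|Y_{0}^{TX}|^{2}+|Y_{0}^{N}|^{2})\bigr).
\end{equation*}
This is the main obstacle, and the reason the exponent degrades from $1$ to $(p-2)/p$: the Gronwall-type bound \eqref{eq:nab28} already controls the sup of the auxiliary process $V_{b,\vartheta,\cdot}$ in terms of $\int_{0}^{M}[\cos(\vartheta)|Y^{TX}_{s}|^{2}+|Y^{N}_{s}|^{2}]\,ds$, and the Girsanov manipulation following \eqref{eq:nab10} can be run once and for all on $[0,M]$ rather than on $[0,t]$; the resulting pointwise-in-$t$ estimate of Theorem \ref{Tfuest} is then upgraded to a supremum bound by a Kolmogorov--chaining argument that trades an $L^{p}$-factor against the modulus of continuity of $U^{0}_{b,\vartheta,\cdot}$ coming directly from the ODE \eqref{eq:diff0}.

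For the convergence statements, fix $\vartheta\in[0,\tfrac{\pi}{2}[$. As $b\to 0$, the factor $b/\cos^{1/2}(\vartheta)\to 0$, so the quantitative bound \eqref{eq:flimsw5} forces $\sup_{t\le M}|(U^{0}_{b,\vartheta,t}-e^{-tN_{-\vartheta}^{\LXN}/b^{2}})\mathbf{P}^{\perp}|\to 0$ in $L^{p}$, hence in probability. The final assertion, that $U^{0}_{b,\vartheta,\cdot}\mathbf{P}^{\perp}$ converges to $0$ uniformly in probability on $[\epsilon,M]$, follows because on such an interval $\|e^{-tN_{-\vartheta}^{\LXN}/b^{2}}\mathbf{P}^{\perp}\|\le e^{-\epsilon\cos(\vartheta)/b^{2}}$ tends to $0$ deterministically as $b\to 0$.
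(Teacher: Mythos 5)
Your Duhamel identity is exactly the paper's starting point (equation \eqref{eq:flimsw2}), but the order in which you take the supremum over $t$ versus the expectation diverges from the paper's proof and this is where a real gap opens up. You first take the supremum over $t$ pathwise, obtaining a bound of the schematic form $\Phi(b,\vartheta)\cdot\sup_{0\le s\le M}|U^{0}_{b,\vartheta,s}|\cdot G_{M}$, and then need an $L^{2p}$ bound on $\sup_{0\le s\le M}|U^{0}_{b,\vartheta,s}|$. But that bound is precisely the content of Theorem \ref{Tunif}, and in the paper Theorem \ref{Tunif} is proved \emph{after} and \emph{using} Theorem \ref{Tgres1}: the very first line of its proof reads ``If we just consider $U^{0}_{b,\vartheta,\cdot}\mathbf{P}^{\perp}$, our estimate follows from Theorem \ref{Tgres1}.'' So your argument is circular as written.

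The proposed escape hatch --- upgrading the pointwise-in-time bound of Theorem \ref{Tfuest} to a supremum bound by Kolmogorov chaining against the modulus of continuity of $U^{0}_{b,\vartheta,\cdot}$ from the ODE \eqref{eq:diff0} --- does not work either. The ODE has coefficients of order $1/b^{2}$ (the $-N^{\LXN}_{-\vartheta}/b^{2}$ term), so the increments $|U^{0}_{b,\vartheta,t}-U^{0}_{b,\vartheta,s}|$ are of order $|t-s|/b^{2}$ times $\sup|U^{0}|$; any chaining estimate built from this modulus of continuity degrades by a power of $1/b$ that destroys exactly the smallness in $b$ you are trying to prove. Likewise, the Gronwall bound \eqref{eq:nab28} on $V_{b,\vartheta,\cdot}$ is pathwise and monotone in $t$, but it only appears under the Girsanov-transformed expectation; the Girsanov densities $\prod N^{f}_{s_{i},s_{i+1}}$ that intervene at \eqref{eq:nab10} depend on both the subdivision $s_{1}<\dots<s_{k}$ and the terminal time $t$, so there is no single change of measure to ``run once and for all on $[0,M]$,'' and one does not get $E^{Q}[\sup_{t}\ldots]$ from $\sup_{t}E^{Q}[\ldots]$ without a martingale-type argument that $U^{0}$ does not supply.

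The paper's proof sidesteps all of this by applying H\"older \emph{inside} the Duhamel integral (equation \eqref{eq:prin16}, with exponents $2p,2p,q$, $\tfrac{1}{p}+\tfrac{1}{q}=1$). After H\"older, the three factors are $L^{2p}([0,M])$-norms of $|U^{0}_{b,\vartheta,\cdot}|$ and $|Y_{\cdot}|$ and the $L^{q}$-norm of the exponential $e^{-(t-s)N^{\LXN}_{-\vartheta}/b^{2}}\mathbf{P}^{\perp}$; the first two are increasing in the upper limit of integration and hence dominated by their values at $t=M$, while the third is bounded by \eqref{eq:prin18x1}. So the bound produced is $t$-independent, $\sup_{t}$ comes for free, and after Cauchy--Schwarz (\ref{eq:prin17x1}) and the moment bounds \eqref{eq:prin17}, only the pointwise-in-time estimate $\sup_{0\le t\le M}E^{Q}[|U^{0}_{b,\vartheta,t}|^{2p}]$ from Theorem \ref{Tfuest} is ever needed. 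This is also where the exponent $(p-2)/p$ genuinely comes from: it is the H\"older balance $b^{-1}\cdot b^{2/q}=b^{(p-2)/p}$ produced by the $L^{q}$-norm of the exponential kernel. That you end up with a stronger exponent than the one stated should itself have been a warning sign that the argument was using an unavailable hypothesis. Your concluding paragraph on the two convergence statements is fine and matches the paper.
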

\begin{proof}
We proceed as in the proof of \cite[Proposition 14.10.2]{Bismut08b}.
By (\ref{eq:diff0}), we get
\begin{multline}
    U^{0}_{b,\vartheta,t}=
    \exp\left(-tN^{\Lambda\ac\left(T^{*}X \oplus 
    N^{*}\right)}_{-\vartheta}/b^{2}\right) \\
    +\int_{0}^{t}U^{0}_{b,\vartheta,
    s}\frac{R^{0}_{\vartheta}\left(Y_{s}\right)}{b}
    \exp\left(-\left(t-s\right)N^{\Lambda\ac\left(T^{*}X \oplus N^{*}\right)}_{-\vartheta}/b^{2}\right)ds.
    \label{eq:flimsw2}
\end{multline}
Clearly,
\begin{multline}\label{eq:prin15}
\left\vert  \int_{0}^{t}U^{0}_{b,\vartheta,
    s}\frac{R^{0}_{\vartheta}\left(Y_{s}\right)}{b}
    \exp\left(-\left(t-s\right)N^{\Lambda\ac\left(T^{*}X \oplus N^{*}\right)}_{-\vartheta}/b^{2}\right)ds\mathbf{P}^{\perp}\right\vert \\
    \le\int_{0}^{t}\left\vert  U^{0}_{b,\vartheta,
    s}\frac{R^{0}_{\vartheta}\left(Y_{s}\right)}{b}
    \exp\left(-\left(t-s\right)N^{\Lambda\ac\left(T^{*}X \oplus N^{*}\right)}_{-\vartheta}/b^{2}\right)
    \mathbf{P}^{\perp}\right\vert ds.
\end{multline}
Take $p>2$, and let $q$ be such that $\frac{1}{p}+\frac{1}{q}=1$. By 
(\ref{eq:est0}) and by
H\"{o}lder's inequality, for $0\le t\le M$, we get
\begin{multline}\label{eq:prin16}
\int_{0}^{t}\left\vert  U^{0}_{b,\vartheta,
    s}\frac{R^{0}_{\vartheta}\left(Y_{s}\right)}{b}
    \exp\left(-\left(t-s\right)N^{\Lambda\ac\left(T^{*}X \oplus N^{*}\right)}_{-\vartheta}/b^{2}\right)
    \mathbf{P}^{\perp}\right\vert ds\\
    \le Cb^{\left(p-2\right)/p}\left[\int_{0}^{M}\left\vert  
    U_{b,\vartheta,s}^{0}\right\vert^{2p}ds\right]^{1/2p}\\
   \left( \cos\left(\vartheta\right) 
   \left[\int_{0}^{M}\left\vert  
   Y^{TX}_{s}\right\vert^{2p}ds\right]^{1/2p}+
   \cos^{1/2}\left(\vartheta\right)\left[\int_{0}^{M}\left\vert  
   Y^{N}_{s}\right\vert^{2p}ds\right]^{1/2p} \right) \\
  \left\vert  \frac{1-\exp\left(-qMN^{\Lambda\ac\left(T^{*}X \oplus N^{*}\right)}_{-\vartheta}/b^{2}\right)}
  {qN^{\Lambda\ac\left(T^{*}X \oplus N^{*}\right)}_{-\vartheta}}\mathbf{P}^{\perp}\right\vert
  ^{1/q}.
\end{multline}

By Cauchy-Schwarz, we have
\begin{multline}\label{eq:prin17x1}
E^{Q}\left[\left[\int_{0}^{M}\left\vert  
U^{0}_{b,\vartheta,s}\right\vert^{2p}ds\right]^{1/2}
\left[\int_{0}^{M}\left\vert  
Y_{s}^{TX}\right\vert^{2p}ds\right]^{1/2}\right]\\
\le \left\{E^{Q}\left[\int_{0}^{M}\left\vert  
U^{0}_{b,\vartheta,s}\right\vert^{2p}ds\right]\right\}^{1/2}
\left\{E^{Q}\left[\int_{0}^{M}\left\vert  Y_{s}^{TX}\right\vert^{2p}ds\right]\right\}
^{1/2}.
\end{multline}
In (\ref{eq:prin17x1}), we can replace $Y^{TX}_{\cdot}$ by 
$Y^{N}_{\cdot}$.

By (\ref{eq:flimsw2})--(\ref{eq:prin17x1}), we obtain
\begin{multline}\label{eq:mrin17x2}
 \left\Vert  \sup_{0\le t\le M} \left\vert \left( U^{0}_{b,\vartheta,t}-
   \exp\left(-tN^{\Lambda\ac\left(T^{*}X \oplus N^{*}\right)}_{-\vartheta}/b^{2}\right) \right)  
       \mathbf{P}^{\perp}\right\vert\right\Vert_{p}\\
       \le Cb^{\left(p-2\right)/p}
       \left\{E^{Q}\int_{0}^{M}\left\vert  
       U^{0}_{b,\vartheta,s}\right\vert^{2p}ds\right\}^{1/2p}
       \cos^{1/2}\left(\vartheta\right) \\
       \left( \cos^{1/2}\left(\vartheta\right)\left\{E^{Q}\left[\int_{0}^{M}
       \left\vert  Y_{s}^{TX}\right\vert^{2p}ds\right]\right\}^{1/2p}+
      \left\{E^{Q}\left[\int_{0}^{M}\left\vert  
       Y^{N}_{s}\right\vert^{2p}ds\right]\right\}^{1/2p}\right)\\ 
       \left\vert  \frac{1-\exp\left(-qMN^{\Lambda\ac\left(T^{*}X \oplus N^{*}\right)}_{-\vartheta}/b^{2}\right)}
  {qN^{\Lambda\ac\left(T^{*}X \oplus N^{*}\right)}_{-\vartheta}}\mathbf{P}^{\perp}\right\vert
  ^{1/q}.
\end{multline}

By \cite[Proposition 10.8.2]{Bismut08b}, we get
\begin{align}\label{eq:prin17}
&E^{Q}\left[\left\vert  Y_{t}^{TX}\right\vert^{2p}\right]\le 
C_{p}\left(1+\left\vert  Y_{0}^{TX}\right\vert^{2p}\right),
&E^{Q}\left[\left\vert  Y_{t}^{N}\right\vert^{2p}\right]\le 
C_{p}\left(1+\left\vert  Y_{0}^{N}\right\vert^{2p}\right).
\end{align}

For $a>0,x\ge 0$, we have the  inequality,
\begin{equation}\label{eq:fou1}
\frac{1-e^{-x/a}}{x}\le \inf\left(1/a,1/x\right).
\end{equation}
From (\ref{eq:fou1}), we obtain
\begin{equation}\label{eq:prin18x1}
\left\vert  \frac{1-\exp\left(-qMN^{\Lambda\ac\left(T^{*}X \oplus N^{*}\right)}_{-\vartheta}/b^{2}\right)}
  {qN^{\Lambda\ac\left(T^{*}X \oplus N^{*}\right)}_{-\vartheta}}\mathbf{P}^{\perp}\right\vert\le
  C\inf \left( \frac{1}{\cos\left(\vartheta\right)}, 
 \frac{1}{b^{2}}\right).
\end{equation}
Using equation (\ref{eq:couac-1}) in Theorem \ref{Tfuest} and 
(\ref{eq:mrin17x2})--(\ref{eq:prin18x1}), for $0\le t\le M$, we get
(\ref{eq:flimsw5}).  The last part of our theorem follows from 
(\ref{eq:flimsw5}). The proof of our theorem is completed. 
\end{proof}

Now we get an extension  of
\cite[Proposition 14.10.3]{Bismut08b}, which is also a path integral 
version of Proposition \ref{Pwondidbis}.
\begin{prop}\label{Pformid}
For $0\le \vartheta<\frac{\pi}{2}$, the following identity holds:
\begin{multline}
    U^{0}_{b,\vartheta,t}\Biggl( 
   1+b\left(1+N^{\Lambda\ac\left(T^{*}X \oplus 
   N^{*}\right)}_{-\vartheta}\right)^{-1}R^{0}_{\vartheta}\left(Y_{t}^{TX}\right)\\
   +b
   \left(\cos\left(\vartheta\right)+N^{\Lambda\ac\left(T^{*}X \oplus N^{*}\right)}_{-\vartheta}\right)^{-1}R^{0}_{\vartheta}\left(Y^{N}_{t}\right)\Biggr)\\
   =1+b\left(1+N^{\Lambda\ac\left(T^{*}X \oplus N^{*}\right)}_{-\vartheta}\right)^{-1}R^{0}_{\vartheta}\left(Y_{0}^{TX}\right)+b
   \left(\cos\left(\vartheta\right)+N^{\Lambda\ac\left(T^{*}X \oplus N^{*}\right)}_{-\vartheta}\right)^{-1}R^{0}_{\vartheta}\left(Y^{N}_{0}\right)
    \\
   +
   \int_{0}^{t}U^{0}_{b,\vartheta,s}\Biggl(-\frac{N^{\Lambda\ac\left(T^{*}X \oplus N^{*}\right)}_{-\vartheta}}{b^{2}}+
  R^{0}_{\vartheta}\left(Y_{s}\right)
   \Biggl [\left(1+N^{\Lambda\ac\left(T^{*}X \oplus N^{*}\right)}_{-\vartheta}\right)^{-1}R^{0}_{\vartheta}\left(Y^{TX}\right)\\
   +\left(\cos\left(\vartheta\right)+
   N^{\Lambda\ac\left(T^{*}X \oplus N^{*}\right)}_{-\vartheta}\right)^{-1}R^{0}_{\vartheta}\left(Y^{N}\right)\Biggr]\Bigg)ds\\
   +\int_{0}^{t}U^{0}_{b,\vartheta,s}\Bigg( \left(1+N^{\Lambda\ac\left(T^{*}X \oplus N^{*}\right)}_{-\vartheta}\right)  
   ^{-1}R^{0}_{\vartheta}\left(\delta 
   w^{TX}\right)\\
   +\left(\cos\left(\vartheta\right)+N^{\Lambda\ac\left(T^{*}X \oplus N^{*}\right)}_{-\vartheta}\right)^{-1}
   \cos^{1/2}\left(\vartheta\right)R^{0}_{\vartheta}\left( \delta 
   w^{N}\right) \Biggr).
    \label{eq:formid5}
\end{multline}
\end{prop}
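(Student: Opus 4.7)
The plan is to apply It\^{o}'s formula to the product
\[
F_s := U^{0}_{b,\vartheta,s}\bigl(1+bB_\vartheta(Y_s)\bigr),\qquad
B_\vartheta(Y) := (1+\Nto)^{-1}R^{0}_{\vartheta}(Y^{TX}) + (\cos(\vartheta)+\Nto)^{-1}R^{0}_{\vartheta}(Y^{N}),
\]
where $\Nto = N^{\Lambda\ac(T^{*}X\oplus N^{*})}_{-\vartheta}$, and then to integrate from $0$ to $t$ to recover (\ref{eq:formid5}). I would first pass to the $G$-trivialization afforded by the horizontal lift of $x_{\cdot}$ associated to (\ref{eq:wat2}), so that $Y^{\mathfrak p}_{s}, Y^{\mathfrak k}_{s}$ become honest It\^{o} processes in the fixed vector space $\mathfrak g$; then the parallel-transport issues disappear and $B_\vartheta(Y_s)$ is a genuinely $\End$-valued linear function of the semimartingale $Y_s$, to which standard It\^{o} calculus applies.

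Since $U^{0}_{b,\vartheta,\cdot}$ solves the pathwise ODE (\ref{eq:diff0}) along each trajectory, it is absolutely continuous in time and hence of bounded variation, so its quadratic covariation with any semimartingale vanishes. The It\^{o} product rule therefore reduces to
\[
dF_s = dU^{0}_{b,\vartheta,s}\,\bigl(1+bB_\vartheta(Y_s)\bigr) + U^{0}_{b,\vartheta,s}\cdot b\,dB_\vartheta(Y_s).
\]
Inserting (\ref{eq:diff0}) into the first term and (\ref{eq:wat2}) into the second (via the linearity of $B_\vartheta$), one separates drift ($ds$) from Brownian increments ($\delta w^{TX}, \delta w^{N}$). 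The Brownian part produces
\[
U^{0}_{b,\vartheta,s}\Bigl[(1+\Nto)^{-1}R^{0}_{\vartheta}(\delta w^{TX}) + \cos^{1/2}(\vartheta)(\cos(\vartheta)+\Nto)^{-1}R^{0}_{\vartheta}(\delta w^{N})\Bigr],
\]
which is exactly the last line of (\ref{eq:formid5}).

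The core of the argument is the cancellation of the $1/b$-drift. Collecting terms, the coefficient of $U^{0}_{b,\vartheta,s}\,ds/b$ equals
\[
R^{0}_{\vartheta}(Y_s) - \Nto B_\vartheta(Y_s) - (1+\Nto)^{-1}R^{0}_{\vartheta}(Y^{TX}_s) - \cos(\vartheta)(\cos(\vartheta)+\Nto)^{-1}R^{0}_{\vartheta}(Y^{N}_s).
\]
Splitting $R^{0}_{\vartheta}(Y_s)=R^{0}_{\vartheta}(Y^{TX}_s)+R^{0}_{\vartheta}(Y^{N}_s)$ and grouping $Y^{TX}$ and $Y^{N}$ contributions separately, this vanishes via the scalar identities
\[
\Nto(1+\Nto)^{-1} + (1+\Nto)^{-1} = I,\qquad \Nto(\cos(\vartheta)+\Nto)^{-1} + \cos(\vartheta)(\cos(\vartheta)+\Nto)^{-1} = I,
\]
which hold eigenspace-by-eigenspace of $\Nto$ and, crucially, require no commutation of $\Nto$ with $R^{0}_{\vartheta}(Y)$. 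The surviving drift is
\[
U^{0}_{b,\vartheta,s}\left[-\frac{\Nto}{b^{2}} + R^{0}_{\vartheta}(Y_s)B_\vartheta(Y_s)\right]ds,
\]
which matches the first integrand of (\ref{eq:formid5}); with $F_0 = 1+bB_\vartheta(Y_0)$, integration yields the claim.

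I do not expect a serious obstacle here: Proposition \ref{Pformid} is essentially the probabilistic shadow of the PDE identity of Proposition \ref{Pwondidbis}, with the fibrewise harmonic oscillator replaced by its path-integral representation. The only delicate points are bookkeeping --- writing each resolvent consistently on the left of $R^{0}_{\vartheta}(Y^{TX})$ or $R^{0}_{\vartheta}(Y^{N})$, so as not to force any unavailable commutation --- and recording carefully that the absolute continuity of $U^{0}_{b,\vartheta,\cdot}$ kills any It\^{o} cross-term, so that only a pathwise Leibniz rule and the algebraic identity above are at work.
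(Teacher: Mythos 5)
Your proof is correct and is essentially the paper's own argument, which is stated in a single line as following from the SDE (\ref{eq:wat2}) and the ODE (\ref{eq:diff0}). You have supplied exactly the computation that line gestures at: the It\^o product rule applied to $U^{0}_{b,\vartheta,\cdot}\bigl(1+bB_\vartheta(Y_\cdot)\bigr)$, with the cross-variation vanishing because $U^{0}_{b,\vartheta,\cdot}$ is pathwise of bounded variation, and the $1/b$-drift cancelling by the resolvent identity $\Nto(\lambda+\Nto)^{-1}+\lambda(\lambda+\Nto)^{-1}=I$ applied with $\lambda=1$ on the $TX$ part and $\lambda=\cos(\vartheta)$ on the $N$ part, which indeed needs no commutation of $\Nto$ with $R^{0}_{\vartheta}$.
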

\begin{proof}
 By (\ref{eq:wat2}), (\ref{eq:diff0}), we get 
(\ref{eq:formid5}).
\end{proof}
\begin{remark}\label{Rito}
	In the right hand-side of (\ref{eq:formid5}), we could have replaced 
	the It\^o integrals by more classical Stratonovitch integrals. 
	However, in the proof of Theorem \ref{Tunif}, the estimates will 
	be established using the fact that It\^o integrals
	appear in (\ref{eq:formid5}).
\end{remark}

Now we will establish a crucial estimate.
\begin{thm}\label{Tunif}
For any $p>2 , M>0$, there exist $ C_{p,M}>0, C'>0$ such that 
for $0<b\le 1,\vartheta\in \left[0,\frac{\pi}{2}\right[$, then
\begin{equation}
   \left\Vert  \sup_{0\le t\le M} \left\vert U^{0}_{b,\vartheta,t}
   \right\vert
      \right \Vert_{p}
       \le
       C_{p,M}\exp\left(C'\left(\cos\left(\vartheta\right)\left\vert  
    Y_{0}^{TX}\right\vert^{2}+\left\vert  
    Y_{0}^{N}\right\vert^{2}\right)\right).
    \label{eq:prin18}
\end{equation}
\end{thm}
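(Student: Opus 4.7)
The plan is to upgrade the pointwise-in-$t$ estimate of Theorem \ref{Tfuest} to the uniform bound (\ref{eq:prin18}) by exploiting the semimartingale identity of Proposition \ref{Pformid}. Setting
$$I_t = 1 + b\bigl(1+N^{\LXN}_{-\vartheta}\bigr)^{-1}R^0_\vartheta(Y^{TX}_t) + b\bigl(\cos\vartheta+N^{\LXN}_{-\vartheta}\bigr)^{-1}R^0_\vartheta(Y^N_t),$$
formula (\ref{eq:formid5}) takes the form $U^0_{b,\vartheta,t}\,I_t = I_0 + A_t + M_t$, with $A_t$ of bounded variation and $M_t$ an It\^o martingale. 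The strategy is first to bound $\|\sup_{0\le t\le M} |U^0_{b,\vartheta,t}\,I_t|\|_p$, and then to multiply by $I_t^{-1}$.

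For the martingale part $M_t$, I would apply Burkholder--Davis--Gundy: its quadratic variation is dominated by $\int_0^t |U^0_{b,\vartheta,s}|^2\,(|Y^{TX}_s|^2 + \cos\vartheta\,|Y^N_s|^2)\,ds$, using (\ref{eq:est0}) together with the boundedness of $(1+N^{\LXN}_{-\vartheta})^{-1}$ and $(\cos\vartheta+N^{\LXN}_{-\vartheta})^{-1}$. Taking $L_{p/2}$ norms and applying H\"older reduces the task to controlling $\|U^0_{b,\vartheta,s}\|_{2p}$ uniformly in $s$, which is precisely what Theorem \ref{Tfuest} provides (with $2p$ in place of $p$), combined with the Gaussian moment bounds (\ref{eq:prin17}) on $Y_s$. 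This yields the desired $L_p$ bound on $\sup_t|M_t|$ with the correct exponential dependence on $Y_0$.

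The finite-variation part $A_t = \int_0^t U^0_{b,\vartheta,s}D_s\,ds$ carries the apparently divergent operator $-N^{\LXN}_{-\vartheta}/b^2$ inside $D_s$. However, this divergence is illusory: the quadratic product $R^0_\vartheta(Y_s)\bigl[(1+N^{\LXN}_{-\vartheta})^{-1}R^0_\vartheta(Y^{TX}) + (\cos\vartheta+N^{\LXN}_{-\vartheta})^{-1}R^0_\vartheta(Y^N)\bigr]$ appearing on the same line cancels it, this being the path-space incarnation of the compression identity of Proposition \ref{Pwondidbis}. To exploit the cancellation I would split $U^0_{b,\vartheta,t} = U^0_{b,\vartheta,t}\mathbf{P} + U^0_{b,\vartheta,t}\mathbf{P}^\perp$: the $\mathbf{P}^\perp$ component is already treated in $L_p$ by Theorem \ref{Tgres1}, while on $\mathbf P$ the identity $\mathbf P R^0_\vartheta(Y^{TX})\mathbf P = 0$ from (\ref{eq:nea1}) removes the most singular contribution, leaving only terms that are controlled by Theorem \ref{Tfuest} and the Ornstein--Uhlenbeck moment formulas of subsection \ref{subsec:caeu}.

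Finally, a Neumann series gives $I_t^{-1} = \sum_{k\ge 0}(1-I_t)^k$, with $\|I_t - 1\| \le Cb(|Y^{TX}_t| + |Y^N_t|)$. Although this norm is not uniformly small in $(b,Y)$, the quantity $\|I_t^{-1}\|_q$ is bounded for every $q < \infty$ and every $0 < b \le 1$ thanks to the Gaussian tails of $Y_t$, and a final application of H\"older to
$$\sup_{0\le t\le M}|U^0_{b,\vartheta,t}| \le \sup_{0\le t\le M}|U^0_{b,\vartheta,t}\,I_t|\cdot\sup_{0\le t\le M}|I_t^{-1}|$$
yields (\ref{eq:prin18}). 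The main obstacle is precisely the cancellation of the $-N^{\LXN}_{-\vartheta}/b^2$ drift in $A_t$: any estimate handling this term in operator norm produces a factor of $b^{-2}$ that destroys the conclusion, so the algebraic structure encoded in (\ref{eq:formid5}), and not merely its analytic consequences, must be used in the bound.
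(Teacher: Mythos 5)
Your decomposition and most of the ingredients are right: start from the semimartingale identity of Proposition \ref{Pformid}, split $U^0_{b,\vartheta,\cdot}$ into its $\mathbf P$ and $\mathbf P^\perp$ components, control $U^0_{b,\vartheta,\cdot}\mathbf P^\perp$ by Theorem \ref{Tgres1}, use BDG for the martingale part and Theorem \ref{Tfuest} plus the Ornstein--Uhlenbeck moment bounds for the drift. This matches the paper's proof closely up to the last step. But the last step in your plan does not work.

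The fatal gap is the inversion of $I_t$. You propose $\sup_t|U^0_{b,\vartheta,t}|\le\sup_t|U^0_{b,\vartheta,t}I_t|\cdot\sup_t|I_t^{-1}|$ and assert that $\bigl\Vert I_t^{-1}\bigr\Vert_q<\infty$ for all $q$ thanks to the Gaussian tails of $Y_t$. This is false. Using equation (\ref{eq:nea1}) one finds that the $\mathbf P$-compression of $I_t$ equals $1+b\cos^{3/2}(\vartheta)\,\widehat c\bigl(-i\ad(Y^N_t)\vert_{\overline{TX}}\bigr)$, a Hermitian perturbation of $1$ by an operator with eigenvalues of both signs and of magnitude comparable to $b\cos^{3/2}(\vartheta)\,\vert Y^N_t\vert$. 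For $Y^N_t$ in the (codimension-one) locus where the smallest eigenvalue equals $-1$, $I_t$ degenerates, and $\vert I_t^{-1}\vert$ blows up like the reciprocal of the distance to that locus. Against a Gaussian measure, $(1/\mathrm{dist})^q$ is integrable only for $q<1$, so $\bigl\Vert I_t^{-1}\bigr\Vert_q$ is infinite for every $q\ge 1$. The Neumann series for $I_t^{-1}$ does not even converge once $b\vert Y_t\vert\gtrsim 1$. No bound on $\sup_t\vert U^0_{b,\vartheta,t}I_t\vert$ can therefore give (\ref{eq:prin18}) by division.

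There is also a minor misreading of where the $-N^{\LXN}_{-\vartheta}/b^{2}$ term goes: it is not cancelled by the quadratic product inside the drift of (\ref{eq:formid5}); those two terms live at different scales ($1/b^{2}$ versus $O(1)$) and have no sign cancellation. It is killed outright by right-multiplying (\ref{eq:formid5}) by $\mathbf P$, since $N^{\LXN}_{-\vartheta}\mathbf P=0$. The paper does exactly this, obtaining equation (\ref{eq:formid7}). The correct replacement for your inversion step is then purely differential: using (\ref{eq:nea1}), the boundary term $B_{b,\vartheta,t}$ is rewritten as $\frac{b}{\cos\vartheta}U^0_{b,\vartheta,t}R^0_\vartheta(Y_t)\mathbf P+\cdots$, and the ODE (\ref{eq:diff0}) for $U^0_{b,\vartheta,\cdot}$ identifies this as $\frac{b^{2}}{\cos\vartheta}\frac{d}{dt}U^0_{b,\vartheta,t}\mathbf P+\cdots$. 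This turns (\ref{eq:formid7}) into the linear equation $\bigl(\frac{b^{2}}{\cos\vartheta}\frac{d}{dt}+1\bigr)U^0_{b,\vartheta,t}\mathbf P = C_{b,\vartheta,t}+\cdots$ (equation (\ref{eq:afb20})), which is integrated explicitly with a contracting kernel $e^{-(t-s)\cos\vartheta/b^{2}}$; all source terms on the right are then controllable by Theorems \ref{Tgres1} and \ref{Tfuest}. This passage from an algebraic inversion to an integrated first-order linear ODE with a dissipative term is precisely the missing idea, and it is what makes the argument robust as $b\to 0$ and $\vartheta\to\frac{\pi}{2}$.
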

\begin{proof}
    If we just consider $U^{0}_{b,\vartheta,\cdot}\mathbf{P}^{\perp}$, 
    our estimate follows from Theorem \ref{Tgres1}. We 
    should then prove the corresponding estimate for 
    $U^{0}_{b,\vartheta,\cdot}\mathbf{P}$.
    
    By equation (\ref{eq:formid5}) in Proposition \ref{Pformid}, we get 
an analogue of \cite[eq. (14.10.32)]{Bismut08b},
\begin{multline}
   U^{0}_{b,\vartheta,t}\Bigg( 
   1+b\left(1+N^{\Lambda\ac\left(T^{*}X \oplus 
   N^{*}\right)}_{-\vartheta}\right)^{-1}R^{0}_{\vartheta}\left(Y_{t}^{TX}\right)\\
   +b
   \left(\cos\left(\vartheta\right)+N^{\Lambda\ac\left(T^{*}X \oplus N^{*}\right)}_{-\vartheta}
   \right)^{-1}R^{0}_{\vartheta}\left(Y^{N}_{t}\right)\Biggr)\mathbf{P}\\
   = \Biggl( 1+b\left(1+N^{\Lambda\ac\left(T^{*}X \oplus N^{*}
   \right)}_{-\vartheta}\right)^{-1}R^{0}_{\vartheta}
   \left(Y_{0}^{TX}\right) \\
   +b
   \left(\cos\left(\vartheta\right)+N^{\Lambda\ac\left(T^{*}X \oplus N^{*}\right)}_{-\vartheta}\right)^{-1}R^{0}_{\vartheta}
   \left(Y^{N}_{0}\right)\Biggr) \mathbf{P}
    \\
   +
   \int_{0}^{t}U^{0}_{b,\vartheta,s}R^{0}_{\vartheta}\left(Y_{s}\right)
   \Biggl[\left(1+N^{\Lambda\ac\left(T^{*}X \oplus 
   N^{*}\right)}_{-\vartheta}\right)^{-1}R^{0}_{\vartheta}\left(Y^{TX}_{s}\right) \\
   +\left(\cos\left(\vartheta\right)+
   N^{\Lambda\ac\left(T^{*}X \oplus 
   N^{*}\right)}_{-\vartheta}\right)^{-1}R^{0}_{\vartheta}\left(Y^{N}_{s}\right)\Biggr]\mathbf{P}ds\\
   +\int_{0}^{t}U^{0}_{b,\vartheta,s}\Biggl[ \left(1+N^{\Lambda\ac\left(T^{*}X \oplus N^{*}\right)}_{-\vartheta}\right)  
   ^{-1}R^{0}_{\vartheta}\left(\delta 
   w^{TX}\right) \\
   +\left(\cos\left(\vartheta\right)+N^{\Lambda\ac\left(T^{*}X \oplus N^{*}\right)}_{-\vartheta}\right)^{-1}\cos^{1/2}
   \left(\vartheta\right)R^{0}_{\vartheta}\left( \delta w^{N}\right) 
   \Biggr] \mathbf{P}.
    \label{eq:formid7}
\end{multline}

By (\ref{eq:est0}) and by H\"{o}lder's inequality, for $p>1$, we get
\begin{multline}\label{eq:afb1}
 \int_{0}^{M}\Biggl\vert U^{0}_{b,\vartheta,s}R^{0}_{\vartheta}\left(Y_{s}\right)
   \Biggl[\left(1+N^{\Lambda\ac\left(T^{*}X \oplus 
   N^{*}\right)}_{-\vartheta}\right)^{-1}R^{0}_{\vartheta}\left(Y^{TX}_{s}\right) \\
   +\left(\cos\left(\vartheta\right)+
   N^{\Lambda\ac\left(T^{*}X \oplus 
   N^{*}\right)}_{-\vartheta}\right)^{-1}R^{0}_{\vartheta}\left(Y^{N}_{s}\right)\Biggr]\mathbf{P}\Biggr\vert ds \\
   \le C_{M,p}\Biggl[\int_{0}^{M}\left\vert  
   U^{0}_{b,\vartheta,s}\right\vert^{p}\left(\cos^{p}\left(\vartheta\right)\left\vert  
   Y^{TX}\right\vert^{2p}+\left\vert  
   Y^{N}\right\vert^{2p}\right)ds\Biggr]^{1/p}.
\end{multline}
In (\ref{eq:afb1}), equation (\ref{eq:est0}) has been used in particular to 
overcome the potentially singular term 
$\left(\cos\left(\vartheta\right)+N^{\Lambda\ac\left(T^{*}X 
\oplus N^{*}\right)}_{-\vartheta}\right)^{-1}$.
By (\ref{eq:afb1}), and by Cauchy-Schwarz, we obtain
\begin{multline}\label{eq:afb2}
E^{Q}\Biggl[\Biggl\{\int_{0}^{M}\Biggl\vert U^{0}_{b,\vartheta,s}R^{0}_{\vartheta}\left(Y_{s}\right)
   \Biggl[\left(1+N^{\Lambda\ac\left(T^{*}X \oplus 
   N^{*}\right)}_{-\vartheta}\right)^{-1}R^{0}_{\vartheta}\left(Y^{TX}_{s}\right) \\
   +\left(\cos\left(\vartheta\right)+
   N^{\Lambda\ac\left(T^{*}X \oplus 
   N^{*}\right)}_{-\vartheta}\right)^{-1}R^{0}_{\vartheta}\left(Y^{N}_{s}\right)\Biggr]\mathbf{P}\Biggr\vert ds
   \Biggr\}^{p}\Biggr]\\
   \le C_{M,p}\left\{E^{Q}\left[\int_{0}^{M}\left\vert  
   U^{0}_{b,\vartheta,s}\right\vert^{2p}ds\right]\right\}^{1/2}\\
   \left\{E^{Q}\left[\int_{0}^{M}\left(\cos^{2p}\left(\vartheta\right)\left\vert  
   Y^{TX}\right\vert^{4p}+\left\vert  
   Y^{N}\right\vert^{4p}\right)ds\right]\right\}^{1/2}.
\end{multline}
Using equation (\ref{eq:couac-1}) in Theorem \ref{Tfuest},  
(\ref{eq:prin17}),  and (\ref{eq:afb2}), we find that the estimate in 
(\ref{eq:afb2}) is compatible with (\ref{eq:prin18}).

The contribution of the last stochastic integral $I_{t}$ in the right-hand 
side of (\ref{eq:formid7}) can be estimated using an inequality by
Burkholder-Davis-Gundy \cite{BurkholderGundy70}, which asserts 
that if $f_{\cdot}$ is an adapted stochastic process such that 
$\int_{0}^{M}\left\vert  f\right\vert^{2}ds< + \infty $ a.s., for $p>1$
\begin{equation}\label{eq:prin18a1}
\left[E^{Q}\left[\sup_{0\le t\le M}\left\vert  \int_{0}^{t}\left\langle  f_{s},\delta 
w_{s}\right\rangle\right\vert^{p}\right]\right]^{1/p}\le
C_{p}\left[E^{Q}\left[\left\{\int_{0}^{M}\left\vert  
f_{s}\right\vert^{2}ds\right\}^{p/2}\right]\right]^{1/p}.
\end{equation}

By  (\ref{eq:est0}),  (\ref{eq:prin18a1}),  for $p>1$, we get
\begin{equation}\label{eq:prin19}
\left[E^{Q}\left[\sup_{0\le t\le M}\left\vert  
I_{t}\right\vert^{p}\right]\right]^{1/p}\le
C_{p}E^{Q}\left[\left\vert  \int_{0}^{M}\left\vert  
U^{0}_{b,\vartheta,s}\right\vert^{2}ds\right\vert^{p/2}\right]^{1/p}.
\end{equation}
By H\"{o}lder's inequality, for $p>2$, we get
\begin{equation}\label{eq:prin20}
\left\vert  \int_{0}^{M}\left\vert  
U^{0}_{b,\vartheta,s}\right\vert^{2}ds\right\vert^{p/2}\le M^{\left(p-2\right)/2}
\int_{0}^{M}\left\vert  U_{b,\vartheta,s}\right\vert^{p}ds. 
\end{equation}
Using equation (\ref{eq:couac-1}) in Theorem \ref{Tfuest}, (\ref{eq:prin19}), and (\ref{eq:prin20}), 
the contribution of $I_{\cdot}$ to the estimation of the right-hand 
side of (\ref{eq:formid7}) is still compatible with (\ref{eq:prin18}).

The most annoying term in (\ref{eq:formid7}) is the contribution of 
\begin{multline}\label{eq:prin21}
A_{b,\vartheta,t}=U^{0}_{b,\vartheta,t}b \Biggl( \left(1+N^{\Lambda\ac\left(T^{*}X \oplus N^{*}\right)}_{-\vartheta}
\right)^{-1}R^{0}_{\vartheta}\left(Y_{t}^{TX}\right) \\
+
   \left(\cos\left(\vartheta\right)+N^{\Lambda\ac\left(T^{*}X \oplus N^{*}\right)}_{-\vartheta}
   \right)^{-1}R^{0}_{\vartheta}\left(Y^{N}_{t}\right)\Biggr)-b\Biggl( \left(1+N^{\Lambda\ac\left(T^{*}X \oplus N^{*}\right)}_{-\vartheta}
\right)^{-1}R^{0}_{\vartheta}\left(Y_{0}^{TX}\right) \\
+
   \left(\cos\left(\vartheta\right)+N^{\Lambda\ac\left(T^{*}X \oplus N^{*}\right)}_{-\vartheta}
   \right)^{-1}R^{0}_{\vartheta}\left(Y^{N}_{0}\right)\Biggr)\mathbf{P}.
\end{multline}
Set
\begin{align}\label{eq:prin22}
    B_{b,\vartheta,t}= & \,U^{0}_{b,\vartheta,t}\mathbf{P}b \Biggl( 
\left(1+N^{\Lambda\ac\left(T^{*}X \oplus N^{*}\right)}
_{-\vartheta}\right)^{-1}
R^{0}_{\vartheta}\left(Y_{t}^{TX}\right) \nonumber  \\
&+\left(\cos\left(\vartheta\right) 
    +N^{\Lambda\ac\left(T^{*}X \oplus N^{*}\right)}_{-\vartheta}
   \right)^{-1}R^{0}_{\vartheta}\left(Y^{N}_{t}\right)\Biggr)\mathbf{P}\nonumber \\
   &-\mathbf{P}b \Biggl( 
\left(1+N^{\Lambda\ac\left(T^{*}X \oplus N^{*}\right)}_{-\vartheta}\right)^{-1}
R^{0}_{\vartheta}\left(Y_{0}^{TX}\right)  \nonumber \\
&+\left(\cos\left(\vartheta\right) 
    +N^{\Lambda\ac\left(T^{*}X \oplus N^{*}\right)}_{-\vartheta}
   \right)^{-1}R^{0}_{\vartheta}\left(Y^{N}_{0}\right)\Biggr)\mathbf{P},\\
B^{\perp}_{b,\vartheta,t}=& \, U^{0}_{b,\vartheta,t}\mathbf{P}^{\perp}b 
\Biggl( \left(1+N^{\Lambda\ac\left(T^{*}X \oplus N^{*}\right)}
_{-\vartheta}\right)^{-1}
R^{0}_{\vartheta}\left(Y_{t}^{TX}\right) \nonumber 
\\
&+\left(\cos\left(\vartheta\right)+N^{\Lambda\ac\left(T^{*}X \oplus N^{*}\right)}_{-\vartheta}
   \right)^{-1}R^{0}_{\vartheta}\left(Y^{N}_{t}\right)\Biggr)\mathbf{P} \nonumber \\
   &-\mathbf{P}^{\perp}
 b 
\Biggl( \left(1+N^{\Lambda\ac\left(T^{*}X \oplus N^{*}\right)}_{-\vartheta}\right)^{-1}
R^{0}_{\vartheta}\left(Y_{0}^{TX}\right) \nonumber 
\\
&+\left(\cos\left(\vartheta\right)+N^{\Lambda\ac\left(T^{*}X \oplus N^{*}
\right)}_{-\vartheta}
   \right)^{-1}R^{0}_{\vartheta}\left(Y^{N}_{0}\right)\Biggr)\mathbf{P}. \nonumber 
\end{align}
Then
\begin{equation}\label{eq:orin22x1}
A_{b,\vartheta,t}=B_{b,\vartheta,t}+B^{\perp}_{b,\vartheta,t}.
\end{equation}
First we will control $B^{\perp}_{\bt,\cdot}$, and later 
$B_{\bt,\cdot}$.\\
\noindent $\bullet$ \underline{\textit{Control of} $B^{\perp}_{\bt,\cdot}$}\\
By (\ref{eq:est0}),  we get
\begin{multline}\label{eq:afb4}
b\left\vert \left(1+N^{\Lambda\ac\left(T^{*}X \oplus N^{*}\right)}_{-\vartheta}\right)^{-1}R^{0}_{\vartheta}
\left(Y^{TX}\right)
+\left(\cos\left(\vartheta\right)+N^{\Lambda\ac\left(T^{*}X \oplus N^{*}\right)}_{-\vartheta}
   \right)^{-1}R^{0}_{\vartheta}\left(Y^{N}\right) \right\vert\\
   \le C\left(b\cos\left(\vartheta\right)\left\vert  
   Y^{TX}\right\vert+\frac{b}{\cos^{1/2}\left(\vartheta\right)}\left\vert  Y^{N}\right\vert\right).
\end{multline}
By Theorem \ref{Tgres1} and by
(\ref{eq:afb4}), for $p>2$,  we get
\begin{multline}\label{eq:afb5}
\Biggl\Vert  \sup_{0\le t\le 
M}\Biggl\vert\left( U^{0}_{b,\vartheta,t}-\exp\left(-tN_{-\vartheta}^{\Lambda\ac\left(T^{*}X \oplus N\right)}/b^{2}\right) \right) 
\mathbf{P}^{\perp} \\
b \Biggl( \left(1+N^{\Lambda\ac\left(T^{*}X \oplus N^{*}\right)}_{-\vartheta}
\right)^{-1}R^{0}_{\vartheta}\left(Y_{t}^{TX}\right) \\
+
   \left(\cos\left(\vartheta\right)+N^{\Lambda\ac\left(T^{*}X \oplus N^{*}\right)}_{-\vartheta}
   \right)^{-1}R^{0}_{\vartheta}\left(Y^{N}_{t}\right)\Biggr)\mathbf{P}\Biggr\vert\Biggr\Vert_{p}\\
   \le 
   C_{2p,M}\exp\left(C'\left(\cos\left(\vartheta\right)\left\vert  
   Y_{0}^{TX}\right\vert^{2}+\left\vert  
   Y^{N}_{0}\right\vert^{2}\right)\right)\\
   \Biggl(b\cos\left(\vartheta\right)\left\Vert  \sup_{0\le t\le 
   M}\left\vert  
   Y^{TX}_{t}\right\vert\right\Vert_{2p}\\
   +\inf\left(\left(b/\cos^{1/2}
   \left(\vartheta\right)\right)^{\left(2p-1\right)/p},1\right)\left\Vert  
   \sup_{0\le t\le M}\left\vert  
   Y^{N}_{t}\right\vert\right\Vert_{2p}\Biggr).
   \end{multline}
   By  (\ref{eq:lom7}), for $0<b\le 1$, we get
   \begin{equation}\label{eq:afb6}
\left\Vert  \sup_{0\le t\le M}\left\vert  
bY^{TX}_{t}\right\vert\right\Vert_{2p}\le C\left(1+\left\vert  
Y_{0}^{TX}\right\vert\right).
\end{equation}
The same argument shows that if $b/\cos^{1/2}\left(\vartheta\right)\le 1$,  then
\begin{equation}\label{eq:afb7}
\frac{b}{\cos^{1/2}\left(\vartheta\right)}
\left\Vert  \sup_{0\le t\le M}\left\vert  
Y_{t}^{N}\right\vert\right\Vert_{2p}\le
C\left(1+\left\vert  Y_{0}^{N}\right\vert\right).
\end{equation}
Since $p>2$, then $\left(2p-2\right)/p>1$, so that in (\ref{eq:afb7}), 
$\frac{b}{\cos^{1/2}\left(\vartheta\right)}$ can be replaced by 
$\left(\frac{b}{\cos^{1/2}\left(\vartheta\right)}\right)^{\left(2p-1\right)/p}$.
If $b/\cos^{1/2}\left(\vartheta\right)>1$, by (\ref{eq:lom7}), we 
get
\begin{equation}\label{eq:afb8}
\left\Vert  \sup_{0\le t\le M}\left\vert  
Y^{N}_{t}\right\vert\right\Vert_{2p}\le C\left(1+\left\vert  
Y_{0}^{N}\right\vert\right).
\end{equation}
By (\ref{eq:afb6})--(\ref{eq:afb8}),  we can give an upper bound for 
the left-hand side of (\ref{eq:afb5}) which is compatible with 
(\ref{eq:prin18}).

 By (\ref{eq:afb4}), we get
 \begin{multline}\label{eq:afb9}
\Biggl\Vert  \sup_{0\le t\le 
M}\Biggl\vert \left( \exp\left(-tN_{-\vartheta}^{\Lambda\ac\left(T^{*}X \oplus N\right)}/b^{2}\right)
-1 \right) 
\mathbf{P}^{\perp} \\
b \Biggl( \left(1+N^{\Lambda\ac\left(T^{*}X \oplus N^{*}\right)}_{-\vartheta}
\right)^{-1}R^{0}_{\vartheta}\left(Y_{t}^{TX}\right) \\
+
   \left(\cos\left(\vartheta\right)+N^{\Lambda\ac\left(T^{*}X \oplus N^{*}\right)}_{-\vartheta}
   \right)^{-1}R^{0}_{\vartheta}\left(Y^{N}_{t}\right)\Biggr)\mathbf{P}\Biggr\vert\Biggr\Vert_{p}\\
   \le 
   C
   \Biggl(b\cos\left(\vartheta\right)\left\Vert  \sup_{0\le t\le 
   M}\left\vert  
   Y^{TX}_{t}\right\vert\right\Vert_{p}+\Biggl( \left( 1-
   \exp\left(-\cos\left(\vartheta\right)nM/b^{2}\right) \right) 
   \frac{b}{\cos^{1/2}\left(\vartheta\right)}\\
   +b\cos^{1/2}\left(\vartheta\right) \Biggr) 
   \left\Vert  
   \sup_{0\le t\le M}\left\vert  
   Y^{N}_{t}\right\vert\right\Vert_{p}\Biggr)\left\vert  f\right\vert.
\end{multline}
The last two terms in the right-hand side of (\ref{eq:afb9}) 
correspond to the cases where $N^{\Lambda\ac\left(T^{*}X\right)}=0$ 
and $N^{\Lambda\ac\left(T^{*}X\right)}>0$.
If $b/\cos^{1/2}\left(\vartheta\right)\le 1$, by (\ref{eq:afb6}), 
(\ref{eq:afb7}), we still get an upper bound for the left-hand side 
of (\ref{eq:afb9}) that is compatible with (\ref{eq:prin18}). If 
$0<b\le 1,\frac{b}{\cos^{1/2}\left(\vartheta\right)}>1$, we get
\begin{equation}\label{eq:afb10}
\left( 1-
   \exp\left(-\cos\left(\vartheta\right)nM/b^{2}\right) \right) 
   \frac{b}{\cos^{1/2}\left(\vartheta\right)}  
   \le nM\frac{\cos^{1/2}\left(\vartheta\right)}{b}\le nM.
\end{equation}
Using (\ref{eq:afb6}), (\ref{eq:afb8}), and (\ref{eq:afb10}), we still obtain a good upper 
bound for the left-hand side of (\ref{eq:afb9}). 

By (\ref{eq:est0}), we get 
\begin{multline}\label{eq:afb11}
\Biggl\vert  
\mathbf{P^{\perp}}\Biggl( b\left(1+N^{\Lambda\ac\left(T^{*}X \oplus 
N^{*}\right)}_{-\vartheta}\right)^{-1}(R^{0}_{\vartheta}\left(Y^{TX}_{t}-Y^{TX}_{0}\right)
\\
+b\left(\cos\left(\vartheta\right)+N^{\Lambda\ac\left(T^{*}X 
\oplus 
N^{*}\right)}_{-\vartheta}\right)^{-1}R^{0}_{\vartheta}\left(Y^{N}_{t}-Y^{N}_{0}\right)
\Biggr)\mathbf{P}\Biggr\vert\\
\le C b\cos\left(\vartheta\right)\left\vert Y^{TX}_{t}-Y_{0}^{TX}\right\vert+C\frac{b}{\cos^{1/2}
\left(\vartheta\right)}
\left\vert   
Y^{N}_{t}-Y^{N}_{0} \right\vert.
\end{multline}

By (\ref{eq:afb6}), the first term in the 
right-hand side of (\ref{eq:afb11}) can be dominated by the 
right-hand side of (\ref{eq:prin18}).
If $ b/\cos^{1/2}\left(\vartheta\right)\le1$, using (\ref{eq:afb7}), 
we can dominate 
properly the 
contribution of the second term in the right-hand side of (\ref{eq:afb11}). 
Let $Y^{N,0}_{\cdot}$ be 
the solution of (\ref{eq:wat2}) with $Y^{N,0}_{0}=0$.  Then
\begin{equation}\label{eq:afb12}
Y_{t}^{N}-Y^{N}_{0}=\left(e^{-\cos\left(\vartheta\right)t/b^{2}}-1\right)Y^{N}_{0}+Y^{N,0}_{t}.
\end{equation}
Then
\begin{multline}\label{eq:afb13}
\frac{b}{\cos^{1/2}\left(\vartheta\right)} \left\Vert  \sup_{0\le t\le M}\left\vert  
\left( Y^{N}_{t}-Y^{N}_{0} 
\right) \right\vert\right\Vert_{p}\le
\frac{b}{\cos^{1/2}\left(\vartheta\right)}\left(1-e^{-\cos\left(\vartheta\right)M/b^{2}}\right)\left\vert  Y^{N}_{0}\right\vert\\
+ \frac{b}{\cos^{1/2}\left(\vartheta\right)} 
\left\Vert  \sup_{0\le t\le M}\left\vert  
Y^{N,0}_{t}\right\vert\right\Vert_{p}.
\end{multline}
Also
\begin{equation}\label{eq:afb14}
\frac{b}{\cos^{1/2}\left(\vartheta\right)}\left(1-e^{-\cos\left(\vartheta\right)M/b^{2}}\right)\le 
\frac{\cos^{1/2}\left(\vartheta\right)}{b}M.
\end{equation}
If $b/\cos^{1/2}\left(\vartheta\right)>1$, the right-hand side of 
(\ref{eq:afb14}) is dominated by $M$.
By (\ref{eq:lom7}), when $b/\cos^{1/2}\left(\vartheta\right)>1$, we get
\begin{equation}\label{eq:afb15}
\frac{b}{\cos^{1/2}\left(\vartheta\right)} \left\Vert  \sup_{0\le t\le 
M}\left\vert  Y^{N,0}_{t}\right\vert\right\Vert_{p}\le C.
\end{equation}
By (\ref{eq:afb11})--(\ref{eq:afb15}), we obtain an upper bound for 
(\ref{eq:afb11}) that is compatible with (\ref{eq:prin18}).

By (\ref{eq:afb5})--(\ref{eq:afb15}),  we find that $\left\Vert\sup_{0\le t\le 
M}\left\vert  B^{\perp}_{\bt,t}\right\vert\right\Vert_{p}$ is dominated by 
an expression like the right-hand side of (\ref{eq:prin18}). \\
\noindent $\bullet$ \underline{\textit{Control of} $B_{\bt,\cdot}$}\\
Set
\begin{equation}\label{eq:afb16}
C_{b,\vartheta,t}=U^{0}_{b,\vartheta,t}
\mathbf{P}+B_{b,\vartheta,t}.
\end{equation}
It follows from equation (\ref{eq:formid7}) and from the above that
\begin{equation}\label{eq:afb16x1}
\left\Vert  \sup_{0\le t\le M}\left\vert  
C_{b,\vartheta,t}\right\vert\right\Vert_{p}\le
C_{p,M}\exp\left(C'\left(\cos\left(\vartheta\right)\left\vert  
Y_{0}^{TX}\right\vert^{2}+\left\vert  
Y_{0}^{N}\right\vert^{2}\right)\right).
\end{equation}

Note that
\begin{multline}\label{eq:prin24}
B_{b,\vartheta,t}=U^{0}_{b,\vartheta,t}\mathbf{P}b \left( 
R^{0}_{\vartheta}\left(Y_{t}^{TX}\right)+
   \frac{R^{0}_{\vartheta}\left(Y^{N}_{t}\right)}{\cos\left(\vartheta\right)}\right)\mathbf{P}\\
   -\mathbf{P}b \left( 
R^{0}_{\vartheta}\left(Y_{0}^{TX}\right)+
   \frac{R^{0}_{\vartheta}\left(Y^{N}_{0}\right)}{\cos\left(\vartheta\right)}\right)\mathbf{P}.
\end{multline}
Using equation (\ref{eq:nea1}) in Proposition \ref{Pproj}, we can 
rewrite (\ref{eq:prin24}) in the form,
\begin{equation}\label{eq:afb17}
B_{b,\vartheta,t}=U^{0}_{b,\vartheta,t}\frac{b}{\cos\left(\vartheta\right)}\mathbf{P}R^{0}_{\vartheta}\left(Y_{t}\right)\mathbf{P}
-\frac{b}{\cos\left(\vartheta\right)}\mathbf{P}R^{0}_{\vartheta}\left(Y_{0}\right)\mathbf{P}.
\end{equation}
We rewrite (\ref{eq:afb17}) in the form
\begin{equation}\label{eq:afb18}
B_{b,\vartheta,t}=\frac{b}{\cos\left(\vartheta\right)}U^{0}_{b,\vartheta,t}R^{0}_{\vartheta}\left(Y_{t}\right)\mathbf{P}-
U^{0}_{b,\vartheta,t}\mathbf{P}^{\perp}\frac{b}{\cos\left(\vartheta\right)}R^{0}_{\vartheta}\left(Y_{t}\right)\mathbf{P}-
\frac{b}{\cos\left(\vartheta\right)}\mathbf{P}R^{0}_{\vartheta}\left(Y_{0}\right)\mathbf{P}.
\end{equation}
By combining (\ref{eq:diff0}) and (\ref{eq:afb18}), we obtain
\begin{equation}\label{eq:afb19}
B_{b,\vartheta,t}=\frac{b^{2}}{\cos\left(\vartheta\right)}\frac{d}{dt}U^{0}_{b,\vartheta,t}\mathbf{P}
-U^{0}_{b,\vartheta,t}\mathbf{P}^{\perp}\frac{b}{\cos\left(\vartheta\right)}R^{0}_{\vartheta}\left(Y_{t}\right)
\mathbf{P}-\frac{b}{\cos\left(\vartheta\right)}\mathbf{P}R_{\vartheta}^{0}\left(Y_{0}\right)\mathbf{P}.
\end{equation}

By (\ref{eq:afb16}), (\ref{eq:afb19}), we get
\begin{equation}\label{eq:afb20}
\left( \frac{b^{2}}{\cos\left(\vartheta\right)}\frac{d}{dt}+1 \right) 
U_{b,\vartheta,t}^{0}\mathbf{P}=C_{b,\vartheta,t}+U^{0}_{b,\vartheta,t}\mathbf{P}^{\perp}\frac{b}{\cos\left(\vartheta\right)}
R^{0}_{\vartheta}\left(Y_{t}\right)\mathbf{P}+\frac{b}{\cos\left(\vartheta\right)}\mathbf{P}R^{0}_{\vartheta}\left(Y_{0}\right)\mathbf{P}.
\end{equation}
By (\ref{eq:afb20}), we obtain
\begin{multline}\label{eq:afb21}
U^{0}_{b,\vartheta,t}\mathbf{P}=\exp\left(-\cos\left(\vartheta\right)t/b^{2}\right)\mathbf{P}
+\int_{0}^{t}\exp\left(-\left(t-s\right)\cos\left(\vartheta\right)/b^{2}\right) \\
\frac{\cos\left(\vartheta\right)}{b^{2}}\left(C_{b,\vartheta,s}
+U^{0}_{b,\vartheta,s}\mathbf{P}^{\perp}\frac{b}{\cos\left(\vartheta\right)}
R^{0}_{\vartheta}\left(Y_{s}\right)\mathbf{P}
+\frac{b}{\cos\left(\vartheta\right)}\mathbf{P}R^{0}_{\vartheta}\left(Y_{0}\right)\mathbf{P}\right)
ds.
\end{multline}
For $0\le t\le M$, we have the obvious inequalities,
\begin{equation}\label{eq:afb23}
\left\vert  
\int_{0}^{t}\exp\left(-\left(t-s\right)\cos\left(\vartheta\right)/b^{2}\right)\frac{\cos\left(\vartheta\right)}{b^{2}}
C_{b,\vartheta,s}ds\right\vert\le \sup_{0\le t\le M}\left\vert  
C_{b,\vartheta,t}\right\vert.
\end{equation}
By (\ref{eq:afb16x1}), (\ref{eq:afb23}), we get
\begin{multline}\label{eq:afb23x1}
\left\Vert \sup_{0\le t\le M}\left\vert  \int_{0}^{t}\exp\left(-\left(t-s\right)\cos\left(\vartheta\right)/b^{2}\right)\frac{\cos\left(\vartheta\right)}{b^{2}}
C_{b,\vartheta,s}ds\right\vert \right\Vert_{p}\\
\le C_{p,M}\exp\left(C'\left(\cos\left(\vartheta\right)\left\vert  
Y_{0}^{TX}\right\vert^{2}+\left\vert  
Y_{0}^{N}\right\vert^{2}\right)\right).
\end{multline}

For $0\le t\le M$, by (\ref{eq:est0}), we have the inequality,
\begin{multline}\label{eq:afb24}
\left\vert  
\int_{0}^{t}\exp\left(-\left(t-s\right)\cos\left(\vartheta\right)/b^{2}\right)
\frac{\cos\left(\vartheta\right)}{b^{2}}
U^{0}_{b,\vartheta,s}\mathbf{P}^{\perp}\frac{b}{\cos\left(\vartheta\right)}
R^{0}_{\vartheta}\left(Y_{s}\right)\mathbf{P}ds\right\vert \\
\le \sup_{0\le s\le M}\left\vert  
U^{0}_{b,\vartheta,s}\mathbf{P}^{\perp}\right\vert 
\left(1-e^{-M\cos\left(\vartheta\right)/b^{2}}\right) \\
\left( 
\sup_{0\le s\le M}\left\vert bY^{TX}_{s} \right\vert+
\frac{b}{\cos^{1/2}\left(\vartheta\right)}\sup_{0\le s\le M}\left\vert  
Y^{N}_{s}\right\vert \right) .
\end{multline}
Using the notation in (\ref{eq:lom7}), we get
\begin{align}\label{eq:afb24x1}
&\sup_{0\le s\le M}\left\vert  bY^{TX}_{s}\right\vert\le \left\vert  
bY^{TX}_{0}\right\vert+2\sup_{0\le s\le M}\left\vert  
w^{TX}_{s}\right\vert,\\
&\sup_{0\le s\le M}\left\vert  \frac{b}{\cos^{1/2}(\vartheta)}Y^{N}_{s}\right\vert\le \left\vert  
\frac{b}{\cos^{1/2}(\vartheta)}Y^{N}_{0}\right\vert+2\sup_{0\le s\le M}\left\vert  
w^{N}_{s}\right\vert. \nonumber 
\end{align}
By (\ref{eq:afb24x1}), we obtain
\begin{align}\label{eq:afb24x2}
&\left(1-e^{-M\ct/b^{2}}\right)\sup_{0\le s\le M}\left\vert  bY^{TX}_{s}\right\vert\le 
C\cos^{1/2}\left(\vartheta\right)\left\vert  
Y^{TX}_{0}\right\vert+2\sup_{0\le s\le M}\left\vert  
w^{TX}_{s}\right\vert,\\
&\left(1-e^{-M\ct/b^{2}}\right)\sup_{0\le s\le M}\left\vert \frac{ 
b}{\cos^{1/2}(\vartheta)}Y^{N}_{s}\right\vert\le 
C\left\vert  
Y^{N}_{0}\right\vert+2\sup_{0\le s\le M}\left\vert  
w^{N}_{s}\right\vert. \nonumber 
\end{align}
Using Theorem \ref{Tgres1},  and equations (\ref{eq:afb24}), 
(\ref{eq:afb24x2}),  we obtain\begin{multline}\label{eq:afb25}
\left\Vert\sup_{0\le t\le M}\left\vert  
\int_{0}^{t}\exp\left(-\left(t-s\right)\cos\left(\vartheta\right)/b^{2}\right)
\frac{\cos\left(\vartheta\right)}{b^{2}}
U^{0}_{b,\vartheta,s}\mathbf{P}^{\perp}\frac{b}{\cos\left(\vartheta\right)}
R^{0}_{\vartheta}\left(Y_{s}\right)\mathbf{P}ds\right\vert   
\right\Vert_{p}\\
\le 
C_{p,M}\exp\left(C'\left(\cos\left(\vartheta\right)\left\vert  
Y^{TX}\right\vert^{2}+\left\vert  
Y_{0}^{N}\right\vert^{2}\right)\right).
\end{multline}

By Proposition \ref{Pproj}, we get
\begin{multline}\label{eq:afb26}
\int_{0}^{t}\exp\left(-\left(t-s\right)\cos\left(\vartheta\right)/b^{2}\right)\frac{\cos\left(\vartheta\right)}{b^{2}}
\frac{b}{\cos\left(\vartheta\right)}\mathbf{P}
R^{0}_{\vartheta}\left(Y_{0}\right)\mathbf{P}ds\\
=\left(1-e^{-t\cos\left(\vartheta\right)/b^{2}}\right)b\cos^{3/2}\left(\vartheta\right)
\widehat{c}\left(-i\ad\left(Y^{N}_{0}\right)\vert_{\overline{TX}}\right)\mathbf{P}.
\end{multline}
By (\ref{eq:afb26}), we deduce that for $0<b\le 1$, 
\begin{equation}\label{eq:afb27}
\left\vert  \int_{0}^{t}\exp\left(-\left(t-s\right)\cos\left(\vartheta\right/b^{2}\right)\frac{\cos\left(\vartheta\right)}{b^{2}}
\frac{b}{\cos\left(\vartheta\right)}\mathbf{P}
R^{0}_{\vartheta}\left(Y_{0}\right)\mathbf{P}ds\right\vert 
\le C\left\vert  Y_{0}^{N}\right\vert.
\end{equation}

By (\ref{eq:afb21}), (\ref{eq:afb23x1}), (\ref{eq:afb25}), and 
(\ref{eq:afb27}), we deduce that
\begin{equation}\label{eq:afb28}
\left\Vert \sup_{0\le t\le M}\left\vert  
U^{0}_{b,\vartheta,t}\mathbf{P}\right\vert \right\Vert_{p}\le
C_{p,M}\exp\left(C'\left(\cos\left(\vartheta\right)\left\vert  
Y_{0}^{TX}\right\vert^{2}+\left\vert  
Y_{0}^{N}\right\vert^{2}\right)\right).
\end{equation}

By (\ref{eq:afb16}), (\ref{eq:afb16x1}), and (\ref{eq:afb28}), $\left\Vert\sup_{0\le t\le 
M}\left\vert  B_{\bt,t}\right\vert\right\Vert_{p}$ is dominated by 
an expression like the right-hand side of (\ref{eq:prin18}).

This concludes the proof of equation (\ref{eq:prin18}), and of our 
theorem.
\end{proof}
\subsection{The limit of $U^{0}_{b,\vartheta,\cdot}$ as $b\to 0$}
\label{subsec:limu}%
Recall that 
\index{StY@$\overline{S}^{0}_{\vartheta}\left(Y\right)$}%
$\overline{S}^{0}_{\vartheta}\left(Y\right)$ was defined in 
Definition \ref{DSbis}.
By equation (\ref{eq:qsic7}) in Proposition \ref{Pidnew}, we get
\begin{equation}\label{eq:bebe1}
\left\vert  \overline{S}^{0}_{\vartheta}\left(Y\right)\right\vert\le 
C\left(\cos^{2}\left(\vartheta\right)\left\vert  
Y^{TX}\right\vert^{2}+\left\vert  Y^{N}\right\vert^{2}\right).
\end{equation}

Recall that 
\index{Hbt@$H_{b,\vartheta,\cdot}$}%
$H_{b,\vartheta,\cdot}$ was defined in equation 
(\ref{eq:nea3}). Using (\ref{eq:wat2}), (\ref{eq:nea3}), we conclude that
\begin{multline}\label{eq:spi1}
\frac{b^{2}}{\cos\left(\vartheta\right)}\ddot H_{b,\vartheta}+\dot 
H_{b,\vartheta} \\
=H_{b,\vartheta}\left(-\cos^{4}\left(\vartheta\right)
\widehat{c}\left( \ad\left(Y^{N}\right)\vert_{\overline{TX}} \right) 
^{2}
+\cos^{2}\left(\vartheta\right)\widehat{c}\left(\ad\left(-i\dot w^{N}\right)\vert_{\overline{TX}}\right)
\right).
\end{multline}
Equations (\ref{eq:hope5xy1}) and (\ref{eq:spi1}) are intimately 
related.

By (\ref{eq:nea3}), we get
\begin{align}\label{eq:spi2}
&\frac{d H_{b,\vartheta}^{-1}}{ds}=\frac{\cos^{5/2}\left(\vartheta\right)}{b}\widehat{c}\left(i\ad\left(Y^{N}\right)
\vert_{\overline{TX}}\right)H^{-1}_{b,\vartheta},
&H^{-1}_{b,\vartheta,0}=1.
\end{align}
By (\ref{eq:wat2}), (\ref{eq:spi2}), we conclude that
\begin{multline}\label{eq:spi3}
\frac{b^{2}}{\ct}\ddot H^{-1}_{b,\vartheta}+\dot 
H^{-1}_{b,\vartheta} \\
=\left( -\cos^{4}\left(\vartheta\right)
\widehat{c}\left( \ad\left(Y^{N}\right)\vert_{\overline{TX}} \right) 
^{2}
-\cos^{2}\left(\vartheta\right)\widehat{c}\left(\ad\left(-i\dot 
w^{N}\right)\vert_{\overline{TX}}\right)\right) 
H^{-1}_{b,\vartheta}.
\end{multline}
\begin{defin}\label{Dkbt}
Put
\index{Lbt@$L_{\bt,t}$}%
   \begin{equation}\label{eq:mira0}
L_{\bt,t}=\exp\left(\int_{0}^{t}\overline{S}^{0}_{\vartheta}\left(Y\right)ds\right)H_{\bt,t}.
\end{equation}
\end{defin}

By  equation (\ref{eq:rot7ay1}) in Proposition  \ref{PMehla}, in which $\beta$ is replaced by $\beta 
b$, and $t$ by $t/b^{2}$, given 
$\beta>0$, if $b>0$ is small enough so that $\beta b\le 1$, then
\begin{equation}\label{eq:mira0zz1}
E^{Q}\left[\exp\left(\frac{\beta^{2}}{2}\int_{0}^{t}\left\vert  
Y^{TX}_{s}\right\vert^{2}ds\right)\right]\le 
\exp\left(\frac{\beta^{2}}{2}\left(mt+b^{2}\left\vert  
Y_{0}^{TX}\right\vert^{2}\right)\right).
\end{equation}
Under the above conditions, by (\ref{eq:mira0zz1}), we get
\begin{equation}\label{eq:mira0zzz1}
E^{Q}\left[\exp\left(\frac{\beta^{2}}{2}\int_{0}^{t}\left\vert  
Y^{TX}_{s}\right\vert^{2}ds\right)\right]\le 
\exp\left(\frac{1}{2}\left(\beta^{2}mt+\left\vert  
Y_{0}^{TX}\right\vert^{2}\right)\right).
\end{equation}
Similarly, given $\vartheta\in \left[0,\frac{\pi}{2}\right[, 
\beta>0$,  for $b>0$ small enough so that $\beta 
b/\cos^{1/2}\left(\vartheta\right)\le 1$, then
\begin{equation}\label{eq:mira0zz2}
E^{Q}\left[\exp\left(\frac{\beta^{2}}{2}\int_{0}^{t}\left\vert  
Y^{N}_{s}\right\vert^{2}ds\right)\right]\le 
\exp\left(\frac{\beta^{2}}{2}\left(mt+\frac{b^{2}}{\ct}\left\vert  
Y_{0}^{N}\right\vert^{2}\right)\right).
\end{equation}
Under the above conditions, by (\ref{eq:mira0zz2}), we deduce that
\begin{equation}\label{eq:mira0zz3}
E^{Q}\left[\exp\left(\frac{\beta^{2}}{2}\int_{0}^{t}\left\vert  
Y^{N}_{s}\right\vert^{2}ds\right)\right]\le 
\exp\left(\frac{1}{2}\left( \beta^{2}mt+\left\vert  
Y_{0}^{N}\right\vert^{2}\right)\right).
\end{equation}

We fix $\vartheta\in \left[0,\frac{\pi}{2}\right[, 
Y_{0}^{TX},Y_{0}^{N}$. 
By Theorem  \ref{Tpou}, and by (\ref{eq:bebe1}),  (\ref{eq:mira0}),  
(\ref{eq:mira0zzz1}),  and
(\ref{eq:mira0zz3}),   we conclude that given $p\ge 
1$, for $b>0$ small enough, $\sup_{0\le t\le M}\left\vert  
L_{\bt,t}\right\vert$ and $\sup_{0\le t\le M}\left\vert  
L_{\bt,t}^{-1}\right\vert$ are uniformly bounded in $L_{p}$.

Recall that the constant
\index{d0t@$\pmb \delta^{0}_{\vartheta}$}%
$\pmb \delta^{0}_{\vartheta}$ was defined in (\ref{eq:mir-1}), and is 
given by (\ref{eq:rob3}). Also
\index{H0t@ $H_{0,\vartheta,\cdot}$}%
 $H_{0,\vartheta,\cdot}$
 was defined in equation 
(\ref{eq:mir0}).

Now we establish an extension of \cite[Theorem 14.10.6]{Bismut08b}.
\begin{thm}\label{TlimV}
Given $\vartheta\in\left[0,\frac{\pi}{2}\right[, M>0$, as $b\to 0$, 
$U^{0}_{b,\vartheta,\cdot}\mathbf{P}$ converges uniformly  on 
$\left[0,M\right]$ to $\exp\left(\pmb 
\delta^{0}_{\vartheta}\cdot\right)H_{0,\vartheta,\cdot}\mathbf{P}$ in probability. 
\end{thm}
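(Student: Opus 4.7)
My plan is to use a two-step comparison through the auxiliary process $L_{\bt,t}=\exp(\int_0^t\overline{S}^0_\vartheta(Y_s)\,ds)H_{\bt,t}$ of Definition \ref{Dkbt}, in which the two factors commute because $\overline{S}^0_\vartheta$ is scalar by Definition \ref{DSbis}. The first step is to prove $U^0_{b,\vartheta,t}\mathbf{P}-L_{\bt,t}\mathbf{P}\to 0$ uniformly on $[0,M]$ in probability; the second is $L_{\bt,t}\mathbf{P}\to\exp(\pmb\delta^0_\vartheta t)H_{0,\vartheta,t}\mathbf{P}$. For the second step I would combine Theorem \ref{Tpou} ($H_{\bt,\cdot}\to H_{0,\vartheta,\cdot}$ uniformly in probability) with the ergodic statement $\int_0^t\overline{S}^0_\vartheta(Y_s)\,ds\to t\pmb\delta^0_\vartheta$ in probability, which follows from the $O(b^2/\ct)$-mixing of the Ornstein-Uhlenbeck dynamics (\ref{eq:wat2}) as $b\to 0$, Proposition \ref{Pidnez} identifying the Gaussian average $P\overline{S}^0_\vartheta P=\pmb\delta^0_\vartheta$, and the uniform $L_p$ control of $\sup_{t\le M}|L_{\bt,t}|$ established at the end of subsection \ref{subsec:limu}.

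For the first step I would start from the integral identity (\ref{eq:formid5}) of Proposition \ref{Pformid}, multiplied by $\mathbf{P}$ on the right. The $O(b)$ boundary corrections vanish in $L_p$ thanks to Theorem \ref{Tunif} combined with the moment bounds for $Y_\cdot$ of subsection \ref{subsec:caeu}. Decomposing $U^0_{b,\vartheta,s}=U^0_{b,\vartheta,s}\mathbf{P}+U^0_{b,\vartheta,s}\mathbf{P}^\perp$, the $\mathbf{P}^\perp$-contribution to the Bochner integral is negligible on any $[\epsilon,M]$ by Theorem \ref{Tgres1}, while the boundary-layer $[0,\epsilon]$ is handled via Theorem \ref{Tfuest} and made arbitrarily small by $\epsilon\to 0$. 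On $\mathbf{P}$ the drift kernel reduces, via the identity $T^0_\vartheta=S^0_\vartheta$ of Proposition \ref{Peqel}, to $\mathbf{P}S^0_\vartheta(Y_s)\mathbf{P}$; the $\mathbf{P}$-projection of the It\^o martingale in (\ref{eq:formid5}) produces the increment $\cos^2(\vartheta)\widehat{c}(-i\ad(dw^N)|_{\overline{TX}})$ by Proposition \ref{Pproj}, whose quadratic-variation bracket accounts for the extra $\cos^4(\vartheta)\widehat{c}(\ad(Y^N)|_{\overline{TX}})^2$ that upgrades $S^0_\vartheta$ to $\overline{S}^0_\vartheta$ in Definition \ref{DSbis}. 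The resulting effective equation for $U^0_{b,\vartheta,\cdot}\mathbf{P}$ is then the same as the one satisfied by $L_{\bt,\cdot}\mathbf{P}$, and uniqueness of its solution yields the desired comparison.

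The main obstacle is to control the two coupled sources of rapid oscillation simultaneously and uniformly on $[0,M]$. The adiabatic elimination $R^{0,\perp}_\vartheta(Y)/b\rightsquigarrow\overline{S}^0_\vartheta(Y)$ is effected algebraically by (\ref{eq:formid5}), but its implementation at the pathwise level requires the sharp uniform bound $\sup_{t\le M}|U^0_{b,\vartheta,t}|\in L_p$ of Theorem \ref{Tunif}, which is considerably stronger than what a naive Gronwall argument gives. At the same time the nontrivial coupling between the exterior algebra $\Lambda\ac(T^*X\oplus N^*)$ and the Clifford algebra $\widehat{c}(\overline{TX})$---which for $\vartheta>0$ destroys the factorization $U^{0}_{b,\cdot}=V_{b,\cdot}\otimes W_{b,\cdot}$ of (\ref{eq:qsic4f}) used in \cite[Theorem 14.10.6]{Bismut08b}---must be disentangled using the projection identities of subsections \ref{subsec:ide}--\ref{subsec:anopr} together with Burkholder-Davis-Gundy estimates on the martingale terms. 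This coupling is the essential reason the argument of \cite{Bismut08b} does not apply verbatim and motivates the effort spent on Theorems \ref{Tfuest} and \ref{Tunif}.
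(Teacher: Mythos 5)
Your plan matches the paper's own proof essentially step for step: the same auxiliary process $L_{\bt,\cdot}$, the same two-step comparison $U^{0}_{\bt,\cdot}\mathbf{P}\approx L_{\bt,\cdot}\mathbf{P}\to\exp(\pmb\delta^{0}_{\vartheta}\cdot)H_{0,\vartheta,\cdot}\mathbf{P}$, and the same inputs (Theorems \ref{Tfuest}, \ref{Tgres1}, \ref{Tunif}, \ref{Tpou}; Propositions \ref{Pformid}, \ref{Pproj}, \ref{Peqel}, \ref{Pidnez}; the ergodic averaging as in \cite[Proposition~14.10.1]{Bismut08b}). One clarification about your execution of the first step: in the paper the discrepancy between $S^{0}_{\vartheta}$ and $\overline{S}^{0}_{\vartheta}$ is \emph{not} absorbed by a quadratic-variation bracket of the martingale increment $\cos^{2}(\vartheta)\widehat{c}(-i\ad(dw^{N})|_{\overline{TX}})$ (that bracket is a deterministic constant, whereas $\cos^{4}(\vartheta)\widehat{c}(\ad(Y^{N})|_{\overline{TX}})^{2}$ is random); instead, the paper forms the process $\mathsf{A}_{\bt,\cdot}$ of (\ref{eq:mir1}), whose corrector includes the additional $b\cos^{3/2}(\vartheta)\widehat{c}(i\ad(Y^{N}_{\cdot})|_{\overline{TX}})$ term \emph{not} present in (\ref{eq:formid5}), and then the drift contribution to $\mathsf{B}_{\bt,t}$ in (\ref{eq:mir4}) cancels pathwise and exactly — i.e.\ $\mathsf{B}_{\bt,t}=0$ in (\ref{eq:mir5}) — thanks to the ODE (\ref{eq:spi2}) for $H^{-1}_{\bt,\cdot}$. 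Your version, matching effective equations and invoking uniqueness, is a softer route to the same conclusion, but the exact algebraic cancellation is the cleanest way to verify the claimed drift.
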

\begin{proof}
   In the proof, $\vartheta\in \left[0,\frac{\pi}{2}\right[$ will be 
   fixed. 
   We start from equation (\ref{eq:formid7}), that will be combined 
   with equations (\ref{eq:spi2}) and (\ref{eq:spi3}). 
Let 
   $\mathsf{A}_{b,\vartheta,\cdot}$ be the stochastic process,
   \begin{multline}\label{eq:mir1}
\mathsf{A}_{b,\vartheta,\cdot}=U^{0}_{b,\vartheta,\cdot}
\Biggl(1+b\left(1+N^{\LXN}_{-\vartheta}\right)R_{\vartheta}^{0}\left(Y^{TX}_{\cdot}\right)\\
+b\left(\ct+N^{\LXN}_{-\vartheta}\right)^{-1}R^{0}_{\vartheta}\left(Y^{N}_{\cdot}\right)
+b\cos^{3/2}\left(\vartheta\right)\widehat{c}\left(i\ad\left(Y^{N}_{\cdot}\right)
\vert_{\overline{TX}}\right) \Biggr)L^{-1}_{\bt,\cdot}\mathbf{P}.
\end{multline}
Using (\ref{eq:wat2}),  (\ref{eq:diff0}), (\ref{eq:formid7}), and (\ref{eq:spi2}), we 
get
\begin{multline}\label{eq:mir2}
\mathsf{A}_{\bt,t}=\mathsf{A}_{\bt,0}+
\int_{0}^{t}U^{0}_{\bt}\Biggl[-\overline{S}^{0}_{\vartheta}\left(Y\right)+R^{0}_{\vartheta}\left(Y\right)
\Biggl[\left(1+N^{\LXN}_{-\vartheta}\right)^{-1}R^{0}_{\vartheta}\left(Y^{TX}\right)\\
+\left(\ct+N^{\LXN}_{-\vartheta}\right)^{-1}R^{0}_{\vartheta}\left(Y^{N}\right)\Biggr]ds+
\left(1+N^{\LXN}_{-\vartheta}\right)^{-1}R^{0}_{\vartheta}\left(dw^{TX}\right)\\
+\left(\ct+\Nto\right)^{-1}\cos^{1/2}\left(\vartheta\right)R^{0}_{\vartheta}
\left(dw^{N}\right)\\
+R^{0}_{\vartheta}\left(Y\right)\cos^{3/2}\left(\vartheta\right)\widehat{c}\left(i\ad\left(Y^{N}\right)
\vert_{\overline{TX}}\right)ds\\
+
 \Biggl(  \left(1+\Nto\right)^{-1}R_{\vartheta}^{0}\left(Y^{TX}\right)
+\left(\ct+\Nto\right)^{-1}R^{0}_{\vartheta}\left(Y^{N}\right) 
 \\
+\cos^{3/2}\left(\vartheta\right)\widehat{c}\left(i\ad\left(Y^{N}\right)
\vert_{\overline{TX}}\right)\Biggr)\cos^{5/2}\left(\vartheta\right)
\widehat{c}\left(i\ad\left(Y^{N}\right)
\vert_{\overline{TX}}\right)ds\\
+\cos^{2}\left(\vartheta\right)\widehat{c}\left(i\ad\left(dw^{N}\right)
\vert_{\overline{TX}}\right)\Biggr]L^{-1}_{\bt}\mathbf{P}.
\end{multline}
In (\ref{eq:mir2}), the stochastic integrals with respect to $w^{TX},w^{N}$ are 
 standard It\^{o} integrals, i.e., $dw^{TX},dw^{N}$ can be 
replaced by $\delta w^{TX},\delta w^{N}$.

We denote by $\mathsf{B}_{\bt,t},\mathsf{B}^{\perp}_{\bt,t}$ be
 obtained from $\mathsf{A}_{\bt,t}-\mathsf{A}_{\bt,0}$ in (\ref{eq:mir2}) by 
replacing $U^{0}_{\bt,\cdot}$ by 
$U^{0}_{\bt}\mathbf{P},U^{0}_{\bt}\mathbf{P}^{\perp}$, so that
\begin{equation}\label{eq:mir3}
\mathsf{A}_{\bt,t}=\mathsf{A}_{\bt,0}+\mathsf{B}_{\bt,t}+\mathsf{B}_{\bt,t}^{\perp}.
\end{equation}

By equation (\ref{eq:nea1}) in Proposition \ref{Pproj}, by equation 
(\ref{eq:psica1}) in Proposition \ref{Peqel}, by 
(\ref{eq:psic27y1}), and (\ref{eq:mir2}), we get
\begin{multline}\label{eq:mir4}
\mathsf{B}_{\bt,t}=\int_{0}^{t}U^{0}_{\bt}\mathbf{P}\Biggl[-\cos^{4}\left(\vartheta\right)
\widehat{c}\left(\ad\left(Y^{N}\right)\vert_{\overline{TX}}
\right)^{2}ds-\cos^{2}\left(\vartheta\right)\widehat{c}\left(i\ad\left(dw^{N}\right)\vert_{\overline{TX}}\right)\\
+\cos^{4}\left(\vartheta\right)\widehat{c}\left(\ad\left(Y^{N}\right)\vert_{\overline{TX}}\right)^{2}ds
+\cos^{2}\left(\vartheta\right)\widehat{c}\left(i\ad\left(dw^{N}\right)\vert_{\overline{TX}}\right)
\Biggr]L^{-1}_{\bt}.
\end{multline}
By (\ref{eq:mir4}), we deduce that
\begin{equation}\label{eq:mir5}
\mathsf{B}_{\bt,t}=0.
\end{equation}

By equation (\ref{eq:flimsw5})
   in Theorem \ref{Tgres1}, and by the considerations following 
 equation (\ref{eq:mira0zz3}), by proceeding as in \cite[Proposition 
  14.10.4]{Bismut08b}, as $b\to 0$, we have the uniform convergence 
   over $\left[0,M\right]$ in probability,
  \begin{equation}\label{eq:mir6}
\mathsf{B}_{\bt,\cdot}^{\perp}\to 0.
\end{equation}

By (\ref{eq:mir3}), (\ref{eq:mir5}), and (\ref{eq:mir6}), we get the 
uniform convergence  over $\left[0,M\right]$ in probability,
\begin{equation}\label{eq:mir7}
\mathsf{A}_{\bt,\cdot}-\mathsf{A}_{\bt,0}\to 0.
\end{equation}

By \cite[Proposition 14.10.1]{Bismut08b},
 as $b\to 0$, $bY_{\cdot}$ 
converges uniformly on $\left[0,M\right]$ to $0$ in probability. By 
Theorem \ref{Tunif} and by the considerations we made after 
(\ref{eq:mira0zz3}) on the process $L_{\bt,\cdot}$, we deduce from (\ref{eq:mir1}), (\ref{eq:mir7}) that  we have the 
uniform convergence  over $\left[0,M\right]$ in probability, 
\begin{equation}\label{eq:mir8}
U^{0}_{\bt}L^{-1}_{\bt}\mathbf{P}\to \mathbf{P}.
\end{equation}
By the considerations we made after (\ref{eq:mira0zz3}), we deduce from 
(\ref{eq:mir8}) that as $b\to 0$, we have the uniform convergence in 
probability over $\left[0,M\right]$, 
\begin{equation}\label{eq:mir9}
U^{0}_{\bt,\cdot}\mathbf{P}-L_{\bt,\cdot}\mathbf{P}\to 0.
\end{equation}

By \cite[Proposition 14.10.1]{Bismut08b} and by (\ref{eq:mir9a}),  as 
$b\to 0$, the process 
$\int_{0}^{t}\overline{S}^{0}_{\vartheta}\left(Y\right)ds$ converges 
uniformly on $\left[0,M\right]$ to the process $\pmb 
\delta^{0}_{\vartheta}t$ in probability. Using  Theorem 
\ref{Tpou}, we conclude that as $b\to 0$, the process  $L_{\bt,\cdot}$ converges 
to $\exp\left(\pmb 
\delta^{0}_{\vartheta}\cdot\right)H_{0,\vartheta,\cdot}$ uniformly on 
$\left[0,M\right]$ in probability. Our theorem follows from 
(\ref{eq:mir9}) and from this last result.
\end{proof}

We now give an extension of \cite[Theorem 14.10.7]{Bismut08b}.
\begin{thm}\label{Tfiut}
Given $\vartheta\in\left[0,\frac{\pi}{2}\right[$, $0<\epsilon\le M$, 
as $b\to 0$, $U^{0}_{\bt,\cdot}$ converges uniformly on 
$\left[\epsilon,M\right]$ to 
$\exp\left(\pmb\delta^{0}_{\vartheta}\cdot\right)H_{0,\vartheta,\cdot}\mathbf{P}$ in probability.
\end{thm}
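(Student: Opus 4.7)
The plan is to reduce Theorem \ref{Tfiut} to the combination of Theorem \ref{TlimV} and the last statement of Theorem \ref{Tgres1} by splitting $U^{0}_{\bt,\cdot}$ according to the orthogonal decomposition induced by $\mathbf{P}$ and $\mathbf{P}^{\perp}$. Writing
\begin{equation*}
U^{0}_{b,\vartheta,t}=U^{0}_{b,\vartheta,t}\mathbf{P}+U^{0}_{b,\vartheta,t}\mathbf{P}^{\perp},
\end{equation*}
it suffices to control each summand uniformly on $[\epsilon,M]$ in probability, and since convergence in probability is stable under addition, the two pieces can be handled independently.

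For the first summand, I would simply invoke Theorem \ref{TlimV}, which already gives the uniform convergence on $[0,M]$ of $U^{0}_{\bt,\cdot}\mathbf{P}$ to $\exp(\pmb\delta^{0}_{\vartheta}\cdot)H_{0,\vartheta,\cdot}\mathbf{P}$ in probability; restricting to the subinterval $[\epsilon,M]$ is immediate. For the second summand, I would invoke the last statement of Theorem \ref{Tgres1}, which asserts that $U^{0}_{\bt,\cdot}\mathbf{P}^{\perp}$ converges uniformly to $0$ on $[\epsilon,M]$ in probability.

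The hypothesis $\epsilon>0$ is essential and is the only subtle point worth flagging: at $t=0$, $U^{0}_{\bt,0}\mathbf{P}^{\perp}=\mathbf{P}^{\perp}\neq 0$, so the statement fails near $t=0$. The mechanism behind the vanishing of the $\mathbf{P}^{\perp}$-component for $t>0$ is already encoded in the integral representation (\ref{eq:flimsw2}) used in Theorem \ref{Tgres1}: the factor $\exp(-tN_{-\vartheta}^{\Lambda\ac(T^{*}X\oplus N^{*})}/b^{2})\mathbf{P}^{\perp}$ decays exponentially as $b\to 0$ since $N_{-\vartheta}^{\Lambda\ac(T^{*}X\oplus N^{*})}$ acts with strictly positive eigenvalues on the range of $\mathbf{P}^{\perp}$, and the remainder is absorbed by the uniform $L_p$-bounds coming from Theorems \ref{Tfuest} and \ref{Tunif}. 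Summing the two limits gives the claim, and there is no genuine obstacle—the role of Theorem \ref{Tfiut} is bookkeeping, packaging the two separate limits into the single form needed in the proof of Theorem \ref{Test}.
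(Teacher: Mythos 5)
Your proposal matches the paper's proof exactly: the paper's own argument is simply the one-line observation that the result follows from Theorems \ref{Tgres1} and \ref{TlimV}, which is precisely the decomposition $U^{0}_{\bt,t}=U^{0}_{\bt,t}\mathbf{P}+U^{0}_{\bt,t}\mathbf{P}^{\perp}$ you spell out. Your remark about the necessity of $\epsilon>0$ (since $U^{0}_{\bt,0}\mathbf{P}^{\perp}=\mathbf{P}^{\perp}\neq 0$) is a correct and worthwhile clarification of what the paper leaves implicit.
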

\begin{proof}
This is a consequence of Theorems \ref{Tgres1} and \ref{TlimV}.
\end{proof}
\subsection{The case of $U^{\prime 0}_{\bt,\cdot}$}%
\label{subsec:tcs}
    There is an obvious analogue of Proposition \ref{Pwondidbis} when 
    replacing $\overline{\mathcal{M}}^{X}_{\bt}$ by 
    $\overline{M}^{X}\vert_{db=0}$.
Recall that $U'_{\bt,\cdot}$ was defined in (\ref{eq:bobi2}). Let 
$U^{\prime 0 }_{\bt,\cdot}$ be $U'_{\bt,\cdot}$ when $\rho^{E}$ is 
the trivial representation.
Proposition \ref{Pcru} has a trivial extension to $U^{\prime 0 
}_{\bt,\cdot}$. The presence of 
$\frac{d\vartheta}{\cos^{1/2}\left(\vartheta\right)}$ introduces an 
extra factor $1/\cos\left(\vartheta\right)$ in whatever estimate we 
make of $U^{ \prime 0}_{\bt,\cdot}$ with respect to the estimates 
we made for $U^{0}_{\bt,\cdot}$. Indeed 
the term 
$-\frac{d\vartheta}{\sqrt{2}}\left(\widehat{c}\left(Y^{TX}\right)+\frac{\sin\left(\vartheta\right)}{\cos^{1/2}\left(\vartheta\right)}
i\mathcal{E}^{N}\right)$ just introduces a factor 
$1/\cos^{1/2}\left(\vartheta\right)$. The presence of 
$\exp\left(-\frac{\ct}{2}\int_{0}^{t}\left\vert  
\left[Y^{N},Y^{TX}\right]\right\vert^{2}ds \right) $ in the 
right-hand side of (\ref{eq:sen1b}) allows us to dominate the 
contribution of  $\frac{d\vartheta}{\sqrt{2}}
\frac{\sin\left(\vartheta\right)}{\cos^{1/2}\left(\vartheta\right)}ic\left(\left[Y^{N},Y^{TX}\right]
\right)$ with an extra factor $1/\ct$.  Equation 
(\ref{eq:nea3}) for $H_{\bt,\cdot}$ should be replaced by 
\begin{align}\label{eq:nea3bis}
&\frac{dH'_{\bt}}{ds}=H'_{\bt}\frac{1}{b}\left(\cos^{5/2}\left(\vartheta\right)\widehat{c}\left(-i\ad\left(Y^{N}\right)
\vert_{\overline{TX}}\right)
-\frac{d\vartheta}{\sqrt{2}}\widehat{c}\left(\overline{Y}^{TX}\right)\right),&H'_{\bt,0}=1.
\end{align}
Equation (\ref{eq:nea3bis}) can be easily integrated by the formula
\begin{equation}\label{eq:fufu4}
H'_{\bt,s}=H_{\bt,s}-\frac{1}{b}\int_{0}^{s}H_{\bt,u}\frac{d\vartheta}{\sqrt{2}}\widehat{c}\left(\overline{Y}^{TX}_{u}\right)
H_{\bt,u}^{-1}duH_{\bt,s}.
\end{equation}
Using (\ref{eq:wat2}), (\ref{eq:nea3}), equation (\ref{eq:fufu4}) can 
be rewritten in the form,
\begin{multline}\label{eq:fufu5}
H'_{\bt,s}=H_{\bt,s}+d\vartheta 
bH_{\bt,s}\widehat{c}\left(\overline{Y}^{TX}_{s}\right)-d\vartheta  b\widehat{c}\left(\overline{Y}^{TX}_{0}\right)H_{\bt, s}\\
+d\vartheta\int_{0}^{s}H_{\bt,u}\cos^{5/2}\left(\vartheta\right)\widehat{c}\left(i\overline{\left[Y^{N}_{u},Y^{TX}_{u}\right]}\right)
H_{\bt,u}^{-1}duH_{\bt,s}\\
-d\vartheta\int_{0}^{t}H_{\bt,u}\widehat{c}\left(\delta 
w^{TX}\right)H_{\bt,u}^{-1}H_{\bt,s}.
\end{multline}
By equations (\ref{eq:lom7}) and  (\ref{eq:fufu5}), we can estimate 
$H'_{\bt,\cdot}$ using the previous estimates on $H_{\bt,\cdot}$. 

Let 
\index{H0t@ $H'_{0,\vartheta,\cdot}$}%
$H'_{0,\vartheta,\cdot}$ be the solution of the stochastic 
differential equation,
\begin{align}\label{eq:mir0bis}
&dH'_{0,\vartheta}=H'_{0,\vartheta}\left(\cos^{2}\left(\vartheta\right)\widehat{c}\left(-i\ad\left(dw^{\mathfrak k}
\right)\vert_{\overline{TX}}\right)-\frac{d\vartheta}{\sqrt{2}}\widehat{c}\left(\overline{dw}^{\mathfrak p}\right)\right),\\
&H'_{0,\vartheta,0}=1. \nonumber 
\end{align}

The  obvious extensions of Theorem \ref{Tpou},  of 
Proposition \ref{Pexp}, of Theorems \ref{Tfuest} and \ref{Tgres1}, of Proposition  \ref{Pformid}, and of Theorem \ref{Tunif} still hold.

Now we state an extension of Theorems \ref{TlimV} and \ref{Tfiut}.
\begin{thm}\label{TlimVpr}
Given $\vartheta\in\left[0,\frac{\pi}{2}\right[,M>0$, as $b\to 0$, $U^{\prime 0 
}_{\bt,\cdot}\mathbf{P}$ converges uniformly on  $\left[0,M\right]$ to 
$\exp\left(\bm{\delta}^{0}_{\vartheta}\cdot\right)H'_{0,\vartheta,\cdot}\mathbf{P}$ in 
probability.

Given $\vartheta\in\left[0,\frac{\pi}{2}\right[$, $0<\epsilon\le M$, 
as $b\to 0$, $U^{\prime 0}_{\bt,\cdot}$ converges uniformly on 
$\left[\epsilon,M\right]$ to 
$\exp\left(\pmb\delta^{0}_{\vartheta}\cdot\right)H'_{0,\vartheta,\cdot}\mathbf{P}$ in probability.
\end{thm}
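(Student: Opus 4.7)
\medskip

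The strategy is to mimic verbatim the proofs of Theorems \ref{TlimV} and \ref{Tfiut}, using for $U^{\prime 0}_{\bt,\cdot}$ and $H'_{\bt,\cdot}$ the primed counterparts of the auxiliary estimates, which are stated to hold in the paragraph preceding the theorem. The central simplification is that since $d\vartheta^{2}=0$, every additional contribution arising from the primed construction contains exactly one factor of $d\vartheta$; in particular no two such factors can collide, and the algebraic identities used in the unprimed case survive with only transparent modifications. The limiting process $H'_{0,\vartheta,\cdot}$ defined by (\ref{eq:mir0bis}) is obtained from $H_{0,\vartheta,\cdot}$ precisely by adding the $d\vartheta$-term $-\frac{d\vartheta}{\sqrt{2}}\widehat{c}(\overline{dw}^{\mathfrak p})$, in agreement with the integrated formula (\ref{eq:fufu5}), which expresses $H'_{\bt,\cdot}$ as $H_{\bt,\cdot}$ plus a $d\vartheta$-correction built from $H_{\bt,\cdot}$ itself; by Theorem \ref{Tpou} and Theorem \ref{Tconci} applied to this correction, $H'_{\bt,\cdot}\to H'_{0,\vartheta,\cdot}$ uniformly on $[0,M]$ in probability.

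In analogy with Definition \ref{Dkbt}, set $L'_{\bt,t}=\exp\!\left(\int_{0}^{t}\overline{S}^{0}_{\vartheta}(Y)\,ds\right)H'_{\bt,t}$ and define the auxiliary process $\mathsf{A}'_{\bt,t}$ by replacing $U^{0}_{\bt,\cdot}$ and $L_{\bt,\cdot}$ by $U^{\prime 0}_{\bt,\cdot}$ and $L'_{\bt,\cdot}$ in (\ref{eq:mir1}). Applying the It\^o formula and using the primed extension of Proposition \ref{Pformid}, together with equation (\ref{eq:nea3bis}) for $H'_{\bt,\cdot}$, yields the primed analog of (\ref{eq:mir2}): it has the same degree-zero part in $d\vartheta$ as in the proof of Theorem \ref{TlimV}, plus a single extra $d\vartheta$-term combining the contribution of $-\frac{d\vartheta}{\sqrt{2}}\widehat{c}(\overline{Y}^{TX})/b$ inside $R'_{\vartheta}/b$, the $\frac{d\vartheta}{\sqrt{2}}\frac{\sin\vartheta}{\cos^{1/2}\vartheta}ic([Y^{N},Y^{TX}])$ term in (\ref{eq:bobi2}), and the extra piece of $L^{\prime-1}_{\bt,\cdot}$ coming from (\ref{eq:fufu5}).

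Splitting as in (\ref{eq:mir3}), write $\mathsf{A}'_{\bt,t}-\mathsf{A}'_{\bt,0}=\mathsf{B}'_{\bt,t}+\mathsf{B}^{\prime\perp}_{\bt,t}$. The degree-zero component of $\mathsf{B}'_{\bt,t}$ vanishes identically by exactly the same application of Proposition \ref{Peqel} and equation (\ref{eq:mir9a}) as in (\ref{eq:mir4})--(\ref{eq:mir5}). For the degree-one component, the extra $d\vartheta$-contributions are of the form $\mathbf{P}\,[\cdot]\,\mathbf{P}$ where $[\cdot]$ is either $\widehat{c}(\overline{Y}^{TX})$ (odd in the $\overline{TX}$ Clifford variables) or the image under $\mathbf{P}\cdot\mathbf{P}$ of the compression of $-\frac{1}{\sqrt 2}\widehat{c}(\overline{Y}^{TX})+\frac{\sin\vartheta}{\sqrt 2\cos^{1/2}\vartheta}ic([Y^N,Y^{TX}])$; these cancel against each other by the same compression identity of Proposition \ref{Pcomp} that underlies Proposition \ref{Peqel}, giving $\mathsf{B}'_{\bt,t}=0$. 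The error $\mathsf{B}^{\prime\perp}_{\bt,t}$ tends to $0$ uniformly in probability by the primed extension of Theorem \ref{Tgres1}, combined with the uniform $L^{p}$ bounds on $\sup_{0\le t\le M}|L'_{\bt,t}|$ and $\sup_{0\le t\le M}|L^{\prime-1}_{\bt,t}|$, which follow from (\ref{eq:fufu5}), (\ref{eq:bebe1}), (\ref{eq:mira0zzz1}), (\ref{eq:mira0zz3}), and Theorem \ref{Tpou}.

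The main obstacle is the singular factor $1/b$ inside $-\frac{d\vartheta}{\sqrt{2}b}\widehat{c}(\overline{Y}^{TX})$, which appears both in $R'_{\vartheta}/b$ and, through (\ref{eq:fufu5}), in $L^{\prime-1}_{\bt,\cdot}$; exactly as in the unprimed case, this singularity is neutralized by the projector $\mathbf{P}\cdot\mathbf{P}$ (which annihilates $\widehat{c}(\overline{Y}^{TX})$ at leading order because this Clifford variable is purely odd in $\overline{TX}$) together with the Burkholder--Davis--Gundy-type bounds used in the proof of Theorem \ref{Tunif}. Having established that $\mathsf{A}'_{\bt,\cdot}-\mathsf{A}'_{\bt,0}\to 0$ in probability uniformly on $[0,M]$, one applies the primed version of Theorem \ref{Tunif} and the convergence $bY_{\cdot}\to 0$ from \cite[Proposition 14.10.1]{Bismut08b} to deduce $U^{\prime 0}_{\bt,\cdot}L^{\prime-1}_{\bt,\cdot}\mathbf{P}\to\mathbf{P}$, hence $U^{\prime 0}_{\bt,\cdot}\mathbf{P}-L'_{\bt,\cdot}\mathbf{P}\to 0$; the convergence of $L'_{\bt,\cdot}$ to $\exp(\pmb\delta^{0}_{\vartheta}\cdot)H'_{0,\vartheta,\cdot}$ then gives the first assertion. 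The second assertion follows from the first together with the primed extension of Theorem \ref{Tgres1}, exactly as Theorem \ref{Tfiut} was deduced from Theorems \ref{Tgres1} and \ref{TlimV}.
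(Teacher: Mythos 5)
Your overall strategy coincides with the paper's: the paper's own proof is just a one-line pointer saying the argument is essentially the same as the proofs of Theorems \ref{TlimV} and \ref{Tfiut}, using the primed replacements of the auxiliary objects introduced in subsection \ref{subsec:tcs}. Your identification of $L'_{\bt,t}=\exp\bigl(\int_0^t \overline S^0_\vartheta(Y)\,ds\bigr)H'_{\bt,t}$ and of $\mathsf A'_{\bt,\cdot}$ as the primed analogue of \eqref{eq:mir1}, and your use of \eqref{eq:fufu5} to extract uniform control on $L'_{\bt,\cdot}$ and $L'^{-1}_{\bt,\cdot}$, are all the right moves.

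However, there is a concrete error in the mechanism you propose for taming the singular $-\frac{d\vartheta}{\sqrt 2\,b}\widehat c(\overline Y^{TX})$. You assert that $\mathbf P\cdot\mathbf P$ ``annihilates $\widehat c(\overline Y^{TX})$ at leading order because this Clifford variable is purely odd in $\overline{TX}$.'' This is false. By Definition \ref{RtY}, $\mathbf P$ is the projection from $\Lambda\ac(T^*X\oplus N^*)\otimes S^{\overline{TX}}\otimes F$ onto $S^{\overline{TX}}\otimes F$: it kills the positive-degree part of $\Lambda\ac(T^*X\oplus N^*)$ and acts as the identity on $S^{\overline{TX}}\otimes F$. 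Since $\widehat c(\overline Y^{TX})$ acts only on the $S^{\overline{TX}}$ factor, it commutes with $\mathbf P$, and $\mathbf P\,\widehat c(\overline Y^{TX})\,\mathbf P=\widehat c(\overline Y^{TX})\,\mathbf P\neq 0$. (Contrast this with $\mathcal E^N$ and $c([Y^N,Y^{TX}])$, which do raise/lower the exterior degree in $\Lambda\ac(T^*X\oplus N^*)$ and are therefore killed by the $\mathbf P$-compression.)

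The actual reason the $1/b$ singularity disappears is the one built into \eqref{eq:nea3bis}: $H'_{\bt,\cdot}$ is defined to include $-\frac{d\vartheta}{\sqrt 2\,b}\widehat c(\overline Y^{TX})$ in its generator, so that when one differentiates $\mathsf A'_{\bt,\cdot}=U'^0_{\bt,\cdot}(\cdots)L'^{-1}_{\bt,\cdot}\mathbf P$, the $-\frac{d\vartheta}{\sqrt 2\,b}\widehat c(\overline Y^{TX})$ coming from $R'_\vartheta(Y)/b$ in \eqref{eq:bobi1}--\eqref{eq:bobi2} cancels identically against the $+\frac{d\vartheta}{\sqrt 2\,b}\widehat c(\overline Y^{TX})$ coming from the derivative of $L'^{-1}_{\bt,\cdot}$ (i.e.\ from the primed analogue of \eqref{eq:spi2}). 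Equivalently, after the integration by parts leading to \eqref{eq:fufu5}, the $1/b$ factor is converted into a $b$ that tends to $0$. The projector $\mathbf P$ plays no role in removing this particular singularity. Your proof would need this step corrected before the claimed identity $\mathsf B'_{\bt,t}=0$ can be justified in the $d\vartheta$-direction.
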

\begin{proof}
The proof is essentially the same as the proof of Theorems 
\ref{TlimV} and \ref{Tfiut}.
\end{proof}
\subsection{The final steps in the proof of Theorem \ref{Test}}%
\label{subsec:finst}
Recall that the smooth kernels 
$\overline{q}^{X}_{\bt,t},\overline{q}^{X}_{0,\vartheta,t}$ were defined in 
Definition \ref{Dplim}. We will give an extension of \cite[Theorem 
14.11.2]{Bismut08b}.
   \begin{thm}\label{Tfin}  
Let $s\in C^{ \infty,c 
}\left(\widehat{\mathcal{X}},\widehat{\pi}^{*}\left(\Lambda\left(T^{*}X \oplus N^{*}\right)
\otimes S^{\overline{TX}} \otimes F\right)\right)$. Given 
$\vartheta\in\left[0,\frac{\pi}{2}\right[,t>0$, as $b\to 0$, 
\begin{multline}\label{eq:rob4}
\int_{\widehat{\mathcal{X}}}^{}\overline{q}^{X}_{b,\vartheta,t}\left(\left(x,Y\right),\left(x',Y'\right)\right)s\left(x',Y'\right)dx'dY'\\
\to 
\int_{\widehat{\mathcal{X}}}^{}\overline{q}^{X}_{0,\vartheta,t}\left(\left(x,Y\right),
\left(x',Y'\right)\right)s\left(x',Y'\right)dx'dY'.
\end{multline}
\end{thm}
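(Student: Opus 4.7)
The plan is to pass to the limit in the probabilistic representation of the heat kernel, leveraging the convergence results of subsections \ref{subsec:limu}--\ref{subsec:tcs} together with the uniform $L_p$ estimates of subsections \ref{subsec:cruest}--\ref{subsec:limbs}.

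First I would express the left-hand side of (\ref{eq:rob4}) probabilistically. Since $\overline{q}^{X}_{b,\vartheta,t}$ is the smooth kernel of $\exp(-t\overline{L}^{X}\vert_{db=0})$, and since $\overline{M}^{X}=\exp(|Y|^{2}/2)\overline{L}^{X}\exp(-|Y|^{2}/2)$, $\overline{L}^{X}_{-}=I\overline{L}^{X}I^{-1}$, the integral on the left of (\ref{eq:rob4}) can be reduced via these conjugations to $\exp(-t\overline{M}^{X}_{-}\vert_{db=0})$ acting on a modified test function. By the discussion following (\ref{eq:bobi2}), the formula (\ref{eq:sen1b}) then holds with $U_{\bt,t}$ replaced by $U'_{\bt,t}$, yielding
\[
  \int \overline{q}^{X}_{\bt,t}((x,Y),(x',Y'))s(x',Y')\,dx'\,dY'
  \;=\;E^{Q}\!\left[\,e^{-\frac{\ct}{2}\int_{0}^{t}|[Y^{N},Y^{TX}]|^{2}}\,
  U'_{\bt,t}\,\tau_{0}^{t}s(x_{t},Y_{t})\right]
\]
up to the standard Gaussian factors coming from the conjugation and the change of variable $Y\to-Y$.

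Second, I would decompose $U'_{\bt,t}=U^{\prime 0}_{\bt,t}\otimes E_{\bt,t}$ via (\ref{eq:diff2a1}), and split $U^{\prime 0}_{\bt,t}=U^{\prime 0}_{\bt,t}\mathbf{P}+U^{\prime 0}_{\bt,t}\mathbf{P}^{\perp}$. The $\mathbf{P}^{\perp}$--part tends to $0$ in $L_{p}$ on every interval $[\epsilon,M]$ by the natural $U^{\prime 0}$-analogue of Theorem~\ref{Tgres1}. For the $\mathbf{P}$-part, Theorem~\ref{TlimVpr} gives uniform convergence on $[\epsilon,M]$ in probability
\[
  U^{\prime 0}_{\bt,\cdot}\mathbf{P}\;\longrightarrow\;
  \exp\!\bigl(\pmb\delta^{0}_{\vartheta}\,\cdot\bigr)\,H'_{0,\vartheta,\cdot}\mathbf{P},
\]
Theorem~\ref{Teste} gives $E_{\bt,\cdot}\to E_{0,\cdot}$ in probability, and Theorem~\ref{Tconci} gives $(x_{b,\cdot},g_{b,\cdot})\to(x_{0,\cdot},g_{0,\cdot})$ uniformly in probability on $[0,M]$, so that in particular $\tau_{0}^{t}$ converges to the parallel transport along the Brownian curve.

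Third, I would pass to the limit inside the expectation by dominated convergence. The uniform $L_{p}$ bound
\[
  \bigl\|\sup_{0\le s\le t}|U^{\prime 0}_{\bt,s}|\bigr\|_{p}
  \le C_{p,M}\exp\bigl(C'(\cos(\vartheta)|Y^{TX}_{0}|^{2}+|Y^{N}_{0}|^{2})\bigr)
\]
from the $U'$-extension of Theorem~\ref{Tunif}, combined with the analogous bounds for $E_{\bt,\cdot}$ and $H'_{\bt,\cdot}$ from Theorems~\ref{Teste} and \ref{Tpou}, supplies a dominating function whose Gaussian tails in $Y_{0}$ integrate against the compact support of $s$ and the Gaussian weight coming from the $Q$-law of $Y_{\cdot}$. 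The limit takes the form of an expectation involving the Brownian motion (\ref{eq:stoch1}), the parallel transport $\tau_{0}^{t}$ and the operator process $\exp(\pmb\delta^{0}_{\vartheta}t)H'_{0,\vartheta,t}E_{0,t}$. Comparing this limit with the probabilistic representation (\ref{eq:stoch4bis}) of $\exp(-tT^{X})$, and matching the scalar constants using (\ref{eq:rob3}), (\ref{eq:qsic-10}) and (\ref{eq:japo4}), one recovers the kernel $p^{X}_{\vartheta,t}(x,x')$ acting on $\overline{s}(x'):=\int\pi^{-(m+n)/2}e^{-|Y'|^{2}/2}\mathbf{P}s(x',Y')\,dY'$, which is exactly the right-hand side of (\ref{eq:rob4}) via (\ref{eq:reo-1y1}).

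The main obstacle is the dominated-convergence step of part three: the in-probability convergence of $U^{\prime 0}_{\bt,\cdot}\mathbf{P}$ must be upgraded to convergence of the full expectation, and the Gaussian tails in $Y_{0}$ produced by Theorems~\ref{Tfuest} and \ref{Tunif} must balance precisely against the Gaussian growth reintroduced by reversing the conjugation by $\exp(|Y|^{2}/2)$. Keeping these weights compatible uniformly in $\vartheta\in[0,\tfrac{\pi}{2}[$ is exactly what the uniform estimates of sections~\ref{sec:extra}--\ref{sec:fin} were designed to accomplish, so the difficulty is in bookkeeping rather than in producing new analytic input.
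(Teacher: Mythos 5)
Your outline is close to the paper's, but there is a genuine gap hidden in your phrase ``pass to the limit inside the expectation by dominated convergence.'' Two of the ingredients you need do not follow from dominated convergence plus the in-probability results you cite.

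First, the process $Y_{b,\cdot}$ in (\ref{eq:wat2}) does \emph{not} converge as $b\to 0$: for fixed $t>0$ it becomes asymptotically Gaussian-distributed but has no pointwise or in-probability limit, so $s(x_{b,t},Y_{b,t})$ does not converge to $s(x_{0,t},Y_{0,t})$ along a single path. What is actually needed is the joint weak convergence of $(x_{b,\cdot},Y_{b,t})$ to the Brownian path $x_{0,\cdot}$ together with an \emph{independent} standard Gaussian in $Y$, so that the test function $s$ gets averaged into $\int\mathbf{P}s(x_{0,t},Y)\,e^{-|Y|^{2}}\,\pi^{-(m+n)/2}dY$. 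This is the content of equation (\ref{eq:jar10}), which relies on the full strength of \cite[Theorem 12.8.1 and Remark 12.8.2]{Bismut08b}. Theorem~\ref{Tconci} only handles $(x_{b,\cdot},g_{b,\cdot})$ and gives no information about $Y_{b,t}$, so the combination ``Theorem~\ref{Tconci} plus dominated convergence'' does not reach the conclusion; this step is a substantive analytic input, not bookkeeping.

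Second, your probabilistic representation of the heat kernel carries the factor $\exp\bigl(-\tfrac{\ct}{2}\int_{0}^{t}|[Y^{N},Y^{TX}]|^{2}ds\bigr)$, which you never address. Its limit is not a Gaussian matter at all: one needs the ergodic result \cite[Proposition 14.10.1]{Bismut08b}, which shows that $\int_{0}^{t}|[Y^{N}_{b,s},Y^{TX}_{b,s}]|^{2}ds$ converges uniformly in probability to the deterministic $-\tfrac{t}{4}\Tr^{\mathfrak p}[C^{\mathfrak k,\mathfrak p}]$. Without this, the scalar prefactor in the limit is not identified, and the constants in (\ref{eq:roba1}) (and hence the match with $T^{X}$ via (\ref{eq:stoch4bis})) do not come out. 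Related to this, the final identification of the limiting expectation with $\exp(-tT^{X})$ uses the independence of $x_{0,\cdot}$ from $H_{0,\vartheta,\cdot}\otimes E_{0,\cdot}$, and for the $d\vartheta$ component the conditional expectation $D_{\vartheta,t}=E^{Q,w^{\mathfrak p}}[H'_{0,\vartheta,t}\otimes E_{0,t}]$ together with the SDE computation (\ref{eq:fufu6})--(\ref{eq:fufu7}); ``matching scalar constants'' is a correct description of the endpoint but not of the mechanism. Incorporating the two missing ingredients would bring your argument in line with the paper's proof.
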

\begin{proof}
 First, we prove equation (\ref{eq:rob4}) when making $d\vartheta=0$. 
 In the sequel, we fix $\vartheta\in 
\left[0,\frac{\pi}{2}\right[, t>0$. Set 
$x=\cos\left(\vartheta\right)$.
By Proposition \ref{Pformel}, by equations    (\ref{eq:psic-4}), (\ref{eq:sig1}), by 
Theorem \ref{Thesgab}, and by  (\ref{eq:diff2}), (\ref{eq:he9}),  
   we only 
need to prove that given 
$\vartheta\in\left[0,\frac{\pi}{2}\right[, t>0$,  as $b\to 0$, 
\begin{multline}\label{eq:roba1}
E^{Q}\left[\exp\left(-\frac{x}{2}\int_{0}^{t}\left\vert  
\left[Y^{N}_{b},Y^{TX}_{b}\right]\right\vert
^{2}ds\right)U^{0}_{b,\vartheta,t} \otimes E_{b/\cos^{1/2}\left(\vartheta\right),
t}\tau^{t}_{0}
s\left(x_{b,t},Y_{b,t}\right)\right]\\
\to 
\exp\Biggl(-\frac{x^{2}}{2}\left(\frac{1}{4}\Tr^{\mathfrak p}
\left[C^{\mathfrak k,\mathfrak p}
\right]
+2\sum_{i=m+1}^{m+n}\widehat{c}\left(\ad\left(e_{i}\right)\vert_{\overline{TX}}\right) 
\rho^{F}\left(e_{i}\right)\right)t \\
-\left(\frac{1}{48}\Tr^{\mathfrak 
k}\left[C^{\mathfrak k, \mathfrak k}\right]+\frac{1}{2}C^{\mathfrak 
k,F}\right)t
\Biggr)\\
E^{Q}\left[\tau^{t}_{0}\int_{\left(TX \oplus 
N\right)_{x_{0,t}}}^{}\mathbf{P}s\left(x_{0,t},Y\right)\exp\left(-
\left\vert  Y\right\vert^{2}\right)\frac{dY}{\pi^{\left(m+n\right)/2}}\right],
\end{multline}
where in the left-hand side, $\left(x_{b,\cdot},Y_{b,\cdot}\right)$ 
verifies (\ref{eq:wat2}), and where in the right-hand side, 
$x_{0,\cdot}$ verifies (\ref{eq:stoch1}). The dependence of these 
processes on $\vartheta$ will not be denoted explicitly.

By the ergodic theory arguments explained in \cite[Proposition 14.10.1]{Bismut08b}, 
for any $M>0$, the process 
$t\in\R_{+}\to\int_{0}^{t}\left\vert  \left[Y^{N}_{b,s},Y^{TX}_{b,s}\right]\right\vert^{2}ds$ converges uniformly on 
$\left[0,M\right]$ to $t\in\R_{+}\to -\frac{1}{4}\Tr^{\mathfrak 
p}\left[C^{\mathfrak k,\mathfrak p}\right]t$  in probability.  By Theorem 
\ref{Tunif}, given $p>2$, for 
 $b>0$ small enough, 
$\sup_{0\le t\le M}\left\vert  U^{0}_{b,\vartheta,t}\right\vert$ 
remains uniformly bounded in all the $L_{p},1\le p < + \infty $ . 
By  
Theorem \ref{Teste},   $\sup_{0\le 
t\le M}\left\vert  E_{b/\cos^{1/2}\left(\vartheta\right),t}\right\vert$ remains also uniformly 
bounded in all the $L_{p},1\le p<+ \infty$.  

Combining Theorems \ref{Teste}, \ref{Tpou},  and   \ref{Tfiut} with the above considerations, we find 
that   as $b\to 0$,  then 
\begin{multline}\label{eq:jar5}
E^{Q}\left[\exp\left(-\frac{x}{2}\int_{0}^{t}\left\vert  
\left[Y_{b}^{N},Y_{b}^{TX}\right]\right\vert
^{2}ds\right)U^{0}_{b,\vartheta,t} \otimes E_{b/\cos^{1/2}\left(\vartheta\right),t}\tau^{t}_{0}
s\left(x_{b,t},Y_{t}\right)\right]\\
-\exp\left(\left( \frac{x}{8}\Tr^{\mathfrak p}\left[C^{\mathfrak k, 
\mathfrak p}\right] +\pmb{\delta}^{0}_{\vartheta}\right)t 
\right)E^{Q}\left[H_{b,\vartheta,t}\otimes 
E_{b/\cos^{1/2}\left(\vartheta\right),t}\tau^{t}_{0}\mathbf{P} s\left(x_{b,t},Y_{b,t}\right)\right]\to 0.
\end{multline}

We are now ready to use the full strength of \cite[Theorem 12.8.1 and 
Remark 12.8.2]{Bismut08b}, and more specifically \cite[eq. 
(12.8.47)]{Bismut08b}, from which we deduce that as $b\to 0$, 
\begin{multline}\label{eq:jar10}
E^{Q}\left[H_{b,\vartheta,t}\otimes 
E_{b/\cos^{1/2}\left(\vartheta\right),t}\tau^{t}_{0}\mathbf{P} s\left(x_{b,t},Y_{b,t}\right)\right]\\
\to E^{Q}\left[H_{0,\vartheta,t}\otimes 
E_{0,t}\tau^{t}_{0}\int_{\left(TX \oplus N\right)_{x_{0,t}}}^{}\mathbf{P}s\left(x_{0,t},Y\right)\exp\left(-\left\vert  Y\right\vert^{2}\right)
\frac{dY}{\pi^{\left( m+n \right) /2}}\right].
\end{multline}
Note that $x_{0,\cdot}$, which only depends on $w^{TX
}_{\cdot}$, and $H_{0,\vartheta,\cdot} \otimes E_{0,\cdot}$, 
which only depends on $w^{N}_{\cdot}$, are independent, so 
that 
\begin{multline}\label{eq:roba4}
E^{Q}
\left[H_{0,\vartheta,t} \otimes E_{0,t}\int_{\left(TX \oplus 
N\right)_{x_{0,t}}}^{}\mathbf{P}\tau^{t}_{0}s\left(x_{0,t},Y\right)
\exp\left(-\left\vert  Y\right\vert^{2}\right)dY\right]\\
=E^{Q}
\left[H_{0,\vartheta,t} \otimes E_{0,t}\right]E^{Q}\left[\int_{\left(TX \oplus 
N\right)_{x_{0,t}}}^{}\mathbf{P}\tau^{t}_{0}s\left(x_{0,t},Y\right)
\exp\left(-\left\vert  Y\right\vert^{2}\right)dY\right].
\end{multline}

From the stochastic differential equations 
(\ref{eq:roba2}), (\ref{eq:mir0}), 
 using It\^{o}'s formula, we get
\begin{multline}\label{eq:roba5}
d \left( H_{0,\vartheta} \otimes E_{0}\right) =\left( H_{0,\vartheta} \otimes 
E_{0}\right)\frac{1}{2} \Biggl(-x^{4}\sum_{m+1}^{m+n}\widehat{c}\left(\ad\left(e_{i}\right)
\vert_{\overline{\mathfrak 
p}}\right)^{2}-\sum_{m+1}^{m+n}\rho^{E}\left(e_{i}\right)^{2} \\
-
2x^{2}\sum_{m+1}^{m+n}\widehat{c}\left(\ad\left(e_{i}\right)\vert_{\overline{\mathfrak p}}\right)\rho^{E}\left(e_{i}\right)\Biggr)
+\left(H_{0,\vartheta} \otimes  
E_{0}\right)\left(-x^{2}\widehat{c}\left(i\ad\left(\delta 
w^{\mathfrak k}\right)\right)-\rho^{E}\left(i\delta w^{\mathfrak k}\right)\right).
\end{multline}
By (\ref{eq:roba5}), we deduce that
\begin{multline}\label{eq:roba6}
d E^{Q}\left[\left( H_{0,\vartheta} \otimes E_{0}\right) 
\right]=E^{Q}\left[\left( H_{0,\vartheta} \otimes 
E_{0}\right)\right]
\frac{1}{2} \Biggl(-x^{4}\sum_{m+1}^{m+n}\widehat{c}\left(\ad\left(e_{i}\right)
\vert_{\overline{\mathfrak 
p}}\right)^{2}-\sum_{m+1}^{m+n}\rho^{E}\left(e_{i}\right)^{2} \\
-
2x^{2}\sum_{m+1}^{m+n}\widehat{c}\left(\ad\left(e_{i}\right)\vert_{\overline{\mathfrak p}}\right)\rho^{E}\left(e_{i}\right)\Biggr).
\end{multline}
By (\ref{eq:batt2}), (\ref{eq:Lie11x1}), and (\ref{eq:roba6}), we conclude that
\begin{multline}\label{eq:roba7}
E^{Q}\left[\left( H_{0,\vartheta,t} \otimes 
E_{0,t}\right)\right]\\
=\exp\left(-\frac{1}{2}\left(\frac{x^{4}}{8}\Tr^{\mathfrak p}\left[C^{\mathfrak k, \mathfrak p}\right]
+C^{\mathfrak 
k,E}+2x^{2}\sum_{m+1}^{m+n}\widehat{c}\left(\ad\left(e_{i}\right)\vert_{\overline{\mathfrak p}}\right)\rho^{E}\left(e_{i}\right)
\right)t\right).
\end{multline}
By  (\ref{eq:rob3}),  (\ref{eq:jar5})--(\ref{eq:roba4}), and 
(\ref{eq:roba7}), we get (\ref{eq:roba1}). 

Now we will consider the $d\vartheta$ component of (\ref{eq:rob4}). Using 
(\ref{eq:stoch4bis}), (\ref{eq:sen1b}),  the considerations that 
follow (\ref{eq:bobi2}), and (\ref{eq:diff2a1}), to establish
(\ref{eq:roba1}), we have to prove that as $b\to 0$, 
\begin{multline}\label{eq:roba1ter}
E^{Q}\left[\exp\left(-\frac{x}{2}\int_{0}^{t}\left\vert  
\left[Y^{N}_{b},Y^{TX}_{b}\right]\right\vert
^{2}ds\right)U^{\prime 0}_{b,\vartheta,t} \otimes E_{b/\cos^{1/2}\left(\vartheta\right),
t}\tau^{t}_{0}
s\left(x_{b,t},Y_{b,t}\right)\right]\\
\to 
\exp \left( -\left( \frac{x^{2}}{8}\Tr^{\mathfrak p}\left[C^{\mathfrak k,\mathfrak p}
\right]
+\frac{1}{48}\Tr^{\mathfrak 
k}\left[C^{\mathfrak k, \mathfrak k}\right]+\frac{1}{2}C^{\mathfrak 
k,F}\right) t
\right) \\
E^{Q}\left[A_{t}\tau^{t}_{0}\int_{\left(TX \oplus 
N\right)_{x_{0,t}}}^{}\mathbf{P}s\left(x_{0,t},Y\right)\exp\left(-
\left\vert  
Y\right\vert^{2}\right)\frac{dY}{\pi^{\left(m+n\right)/2}}\right].
\end{multline}

 By Theorem 
\ref{TlimVpr}, in the extension of (\ref{eq:jar5}), we have to 
replace $H_{\bt,t}$ by $H'_{\bt,t}$. In (\ref{eq:jar10}) we should 
replace $H_{0,\vartheta,t}$ by $H'_{0,\vartheta,t}$. 

Let $E^{Q,w^{\mathfrak p}}$ denote conditional expectation with 
respect to $w^{\mathfrak p}$. Set
\begin{equation}\label{eq:fufu5a}
D_{\vartheta,\cdot}=E^{Q,w^{\mathfrak p}}\left[H'_{0,\vartheta,\cdot} 
\otimes E_{0,\cdot}\right].
\end{equation}
Since $x_{0,\cdot}$ depends only on $w^{\mathfrak p}$, instead of 
(\ref{eq:roba4}), we get
\begin{multline}\label{eq:roba4bis}
E^{Q}
\left[H'_{0,\vartheta,t} \otimes E_{0,t}\int_{\left(TX \oplus 
N\right)_{x_{0,t}}}^{}\mathbf{P}\tau^{t}_{0}s\left(x_{0,t},Y\right)
\exp\left(-\left\vert  Y\right\vert^{2}\right)dY\right]\\
=E^{Q}\left[D_{\vartheta,t}\int_{\left(TX \oplus 
N\right)_{x_{0,t}}}^{}\mathbf{P}\tau^{t}_{0}s\left(x_{0,t},Y\right)
\exp\left(-\left\vert  Y\right\vert^{2}\right)dY\right].
\end{multline}

 By (\ref{eq:roba2}), (\ref{eq:mir0bis}), instead of (\ref{eq:roba5}), we get
\begin{multline}\label{eq:roba5bis}
d \left( H'_{0,\vartheta} \otimes E_{0}\right) =\left( H'_{0,\vartheta} \otimes 
E_{0}\right)\frac{1}{2} \Biggl(-x^{4}\sum_{m+1}^{m+n}\widehat{c}\left(\ad\left(e_{i}\right)
\vert_{\overline{\mathfrak 
p}}\right)^{2}-\sum_{m+1}^{m+n}\rho^{E}\left(e_{i}\right)^{2} \\
-
2x^{2}\sum_{m+1}^{m+n}\widehat{c}\left(\ad\left(e_{i}\right)\vert_{\overline{\mathfrak p}}\right)\rho^{E}\left(e_{i}\right)\Biggr)\\
+\left(H'_{0,\vartheta} \otimes  
E_{0}\right)\left(-x^{2}\widehat{c}\left(\ad\left(i\delta 
w^{\mathfrak k}\right)\right)-\rho^{E}\left(i\delta w^{\mathfrak 
k}\right)-\frac{d\vartheta}{\sqrt{2}}\widehat{c}\left(\overline{dw}^{\mathfrak p}\right){}\right).
\end{multline}

By (\ref{eq:fufu5a}), we get
\begin{multline}\label{eq:fufu6}
    dD_{\vartheta}=D_{\vartheta}\frac{1}{2} \Biggl(-x^{4}\sum_{m+1}^{m+n}\widehat{c}\left(\ad\left(e_{i}\right)
\vert_{\overline{\mathfrak 
p}}\right)^{2}-\sum_{m+1}^{m+n}\rho^{E}\left(e_{i}\right)^{2} \\
-
2x^{2}\sum_{m+1}^{m+n}\widehat{c}\left(\ad\left(e_{i}\right)\vert_{\overline{\mathfrak p}}\right)\rho^{E}\left(e_{i}\right)
-\frac{d\vartheta}{\sqrt{2}}\widehat{c}\left(\overline{dw}^{\mathfrak p}\right) \Biggr).
\end{multline}
Using  the same arguments as in (\ref{eq:roba7}), and comparing with 
(\ref{eq:roba8}), we deduce from (\ref{eq:fufu6}) that
\begin{equation}\label{eq:fufu7}
D_{\vartheta,t}=\exp\left(-\frac{t}{2}\left(\frac{x^{4}}{8}\Tr^{\mathfrak p}\left[C^{\mathfrak k, \mathfrak p}\right]
+C^{\mathfrak k,E}\right)\right)A_{t}.
\end{equation}
By (\ref{eq:rob3}), by the analogue of (\ref{eq:jar5}), 
(\ref{eq:jar10}), and using  (\ref{eq:roba4bis}),  (\ref{eq:fufu7}),  we 
get (\ref{eq:roba1ter}).
The proof of our theorem is completed. 
\end{proof}

Given what we did before, we are now ready to indicate the main steps 
of the proof of Theorem \ref{Test}, by properly referring to 
\cite[chapters 12--14]{Bismut08b} when necessary.

In Theorem \ref{Tinsi2}, we gave a complete proof of 
the corresponding estimates for the scalar heat kernel $\mathfrak 
r_{b,\vartheta,t}^{X}$ over $\widehat{\mathcal{X}}$ and of the 
corresponding convergence result as $b\to 0$. 

For the moment, we make $d\vartheta=0$.

As in \cite{Bismut08b},  the idea is to proceed in two steps:
\begin{enumerate}
    \item  In a first step,  we obtain rough estimates on 
    $\overline{q}^{X}_{b,\vartheta,t}$ and its derivatives  in the considered range of 
    parameters, by using the proper version of the Malliavin 
    calculus, and we prove the convergence in (\ref{eq:sumex32bis}).

    \item  In a second step, using 
   the uniform bounds (\ref{eq:prin-6z1}) in Theorem \ref{Teste}, (\ref{eq:couac-1y1}) in Theorem 
    \ref{Tfuest},  and   the  uniform bounds of the   first step, we obtain the 
    uniform estimates in (\ref{EQ:BERN0}).
\end{enumerate}
The main difficulty is the presence of 
$U_{\bt,t}=U^{0}_{\bt,t} \otimes E_{\bt,t}$ in the right-hand side of 
(\ref{eq:sen1}), (\ref{eq:sen1b}). 

By (\ref{eq:psic-4}), 
(\ref{eq:sig1}), we get
\begin{equation}\label{eq:fufu1}
\overline{\mathcal{L}}^{X}_{\bt,-}=\exp\left(-\left\vert  
Y\right\vert^{2}/2\right)\overline{\mathcal{M}}^{X}_{\bt,-}\exp\left(\left\vert  
Y\right\vert^{2}/2\right).
\end{equation}

If $s\in C^{\infty,c}\left(\widehat{\mathcal{X}}, 
\widehat{\pi}^{*}\left(\Lambda\ac\left(T^{*}X \oplus N^{*}\right) 
\otimes S^{\overline{TX}} \otimes F\right)\right)$,  using equation  (\ref{eq:sen1b}) in Theorem \ref{Thesgab} and  
(\ref{eq:fufu1}),  we get
\begin{multline}\label{eq:robx1}
\exp\left(-t\overline{\mathcal{L}}^{X}_{b,\vartheta,-}\right)s\left(x_{0},Y_{0}\right)
=\exp\left(-\left\vert  Y\right\vert^{2}/2\right) \\
E^{Q}\left[\exp\left(-\frac{\cos\left(\vartheta\right)}{2}\int_{0}^{t}\left\vert  \left[Y^{N},Y^{TX}\right]\right\vert
^{2}ds\right)U_{b,\vartheta,t}\tau^{t}_{0}s\left(x_{t},Y_{t}\right)\exp\left(\left\vert  Y_{t}\right\vert^{2}
/2\right)\right].
\end{multline}

An important observation is that contrary to what we did in 
\cite[sections 14.7 and 14.8]{Bismut08b} for the kernel 
$\overline{q}^{X}_{b,t}$, it is not possible to deduce the uniform 
pointwise estimate (\ref{EQ:BERN0}) from the corresponding estimate 
(\ref{eq:scal7}) for the scalar kernel $ \mathfrak r^{X}_{\bt,t}$. In 
\cite{Bismut08b}, a simple proof was possible because one could 
obtain a simple pointwise estimate for $U^{0}_{b,0,t}$, which is not 
the case here  when $\vartheta>0$. So 
we start from (\ref{eq:robx1}), apply the Malliavin calculus, and use 
also the  uniform bound (\ref{eq:prin-6z1}) in Theorem \ref{Teste} 
for $E_{\bt,\cdot}$, 
and the estimate (\ref{eq:couac-1y1}) in Theorem \ref{Tfuest} for  
$U^{0}_{\bt,\cdot}$. This last estimate, that provides the required 
uniformity in $\vartheta\in \left[0,\frac{\pi}{2}\right[$ plays a 
critical role. Ultimately, given $0<\epsilon\le M<+ \infty $,  we obtain the existence of $C>0,k\in\N$ 
such that for $0<b\le 
1,\vartheta\in\left[0,\frac{\pi}{2}\right[,\epsilon\le t\le M$,
\begin{equation}\label{eq:fufu2}
\left\vert  
\overline{q}^{X}_{\bt,t}\left(\left(x,Y\right),\left(x',Y'\right)\right)\right\vert\le C\cos^{-k}\left(\vartheta\right),
\end{equation}
Also,  we obtain corresponding bounds for the derivatives of arbitrary order of 
$\overline{q}^{X}_{\bt,t}\left(\left(x,Y\right),\left(x',Y'\right)\right)$ in $\left(x',Y'\right)$.

In a second step, we use the semigroup property of 
$\exp\left(-t\overline{\mathcal{L}}^{X}_{\bt,t}\right)$, and we get 
an   analogue of (\ref{eq:phan5}). From 
this analogue, we obtain 
\begin{multline}\label{eq:robx1y1}
\left\vert  
\overline{q}^{X}_{\bt,t}\left(\left(x,Y\right),\left(x',Y'\right)\right)\right\vert\le
\int_{\widehat{\mathcal{X}}}^{}\left\vert  
\overline{q}^{X}_{\bt,t/2}\left(\left(x,Y\right),\left(z,Z\right)\right)\right\vert \\ 
\left\vert  \overline{q}^{X}_{\bt,t/2}\left(\left(z,Z\right),\left(x',Y'\right)\right)\right\vert dzdZ.
\end{multline}
By (\ref{eq:fufu2}), (\ref{eq:robx1y1}),  we 
get the obvious extension of (\ref{eq:phan5a}),
\begin{equation}\label{eq:robx2}
\left\vert  \overline{q}^{X}_{\bt,t}\left(\left(x,Y\right),\left(x',Y'\right)\right)\right\vert
\le C\cos^{-k}\left(\vartheta\right)\int_{\widehat{\mathcal{X}}}^{}
\left\vert  
\overline{q}^{X}_{\bt,t/2}\left(\left(x,Y\right),\left(z,Z\right)\right)\right\vert dzdZ.
\end{equation}

By (\ref{eq:robx1}), we get
\begin{equation}\label{eq:robx3}
\int_{\widehat{\mathcal{X}}}^{}
\left\vert  
\overline{q}^{X}_{\bt,t/2}\left(\left(x,Y\right),\left(z,Z\right)\right)\right\vert dzdZ
\le C\exp\left(-\left\vert  
Y\right\vert^{2}/2\right)E^{Q}\left[\left\vert  
U_{\bt,t}\right\vert\exp\left(\left\vert  
Y_{t}\right\vert^{2}/2\right)\right].
\end{equation}
Using again the uniform bounds (\ref{eq:prin-6z1}),  
(\ref{eq:couac-1y1}), we deduce 
from (\ref{eq:robx2}),  (\ref{eq:robx3})  that
\begin{equation}\label{eq:robx4}
\left\vert  \overline{q}^{X}_{\bt,t}\left(\left(x,Y\right),\left(x',Y'\right)\right)\right\vert
\le 
C\cos^{-k}\left(\vartheta\right)\exp\left(-C'\left(\left\vert  
Y^{TX}\right\vert^{2}+\cos\left(\vartheta\right)\left\vert  
Y^{N}\right\vert^{2}\right)\right).
\end{equation}

The $L_{2}$ formal adjoint of $\overline{\mathcal{L}}^{X}_{\bt,-}$ 
has  the same structure as 
$\overline{\mathcal{L}}^{X}_{\bt,-}$. By exchanging the roles of 
$\left(x,Y\right)$ and $\left(x',Y'\right)$, we obtain the analogue 
of (\ref{eq:robx4}), 
\begin{equation}\label{eq:robx4bis}
\left\vert  \overline{q}^{X}_{\bt,t}\left(\left(x,Y\right),\left(x',Y'\right)\right)\right\vert
\le 
C\cos^{-k}\left(\vartheta\right)\exp\left(-C'\left(\left\vert  
Y^{TX \prime }\right\vert^{2}+\cos\left(\vartheta\right)\left\vert  
Y^{N \prime }\right\vert^{2}\right)\right).
\end{equation}

From  (\ref{eq:fufu2}), (\ref{eq:robx1y1}), we get the obvious analogue of 
 (\ref{eq:phan6}),
 \begin{multline}\label{eq:phan6bis}
\left\vert  \overline{q}^{X}_{\bt,t}\left(\left(x,Y\right),\left(x',Y'\right)\right)\right\vert \\
\le 
C\cos^{-k}\left(\vartheta\right)\int_{\substack{\left(z,Z\right)\in \widehat{\mathcal{X}}\\d\left(x,z\right)\ge 
d\left(x,x'\right)/2}}^{}\left\vert  \overline{q}^{X}_{\bt,t/2}\left(\left(x,Y\right),\left(z,Z\right)\right)\right\vert 
dzdZ\\
+C\cos^{-k}\left(\vartheta\right)
\int_{\substack{\left(z,Z\right)\in 
\widehat{\mathcal{X}}\\ d\left(x',z\right)\ge 
d\left(x,x'\right)/2}}^{}\left\vert  \overline{q}^{X}_{\bt,t/2}\left(\left(z,Z\right),\left(x',Y^{TX\prime}\right)
\right)\right\vert dzdZ.
\end{multline}

 By (\ref{eq:robx1}), instead of 
(\ref{eq:phan7}), we get
\begin{multline}\label{eq:robx4y1}
\int_{\substack{\left(z,Z\right)\in\widehat{\mathcal{X}}\\
d\left(x,z\right)\ge d\left(x,x'\right)/2}}\left\vert  
\overline{q}^{X}_{\bt,t/2}\left(\left(x,Y\right),\left(z,Z\right)\right)\right\vert dzdZ \\
\le
\exp\left(-\left\vert  Y\right\vert^{2}/2\right)
E^{Q}\left[1_{\sup_{0\le s\le t/2}d\left(x,x_{s}\right)\ge 
d\left(x,x'\right)/2}\left\vert  
U_{\bt,t}\right\vert\exp\left(\left\vert  Y_{t/2}\right\vert^{2}/2\right)\right].
\end{multline}
It is now crucial to observe that in (\ref{eq:wat2}), 
$\left(x_{\cdot},Y^{TX}_{\cdot}\right)$ has 
the same probability law 
as $\left( 
x_{\cos^{2}\left(\vartheta\right)\cdot},Y^{TX}_{\cos^{2}\left(\vartheta\right)\cdot} \right) $ in 
(\ref{eq:glab9a6a}), when replacing $b$ by $b\ct$. Similarly,  
$Y^{N}_{\cdot}$ 
in (\ref{eq:wat2}) has 
the same probability law as 
$Y_{\ct \cdot/b^{2}}$ 
in (\ref{eq:phan-2}) (with $E=N$). In equations 
(\ref{eq:scal4}) and (\ref{eq:scal16}), we already used a similar 
argument. By  equation (\ref{eq:glab17a}) (used with $t$ replaced by 
$\ct t/b^{2}$),  and by  equation 
(\ref{eq:led19s1}) in Theorem \ref{Tesc},  if $0<b\le 
1,\vartheta\in\left[0,\frac{\pi}{2}\right[, \epsilon\le t\le M$, we 
get
\begin{multline}\label{eq:fufu3}
\exp\left(-\left\vert  Y\right\vert^{2}/2\right)E^{Q}\left[1_{\sup_{0\le s\le t/2}d\left(x,x_{s}\right)\ge 
d\left(x,x'\right)/2}\exp\left(\left\vert  
Y_{t/2}\right\vert^{2}\right)\right]\\
\le 
C\exp\left(-C'\left(\frac{d^{2}\left(x,x'\right)}{\cos^{2}\left(\vartheta\right)}+\left\vert  Y^{TX}\right\vert^{2}+\cos\left(\vartheta\right)
\left\vert  Y^{N}\right\vert^{2}\right)\right).
\end{multline}

By using equation (\ref{eq:couac-1y1}) in Theorem \ref{Tfuest}, 
(\ref{eq:fufu3}), and H\"older's inequality, in the considered range 
of parameters, we get
\begin{multline}\label{eq:fufu3a1}
\exp\left(-\left\vert  Y\right\vert^{2}/2\right)E^{Q}\left[1_{\sup_{0\le s\le t/2}d\left(x,x_{s}\right)\ge 
d\left(x,x'\right)/2}\left\vert  U_{\bt,t}\right\vert\exp\left(\left\vert  
Y_{t/2}\right\vert^{2}\right)\right]\\
\le 
C\exp\left(-C'\left(\frac{d^{2}\left(x,x'\right)}{\cos^{2}\left(\vartheta\right)}+\left\vert  Y^{TX}\right\vert^{2}+\cos\left(\vartheta\right)
\left\vert  Y^{N}\right\vert^{2}\right)\right).
\end{multline}
 
By  (\ref{eq:robx4y1}),   (\ref{eq:fufu3a1}), we 
obtain
\begin{multline}\label{eq:robx5}
\int_{\substack{\left(z,Z\right)\in\widehat{\mathcal{X}}\\
d\left(x,z\right)\ge d\left(x,x'\right)/2}}\left\vert  
\overline{q}^{X}_{\bt,t/2}\left(\left(x,Y\right),\left(z,Z\right)\right)\right\vert dzdZ \\
\le C\exp\left(-C'\left(\frac{d^{2}\left(x,x'\right)}{\cos^{2}\left(\vartheta\right)}+\left\vert  
Y^{TX}\right\vert^{2}+\ct\left\vert  
Y^{N}\right\vert^{2}\right)\right).
\end{multline}
Still using the properties of the formal adjoint of 
$\overline{\mathcal{L}}^{X}_{\bt,-}$, the same arguments as before 
give 
\begin{multline}\label{eq:robx6}
\int_{\substack{\left(z,Z\right)\in\widehat{\mathcal{X}}\\
d\left(x',z\right)\ge d\left(x,x'\right)/2}}\left\vert  
\overline{q}^{X}_{\bt,t/2}\left(\left(z,Z\right),\left(x',Y'\right)\right)\right\vert dzdZ \\
\le C\exp\left(-C'\left(\frac{d^{2}\left(x,x'\right)}{\cos^{2}\left(\vartheta\right)}+\left\vert  
Y^{TX \prime}\right\vert^{2}+\ct\left\vert  
Y^{N \prime}\right\vert^{2}\right)\right).
\end{multline}

By (\ref{eq:phan6bis}), (\ref{eq:robx5}), and (\ref{eq:robx6}), we get
\begin{equation}\label{eq:robx7}
\left\vert  
\overline{q}^{X}_{\bt,t}\left(\left(x,Y\right)\left(x',Y'\right)\right)\right\vert\le
C\cos^{-k}\left(\vartheta\right)\exp\left(-C'\frac{d^{2}\left(x,x'\right)}{\cos^{2}\left(\vartheta\right)}\right).
\end{equation}

By combining (\ref{eq:robx4}), (\ref{eq:robx4bis}), and 
(\ref{eq:robx7}), we get the estimate (\ref{EQ:BERN0}), at least when 
making $d\vartheta=0$. The proof of this estimate when $d\vartheta$ 
is not equal to $0$ is exactly the same, given the estimates we 
obtained on $U^{\prime 0}_{\bt,\cdot}$.

Let us now establish the convergence result in equation 
(\ref{eq:sumex32bis}).  As was explained before,  the methods of 
the Malliavin calculus show that given $0<b\le 1,\vartheta\in 
\left[0,\frac{\pi}{2}\right[, \left(x,Y\right)\in \widehat{\mathcal{X}}$, 
$\overline{q}^{X}_{b,\vartheta,t}\left(\left(x,Y\right),\left(x',Y'\right)\right)$ and its derivatives of arbitrary order
in $\left(x',Y'\right)$ are uniformly bounded over compact subsets of 
$\widehat{\mathcal{X}}$. Note here that we can also obtain 
corresponding uniform bounds for these derivatives that are similar 
to (\ref{EQ:BERN0}), but this plays no role in the sequel. Using  
Theorem \ref{Tfin}, we get the convergence in (\ref{eq:sumex32bis}). 
More precisely, the above convergence is uniform over compact 
subsets, and derivatives of arbitrary order converge as well. 

This completes the proof of Theorem \ref{Test}.

\section{Estimates on the hypoelliptic heat kernel for 
$b$ large}%
\label{sec:unilar}
The purpose of this section is to establish Theorems \ref{TLARGE1QU} 
and \ref{TBEAUTY}, 
i.e., to obtain uniform estimates on the hypoelliptic heat kernels 
$\underline{q}^{X \prime}_{b,\vartheta,t}$ and $\mathfrak 
q^{X \prime }_{\bt}$ for large values of 
$b$.

When $\vartheta=0$,  Theorem \ref{TLARGE1QU} coincides with \cite[Theorem 
9.1.1]{Bismut08b}. 
 We will briefly explain how to adapt the techniques used in the 
 proof of
 \cite[Theorem 9.1.1]{Bismut08b} in order to establish
 Theorem \ref{TLARGE1QU}.  
 
The operator 
\index{LXbt@$\underline{\mathcal{L}}^{X \prime }_{b,\vartheta}$}%
$\underline{\mathcal{L}}^{X \prime 
}_{b,\vartheta}$ is given by (\ref{eq:glub2bis1}). 
The operators 
 $\underline{\mathcal{L}}^{X \prime 
}_{b,\vartheta}$ and $\underline{\mathcal{L}}^{X \prime 
}_{b/\cos^{1/2}\left(\vartheta\right),0}$ can be easily compared. 
The only substantial difference is that in the first operator, we 
have the term $\frac{1}{2\cos^{2}\left(\vartheta\right)}\left\vert  
Y^{TX}\right\vert^{2}$, while in the second operator, the smaller term 
$\frac{1}{2}\left\vert  
Y^{TX}\right\vert^{2}$ appears. The operator $\underline{\mathcal{L}}^{X 
\prime }_{b,0}$ and its heat kernel were  studied in 
detail in \cite[Theorem 9.1.1 and chapter 15]{Bismut08b} when $b\to + 
\infty $. Here,   the  difference 
between $\underline{\mathcal{L}}^{X \prime 
}_{b,\vartheta}$ and $\underline{\mathcal{L}}^{X \prime 
}_{b/\cos^{1/2}\left(\vartheta\right),0}$  will 
play in our favour. 

Put
\index{LXbt@$\underline{\mathcal{L}}^{X \prime  \prime}_{b,\vartheta}$}%
 \begin{multline}\label{eq:glub2bis2}
\underline{\mathcal{L}}^{X \prime  \prime}_{b,\vartheta}=\frac{b^{4}}{2\cos^{2}\left(\vartheta\right)}\left\vert  \left[Y^{ N},Y^{TX}\right]\right\vert^{2}
     +\frac{1}{2} \Biggl( 
     -\frac{\cos^{2}\left(\vartheta\right)}{b^{4}}\Delta^{TX\oplus 
     N}+\frac{1}{2}\left\vert  
     Y^{TX}\right\vert^{2}+
     \left\vert  Y^{N}\right\vert^{2}  \\
     - \frac{1}{b^{2}}\left( m+\cos\left(\vartheta\right)n \right) \Biggr) 
    +\frac{N^{\Lambda\ac\left(T^{*}X \oplus N^{*}\right) \prime }_{-\vartheta}}{b^{2}}  \\
     - \Biggl(\n_{Y^{ TX}}^{C^{ \infty }\left(TX \oplus 
 N,\widehat{\pi}^{*} \left( \Lambda\ac\left(T^{*}X \oplus 
 N^{*}\right)\otimes  S^{\overline{TX}} \otimes F
 \right) \right),f*,\widehat{f} }
	    \\
  -c\left(
   i\theta\ad\left(Y^{N}\right) \right) 
   -i\widehat{c}\left(\ad\left(Y^{N}\right)\vert_{\overline{TX}}\right)
   -i\rho^{F}\left(Y^{N}\right)\Biggr).
   \end{multline}
   Up to terms that are irrelevant in the 
 range of parameters $b\ge 1,\vartheta\in\left[0,\frac{\pi}{2}\right[$, the operator $\underline{\mathcal{L}}^{X \prime 
\prime }_{b,\vartheta}$ is now easily comparable to the 
operator $\underline{\mathcal{L}}^{X \prime 
}_{b/\cos^{1/2}\left(\vartheta\right),0}$. 

For $t>0$, let $\underline{q}^{X \prime \prime 
}_{b,\vartheta,t}\left(\left(x,Y\right),\left(x',Y'\right)\right)$ be the smooth 
kernel associated with the operator $\exp\left(-t\underline{\mathcal{L}}^{X \prime \prime  
}_{b,\vartheta}\right)$. We use the 
notation $\underline{q}^{X \prime \prime 
}_{b,\vartheta}=\underline{q}^{X \prime \prime }_{b,\vartheta,1}$. 
It is  easy to see that the estimates in \cite[Theorem 
9.1.1]{Bismut08b} for 
$\underline{q}^{X \prime }_{b/\cos^{1/2}\left(\vartheta\right),t}$ remain 
valid for $\underline{q}^{X \prime \prime }_{b,\vartheta,t}$ in the 
range $b\ge 1,\vartheta\in \left[0,\frac{\pi}{2}\right[$.

   Comparing  (\ref{eq:glub2bis1}) and (\ref{eq:glub2bis2}), we get
   \begin{equation}\label{eq:ors15}
\underline{\mathcal{L}}^{X \prime }_{b,\vartheta}=
\underline{\mathcal{L}}^{X \prime \prime}_{b,\vartheta}+
\frac{1}{2}\left(\frac{1}{\cos^{2}\left(\vartheta\right)}-\frac{1}{2}\right)\left\vert  Y^{TX}\right\vert^{2}.
\end{equation}
Also
\begin{equation}\label{eq:ors16}
\frac{1}{\cos^{2}\left(\vartheta\right)}-\frac{1}{2}\ge 
\frac{1}{2\cos^{2}\left(\vartheta\right)}.
\end{equation}

If $\vartheta\in \left[0,\frac{\pi}{2}\right[$, let $s\in\left[0,1\right]\to x_{s}\in X$ 
be a $C^{1}$ path such that
\begin{equation}\label{eq:ors13a}
\dot x_{s}=Y^{TX}_{s}.
\end{equation}
If $x_{0}=x,x_{t}=x'$, using (\ref{eq:ors16}), we get
\begin{equation}\label{eq:ors14}
\frac{1}{2}\left(\frac{1}{\cos^{2}\left(\vartheta\right)}-\frac{1}{2}\right)
\int_{0}^{t}\left\vert  Y^{TX}\right\vert^{2}ds
\ge 
\frac{d^{2}\left(x,x'\right)}{4\cos^{2}\left(\vartheta\right)t}.
\end{equation}

From the 
probabilistic representation of the heat kernels $\underline{q}^{X 
\prime }_{b,\vartheta,t}$,  using equation (\ref{eq:ors14}),  up to 
fixed constants, in the range of parameters  described before,  an upper bound for $\left\vert
\underline{q}^{X \prime }_{b,\vartheta,t}
\left(\left(x,Y\right),\left(x',Y'\right)\right)\right\vert$ is  the 
product of 
$\exp\left(-d^{2}\left(x,x'\right)/4\cos^{2}\left(\vartheta\right)t\right)$ by the 
upper bounds  for $\left\vert  \underline{q}^{X \prime}_{b/\cos^{1/2}\left(\vartheta\right),t}\left(\left(x,Y\right),\left(x',Y'\right)\right)\right\vert$ that were obtained 
in \cite[Theorem 9.1.1]{Bismut08b}.
Since $d_{\gamma}\left(x\right)\ge\left\vert  a\right\vert$, from 
these estimates, we get Theorem \ref{TLARGE1QU} except for the very 
last statement concerning vertical derivatives. However, the same 
arguments as in \cite[Theorem 12.11.2]{Bismut08b} permits us also to 
properly control the vertical derivatives.

By exactly the same arguments, we can derive Theorem \ref{TBEAUTY} 
from \cite[Theorem 9.5.6]{Bismut08b}.
More precisely, we obtain exactly the same estimates as in 
\cite{Bismut08b} for the rescaled heat kernel associated with 
$\underline{\mathcal{L}}^{X \prime \prime }_{\bt}$. Equations 
(\ref{eq:ors13a}), (\ref{eq:ors14}) are responsible for the 
appearance of $\frac{\left\vert  
a\right\vert^{2}}{\cos^{2}\left(\vartheta\right)}$ in the right-hand 
side of (\ref{eq:mossbrugg3bis}). As to replacing $\mathfrak q^{X 
\prime }_{\bt}$ by $\frac{\ct}{b^{2}}\n^{V} \mathfrak q^{X \prime 
}_{\bt}$ in (\ref{eq:mossbrugg3bis}), this can be done by exactly the 
same arguments.

.

\printindex
\bibliography{Bismut,Others}
\end{document}